\documentclass[]{amsart}

\usepackage{amsmath,amsfonts,amssymb,amsthm,amscd,comment,euscript}
\usepackage{bm}
\usepackage{etoolbox}
\usepackage{stmaryrd}
\usepackage[all]{xy}
\usepackage{graphicx}
\usepackage{enumerate, enumitem} 

\usepackage[breaklinks, colorlinks=true, linktocpage]{hyperref} 
\usepackage[usenames,dvipsnames]{xcolor}
\usepackage{tikz-cd}
\usepackage{xcolor}
\usepackage{soul}
\usepackage{stmaryrd} 
\usepackage{environ}
\usepackage{mathdots} 
\usepackage{caption}
\usepackage{subcaption}
\usepackage{adjustbox}

\usepackage[capitalize]{cleveref}   

\usepackage[rightcaption]{sidecap}
\sidecaptionvpos{figure}{c}

\definecolor{UF2}{HTML}{FA4616}
\definecolor{UF}{HTML}{0021A5}
\definecolor{lightergray}{rgb}{0.8,0.85,0.85}

\newcommand{\bump}{\begin{tikzpicture}[scale = .4]
		\draw[lightergray] (0,0) -- (1,0) -- (1,1) -- (0,1) -- (0,0);
		\draw[very thick,UF!100!black] (0.5,0) to[out=90,in=180] (1,0.5);
		\draw[very thick,UF!100!black] (0,0.5) to[out=0,in=-90] (0.5,1);
\end{tikzpicture}}
\newcommand{\jay}{\begin{tikzpicture}[scale = .4]
		\draw[lightergray] (0,0) -- (1,0) -- (1,1) -- (0,1) -- (0,0);
		\draw[very thick,UF!100!black] (0,0.5) to[out=0,in=-90] (0.5,1);
\end{tikzpicture}}

\newcommand{\are}{\begin{tikzpicture}[scale = .4]
		\draw[lightergray] (0,0) -- (1,0) -- (1,1) -- (0,1) -- (0,0);
		\draw[very thick,UF!100!black] (0.5,0) to[out=90,in=180] (1,0.5);
\end{tikzpicture}}
\newcommand{\bcross}{\begin{tikzpicture}[scale = .4]
		\draw[lightergray] (0,0) -- (1,0) -- (1,1) -- (0,1) -- (0,0);
		\draw[very thick,UF!100!black] (.5,0) -- (.5,1);
		\draw[very thick,UF!100!black] (0,.5) -- (1,.5);
\end{tikzpicture}}

\NewEnviron{mosaic}[2][]{\begin{tikzpicture}[baseline]\def\tilesize{#2}\matrix[mosai,ampersand replacement=\&, #1]{\BODY};\end{tikzpicture}}

\newcommand{\cspace}{.1} 
\newcommand{\tileedge}{.05pt} 
\newcommand{\knotthickness}{1.5pt} 

\newcommand{\tileii}[1][{}]{\tikz {\draw[tilestyle] (0,0) rectangle (\tilesize,\tilesize); \draw[#1,knotstyle={\tilesize*\knotthickness}] (\tilesize,.5*\tilesize) to[out=180, in=90] (.5*\tilesize,0);}}

\newcommand{\tileiv}[1][{}]{\tikz {\draw[tilestyle] (0,0) rectangle (\tilesize,\tilesize); \draw[#1,knotstyle={\tilesize*\knotthickness}] (0,.5*\tilesize) to[out=0, in=-90] (.5*\tilesize,\tilesize);}}

\newcommand{\tileviii}[1][{}]{\tikz {\draw[tilestyle] (0,0) rectangle (\tilesize,\tilesize); \draw[#1,knotstyle={\tilesize*\knotthickness}] (\tilesize,.5*\tilesize) to[out=180, in=90] (.5*\tilesize,0); \draw[#1,knotstyle={\tilesize*\knotthickness}] (0,.5*\tilesize) to[out=0, in=-90] (.5*\tilesize,\tilesize);}}
\newcommand{\tileix}[1][{}]{\tikz {\draw[tilestyle] (0,0) rectangle (\tilesize,\tilesize); \draw[#1,knotstyle={\tilesize*\knotthickness}] (0,.5*\tilesize) to (\tilesize,.5*\tilesize); \draw[#1,knotstyle={\tilesize*\knotthickness}] (.5*\tilesize,0) to (.5*\tilesize,.5*\tilesize-\tilesize*\cspace); \draw[#1,knotstyle={\tilesize*\knotthickness}] (.5*\tilesize,.5*\tilesize+\tilesize*\cspace) to (.5*\tilesize,\tilesize);}}

\newcommand{\tileNoBot}{%
  \tikz \draw[tilestyle] (0,0) -- (0,\tilesize) -- (\tilesize,\tilesize) -- (\tilesize,0);
}
\newcommand{\tileNoTop}{%
  \tikz \draw[tilestyle] (0,\tilesize) -- (0,0) -- (\tilesize,0) -- (\tilesize,\tilesize);
}
\newcommand{\tileLR}{%
  \tikz \draw[tilestyle]
    (0,0) -- (0,\tilesize)
    (\tilesize,0) -- (\tilesize,\tilesize);
}
\newcommand{\tileLRtext}[2][{}]{%
  \tikz {
    \draw[tilestyle]
    (0,0) -- (0,\tilesize)
    (\tilesize,0) -- (\tilesize,\tilesize);
    \node[anchor=center] at (\tilesize/2,\tilesize/2) {#2};
  }%
}

\newcommand{\tilecdots}{%
  \tikz {
    \draw[tilestyle] (0,0) rectangle (\tilesize,\tilesize);
    \node at (\tilesize/2,\tilesize/2) {\(\cdots\)};
  }
}
\newcommand{\tilevdots}{%
  \tikz {
    \draw[tilestyle] (0,0) rectangle (\tilesize,\tilesize);
    \node at (\tilesize/2,\tilesize/2) {\(\vdots\)};
  }
}  
  \newcommand{\tileddots}{%
  \tikz {
    \draw[tilestyle] (0,0) rectangle (\tilesize,\tilesize);
    \node at (\tilesize/2,\tilesize/2) {\(\ddots\)};
  }
}
  \newcommand{\tileiddots}{%
  \tikz {
    \draw[tilestyle] (0,0) rectangle (\tilesize,\tilesize);
    \node at (\tilesize/2,\tilesize/2) {\(\iddots\)};
  }
}
\newcommand{\textile}[2][{}]{%
  \tikz {
    \draw[#1] (0,0) rectangle (\tilesize,\tilesize);
    \node[anchor=center] at (\tilesize/2,\tilesize/2) {#2};
  }%
}
\newcommand{\tilexvert}[1][{}]{%
  \tikz {
    \draw[tilestyle] (0,0) rectangle (\tilesize,\tilesize);
    \draw[red,knotstyle={\tilesize*\knotthickness}] (.5*\tilesize,0) to (.5*\tilesize,\tilesize);
    \draw[#1,knotstyle={\tilesize*\knotthickness}] (0,.5*\tilesize) to (.5*\tilesize-\tilesize*\cspace,.5*\tilesize);
    \draw[#1,knotstyle={\tilesize*\knotthickness}] (.5*\tilesize+\tilesize*\cspace,.5*\tilesize) to (\tilesize,.5*\tilesize);
  }
}
\newcommand{\tilebumpBotColor}[1][{}]{\tikz {\draw[tilestyle] (0,0) rectangle (\tilesize,\tilesize); \draw[red,knotstyle={\tilesize*\knotthickness}] (\tilesize,.5*\tilesize) to[out=180, in=90] (.5*\tilesize,0); \draw[#1,knotstyle={\tilesize*\knotthickness}] (0,.5*\tilesize) to[out=0, in=-90] (.5*\tilesize,\tilesize);}}
\newcommand{\tilebumpTopColor}[1][{}]{\tikz {\draw[tilestyle] (0,0) rectangle (\tilesize,\tilesize); \draw[#1,knotstyle={\tilesize*\knotthickness}] (\tilesize,.5*\tilesize) to[out=180, in=90] (.5*\tilesize,0); \draw[red,knotstyle={\tilesize*\knotthickness}] (0,.5*\tilesize) to[out=0, in=-90] (.5*\tilesize,\tilesize);}}

\tikzset{tilestyle/.style={use as bounding box, line width=\tileedge}} 
\tikzset{knotstyle/.style={line width=#1}} 
\tikzset{mosai/.style={execute at begin cell=\node\bgroup, execute at end cell=\egroup;, column sep={\tilesize cm,between origins}, row sep={\tilesize cm,between origins}, inner sep=0}}

\newtheorem{thm}{Theorem}[section]
\newtheorem{lem}[thm]{Lemma}
\newtheorem{cor}[thm]{Corollary}
\newtheorem{prop}[thm]{Proposition}

\newtheorem{egg}[thm]{Example}
\newtheorem{rmk}[thm]{Remark}
\newtheorem{defn}[thm]{Definition}
\newtheorem{nota}[thm]{Notation}

\newcommand{\precdot}{\prec\mathrel{\mkern-5mu}\mathrel{\cdot}}

\newcommand{\dem}{\mathrm{Dem}}

\newcommand{\mbb}{\mathbb}
\newcommand{\mcal}{\mathcal}
\newcommand{\mbf}{\mathbf}
\newcommand{\mrm}[1]{\mathrm{#1}}
\newcommand{\mfk}[1]{\mathfrak{#1}}
\newcommand{\ol}[1]{\overline{#1}}

\renewcommand{\P}{\mathbb{P}}

\newcommand{\C}{\mathbb{C}}

\newcommand{\Fl}{\mathrm{Fl}}
\newcommand{\R}{\mathbb{R}}
\newcommand{\Z}{\mathbb{Z}}
\newcommand{\B}{\mathcal{B}}

\newcommand{\mm}{m}
\newcommand{\q}{q}
\newcommand{\id}{e}

\newcommand{\Gr}{\mathrm{Gr}}
\DeclareMathOperator*{\conv}{conv}
\newcommand{\BS}{\mathrm{BS}}
\DeclareMathOperator*{\argmax}{argmax}
\newcommand{\Brick}{\mathrm{Brick}}
\newcommand{\BB}{\bm{\mathfrak{B}}}
\renewcommand{\b}{\ol{\mbf{b}}}

\usepackage{halloweenmath}
\newcommand{\atom}{\mathfrak{a}}
\newcommand{\boson}{\mathfrak{b}}
\newcommand{\conductivity}{\mathfrak{c}}
\newcommand{\dipole}{\mathfrak{d}}
\newcommand{\assumption}{\Upsilon}
\renewcommand{\pumpkin}{\mathcal{S}}
\newcommand{\LL}{\mathbb{L}}
\renewcommand{\mathcloud}{\mathcal{L}}

\makeatletter
\newcommand{\upsidedownwitch}{\mathord{\mathpalette\upsidedownwitch@\relax}}
\newcommand{\upsidedownwitch@}[2]{\raisebox{\depth}{\scalebox{1}[-1]{$\m@th#1\mathwitch$}}}
\makeatother

\renewcommand{\reversemathwitch}{\upsidedownwitch}

\renewcommand{\vec}[1]{\mathbf{#1}}

\newcommand\junk[1]{}

\newtoggle{comments}
\newtoggle{details}
\newtoggle{detailsnote}

\toggletrue{comments}   
\togglefalse{details}   

\toggletrue{detailsnote}   

\iftoggle{details}{%
	\newcommand{\details}[1]{
		\ \\
		{\color{OliveGreen}
			\textbf{Details:} #1
		}
		\\
	}
}{%
	\newcommand{\details}[1]{}
}

\usepackage[maxbibnames=9,backend=bibtex]{biblatex}
\begin{document}

\title{A cubulation of the associahedron using a \\ torus-equivariant degeneration of the brick variety}

\author[]{Raj Gandhi and Gabe Udell}

\address{
    Department of Mathematics \\
    Cornell University
}
\email{rg593@cornell.edu}

\address{
    Department of Mathematics \\
    Cornell University
}
\email{gru5@cornell.edu}

\keywords{brick polytope, associahedron, polyhedral subdivision, Richardson variety}

\begin{abstract}
We describe a mixed subdivision of the $(n-1)$-dimensional Loday associahedron into $(n-1)!$ maximal cells which are combinatorial cubes, and we show that the face poset of this subdivision is  isomorphic to the face poset of the subdivision of the $1\times 2\times \dotsb \times (n-1)$ box into unit cubes. More generally, we describe mixed subdivisions of generalized associahedra into pieces which are Minkowski sums of Bruhat interval polytopes. Our subdivisions arise from a torus-equivariant embedded (in a product of flag varieties) degeneration of  brick varieties into reduced unions of products of Richardson varieties. The subdivision of Loday's associahedron allows us to `embed' the 1-skeleton of the permutahedron in the boundary of the associahedron.
\end{abstract}

\maketitle

\tableofcontents

\section{Introduction}

The \textit{associahedron} $K_n$ is an $(n-1)$-dimensional polytope with: Catalan many vertices corresponding to all ways of completely parenthesizing a product of $n+1$ factors (e.g., the two vertices of $K_2$ correspond to $a\cdot (b\cdot c)$ and $(a\cdot b)\cdot c$), edges corresponding to single applications of the associative rule, and faces corresponding to partial parenthesizations (e.g. $a\cdot b\cdot (c\cdot d) \cdot e$). The face structure of the associahedron is then the poset of partial parenthesizations ordered by refinement (e.g. the vertex labeled by $(a\cdot (b\cdot (c\cdot d))) \cdot e$ would be contained in the face labeled by $a\cdot b\cdot (c\cdot d) \cdot e$). Tamari \cite{T51} studied the \textit{1-skeleton} of the associahedron and believed that the associahedron could be realized as a polytope, but an explicit polytopal realization was not discovered until much later.  Stasheff gave a realization of the associahedron as ``curvilinear polytopes'' in \cite{S63}, and the first polytopal realizations of $K_n$ were given in the 1980s independently by Haiman \cite{H84} and Lee \cite{L89}. We will focus in particular on the \textit{Loday realization} of the associahedron, described first by Shnider and Sternberg \cite{ShniderSternberg} and later by Loday \cite{L04}. Loday's description was by way of another Catalan object: ``full binary trees''. Just as Catalan objects appear throughout mathematics, so too does the associahedron. It is closely connected to homotopy theory, operads, Hopf algebras, cluster algebras, and even string theory.

A closely related polytope is the \textit{permutahedron}. The permutahedron $\mrm{Perm}_n$ is an $(n-1)$-dimensional lattice polytope in $\R^n$ constructed as the convex hull of the vectors $(\sigma(1),\dots, \sigma(n))$ for $\sigma\in S_n$. The ties between the permutahedron and the associahedron are numerous and deep. In this paper, we add new curiosities to the chronicle of their connections. One well-known connection is that the Loday realization is a ``generalized permutahedron'' \cite{postnikov}; that is, its \textit{normal fan} coarsens the normal fan of the permutahedron. As a consequence, one can identify collections of faces of the permutahedron to get the face structure of the associahedron. A strange byproduct of our  results is that we seem to be able to do the opposite: subdivide the faces of the Loday realization of the associahedron (with new vertices being inserted at integer coordinates) in such a way as to recover the face structure of the permutahedron. Doing so requires us to identify $n!-\frac{1}{n+1}\binom{2n}{n}$ new integer points on the boundary of the associahedron.
Just as the associahedron was a cell structure before it was realized explicitly as a polytope, this subdivision realizes an existing combinatorial story in an explicit geometric fashion.

Our main result is to explicitly construct a subdivision of the Loday realization of the associahedron $K_n$ into $(n-1)!$ pieces. The subdivision is combinatorially isomorphic to the unit cube subdivision of a $1\times 2 \times \cdots \times (n-1)$ box so each top-dimensional cell of the subdivision is combinatorially equivalent to a cube, and the 1-skeleton of the subdivision has $n!$ vertices. Each top-dimensional cell of the subdivision is a Minkowski sum of \textit{Bruhat interval polytopes}. We additionally introduce a new expression for Loday's associahedron as a Minkowski sum of Bruhat interval polytopes so that our cubulation of the associahedron is a  \textit{mixed subdivision} with respect to this Minkowski decomposition.

\begin{figure}
    \centering
    \begin{subfigure}{0.45\textwidth}
        \includegraphics[width=0.9\linewidth]{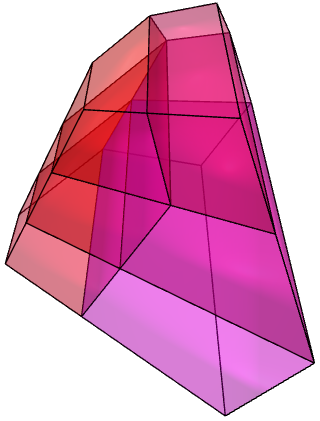}
    \end{subfigure}
    \hfill
\begin{subfigure}{0.45\textwidth}
    \includegraphics[width=0.9\linewidth]{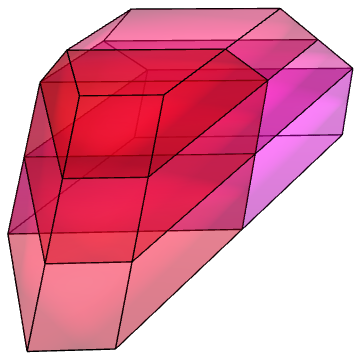}
\end{subfigure}
    \caption{Two views of the cubulation we construct of the 3D Loday associahedron}
\end{figure}

\begin{figure}
    \centering
    \includegraphics[width=0.4\linewidth]{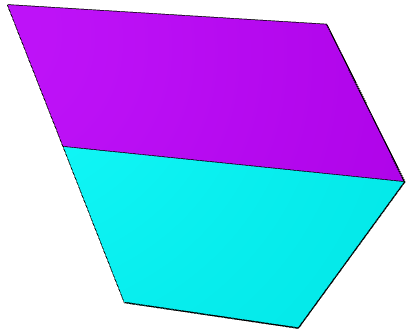}
    \caption{The cubulation we construct of the Loday realization of the 2D associahedron}
    \label{fig:placeholder}
\end{figure}

The subdivision of the permutahedron given by Harada--Horiguchi--Masuda--Park in \cite{HHMP} is \textit{also} isomorphic to the unit cube subdivision of a $1\times 2 \times \cdots \times (n-1)$ box (and therefore to the cubulation of the associahedron that we introduce). That subdivision, which we refer to as the \textit{HHMP subdivision},  has played a crucial role in recent work  \cite{quasiflag,bergeron_coxeter_2026,nadeau_geometry_2024} constructing and studying the ``quasisymmetric flag variety'', a union of toric varieties which has the ``quasisymmetric coinvariant ring'' as its cohomology ring. We are optimistic that there may be  an analog of the quasisymmetric flag variety that can be constructed using the subdivision of the Loday associahedron that we give in \cref{thm:main-result}, and that its cohomology ring will also be interesting.

We study Loday's associahedron through the more general framework of  \textit{brick polytopes} \cite{bricksort, PS15}.  Brick polytopes include not just the Loday realization of the associahedron, but all of the \textit{generalized associahedra} of Hohlweg, Lange and Thomas \cite{HLT}. We obtain similar subdivisions of all type A generalized associahedra which we conjecture to also all be composed of $(n-1)!$ cubes, but we do not undertake a detailed study of the combinatorics in these cases.

We discovered the subdivison of Loday's associahedron using toric geometry and our results about its combinatorics rely on geometric techniques. Every lattice polytope can be associated with a \textit{toric variety} embedded in projective space. In turn, every projective toric variety gives rise to a lattice polytope known as its \textit{moment polytope}. Sturmfels \cite{Sturmfels} showed that the ``regular'' subdivisions of a polytope correspond to ``Gr\"{o}bner degenerations''. Knutson, Sanchez, and Sherman-Bennett prove a more general incarnation of the same principle in \cite{AllenHammer} where they study generalizations of the HHMP subdivision of the permutahedron through degenerations of the permutahedral variety. They show that any \textit{torus-equivariant embedded degeneration} (\cref{defn:torus-equivariant-embedded-degeneration}) of a toric variety induces a subdivision of its moment polytope  into the moment polytopes of the irreducible components of the special fibre of the degeneration. Further, the face poset of the subdivision of the moment polytope is isomorphic to the poset of torus-orbit closures in the special fibre of the degeneration. Earlier work by Lian \cite{lian} studied the HHMP subdivision by constructing a different \textit{semitoric} degeneration of the permutahedral variety. In this paper we obtain and study subdivisions of brick polytopes by defining torus-equivariant embedded degenerations of their toric varieties.

Escobar \cite{E16} introduced \textit{brick varieties} and proved that 
their moment polytopes are brick polytopes. In particular, brick varieties include the toric varieties of generalized associahedra (including Loday's associahedron). Brick varieties are smooth subvarieties of products of Grassmannians which compactify ``braid varieties'', resolve \textit{Richardson varieties}, and are of interest in their own right.  
Traditionally, both brick varieties and brick polytopes are indexed by sequences of simple transpositions. We work with slightly more general versions of each that are indexed instead by sequences of permutations (that need not be simple transpositions). Unlike traditional brick varieties, these more general brick varieties are not necessarily smooth.

\begin{thm}\label{thm:main-result-2}(\cref{thm:degen})  
   Suppose $Q=(\q_1,\dots,\q_\mm)$ is a word in $S_n$ whose Demazure product is $w_0$ where $w_0$ is the permutation of $S_n$ with $w_0(i)=n+1-i$. Let \[L(Q)=\{(u_0,u_1,\dotsc,u_{\mm})\in (S_n)^{\mm+1}:u_0=\id,\text{ } u_{\mm}=w_0,\text{ }\ell(u_i\q_{i+1})=\ell(u_i)+\ell(\q_{i+1}),\text{ } u_{i+1}\leq u_i {\q_{i+1}},\text{ } i=0,\dotsc, \mm-1\}\] where $\ell(\sigma)$ denotes the length of a permutation $\sigma$. There is a torus-equivariant embedded (within $\mm+1$ copies of the complete flag variety $\Fl_n^{\mm+1}$) degeneration of the brick variety $\Brick^Q$ to the reduced union $\bigcup_{u_\bullet\in L(Q)}C_{u_\bullet}$, where \[C_{u_\bullet}=X_{u_0}^{u_0}\times X_{u_1}^{u_0 \q_1}\times X_{u_2}^{u_{1}\q_2}\times\dotsb\times X_{u_{\mm}}^{u_{\mm-1}\q_{\mm}}\] is a product of Richardson varieties in $\Fl_n$. 
\end{thm}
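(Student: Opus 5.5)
We plan to realize $\Brick^Q$ inside $\Fl_n^{\mm+1}$ as the closure of a configuration space of flags $(F_0,F_1,\dots,F_\mm)$ with $F_0$ the standard flag, $F_\mm$ the opposite standard flag, and consecutive flags in relative position exactly $\q_{i+1}$. This is the braid-variety style description. We then degenerate by a one-parameter subgroup of a torus acting on the middle factors, and choose it so that in the limit consecutive flags are ``pulled apart'' into independent Schubert and opposite-Schubert conditions. Concretely, we first write $\Brick^Q$ as the closure of
\[
\{(F_0,\dots,F_\mm): F_0=E_\bullet,\ F_\mm=E^{op}_\bullet,\ \mathrm{pos}(F_i,F_{i+1})=\q_{i+1}\}.
\]
Since the Demazure product of $Q$ is $w_0$, this set is nonempty and of the expected dimension $\sum_i \ell(\q_i)-\ell(w_0)$, which we check against Escobar's definition. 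We then take the family $\mathcal{X}\subset \Fl_n^{\mm+1}\times\mathbb{A}^1$ given by the closure of $\{(t^{\lambda_0}F_0,\dots,t^{\lambda_\mm}F_\mm,t): t\neq 0\}$. Here the $\lambda_i$ are generic cocharacters growing rapidly in $i$ (say $\lambda_i=N^i\rho$ with $N\gg0$). The resulting degeneration commutes with the $T$-action on every factor, so it is torus-equivariant and embedded.

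The next step is to identify the special fibre set-theoretically. Acting by the dominant cocharacter $N^i\rho$ on factor $i$ sends each flag to a $T$-fixed limit along a Białynicki-Birula cell. The relative-position condition between $F_i$ and $F_{i+1}$ should break, in the limit, into the condition that the $i$-th limit flag lies in a Richardson variety $X_{u_i}^{v_i}$. The lower index comes from the attracting cell of the previous factor, and the upper index is $u_{i-1}\q_i$ via the Hecke/Demazure multiplication rule for orbit closures. This yields the inclusion of the special fibre in $\bigcup_{u_\bullet} C_{u_\bullet}$, where the tuples are constrained by $u_{i+1}\le u_i\q_{i+1}$, and nonemptiness of $X_{u_{i+1}}^{u_i\q_{i+1}}$ is exactly this inequality. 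The length-additivity condition $\ell(u_i\q_{i+1})=\ell(u_i)+\ell(\q_{i+1})$ arises because only reduced concatenations give components of full dimension. We will verify this by computing
\[
\dim C_{u_\bullet}=\sum_i \bigl(\ell(u_{i-1}\q_i)-\ell(u_i)\bigr)=\sum_i \ell(\q_i)-\ell(w_0)
\]
under the additivity assumption. The sum telescopes, so every such $C_{u_\bullet}$ has the same dimension as $\Brick^Q$.

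Finally, we prove that the special fibre equals the reduced union. For this we compare multidegrees, or equivalently classes in $H^*(\Fl_n^{\mm+1})$. A flat limit preserves the cohomology class, so it suffices to show that
\[
[\Brick^Q]=\sum_{u_\bullet\in L(Q)}[C_{u_\bullet}],
\]
with the class of each $C_{u_\bullet}$ being a product of Richardson classes $[X_{u_i}]\cdot[X^{u_{i-1}\q_i}]$. We would establish this identity inductively in $\mm$ by pushing forward along the projection forgetting the last factors. This uses the known class of the correspondence $\{\mathrm{pos}(F,F')\le q\}$ and the multiplicativity of Demazure operators, with the expansion $\overline{B q B}$ acting on Schubert classes via the nil-Hecke algebra. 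Containment plus equality of classes, together with the fact that each $C_{u_\bullet}$ is reduced and irreducible (Richardson varieties are), forces the special fibre to be generically reduced along each component. We also need the absence of embedded components. For this we invoke Cohen–Macaulayness of $\Brick^Q$, or a Frobenius splitting compatible with all Richardson varieties, so that the flat limit is reduced.

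We expect the main obstacle to be the last step. Specifically, we must show that no component appears with multiplicity and no unexpected lower-dimensional pieces survive. The cleanest route is probably a compatible Frobenius splitting of $\Fl_n^{\mm+1}\times\mathbb{A}^1$ that splits the family $\mathcal{X}$. The special fibre is then automatically reduced, and the class computation only has to rule out extra components.
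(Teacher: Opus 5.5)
Your family is fine. By $\Delta G$-invariance of $D_q$, the consecutive pair $(F'_{i-1},F'_i)$ in your fibre at $t$ lies in $(1,t^{\lambda_i-\lambda_{i-1}})\cdot D_{q_i}$. So each pair is a base change along $t\mapsto t^{k_i}$ of the paper's two-factor family. The paper uses $\lambda_i=i\rho$, and rapid growth buys nothing.

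The problems are in Steps 2 and 3, and they are genuine gaps.

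\emph{Containment.} The inclusion $\mathcal{X}_0\subseteq\bigcup_{u_\bullet\in L(Q)}C_{u_\bullet}$ is only heuristic. Points of $\mathcal{X}_0$ are limits of \emph{moving} points $(t^{\lambda_i}F_i(t))_i$, not Bia\l{}ynicki-Birula limits of individual flags. The rigorous version is $\mathcal{X}_0\subseteq\bigcap_j I_j(0)$, where $I_j(0)$ is the special fibre of the $j$th pairwise family pulled back to $\Fl_n^{m+1}$. This reduces everything to the two-factor statement $D_q(0)=\bigcup_{\ell(wq)=\ell(w)+\ell(q)}X_w\times X^{wq}$, which is the real content. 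The paper proves it by a class computation in $H^*(\Fl_n^2)$, $B_-\times_T B_+$-invariance for the reverse inclusion, and Brion's multiplicity-free degeneration plus Hilbert polynomials for reducedness.

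\emph{Multiplicities.} Even granting containment, the identity $[\Brick^Q]=\sum_{L(Q)}[C_{u_\bullet}]$ does not force every $C_{u_\bullet}$ to occur with multiplicity one. In the two-factor case the classes $[X_w]\otimes[X^{wq}]$ are Schubert basis elements. The $[C_{u_\bullet}]$, however, are tensor products of Richardson classes, and distinct Richardson varieties can share a class. For example, in $\Fl_3$ one has $X_{s_2}^{s_2s_1}=s_2\cdot X_{\id}^{s_1}$, so $[X_{s_2}^{s_2s_1}]=[X_{\id}^{s_1}]$. A priori one $C_{u_\bullet}$ could be missing and another doubled. You would need linear independence of the $[C_{u_\bullet}]$ or the reverse inclusion $C_{u_\bullet}\subseteq\mathcal{X}_0$, and you give neither. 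The group-degeneration trick that yields the reverse inclusion for $D_q$ is unavailable here, since $\Brick^Q$ is not an orbit closure.

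\emph{Embedded components.} Cohomology cannot detect embedded components. Cohen--Macaulayness of the general fibre $\Brick^Q$ says nothing about the special fibre, since openness of the CM locus runs from special to general, not back. A Frobenius splitting of the whole family is asserted but not constructed.

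\emph{The missing idea.} Work in $K$-theory with the scheme-theoretic \emph{intersection} family $\bigcap_j\mathcal{I}_j$ of the pairwise degenerations and the two point conditions, rather than with the closure.
\begin{enumerate}
\item At $t=0$ and $t=1$ the successive intersections are proper and Cohen--Macaulay, so Brion's lemma gives $[\mathcal{O}_{I_{[m+1]}(t)}]=\prod_j[\mathcal{O}_{I_j(t)}]$.
\item Each $[\mathcal{O}_{I_j(t)}]$ is independent of $t$ by flatness of the two-factor family. Hence the Hilbert polynomial of $I_{[m+1]}(t)$ is constant, the intersection family is flat, and it coincides with your $\mathcal{X}$.
\item Its special fibre is then literally $\bigcap_j I_j(0)=\bigcup_{L(Q)}C_{u_\bullet}$. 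This is reduced, being cut out by compatibly split products of Schubert and opposite Schubert varieties.
\item Openness of reducedness gives $I_{[m+1]}(1)=\Brick^Q$ as schemes.
\end{enumerate}
That single step supplies the containment, the reverse containment, the multiplicities, and the absence of embedded components at once.
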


As a consequence of \cref{thm:main-result-2} we obtain the following equality of cohomology classes in $H^*(\Fl_n^{\mm+1})$:
\[[\mrm{Brick}^Q]=\sum_{u_{\bullet}\in L(Q)}[C_{u_\bullet}].\]

\subsection{Cubulating by thinking inside of the box}

The exact relationship between the HHMP subdivision and the subdivision described here is ultimately mysterious (and deeply fascinating) to us. Why do these two related sequences of polytopes have combinatorially isomorphic subdivisions? Our clearest understanding comes from \textit{Lehmer codes}, \textit{bracket vectors}, their connections, and their properties.

We define the \textbf{Lehmer code} of a permutation $\sigma\in S_n$ to be $\mbf{L}(\sigma)=(L_j(\sigma))_{j\in [n]}$ where $L_j(\sigma)=\#\{i<j:\sigma^{-1}(i)<\sigma^{-1}(j)\}$  \cite{Lehmer}. Lehmer codes give a bijection from $S_n$ to $\{0\}\times \{0,1\} \times \cdots \times \{0,1,\dots, n-1\}.$
A \textbf{full} (rooted) binary tree is a binary tree where every non-leaf vertex has two children. The \textbf{infix order} of the internal nodes of a binary tree is the total order on the vertices of the tree such that each node has a label which is greater than all its left descendants and less than all its right descendants. We define the \textbf{bracket vector} of a full binary tree $T$ with $n$ internal vertices to be $\mbf{b}(T)=(b_j(T))_{j\in [n]}$ with $b_j(T)$ being one less than the number of leaves in the left subtree of the $j$th vertex of $T$ in the infix order. See \cref{egg:bracket-vector} for an example of a full binary tree together with its bracket vector. Each bracket vector is contained in $\{0\}\times \{0,1\} \times \cdots \times \{0,1,\dots, n-1\}.$ 
    
Now is a convenient time to interject that the \textbf{Loday realization} of the associahedron is obtained by setting the $i$th coordinate of the vertex associated to a full binary tree to be $(a_i(T)+1)\cdot (b_i(T)+1)$, where $a_i(T)$ is one less than the number of leaves in the right subtree of the $i$th vertex of $T$ in the infix order.

When appropriately oriented, the edges of the permutahedron and the associahedron correspond to the cover relations of  the ``weak Bruhat order'' and the ``Tamari lattice''. We will not discuss these lattices beyond this section, but they help us to contextualize the cubulations we are interested in.

 We will now state two technical definitions from a paper of Pilaud and Poliakova \cite{PP25} that help explain how Lehmer codes and bracket vectors are related to the permutahedron and associahedron.
    \begin{defn}[\cite{PP25}, Definition 68]\label{defn:boxy-subdivision}
         A \textbf{box} $C$ is an axis-aligned parallelepiped in $\mbb{R}^n$. If $\mathbf{x},\mathbf{y}\in\mbb{R}^n$ are such that $x_i\leq y_i$ for all $i\in\{1,\dotsc,n\}$, then we will denote by $\mrm{Box}(\mathbf{x},\mathbf{y})$ the box $\prod_{i\in\{1,\dotsc,n\}}[x_i,y_i]$. A \textbf{subbox} of $C$ is any box included in $C$ whose vertices lie on the boundary of $C$. A \textbf{boxy subdivision} of $C$ is a collection $\mcal{D}$ of subboxes of $C$ such that
         \begin{itemize}
             \item the boundary of $C$ is the union of all subbox in $\mcal{D}$; and
             \item for any subbox $C',C''\in\mcal{D}$, the intersection $C'\cap C''$ is either: empty, equal to  $C'$, equal to $C''$, or is a subbox in $\mcal{D}$ with $\mrm{dim}(C'\cap C'')< \mrm{min}(\mrm{dim}(C'),\mrm{dim}(C''))$.
         \end{itemize}
         The \textbf{subbox poset} of the boxy subdivision $\mcal{D}$ is the poset of $\mcal{D}\cup\{C\}$ ordered by inclusion.
    \end{defn}

\begin{defn}[\cite{PP25}, Definition 69]\label{defn:boxy-realization}
A \textbf{boxy realization} of a poset $P$ is a map $\gamma:P\to \R^d$ such that $\gamma(P)$ lies on the boundary of $\mrm{Box}(\gamma(\min(P)),\gamma(\max(P)))$ and for any cover relation $p\lessdot q$ in $P$, $\gamma(p)-\gamma(q)$ is a positive multiple of some standard basis vector $e_i$.
\end{defn}

In \cite{PP25} Pilaud and Poliakova reproduce proofs from \cite{SU04} that: 
\begin{enumerate}
    \item Lehmer codes give a boxy realization of the weak order lattice,
    \item $\{\mrm{Box}(\mbf{L}(\sigma),\mbf{L}(\tau)):\sigma\leq \tau \text{ defines a face of the permutahedron}\}$ is a boxy subdivision with subbox poset isomorphic to the \textit{face lattice} of the permutahedron
    \item bracket vectors give a boxy realization of the Tamari lattice,
    \item $\{\mrm{Box}(\mbf{b}(S),\mbf{b}(T)):S\leq T \text{ defines a face of } K_n\}$ is a boxy subdivision with subbox poset isomorphic to the face lattice of the associahedron.
\end{enumerate}

Recently \cite{cubical}, Berggren and Chenevi\`ere found boxy realizations for all ``framing lattices'' which generalize weak order, the Tamari lattice, the Boolean lattice, type A ``Cambrian lattices'', the ``alt-$\nu$-Tamari lattices'', ``permutree lattices'', and more.

\begin{figure}
    \centering
    \includegraphics[scale=1.15]{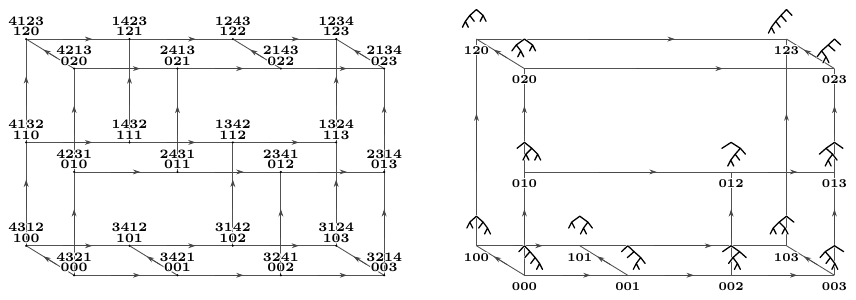}
    \caption{The boxy realizations of the weak order (on the left) and the Tamari lattice (on the right) for $S_4$. Figure is from \cite{PP25}, reproduced with permission.}
    \label{fig:placeholder0}
\end{figure}

\begin{figure}
    \centering
    \includegraphics[scale=1.7]{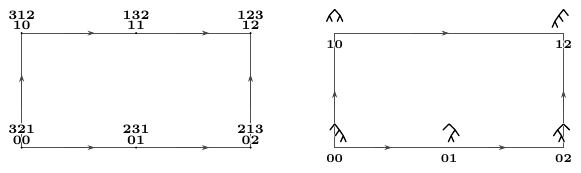}
    \caption{The boxy realizations of the weak order (on the left) and the Tamari lattice (on the right) for $S_3$. Figure is from \cite{PP25}, reproduced with permission.}
    \label{fig:placeholder2}
\end{figure}

The $1\times 2 \times \cdots \times (n-1)$ box containing the Lehmer codes and bracket vectors has a cubulation by $1\times \cdots \times 1$ unit (hyper)cubes, which we will call the \textbf{standard cubulation}. Since the subbox poset of the Lehmer code boxy subdivision matches the face lattice of the permutahedron, we might imagine perturbing the Lehmer codes by small amounts to obtain a legitimate convex realization of the permutahedron and  taking the cubulation along for the ride. As the set of Lehmer codes for $S_n$ is in bijection with $S_n$, and as vertices of the permutahedron correspond to elements of $S_n$, one can hope to construct the cubes of the desired subdivision of the permutahedron as convex hulls of vertices in the permutahedron that correspond to Lehmer codes defining the vertices of the standard cubulation. A priori, one should not expect this to work (e.g. the convex hull of four points might not be a combinatorial square, and the convex hull of eight points might not be a combinatorial cube), but \cite{HHMP} showed that this identification of vertices of the permutahedron with Lehmer codes does induce a cubulation of the permutahedron whose face lattice is isomorphic to the face lattice of the standard cubulation.

\begin{figure}
    \centering
    \includegraphics[scale=1.45]{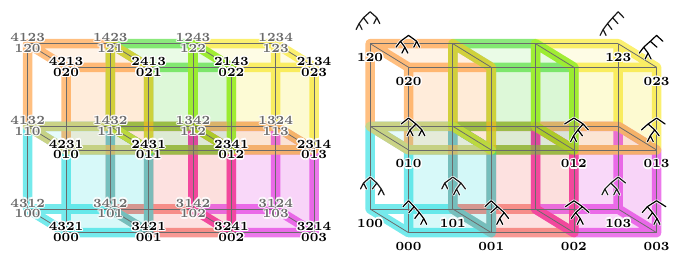}
    \caption{Standard cubulation overlayed with Lehmer code and bracket vector boxy subdivisions for $S_4$.}
    \label{fig:placeholder3}
\end{figure}

\begin{figure}
    \centering
    \includegraphics[scale=1.6]{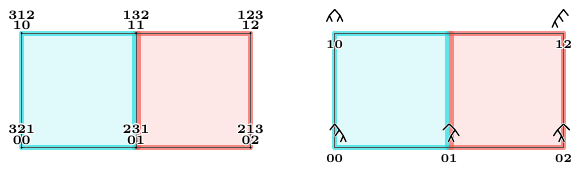}
     \caption{Standard cubulation overlayed with Lehmer code and bracket vector boxy subdivisions for $S_3$.}
    \label{fig:placeholder4} 
\end{figure}

We similarly might hope that the cubulation can tag along in the move from bracket vectors to Loday's realization of the associahedron. This is even more dubious because we need to identify $n!-\frac{1}{n1}\binom{2n}{n}$ integer points on the boundary which are not themselves vertices \textit{and} they need to have the appropriate convex structure to form nonoverlapping cubes which subdivide the associahedron. Surprisingly, our tools from toric geometry allow us to realize this cubulation.

To prove that our cubulation of the associahedron has the same combinatorics as the standard cubulation of the bracket vector boxy subdivision, we construct a bijection between the faces of the standard cubulation and the faces of our subdivision and prove that \textit{complemented bracket vectors} are mapped to the original vertices of the associahedron appropriately. Since the 1-skeleton of the permutahedron is a subgraph of the 1-skeleton of the standard cubulation, we can drop edges from the cubulation of the associahedron to obtain a drawing of the 1-skeleton of the permutahedron on the boundary of the associahedron. We conjecture this construction yields a dissection of the faces of the associahedron which has a face lattice isomorphic to that of the permutahedron. The 3D instance of this associahedron in permutahedron clothing is shown in \cref{fig:crazyweak}.

\begin{figure}
    \centering
    \includegraphics[width=0.49\linewidth]{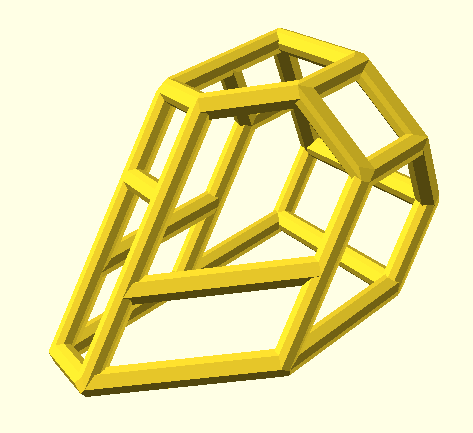}
    \includegraphics[width=0.49\linewidth]{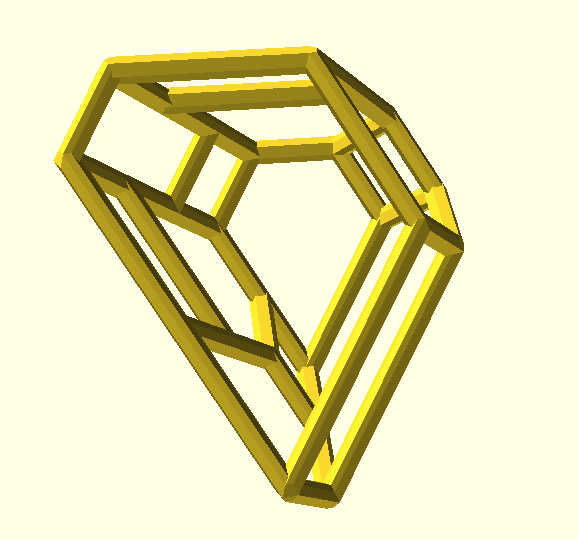}
    \caption{The weak order graph drawn on the boundary of the associahedron}
    \label{fig:crazyweak}
\end{figure}

\subsection{Concrete construction of the cubulation}

The HHMP subdivision of the permutahedron and the cubulation of the associahedron are both constructed using \textit{(twisted) Bruhat interval polytopes}. Bruhat interval polytopes were introduced in \cite{Kostant-Toda,TW15} and have since been studied by many authors \cite{benedetti2024lattice, LMP21, gorsky2026unexpected, gaetz2026poset, gaetz2023one}. For permutations $u,v\in S_n$, with $u\leq v$ in strong Bruhat order, the \textbf{twisted Bruhat interval polytope} $\tilde{P}_{u,v}$ is the convex hull of the points $(n+1-w^{-1}(1),n+1-w^{-1}(2),\dotsc,n+1-w^{-1}(n))$ over all $u\leq w\leq v$. 

We write $\id$ for the identity permutation, $s_i$ for the permutation transposing $i$ and $i+1$, and $w_0$ for the permutation transposing $i$ and $n+1-i$ for each $i$. Noting that the permutahedron is the twisted Bruhat interval polytope $\tilde{P}_{\id,w_0}$, the HHMP subdivision of the permutahedron can be written as
\[\tilde{P}_{\id,w_0} = \bigcup_{u\in S_{n}: u(n)=n} \tilde{P}_{u,us_{n-1}s_{n-2}\ldots s_{1}}.\]
Each of the $(n-1)!$ twisted Bruhat interval polytopes $\tilde{P}_{u,us_{n-1}s_{n-2}\ldots s_{1}}$ has the combinatorial structure of an $n-1$ dimensional cube, and they fit together to give a subdivision with the same combinatorial structure as the standard cubulation of the $1\times 2 \times \cdots \times (n-1)$ box.

\begin{thm}[\cref{lem:spackle-poset-iso}]\label{thm:main-result}
    Set $\sigma_1:=s_1s_2\dots s_{n-1}$, and $\sigma_i:=s_1s_2\dots s_{n-i+1}$ for $i\geq 2.$ Set $\alpha_0=\alpha_1:=e$, and $\alpha_i:=\sigma_2\sigma_3\dotsb \sigma_i$ for $i=2,\dotsc,n$. Let $L(Q)=\{(\id,v_1\alpha_1,\dots, v_{n}\alpha_{n}):v_i\in S_n, v_{i+1}\leq v_{i}, v_i\leq s_{i+1}s_{i+2}\dots s_{n-1} \text{ for } i\geq 1\}$ where $\leq$ denotes the strong Bruhat order relation. For $u_\bullet=(u_0,\dots, u_n)\in L,$ define the polytope $C_{u_\bullet}$ as the Minkowski sum of twisted Bruhat interval polytopes
    \[C_{u_\bullet}=\sum_{i=1}^{n}\tilde{P}_{u_i}^{u_{i-1}\sigma_i}.\]
     Then, after a translation and negation (i.e. multiplication by $-1$), the Loday realization of the associahedron $K_n$ has the subdivision 
    \[\bigcup_{u_\bullet \in L} C_{u_{\bullet}}. \]
    The polytopes $C_{u_\bullet}$ intersect at common faces, and each is combinatorially isomorphic to an $n-1$ dimensional hypercube. This subdivision overall is combinatorially isomorphic to the standard cubulation of the $1\times \cdots \times (n-1)$ box. In particular, the subdivision has $n!$ vertices and has $(n-1)!$ top dimensional cells.
\end{thm}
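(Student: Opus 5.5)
The plan is to deduce the statement from \cref{thm:main-result-2} together with the Knutson--Sanchez--Sherman-Bennett principle. That principle says a torus-equivariant embedded degeneration subdivides the moment polytope into the moment polytopes of the components of the special fibre, and the face poset of this subdivision is the poset of torus-orbit closures in the special fibre.

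\textbf{Choosing the word and computing the pieces.} First I fix the word $Q=(\sigma_1,\sigma_2,\dots,\sigma_n)$, with $\sigma_i$ as in the statement. I check that its Demazure product is $w_0$, and that the brick polytope of $Q$ is, up to translation and negation, Loday's associahedron. This should follow by comparing with the standard simple-transposition word for $K_n$ (the reduced word of $w_0$ concatenated with $c=s_1\cdots s_{n-1}$). Replacing consecutive simple letters by the products $\sigma_i$ should leave the Minkowski decomposition of the brick polytope unchanged up to translation. By Escobar, the moment polytope of $\Brick^Q$ is this brick polytope.

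Next I compute the moment polytope of each component $C_{u_\bullet}$ of \cref{thm:main-result-2}. Each factor is a Richardson variety $X_{u_i}^{u_{i-1}\sigma_i}$, so the polytope is the Minkowski sum over $i$ of the Bruhat interval polytopes for $[u_i,u_{i-1}\sigma_i]$. After the twist coming from the Plücker embedding, these become the $\tilde P$'s in the statement; the $i=0$ factor is a point.

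\textbf{Identifying $L(Q)$.} I then rewrite the index set $L(Q)$ of \cref{thm:main-result-2} in the form $u_i=v_i\alpha_i$ with $v_{i+1}\le v_i\le s_{i+1}\cdots s_{n-1}$. This is an induction on $i$, using three facts:
\begin{itemize}
\item length-additivity $\ell(u_i\sigma_{i+1})=\ell(u_i)+\ell(\sigma_{i+1})$;
\item the condition $u_{i+1}\le u_i\sigma_{i+1}$;
\item the lifting property, together with the fact that right multiplication by $\alpha_i$ is length-additive on the relevant parabolic coset representatives.
\end{itemize}
The end condition $u_n=w_0$ forces $\alpha_n$ to be compatible with $w_0$. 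Reducing the $v_i$ to Bruhat intervals below the cycles $s_{i+1}\cdots s_{n-1}$ should turn the count into $\prod_i i=(n-1)!$ chains, once the top-dimensional components are identified.

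\textbf{Combinatorics.} For the face structure I use the orbit-closure statement. Torus-orbit closures in a product of Richardson varieties correspond to products of faces of the Bruhat interval polytopes. The interval $[v_i\alpha_i, v_{i-1}\alpha_{i-1}\sigma_i]$ should be a small interval (essentially a chain, or a Boolean-type interval of rank one in the relevant direction). Granting that, each Minkowski sum has the face lattice of an $(n-1)$-cube.

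To assemble the cubes, I define an explicit map from the vertices (tuples of permutations) to the integer points of $\{0\}\times\{0,1\}\times\cdots\times\{0,\dots,n-1\}$, via the Lehmer-code-like data of $(v_1,\dots,v_n)$. I then check two things:
\begin{itemize}
\item the map extends to a poset isomorphism from the orbit-closure poset to the face poset of the standard cubulation;
\item the original associahedron vertices correspond to complemented bracket vectors.
\end{itemize}
This gives $n!$ vertices and $(n-1)!$ maximal cells.

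\textbf{Main obstacle.} I expect the hardest step to be this last combinatorial identification. One must show that distinct components meet exactly in common faces matching the cube adjacencies. This requires understanding when a product of Richardson orbit closures lies in two components simultaneously. I would first try to handle it by showing that each $C_{u_\bullet}$ is determined by a cube in the box, and that face intersections are governed coordinatewise by intersections of Bruhat intervals of the chains $v_i$.
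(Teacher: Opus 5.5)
Your overall route matches the paper's: the degeneration of \cref{thm:main-result-2}, then \cref{prop: allen hammer}, then orbit closures of each $C_{u_\bullet}$ as products of sub-Richardson varieties. However, the step that carries the theorem rests on a false guess. Write $u_i=v_i\alpha_i$ and set $v_0:=s_1\cdots s_{n-1}$. Then $u_{i-1}\sigma_i=v_{i-1}\alpha_i$, so the $i$th factor is $[v_i\alpha_i,v_{i-1}\alpha_i]$. Inside the Boolean interval $[\alpha_i,\alpha_{i+1}]=\{s\alpha_i : s\le s_i\cdots s_{n-1}\}$, where $x\alpha_i\le y\alpha_i$ iff $x\le y$, this factor is a Boolean lattice of rank $\ell(v_{i-1})-\ell(v_i)$. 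These ranks are not $1$. They can be $0$ or as large as $n-1$, and only their telescoping sum $\ell(v_0)-\ell(v_n)$ equals $n-1$. Already for $n=3$ and $v_1=e$, the cell is $\tilde P_{e,s_1s_2}$ plus two points, so the whole square comes from a single summand. The cube statement must instead come from two facts. Each summand is a cube. And toricness of $C_{u_\bullet}$ forces the summands to be affinely independent (as in \cref{lem: prod the rich}). Hence $C_{u_\bullet}$ is a product of cubes of total dimension $n-1$.

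With the wrong local picture you have no mechanism for the gluing. Also, indexing vertices by ``Lehmer-code-like data of $(v_1,\dots,v_n)$'' conflates vertices with maximal cells. Three facts are needed, and the paper proves them:
\begin{enumerate}
\item[(a)] The vertex set $\mathbb{S}_0(Q)$ is exactly the set of tuples $(x_1\alpha_1,\dots,x_n\alpha_n)$ with $x_n\le\cdots\le x_1$ and $x_i\le s_i\cdots s_{n-1}$. To see that every such chain occurs, take $v_i$ to be $x_i$ with $s_i$ deleted.
\item[(b)] Setting $\daleth_i:=\max\{j : s_i\le x_j\}$ (or $0$) gives an order isomorphism from $\mathbb{S}_0(Q)$, with the componentwise Bruhat order, onto the lattice points of the box, with the componentwise order.
\item[(c)] The bottom and top vertices of the cell of $v_\bullet$ have $x$-tuples $(v_1,\dots,v_n)$ and $(v_0,\dots,v_{n-1})$. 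This is an index shift, so their images differ by exactly $1$ in every coordinate.
\end{enumerate}
Containment of spackle elements is just componentwise comparison of both endpoints, so (a)--(c) give the isomorphism with the standard cubulation. The obstacle you anticipate, orbit closures lying in two components, never needs separate treatment, since \cref{prop: allen hammer} already supplies the subdivision.

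Two smaller repairs are needed. First, the lower bound $u_i\ge\alpha_i$, which you need to write $u_i=v_i\alpha_i$, comes from backward induction from $u_n=w_0$ via Demazure monotonicity (\cref{subrichardson}), not from a forward lifting argument. Second, Escobar's result identifies $\B(Q)$, the moment polytope of the Grassmannian embedding. The degeneration lives in $\Fl_n^{\mm+1}$, whose moment polytope is $\BB(Q)$, so you need \cref{lem:bulk up} to know that $\BB(Q)$ is a translate of the associahedron.
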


Along the path to proving \cref{thm:main-result}, we find that the terms of a well-known Minkowski sum representation for Loday's associahedron can be grouped together into twisted Bruhat interval polytopes. 

\begin{thm}[\cref{Loday mink sum}]\label{Loday mink sum intro}
    Let $\sigma_1=s_1s_2\dots s_{n-1}$ and $\sigma_i=s_1s_2\dots s_{n-i+1}$ for $i\geq 2.$  Let $\alpha_i=\sigma_2\sigma_3\cdots\sigma_i.$ 
    Then, after a translation and negation, the Loday associahedron has the following Minkowski decomposition into twisted Bruhat interval polytopes, each of which is combinatorially isomorphic to a hypercube:
    \[\sum \tilde{P}_{\alpha_i,\alpha_{i+1}}. \]
\end{thm}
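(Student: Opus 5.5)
The plan is to reduce the statement to the classical Minkowski decomposition of Loday's associahedron into simplices. Write $\Delta_I=\conv\{e_k:k\in I\}$. Up to translation, Loday's $K_n$ is the generalized permutahedron $\sum_{1\le j\le m\le n}\Delta_{[j,m]}$ (Postnikov, \cite{postnikov}). I will group these summands by a shared endpoint. I expect that, after the negation $x\mapsto -x$ and a translation, $\tilde P_{\alpha_i,\alpha_{i+1}}$ equals a nested sum of the form $\sum_{j}\Delta_{[j,m_i]}$ for a suitable fixed endpoint $m_i$. Which endpoint is shared, and whether a reflection $k\mapsto n+1-k$ is needed, will be fixed by a small computation for $n=3$. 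Summing over $i$ then recovers every interval $[j,m]$ exactly once, which gives the decomposition.

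\textbf{Step 1: describe the intervals.} First I compute $\alpha_i$ explicitly in one-line notation. Each $\sigma_{i+1}=s_1\cdots s_{n-i}$ is a cycle, so $\alpha_i$ is an explicit product of cycles. Next I check that $\alpha_{i+1}=\alpha_i\sigma_{i+1}$ is length-additive. This holds because $\sigma_{i+1}$ is a product of distinct simple reflections and $\alpha_i$ has no right descents among $s_1,\dots,s_{n-i}$, which I will verify directly.

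\textbf{Step 2: compare support functions.} Bruhat interval polytopes are generalized permutahedra, since their edges are parallel to $e_a-e_b$ (Tsukerman--Williams \cite{TW15}). So $\tilde P_{\alpha_i,\alpha_{i+1}}$ is determined by its support values on $0/1$ vectors,
\[z_i(S)=\max_{\alpha_i\le w\le \alpha_{i+1}}\sum_{s\in S}\bigl(n+1-w^{-1}(s)\bigr),\qquad S\subseteq[n].\]
I will compute $z_i(S)$ using the tableau criterion for Bruhat order. The maximum over an interval of such a linear functional is attained at an element $w$ that places the values of $S$ as favorably as the interval allows. Because $\alpha_{i+1}$ differs from $\alpha_i$ by a single cycle, this maximizer should be explicit. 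I expect to obtain $z_i(S)=c_i(S)+\#\{j:[j,m_i]\cap S\neq\emptyset\}$, where $c_i$ is linear in $\mathbf 1_S$ and accounts for the translation. After negation, this is the support function of the nested simplex sum up to translation.

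Since support functions add under Minkowski sum, summing over $i$ gives $\sum_{j\le m}[\,[j,m]\cap S\neq\emptyset\,]$. This is exactly the support function of $\sum_{j\le m}\Delta_{[j,m]}$, which proves the decomposition. The cube claim then follows because a sum of simplices over a chain of nested intervals is combinatorially a cube. Concretely, the normal fan of $\sum_j\Delta_{[j,m]}$ is the nested-set fan of the chain building set $\{[j,m]\}_j$, whose maximal faces are indexed by $\{0,1\}$-choices at each step, i.e.\ it is the normal fan of a product of segments.

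\textbf{Main obstacle.} The hard step is the exact evaluation of $z_i(S)$ over the Bruhat interval. The difficulty is proving that the guessed maximizer actually lies in $[\alpha_i,\alpha_{i+1}]$ and that nothing in the interval does better. If the tableau argument is messy, my fallback is to compute the vertex set directly: every element of $[\alpha_i,\alpha_{i+1}]$ gives a point, and I would match these points with the points $\sum_j e_{k_j}$ for $k_j\in[j,m_i]$, using the unique-subword structure of the Coxeter element $\sigma_{i+1}$. Convex hulls of equal point sets then coincide.
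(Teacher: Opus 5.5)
Your route is genuinely different from the paper's, and it can be made to work. The paper never uses Postnikov's simplices. It identifies $\BB(Q)$ for $Q=(\sigma_1,\dots,\sigma_n)$ with the negated, translated Loday polytope (\cref{prop:is Loday}). It then applies the general \cref{thm:Minkowski}: the moment polytope of $\Brick^Q\subseteq\Fl_n^{\mm}$ is the sum of the moment polytopes of its projections, which are the Richardson varieties of \cref{proj is Richardson}. It reads off the endpoints $\alpha_i,\alpha_{i+1}$ from \cref{lem:easydem}, and quotes \cite{LMP21} for cube-ness. That argument is uniform in $Q$, covering every bulky brick polytope and every $c$-associahedron. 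It also identifies the summands as $\mu_{\Fl_n}(\pi_i(\Brick^Q))$, which is exactly what the later mixed-subdivision statement needs. Yours is elementary and tied to $c=s_1\cdots s_{n-1}$, but it shows more: the decomposition is Postnikov's $\sum_{j\le m}\Delta_{[j,m]}$ regrouped by left endpoint, and cube-ness becomes the standard fact about nested (Pitman--Stanley type) sums. Your deferred computation comes out with the \emph{left} endpoint shared and no reflection: $(n+1)\mathbf{1}-\tilde P_{\alpha_i,\alpha_{i+1}}$ is a translate of $\sum_{m=i+1}^{n}\Delta_{[i,m]}$. For $n=3$ the summands are $\Delta_{[1,2]}+\Delta_{[1,3]}$, then $\Delta_{[2,3]}$, then a point.

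Two things need fixing.

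First, there is a sign slip in Step 2. Your $z_i(S)$ is the support function of $\tilde P_{\alpha_i,\alpha_{i+1}}$ itself, not of its negative. It equals a linear term minus $\#\{m:[i,m]\subseteq S\}$. Already for $n=3$, $i=1$, $\tilde P_{e,s_1s_2}$ is a translate of $-(\Delta_{[1,2]}+\Delta_{[1,3]})$, which is not a sum of simplices, so no choice of endpoint or reflection gives the form you predict. Maximize $\sum_{s\in S}w^{-1}(s)$ instead.

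Second, your "main obstacle" really rests on the equality $[\alpha_i,\alpha_{i+1}]=\{\alpha_ix:x\le\sigma_{i+1}\}$. Length-additivity and unique subwords of $\sigma_{i+1}$ only give $\supseteq$. For $\subseteq$, take $J=\{s_1,\dots,s_{n-i}\}$. Then $\alpha_i$ is the minimal element of its coset $\alpha_iW_J$, and $\alpha_{i+1}=\alpha_i\sigma_{i+1}\in\alpha_iW_J$. Since $w\mapsto w^J$ is order-preserving \cite{BB05}, every $w$ in the interval has $w^J=\alpha_i$, so $w=\alpha_ix$ with $x\in W_J$. Stripping left descents of $\alpha_i$ one at a time with the lifting property turns $\alpha_ix\le\alpha_i\sigma_{i+1}$ into $x\le\sigma_{i+1}$. (The paper gets this equality in \cref{lem:alpha interval} from Boolean-ness plus counting.)

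With this equality, no optimization is needed. In one-line notation $\alpha_i=i\,(i+1)\cdots n\,(i-1)\cdots 1$, so for $w=\alpha_ix$ the coordinates $w^{-1}(k)$ with $k<i$ are constant, and those with $k\ge i$ equal $x^{-1}(k-i+1)$. Write $x$ as its unique subword of $s_1\cdots s_{n-i}$. The point is then a fixed vector plus $\sum_{m>i}e_{a_m}$, where $a_i=i$, and for $m>i$ we set $a_m=a_{m-1}$ if $s_{m-i}$ occurs in $x$ and $a_m=m$ otherwise. These are exactly the $2^{n-i}$ vertices of $\sum_{m>i}\Delta_{[i,m]}$: for generic $c$ the maximizing vertex is $\sum_m e_{a_m}$ with $a_m=\argmax_{k\in[i,m]}c_k$, and these sequences are precisely the ones obeying this rule. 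This is also the correct form of your fallback. Match the interval's points with these vertices, not with all points $\sum_je_{k_j}$, of which there are more.
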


We will later prove that the subdivision in \cref{thm:main-result} is a \textit{mixed subdivision} with respect to this Minkowski decomposition.

\subsection{Plan for this paper}\label{subsection:plan} In \cref{section:brick-polytope}, we recall preliminaries about polytopes, partially ordered sets, and permutations including: Bruhat interval polytopes, the Demazure product, subword complexes, and brick polytopes. The definition of the brick polytope we give is slightly more general than the one appearing in the literature, and we introduce a second realization of it which we call the \textit{bulky brick polytope}. In \cref{subsection:coincide-brick} we discuss how the generalized brick polytopes and bulky brick polytopes relate to each other and to traditional brick polytopes.

In \cref{section:pipe-dreams-bracket-vectors}, we introduce the specific sequences $Q$ of permutations such that the brick polytope associated with $Q$ is a generalized associahedron. For the case of the Loday associahedron we describe in \cref{subsection:pipe-dreams} how the facets of the subword complex can be visualized as special types of ``pipe dreams'' called $(1,n)$-twists, and we explain how (a translation and reflection of) Loday's associahedron can be computed directly from $(1,n)$-twists. 

With the connection between $(1,n)$-twists and the vertices of the associahedron in hand, we are able to explore how the cubulation in \cref{thm:main-result} relates to bracket vectors. In order to meaningfully claim that the cubulation in \cref{thm:main-result} realizes the relationship between the bracket vector boxy subdivision and the standard cubulation, it is not enough to show that the standard cubulation is isomorphic to the cubulation in \cref{thm:main-result}. We would need to further show that this isomorphism maps the bracket vector of a binary tree $T$ to the vertex of the  associahedron associated with $T$. We instead show that the isomorphism maps the \textit{complemented bracket vector} of a binary tree $T$ to the vertex of the associahedron associated with $T$ where the complemented bracket vector is related to the regular bracket vector by the antipodal symmetry of the $1\times 2\times \cdots \times (n-1)$ box. In \cref{subsection:bracket-vectors} we introduce the complemented bracket vector of a binary tree and the \textit{bump vector} of a $(1,n)$-twist, and show that the complemented bracket vector map and the bump vector map form a commuting triangle with a well known bijection between $(1,n)$-twists and binary trees. In \cref{subsection:cubulation}, we give an explicit description of the isomorphism between the vertices of the $1\times 2 \times \cdots \times (n-1)$ box and the vertices of the cubulation (although we will not have the tools to prove that these vertices define the subdivision of \cref{thm:main-result} until \cref{section:subdivision}) and show that this map takes the bump vector of a $(1,n)$-twist to the vertex of the associahedron associated with that $(1,n)$-twist.

In \cref{section:moment-polytopes}, we recall the moment polytope of an algebraic variety that is embedded torus-equivariantly in a projective space, and we will recall the Grassmannian, flag variety, Richardson variety, and brick variety, together with their moment polytopes with respect to certain projective embeddings. We prove a rather useful folk theorem (\cref{lem:MinkowskiMachine}) that says that the moment polytope of a subvariety of a product is the Minkowski sum of the moment polytopes of the projections onto the factors of the product. We use this to express the bulky brick polytope as a Minkowski sum of twisted Bruhat interval polytopes.  Finally, we will explain that in certain cases, the brick variety can be viewed as a resolution of singularities for a Richardson variety, and we will give a pattern-avoidance condition that implies when the brick resolution is isomorphic to the Richardson variety whose singularities it resolves.

In \cref{section:degeneration}, we describe a torus-equivariant embedded (in $\Fl_n^2$) degeneration of the variety of pairs of complete flags in $\C^n$ with \textit{relative position} less than or equal to $\sigma$ for $\sigma\in S_n$ (\cref{sigmadegen}). We use this to construct a torus-equivariant embedded (in $\Fl_n^{n+1}$) degeneration of the brick variety and prove \cref{thm:main-result-2}, which says that the brick variety degenerates into a reduced union of products of Richardson varieties. We provide more geometric background than is customary in the hopes that the arguments will be accessible to graduate students and to combinatorialists with some foundation in algebraic geometry.  

In \cref{section:subdivision}, we introduce the \textit{spackle poset} to capture the combinatorial structure of the degeneration of the brick variety. When the brick variety is toric, we show (\cref{subsubsection:spackle-poset}) this poset is isomorphic to the face poset of the subdivision of the bulky brick polytope induced by the degeneration of \cref{thm:main-result}, and also isomorphic to the poset of torus-orbit closures in the special fiber of the degeneration of the brick variety of \cref{thm:main-result-2}.
We also show that the subdivision of the bulky brick polytope is a mixed subdivision with respect to the Minkowski decomposition of the bulky brick polytope into twisted Bruhat interval polytopes. In \cref{subsection:volumes}  we compute the volumes of the full-dimensional cells of the subdivision of the brick polytope using a relationship between the Hilbert polynomial and Ehrhart polynomial of the brick variety with respect to its embedding in $\Fl_n^{m+1}$. 

In \cref{witch} we study the spackle poset associated with the subdivision of Loday's associahedron and give an explicit isomorphism between it and the \textit{face poset} of the standard cubulation of the $1\times 2\times \dotsb \times(n-1)$ box.

\section*{Acknowledgements}

The authors thank their advisor Allen Knutson for proposing the problem of indexing the components of $\Brick^Q(0)$, and for his ongoing insights and support. We thank Mario Sanchez for various crucial insights and note that the expression for certain brick polytopes as Minkowski sums of Bruhat interval polytopes was known to Mario Sanchez and Melissa Sherman-Bennett by other means. We thank Spencer Backman for suggesting we look for a Minkowski decomposition for the bulky brick polytope. We thank Vincent Pilaud for discussing this paper at multiple points and telling us about boxy subdivisions and boxy realizations. We also thank Ben Adenbaum, Laura Escobar,  Christian Gaetz, Karola M\'esz\'aros, Melissa Sherman-Bennett, and Vasu Tewari for helpful conversations. R.G. was partially supported by a Postgraduate Scholarship - Doctoral - from the Natural Sciences and Engineering Research Council of Canada. G.U. was supported by the National Science Foundation Graduate Research Fellowship under grant DGE~--~2139899.

\section*{AI statement}

The authors first started thinking about the ideas pertaining to this article in Spring of 2023. We presented early findings of this work (which can be found in this FPSAC 2025 abstract: \cite{GU25}) at the Canadian Mathematical Society meeting in December of 2023, and we presented later findings of this work at the Field's Institute in May of 2025 and also the Joint Mathematics Meetings in January of 2026.

In the preparation of this manuscript, the authors used generative AI in the following forms. We used OpenAI's GPT-6 Luna and Claude Opus 5.5 to generate the following figures: \cref{fig:placeholder0}, \cref{fig:placeholder2}, \cref{fig:placeholder3}, \cref{fig:placeholder4}, \cref{fig:PDBrick}, \cref{fig:lehmerEx}. Furthermore, we asked both OpenAI's GPT-6 Luna and Claude Opus 5.5 to generate referee reports for our paper, and we made modifications and corrections to this work based on those referee reports. Moreover, OpenAI's GPT-5.4 Mini and Google's Gemini were consulted while working out kinks in certain algebraic arguments.

\section{The brick polytope}\label{section:brick-polytope}
    In this section, we will introduce the main objects that we study in this paper: \textit{brick polytopes} and \textit{bulky brick polytopes}. We will give a brief overview on partially ordered sets and polyhedral geometry in \cref{subsection:polytopes}. Notions surrounding symmetric groups, Bruhat interval polytopes, the Demazure product, and subword complexes will be recalled and introduced in \cref{subsection:preliminaries} and \cref{subsection:subword}. In \cref{subsection:brick-polytopes}, we will recall brick polytopes and introduce bulky brick polytopes. We will introduce criteria that will ensure the brick polytope and bulky brick polytope coincide (up to translation) in certain cases in \cref{subsection:coincide-brick}.

    \subsection{Partially ordered set and polytopes}\label{subsection:polytopes}

   We will begin by discussing some preliminaries on partially ordered sets, before moving on to polytopes. We refer the reader to \cite{Ziegler} for further details.

    \subsubsection{Partially ordered sets}

    A \textbf{partially ordered set} or \textbf{poset} is a pair $(X,\preceq)$, where $X$ is a set and $\preceq$ is a partial order relation (that is, a reflexive, antisymmetric, transitive relation) on $X$. Posets arise in several ways in this paper, so we take a moment to give some basic definitions that will be helpful later on. 

    \begin{defn}
        Let $(A,\preceq_{A})$  and $(B,\preceq_{B})$ be posets and $f:A\to B$ some map.
        \begin{itemize}
            \item We call $f$ \textbf{monotone} when $x\preceq_A y$ implies $f(x) \preceq_B f(y)$.
            \item We call $f$ an \textbf{order embedding} when $x\preceq_A y$ if and only if $f(x) \preceq_B f(y)$. (Observe that order embeddings are always injective).
            \item We call $f$ an \textbf{order isomorphism}, and say that $(A,\preceq_{A})$  and $(B,\preceq_B)$ are \textbf{isomorphic as posets}, when $f$ is a bijective order embedding.
        \end{itemize}
    \end{defn}

    A \textbf{lattice} is a poset such that any two elements of the poset have a greatest lower bound and a least upper bound. We are not particularly focused on properties of lattices, but we will meet\footnote{no pun intended} several lattices later on. A \textbf{boolean lattice} of rank $n\geq 1$ is a poset which is isomorphic to the poset of subsets of $[n]:=\{1,\dotsc,n\}$, ordered by inclusion.

    \subsubsection{Polyhedral geometry}
    
    A \textbf{polyhedron} in $\mbb{R}^n$ is the intersection of finitely many closed half-spaces in $\mbb{R}^n$. 
    A \textbf{polytope} is a polyhedron which is bounded. Equivalently, a \textbf{polytope} in $\mbb{R}^n$ is the convex hull of finitely many points in $\mbb{R}^n$. We will denote the convex hull of a set $X$ by $\conv X$ throughout this article. 
    The \textbf{faces} of a polyhedron $P$ are the intersections of $P$ with subsets of the set of hyperplanes defining the closed half-spaces that bound $P$. The \textbf{face lattice} of a polyhedron is its set of faces ordered by inclusion (and it is indeed a lattice). Two polyhedra are \textbf{combinatorially equivalent} if they have isomorphic face lattices. 
   
   The \textbf{vertices} of a polytope are its faces of dimension $0$, its \textbf{edges}  are its faces of dimension $1$, and its \textbf{facets} are its faces of codimension $1$. The \textbf{one-skeleton} of a polytope $P$ is the graph $G$, whose vertices correspond to the vertices of $P$, and where two vertices of $G$ are connected by an edge whenever the corresponding vertices of $P$ lie on a common edge of $P$. The polytope $P$ is a \textbf{lattice polytope} if its vertices all lie on integer points in $\mbb{R}^n$. 
   \begin{defn}\label{defn:subdivision}
     Let $P$ be a polytope in $\mbb{R}^m$. A \textbf{subdivision} of $P$ is a collection of polytopes $C_1,\dots, C_k$ in $\mbb{R}^m$ such that $P=\bigcup C_i$, and for $i\neq j$, $C_i\cap C_j$ is a common face of $C_i$ and $C_j$. The \textbf{face poset} of the subdivision is the set of \textbf{cells} $\{F:F\text{ is a face of } C_i \text{ for some } i\}$ of the subdivision ordered by inclusion. We say that two subdivisions are \textbf{combinatorially isomorphic} if their face posets are isomorphic as posets. The \textbf{vertices} of the subdivision are the $0$-dimensional cells of the subdivision.
\end{defn}

\begin{defn}\label{defn:Minkowski-sum-mixed-subdivision}
    The \textbf{Minkowski sum} of two polytopes $P$ and $Q$ in $\mbb{R}^m$ is the polytope $P+Q:=\{p+q\mid p\in P,\text{ }q\in Q\}$ in $\mbb{R}^m$. A \textbf{full dimensional Minkowski cell} of a Minkowski sum $P=\sum_{i=1}^n P_i$ of polytopes $P_i$ in $\mbb{R}^m$ is a Minkowski sum $C=\sum_{i=1}^n C_i$ of polytopes $C_i$ in $\mbb{R}^m$ such that $\dim(C)=\dim(P)$ and $C_i$ is a face of $P_i$ for each $i$. A \textbf{mixed subdivision} of $P$ is a subdivision $P=\bigcup C_j$ of $P$ into polytopes $C_j$ in $\mbb{R}^m$ such that each $C_j$ is a full dimensional Minkowski cell of $P$. 
\end{defn}

     A \textbf{polyhedral complex} is a set $\mbb{S}$ of polyhedra so that if $P\in \mbb{S}$ then every face of $P$ is in $\mbb{S}$ and the intersection of any two polyhedra $P,Q \in \mbb{S}$ is a face of both $P$ and $Q$. Polyhedral complexes have a natural ordering by inclusion of faces, and we say that two polyhedral complexes are \textbf{combinatorially equivalent} if they are isomorphic as posets.

     A \textbf{cone} in $\mbb{R}^n$ is the $\mbb{R}_{\geq 0}$-span of finitely many vectors in $\mbb{R}^n$. A \textbf{fan} is a polyhedral complex where every polyhedron is a cone.
     Given a polytope $P$ in $\mbb{R}^n$, the \textbf{normal cone} to a vertex $v$ of $P$ is the  cone $C_v(P):=\{w\in \mbb{R}^n\mid v\in\mrm{argmax}_{x\in P}\text{ } (x\cdot w)\}$ in $\mbb{R}^n$, where $x\cdot w$ denotes the standard inner product of $x$ and $w$ in $\mbb{R}^n$. The \textbf{normal fan} of $P$ is the collection of normal cones to vertices, together with all faces of those cones.
     The \textbf{interior} of $C_v(P)$ is the set $C_v(P)^\circ=\{w\in \R^{n}: v= \argmax_{x\in P}x\cdot w \}$. Note that for $P$ a (not necessarily full-dimensional) polytope in $\R^n$ and $v$ a vertex of $P$, the normal cone $C_v(P)$ has dimension $n$.

    \subsection{Permutations and words}\label{subsection:preliminaries}
   Next, we will recall definitions and properties of the symmetric groups, the Bruhat order, Bruhat interval polytopes and the Demazure product.

   \subsubsection{Permutations and Bruhat interval polytopes} Throughout this paper we write $[n]$ for the set $\{1,2,\dots, n\}$ and $S_n$ for the symmetric group $S_n$ of permutations of $[n]$. We will write $\id$ to denote the identity element of $S_n$, $s_i$ to denote the simple transposition in $S_n$ that swaps $i$ and $i+1$ and fixes all other $j\in [n]$, and $w_0$ for the permutation $w_0\in S_n$ that sends $i$ to $n+1-i$ for all $i$. 
    The \textbf{one-line notation} for a permutation $v\in S_n$ is $i_1 i_2\dotsb i_n$, where $v(j)=i_j$. For example, the permutation $v=231$ in $S_3$ is the one-line notation for the permutation sending $1$ to $2$, $2$ to $3$, and $3$ to $1$. The one-line notation for $w_0$ is $w_0=n\quad n-1 \quad \cdots \quad 3\quad 2\quad 1$. 
    
    Any element $\sigma\in S_n$ can be expressed $\sigma=s_{i_1}\dotsb s_{i_{\mm}}$, where the $s_{i_j}$ are simple transpositions in $S_n$. The \textbf{length} $\ell(\sigma)$ of an element $\sigma\in S_n$ is the minimal $\mm$ over all expressions $\sigma=s_{i_1}\dotsb s_{i_{\mm}}$ of $\sigma$ as a product of simple transpositions $s_{i_j}$ in $S_n$. 
    
     A \textbf{word} in $S_n$ is a sequence of permutations $Q=(\q_{1},\dotsc,\q_{\mm})$ with $\q_{i}\in S_n$ for each $i$. We say that $Q$ is a \textbf{word in simple transpositions} if all $\q_{i}$ are simple transpositions.
    We will use the notation $Q_{(k)}:=\q_1\q_2\cdots\q_{k}$ for the product of the first $k$ entries of $Q$ with $Q_{(0)}:=\id$.
    For $v\in S_n$, we say that $Q$ is a \textbf{word for $v$} if $v=Q_{(\mm)}$.
    We say $Q$ is a \textbf{reduced word} if $\ell(\q_{1}\dotsb \q_{\mm})=\ell(\q_1)+\cdots+\ell(\q_{\mm})$.
    The \textbf{Bruhat order} $\leq$ on $S_n$ is the partial order on $S_n$ in which $u\leq v$ if and only if, for some (equivalently, any) reduced word in simple transpositions $Q$ for $v$, there is a subsequence of $Q$ that is a reduced word for $u$. 
     We define a \textbf{subword} of a word $Q=(\q_{1},\dotsc,\q_{\mm})$ to be a word $(\q_1',\q_2',\dots, \q_{\mm}')$ such that $\q_i'\leq \q_i$ for each $i$. 
     When $Q$ is a sequence of simple transpositions, this is equivalent to defining a subword of $Q$ to be a subsequence of $Q$.

    For permutations $u\leq v\in S_n$, the \textbf{Bruhat interval} between $u$ and $v$ is $[u,v]=\{w\in S_n:u\leq w\leq v\}.$ When $u\not \leq v,$ we let $[u,v]$ be the empty set. All permutations are less than or equal to $w_0$ and greater than or equal to $\id$,  so $S_n=[e,w_0].$

    We will make use of the \textit{lifting property} for the Bruhat order on $S_n$ later in this section, which we record below. 

\begin{prop}[Lifting property in $S_n$](\cite[Proposition 2.2.7]{BB05})\label{prop:lifting-property}
    Let $v,w\in S_n$ be such that $v\leq w$, and suppose $s_i$ is a simple transposition in $S_n$ such that $\ell(w s_i)<\ell(w)$ and $\ell(v s_i)> \ell(v)$. Then $v\leq w s_i$ and $v s_i\leq w$.
\end{prop}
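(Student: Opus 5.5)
The plan is to argue directly from the subword characterization of Bruhat order recalled above: $u\leq v$ if and only if some (equivalently, any) reduced word in simple transpositions for $v$ has a subsequence that is a reduced word for $u$. The key freedom is that ``any'' reduced word for $w$ may be used, so I will choose one adapted to $s_i$.

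First, since $\ell(ws_i)<\ell(w)$, we have $\ell(ws_i)=\ell(w)-1$. Take a reduced word $(s_{j_1},\dotsc,s_{j_k})$ for $ws_i$, where $k=\ell(w)-1$. Then $(s_{j_1},\dotsc,s_{j_k},s_i)$ is a word for $w$ of length $\ell(w)$, hence a reduced word for $w$. Since $v\leq w$, this word has a subsequence that is a reduced word for $v$. I split into two cases according to whether that subsequence uses the final letter $s_i$.

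\emph{Case 1: the subsequence uses the final $s_i$.} Then it has the form $(s_{j_{a_1}},\dotsc,s_{j_{a_r}},s_i)$ and is a reduced word for $v$. Hence $vs_i=s_{j_{a_1}}\dotsb s_{j_{a_r}}$ has length at most $r=\ell(v)-1$. This contradicts the hypothesis $\ell(vs_i)>\ell(v)$, so this case cannot occur.

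\emph{Case 2: the subsequence avoids the final $s_i$.} Then it is a subsequence of the reduced word $(s_{j_1},\dotsc,s_{j_k})$ for $ws_i$, and it is a reduced word for $v$, so $v\leq ws_i$. Appending the final letter $s_i$ gives a subsequence of the reduced word for $w$, and it is a word for $vs_i$ of length $\ell(v)+1$. Because $\ell(vs_i)>\ell(v)$ forces $\ell(vs_i)=\ell(v)+1$, this word is reduced, so $vs_i\leq w$.

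There is no serious obstacle here. The only points needing care are the choice of a reduced word for $w$ ending in $s_i$, and the fact that lengths change by exactly one under right multiplication by a simple transposition. I would justify the latter from parity of length (the sign of a permutation) together with $\ell(\sigma s_i)\leq \ell(\sigma)+1$.
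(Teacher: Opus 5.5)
Your proof is correct and is essentially the standard subword-property argument behind \cite[Proposition 2.2.7]{BB05} (stated there for left descents), which the paper cites without reproducing; choosing a reduced word for $w$ ending in $s_i$ and ruling out reduced subwords for $v$ that use that last letter is exactly the right move. A small simplification: parity is not needed, since $\ell(\sigma s_i)\leq \ell(\sigma)+1$ applied to $\sigma=ws_i$ and to $\sigma=v$ already forces $\ell(ws_i)=\ell(w)-1$ and $\ell(vs_i)=\ell(v)+1$.
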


 The \textit{Bruhat interval polytope} and the \textit{twisted Bruhat interval polytope} are lattice polytopes that contain information about the Bruhat order. We will now define the (twisted) Bruhat interval polytope in \cref{defn:bruhat-interval-polytope} and explain the interpretation of (twisted) Bruhat intervals as \textit{moment polytopes of Richardson varieties} in \cref{rmk:why-toric} below.

    \begin{defn}[\cite{flagPositroids, Kostant-Toda, TW15}]
    \label{defn:bruhat-interval-polytope}
    For permutations $u$ and $v$ in $S_n$, the \textbf{Bruhat interval polytope} $P_{u,v}$ is the convex hull of the points $(w(1),w(2),\dotsc,w(n))$ over all $u\leq w\leq v$. The \textbf{twisted Bruhat interval polytope} $\tilde{P}_{u,v}$ is the convex hull of the points $(n+1-w^{-1}(1),n+1-w^{-1}(2),\dotsc,n+1-w^{-1}(n))$ over all $u\leq w\leq v$. A Bruhat interval polytope $P_{u,v}$ (resp. twisted Bruhat interval polytope $\tilde{P}_{u,v}$) is \textbf{toric} if $\dim P_{u,v}=\ell(v)-\ell(u)$ (resp. $\dim \tilde{P}_{u,v}=\ell(v)-\ell(u)$).
    \end{defn}

Note that $\tilde{P}_{u,v}=P_{w_0v^{-1},w_0u^{-1}}$ so the set of Bruhat interval polytopes is the same as the set of twisted Bruhat interval polytopes. When $P_{u,v}$ is toric, so are $P_{u^{-1},v^{-1}}$ and $\tilde{P}_{u,v}$, and vice versa \cite{LMP21}. The \textbf{permutahedron} is the (twisted) Bruhat interval polytope $P_{e,w_0}=\tilde{P}_{e,w_0}$.

\begin{rmk}\label{rmk:why-toric}
    As we will explain in \cref{background}, we can associate a lattice polytope, called a moment polytope, to every algebraic variety with a torus action that is torus-equivariantly embedded in a projective space. If the dimension of such a variety equals the dimension of its moment polytope, then the variety is a toric variety and its moment polytope is called toric. We will see in \cref{lem:Richardson sum} (cf. \cite{Kostant-Toda}) that the twisted Bruhat interval polytope $\tilde{P}_{u,v}$ is the \textit{moment polytope} of the Richardson variety $X_u^v$ when $v\geq u$ in $S_n$. As $\mrm{dim}(X_u^v)=\ell(v)-\ell(u)$ when $v\geq u$, it follows that $\tilde{P}_{u,v}$ is toric when $\dim \tilde{P}_{u,v}=\ell(v)-\ell(u)$
\end{rmk}

    \subsubsection{Demazure products} We first define the \textbf{Demazure product} of a permutation $w\in S_n$ and a simple transposition $s_i\in S_n$ as
    \[w\star s_i = \begin{cases}
            w \cdot s_i  & \text{if $\ell(w\cdot s_i)>\ell(w)$,}\\
            w & \text{otherwise.}
            \end{cases}
    \]
    This product respects the ``braid relations'' in the sense that  $(w\star s_i) \star s_j = (w\star s_j) \star s_i$ for $|i-j|>1$, and $((w\star s_i) \star s_{i+1}) \star s_i = ((w\star s_{i+1}) \star s_{i}) \star s_{i+1}$ so it can be extended to a product  $S_n\times S_n\to S_n$ by defining $w\star v= ((\cdots ((w\star s_{i_1})\star s_{i_2}) \cdots)\star s_{i_k}) $ where $s_{i_1}\cdots s_{i_k}$ is any reduced expression for $v$. The Demazure product is an associative product. One nice way to think about it is that $w\star v$ is through the fact that the Bruhat interval $[\id,w\star v]$ is equal to $\{a\cdot b: a\in [\id,w],\ b\in [\id,v]\}$ (see e.g. \cite{LOYY24,He}). For $Q=(q_1,\dots,q_{\mm})$ a word in $S_n$, we define
    $\dem(Q)=q_1\star q_2 \star \cdots \star q_{\mm}.$

We will often make use of two lemmas, \cref{lem:dem-assoc} and \cref{lem: demlem}, concerning Demazure products, which we will record and prove below.

\begin{lem}\label{lem:dem-assoc}
    For all $a,b,c\in S_n$, we have $(ab)\star c\leq a \star b\star c$.
\end{lem}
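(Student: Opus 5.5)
We prove that $(ab)\star c\leq a\star b\star c$ by using the characterization of the Demazure product through Bruhat intervals recalled above: for all $x,y\in S_n$,
\[[\id,x\star y]=\{x'y' : x'\in[\id,x],\ y'\in[\id,y]\}.\]
Since every Bruhat interval $[\id,z]$ has $z$ as its unique maximal element, it suffices to show that $(ab)\star c$ lies in $[\id,a\star b\star c]$. The whole argument is to express both sides as sets of products and compare them.

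First, apply the characterization twice, using the associativity of $\star$ noted above. This gives
\[[\id,a\star b\star c]=\{a'b'c' : a'\le a,\ b'\le b,\ c'\le c\}.\]
Indeed, $[\id,(a\star b)\star c]$ consists of the products $xc'$ with $x\le a\star b$ and $c'\le c$. Each such $x$ is a product $a'b'$ with $a'\le a$ and $b'\le b$.

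Second, apply the characterization to the pair $(ab,c)$. This gives
\[[\id,(ab)\star c]=\{xc' : x\le ab,\ c'\le c\}.\]
The only remaining input is the inclusion $[\id,ab]\subseteq[\id,a\star b]$, which is equivalent to the inequality $ab\le a\star b$. Granting it, every $x\le ab$ satisfies $x\le a\star b$, so $x=a'b'$ with $a'\le a$ and $b'\le b$. Hence $[\id,(ab)\star c]\subseteq[\id,a\star b\star c]$. Taking maximal elements yields $(ab)\star c\le a\star b\star c$.

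The main step, though an easy one, is $ab\le a\star b$. It follows immediately from the same characterization: taking $a'=a$ and $b'=b$ shows $ab\in[\id,a\star b]$.

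An alternative route avoids the interval characterization. One inducts on $\ell(b)$ using $w\star s_i\in\{w,ws_i\}$ together with the lifting property (\cref{prop:lifting-property}) to show that $\star$ is monotone in each argument. From monotonicity, $ab\le a\star b$ gives $(ab)\star c\le (a\star b)\star c$. The interval argument above is cleaner, and it is the one I would write.
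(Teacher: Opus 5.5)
Your proof is correct, and it follows a slightly different route from the paper's. The paper's proof is the one-line chain $(ab)\star c\le (a\star b)\star c=a\star b\star c$, which is exactly your ``alternative route.'' It rests on $ab\le a\star b$ together with monotonicity of $x\mapsto x\star c$. That monotonicity is \cref{lem: demlem}, proved immediately afterward by induction on a reduced word for $c$ using the lifting property (\cref{prop:lifting-property}).

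Your main argument has the same two-step shape, but it gets both ingredients from the interval description $[\id,x\star y]=\{x'y' : x'\le x,\ y'\le y\}$ that the paper quotes from the literature. First, $ab\in[\id,a\star b]$ is immediate from it. Second, the inclusion $[\id,(ab)\star c]\subseteq[\id,a\star b\star c]$ is monotonicity in the first argument, read off directly: $x\le y$ gives $\{x'c'\}\subseteq\{y'c'\}$.

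What your version buys is a proof that is complete on its own terms. The paper's one-liner leaves both $ab\le a\star b$ and the monotonicity step unjustified at that point, and the latter is only supplied by the next lemma. The cost is that your proof depends on the cited subword/interval characterization of the Demazure product, whereas the paper's route needs only the lifting property.
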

\begin{proof}
    This follows from the fact that $(ab)\star c\leq (a \star b) \star c=a\star b\star c$ for all $a,b,c\in S_n$.
\end{proof}

\begin{lem}\label{lem: demlem}
    Suppose $a,w\in S_n$ such that $a\leq w$. Then for any $x\in S_n$, we have $a \star x\leq w \star x$ and $x \star a\leq x \star w.$ 
\end{lem}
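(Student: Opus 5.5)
By symmetry it suffices to handle one side carefully and then derive the other. I would use the characterization recalled above: $[\id,u\star v]=\{ab: a\in[\id,u],\ b\in[\id,v]\}$. Both inequalities then become statements about sets of products, and Bruhat order becomes containment of lower intervals.

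For the second inequality, $x\star a\leq x\star w$, the plan is as follows.
\begin{enumerate}
\item Since $a\leq w$, we have $[\id,a]\subseteq[\id,w]$.
\item Hence $\{yb: y\in[\id,x],\ b\in[\id,a]\}\subseteq\{yb: y\in[\id,x],\ b\in[\id,w]\}$.
\item By the characterization, this says $[\id,x\star a]\subseteq[\id,x\star w]$.
\item Since $x\star a$ lies in its own lower interval, $x\star a\leq x\star w$.
\end{enumerate}
The first inequality, $a\star x\leq w\star x$, follows by the identical argument with the factors in the other order: $[\id,a\star x]=[\id,a]\cdot[\id,x]\subseteq[\id,w]\cdot[\id,x]=[\id,w\star x]$.

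As a sanity check, or as a fallback if one prefers not to rely on the cited set identity, I would argue by induction on $\ell(x)$ using reduced words. It suffices to show that $a\leq w$ implies $a\star s_i\leq w\star s_i$; iterating this over a reduced word for $x$ gives $a\star x\leq w\star x$. For the simple-reflection step, split into cases according to whether $\ell(as_i)$ and $\ell(ws_i)$ go up or down.
\begin{itemize}
\item If both go up, then $a\star s_i=as_i$ and $w\star s_i=ws_i$. Here $as_i\leq ws_i$ is standard: it is the ``Z-property'', deduced from the lifting property \cref{prop:lifting-property} applied to $a\leq ws_i$.
\item If $\ell(as_i)$ goes up and $\ell(ws_i)$ goes down, then $w\star s_i=w$, and \cref{prop:lifting-property} gives $as_i\leq w$ directly.
\item If $\ell(as_i)$ goes down, then $a\star s_i=a\leq w\leq w\star s_i$.
\end{itemize}
The left version follows by applying the right version to inverses, since $(u\star v)^{-1}=v^{-1}\star u^{-1}$ and inversion preserves Bruhat order.

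The main obstacle is the case where both $\ell(as_i)$ and $\ell(ws_i)$ go up, which requires $as_i\leq ws_i$. I would handle it by applying lifting to $v=a$ and $w'=ws_i$: we have $a\leq w\leq ws_i$, $\ell(ws_i\cdot s_i)<\ell(ws_i)$ and $\ell(as_i)>\ell(a)$, so \cref{prop:lifting-property} yields $as_i\leq ws_i$.
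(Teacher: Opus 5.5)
Your proposal is correct, and your main argument takes a different route from the paper. The paper's proof has three steps. It handles $x=s_i$ by a three-way case analysis on whether $\ell(as_i)$ and $\ell(ws_i)$ go up or down, using the lifting property \cref{prop:lifting-property}. It then iterates along a reduced word for $x$. Finally, it dismisses $x\star a\leq x\star w$ with ``an analogous proof.''

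You instead get both inequalities at once from the identity $[\id,u\star v]=\{ab: a\in[\id,u],\ b\in[\id,v]\}$. The paper recalls this identity when introducing the Demazure product but does not use it here. The containment $[\id,a]\subseteq[\id,w]$ passes to the product sets, hence to the lower intervals. This is shorter and treats the two sides symmetrically. The cost is that all the content moves into the cited identity, whose standard proof itself rests on the subword/lifting property, whereas the paper's argument is self-contained modulo lifting.

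Your fallback is essentially the paper's proof, and it is slightly more complete. In the case where both lengths increase, the paper simply asserts $as_i\leq ws_i$; you justify it by applying lifting to $a\leq w<ws_i$. You also derive the left-hand inequality from the right-hand one via $(u\star v)^{-1}=v^{-1}\star u^{-1}$ and the fact that inversion preserves Bruhat order, rather than appealing to an unspecified analogous argument. That inversion identity deserves a one-line justification, though it is standard. For example, it follows from the same interval identity together with $[\id,u]^{-1}=[\id,u^{-1}]$.
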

\begin{proof}
    First assume $x$ is a simple transposition $s_i$, and suppose $a\leq w$.
If $a \star s_i=a$ then $a s_i\leq a \leq w \leq w\star s_i.$ If $a \star s_i=as_i$ and $w \star s_i=ws_i$ then $a \star s_i=as_i\leq ws_i=w \star s_i.$ Finally, if $a \star s_i=as_i$ and $w \star s_i=w$ then by the lifting property of $S_n$ (\cref{prop:lifting-property}), $a \star s_i=as_i\leq w=w \star s_i.$ Thus, if $a\leq w$, then $a \star s_i\leq w \star s_i.$

Now let $s_{i_1}s_{i_2}\cdots s_{i_k}$ be a reduced word for $x$, and suppose $a\leq w$. Then $a \star s_{i_1}\leq w \star s_{i_1}.$ So $a \star s_{i_1}\star s_{i_2}=(a \star s_{i_1})\star s_{i_2}\leq (w\star s_{i_1})\star s_{i_2}=w\star s_{i_1}\star s_{i_2}.$ Repeating this logic $k$ times gives $a\star x\leq w\star x.$ Thus, for any $x,a,w\in S_n$, if $a\leq w,$ then $a\star x\leq w\star x$.

An analogous proof shows that for any $x,a,w\in S_n$, if $a\leq w,$ then $x\star a\leq x\star w$.
\end{proof}

\subsection{Subword complexes and subword sets}\label{subsection:subword}
We will now recall the definition of the \textit{subword complex} from \cite{KnutsonMiller} and introduce  a generalization of subword complexes that we call \textit{subword sets}.

 Fix a word $Q=(\q_1,\dotsc, \q_{\mm})$ in simple transpositions. A \textbf{subword} $J$ of $Q$ is a sequence $(\q'_{1},\dotsc,q'_{\mm})$ obtained from $Q$ by replacing some entries by $\id$ and leaving others the same. Denote by $Q\setminus J$ the sequence of length $\mm$ whose $i$th entry equals $1$ if $\q'_i\neq \id$ and equals $\q_{i}$ otherwise.
 \begin{egg}
     There are $8$ subwords of the word $Q=(s_1,s_2,s_1)$ in $S_3$:  
 \[(e,e,e),\quad (s_1,e,e),\quad (e,s_2,e),\quad (e,e,s_1),\quad (s_1,s_2,e),\quad (s_1,e,s_1),\quad (e,s_2,s_1),\quad (s_1,s_2,s_1).\]
 \end{egg}

    \begin{defn}(cf. \cite{KM04, KnutsonMiller})
 Let $Q=(s_{i_1},\dotsc,s_{i_\mm})$ be a word in simple transpositions in $S_n$ and $w\in S_n$ a permutation. The \textbf{subword complex} $\Delta(Q,w)$ is a simplicial complex on the vertex set $Q$ whose facets (resp. faces) are the subwords $Q\setminus J$ of $Q$, where $J$ is itself a subword of $Q$, such that  $J$ is a reduced word for $w$ (resp. the product $J$ contains a reduced subword for $w$.) 
\end{defn}

\begin{rmk}
        Assume $Q=(s_{i_1},\dotsc,s_{i_{\mm}})$ is a word in simple transpositions in $S_n$, and let $w\in S_n$.
        The subword complex $\Delta(Q,w)$ is homeomorphic to a sphere if $\dem(Q)=w$ and to a ball otherwise \cite[Corollary 3.8]{KM04}. We only focus on non-empty spherical subword complexes $\Delta(Q,w_0)$ in this work, a choice partly justified by the fact that every spherical subword complex is isomorphic to $\Delta(Q',w_0)$ for some $Q'$ such that $\dem(Q')=w_0$ \cite[Theorem 3.7]{CJP14}.
    \end{rmk}

Instead of considering subword complexes, we will look at \textit{subword sets}.

 \begin{defn}\label{defn:subword-set}
        Let $Q=(\q_1,\dotsc,\q_\mm)$ be a word in $S_n$ and $w\in S_n$ be a permutation.
        The \textbf{subword set} $\nabla(Q,w)$ of $Q$ with respect to $w$, is the set of all subwords of $Q$ with product $w$:
         \[\nabla(Q,w)=\{J=(v_1,\dotsc,v_\mm)\mid v_i\leq \q_i \text{ and } J_{(\mm)}=w\}.\]
    \end{defn}

When $Q$ is a sequence of simple transpositions, the subword set will contain the complement of facets of the corresponding subword complex. From here on, we will only ever talk about $\nabla(Q,w_0)$ rather than more general subword sets.
The primary reason for defining the elements of a subword complex in terms of complements of subwords was the simplicity of viewing the subword complex as a simplicial complex when defined this way. Since we do not define a simplicial complex structure on the subword set, we prefer not to take complements in our definition of the subword set.

\subsection{Introducing the brick polytope and bulky brick polytope}\label{subsection:brick-polytopes}

 We recall the \textit{brick polytope} of \cite{bricksort} and \cite{PS15} (which is the moment polytope of the \textit{brick variety} with respect to a torus-equivariant embedding of the brick variety into a product of Grassmannians; see \cref{prop: unbulky moment}), largely following the exposition in \cite{E16}, and we introduce the \textit{bulky brick polytope} (which is the moment polytope of the brick variety with respect to a torus-equivariant embedding of the brick variety into a product of flag varieties; see \cref{prop: bulky moment}), laying the foundations for this paper. In \cref{subsection:coincide-brick}, we will give sufficient conditions for the brick polytope and bulky brick polytope to coincide.

 Consider the standard basis vectors $p_i$ of $\mbb{R}^n$, with $1$ appearing in entry $i$ of $p_i$ and $0$ appearing in all other entries.  Define the vectors $\omega_i=p_1+\cdots+p_i$ for $i\in [n]$. We let $S_n$ act on $\R^n$ by setting $\sigma \cdot p_i=p_{\sigma(i)}$ for $\sigma\in S_n$ and extending linearly.

 Let $Q=(\q_1,\dotsc,\q_{\mm})$ be any word in $S_n$ and let $J\in \nabla(Q,w_0)$ be a subword of $Q$. 
   The \textbf{weight function} $w(J,\cdot)\colon [\mm] \to \mbb{R}^n$ is 
 
   \[w(J,k):=J_{(k)}\left(\sum_{j\in[n-1]:s_j\leq \q_k}\omega_j\right),\]
    the \textbf{brick vector} is $B(J):=\sum_{k=1}^{\mm}w(J,k)$, and the \textbf{brick polytope} of $Q$ is 
        \[\B(Q):=\mrm{conv}\{B(J)\mid J\in\nabla(Q,w_0)\}.\]
   We also define the \textbf{bulky weight function} 
    $\mbf{w}(J,\cdot)\colon [\mm]\to \mbb{R}^n$ with the formula
    \[\mbf{w}(J,k):=J_{(k)}\left(\sum_{i=1}^{n}\omega_{i}\right).\]

    \noindent The \textbf{bulky brick vector} is $\mbf{B}(J)=\sum_{k=1}^{\mm}\mbf{w}(J,k)$, and the \textbf{bulky brick polytope} of $Q$ is
  \[\BB(Q):=\conv\{\mbf B(J)\mid J\in\nabla(Q,w_0)\}.\]

\begin{rmk}
    The brick polytope was originally defined in \cite{bricksort} in the special case that all $\q_i$ are simple transpositions. In that case, the definitions of $B(J)$ and $\mathcal{B}(Q)$ come from \cite{PS15}. In \cite{bricksort,PS15} they take the convex hull of all reduced subwords $J$ of $Q$ with product $w_0$. We follow \cite{E16} in taking the convex hull over all subwords $J$ of $Q$ that have product $w_0$ rather than only reduced $J$ (and when the $Q$ is a word in simple transpositions, \cite{E16} showed that this does not make a difference). Our definitions generalize brick polytopes to allow $Q$ to be any word in $S_n$ with $\dem(Q)=w_0$ and introduce the bulky versions $\mbf{w}(J,k),\mbf{B}(J),$ and $\BB(Q)$ to match our treatment of the \textit{brick variety}.
    \end{rmk}

When $\dem(Q)\neq w_0$, the subword set $\nabla(Q,w_0)$ will be empty so from now on, we will always assume that $\dem(Q)=w_0$.

\subsection{Equalities between brick polytopes and bulky brick polytopes}\label{subsection:coincide-brick}

Now we will describe conditions that will ensure that brick polytopes and bulky brick polytopes coincide or coincide up to translation.

\begin{lem}\label{lem: smashbrick}
    Let $\q_1,\dots, \q_{\mm} \in S_n.$ Let \[Q=(\q_1, \dots, \q_{\mm}) \quad \text{ and }\quad Q'=(\q_{1},\dots, \q_{i-1},\q_i\q_{i+1},\q_{i+2},\dots, \q_{\mm}).\]
    If $[\id,\q_i]\cap [\id,\q_{i+1}]=\{\id\}$ then $\B(Q)=\B(Q').$  
\end{lem}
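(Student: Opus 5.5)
Set $Q'$ as in the statement, and let $r := \q_i\q_{i+1}$ denote its merged entry. The plan is to build a bijection between $\nabla(Q,w_0)$ and $\nabla(Q',w_0)$ that preserves brick vectors; equality of the two convex hulls then follows at once.

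\textbf{Step 1: the bijection.} If $J=(v_1,\dots,v_\mm)\in\nabla(Q,w_0)$, then $v_iv_{i+1}\le \q_i\q_{i+1}$. This is the subword property of Bruhat order, once we know that $\q_i\q_{i+1}$ behaves like a reduced product. So the first task is to show that the hypothesis $[\id,\q_i]\cap[\id,\q_{i+1}]=\{\id\}$ forces $\q_i$ and $\q_{i+1}$ to be supported on disjoint sets of simple transpositions. Note that $s_j\le \q$ exactly when $s_j$ occurs in some (equivalently every) reduced word of $\q$, so the hypothesis says precisely that no $s_j$ lies below both.

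Consequently $\q_i$ lies in a parabolic subgroup $W_I$ and $\q_{i+1}$ in $W_{I'}$ with $I\cap I'=\emptyset$, and $\ell(\q_i\q_{i+1})=\ell(\q_i)+\ell(\q_{i+1})$. I would then show that the multiplication map
\[[\id,\q_i]\times[\id,\q_{i+1}]\to[\id,\q_i\q_{i+1}]\]
is a bijection. Surjectivity: any $x\le \q_i\q_{i+1}$ is given by a subword of the concatenated reduced word, so it factors as $ab$ with $a\le\q_i$ and $b\le\q_{i+1}$. Injectivity: $ab=a'b'$ gives $a'^{-1}a=b'b^{-1}\in W_I\cap W_{I'}=\{\id\}$. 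With this in hand, $J\mapsto J'=(v_1,\dots,v_iv_{i+1},\dots,v_\mm)$ is a bijection $\nabla(Q,w_0)\to\nabla(Q',w_0)$, because the total product is unchanged.

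\textbf{Step 2: the weights.} The brick vectors of $J$ and $J'$ agree in every term except those at positions $i,i+1$ of $Q$ and position $i$ of $Q'$. Write $\Omega_S:=\sum_{j\in S}\omega_j$ for $S\subseteq[n-1]$. The set of $s_j$ below $r=\q_i\q_{i+1}$ is $I\sqcup I'$, so the merged term is
\[J_{(i+1)}\,\Omega_{I\sqcup I'},\]
while the two old terms are
\[J_{(i-1)}v_i\,\Omega_I+J_{(i-1)}v_iv_{i+1}\,\Omega_{I'}.\]
To compare these I use $J_{(i+1)}=J_{(i-1)}v_iv_{i+1}$, together with two facts. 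First, $v_{i+1}\in W_{I'}$ fixes $\omega_j$ for $j\notin I'$, and in particular for $j\in I$, so $J_{(i+1)}\Omega_I=J_{(i-1)}v_i\Omega_I$. Second, the $I'$ parts of the two expressions coincide directly. Summing these identities shows that $B(J)=B(J')$, which gives $\B(Q)=\B(Q')$.

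\textbf{Main obstacle.} The step I expect to be hardest is justifying the support-disjointness claim, and specifically the fact that $s_j\le\q$ if and only if $s_j$ appears in a reduced word of $\q$. This follows from the subword characterization of Bruhat order and the fact that every reduced word uses the same set of letters (Tits' theorem, via braid moves). Once that is in place, the remaining steps are routine.
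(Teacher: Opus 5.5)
Your proposal is correct and follows the same route as the paper. You merge $v_i,v_{i+1}$ into $v_iv_{i+1}$ and use the disjoint-support consequence of $[\id,\q_i]\cap[\id,\q_{i+1}]=\{\id\}$ to see that this map covers $\nabla(Q',w_0)$, then verify that the merged weight equals $w(J,i)+w(J,i+1)$ because $v_{i+1}$ fixes $\omega_j$ whenever $s_j\le\q_i$. The only cosmetic differences are that the paper obtains the factorization of $[\id,\q_i\q_{i+1}]$ from $[\id,\q_i\star\q_{i+1}]=\{ab: a\le\q_i,\ b\le\q_{i+1}\}$ and needs only surjectivity, whereas you argue via parabolic subgroups and also prove injectivity.
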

\begin{proof}
    Define a map $f:\nabla(Q,w_0)\to \nabla(Q',w_0)$ by \[(v_1,\dots,v_{i-1},v_i,v_{i+1},v_{i+2},\dots v_{\mm})\mapsto (v_1,\dots,v_{i-1},v_iv_{i+1},v_{i+2},\dots v_{\mm}).\]

    The condition $[\id,\q_i]\cap [\id,\q_{i+1}]=\{\id\}$ is equivalent to saying that the set of simple transpositions appearing in reduced words for $\q_i$ is disjoint from the set of simple transpositions appearing in reduced words for $\q_{i+1}$. This precludes any cancellation in the product $\q_i\q_{i+1}$ and thus implies that $\q_i\q_{i+1}=\q_i\star\q_{i+1}.$ Then $[\id, \q_i\q_{i+1}]=[\id,\q_i\star\q_{i+1}]=\{v_{i}\cdot v_{i+1}:v_i\leq q_i, v_{i+1}\leq q_{i+1}\}$ which shows that $f$ is a surjection\footnote{It is not too hard to see that $f$ is in fact a bijection, but doing so is not necessary for this proof} from $\nabla(Q,w_0)$ to $\nabla(Q',w_0)$.

   Let $J=(v_1,\dots,v_{i-1},v_i,v_{i+1},v_{i+2},\dots v_{\mm})\in \nabla(Q,w_0)$. 
   For $k<i,$ $J_{(k)}=f(J)_{(k)}$ so $w(J,k)=w(f(J),k).$ For $k\geq i,$ $J_{(k+1)}=v_1v_{2}\dots v_{k+1}=
   f(J)_{(k)}$. This tells us that for $k>i,$ $w(J,k+1)=w(f(J),k)$ (but does not tell us that $w(J,i+1)=w(f(J),i)$ since the $i+1$st permutation of $Q$ differs from the $i$th permutation of $Q'$).
   
   For $j\neq j',$ $s_{j'} \omega_{j}=\omega_{j}.$ Thus for $s_j\not \leq \sigma,$ $\sigma \omega_j = \omega_j.$ Since $[1,\q_i]\cap [1,\q_{i+1}]=\{1\},$  $v_{i+1}\sum_{j\in [n-1]:s_j\leq \q_{i}}\omega_i = \sum_{j\in [n-1]:s_j\leq \q_{i}}\omega_i.$ We make use this fact in the following calculation.

\begin{align*}
    w(f(J),i)&=f(J)_{i}\left(\sum_{j\in [n-1]:s_j\leq \q_i \q_{i+1}}\right)\\
    &=v_1\cdots v_{i}v_{i+1}\left(\sum_{j\in [n-1]:s_j\leq \q_i }\omega_j+\sum_{j\in [n-1]:s_j\leq  \q_{i+1}}\omega_j\right)\\
    &=v_1\cdots v_{i}v_{i+1}\left(\sum_{j\in [n-1]:s_j\leq \q_i }\omega_j\right)+v_1\cdots v_{i}v_{i+1}\left(\sum_{j\in [n-1]:s_j\leq  \q_{i+1}}\omega_j\right)\\
    &=v_1\cdots v_{i}\left(\sum_{j\in [n-1]:s_j\leq \q_i }\omega_j\right)+v_1\cdots v_{i}v_{i+1}\left(\sum_{j\in [n-1]:s_j\leq  \q_{i+1}}\omega_j\right)\\
    &=J_{(i)}\left(\sum_{j\in [n-1]:s_j\leq \q_i }\omega_j\right)+J_{(i+1)}\left(\sum_{j\in [n-1]:s_j\leq  \q_{i+1}}\omega_j\right)\\
    &=w(J,i)+w(J,i+1).
\end{align*}   

We can now show that $B(J)=B(f(J))$ as follows:
\begin{align*}
    B(f(J))&=\sum_{k=1}^{\mm-1}w(f(J),k)\\
    &=\left(\sum_{k=1}^{i-1}w(f(J),k)\right)+\left(\sum_{k=i+1}^{\mm-1}w(f(J),k)\right)+w(f(J),i)\\
    &=\left(\sum_{k=1}^{i-1}w(f(J),k)\right)+\left(\sum_{k=i+1}^{\mm-1}w(J,k+1)\right)+w(J,i)+w(J,i+1)\\
    &=\left(\sum_{k=1}^{i-1}w(f(J),k)\right)+\left(\sum_{k=i+2}^{\mm}w(J,k)\right)+w(J,i)+w(J,i+1)\\
    &=B(J).
\end{align*}
We conclude that $\B(Q)=\B(Q')$.\qedhere
    
\end{proof}

\begin{prop}\label{prop:same-normal-fan}
    Assume $Q=(\q_1,\dotsc,\q_{\mm})$ is a word in $S_n$ such that each $\q_i$  is a product of \textit{distinct} simple transpositions $h_{1}^{i}\dotsb h_{\ell(\q_i)}^{i}$. Let $Q'$ be the word $(h_{1}^{1},\dotsc,h_{\ell(\q_1)}^{1},\dotsc,h_{1}^{\mm},\dotsc,h_{\ell(\q_\mm)}^{\mm})$. Then $\B(Q)=\B(Q')$. 
    \end{prop}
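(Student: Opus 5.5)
The plan is to obtain $Q'$ from $Q$ by repeatedly splitting one entry into two and applying \cref{lem: smashbrick} in reverse at each step. Fix an index $i$ with $\ell(\q_i)\geq 2$, and write $\q_i = h^i_1 \cdot (h^i_2\dotsb h^i_{\ell(\q_i)})$. Consider the word obtained from $Q$ by replacing the single entry $\q_i$ with the two consecutive entries $h^i_1$ and $h^i_2\dotsb h^i_{\ell(\q_i)}$. \Cref{lem: smashbrick} applies to this longer word at the position of these two entries, provided
\[[\id,h^i_1]\cap[\id,h^i_2\dotsb h^i_{\ell(\q_i)}]=\{\id\}.\]
Merging the two entries recovers $Q$, so the lemma gives equality of the brick polytopes.

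The key step is verifying this hypothesis. Since the $h^i_j$ are distinct simple transpositions, the expression $h^i_2\dotsb h^i_{\ell(\q_i)}$ is reduced: any product of distinct simple transpositions is reduced, as one sees for instance by counting the support. The subword characterization of Bruhat order then shows that every element of $[\id,h^i_2\dotsb h^i_{\ell(\q_i)}]$ is a product of a subset of $\{h^i_2,\dots,h^i_{\ell(\q_i)}\}$. Hence every such element has support avoiding $h^i_1$, and so the element $h^i_1$ is not in this interval. Because $[\id,h^i_1]=\{\id,h^i_1\}$, the intersection is $\{\id\}$. This is exactly the disjoint-support reformulation already noted inside the proof of \cref{lem: smashbrick}.

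Next I iterate. The second new entry $h^i_2\dotsb h^i_{\ell(\q_i)}$ is again a product of distinct simple transpositions, so I split it in the same way. Induction on $\ell(\q_i)$ fully expands $\q_i$ into $(h^i_1,\dots,h^i_{\ell(\q_i)})$ without changing the brick polytope. I then do this for each $i$ in turn; entries with $\ell(\q_i)\le 1$ need nothing. Formally, this is an induction on $\sum_i(\ell(\q_i)-1)$ over the entries with $\ell(\q_i)\geq1$.

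One subtlety is the case $\q_i=\id$. Such an entry is absent in $Q'$, so I must check that removing it does not change $\B(Q)$. If $\q_i=\id$, then $v_i=\id$ is forced and the weight $w(J,i)=J_{(i)}(0)=0$. Deleting the entry therefore gives a bijection of subword sets that preserves brick vectors. Alternatively, one can apply \cref{lem: smashbrick} with $\q_{i+1}$, since $[\id,\id]\cap[\id,\q_{i+1}]=\{\id\}$. Apart from this, I expect the only real content to be the disjointness check above, which is straightforward.
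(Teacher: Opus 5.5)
Your proposal is correct and takes the same route as the paper, whose proof is simply the one-line citation of \cref{lem: smashbrick}. Your iterated splitting, the disjoint-support check $[\id,h^i_1]\cap[\id,h^i_2\dotsb h^i_{\ell(\q_i)}]=\{\id\}$, and your separate handling of entries with $\q_i=\id$ are exactly the details that citation leaves implicit.
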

    \begin{proof}
        This follows from  \cref{lem: smashbrick}.
    \end{proof}

\begin{rmk}
       In the notation of \cref{prop:same-normal-fan}, although the brick polytope $\mcal{B}(Q')$ is independent of the reduced decompositions of the $\q_i$, the bulky brick polytope $\BB(Q')$ does depend on the reduced decompositions of the $\q_i$. 
\end{rmk}

\begin{lem}\label{lem:bulk up}
     Let $\q_1,\dots, \q_{\mm} \in S_n.$ Set $Q=(\q_1, \dots, \q_{\mm})$ and $H_r=\{j\in [n-1]:s_j\leq \q_r\}.$ If there exists some $r$ such that $H_1\subseteq H_2 \subseteq \cdots \subseteq H_r \supseteq H_{r+1} \cdots \supseteq H_{\mm},$ then $\B(Q)$ and $\BB(Q)$ are identical up to translation. 
\end{lem}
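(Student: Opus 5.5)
The plan is to compare the two vectors attached to the same subword $J=(v_1,\dots,v_\mm)\in\nabla(Q,w_0)$, one weight at a time. I will show that $\mbf{B}(J)-B(J)$ is a vector $c\in\R^n$ that does not depend on $J$. Then $\BB(Q)=\B(Q)+c$, since both polytopes are convex hulls over the same index set $\nabla(Q,w_0)$.

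\emph{Step 1: the weight difference.} For each $k$, subtracting the two weight functions gives
\[\mbf{w}(J,k)-w(J,k)=J_{(k)}\Big(\sum_{j\in[n]\setminus H_k}\omega_j\Big),\]
where $\omega_n$ always appears on the right (note $n\notin H_k$). Since $\omega_n=p_1+\dots+p_n$ is fixed by every permutation, that term contributes the constant $\mm\,\omega_n$. It therefore suffices to show that $J_{(k)}\omega_j$ is independent of $J$ whenever $j\in[n-1]\setminus H_k$.

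\emph{Step 2: factors that fix $\omega_j$.} I will use the fact, already noted in the proof of \cref{lem: smashbrick}, that $s_{j'}\omega_j=\omega_j$ for $j'\neq j$. Since $v_i\leq \q_i$, any reduced word for $v_i$ is a subword of a reduced word for $\q_i$. So every simple transposition occurring in $v_i$ is some $s_{j'}$ with $j'\in H_i$. Hence $v_i\omega_j=\omega_j$ whenever $j\notin H_i$.

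\emph{Step 3: use unimodality, splitting at the index $r$.}
\begin{itemize}
\item If $k\leq r$ and $j\notin H_k$, then $j\notin H_i$ for all $i\leq k$, because $H_1\subseteq\dots\subseteq H_k$. So each of $v_1,\dots,v_k$ fixes $\omega_j$, and $J_{(k)}\omega_j=\omega_j$.
\item If $k>r$ and $j\notin H_k$, then $j\notin H_i$ for all $i\geq k$. So $v_{k+1},\dots,v_\mm$ all fix $\omega_j$. Using $J_{(\mm)}=w_0$, we get
\[J_{(k)}\omega_j=w_0\,(v_{k+1}\cdots v_\mm)^{-1}\omega_j=w_0\omega_j.\]
\end{itemize}

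\emph{Step 4: conclude.} Summing over $k$ gives
\[\mbf{B}(J)-B(J)=\mm\,\omega_n+\sum_{k\leq r}\ \sum_{j\in[n-1]\setminus H_k}\omega_j+\sum_{k>r}\ \sum_{j\in[n-1]\setminus H_k}w_0\omega_j,\]
which is independent of $J$. This proves the lemma.

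The only delicate point is Step 2: that $v_i\leq\q_i$ forces $v_i$ to involve only simple transpositions indexed by $H_i$. This follows from the subword characterization of Bruhat order recalled earlier in the paper. Everything else is bookkeeping.
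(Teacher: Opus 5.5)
Your proposal is correct and follows essentially the same route as the paper: compare $\mbf{w}(J,k)$ and $w(J,k)$ term by term. For $k\le r$, the increasing part of the chain gives $J_{(k)}\omega_j=\omega_j$. For $k>r$, the decreasing part together with $J_{(\mm)}=w_0$ gives $J_{(k)}\omega_j=w_0\omega_j$.

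One wording fix in Step 2: it is not true that \emph{every} reduced word of $v_i$ is a subword of a reduced word of $\q_i$. For example, $(s_3,s_1)$ is a reduced word of $s_1s_3\le s_1s_2s_3$, but it is not a subword of the unique reduced word $(s_1,s_2,s_3)$. The subword property does give \emph{some} such reduced word, and that is all you need to conclude that $v_i$ is a product of $s_{j'}$ with $j'\in H_i$.
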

\begin{proof}
    We show that $B(J)$ and $\mbf{B}(J)$ differ by a translation by showing that $\mbf{w}(J,k)$ and $w(J,k)$ differ by a translation which doesn't depend on $J$.
    
  The nesting property means that for $k\leq r$, if a simple reflection doesn't show up in $\q_k$, then it doesn't show up in $\q_{k'}$ for $k'<k$ and therefore also doesn't show up in $J_{(k)}.$ So for $k\leq r$, $j\in [n]\setminus H_k$ implies $s_j \not \leq J_{(k)}$. So $\mbf{w}(J,k) - w(J,k) = J_{(k)} \sum_{j\in [n]\setminus H_k} \omega_j = \sum_{j\in [n]\setminus H_k} \omega_j.$

  For $k \geq r,$ $j\notin H_k$ implies $j\notin H_{k'}$ for $k'>k.$ Let $J=(v_1,\dots, v_{\mm})\in \nabla(Q,w_0)$. For $k\geq r$ and $j\notin H_k$, 
    $w_0\omega_j=J_{(m)}\omega_{j}=J_{(k)}v_{k+1}\cdots v_{\mm}\omega_j = J_{(k)}\omega_j.$ Therefore, for $k\geq r,$  $\mbf{w}(J,k) - w(J,k) = J_{(k)} \sum_{j\in [n]\setminus H_k} \omega_j = w_0\sum_{j\in [n]\setminus H_k} \omega_j$.
    
    We have shown that  $\mbf{w}(J,k) - w(J,k)$ does not depend on $J$, so we conclude that $\mbf{B}(J)-B(J)$ does not depend on $J$. Therefore, $\B(Q)$ and $\BB(Q)$ are identical up to translation. 
\end{proof}

\section{Pipe dreams, bracket vectors, and the cubulation of the associahedron}\label{section:pipe-dreams-bracket-vectors}

In this section, we will describe some connections between various combinatorial objects: certain ``pipe dreams" called $(1,n)$-twists, rooted binary trees, and Lehmer codes. Describing these connections constitutes the bulk of \cref{subsection:Loday-associehedron}, \cref{subsection:pipe-dreams}, and \cref{subsection:bracket-vectors}. In \cref{subsection:cubulation}, we will see how $(1,n)$-twists correspond to vertices of the Loday associahedron, and suggest which points on the boundary of the Loday associahedron are vertices of the subdivision of \cref{thm:main-result}.

\subsection{Loday associahedron and generalized associahedra}\label{subsection:Loday-associehedron}

We begin by explaining how the Loday realization of the associahedron arises as a brick polytope, and then connect this understanding to a special type of ``pipe dream'' called a $(1,n)$-twist.

The following result describes when a brick polytope, bulky brick polytope, and Loday associahedron all coincide up to translation.
\begin{prop}\label{prop:is Loday}
      Let $\sigma_i =s_1s_2\cdots s_{n-i+1}\in S_n$ for $i\geq 2$ and set $\sigma_1=\sigma_2$. Let $Q=(\sigma_1,\sigma_2,\dots, \sigma_{n}).$ Let $Q_i=(s_1,s_2,\dots, s_{n-i+1})$ for $i\geq 2$ and set $Q_1=Q_2$. Set $Q'$ to be the concatenation $Q_1+Q_2+\dots+Q_n$. Then $\nabla(Q,w_0)$ is in bijection with $\nabla(Q',w_0)$; and $\B(Q')$, $\B(Q)$, $\BB(Q)$, and the Loday realization of the associahedron all coincide up to translation and multiplication by $-1$.
\end{prop}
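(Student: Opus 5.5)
The proof splits into three comparisons: $\nabla(Q,w_0)$ versus $\nabla(Q',w_0)$ together with $\B(Q)$ versus $\B(Q')$, then $\B(Q)$ versus $\BB(Q)$, and finally $\B(Q')$ versus the Loday realization.

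\emph{From $Q$ to $Q'$.} Each $\sigma_i=s_1s_2\cdots s_{n-i+1}$ is a product of distinct simple transpositions, and $Q_i$ is exactly that reduced decomposition. So \cref{prop:same-normal-fan} gives $\B(Q)=\B(Q')$ directly. For the bijection of subword sets, I would use the map $f$ from the proof of \cref{lem: smashbrick}, applied repeatedly. It sends a subword of $Q'$ to the tuple obtained by multiplying within each block $Q_i$. Its image lies in $\nabla(Q,w_0)$ because the product of a subsequence of $(s_1,\dots,s_{n-i+1})$ is $\leq\sigma_i$. To see that $f$ is bijective, I would note that an element $v\leq s_1\cdots s_{k}$ is a product of a subset of these distinct generators, taken in increasing order. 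Distinct subsets give distinct elements, since the support of $v$ is the subset. Hence each $v_i$ has a unique preimage block, and the block products recover $J$.

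\emph{From $\B(Q)$ to $\BB(Q)$.} Set $H_r=\{j:s_j\leq\q_r\}$. Then $H_1=H_2=[n-1]$ and $H_i=[n-i+1]$ for $i\geq 2$. This chain is weakly decreasing, so the hypothesis of \cref{lem:bulk up} holds with $r=1$ (or $r=2$). That lemma shows $\B(Q)$ and $\BB(Q)$ agree up to translation.

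\emph{From $\B(Q')$ to the Loday realization.} This is the main obstacle. The concatenation $Q'=(s_1,\dots,s_{n-1})(s_1,\dots,s_{n-1})(s_1,\dots,s_{n-2})\cdots(s_1)$ has length $\binom{n}{2}+n-1$. It should be equivalent, up to commutations of commuting letters, to $c\,w_0(c)$ for the linear Coxeter element $c=s_1s_2\cdots s_{n-1}$, possibly after the symmetry $s_i\mapsto s_{n-i}$. I would first establish this equivalence explicitly. Commutation moves preserve the subword set and brick vectors, since the root and weight functions are unchanged under swapping commuting adjacent letters (\cite{PS15}). I would then invoke the theorem of Pilaud--Santos \cite{PS15} that $\B(c\,\mathbf{w_0}(c))$ is, up to translation, the Hohlweg--Lange associahedron for $c$, which for the linear Coxeter element is Loday's associahedron. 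If the orientation or labeling convention produces the reflection $x\mapsto -x$, or the reversal of coordinates composed with a negation, then the multiplication by $-1$ in the statement absorbs it. If locating the precise statement of \cite{PS15} is awkward, I would fall back on a direct check. This would use the bijection between facets of $\Delta(Q',w_0)$ and binary trees, and would verify coordinate by coordinate that $B(J)$ equals $-(a_i(T)+1)(b_i(T)+1)$ plus a constant. That computation would be organized by counting, for each position, how many pipes pass above the relevant elbows.
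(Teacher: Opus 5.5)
Your proposal is correct and is essentially the paper's proof: \cref{prop:same-normal-fan} gives $\B(Q)=\B(Q')$, \cref{lem:bulk up} applies through the chain $H_1=H_2\supseteq H_3\supseteq\cdots\supseteq H_n$, the block-merging map of \cref{lem: smashbrick} gives the bijection, and $\B(Q')$ versus Loday is cited from \cite{PS15}. Your support argument supplies the injectivity of the merging map, which the paper leaves implicit. Your hedging about commutations is unnecessary, because $Q'$ is literally $C+w_0(C)$ for $C=(s_1,\dots,s_{n-1})$: the $C$-sorting word of $w_0$ is exactly $Q_2+\cdots+Q_n$. So the $s_i\mapsto s_{n-i}$ symmetry never enters, which is just as well: a coordinate reversal would not be absorbed by the factor $-1$, only by the mirror symmetry of Loday's realization.
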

\begin{proof}
    The result that $\B(Q')$ coincides with the Loday associahedron up to translation can be attributed to \cite{PS15}. The weights we sum are in some sense negative of the ones they use so the resulting brick polytopes are translations of the result of multiplying their brick polytopes by $-1$. The word $Q$ satisfies the hypotheses of both \cref{prop:same-normal-fan} and \cref{lem:bulk up} so $\B(Q')=\B(Q)$, and $\BB(Q)$ is a translation of $\B(Q')$. We have that $\nabla(Q,w_0)$ is in bijection with $\nabla(Q',w_0)$ by \cref{lem: smashbrick}.
\end{proof}

The word $Q$ described above is a special case of a more general construction. We give a brief and partial history of generalized associahedra.

A \textbf{Coxeter element} in $S_n$ is an element of $S_n$ obtained by taking the product of all the simple reflections in some order with each appearing exactly once. 
Let $C$ be a reduced word in simple transpositions for a Coxeter element $c$, i.e., $C$ is  some ordering of the simple reflections of $S_n$. Denote by $C^\infty$ the concatenation of $C$ to itself ad infinitum. For $w\in S_n,$ the \textbf{$C$-sorting word} for $w$ is the lexicographically first subword (in terms of positions) of $C^\infty$ that is a reduced word for $w$. Letting $q_i$ be the product of the simple reflections coming from the $i$th copy of $C$ in the $C$-sorting word for $w$, we can view $Q=(\q_1, \q_1,\q_2,\dots, \q_k)$ as a subword of $(c,c,\dots, c).$ A permutation $w\in S_n$ is called \textbf{$c$-sortable} if $q_1\geq q_2 \geq \cdots \geq q_k$. The longest permutation $w_0$ is $c$-sortable for all $c$ and we denote its $C$-sorting word by $w_0(C)$. Since $(\q_1,\q_1,\dots, \q_k)$ satisfies the conditions of both \cref{prop:same-normal-fan} and \cref{lem:bulk up}, it follows from a result of \cite{PS15}, that $\BB(Q)$ is the ``$c$-generalized associahedron'' of \cite{HLT} up to translation (and negation). We give some brief notes about the context/history of these objects before discussing one application of this perspective.

When $W$ is a finite Coxeter group and $c$ is a Coxeter element of $W$, one can define the sets of ``non-crossing partitions''  and ``$c$-clusters'' associated to $W$ and $c$. The sizes of these two sets are the same and do not depend on $c$. These sizes are called \textbf{$W$-Catalan numbers} and in type $A$ they are the regular Catalan numbers. 

Associated to each integer $n$, we have: the Loday realization of the associahedron $K_n$, the normal fan of the Loday realization of the associahedron, the Tamari lattice $T_n$, and the $n$th Catalan number. The ``Hasse diagram'' of the Tamari lattice can be obtained as an orientation of the 1-skeleton of the Loday associahedron. Reading generalized the Tamari lattice and the normal fan of the Loday associahedron to the $c$-Cambrian lattice and the $c$-Cambrian fan associated to any Coxeter element $c$ \cite{Cambrian}. Reading conjectured the $c$-Cambrian fan to be the normal fan of some polytope. Hohlweg, Lange, and Thomas introduced the \textit{$c$-generalized associahedra} $\mathrm{Asso}_c(W)$ which have $c$-Cambrian fans as their normal fans \cite{HLT}.

We will make use of a special case of the following technical result.

\begin{lem}\label{lem:easydem}
    Let $C$ be a reduced word for a Coxeter element $c$ in $S_n$. Let $a_i$ be the product of the simple reflections in $w_0(C)$ coming from the $i$th copy of $C$ in $C^{\infty}$. Then 
    \begin{align*}
        \dem(C^i)&=a_1a_2\cdots a_i\\
        &=\dem(a_1,a_2,\dots, a_i)\\
        &=\dem(a_1,a_1,a_2,a_3,\dots, a_{i-1})\\
        &=w_0\dem(a_{i+1},a_{i+2},\dots, a_{n-1})^{-1}.
    \end{align*}
    Additionally, $a_i=\dem(C^{i-1})^{-1}\dem(C^i).$

\end{lem}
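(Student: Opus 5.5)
The plan is to identify the $C$-sorting word of $w_0$ with the \emph{greedy} subword of $C^\infty$, meaning the subword obtained by scanning left to right and keeping a letter exactly when it increases the length of the product kept so far. Once that is done, every equality in \cref{lem:easydem} follows from elementary properties of the Demazure product.

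\textbf{Step 1 (sorting word $=$ greedy word).} Let $u$ be the product of the letters already chosen, and suppose the next letter $s$ of $C^\infty$ satisfies $\ell(us)>\ell(u)$. Then $us$ has a reduced extension $us\cdot x=w_0$ with $\ell(w_0)=\ell(us)+\ell(x)$. Because $C^\infty$ contains infinitely many copies of $C$, the tail of $C^\infty$ after $s$ contains a reduced word for $x$ as a subword. So keeping $s$ never prevents completion to a reduced word for $w_0$, and lexicographic minimality of positions forces us to keep it. If instead $\ell(us)<\ell(u)$, then $s$ cannot be kept in any reduced word. Hence $w_0(C)$ is exactly the greedy subword. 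By the definition of the Demazure product via simple letters, the greedy subword of any finite word $W$ is a reduced word for $\dem(W)$. Applying this to the prefix $C^i$ gives $\dem(C^i)=a_1a_2\cdots a_i$ with lengths adding.

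\textbf{Step 2 (the next three equalities).} Since $\ell(a_1\cdots a_i)=\sum\ell(a_j)$, the Demazure product coincides with the ordinary product, so $\dem(a_1,\dots,a_i)=a_1\cdots a_i$. Starting from $e$, every letter increases length, so the greedy word keeps the entire first copy and $a_1=c$. Associativity of $\star$ then gives
\[
\dem(C^i)=c\star\dem(C^{i-1})=a_1\star(a_1\cdots a_{i-1})=\dem(a_1,a_1,a_2,\dots,a_{i-1}),
\]
where the middle equality uses $\dem(C^{i-1})=a_1\cdots a_{i-1}$ from Step 1. The final identity is immediate: $a_i=\dem(C^{i-1})^{-1}\dem(C^i)=(a_1\cdots a_{i-1})^{-1}(a_1\cdots a_i)$.

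\textbf{Step 3 (the $w_0$ formula).} I would show that the sorting word of $w_0$ is supported in the first $n-1$ copies, i.e.\ $a_1\cdots a_{n-1}=w_0$ is reduced and $a_j=e$ for $j\ge n$. Granting this, $a_{i+1}\cdots a_{n-1}=(a_1\cdots a_i)^{-1}w_0$ is also reduced, so it equals $\dem(a_{i+1},\dots,a_{n-1})$. Therefore
\[
w_0\,\dem(a_{i+1},\dots,a_{n-1})^{-1}=w_0w_0^{-1}a_1\cdots a_i=a_1\cdots a_i .
\]

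I expect the bound of $n-1$ copies to be the main obstacle. My first attempt would be a support argument: since $w_0$ is $c$-sortable, the supports of $a_1\ge a_2\ge\cdots$ are nested. I would then show that the support strictly shrinks at each step, which would force $a_n=e$. This rests on the facts that at each stage the greedy word drops at least one generator, and that whatever remains continues to act like a Coxeter element of a parabolic subgroup. If that argument stalls, I would fall back on the known result of Reading that the $c$-sorting word of $w_0$ has at most $n-1$ nonempty blocks, or equivalently on $\ell(w_0)=\binom n2$ together with an estimate $\ell(a_j)\le n-j$.
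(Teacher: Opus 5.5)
Your Steps 1 and 2, and the reduction in Step 3 to the claim that $a_1\cdots a_{n-1}=w_0$ with $a_j=e$ for $j\ge n$, match the paper's proof. The paper identifies $w_0(C)$ with the greedy subword, reads off $\dem(C^i)=a_1\cdots a_i$, uses length-additivity and associativity of $\star$, and then simply asserts $w_0=a_1a_2\cdots a_{n-1}$.

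The gap is in how you propose to prove that last claim: both of your self-contained routes fail for non-linear $c$. Take $n=4$ and $C=(s_1,s_3,s_2)$. Starting from $1234$, the first copy gives $2134,2143,2413$ and the second gives $4213,4231,4321=w_0$, with every letter increasing length. So $a_1=a_2=s_1s_3s_2$ and $a_3=e$. The support does not shrink from the first block to the second, and $\ell(a_2)=3>n-2$. Hence neither ``one generator is dropped per copy'' nor $\ell(a_j)\le n-j$ holds. The bound on the number of nonempty blocks is true, but for a different reason, and as written your plan for it rests only on the unverified appeal to the literature.

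A direct way to close the gap is to prove $\dem(C^{n-1})=w_0$.
\begin{itemize}
\item Since $w\star s_i$ swaps the entries in positions $i,i+1$ exactly when $w(i)<w(i+1)$, $\dem(C^{r})$ is obtained from $12\cdots n$ by $r$ rounds of such comparators, each round firing every $s_i$ once.
\item These comparators commute with thresholding $w\mapsto(\mathbf{1}[w(p)>t])_{p}$. So it suffices to treat a $0/1$ string with $k$ ones at the right end and $z=n-k$ zeros, where a comparator moves a one left past a zero.
\item Let $d_j^{(r)}$ be the number of zeros to the left of the $j$th one after round $r$. Ones never move right, so $d_j^{(r)}$ is non-increasing in $r$. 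By induction on $r$, $d_j^{(r)}\le\max(0,z-r+j-1)$.
\item The only case needing work is $d_j^{(r-1)}=D:=z-r+j>0$, with the $j$th one at position $p$. The inductive bound $d_{j-1}^{(r-1)}\le D-1$ places the $(j-1)$st one at position at most $p-2$ throughout round $r$. Only $s_{p-1}$ can move the $j$th one, and when it fires it sees a zero in position $p-1$, so the $j$th one moves.
\item At $r=n-1$ the bound is $j-k\le 0$. So every threshold string is sorted, and $\dem(C^{n-1})=w_0$.
\end{itemize}
By your Step 1 this gives $a_1\cdots a_{n-1}=w_0$ and $a_j=e$ for $j\ge n$. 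In the only case the paper actually uses, $c=s_1s_2\cdots s_{n-1}$, you can instead compute $a_i=s_1\cdots s_{n-i}$ directly.
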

\begin{proof}
    Since the $C$-sorting word of $w_0$ is defined as the lexicographically first subword of $C^{\infty}$ which has product $w_0$, it can be built by reading through $C^{\infty}$ from left to right and taking every letter that increases the length. That is exactly the Demazure product. This shows that $a_1\dots a_i = \dem(C^i)$ and consequently $a_i=\dem(C^{i-1})^{-1}\dem(C^i).$ The fact that letters are only taken when they increase the length means that $\dem(a_1,\dots,a_i)=a_1a_2\cdots a_i$ and $\dem(a_i,a_{i+1},\dots, a_{n-1})=a_ia_{i+1}\cdots a_{n-1}.$ Since $w_0=a_1a_2\cdots a_{n-1},$
     \begin{align*}
        w_0\dem(a_{i+1},\dots, a_{n-1})^{-1}&=w_0\cdot (a_{i+1}a_{i+2}\cdots a_{n-1})^{-1}\\
        &=a_1a_2\cdots a_{n-1} \cdot (a_{i+1}a_{i+2}\cdots a_{n-1})^{-1}\\
        &=a_1a_2\cdots a_{i}.
    \end{align*}
    Finally, 
      \begin{align*}
    \dem(a_1,a_1,a_2,a_3,\dots, a_{i-1})
    &=a_1\star \dem(a_1,\dots, a_{i-1})\\ 
    &=\dem(C)\star (a_1a_2\cdots a_{i-1})\\
    &=\dem(C) \star \dem(C^{i-1})\\
    &=\dem(C^i).
\end{align*}
\end{proof}
In particular, when $c=s_1s_2\dots s_{n-1},$ we have $a_i=s_1s_2\dots s_{n-i}$ and we get the Loday realization of the associahedron.

\subsection{Pipe dreams and twists}\label{subsection:pipe-dreams}

Subword complexes were originally defined by Knutson and Miller \cite{KnutsonMiller} to describe ``pipe dreams''. Here we only care about pipe dreams ``for the permutation'' written  $1~~n+1~~n~\cdots~4~3~~2$ in one-line notation. These pipe dreams were first shown to be Catalan many by Woo \cite{woo}. We will introduce these objects directly and will refer to them as $(1,n)$-twists.

A $(1,n)$-twist is a filling of the NW triangle (including the antidiagonal) of an $(n+1)\times (n+1)$ square with $\bcross$, $\bump$, $\are$, and $\jay$ such that the `pipes' passing through the tiles form continuous curves, there are pipes starting in the left of every row but the topmost one and ending at the top of every column but the leftmost one,  the pipe starting in the $i$th row from the bottom ends in the $(i+1)$st column from the left, and any pair of pipes crosses in at most one location.  We will say the tile in the $i$th row from the bottom and the $j$th column from the left is in position $(i,j)$. We will refer to the pipe starting in the $i$th row from the bottom as the $i$ pipe.

\begin{figure}
     \centering
     \begin{subfigure}[b]{0.18\textwidth}
         \centering
         \[
\begin{mosaic}[font=\Large\sffamily]{0.6}
  \&  \& \text{$1$}\& \text{$2$} \& \text{$3$}  \\
 \& \tileii \& \tileix \& \tileix \& \tileiv \\
\text{$3$} \& \tileviii \& \tileviii \& \tileiv \\
\text{$2$} \& \tileix \& \tileiv\\
\text{$1$} \& \tileiv \\
\end{mosaic}
\]
        
         \label{fig:y equals x}
     \end{subfigure}
     \hfill
     \begin{subfigure}[b]{0.18\textwidth}
         \centering
         \[
\begin{mosaic}[font=\Large\sffamily]{0.6}
  \&  \& \text{$1$}\& \text{$2$} \& \text{$3$}  \\
 \& \tileii \& \tileviii \& \tileix \& \tileiv \\
\text{$3$} \& \tileix \& \tileviii \& \tileiv \\
\text{$2$} \& \tileix \& \tileiv\\
\text{$1$} \& \tileiv \\
\end{mosaic}
\]
        
         \label{fig:three sin x}
     \end{subfigure}
     \hfill
     \begin{subfigure}[b]{0.18\textwidth}
         \centering
         \[
\begin{mosaic}[font=\Large\sffamily]{0.6}
  \&  \& \text{$1$}\& \text{$2$} \& \text{$3$}  \\
 \& \tileii \& \tileix \& \tileix \& \tileiv \\
\text{$3$} \& \tileviii \& \tileix \& \tileiv \\
\text{$2$} \& \tileviii \& \tileiv\\
\text{$1$} \& \tileiv \\
\end{mosaic}
\]
       
         \label{fig:five over x}
     \end{subfigure}
     \begin{subfigure}[b]{0.18\textwidth}
         \centering
         \[
\begin{mosaic}[font=\Large\sffamily]{0.6}
  \&  \& \text{$1$}\& \text{$2$} \& \text{$3$}  \\
 \& \tileii \& \tileix \& \tileviii \& \tileiv \\
\text{$3$} \& \tileix \& \tileix \& \tileiv \\
\text{$2$} \& \tileviii \& \tileiv\\
\text{$1$} \& \tileiv \\
\end{mosaic}
\]
     
         \label{fig:five over x}
     \end{subfigure}
      \begin{subfigure}[b]{0.18\textwidth}
         \centering
         \[
\begin{mosaic}[font=\Large\sffamily]{0.6}
  \&  \& \text{$1$}\& \text{$2$} \& \text{$3$}  \\
 \& \tileii \& \tileviii \& \tileviii \& \tileiv \\
\text{$3$} \& \tileix \& \tileix \& \tileiv \\
\text{$2$} \& \tileix \& \tileiv\\
\text{$1$} \& \tileiv \\
\end{mosaic}
\]
       
         \label{fig:five over x}
     \end{subfigure}
        \caption{The five $(1,3)$-twists}
        \label{fig:three graphs}
\end{figure}

The elements of $\nabla(Q',w_0)$ with $Q'$ as in \cref{prop:is Loday} are in bijection with $(1,n)$-twists. We associate the square in position $(i,j)$ of a $(1,n)$-twist with the simple transposition $s_{i-j}$ for $i>j$ and $(i,j)\neq (n+1,1)$. We read the squares column by column, starting from the leftmost column and proceeding right, and reading from bottom to top within each column. We skip over the squares on and below the antidiagonal ($i\leq j$) as well as the top left corner $(n+1,1)$.
Writing down the associated transpositions as we go gives the word $Q'=(s_1,s_2\dots s_{n-1})+(s_1,s_2,\dots s_{n-1})+(s_1,s_2,\dots, s_{n-2})+\dots +(s_1,s_2,s_3)+(s_1,s_2)+(s_1)$ where $+$ is concatenation. Define $f(\ell)$ to be the $\ell$th position $(i,j)$ according to this reading order of the squares of a $(1,n)$-twist.
A specific $(1,n)$-twist $T$ is mapped (bijectively) to the subword $\xi(T)=(v_1,\dots,v_{n(n+1)/2-1}) \in \nabla(Q,w_0)$ with \[v_\ell=\begin{cases} q_{\ell} & \text{there is a cross tile in position} f(\ell) \text{ of } T\\ \id & \text{else} \end{cases}\] 
where $q_\ell$ is the $\ell$'th letter of $Q'$.
We now set $\sigma_i=s_1s_2\cdots s_{n-i+1}$ for $i\geq 2$, $\sigma_1=\sigma_2$, and $Q=(\sigma_1,\dots, \sigma_n)$ as in \cref{prop:is Loday}. By \cref{prop:is Loday}, $\nabla(Q',w_0)$ is in bijection with $\nabla(Q,w_0)$. Here $\sigma_i$ is the product of the simple transposition coming from the $i$th column of a $(1,n)$-twist. Following the bijection between $\nabla(Q,w_0)$ and $\nabla(Q',w_0),$ a specific $(1,n)$-twist $T$ is mapped (bijectively) to the subword $\Xi(T):=(v_1,\dots,v_{n}) \in \nabla(Q,w_0)$ with \[v_j=\prod_{i=1}^{\min(n-j+1,n-1)}\begin{cases} s_{i} & \text{there is a cross tile in position} (i+j,j) \text{ of } T\\ \id & \text{else.} \end{cases}\] 
If we view $(1,n)$-twists as wiring diagrams, we can understand the product $\Xi(T)_{(k)}=v_1v_2\cdots v_k$ as the ordering of the pipes after the first $k$ columns. An example of this is visualized in figure \cref{fig:PDBrick}. With the notation $\pi_{T,k}:=\Xi(T)_{(k)}$, the following lemma formalizes this intuition.

\begin{figure}
    \centering
    \includegraphics[width=0.8\linewidth]{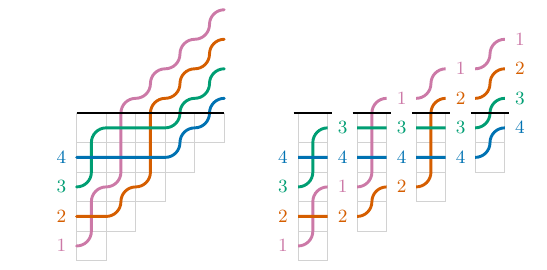}
    \caption{Visualization of the column permutations $\pi_{T,k}$ of a $(1,n)$-twist. On the left is a $(1,n)$-twist $T$. On the right, the 4 columns of $T$ are separated out to show the permutations of the pipes that appear between them. In this instance $(\pi_{T,1},\pi_{T,2},\pi_{T,3},\pi_{T,4})=(2143,2431,4321,4321)$. The pipes are shown as continuing above the $(1,n)$-twist and its columns to visualize how the column permutations incorporate pipes that have already exited the $(1,n)$-twist.  }
    \label{fig:PDBrick}
\end{figure}

\begin{lem}\label{lem:aaaah}
    Let $p$ be a path in a $(1,n)$-twist $T$ that starts below the antidiagonal between the $k$th and $k+1$st columns, travels up until it reaches the top of $T$, and then travels left until it reaches the top left corner of $T$. The path $p$ will intersect every pipe of $T$ and the $j$th pipe it intersects will be $\pi_{T,k}(j)$.
\end{lem}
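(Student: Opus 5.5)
Induction on $k$ is the natural route. Fix conventions first. A $(1,n)$-twist has rows indexed $1,\dots,n+1$ from the bottom and columns $1,\dots,n+1$ from the left. The $i$ pipe enters at the left of row $i$ ($i=1,\dots,n$) and exits at the top of column $i+1$. The tile at $(i,j)$ with $i>j$ carries $s_{i-j}$. The path $p_k$ starts just below the antidiagonal between columns $k$ and $k+1$, so near position $(k,k)/(k+1,k+1)$. It goes up between columns $k$ and $k+1$ through rows $k+1,\dots,n+1$, then goes left along the top edge over columns $k,k-1,\dots,2$.

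The count of pipes is immediate. Vertical edges crossed by $p_k$ on the way up lie in rows $k+1,\dots,n+1$, which gives $n-k+1$ pipe crossings. The tile at $(n+1,k)$ is never a cross in the top row except possibly along the edge, and one checks that it always passes a pipe horizontally there. Top edges of columns $2,\dots,k$ give $k-1$ crossings. That is $n$ intersections in total, one with each pipe, since the horizontal segments between columns in a row each carry exactly one pipe. The real content is identifying the order.

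\textbf{Base case $k=0$.} The path runs up the left boundary and meets the pipes $1,2,\dots,n$ in order, because pipe $i$ starts at row $i$. The top row contributes nothing to the left, since the corner $(n+1,1)$ is a $\tileii$-type tile. So the order is $\pi_{T,0}=\id$.

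\textbf{Inductive step $k-1\to k$.} Compare $p_{k-1}$ and $p_k$. They differ only across column $k$: $p_{k-1}$ goes up the left side of column $k$ from row $k$, and $p_k$ goes up the right side from row $k+1$ and then over the top of column $k$. Traverse column $k$ from bottom to top. Its first tile, at $(k,k)$ on the antidiagonal, is a $\jay$ elbow, which turns the pipe on its left edge into the top edge, so it becomes the one on the left edge of $(k+1,k)$. Each subsequent tile $(k+j,k)$, $j=1,\dots,n-k$, either swaps the $j$th and $(j+1)$st pipes in the ordering read by the path, if it is a cross, or keeps them, if it is a bump. Here ``ordering read by the path'' means listing pipes along the left side of column $k$ versus along the right side plus top. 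The last pipe exits through the top of column $k$ and is appended in position $n-k+1$, followed by the remaining top crossings.

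So the ordering along $p_k$ is the ordering along $p_{k-1}$ right-multiplied by $\prod_{j} s_j^{[\text{cross at }(k+j,k)]}$, taken in increasing $j$. This is exactly the formula for $v_k$ in $\Xi(T)$, which gives $\pi_{T,k}=\pi_{T,k-1}v_k$. Right multiplication is the correct action: a cross at the $j$-th slot swaps positions $j,j+1$ of the list, which in one-line notation is $\pi\mapsto \pi s_j$.

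\textbf{Main obstacle.} The hard part is making the local bookkeeping airtight: positions along $p_{k-1}$ in rows $\ge k$ must line up with positions along $p_k$ shifted by the elbow at $(k,k)$. The cleanest way is to homotope $p_{k-1}$ to $p_k$ one tile at a time, pushing it across a single tile of column $k$. Each push either leaves the ordering unchanged (bump) or transposes two adjacent entries (cross), and the top-row pass accounts for the index shift. For $k=1$, note the convention $v_1$ uses $\min(n,n-1)=n-1$ tiles, because the corner $(n+1,1)$ is excluded. This is consistent with the homotopy, since that corner tile never swaps.
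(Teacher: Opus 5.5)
The paper gives no proof of this lemma. It is stated as the formalization of the wiring-diagram picture in the figure of the column permutations $\pi_{T,k}$ and then used directly, so your argument supplies a justification rather than reproducing one. Your strategy is the right one and is correct in substance. You induct on $k$ starting from the left boundary ($\pi_{T,0}=e$) and push the path across column $k$ one tile at a time. A cross transposes two adjacent entries of the reading list, while bumps, the elbow $(k,k)$ and the corner $(n+1,1)$ change nothing. Hence the list for $p_k$ is the list for $p_{k-1}$ times $v_k$ (right multiplication, letters in increasing order), i.e. $\pi_{T,k}=\pi_{T,k-1}v_k$. This also shows every pipe is met exactly once, since the list is a permutation. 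Your bare count of $n$ crossings does not show that by itself. Alternatively, pipes move only up and right, so pipe $i$ crosses the vertical part of $p_k$ exactly when $i\ge k$.

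However, the bookkeeping in your inductive step is off by one, and read literally it gives the wrong answer. For $k\ge 2$ the non-elbow tiles of column $k$ are $(k+1,k),\dots,(n+1,k)$, carrying $s_1,\dots,s_{n-k+1}$. Your range $j=1,\dots,n-k$ drops the top-row tile $(n+1,k)$, whose letter does occur in $v_k$; only for $k=1$ is the range $1,\dots,n-1$, because the corner is excluded. Contrary to your remark, top-row tiles can be crosses. The first $(1,3)$-twist in the paper has crosses at $(4,2)$ and $(4,3)$, so $v_2=s_2$ there, and omitting $(4,2)$ would give $\pi_{T,2}=213$ instead of $231$. Two smaller slips: the elbow at $(k,k)$ passes the pipe on its left edge to the \emph{bottom} edge of $(k+1,k)$, not its left edge. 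And nothing is appended: the pipe leaving the top of column $k$ is pipe $k-1$, which sits in position $n-k+2$ of the list for $p_k$.

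The clean way to write the sweep uses intermediate paths. For $k\le r\le n+1$, let $p^{(r)}$ go up the right side of column $k$ through rows $k+1,\dots,r$, cross the top edge of $(r,k)$, go up the left side of column $k$ through rows $r+1,\dots,n+1$, and then go left along the top over columns $k-1,\dots,2$. Then:
\begin{itemize}
\item Each $p^{(r)}$ reads exactly $n$ pipe segments.
\item $p^{(k)}$ reads the same list as $p_{k-1}$, because of the elbow at $(k,k)$.
\item $p^{(n+1)}$ is $p_k$, because the top edge of $(n+1,k)$ is the top of column $k$.
\item Passing from $p^{(r-1)}$ to $p^{(r)}$ replaces the bottom and left edges of $(r,k)$, in positions $r-k$ and $r-k+1$, by its right and top edges. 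This swaps those two entries exactly when $(r,k)$ is a cross, i.e. it multiplies on the right by $s_{r-k}$.
\end{itemize}
For $k=1$ the last step crosses the corner, which changes nothing. Written this way, there is no index shift to account for.
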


Combining \cref{lem:aaaah} with the definition of the bulky brick polytope, we arrive at the following proposition.

\begin{prop}\label{lem:coordinates-Loday}
Set $\sigma_i=s_1s_2\cdots s_{n-i+1}$ for $i\geq 2$, $\sigma_1=\sigma_2$, and $Q=(\sigma_1,\dots, \sigma_n)$ as in \cref{prop:is Loday}.
The elements of $\nabla(Q,w_0)$ are 
     \[\nabla(Q,w_0)=\{(\pi_{T,1},\pi_{T,1}^{-1}\pi_{T,2},\pi_{T,2}^{-1}\pi_{T,3},\dots, \pi_{T,n}^{-1}\pi_{T,n+1}): T \text{ is a } (1,n)\text{-twist}\}. \]

    Up to translation and reflection, the Loday associahedron is
    \[\conv_{T \text{ a } (1,n) \text{-twist}}\sum_{j=1}^{n+1}\pi_{T,j}(\sum_{i=1}^{n}\omega_{i}).\]
\end{prop}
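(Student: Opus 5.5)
The plan is to assemble the statement from the explicit bijection $\Xi$ between $(1,n)$-twists and $\nabla(Q,w_0)$, from \cref{lem:aaaah}, and from \cref{prop:is Loday}. There are two claims: a description of $\nabla(Q,w_0)$, and a formula for the Loday associahedron.

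\textbf{First claim.} By \cref{prop:is Loday} and the discussion of pipe dreams, $\Xi$ is a bijection from $(1,n)$-twists onto $\nabla(Q,w_0)$. Write $\Xi(T)=(v_1,\dots,v_n)$. By definition $\pi_{T,k}=\Xi(T)_{(k)}=v_1\cdots v_k$, so $v_1=\pi_{T,1}$ and $v_k=\pi_{T,k-1}^{-1}\pi_{T,k}$ for $2\le k\le n$. Each element of $\nabla(Q,w_0)$ is therefore recovered from consecutive quotients of the partial products $\pi_{T,k}$, which gives the displayed description.

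\textbf{Indexing.} The displayed tuple runs to $\pi_{T,n}^{-1}\pi_{T,n+1}$, while $Q$ has only $n$ letters. I would read $\pi_{T,n+1}$ as $\pi_{T,n}=w_0$; there is no $(n+1)$st column to cross (the last column has no cross tiles), so this extra entry is $\id$ and harmless. Equivalently, the indexing is shifted by the convention $\sigma_1=\sigma_2$. I would make this convention explicit rather than hide it.

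\textbf{Second claim.} By \cref{prop:is Loday}, $\BB(Q)$ coincides with the Loday associahedron up to translation and negation. By definition
\[\BB(Q)=\conv_{J\in\nabla(Q,w_0)}\sum_{k=1}^{n}J_{(k)}\Big(\sum_{i=1}^{n}\omega_i\Big).\]
Substituting $J=\Xi(T)$ gives $J_{(k)}=\pi_{T,k}$, so each bulky brick vector equals $\sum_{k}\pi_{T,k}(\sum_i\omega_i)$. If the sum is taken to $n+1$, the additional term is $w_0(\sum_i\omega_i)$. This term is independent of $T$, so it only translates the polytope.

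\textbf{Role of \cref{lem:aaaah} and the main obstacle.} \cref{lem:aaaah} is what makes $\pi_{T,k}$ readable off the picture: $\pi_{T,k}(j)$ is the $j$th pipe met by the path. It is not logically needed for the equality above, but it justifies the interpretation of $\pi_{T,k}$ as pipe order. The main obstacle is exactly the index bookkeeping: matching the column-$k$ product of $\Xi$ with the $k$th letter $\sigma_k$ of $Q$, given $\sigma_1=\sigma_2$. I would check directly that column $j$ of a twist contributes crosses only at $s_i$ with $i\le \min(n-j+1,n-1)$, so that $v_j\le\sigma_j$ holds. Everything else is substitution.
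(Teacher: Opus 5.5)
Your proposal is correct and follows the paper's own (essentially one-line) argument. Since $\pi_{T,k}$ is defined as $\Xi(T)_{(k)}$, both claims follow by substituting the bijection $\Xi$ into the definition of $\BB(Q)$ and invoking \cref{prop:is Loday}, with \cref{lem:aaaah} serving only to interpret $\pi_{T,k}$ as a pipe ordering; your reading of the off-by-one in the displayed indices, where the extra summand is the $T$-independent vector $w_0(\sum_i\omega_i)$ and hence only a translation, is the right way to make the statement literally true.
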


For any $\pi\in S_n,$ $\pi(\sum_{i=1}^{n}\omega_{i})=\pi(n,n-1,\dots, 2,1)=(n+1-\pi^{-1}(1),n+1-\pi^{-1}(2),\dots, n+1-\pi^{-1}(n))$. Thus, up to translation, the vertex of the Loday associahedron associated with a $(1,n)$-twist $T$ can be obtained by looking at the permutations of pipes between columns of $T$, taking their inverses, and adding their one-line representations as vectors. The interpretation of the Loday construction that views $(1,n)$-twists as elements of $\nabla(Q',w_0)$ is well known. As far as we are aware the interpretation of the Loday construction through the column permutations $\pi_{T,k}$ of $(1,n)$-twists has not appeared before in the literature. Through \cref{thm:Minkowski} and \cref{Loday mink sum} we show that the convex hull and sum in \cref{lem:coordinates-Loday} can be exchanged. That is, up to translation and reflection, the Loday associahedron has the following Minkowski decomposition:
 \begin{align*}
     \BB(Q)&=\conv_{T \text{ a } (1,n) \text{-twist}}\sum_{j=1}^{n+1}\pi_{T,j}(\sum_{i=1}^{n}\omega_{i})\\
     &=\sum_{j=1}^{n+1} \conv_{T \text{ a } (1,n) \text{-twist}} \pi_{T,j}(\sum_{i=1}^{n}\omega_{i}).
 \end{align*}

\begin{figure}\label{figure:cambrian}
    \centering
   \begin{subfigure}[b]{0.3\textwidth}
       \centering
                \[
\begin{mosaic}[font=\Large\sffamily]{0.6}
  \&  \& \text{$1$}\& \text{$2$} \& \text{$3$}  \\
     \& \tileii \& \textile{$s_2$} \& \textile{$s_1$} \& \tileiv \\
\text{$3$} \& \textile{$s_2$} \& \textile{$s_1$} \& \tileiv \\
\text{$2$} \& \textile{$s_1$} \& \tileiv\\
\text{$1$} \& \tileiv \\
\end{mosaic}
\]
   \end{subfigure}
\begin{subfigure}[b]{0.3\textwidth}
   \centering
       \[
\begin{mosaic}[font=\Large\sffamily]{0.6}
  \&  \& \text{$1$}\& \text{$2$} \& \text{$3$} \&\text{$4$} \\
 \&  \tileii \& \textile{$s_3$} \& \textile{$s_2$} \& \textile{$s_1$} \& \tileiv \\
\text{$4$} \& \textile{$s_3$} \& \textile{$s_2$} \& \textile{$s_1$} \& \tileiv \\
\text{$3$} \& \textile{$s_2$} \& \textile{$s_1$} \& \tileiv \\
\text{$2$} \& \textile{$s_1$} \& \tileiv\\
\text{$1$} \& \tileiv \\
\end{mosaic}
\]
   \end{subfigure}
    \caption{Visualization of the simple transpositions corresponding to each cell of a $(1,n)$-twist}
    \label{fig:placeholder}
\end{figure}

\subsection{Trees, $(1,n)$-twists, and bracket vectors}\label{subsection:bracket-vectors}

We will now describe the relationship between $(1,n)$-twists, rooted binary trees, and bracket vectors. A bijection between $(1,n)$-twists and rooted binary trees is described in \cref{thm:bijection-with-rooted-binary-tree}.

In \cref{defn:binary-tree} and \cref{defn:bracket-vector} below, we will define rooted binary trees and bracket vectors.

\begin{defn}\label{defn:binary-tree}
    A \textbf{full} binary tree is a rooted tree so that every non-leaf vertex has two children. Let $T$ be a full binary tree with $n$ internal vertices. The \textbf{infix numbering} on $T$ is a labeling of the internal vertices of $T$ with numbers $\{1,\dotsc,n\}$ so that the label of the $i$th vertex is greater than the labels of all vertices in the left sub-tree of vertex $i$ and less than the labels of all vertices in the right sub-tree of vertex $i$. 
    \end{defn}

\begin{defn}\label{defn:bracket-vector}
    Consider a rooted binary tree $T$ with $n$ internal vertices, and number the vertices of the sub-tree of $T$ consisting of internal vertices in infix numbering. The \textbf{bracket vector} of $T$ is  $\mbf{b}(T)=(b_1,b_2,\dotsc,b_n)$, where $b_i$ is one less than the number of leaves in the left sub-tree of the internal vertex with infix number $i$. Define the \textbf{complemented bracket vector} of $T$ is $\ol{\mbf{b}}(T)=(\ol b_1,\ol b_2,\dotsc,\ol b_n)$ with $\ol b_i=i-1-b_i$
\end{defn}

Below, we give an example of a binary tree, together with its complemented bracket vector.

\begin{egg}\label{egg:bracket-vector}
    Consider the tree $T$ of \cref{egg:infix-numbering}. The bracket vector of $T$ is
$\mbf{b}(T)=(b_1,b_2,b_3,b_4)=(0,0,2,0)$. The complemented bracket vector of $T$ is $\ol{\mbf{b}}(T)=(0,1,0,3)$. 
\end{egg}

\begin{figure}
     \centering
     \[
    \begin{tikzcd}[row sep = 1.1em, column sep = 1.7em]
	    & &  & 3 & & &\\
	      & 1 & &  & & 4 &\\
	\bullet & &  2 & & \bullet & & \bullet  \\
	& \bullet & & \bullet &  & & & \\
 	\arrow[no head, from=1-4, to=2-2]
 	\arrow[no head, from=2-2, to=3-1]
 	\arrow[no head, from=2-2, to=3-3]
 	\arrow[no head, from=3-3, to=4-2] 	\arrow[no head, from=3-3, to=4-4]
 	\arrow[no head, from=1-4, to=2-6] 	\arrow[no head, from=2-6, to=3-5]
    \arrow[no head, from=2-6, to=3-7]
\end{tikzcd}
\]
     \caption{A full rooted binary tree with internal vertices labeled in infix order}
     \label{egg:infix-numbering}
 \end{figure}

In \cref{defn:bump-tile} below, we will define the \textit{bump vector} of a $(1,n)$-twist, and in \cref{egg:twist}, we give an example of a $(1,n)$-twist together with its bump vector.

\begin{defn}\label{defn:bump-tile}
    Define the \textbf{bump vector} of a $(1,n)$-twist $D$ to be $\b(D)=(\ol b_1,\dots, \ol b_{n})$ with $\ol b_i+1$ being the index of the leftmost column of $D$ that has a $\bump$ or a $\jay$ in the $i$th row. I.e. $\ol b_i$ is the smallest number such that $(i,\ol b_i+1)$ is a $\bump$ tile or a $\jay$ tile. 
\end{defn}

\begin{egg}\label{egg:twist}
    Consider the tree $T$ from \cref{egg:infix-numbering}. Under the bijection of \cref{thm:bijection-with-rooted-binary-tree}, the tree $T$  corresponds to the following $(1,4)$-twist:  
\[
\begin{mosaic}[font=\Large\sffamily]{1}
  \& \text{$5$} \& \text{$1$}\& \text{$2$} \& \text{$3$} \&\text{$4$} \\
\text{$5$} \&  \tileii \& \tileix \& \tileix \& \tileviii \& \tileiv \\
\text{$4$} \& \tileix \& \tileix \& \tileix \& \tileiv \\
\text{$3$} \& \tileviii \& \tileviii \& \tileiv \\
\text{$2$} \& \tileix \& \tileiv\\
\text{$1$} \& \tileiv \\
\end{mosaic}
\]
The bump vector is $(0,1,0,3)$.
\end{egg}

The theorem below gives a bijection between $(1,n)$-twist and rooted binary trees.

\begin{thm}\label{thm:bijection-with-rooted-binary-tree}
    There is a bijection $\mfk{T}$ between $(1,n)$-twists and rooted binary trees with $n$ internal nodes. The tree associated with a $(1,n)$-twist is constructed as follows:
    \begin{itemize}
        \item Place the root at the $\are$ tile in the northwest corner of the $(1,n)$-twist
        \item Place an internal vertex at the position of each $\bump$ tile.
        \item Place a leaf at each $\jay$ tile.
        \item If two vertices (including leafs and the root) lie in the same row or the same column with no vertex lying directly between the two, we connect them by an edge.
    \end{itemize}
   For any  $(1,n)$-twist $D$, $\b(D)=\b(\mfk{T}(D))$. That is, the complemented bracket vector of the image of $D$ is the bump vector of $D$.
\end{thm}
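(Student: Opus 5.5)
The plan is to prove the two parts of \cref{thm:bijection-with-rooted-binary-tree} separately. First, the construction $\mfk{T}$ produces a full binary tree with $n$ internal nodes, and it is a bijection. Second, the bump vector of $D$ equals the complemented bracket vector of $\mfk{T}(D)$.

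For the first part, the key observation is local. In a $(1,n)$-twist, a $\bump$ tile at $(i,j)$ has two pipe segments. One exits upward in column $j$; the other arrives from the left in row $i$ and exits upward. Equivalently, looking up and to the right, each $\bump$ or the root $\are$ has exactly one neighbouring vertex further right in its row and exactly one further up in its column.

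So I would orient the tree so that the parent of a vertex is the nearest vertex to its left in the same row or below it in the same column. The following steps then remain.
\begin{enumerate}
\item Every $\bump$ and the $\are$ tile have exactly two children: the first non-cross tile to the right in the row and the first non-cross tile above in the column. Such tiles exist because every row $i$ and every column $j$ terminates in a $\jay$ on the antidiagonal.
\item Every non-root vertex has exactly one parent.
\item There are no cycles, since parents are strictly weakly south-west.
\item Counting gives the number of $\bump$ tiles as $n$. A reduced word for $1~n{+}1~n\cdots 2$ has length $\binom{n}{2}$, and the triangle has $\binom{n+2}{2}-1-(n+1)$ non-antidiagonal squares apart from the corner, leaving $n$ elbow tiles.
\end{enumerate}
Let me double check step 4 against the example, which has exactly $4$ $\bump$ tiles for $n=4$.

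For bijectivity, I would construct the inverse recursively. Given a tree with root subtrees $L$ and $R$, place the right child of the root in row $n+1$ and the left child in column $1$. The twist then decomposes into the twist for $L$ placed along the left column block and the twist for $R$ placed along the top row block, with crosses filling the rectangle between them. I would verify that this is forced by the pipe conditions: pairs of pipes cross at most once, and the pipe in row $i$ ends in column $i+1$. The induction on $n$ then gives both injectivity and surjectivity. This also matches the Catalan count of Woo.

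For the bump vector identity, I would use the same recursive decomposition and induct on $n$. The first task is to identify which row is associated with the internal node of infix label $i$. My guess is that the internal nodes in row $i$, read from the left, correspond to the chain of left-children; the first $\bump$ or $\jay$ in row $i$ sits at column $\ol b_i+1$. Then $\ol b_i$ counts the crosses to its left in that row.

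Each such cross is the crossing of pipe $i$ with a pipe $k<i$. This matches $i-1-b_i$, the number of infix labels less than $i$ that are not in the left subtree of node $i$. That identification is the main obstacle. I would first test it on \cref{egg:twist}, then prove it by induction through the root decomposition: node labels in $L$ keep their rows, and labels in $R$ shift by $|L|+1$, so the complemented bracket entries shift by exactly the number of crosses added in the connecting rectangle.
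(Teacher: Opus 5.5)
Your bump-vector argument is the paper's proof: induct through the root decomposition, where labels in $L$ keep their rows, while labels in $R$ and their rows both shift by $|L|+1$. That shift is exactly the number of crosses the connecting rectangle adds to those rows. Your recursive inverse is also the paper's. The trouble is the local verification in steps 1--4, which is wrong as written.

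\emph{Orientation.} Your orientation is upside down. The root $\are$ sits at $(n+1,1)$, the \emph{top} of column $1$, and column $j$ ends at the $\jay$ in position $(j,j)$ at its \emph{bottom}. So the two children of the root, or of a $\bump$ at $(r,c)$, are the first non-cross tile to the right in row $r$ and the first non-cross tile \emph{below} in column $c$. Parents therefore lie weakly north-west, not south-west. In \cref{egg:twist} the root and the bumps at $(5,4)$ and $(3,2)$ have no vertex above them at all. Note also that a $\bump$ joins bottom-to-right and left-to-top. Your own recursive description, with the left child in column $1$, already uses the correct orientation, with the child below being the left child.

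\emph{Count.} Your count is off: $\binom{n+2}{2}-1-(n+1)-\binom{n}{2}=n-1$. So there are $n-1$ bump tiles, and together with the root this gives $n$ internal nodes. \cref{egg:twist} has three bumps, at $(5,4)$, $(3,1)$ and $(3,2)$, not four.

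\emph{Step 2.} Step 2 is where the real content lies, and you assert it without proof. ``At most one parent'' is not a local fact.

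You can either drop steps 1--4 or repair them. The paper drops them. Once you show the block decomposition is forced, the rest follows. That decomposition is: crosses throughout rows $i+1,\dots,n+1$ and columns $1,\dots,i$ except the root; a $(1,i-1)$-twist $P$ in rows and columns $1,\dots,i$; and a $(1,n-i)$-twist $Q$ in rows and columns $i+1,\dots,n+1$. Since $P$ and $Q$ share no row or column, the edge rule produces exactly the root joined to the corners $(i,1)$ and $(n+1,i+1)$, plus the edges of $\mfk{T}(P)$ and $\mfk{T}(Q)$. Induction then gives both that $\mfk{T}(D)$ is a full binary tree and that $\mfk{T}$ is bijective.

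If you want a direct argument instead, use two facts.
\begin{itemize}
\item A non-root vertex $v=(r,c)$ always has a vertex to its left or above it. Otherwise the pipe through its left-to-top elbow would be both pipe $r$ and pipe $c-1$, which is impossible since $c\le r$. Rows $n+1$ and column $1$ are covered by the root.
\item There are $2n+1$ vertices and exactly $2n$ edges. Each of the $n+1$ rows and $n+1$ columns contains a vertex, giving $(2n+1)-(n+1)=n$ row edges and $n$ column edges.
\end{itemize}
Right and below neighbours of the root and of bumps always exist, because rows and columns end in $\jay$ tiles. So the root has degree $2$, each bump has degree at least $3$, and each $\jay$ has degree at least $1$. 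This sums to at least $4n$, so equality holds throughout, and every non-root vertex has exactly one north-west neighbour.

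Finally, your heuristic that row $i$ carries a chain of left children is backwards. Columns carry left-chains, and row $i$ carries the right-chain starting at the left child of the node labelled $i$. This does not affect your induction.
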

\begin{proof}
The bijection itself is well known and appears in \cite{Karola}. To prove the relation to the bracket vector we end up mostly reproving the bijection. 

We will proceed by induction on $n$. The base case for our induction will be the case $n=1$. In this case, there is a single $(1,1)$-twist
\[
\begin{mosaic}[font=\Large\sffamily]{1}
  \&  \& \text{$1$} \\
 \&  \tileii \& \tileviii \\
\text{$1$} \& \tileviii \\
\end{mosaic}
\]
The tree associated to this pipe dream is
\[
    \begin{tikzcd}[row sep = 1.1em, column sep = 1.7em]
	    & \text{$1$} & \\
        \bullet & & \bullet 
 	\arrow[no head, from=1-2, to=2-1]
 	\arrow[no head, from=1-2, to=2-3]
\end{tikzcd}
\]
The bracket vector associated with this tree is $(0)$ and the complemented bracket vector is also $(0)$. The leftmost bump in row $1$ of the $(1,1)$-twist is in column 1, so the bump vector is also $(0).$ This proves the base case.

Now we will proceed with the induction step. Our inductive hypothesis will be that for $m<n$, the map $\mfk{T}$ from $(1,m)$-twists to binary trees with $m$ internal vertices is a bijection and that for $D$ a $(1,m)$-twist, the bracket vector for $\mfk{T}(D)$ is $(b_1,\dots,b_m)$ where the leftmost $\bump$ tile in the $i$th row of $D$ is in column $i-b_i$.

Let us analyze the set of $(1,n)$ twists such that the highest bump tile that occurs in the first column of the $(1,n)$-twist, occurs in the $i$th row from the bottom. We first show that the set of such $(1,n)$-twists has the following shape, and denote an arbitrary example  of such a $(1,n)$-twist by $R_i$: 
    \[
\begin{mosaic}[font=\Large\sffamily]{1}
  \&  \& \text{$1$}\& \text{$2$} \& \text{$\cdots$} \& \text{i-3} \& \text{i-2} \& \text{i-1} \& \text{i} \& \text{$\cdots$} \& \text{n-1} \& \text{n}\\
 \&  \tileii \& \tileix   \& \tileix    \& \tilecdots \& \tileix   \& \tileix \& \tileix  \& \tileviii \& \begin{tikzpicture}
    \draw[white] (0,0) -- (0,1);
    \draw[very thick,red] (0,0) -- (0,1) -- (1,1);
\end{tikzpicture} \& \begin{tikzpicture}
    \draw[white] (0,0) -- (0,1);
    \draw[very thick,red] (0,1) -- (1,1);
\end{tikzpicture} \& \begin{tikzpicture}
    \draw[very thick,red] (0,0) -- (1,0) -- (1,1) -- (0,1) ;
\end{tikzpicture} \\
\text{$n$} \& \tileix \& \tileix  \& \tileix    \& \tilecdots \& \tileix   \& \tileix \& \tileix  \& \begin{tikzpicture}
    \draw[very thick,red] (0,0) -- (0,1) -- (1,1);
    \draw[white] (0,1) -- (1,1);
\end{tikzpicture}  \& \begin{tikzpicture}
    \draw[white] (0,0) -- (0,1);
    \draw[white] (0,1) -- (1,1); \node at (0.5,0.5) {\textcolor{red}{\textbf{$Q_i$}}};
\end{tikzpicture} \& \begin{tikzpicture}
    \draw[very thick,red] (0,0) -- (1,0) -- (1,1) ;
\end{tikzpicture} \\
\text{$\vdots$} \&\tilevdots  \&\tilevdots\&\tilevdots\&\tileddots \&\tilevdots \&\tilevdots\&\tilevdots\& \begin{tikzpicture}
    \draw[very thick,red] (0,0) -- (0,1);
    \draw[white] (0,1) -- (1,1);
\end{tikzpicture}  \& \begin{tikzpicture}
    \draw[very thick,red] (0,0) -- (1,0) -- (1,1) ;
\end{tikzpicture} \\
\text{$i+1\,\,$ } \& \tileix  \& \tileix \& \tileix \& \tilecdots\& \tileix   \& \tileix \& \tileix \& \begin{tikzpicture}
    \draw[very thick,red] (0,1) -- (0,0) -- (1,0) -- (1,1) ;
    \draw[white] (0,1) -- (1,1);
\end{tikzpicture} \\
\text{$i$} \& \tileviii \& \begin{tikzpicture}
    \draw[white] (1,0) -- (1,1);
    \draw[very thick, white] (0,0) -- (0,1);
    \draw[very thick, blue] (0,0) -- (0,1) -- (1,1);
\end{tikzpicture}  \& \begin{tikzpicture}
    \draw[white] (1,0) -- (1,1);
    \draw[very thick, white] (0,0) -- (0,1);
    \draw[very thick, blue] (0,1) -- (1,1);
\end{tikzpicture}\& \begin{tikzpicture}
    \draw[white] (1,0) -- (1,1);
    \draw[very thick, white] (0,0) -- (0,1);
    \draw[very thick, blue] (0,1) -- (1,1);
\end{tikzpicture}\&\begin{tikzpicture}
    \draw[white] (1,0) -- (1,1);
    \draw[very thick, white] (0,0) -- (0,1);
    \draw[very thick, blue] (0,1) -- (1,1);
\end{tikzpicture}  \&\begin{tikzpicture}
    \draw[white] (1,0) -- (1,1);
    \draw[very thick, white] (0,0) -- (0,1);
    \draw[very thick, blue] (0,1) -- (1,1);
\end{tikzpicture}  \& \begin{tikzpicture}
    \draw[very thick, blue] (0,0) -- (1,0) -- (1,1) ;
    \draw[very thick, blue] (0,1) -- (1,1);
\end{tikzpicture}\\
\text{$i-1\,\,$ } \& \begin{tikzpicture}
    \draw[white] (1,0) -- (1,1);
    \draw[very thick, blue] (0,0) -- (0,1) -- (1,1);
\end{tikzpicture}  \& \begin{tikzpicture}
    \draw[white] (1,0) -- (1,1);
    \draw[very thick, white] (0,0) -- (0,1);
\end{tikzpicture} \&\begin{tikzpicture}
    \draw[white] (1,0) -- (1,1);
    \draw[very thick, white] (0,0) -- (0,1);
\end{tikzpicture} \& \begin{tikzpicture}
    \draw[white] (1,0) -- (1,1);
    \draw[very thick, white] (0,0) -- (0,1);
\end{tikzpicture}\&\begin{tikzpicture}
    \draw[white] (1,0) -- (1,1);
    \draw[very thick, white] (0,0) -- (0,1);
\end{tikzpicture}    \& \begin{tikzpicture}
    \draw[very thick, blue] (0,0) -- (1,0) -- (1,1) ;
\end{tikzpicture} \\
\text{$\vdots$} \& \begin{tikzpicture}
    \draw[white] (1,0) -- (1,1);
    \draw[very thick, blue] (0,0) -- (0,1);
\end{tikzpicture} \& \begin{tikzpicture}
    \draw[white] (1,0) -- (1,1);
    \draw[very thick, white] (0,0) -- (0,1);
\end{tikzpicture} \&\begin{tikzpicture}
    \draw[white] (1,0) -- (1,1);
    \draw[very thick, white] (0,0) -- (0,1);
    \node at (0.5,0.5) {\textcolor{blue}{\textbf{$P_i$}}};
\end{tikzpicture} \& \begin{tikzpicture}
    \draw[white] (1,0) -- (1,1);
    \draw[very thick, white] (0,0) -- (0,1);
\end{tikzpicture} \& \begin{tikzpicture}
    \draw[very thick, blue] (0,0) -- (1,0) -- (1,1) ;
\end{tikzpicture}\\
\text{$4$} \& \begin{tikzpicture}
    \draw[white] (1,0) -- (1,1);
    \draw[very thick, blue] (0,0) -- (0,1);
\end{tikzpicture} \& \begin{tikzpicture}
    \draw[white] (1,0) -- (1,1);
    \draw[very thick, white] (0,0) -- (0,1);
\end{tikzpicture}  \& \begin{tikzpicture}
    \draw[white] (1,0) -- (1,1);
    \draw[very thick, white] (0,0) -- (0,1);
\end{tikzpicture}  \& \begin{tikzpicture}
    \draw[very thick, blue] (0,0) -- (1,0) -- (1,1) ;
\end{tikzpicture} \& \\
\text{$3$} \&\begin{tikzpicture}
    \draw[white] (1,0) -- (1,1);
    \draw[very thick, blue] (0,0) -- (0,1);
\end{tikzpicture} \& \begin{tikzpicture}
    \draw[white] (1,0) -- (1,1);
    \draw[very thick, white] (0,0) -- (0,1);
\end{tikzpicture}   \& \begin{tikzpicture}
    \draw[very thick, blue] (0,0) -- (1,0) -- (1,1) ;
\end{tikzpicture} \& \\
\text{$2$} \& \begin{tikzpicture}
    \draw[white] (1,0) -- (1,1);
    \draw[very thick, blue] (0,0) -- (0,1);
\end{tikzpicture}  \& \begin{tikzpicture}
    \draw[very thick, blue] (0,0) -- (1,0) -- (1,1) ;
\end{tikzpicture}  \& \\
\text{$1$} \& \begin{tikzpicture}
    \draw[very thick, blue] (0,1) -- (0,0) -- (1,0) -- (1,1) ;
\end{tikzpicture}\\
\end{mosaic}
\]
Since the $i$ pipe emerges from row $i$ in column 1, the $1$ pipe must emerge from row $i$ in column 2 and go straight up until it reaches the top. This forces the $2$ pipe to emerge from row $i$ in column 3 and move straight up. This continues up to $i-1$, forcing only $\bcross$ tiles in rows $n+1, n,n-1,\dots, i+1$ and columns $1,\dots, i$. The labels around the boundary of the shape labeled $P_i$ are then exactly the boundary labels of a $(1,i-1)$-twist, and the labels around $Q_i$ are all $i$ plus the corresponding label on a $(1,n-i)$-twist. Thus, $P_i$ is a $(1,i-1)$-twist and $Q_i$ is a $(1,n-i)$-twist.

The map $\mfk{T}$ described in the statement will then send $R_i$ to the binary tree with left subtree $\mfk{T}(P_i)$ and right subtree $\mfk{T}(Q_i)$. By induction we may assume $\mfk{T}$ is well-defined and bijective on $(1,i-1)$-twists  and $(1,n-i)$-twists so it must  be a bijection between $(1,n)$-twists and binary trees with $n=(n-i)+(i-1)+1$ internal vertices.

By the induction hypothesis, the bracket vector $\mfk{T}(P_i)$ is $(c_1,\dotsc,c_{i-1})$, where the leftmost $\bump$ tile in the $j$th row of $\mfk{T}(P_i)$ (counted from bottom to top), occurs in the $(j-c_{j})$-th column (counted from left to right). Viewing  $Q_i$ as a $(1,n-i)$-twist and leveraging the inductive hypothesis, the bracket vector for $\mfk{T}(Q_i)$  is $(d_1,\dotsc,d_{n-i-1})$, where the leftmost bump tile in the $j$th row of $Q_i$ (counted from bottom to top) occurs in the $(j-d_{j})$-th column of $Q_i$ (counted from left to right). Let $(b_1,\dotsc,b_{n})$ be the bracket vector of $\mfk{T}(R_i)$.

For $j<i$, the $j$th vertex is in $\mfk{T}(P_i)$ so $b_j=c_j$ and by induction, the leftmost $\bump$ in the $j$th row of $\mfk{T}(R_i)$ is in the $(j-b_j)$th column. 

For $j>i$, the $j$th vertex of $R_i$ is the $(j-i)$th vertex of $\mfk{T}(Q_i)$. So $b_j = d_{j-i}$. By induction, the leftmost $\bump$ in row $j-i$ of $Q_i$ is in the $(j-i-d_{j-i})$th column of $Q_i$. So the leftmost bump in row $j$ of $R_i$ is in column $i+(j-i-d_{j-i}) = j-b_j$. 

Finally, the root node of $R_i$ will have label $i$, so $b_i=i-1.$ One can clearly see that the leftmost $\bump$ in the $i$th row of $R_i$ is in column $1=i-(i-1)$. Thus, for all $j,$ the leftmost $\bump$ in row $i$ of $R_i$ is in row $i-b_i$. 

We conclude that $\b(D)=\b(\mfk{T}(D))$ for all $(1,n)$-twists $D$.
\end{proof}

\subsection{Bracket vectors and the cubulation}\label{subsection:cubulation}

At this point we have sufficiently rich notation to be able to state and study the formula for the vertices of the cubulation of the associahedron in \cref{thm:main-result}. 

In the introduction we claimed informally that the way the cubulation of the associahedron in \cref{thm:main-result} interacts with the faces of the associahedron is the same as the way the standard cubulation interacts with the subboxes of the boxy subdivision of the $1\times 2 \times \cdots \times n-1$ box which has subbox poset isomorphic to the face lattice of the associahedron. In this section we show that this is true on the level of vertices: when we label the vertices of the associahedron by trees, the isomorphism between the cubulation of the associahedron and the standard cubulation will send the vertex labeled by a tree $T$ to the \textit{complemented bracket vector} of $T$.

Let $\sigma_i=s_1s_2\cdots s_{n-i+1}$ for $i\geq 2$, set $\sigma_1=\sigma_2$. Further define $\alpha_i:=\sigma_2\sigma_3\cdots \sigma_i$ with $\alpha_1:=\id$. Set $\LL^n_0:=[0,1]\times [0,2]\times \cdots \times [0,n-1]\subseteq \mbb{N}^{n-1}$, and define a map $\reversemathwitch:\LL^n_0 \to (S_n)^n$ (which we will define again in \cref{witch} once we have established further notation) that sends $\daleth=(\daleth_1,\daleth_2,\dots, \daleth_{n-1})\in \LL^n_0$ to $(x_1\alpha_1, x_2\alpha_2,\dots, x_n\alpha_n)$, with 
 \[x_j:= \prod_{i=1}^{n} \left(\begin{cases}
        s_i & j\leq \daleth_i\\
        \id & \text{otherwise.}
    \end{cases}\right). \]
The vertex corresponding to $\daleth$ in the cubulation of the Loday associahedron of \cref{thm:main-result} which we will construct in \cref{section:subdivision} will be 
\begin{equation}\label{eqn:vertices-of-subdivision}\sum_{i=1}^n\left(  (x_i\alpha_i) (\omega_1+\dots+\omega_n)\right). \end{equation}
It is no coincidence that this equation looks almost exactly like the definition of the bulky brick vector. 
If we define a map $\mu:S_n^\mm \to \Z^n$ given by 
\[(\sigma_1,\cdots, \sigma_\mm)\mapsto \sum_{i=1}^\mm (\sigma_i)(\omega_1+\cdots \omega_n).\]
Then \cref{eqn:vertices-of-subdivision} is equal to $\mu\circ \reversemathwitch$ and $\mu(\nabla(Q,w_0))$ is the composition of $\mu$ with the map $J\mapsto (J_{(1)},J_{(2)},\dots, J_{(m)})$.

In \cref{section:moment-polytopes}, we will see that this map is related to the \textit{moment map} of $\Fl_n^\mm$, and we will not use $\mu$ in this way again after this section. 

\begin{rmk}
    We write $\reversemathwitch$ as an upside down witch because it is magical. Prononcuiation options include ``inverse witch'', ``reverse witch'', or ``shtew'' which has the sounds of ``witch'' in reverse order.
\end{rmk}

In \cref{section:subdivision}, we will show that the subdivision of the Loday associahedron in \cref{thm:main-result} has vertices $\mu(\reversemathwitch(\LL_n^0))$. When we show that the subdivision is isomorphic to the standard cubulation of the $1\times 2\times \cdots \times (n-1)$ box by unit cubes, we will show that $\mu\circ \reversemathwitch$ gives an isomorphism between the vertex sets of the subdivisions and then we will build this up to an isomorphism of face posets. We never show directly that $\mu \circ \reversemathwitch$ is injective, but it comes ``for free'' as an implicit consequence of the main result.

Because the maps to $\Z^d$ from  $(1,n)$-twists and $\LL_n^0$ both factor through $S_n^n$, we can study the relationship between subdivision vertices corresponding to $(1,n)$-twists and subdivision vertices corresponding to $\LL_n^0$ by studying both maps to $S_n^n$ (i.e., we only need to look at $\reversemathwitch$, and not $\mu \circ \reversemathwitch$).

Recall that \cref{thm:bijection-with-rooted-binary-tree} allows us to identify the subword complex $\nabla(Q,w_0)$ with the set of $(1,n)$-twists where $Q$ is as in \cref{prop:is Loday}. 
Bump vectors give us a way to map from $\nabla(Q,w_0)$ to $\LL_n^0$. We can view $\nabla(Q,w_0)$ as a subset of $S_n^n$ and $\reversemathwitch$ maps from $\LL_n^0$ to $S_n^n$. We now show that these maps form a commuting triangle. Recalling \cref{thm:bijection-with-rooted-binary-tree}, this means that given an element of $\mbb{L}_0^n$ corresponding to a binary tree via its complemented bracket vector, its image under $\mu\circ\reversemathwitch$ is the vertex of Lodays associahedron that corresponds to that tree.

\begin{prop}\label{prop:combinatorial-shadow}
Let $\sigma_i=s_1s_2\cdots s_{n-i+1}$ for $i\geq 2$, $\sigma_1=\sigma_2$ and let $Q=(\sigma_1,\dots, \sigma_n)$ as in \cref{prop:is Loday}. Additionally define $\alpha_i=\sigma_2\sigma_3\cdots \sigma_i$ with $\alpha_1=\id.$ Both triangles in the following diagram commute.

\[
\begin{tikzcd}
     {\nabla(Q,w_0)} \arrow[rrd, "\b", hook] \arrow[d, "{J\mapsto (J_{(1)},\dots,J_{(n)})}",swap, hook] \arrow[rr, "\mfk{T}", two heads, hook] & & \text{binary trees} \arrow[d, "\b"',hook]  \\
     (S_n)^n  &  & \mathbb{L}_0^n \arrow[ll, "\reversemathwitch"' two heads, hook]
\end{tikzcd}
\]

\end{prop}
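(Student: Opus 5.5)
The upper triangle is exactly the last assertion of \cref{thm:bijection-with-rooted-binary-tree}: for every $(1,n)$-twist $D$ we have $\b(D)=\b(\mfk{T}(D))$. Since $\mfk{T}$ is a bijection, it remains to prove the lower triangle. Identifying $\nabla(Q,w_0)$ with $(1,n)$-twists via $\Xi$, and using $\Xi(D)_{(k)}=\pi_{D,k}$, the lower triangle says
\[
x_k\alpha_k=\pi_{D,k}\qquad\text{for every }k\in[n],
\]
where $x_k=\prod_i s_i^{[k\le \ol b_i]}$ is built from the bump vector $\b(D)=(\ol b_1,\dots,\ol b_n)$, with the product taken in increasing order of $i$. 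Injectivity of the two vertical maps then follows from injectivity of $\b$ and of $\mfk{T}$.

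\textbf{Step 1 (reading $\pi_{D,k}$ off the bump vector).} By \cref{defn:bump-tile}, every tile in row $i$ strictly left of column $\ol b_i+1$ is a $\bcross$, and tile $(i,\ol b_i+1)$ is a $\bump$ or a $\jay$. Hence pipe $i$ runs horizontally through the first $\ol b_i$ columns and then turns upward in column $\ol b_i+1$. I will then apply \cref{lem:aaaah} to the path that climbs the boundary between columns $k$ and $k+1$ and then runs left along the top edge. The horizontal segment it meets in row $i$ belongs to pipe $i$ exactly when $k\le \ol b_i$. Otherwise that segment belongs to a pipe that has already turned upward, and I will determine which pipe from the column pattern of $\alpha_k$. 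The goal of this step is an explicit one-line formula for $\pi_{D,k}$ in terms of the set $\{i:\ k\le \ol b_i\}$.

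\textbf{Step 2 (base case).} First I treat $\b(D)=0$. Here every $x_k=\id$, column $1$ consists of bumps, and all remaining available tiles are crosses. Then $\pi_{D,1}=\id=\alpha_1$, and each later column contributes exactly $\sigma_k$, so $\pi_{D,k}=\sigma_2\cdots\sigma_k=\alpha_k$. This checks the normalization and shows that $\alpha_k$ is the pipe order of the ``all-bumps-in-column-one'' twist.

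\textbf{Step 3 (matching).} Next I show that left multiplication by $x_k$ changes $\alpha_k$ exactly as Step 1 predicts. The product $s_{i_1}s_{i_2}\cdots$ over the increasing indices $i$ with $\ol b_i\ge k$ acts on positions, and each $s_i$ corresponds to pipe $i$ crossing one more time before it turns up. An alternative is induction on $k$: verify $x_{k-1}\alpha_{k-1}v_k=x_k\alpha_k$, where $v_k$ is the column-$k$ product from $\Xi$. This reduces to checking $v_k=\alpha_{k-1}^{-1}x_{k-1}^{-1}x_k\alpha_{k-1}\sigma_k$ using the tile description of column $k$.

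I expect Step 1 / Step 3 to be the main obstacle. The difficulty is the bookkeeping of which pipe occupies each row segment once pipes start turning upward, and reconciling that with the fixed ordering of the product defining $x_k$. I would first try the direct comparison via \cref{lem:aaaah}. I would fall back on the recursive decomposition $R_i=(P_i,Q_i)$ from the proof of \cref{thm:bijection-with-rooted-binary-tree}, inducting on $n$: the crosses in rows above $i$ account for $\alpha_k$, and the subtwists $P_i$ and $Q_i$ supply the $x_k$ factors, shifted by $i$ in the case of $Q_i$.
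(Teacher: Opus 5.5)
Your treatment of the upper triangle is correct, and so is the reduction of the lower triangle to $\pi_{D,k}=x_k\alpha_k$, apart from an indexing error discussed below. But the proposal stops there. Steps 1 and 3 state intentions, and the one thing that has to be proved is never argued.

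The bump vector tells you where pipe $r$ leaves row $r$. It does not locally determine the tiles to the right of the first bump in a row. For example, consider two $(1,3)$-twists: one with a cross at $(2,1)$ and bumps at $(3,1),(3,2)$, and one with bumps at $(2,1),(3,1)$ and a cross at $(3,2)$. Both have $\overline{b}_3=0$, yet they differ at $(3,2)$.

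So ``which already-turned pipe occupies row $i$'' cannot simply be read off ``the column pattern of $\alpha_k$''. It is true that the turned pipes reach the boundary in increasing order, but that is essentially the statement you are trying to prove. Likewise, ``the tile description of column $k$'' in your inductive variant is not available a priori.

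The indexing error: $\mathbb{L}^n_0$ has coordinates $\overline{b}_2,\dots,\overline{b}_n$. Hence $x_k=\prod_{r}s_{r-1}^{[k\le\overline{b}_r]}$ (product over increasing $r$), not $\prod_i s_i^{[k\le\overline{b}_i]}$. For the first twist above, $\b(D)=(0,1,0)$ and $\pi_{D,1}=s_1$, while your formula gives $x_1=s_2$. The base case $k=1$ is immediate once this is fixed, because the cross at $(r,1)$ carries $s_{r-1}$ and is present iff $\overline{b}_r\ge 1$. Separately, in $x_k\alpha_k$ the factor $x_k$ relabels pipes, so it acts on values, not positions.

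The missing idea is to let the inductive hypothesis force column $k$ of $D$. For $k\ge 2$, assume $\pi_{D,k-1}=x_{k-1}\alpha_{k-1}$. Realize this as a diagram whose column $1$ has crosses exactly in the rows $r$ with $\overline{b}_r\ge k-1$, followed by $k-2$ columns of crosses, and append column $k$ of $D$.

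In this diagram pipe $k-1$ turns up at $(k-1,1)$. It then climbs column $1$ to the first row $j\ge k$ with $\overline{b}_j<k-1$ (or $j=n+1$), and runs right to column $k$. The pipe order entering column $k$ agrees with that in $D$, and pipe $k-1$ must exit at the top of column $k$. Therefore column $k$ of $D$ is determined:
\begin{itemize}
\item a bump at $(j,k)$, or the jay if $j=k$;
\item crosses above row $j$;
\item for $k<r<j$, where $\overline{b}_r\ge k-1$, a cross at $(r,k)$ exactly when $\overline{b}_r\ge k$.
\end{itemize}
Only at this point can $x_{k-1}\alpha_{k-1}v_k=x_k\alpha_k$ actually be checked. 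The paper does it in two moves. It exchanges the column-$k$ tiles in rows $k+1,\dots,j-1$ with the crosses to their left in column $1$. When $j>k$, it also swaps the bump at $(j,k)$ with the cross at $(k,1)$. This preserves the outgoing pipe order and leaves column $1$ with the pattern of $x_k$ and column $k$ all crosses, i.e.\ $\sigma_k$.

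Your fallback induction on $n$ through $R_i=(P_i,Q_i)$ might also be made to work. As written, though, it contains none of the needed identities relating $x_k,\alpha_k$ for $n$ to those for $i-1$ and $n-i$.
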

\begin{proof}
    Identifying $\nabla(Q,w_0)$ with $(1,n)$-twists, \cref{thm:bijection-with-rooted-binary-tree} is precisely the statement that the upper right triangle commutes. We now show that the lower left triangle commutes.

    Let $D$ be a $(1,n)$-twist and $J\in \nabla(Q,w_0)$ be the subword identified with $D$ by $\Xi$. Set $\b(D)=(b_1,\dots, b_r)$.
    We have $\mathwitch(\b(D))=(y_1\alpha_1,\dots, y_n\alpha_n)$ with $y_\iota\leq s_1\dots s_{n-1}$ and $s_{r-1}\leq y_\iota$ if and only if $\iota\leq b_r$ .

      For $\iota=1,$ $y_{1}$ is exactly $v_1=J_{(1)}$ since it collects (in the correct order) the simple reflections corresponding to positions of cross tiles in the first column. Since $\alpha_1=1,$ we note that $y_1\alpha_1=J_{(1)}$.

    Suppose the inductive hypothesis that $y_{i}\alpha_{i}=J_{(i)}.$ Then $y_i\alpha_i v_{i+1} =J_{(i+1)}.$

\noindent
\begin{minipage}{0.41\textwidth}
    \noindent
    We can represent $y_{i}\alpha_{i}v_{i+1}$ as the first $i+2$ columns of a $(1,n)$-twist $T_i$. The resulting picture is shown in Figure \ref{fig:wow}. Note that the labels above the columns are not the column positions. Column 2 is labeled with a 1, column 3 with a 2, etc. Since $y_i\alpha_iv_{i+1}=J_{(i+1)}$, the $i+1$st column permutation of $T_i$ is $J_{(i+1)}.$
    
    Since $y_i\leq s_{i}s_{i+1}\dots s_{n-1},$  column 1 of $T_i$ can only have crosses in rows $i+1$ through $n$. Multiplying by $\alpha_i$ corresponds to filling columns 2 through $i$ with crosses. 
\end{minipage}
\hfill
\begin{minipage}{0.52\textwidth}

   \noindent
    \[
\begin{mosaic}[font=\Large\sffamily]{0.77}
  \&  \& 1\& 2 \&  \& i-3 \& i-2 \& i-1 \& i\\
 \&  \tileii \& \tileix   \& \tileix    \& \tilecdots \& \tileix   \& \tileix \& \tileix  \&\tileNoBot \& $\cdots$\\
n \& \tileNoBot\& \tileix  \& \tileix    \& \tilecdots \& \tileix   \& \tileix \& \tileix  \&\tileLRtext{$\beta$} \& $\cdots$\\
 \&\tileLRtext{$\gamma$} \&\tilevdots\&\tilevdots\&\tileddots \&\tilevdots \&\tilevdots\&\tilevdots\&\tileNoTop \& $\iddots$\\
i+1\, \& \tileNoTop \& \tileix \& \tileix \& \tilecdots\& \tileix   \& \tileix \& \tileix \& \tileiv \\
i \& \tileviii \& \tileix \& \tileix \& \tilecdots\& \tileix \& \tileix   \& \tileiv\\
i-1\, \& \tileviii \& \tileix \& \tileix \& \tilecdots\& \tileix   \& \tileiv\\
 \& \tilevdots \& \tilevdots \& \tilevdots \& \tileiddots \& \tileiddots\\
4 \& \tileviii \& \tileix \& \tileix \& \tileiddots \& \\
3 \& \tileviii \& \tileix  \& \tileiv \& \\
2 \& \tileviii  \& \tileiv \& \\
1 \& \tileiv \\
\end{mosaic}
\]
    \captionof{figure}{The $(1,n)$-twist  $T_i$ representing the expression $y_{i-1}\alpha_{i-1}v_i$
    \vspace{10mm}}
    \label{fig:wow}

\end{minipage}

   Pipes starting below row $i$ end before column $i-1$ and pipes that leave column 1 between rows $i+1$ and $n+1$ move horizontally until column $i$, so we can ignore columns $2$ through $i-1$ and ignore pipes $1$ though $i-1$. The resulting picture, where we connect column 1 directly to column $i$ is shown in Figure \ref{fig:two cols short}.

\noindent
\begin{minipage}{0.36\textwidth}
         \centering
         \[
\begin{mosaic}[font=\Large\sffamily]{0.75}
  \&  \& i\& \\
 \&  \tileii   \&\tileNoBot \\
n \& \tileNoBot  \&\tileLRtext{$\beta$} \\
 \&\tileLRtext{$\gamma$}\&\tileNoTop \\
i+1 \,\& \tileNoTop  \& \tileiv \\
i \& \tileiv \\
\end{mosaic}\]
         \captionof{figure}{Columns 1 and $i+1$ from figure \ref{fig:wow}}
         \label{fig:two cols short}
\end{minipage}
    \begin{minipage}{0.43\textwidth}
                 \[\begin{mosaic}[font=\Large\sffamily]{0.75}
  \&  \& i\& \\
 \&  \tileii   \&\tileNoBot\\
n \& \tileNoBot  \&\tileLR \\
 \&\tileLR\&\tileLR \\
  \&\tileLR{}\&\tileLRtext{$\beta$} \\
    \&\tileLR \&\tileLR \\
      \&\tileLRtext{$\gamma$}\&\tileLR \\
      \&\tileLR\&\tileLR \\
      \&\tileLR\&\tileLR \\
i+1 \, \& \tileLR  \& \tileiv \\
i \& \tileiv \\
\end{mosaic}\]
        \captionof{figure}{Same as Figure \ref{fig:two cols short}}
         \label{fig:two cols long}
    \end{minipage}
\begin{minipage}{0.2\textwidth}
       We redraw Figure \ref{fig:two cols short} in Figure \ref{fig:two cols long} to give ourselves more room to show what is happening. In Figures \ref{fig:wow},\ref{fig:two cols short}, and \ref{fig:two cols long}, we use $\gamma$ to denote the unknown configuration of pipes in column 1 that encodes $y_i$ and $\beta$ to denote the unknown configuration of pipes in column $i+1$ that encodes $v_{i+1}$.
\end{minipage}

 \noindent
    \begin{minipage}{0.49\textwidth}
    \noindent
         \[\begin{mosaic}[font=\Large\sffamily]{0.9}
  \&  \& i\& \\
 \&  \tileii   \&\tilexvert \& $\gamma'_{n+1}$\\
n \& \tileNoBot  \&\tilexvert  \& $\gamma'_n$\\
 \&\tileLRtext{$\gamma'$}\&\tilevdots \\
  j+1\,\&\tileNoTop\&\tilexvert \& $\gamma'_{j+1}$\\
    j \&\tilebumpBotColor \&\tilebumpTopColor \& $\beta'_{j}$\\
     j-1 \,\&\tilexvert\&\tileNoBot \& $\beta'_{j-1}$ \\
      \&\tilevdots\&\tileLRtext{$\beta'$}  \\
     i+2\, \&\tilexvert\&\tileNoTop \& $\beta'_{i+2}$ \\
i+1\, \& \tilexvert  \& \tileiv \\
i \& \tileiv[red] \\
\end{mosaic}\]
\captionof{figure}{Same as Figure \ref{fig:two cols long} but pipe $i$ must turn right in some row so we assume it is the $j$th row. The unknown portions of $\gamma$ and $\beta$ are now $\gamma'$ and $\beta'$ respectively 
\vspace{10mm}}
         \label{fig:turn at j}
    \end{minipage}
    \hfill
\begin{minipage}{0.46\textwidth}
     The pipe labeled $i$ starts in row $i$ of column 1 and ends at the top of column $i+1$. Therefore, there must be some row $j$ where it turns right to leave column 1. Figure \ref{fig:turn at j} visualizes this. In positions $(r,1)$ for $i<r<j$ and $(r,i+1)$ for $j+1\leq r\leq n+1$, there must be cross tiles. This reduces the unknown portion of $\gamma$ to the positions $(r,1)$ with $j+1\leq r\leq n$ which we call $\gamma'$ and reduces the unknown portion of $\beta$ to the positions $(r,i+1)$ with $j+1\leq r\leq n$ which we call $\beta'$. We denote by $\beta'_r$ the label of the pipe which exits $\beta'$ on the right in row $r$ for $i+2\leq r\leq j-1$ and by $\beta'_j$ we denote the label of the pipe which exits $\beta'$ from the top. for $i+2\leq r\leq j,$ $J_{(i+1)}(r)=\beta'_r.$ 
\end{minipage}

We obtain Figure \ref{fig:switched} by switching $\beta'$ with the cross cells left of it in column 1, and switching the bump tile in position $(j,i+1)$ with the cross tile in position $(i+1,1).$ 

\noindent
  \begin{minipage}{0.48\textwidth}
         
         The definition of $y_{\iota}$ tells us that there is a bump in  position $(r,1)$ of Figure \ref{fig:turn at j} precisely when $b_r\leq i$. Every bump tile in column 1 of Figure \ref{fig:turn at j} is still a bump tile of column 1 in Figure \ref{fig:turn at j}. A cross tile in position $(r,1)$ of Figure \ref{fig:turn at j} becomes a bump tile in Figure \ref{fig:switched} precisely when there is a non-cross tile in position $(r,i+1),$ and that happens precisely when $b_r=i+1.$ Hence, there is a bump in  position $(r,1)$ of Figure \ref{fig:switched} precisely when $b_r\leq i+1$.
         This means that the permutation obtained from column 1 of Figure \ref{fig:switched} is $y_{i+1}.$

    \end{minipage}
   \hfill
\begin{minipage}{0.47\textwidth}
       \[\begin{mosaic}[font=\Large\sffamily]{0.9}
  \&  \& i\& \\
 \&  \tileii   \&\tilexvert \& \, $\gamma'_{n+1}$\\
n \& \tileNoBot  \&\tilexvert  \&\,  $\gamma'_n$\\
 \&\tileLRtext{$\gamma'$}\&\tilevdots \\
  j+1\&\tileNoTop\&\tilexvert \& $\alpha'_{j+1}$\\
    j\&\tileviii \&\tilexvert \& \, $\beta'_{j}$\\
     j-1 \&\tileNoBot \&\tilexvert\&\,  $\beta'_{j-1}$ \\
      \&\tileLRtext{$\beta'$} \&\tilevdots  \\
     i+2 \&\tileNoTop \&\tilexvert \& \, $\beta'_{i+2}$ \\
i+1 \& \tilebumpBotColor \& \tileiv[red] \\
i \& \tileiv[red] \\
\end{mosaic}\] 
 \captionof{figure}{Obtained from Figure \ref{fig:turn at j} by switching $\beta'$ with the crosses to the left of it (in column 1) and swapping the bump in position $(j,i+1)$ with the cross in position $(i+1,1)$.
 \vspace{10mm}}
         \label{fig:switched}
       
    \end{minipage}
  
  It is evident from the figures that the pipes entering $\beta'$ from the left and on the bottom are the same (and in the same places) between Figure \ref{fig:turn at j} and Figure \ref{fig:switched}. Thus, the same is true for the pipes exiting $\beta'$ from the top and the right. One can further see from these figures that this means that the pipes exiting column $i+1$ on the right are the same between Figure \ref{fig:turn at j} and Figure \ref{fig:switched}.

    When we put columns $1,i+1$ of Figure \ref{fig:switched} back in the context of Figure \ref{fig:wow}, the $i+1$st column permutation will then remain equal to $J_{(i+1)}$. At the same time, we know that the first column corresponds to $y_{i+1},$ columns $2$ through $i$ correspond to $\alpha_i$, and now column $i+1$ gives $s_1s_2\dots s_{n-i}=\sigma_{i+1}.$ Hence $J_{(i+1)}=y_{i+1}\alpha_i\sigma_{i+1}=y_{i+1}\alpha_{i+1}.$ We conclude that for all $\iota$, $J_{(\iota)}=y_\iota \alpha_{\iota}$.\qedhere
   
\end{proof}

\section{Brick polytopes as moment polytopes of brick varieties}\label{section:moment-polytopes}

To every projectively embedded variety with a torus action, one can associate its \textit{moment polytope}. The focus of this section is to analyze the moment polytopes of brick varieties.

We will recall the definition of the moment polytope of a projectively embedded variety with a torus action in \cref{background}. We will describe Grassmannians, complete flag varieties, Schubert varieties, Bott-Samelson varieties, and brick varieties in  \cref{subsection:Schubert-varieties} and \cref{subsection:brick-varieti}. In particular, we will show that brick polytopes and bulky brick polytopes are both the moment polytopes of brick varieties, albeit with respect to two different projective embeddedings (one into a product of Grassmannians and the other into a product of complete flag varieties). In \cref{subsection:Minkowski}, we will describe Minkowski decompositions of the brick polytope and bulky brick polytope, and we will note a characterization of toric Richardson varieties in terms of brick varieties. In certain cases, the brick variety can be viewed as a resolution of singularities for a Richardson variety; in \cref{section:fibres}, we give a sufficient pattern-avoidance condition for when such a \textit{brick resolution} is isomorphic to the Richardson variety whose singularities it resolves.

\subsection{Moment polytopes (of subvarieties of products)}\label{background}

A \textbf{torus} is an algebraic group which is isomorphic to $(\C^{\times})^k$ for some $k\geq 0$, where $\C^{\times}:=\C\setminus\{0\}.$ When $T$ is a torus which acts on a variety $X$ we will write $X^T$ to denote the set of fixed points of the action. In subsequent sections, the torus we work with will always be the set of diagonal matrices in $\mrm{GL}_n(\C)$. 

We now recall the notion of the \textit{moment polytope} of a projectively embedded variety with a torus action.

    \begin{defn}
        Let $T=(\C^\times)^k$. An algebraic action of $T$ on the projective space $\mbb{P}^{n}$ is, up to change of coordinates, always of the form $(t_1,\dots,t_k)\cdot (x_1:\dots:x_{n+1})=(x_1\prod_{i=1}^k t_i^{w_{1,i}}:\dots:x_{n+1}\prod_{i=1}^k t_i^{w_{n+1,i}})$ with $w_{j,i}\in \mbb{Z}$. We call each vector $(w_{j,1},\dotsc,w_{j,k})\in\mbb{Z}^k\subseteq\mbb{R}^n$ a \textbf{weight vector}. In such a case, we say the $\textbf{$T$-weight}$ of a fixed point of the form $(0:\dots:0:x_j:0:\dots:0)$ is the weight vector $(w_{j,1},\dots,  w_{j,k})$. In general a fixed point may have more than one nonzero coordinate, in which case each nonzero coordinate is acted upon with the same weight vector, which we will call the \textbf{$T$-weight} of the fixed point. Suppose $X$ is a variety with an algebraic action of $T$.
        \begin{itemize}  
            \item When we have a $T$-equivariant embedding $f\colon X\hookrightarrow \mbb{P}^n$ for some $n$, and have chosen such coordinates for $\mbb{P}^n$, then we define the \textbf{moment map} with respect to $f$ as the map $\mu_{\mbb{P}^n}:X^{T}\to \Z^k$  sending $\gamma \in X^{T}$ to the $T$-weight of $f(\gamma)$.  We define the \textbf{moment polytope} of $X$ with respect to $f$ as the convex hull of the image of $\mu_{\mbb{P}^n}$ and denote it $\mu_{\mbb{P}^n}(X)$\footnote{The moment polytope $\mu_{\mbb{P}^n}(X)$ does not depend on the choice of coordinates of $\mbb{P}^n$.}.
            \item The \textbf{effective torus} is $T/\mrm{St}_{T}(x)$, where $\mrm{St}_{T}(x)$ is the $T$-stabilizer of a \textit{general}\footnote{\textit{general} in the sense that there is an open subset $U\subseteq X$ such that $\mrm{St}_{T}(x)$ is constant for all $x\in U$.}  $x\in X$.
            \item $X$ is a \textbf{toric variety} with respect to the action of $T$ if $X$ contains a dense $T$-orbit (which must be open in $X$ by \cref{lem:dense-torus-orbit-is-open}). Equivalently, by the orbit-stabilizer theorem, $X$ is toric if and only if $\dim(X)=\dim(T/\mrm{St}_{T}(x)).$  Note that we do not require toric varieties to be normal, as is often the case in other sources.
        \end{itemize}
    \end{defn}

\begin{rmk}
    The term ``moment map'' originates in symplectic geometry where the moment map is a map from a manifold with a Hamiltonian action of a Lie group $G$ to the dual of the Lie algebra of $G$. In the broader context of this paper it makes some sense for the moment map of a variety $X$ acted on by a torus $T$ to be formulated as a function $X\to \R \otimes_{\Z} \mrm{Hom}(T,\C^{\times})\cong \R^k$. Our formulation of the moment map as a function from $X^{T}$ to $\R^k$ fits with the algebraic approach of \cite{BrionProcesi}.
\end{rmk}

The following lemma will tell us that, for any variety with the algebraic action of a torus $T$, any dense $T$-orbit in the variety is open. In particular, in a variety that is toric with respect to $T$, every dense orbit of a torus $T$  is open. 

\begin{lem}\label{lem:dense-torus-orbit-is-open}
    Let $X$ be an algebraic variety with the algebraic action of a torus $T$. Any dense orbit of $T$ in $X$ is a smooth, open subvariety of $X$.
\end{lem}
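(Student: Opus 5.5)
We argue in two steps: first that a dense orbit $O=T\cdot x$ is open in $X$, then that it is smooth.

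\emph{Openness.} Consider the orbit map $\phi\colon T\to X$, $t\mapsto t\cdot x$. It is a morphism of varieties, so by Chevalley's theorem its image $O$ is constructible in $X$. A constructible set is a finite union of locally closed sets, and a dense constructible set contains a nonempty open subset of its closure. Since $O$ is dense, its closure is all of $X$, so there is a nonempty open $U\subseteq X$ with $U\subseteq O$. Then
\[
O=\bigcup_{t\in T} t\cdot U .
\]
Indeed, any $y\in O$ can be written $y=t\cdot u$ with $u\in U\subseteq O$, because $O$ is a single orbit. Each $t\cdot U$ is open, since $t$ acts by an automorphism of $X$. Hence $O$ is open, and so it is an open subvariety of $X$.

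\emph{Smoothness.} Since $O$ is a variety, its smooth locus $O_{\mathrm{sm}}$ is a nonempty open subset. This is the standard fact that the singular locus is a proper closed subset of each irreducible component; here $X$, and hence $O$, is irreducible. Because $T$ acts on $O$ by automorphisms, $O_{\mathrm{sm}}$ is $T$-stable. The action is transitive on $O$, so a $T$-stable nonempty subset must be all of $O$. Therefore $O$ is smooth.

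The only substantive step is the openness argument, which rests on Chevalley's theorem together with translating the open set around by the group. If we did not want to assume $X$ irreducible, we would note that the closure of the orbit of the irreducible group $T$ is irreducible, so a dense orbit forces $X$ to be irreducible anyway.
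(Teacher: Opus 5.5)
Your proof is correct, and it takes a different route from the paper. The paper does not prove openness or smoothness directly. It cites Borel (Linear Algebraic Groups, Chapter I, Proposition 1.8) for the fact that every orbit is a smooth, locally closed subvariety. It then finishes with a purely topological step: writing the orbit as $U\cap Z$ with $U$ open and $Z$ closed, density gives $X=\overline{U\cap Z}\subseteq Z$, so $Z=X$ and the orbit equals $U$.

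You instead prove the content of that cited proposition from scratch, in the special case of a dense orbit. Chevalley's theorem gives a nonempty open subset of $X$ inside the orbit, and translating by $T$ spreads it over the whole orbit. Generic smoothness combined with transitivity then gives smoothness. This is essentially the argument behind Borel's proposition, and your translation step is the same trick the paper uses later in part (2) of \cref{proj is Richardson}.

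The paper's version is shorter and yields local closedness for every orbit, not only dense ones. Yours is self-contained and makes visible exactly where the hypotheses enter: reducedness and characteristic $0$ are what make the smooth locus nonempty. Your closing remark that a dense orbit of the irreducible group $T$ forces $X$ to be irreducible is also correct.
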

\begin{proof}
    A dense torus orbit $\mcal{O}:=T\cdot x$ in $X$ is a smooth locally closed algebraic variety by \cite[Chapter I, Proposition 1.8]{B91}. Thus, $\mcal{O}=U\cap Z$, where $U$ is an open subvariety of $X$ and $Z$ is a closed subvariety of $X$. As $\mcal{O}$ is dense in $X$, it follows that $X=\overline{\mcal{O}}=\overline{U\cap Z}$, which is contained in $Z$ as $Z$ is closed. Therefore, $Z=X$, and hence $\mcal{O}=U\cap X=U$ is open.
\end{proof}

Given a variety $X$ with the algebraic action of a torus $T$ that is $T$-equivariantly embedded in $\P^n$, the dimension of the moment polytope $\mu_{\P^n}(X)$ is equal to the dimension of the effective torus acting on $X$. In particular, $X$ is toric with respect to $T$ if and only if $\dim_{\C}(X)=\dim_{\R}(\mu_{\P^n}(X))$ (c.f. \cite{LauraThesis}[Lemma 3.4.7].)

Recall the Segre embedding $\P^a \times \P^b \hookrightarrow \P^{(a+1)(b+1)-1}$ which maps $((x_1:\dots: x_a:x_{a+1}),(y_1:\dots: y_b:y_{b+1}))$ to $(x_1y_1:x_1y_2:\cdots: x_1y_{a+1}: x_2y_1:\cdots, x_iy_j:\cdots: x_ay_{b+1}).$ Suppose $T\cong (\C^\times)^k$ acts on $\P^a$ and $\P^b$ algebraically. Defining the Segre embedding to be $T$-equivariant induces an action on $\P^{(a+1)(b+1)-1}$. If the $x_i$ coordinate has $T$-weight $\alpha_i$ and the $y_j$ coordinate has $T$-weight $\beta_j$, then the $x_iy_j$ coordinate of $\P^{(a+1)(b+1)-1}$ has $T$-weight $\alpha_i+\beta_j$. When we take products of more than two projective spaces we can iterate the Segre embedding and find that weights still add in this manner. Equation \ref{eq:aaa1} below formalizes this conclusion. A more detailed explanation can be found in \cite{E19}.

Let $T$ be an algebraic torus acting algebraically on $X\subseteq Y_1\times Y_2\times \dots Y_\mm$ with $T$-equivariant projection maps $\pi_i:X\to Y_i$ and $T$-equivariant embeddings $Y_i\hookrightarrow \P^{n_i}$ (with projective coordinates chosen to be $T$-fixed points). This setup provides us with a moment map $\mu_{\P^{\prod(n_i+1)-1}}:X^T\to \R^k$ with respect to the Segre embedding. The discussion above tells us that 
\begin{equation}
    \mu_{\P^{\prod(n_i+1)-1}}(x)= \sum_{i=1}^\mm \mu_{\P^{n_i}}(\pi_i(x))\label{eq:aaa1}
\end{equation}
for $x\in X^T$.
We will soon see that 
\begin{align}
\mu_{\P^{\prod(n_i+1)-1}}(X)&=\conv_{x\in X^T} \left( \sum_{i=1}^\mm \mu_{\P^{n_i}}(\pi_i(x)) \right) \label{eq:aaa1new}\\
&=\sum_{i=1}^\mm \conv_{x\in X^T} \mu_{\P^{n_i}}(\pi_i(x)) \label{eq:aaa2}\\
&=\sum_{i=1}^\mm \mu_{\P^{n_i}}(\pi_i(X)).\label{eq:aaa3}
\end{align}
We might summarize this observation abstractly as \[\conv_{a\in A} \sum_{b\in B} f(a,b) = \sum_{b\in B} \conv_{a\in A} f(a,b)\] (where $A=X^T$, $B=[\mm]$, and $f(a,b)=\mu_{\P^{n_b}}(\pi_b(a))$). This observation does not hold for arbitrary $A,B,f$. When it does hold, it produces very useful Minkowski decompositions. We can already see \cref{eq:aaa1new} from \cref{eq:aaa1}. To see that  \cref{eq:aaa2} and \cref{eq:aaa3} are equal, we show in \cref{pre- fix of proj} that $\pi_i(X)^T=\pi_i(X^T).$  \cref{lem:MinkowskiMachine} proves all of the above equalities, using \cref{lim optimizes functional} as a stepping stone towards \cref{eq:aaa2}.

    \begin{lem}\label{pre- fix of proj}
    Let $T=(\mbb{C}^\times)^k$ be a torus, and suppose $X$ and $Y$ are projective with algebraic actions of $T$. If $f\colon X\to Y$ is a $T$-equivariant regular map, then $f(X^T)=f(X)^T$.
\end{lem}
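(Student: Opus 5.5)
The claim is that $f(X^T)=f(X)^T$. I will prove the two inclusions separately.

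The inclusion $f(X^T)\subseteq f(X)^T$ is immediate from equivariance. If $x\in X^T$, then for every $t\in T$ we have $t\cdot f(x)=f(t\cdot x)=f(x)$, so $f(x)$ is a $T$-fixed point lying in $f(X)$.

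For the reverse inclusion, let $y\in f(X)^T$ and consider the fibre $F:=f^{-1}(y)\subseteq X$.
\begin{itemize}
\item $F$ is nonempty because $y\in f(X)$.
\item $F$ is closed, being the preimage of a point under a regular map.
\item $F$ is therefore projective, since $X$ is projective.
\item $F$ is $T$-stable: if $x\in F$ and $t\in T$, then $f(t\cdot x)=t\cdot f(x)=t\cdot y=y$.
\end{itemize}
So $F$ is a nonempty projective variety with an algebraic action of the torus $T$. By the Borel fixed point theorem, a connected solvable algebraic group acting on a nonempty complete variety has a fixed point, so $F^T\neq\emptyset$. Any $x\in F^T$ lies in $X^T$ and satisfies $f(x)=y$, which gives $y\in f(X^T)$.

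If one prefers to avoid quoting Borel's theorem, there is a more elementary argument using limits.
\begin{itemize}
\item Pick $x\in F$ and a generic one-parameter subgroup $\lambda\colon\C^\times\to T$ whose fixed points on $X$ are exactly $X^T$.
\item Since $F$ is projective, the limit $x_0=\lim_{z\to 0}\lambda(z)\cdot x$ exists in $F$.
\item The point $x_0$ is $\lambda$-fixed, so $x_0$ lies in $F^{\lambda}=F\cap X^{\lambda}=F\cap X^T$.
\end{itemize}

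The only step needing care is this existence of a fixed point in the fibre, and in particular the choice of a generic cocharacter in the elementary version. Such a $\lambda$ exists because $X$ embeds equivariantly in some $\P^N$ with finitely many weights. Choosing $\lambda$ to pair distinctly with all differences of distinct weights makes $(\P^N)^{\lambda}=(\P^N)^T$. The same equality then holds for $X$.
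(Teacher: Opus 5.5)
Your argument is exactly the paper's: the easy inclusion comes from equivariance, and for the reverse inclusion the fibre $f^{-1}(y)$ is nonempty, closed (hence projective) and $T$-stable, so Borel's fixed point theorem gives a $T$-fixed point in it. Your optional limit argument works for the varieties the paper uses, but in the lemma's generality it rests on an unproved claim: a non-normal projective $T$-variety need not embed $T$-equivariantly in any $\P^N$ (e.g.\ a nodal rational curve with the $\C^\times$-action fixing the node). You can avoid generic cocharacters altogether by taking limits under the coordinate subgroups $\C^\times\subseteq T$ one at a time, since each successive fixed locus is again nonempty, closed and $T$-stable because $T$ is commutative.
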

\begin{proof}
    If $x\in X^T$, then $f(x)\in (f(X))^T$, as $f$ is $T$-equivariant.
    Now let $x\in (f(X))^T$. Since $x$ is a point, it is closed in the Zariski topology on $f(X)$. The map $f$ being continuous, we have that $f^{-1}(x)$ is closed in $X$. Since $X$ is projective and $f^{-1}(x)$ is closed in $X$, it follows that $f^{-1}(x)$ is projective. Moreover, the set $f^{-1}(x)$ is fixed by the action of $T$ (otherwise, $x=f(t\cdot y)=t\cdot f(y)\notin  f^{-1}(x)$ for some $t\in T$ and $y\in f^{-1}(x)$, which is a contradiction).
    As $f^{-1}(x)$ is nonempty and projective, it follows from Borel's fixed point theorem that $f^{-1}(x)$ has a $T$-fixed point.
\end{proof}

\begin{lem}\label{lim optimizes functional}
    Let $T=(\C^\times)^k$ act on $\P^{n-1}$ by $t\cdot (x_1:\dots:x_{n})=(t^{\alpha_1}x_1,\dots, t^{\alpha_n}x_n)$ with $\alpha_i\in \Z^k$. Let $X\subseteq \P^{n-1}$ be some subvariety of $\P^{n-1}$ which is closed under the action of $T$. Let $\beta=(\beta_1,\dots, \beta_k)\in \mathbb{Z}^k$ and define a map $\gamma:\C^{\times}\to T$ by $\gamma(s)=(s^{\beta_1},s^{\beta_2},\dots, s^{\beta_k}).$
    If $\alpha_i\cdot \beta \neq \alpha_j \cdot \beta$ for $\alpha_i\neq \alpha_j,$ and $z=(z_1:\dots :z_{n})$ then 
    $\mu(\lim_{s\to \infty} \gamma(s)\cdot z)$ is the $\alpha_i$ which maximizes $\alpha_i\cdot \beta$ and satisfies $z_i\neq 0$ (in other words, $\mu(\lim_{s\to \infty} \gamma(s)\cdot z)=\alpha_i$ with $i\in \mrm{argmax}_{j \text{ s.t. } z_j\neq 0}(\alpha_j\cdot \beta)$).
\end{lem}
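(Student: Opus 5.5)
We compute $\gamma(s)\cdot z$ coordinate by coordinate. Since $\gamma(s)=(s^{\beta_1},\dots,s^{\beta_k})$, for $t=\gamma(s)$ we have $t^{\alpha_i}=\prod_{r}(s^{\beta_r})^{\alpha_{i,r}}=s^{\alpha_i\cdot\beta}$. Hence
\[
\gamma(s)\cdot z=(s^{\alpha_1\cdot\beta}z_1:\dots:s^{\alpha_n\cdot\beta}z_n).
\]
Let $S=\{j: z_j\neq 0\}$, which is nonempty because $z$ is a point of projective space. Let $M=\max_{j\in S}\alpha_j\cdot\beta$. Since projective coordinates may be rescaled, we multiply every coordinate by $s^{-M}$ and obtain the equivalent representative
\[
\bigl(s^{\alpha_1\cdot\beta-M}z_1:\dots:s^{\alpha_n\cdot\beta-M}z_n\bigr).
\]

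Next we take the limit coordinatewise as $s\to\infty$. For $j\notin S$ the coordinate is identically $0$. For $j\in S$ with $\alpha_j\cdot\beta<M$, the exponent is negative and the coordinate tends to $0$. For $j\in S$ with $\alpha_j\cdot\beta=M$, the coordinate is constantly $z_j\neq 0$.

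The rescaled curve is therefore a map from $\C^\times$ into $\C^n\setminus\{0\}$. It extends continuously (indeed algebraically, in the variable $1/s$) to $s=\infty$, with limit a nonzero vector $z'$. Since the projection $\C^n\setminus\{0\}\to\P^{n-1}$ is continuous, this shows that $\lim_{s\to\infty}\gamma(s)\cdot z=[z']$ exists in $\P^{n-1}$. It lies in $X$ because $X$ is closed and $T$-stable.

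The point $z'$ is supported exactly on $S'=\{j\in S:\alpha_j\cdot\beta=M\}$. By the hypothesis that distinct weights have distinct pairings with $\beta$, all $\alpha_j$ with $j\in S'$ are equal to a single weight $\alpha$. Thus $[z']$ has all its nonzero coordinates scaled by the same character $t^{\alpha}$, so it is a $T$-fixed point with $T$-weight $\alpha$. By definition of $\mu$, we get $\mu(\lim_{s\to\infty}\gamma(s)\cdot z)=\alpha=\alpha_i$ for any $i\in\argmax_{j:z_j\neq0}\alpha_j\cdot\beta$.

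The only step requiring care is this well-definedness: the argmax may contain several indices, and the genericity hypothesis is exactly what guarantees they all carry the same weight. That makes the limit a fixed point with a single, unambiguous $T$-weight.
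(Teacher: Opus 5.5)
Your proposal is correct and follows essentially the same route as the paper: expand $\gamma(s)\cdot z=(s^{\alpha_1\cdot\beta}z_1:\dots:s^{\alpha_n\cdot\beta}z_n)$ and divide by the maximal power of $s$ over the support of $z$. Then only the coordinates with $z_j\neq 0$ and maximal pairing survive, and the genericity hypothesis forces them to share a single weight, so the limit is a $T$-fixed point of that weight. Your extra remarks (that the limit exists algebraically in $1/s$, and that it lies in $X$ because $X$ is closed and $T$-stable) are harmless details the paper leaves implicit.
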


\begin{proof}
    We can compute $\gamma(s)\cdot z = (s^{\alpha_1\cdot \beta}z_1:s^{\alpha_2\cdot \beta}z_2:\cdots:s^{\alpha_n\cdot \beta}z_n)$. 
    
    Suppose without loss of generality that $1\in \mrm{argmax}_{j \text{ s.t. } z_j\neq 0}(\alpha_j\cdot \beta)$. That is, $z_1\neq 0$ and 
    \begin{equation}\label{lilineq}
        \alpha_1\cdot \beta\geq \alpha_i\cdot \beta \qquad  \text{for all $i$ with $z_i\neq 0$.}
    \end{equation} Our goal is now to show that $\mu(\lim_{s\to \infty}\gamma(s)\cdot z)=\alpha_1$. 
    
   We can rewrite $\gamma(s)\cdot z$ as
$\gamma(s)\cdot z = (z_1:s^{\alpha_2\cdot \beta-\alpha_1\cdot \beta}z_2:\cdots:s^{\alpha_n\cdot \beta-\alpha_1\cdot \beta}z_n)$. First note that whenever $z_i=0,$ the $i$th coordinate of $\lim_{s\to \infty}(\gamma(s)\cdot z)$ will be 0. Next observe that the $i$th coordinate of  $\lim_{s\to \infty}(\gamma(s)\cdot z)$  will also be 0 whenever $\alpha_i\cdot \beta-\alpha_1\cdot \beta< 0$. For all $i$ such that $z_i\neq 0$, we have $\alpha_i\cdot \beta-\alpha_1\cdot \beta\leq 0$ by \cref{lilineq}. Our assumption that $\alpha_i\cdot \beta \neq \alpha_j \cdot \beta$ for $\alpha_i\neq \alpha_j$  means that $\alpha_1\neq \alpha_i$ implies either $z_i=0$ or $\alpha_i\cdot \beta-\alpha_1\cdot \beta< 0$. Hence, the $i$th coordinate of $\lim_{s\to \infty}(\gamma(s)\cdot z)$ is 0 unless $\alpha_1=\alpha_i$. 
Thus,
    $\lim_{s\to \infty}(\gamma(s)\cdot z) = (z_1':z_2':\dots: z_n')$ with 
    \[z_i'=\begin{cases}
        z_i & \text{if } \alpha_i = \alpha_1\\
        0 & \text{else.}
    \end{cases}\]
Since the coordinates that survive in the limit all have weight $\alpha_1$, the limit $\lim_{s\to \infty}(\gamma(s)\cdot z)$ is a $T$-fixed point with weight $\alpha_1$, and $\mu(\lim_{s\to \infty}(\gamma(s)\cdot z))=\alpha_1$. 
\end{proof}

In the following proposition, we explain how the moment polytope of a variety torus-equivariantly embedded in a product can be written as the Minkowski sum of the moment polytopes of the factors in the product. 
We find this result to be very useful for producing interesting Minkowski decompositions. The result itself is known to some experts, but we were unable to find a satisfactory statement of it in the literature. As such, we have included the statement, together with a complete proof, below.

\begin{prop}\label{lem:MinkowskiMachine} Let $X$ be a closed and irreducible subvariety of a product of closed irreducible varieties $Y_1\times Y_2\times \cdots \times Y_{\mm}$  with projection maps $\pi_i:X\to Y_i$ and embedding maps $\rho_i:Y_i\hookrightarrow \P^{n_i}.$ Suppose a torus $T=(\C^{\times})^k$ acts on $X$, each $Y_i$, and each $\P^{n_i}$ so that each $\pi_i, \rho_i$ is $T$-equivariant. Embedding each $Y_i$ in projective space and applying the Segre embedding endows $X$ with a moment map $\mu_{\P^{\prod(n_i+1)-1}}$ such that 
\begin{align}
\mu_{\P^{\prod(n_i+1)-1}}(X)&=\conv_{x\in X^T} \left( \sum_{i=1}^\mm \mu_{\P^{n_i}}(\pi_i(x)) \right) \label{eq:bbb1}\\
&=\sum_{i=1}^\mm \conv_{x\in X^T} \mu_{\P^{n_i}}(\pi_i(x)) \label{eq:bbb2}\\
&=\sum_{i=1}^\mm \mu_{\P^{n_i}}(\pi_i(X)).\label{eq:bbb3}
\end{align}

\end{prop}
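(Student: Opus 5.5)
The plan is to prove the three equalities in order, using the Segre weight additivity \cref{eq:aaa1}, \cref{pre- fix of proj}, and \cref{lim optimizes functional}. Equality \cref{eq:bbb1} is nearly definitional. The moment polytope of $X$ in the Segre embedding is the convex hull of the $T$-weights of the points of $X^T$. By \cref{eq:aaa1}, each such weight equals $\sum_i \mu_{\P^{n_i}}(\pi_i(x))$, provided we choose projective coordinates on each $\P^{n_i}$ consisting of $T$-weight vectors. For a fixed point whose nonzero coordinates share a common weight, the Segre image likewise has common weight equal to the sum.

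Equality \cref{eq:bbb3} follows from \cref{eq:bbb2}. By \cref{pre- fix of proj}, applied to the $T$-equivariant map $\pi_i\colon X\to Y_i$ between projective varieties (closed subvarieties of products of projective varieties), we get $\pi_i(X^T)=\pi_i(X)^T$. Hence
\[\conv_{x\in X^T}\mu_{\P^{n_i}}(\pi_i(x))=\conv_{y\in \pi_i(X)^T}\mu_{\P^{n_i}}(y)=\mu_{\P^{n_i}}(\pi_i(X)).\]

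The main step is \cref{eq:bbb2}. Write $P=\conv_{x\in X^T}\sum_i f_i(x)$ with $f_i(x)=\mu_{\P^{n_i}}(\pi_i(x))$, and $P_i=\conv_{x\in X^T} f_i(x)$. The inclusion $P\subseteq \sum_i P_i$ is immediate. For the reverse inclusion I will compare support functions: two polytopes coincide if $\max_P\langle\cdot,\beta\rangle=\max_{\sum P_i}\langle\cdot,\beta\rangle$ for all $\beta$ in a dense set. It suffices to take integral $\beta\in\Z^k$ generic, meaning $\beta$ separates all the finitely many distinct weights appearing in every $\P^{n_i}$. The maximum over $\sum_i P_i$ is $\sum_i \max_{x\in X^T}\langle f_i(x),\beta\rangle$, and the difficulty is that the maximizers a priori depend on $i$.

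To find a single point realizing all of them, take a general point $z\in X$. Since $X$ is irreducible, for each $i$ the set where a given coordinate of $\rho_i\pi_i$ is nonzero is open. Hence a general $z$ has, in each factor, the maximal possible support, namely the union of the supports over all of $\pi_i(X)$. Let $x=\lim_{s\to\infty}\gamma(s)\cdot z$ with $\gamma(s)=(s^{\beta_1},\dots,s^{\beta_k})$; this limit exists in $X$ because $X$ is projective and $T$-stable, and it is $T$-fixed. Since $\pi_i$ is continuous and equivariant, $\pi_i(x)=\lim_{s\to\infty}\gamma(s)\cdot\pi_i(z)$. By \cref{lim optimizes functional}, $f_i(x)$ is the weight maximizing $\langle\cdot,\beta\rangle$ among the coordinates of $\pi_i(z)$ that are nonzero.

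By the choice of $z$, this maximizing weight dominates $f_i(x')$ for every $x'\in X^T$. Indeed, every fixed point $\pi_i(x')$ lies in $\pi_i(X)$, so its nonzero coordinates are within the support of $\pi_i(z)$. Therefore $\langle\sum_i f_i(x),\beta\rangle=\sum_i\max_{P_i}\langle\cdot,\beta\rangle$, which gives equality of the support functions and hence \cref{eq:bbb2}.

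The obstacle I anticipate is exactly this genericity argument: making precise that a single general $z$ simultaneously achieves maximal support in every factor. This uses irreducibility of $X$, so that the finite intersection of nonempty opens is nonempty. It also requires some care in passing from a dense set of directions $\beta$ to equality of the polytopes.
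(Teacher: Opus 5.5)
Your proposal is correct and follows essentially the same route as the paper: both get the nontrivial inclusion by flowing a general point $z\in X$ along a generic one-parameter subgroup and applying \cref{lim optimizes functional} factor by factor, so that the limiting fixed point attains the maximum of the functional on every summand at once. The paper phrases this vertex by vertex via interiors of normal cones, while you phrase it via support functions on a dense set of directions; your use of irreducibility to give $z$ maximal support in every factor, and your choice of $\beta$ separating all weights, spell out two points the paper leaves implicit in the phrase ``let $z$ be a general point''.
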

\begin{proof}
    We recover \cref{eq:bbb1} from \cref{eq:aaa1}. The equivalence of \cref{eq:bbb2} and \cref{eq:bbb3} follows from \cref{pre- fix of proj} and our definition of the moment polytope. It remains to prove that $\mu_{\P^{\prod(n_i+1)-1}}(X) = \sum_{i=1}^\mm \mu_{\P^{n_i}}(\pi_i(X)).$
     
    For any $x\in X^T,$ $\pi_i(x)\in \pi_i(X)^T$ so $\mu_{\P^{n_i}}(\pi_i(x))\in \mu_{\P^{n_i}}\pi_i(X)$. Hence \[\mu_{\P^{\prod(n_i+1)-1}}(X)=\conv_{x\in X^T}\sum_{i=1}^\mm \mu_{\P^{n_i}}(\pi_i(x)) \subseteq \sum_{i=1}^\mm \mu_{\pi_i(X)}.\]  We therefore just need to show that $\sum_{i=1}^\mm \mu_{\P^{n_i}}\pi_i(X) \subseteq \mu_{\P^{\prod(n_i+1)-1}}(X).$

    Let $v$ be an arbitrary vertex of $Q=\sum_{i=1}^\mm \mu_{\P^{n_i}}\pi_i(X)$. The normal cone to $v$ is the set $N_v=\{w\in \R^{k}: v\in \argmax_{x\in Q}x\cdot w \}.$ The interior of $N_v$ is $N_v^\circ=\{w\in \R^{k}: v= \argmax_{x\in Q}x\cdot w \}.$ For $p,q,$ vertices of $\mu(X)$ (equivalently, they are weights of fixed points), $\{x\in \R^k:x\cdot p=x\cdot q\}$ is a hyperplane of $\R^k$ while $N_v$ is an $n$-dimensional cone. Thus, there will exist some $w\in N_v^\circ \cap \Z^k$ such that there is a unique vertex $p$ of $\mu(X)$ maximizing $p\cdot w.$ Since $w\in N_v^{\circ},$ we will have $p=v.$

    Since $v=\argmax_{x\in Q}x\cdot w,$ we will also have that $v=\sum_{i=1}^\mm \argmax_{x\in \mu_{\P^{n_i}}\pi_i(X)}x\cdot w.$ Note that the fact that $\argmax_{x\in Q}x\cdot w$ is the single point $v$ implies that for each $i$, $\argmax_{x\in \mu_{\P^{n_i}}\pi_i(X)}x\cdot w $ will also be a single point. 

    Now define $\gamma:\C^\times\to (\C^\times)^k$ by $\gamma(s)=(s^{w_1},s^{w_2},\dots, s^{w_k})$ and let $z$ be a general point of $X$. Since $w$ was chosen to induce a linear functional that is maximized by a unique vertex of $\mu(X),$ $\mu(\lim_{s\to \infty}\gamma(s)\cdot z)$ will pick out that unique vertex (note that we may choose coordinates on each $\P^{n_i}$ compatible with applying \cref{lim optimizes functional}). Moreover, we will have
    \begin{align*}
\mu(\lim_{s\to \infty} \gamma(s) \cdot z)&=\sum_{i=1}^\mm \mu_{\P^{n_i}}\pi_i(\lim_{s\to \infty} \gamma(s) \cdot z))\\
&=\sum_{i=1}^\mm \mu_{\P^{n_i}}(\lim_{s\to \infty} \gamma(s)\cdot \pi_i(z))\\
&=\sum_{i=1}^\mm \argmax_{x\in \mu_{\P^{n_i}}\pi_i(X)} x\cdot w\\
&=v.
\end{align*}

This proves that every vertex of $Q=\sum_{i=1}^\mm \mu_{\P^{n_i}}\pi_i(X)$ is contained in $\mu(X)$ so $\sum_{i=1}^\mm \mu_{\P^{n_i}}\pi_i(X) \subseteq \mu(X)$ and therefore $\sum_{i=1}^\mm \mu_{\P^{n_i}}\pi_i(X) = \mu(X)$   
\end{proof}

A number of corollaries of \cref{lem:MinkowskiMachine} have been proven elsewhere by other means. These include \cref{lem:Richardson sum} (originally appearing as Corollary 6.11 in \cite{Kostant-Toda}, and later quoted by Proposition 2.7 of \cite{TW15} and Theorem 5.6 of \cite{MarioMV}) and
\cref{Brick Gr sum} (originally appearing as Proposition 3.29 in \cite{bricksort} and generalized by Proposition 5.17 in \cite{PS15}). We use \cref{lem:MinkowskiMachine} to get a Minkowski decomposition for bulky brick polytopes (and thus a new Minkowski decomposition for certain (non bulky) brick polytopes) in \cref{thm:Minkowski}.

\subsection{Grassmannians and flag varieties}\label{subsection:Schubert-varieties}

 Next, we will recall Grassmannians, complete flag varieties, relative position, Schubert varieties, and Richardson varieties, and discuss their moment maps.
    
	Fix a basis $\{e_1,\dotsc,e_n\}$ for $\mbb{C}^n$. Let $G=\mrm{GL}_n(\mbb{C})$ be the set of $n\times n$ invertible matrices over $\mbb{C}$. Let $B_+$ (resp. $B_-$) denote the set of upper (resp. lower) triangular matrices in $G$, and let $T$ denote
    the set of diagonal matrices in $G$. For any $I\subseteq [n]$, define $E_{I}:=\mrm{Span}( \{e_i:i\in I\})$.

 \subsubsection{Grassmannians}

    Let $\mrm{Gr}(k,n)$ be the \textbf{Grassmannian} of $k$-planes in $\mbb{C}^n$, and observe that there is a natural action of $G$ on $\mrm{Gr}(k,n)$.
   The $T$-fixed points $\Gr(k,n)^{T}$ of $\Gr(k,n)$ are exactly the coordinate subspaces $E_{I}$ with $I\subseteq [n]$ such that $|I|=k$.
    
    We view $S_n$ as a subgroup of $G$ by associating the permutation $\sigma$ with the matrix that sends $e_i$ to $e_{\sigma(i)}$ for each $i$. In particular the action of $S_n$ on $\Gr(k,n)$ is such that $\sigma\cdot E_{[k]}=E_{\sigma\cdot [k]}=E_{\{\sigma(1),\dotsc,\sigma(k)\}}$.  For $I\subseteq [n]$ with $|I|=k$, we can always write $E_{I}=\sigma\cdot E_{[k]}$ for some $\sigma\in S_n$.

    The Grassmannian $\Gr(k,n)$ can be embedded in projective space via the  ``Pl\"ucker embedding" $\psi_k\colon \mrm{Gr}(k,n)\hookrightarrow  \mbb{P}^{{n\choose k}-1}$ which sends the column span of a $k\times n$ matrix $M$ to a point in $\mbb{P}^{{n\choose k}-1}$ with coordinates given by the $k\times k$ minors of $M$.
The $T$-weight of the image of the fixed point $E_{\{i_1,\dotsc,i_k\}}$ of $\mrm{Gr}(k,n)$ under $\psi_k$ is $p_{i_1}+\cdots +p_{i_k}$ where $p_i$ is the $i$th standard basis vector (as in \cref{subsection:brick-polytopes}). The $T$-weight of $\sigma\cdot E_{[k]}$ is $\sigma(\omega_k)$  where $\omega_k=p_1+\cdots+ p_k$ (as in \cref{subsection:brick-polytopes}).

We will suppose that $T$ acts diagonally on any product of Grassmannians $\prod_{i=1}^\mm \Gr(k_i,n)$ and will embed such a product in projective space by first applying $\psi_{k_i}$ to each $\Gr(k_i,n)$ and then applying the Segre embedding. 

\begin{nota}
    Instead of writing $\mu_{\mbb{P}^{{n\choose k}-1}}$ for the moment map on $\Gr(k,n)$ with respect to the Pl\"ucker embedding, we will write $\mu_{\Gr(k,n)}$. Similarly, when $Z$ is any product of Grassmannians, we will write $\mu_{Z}$ for the moment map obtained by applying Pl\"ucker embeddings to each Grassmannian and then applying the Segre embedding. 
\end{nota}

With this notation,
$\mu_{\Gr(k,n)}(\sigma\cdot E_{[k]})= \sigma( \omega_k)$.

\subsubsection{The flag variety}\label{flags}
   
   The \textbf{complete flag variety} $\mrm{Fl}_n$ is the variety of complete flags of subspaces in $\mbb{C}^n$:
	\[\mrm{Fl}_n=\{F_\bullet=(F_1\subseteq F_2\subseteq \dotsb \subseteq F_{n-1}\subseteq F_n=\mbb{C}^n) : \mrm{  dim}(F_i)=i\}.\]
    The standard flag in $\mbb{C}^n$ is $E_\bullet^{\text{st}}=\left(E_{[1]}\subseteq E_{[2]}\subseteq \dotsb\subseteq E_{[n-1]}\subseteq \mbb{C}^n\right)$, and the opposite flag in $\mbb{C}^n$ is $E_\bullet^{\text{op}}=
    \left(E_{\{n\}}\subseteq E_{\{n,n-1\}}\subseteq \dotsb\subseteq E_{\{n,n-1,n-2,\dotsc,2\}}\subseteq \mbb{C}^n\right)$.
    The group $G$ acts on $\mrm{Fl}_{n}$ by the action
	\[g\cdot F_\bullet:=((g\cdot F_1)\subseteq (g\cdot F_2)\subseteq \dotsb\subseteq (g\cdot F_{n-1})\subseteq\mbb{C}^n),\quad g\in G,\text{ }F_\bullet\in\mrm{Fl}_n.\]
 
 The flag variety $\Fl_n$ may be ($G$-equivariantly) identified  with the ``homogeneous space'' $G/B_{+}$ by noting that $G$ acts transitively on $\Fl_n$ and $B_{+}$ is the stabilizer of $E_\bullet^{\mrm{st}}$.

 The $T$-fixed points of $\Fl_n$ are flags $F_{\bullet}$ such that each $F_i$ is a $T$-fixed point of $\Gr(k,n)$. We can describe the set of fixed $T$-fixed points as
$(\Fl_n)^{T}=\{\sigma\cdot E_\bullet^{\mrm{st}}: \sigma\in S_n\}$.

Embedding $\Fl_n$ in $\prod_{k=1}^{n}\mrm{Gr}(k,n)$ via $\phi$ means that the restriction of the moment map $\mu_{\prod_{k=1}^{n}\mrm{Gr}(k,n)}$ to $\phi(\Fl_n)$ is a moment map for $\Fl_n$. Under this moment map, $\mu_{\prod_{k=1}^{n}\mrm{Gr}(k,n)}(\phi(\sigma\cdot E_\bullet^{\text{st}}))=\sigma(\omega_1+\dotsb+\omega_n)$.

We have a $G$-equivariant (and thus $T$-equivariant) embedding $\phi\colon \mrm{Fl}_n\hookrightarrow \prod_{k=1}^{n}\mrm{Gr}(k,n)$, where $F_\bullet=(F_1\subseteq F_2\subseteq \dotsb \subseteq F_{n}\subseteq \mbb{C}^n)$ is sent to $(F_1, F_2, \dotsc, F_{n})$ (and where $G$ acts diagonally on $\prod_{k=1}^{n}\mrm{Gr}(k,n)$). We will denote the projection $\mrm{Fl}_n\to \Gr(k,n)$ by $\phi_k$ so that $\phi(F_{\bullet})=(\phi_1(F_{\bullet}),\dots, \phi_n(F_\bullet)).$

\begin{nota}
    Instead of writing $\mu_{\prod_{k=1}^n \mrm{Gr}(k,n)}\circ \phi$ for the moment map on $\Fl_n,$ we will write $\mu_{\Fl_n}$. We will write $\mu_{(\Fl_n)^m}$ for the moment map on $(\Fl_n)^m$ (or any $T$-invariant subvariety thereof) with respect to the embedding obtained by applying $\phi$ then applying Pl\"ucker embeddings and finally Segre embeddings. 
\end{nota}
With this notation, \[\mu_{\Fl_n}(\sigma \cdot E_\bullet^{\text{st}}) = \sigma(\omega_1+\dotsb+\omega_n).\]

\subsubsection{Flags in relative position}

 The group $G\times G$ acts on $\mrm{Fl}_n\times \mrm{Fl}_n$ by $(f,g)\cdot (F_\bullet,G_\bullet):=(f\cdot F_\bullet,g\cdot G_\bullet)$ for all $f,g\in G$, $F_\bullet,G_\bullet\in\mrm{Fl}_n$. Define $G_\Delta:=\{(g,g)\mid g\in G\}\subseteq G\times G$. For $\sigma\in S_n$, the set of pairs of flags in \textbf{relative position $\sigma$} is the $G_\Delta$-orbit of $(E_\bullet^{\text{st}},\sigma\cdot E_\bullet^{\text{st}})$ in $\mrm{Fl}_n\times \mrm{Fl}_n$, and we will denote this set by $D_\sigma^\circ$. There is a well-known alternative characterization of $D_\sigma^\circ$, which we record below.
 \begin{lem}
     The set of pairs of flags in relative position $\sigma$ can equivalently be characterized as 
 \[D_{\sigma}^{\circ} = \{(F_\bullet,G_\bullet)\in (\Fl_n)^2: \mrm{dim}(F_i\cap G_j)=|[i]\cap\sigma([j])| \text{ for all } i,j\}.\]
 \end{lem}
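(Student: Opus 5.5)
The plan is to prove both inclusions directly, using the Bruhat decomposition and the $G_\Delta$-invariance of the intersection dimensions. Write $R_\sigma$ for the right-hand side.

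\emph{First inclusion: $D_\sigma^\circ\subseteq R_\sigma$.}
\begin{itemize}
\item The function $(F_\bullet,G_\bullet)\mapsto \dim(F_i\cap G_j)$ is constant on $G_\Delta$-orbits, since $g$ is an invertible linear map and so $\dim(gF_i\cap gG_j)=\dim(g(F_i\cap G_j))=\dim(F_i\cap G_j)$. Therefore $R_\sigma$ is a union of $G_\Delta$-orbits.
\item It then suffices to evaluate at the base point $(E_\bullet^{\mathrm{st}},\sigma E_\bullet^{\mathrm{st}})$. There $F_i=E_{[i]}$ and $G_j=\sigma E_{[j]}=E_{\sigma([j])}$.
\item The intersection of two coordinate subspaces is the coordinate subspace on the intersection of the index sets. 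So $\dim(E_{[i]}\cap E_{\sigma([j])})=|[i]\cap\sigma([j])|$, and this gives $D_\sigma^\circ\subseteq R_\sigma$.
\end{itemize}

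\emph{Second inclusion: $R_\sigma\subseteq D_\sigma^\circ$.}
\begin{itemize}
\item Take any pair $(F_\bullet,G_\bullet)$. Since $G$ acts transitively on $\Fl_n$, there is $g$ with $gF_\bullet=E_\bullet^{\mathrm{st}}$. Replacing the pair by $g\cdot(F_\bullet,G_\bullet)$, we may assume $F_\bullet=E_\bullet^{\mathrm{st}}$.
\item The stabilizer of $E_\bullet^{\mathrm{st}}$ in $G$ is $B_+$, so the pairs with first entry $E^{\mathrm{st}}_\bullet$ form the $G_\Delta$-orbits modulo $B_+$ acting on $G_\bullet\in\Fl_n=G/B_+$.
\item By the Bruhat decomposition $G=\bigsqcup_{\tau\in S_n}B_+\tau B_+$, some $b\in B_+$ gives $bG_\bullet=\tau E_\bullet^{\mathrm{st}}$ for a permutation $\tau$. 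Hence every pair lies in $D_\tau^\circ$ for some $\tau\in S_n$.
\item By the first inclusion, such a pair has $\dim(F_i\cap G_j)=|[i]\cap\tau([j])|$ for all $i,j$.
\item It remains to show that the numbers $r_\tau(i,j)=|[i]\cap\tau([j])|$ determine $\tau$. We have $\tau(j)\le i$ if and only if $r_\tau(i,j)-r_\tau(i,j-1)=1$. Taking the smallest such $i$ recovers $\tau(j)$.
\item So if the pair lies in $R_\sigma$, then $r_\tau=r_\sigma$, which forces $\tau=\sigma$, and the pair lies in $D_\sigma^\circ$.
\end{itemize}

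The only nonformal input is the Bruhat decomposition of $\mathrm{GL}_n$, equivalently that every $B_+$-orbit on $\Fl_n$ contains a $T$-fixed point $\tau E_\bullet^{\mathrm{st}}$. I would cite this as standard. If a self-contained argument were wanted, the main obstacle would be producing the adapted basis. The approach would be: for each $j$, take the smallest $i$ with $\dim(E_{[i]}\cap G_j)>\dim(E_{[i]}\cap G_{j-1})$, and set $\tau(j)=i$. Then choose $v_j\in G_j$ with leading coordinate $e_i$, and row-reduce with upper triangular operations so that $v_j=b\,e_{\tau(j)}$ for a single $b\in B_+$. Checking that $\tau$ is a permutation and that the reduction stays inside $B_+$ is routine Gaussian elimination.
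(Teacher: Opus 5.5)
Your proof is correct and follows the same route as the paper: $G_\Delta$-invariance of $\dim(F_i\cap G_j)$ plus evaluation at $(E_\bullet^{\text{st}},\sigma E_\bullet^{\text{st}})$ gives one inclusion, and the partition of $\Fl_n^2$ into the orbits $D_\tau^\circ$ (Bruhat decomposition) gives the other. Your explicit check that the numbers $|[i]\cap\tau([j])|$ determine $\tau$ fills in a step that the paper's argument leaves implicit.
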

 \begin{proof}
     Note that the action of $G_{\Delta}$ preserves intersection dimensions (for vector spaces $V_1,V_2\in\mbb{C}^n$ and $g\in G$, we have $\dim(gV_1\cap gV_2)=\dim(V_1\cap V_2)$ and for flags $F_\bullet$, $G_\bullet\in\Fl_n$, we have  $\dim((gF_\bullet)_i\cap (gG_\bullet)_j)=\dim(F_i\cap G_j)$.) Thus, if $(F_\bullet,G_\bullet)\in D_\sigma^\circ$, then $\dim(F_i\cap G_j)=\dim(E_{[i]}\cap E_{\sigma([j])})=|[i]\cap \sigma([j])|$. This shows one containment. To get the other containment note that the $D_\sigma^\circ$ partition $\Fl_n^2$, and the sets $\{(x\cdot E_\bullet^\text{st},y\cdot E_\bullet^\text{st}): x^{-1}y\in \overline{B_+\sigma B_+}\}$ don't overlap with each other. So if $(F_\bullet, G_\bullet)\in \{(x\cdot E_\bullet^\text{st},y\cdot E_\bullet^\text{st}): x^{-1}y\in \overline{B_+\sigma B_+}\}$, then $(F_\bullet, G_\bullet)$ cannot be in $D_\tau^\circ$ for any $\tau\neq \sigma$. That achieves the other containment.
 \end{proof}
  The closure $D_{\sigma}$ of  $D_{\sigma}^\circ$ in $\mrm{Fl}_n\times \mrm{Fl}_n$ is $D_{\sigma}=\sqcup_{\sigma'\leq \sigma}D_{\sigma'}^\circ$ (see, e.g., \cite[Lemma 2.1]{Bilen}). Zariski closures of orbits of connected algebraic groups are irreducible, so $D_{\sigma}$ is a irreducible.

  Since $D_\sigma^{\circ}=G\cdot(E_\bullet^\text{st},\sigma E_\bullet^\text{st}),$ for $x,y\in G$ we have $(x\cdot E_\bullet^\text{st},y \cdot E_\bullet^\text{st})\in D_\sigma^\circ$ if and only if there is some $b\in \mrm{Stab}_G(E_\bullet^\text{st})=B_+$ such that $bx^{-1}y E_\bullet^\text{st}  = \sigma E_\bullet^\text{st}$. From this we conclude the well known alternate definition that $(xE_\bullet^\text{st},yE_\bullet^\text{st})\in D_\sigma^\circ$ if and only if $x^{-1}y \in B_+\sigma B_+.$ By the Bruhat decomposition, this means that $D_\sigma=\{(x\cdot E_\bullet^\text{st},y\cdot E_\bullet^\text{st}): x^{-1}y\in \overline{B_+\sigma B_+}\}$.

\noindent
  \begin{minipage}{0.49\textwidth}
    \begin{egg}

       Relative position is relatively simple to understand for simple reflections. 
       \[|[j] \cap s_i([j])|=\begin{cases} j & i\neq j\\ j-1 &i=j \end{cases}\]
        so $D_{s_i}^{\circ}$ is the set of pairs of flags $(F_\bullet, G_\bullet)$ so that $F_i\neq G_i$  and     $F_j=G_j$ when $j\neq i$. Taking the closure, $D_{s_i}=\{(F_\bullet, G_\bullet)\in (\Fl_n)^2 : F_j=G_j \text{ for } j\neq i\}.$ We visualize this situation in \cref{fig: relpos i}.
        \end{egg}

    \end{minipage}
    \hfill
    \begin{minipage}{0.47\textwidth}
    
\adjustbox{scale=0.75,center}{
\begin{tikzcd}[row sep = 1.1em]
               & \mathbb{C}^n                          &                \\
               & \vdots \arrow[u]                      &                \\
               & F_{i+1}=G_{i+1} \arrow[u]             &                \\
F_i \arrow[ru] &                                       & G_i \arrow[lu] \\
               & F_{i-1}=G_{i-1} \arrow[lu] \arrow[ru] &                \\
               & \vdots \arrow[u]                      &                \\
               & F_1=G_1 \arrow[u]                     &                \\
               & (0) \arrow[u]                         &               
\end{tikzcd}}
\captionof{figure}{Illustration of $(F_{\bullet},G_{\bullet})\in D_{s_i}$ with arrows representing inclusions of vector spaces}\label{fig: relpos i}

    \end{minipage}

\begin{lem}\label{lem: no change}
      If $s_i \not \leq \sigma,$ then for $(F_\bullet, G_\bullet) \in D_{\sigma},$ $F_i = G_i$.
\end{lem}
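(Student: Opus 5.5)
The plan is to reduce to the open strata and then use the dimension characterization of relative position. Since $D_\sigma=\bigsqcup_{\sigma'\leq\sigma}D_{\sigma'}^\circ$, any $(F_\bullet,G_\bullet)\in D_\sigma$ lies in some $D_{\sigma'}^\circ$ with $\sigma'\leq\sigma$. If $s_i\leq\sigma'$, then transitivity of Bruhat order would give $s_i\leq\sigma$, which is excluded. So it suffices to prove the claim for $(F_\bullet,G_\bullet)\in D_{\sigma'}^\circ$ under the hypothesis $s_i\not\leq\sigma'$. Renaming, we may assume $(F_\bullet,G_\bullet)\in D_\sigma^\circ$ with $s_i\not\leq\sigma$.

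The key combinatorial step is that $s_i\not\leq\sigma$ forces $\sigma([i])=[i]$. By the subword definition of Bruhat order, $s_i\not\leq\sigma$ means $s_i$ does not occur in any (equivalently, some) reduced word for $\sigma$. Hence $\sigma$ lies in the subgroup generated by $\{s_j: j\neq i\}$, which is $S_i\times S_{n-i}$. Each such generator preserves the set $[i]$, so $\sigma([i])=[i]$.

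Now apply the lemma characterizing $D_\sigma^\circ$ by intersection dimensions, with $j=i$. It gives $\dim(F_i\cap G_i)=|[i]\cap\sigma([i])|=|[i]|=i$. Since $\dim F_i=\dim G_i=i$, this forces $F_i\cap G_i=F_i=G_i$, which is the claim.

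An alternative route avoids the strata: every point of $D_\sigma$ has the form $(xE_\bullet^{\mrm{st}},yE_\bullet^{\mrm{st}})$ with $x^{-1}y\in\overline{B_+\sigma B_+}=\bigsqcup_{\sigma'\le\sigma}B_+\sigma'B_+$. Each such $\sigma'$ lies in $S_i\times S_{n-i}$, so $x^{-1}y$ lies in the block upper-triangular parabolic subgroup that stabilizes $E_{[i]}$. Then $G_i=yE_{[i]}=x(x^{-1}y)E_{[i]}=xE_{[i]}=F_i$.

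The only point needing care is the group-theoretic fact that $s_i\not\le\sigma$ implies $\sigma\in\langle s_j:j\ne i\rangle$. This follows directly from the subword property of Bruhat order recalled in the paper, so I expect no real obstacle.
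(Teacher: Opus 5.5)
Your argument is correct and is essentially the paper's proof: show $\sigma([i])=[i]$ because $\sigma$ is a product of $s_j$ with $j\neq i$, then read off $\dim(F_i\cap G_i)=|[i]\cap\sigma([i])|=i$. Your preliminary reduction to a stratum $D_{\sigma'}^\circ$ with $\sigma'\le\sigma$ is a genuine improvement. The paper applies the intersection-dimension formula directly to points of the closure $D_\sigma$, although that formula characterizes only the open orbit $D_\sigma^\circ$, and your reduction fills that step.
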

\begin{proof}
    Suppose $s_i\not \leq \sigma$ and let $(F_\bullet, G_\bullet) \in D_{\sigma}.$ For simple reflections $s_j\neq s_i,$ $s_j([i])=[i]$. So $\sigma([i])=[i].$ So $\dim(F_i\cap G_i) = |[i]\cap [i]|=i.$ Therefore, $F_i\cap G_i=F_i=G_i.$
    \end{proof}

\begin{minipage}{0.31\textwidth}
\center
    
\begin{tikzcd}[row sep = 1.1em, column sep = 0.82em]
               & \mathbb{C}^5            &                           \\
F_4 \arrow[ru] &                         & G_4 \arrow[lu]            \\
F_3 \arrow[u]  &                         & G_3 \arrow[u] \arrow[llu] \\
F_2 \arrow[u]  &                         & G_2 \arrow[u] \arrow[llu] \\
F_1 \arrow[u]  &                         & G_1 \arrow[u] \arrow[llu] \\
               & (0) \arrow[lu] \arrow[ru] &                          
\end{tikzcd}
\captionof{figure}{Illustration of $D_{s_1s_2s_3s_4}$}

\vspace{10mm}
\label{fig:1234}
\end{minipage}
\hfill
\begin{minipage}{0.31\textwidth}
\centering
    
\begin{tikzcd}[row sep = 1.1em, column sep = 0.82em]
                          & \mathbb{C}^5            &                \\
F_4 \arrow[ru]            &                         & G_4 \arrow[lu] \\
F_3 \arrow[u] \arrow[rru] &                         & G_3 \arrow[u]  \\
F_2 \arrow[u] \arrow[rru] &                         & G_2 \arrow[u]  \\
F_1 \arrow[u] \arrow[rru] &                         & G_1 \arrow[u]  \\
                          & (0) \arrow[lu] \arrow[ru] &               
\end{tikzcd}
\captionof{figure}{$(F_{\bullet},G_{\bullet})$ in $D_{s_4s_3s_2s_1}$ 
\vspace{10mm}}
\label{fig:4321}
\end{minipage}
\hfill
\begin{minipage}{0.31\textwidth}
   
\begin{tikzcd}[row sep = 1.1em, column sep = 0.82em]
                          & \mathbb{C}^5            &                           \\
F_4 \arrow[ru]            &                         & G_4 \arrow[lu]            \\
F_3 \arrow[u]             &                         & G_3 \arrow[u] \arrow[llu] \\
F_2 \arrow[u] \arrow[rru] &                         & G_2 \arrow[u]             \\
F_1 \arrow[u]             &                         & G_1 \arrow[u] \arrow[llu] \\
                          & (0) \arrow[lu] \arrow[ru] &                          
\end{tikzcd}
\captionof{figure}{$(F_{\bullet},G_{\bullet})$ in $D_{s_1s_3s_2s_4}$}
\vspace{10mm}
\label{fig:1324}
\end{minipage}

The torus $T$ acts on $\Fl_n\times \Fl_n$ diagonally via $T\cong T_\Delta:=\{(t,t)\mid t\in T\} \hookrightarrow G_{\Delta} \hookrightarrow G\times G.$ 
    The fixed points of this action are just $(\Fl_n)^T \times (\Fl_n)^T.$ Since $D_{\sigma}$ is a $G$-orbit closure, it will be $T$-invariant. 
    Since $(w_1\cdot E_\bullet^{\text{st}}, w_2\cdot E_\bullet^{\text{st}}) = w_1\cdot (E_\bullet^{\text{st}}, w_1^{-1}w_2\cdot E_\bullet^{\text{st}})$, we find that 
    \begin{align*}
         D_{\sigma}^T &= \{(w_1\cdot E_\bullet^{\text{st}}, w_2\cdot E_\bullet^{\text{st}})\mid w_1,w_2\in S_n, w_1^{-1}w_2 \leq \sigma\}\\
         &=\{(w\cdot E_\bullet^{\text{st}},  w\cdot \sigma'\cdot E_\bullet^{\text{st}})\mid w\in S_n, \sigma'\leq \sigma\}.
    \end{align*}

\subsubsection{Schubert and Richardson varieties}

For $w\in S_n$ we define the \textbf{Schubert variety} $X_w=\overline{B_{-}wB_+/B_+}=\{F_\bullet:(E_\bullet^{\text{op}},F_\bullet)\in D_{w_0 \cdot w}\}$ and \textbf{opposite Schubert variety} $X^w=\overline{B_{+}wB_+/B_+}=\{F_{\bullet}:(E_{\bullet}^{\text{st}},F_{\bullet})\in D_{w}\}$\footnote{Our characterization of the Schubert variety using relative positions of flags follows from our characterization of the opposite Schubert variety in terms of relative positions. To see this, observe \begin{align*}
X_w=w_0\cdot X^{w_0\cdot w}=\{w_0\cdot F_\bullet:(E_\bullet^{\text{st}},F_\bullet)\in D_{w_0\cdot w}\}&=\{w_0\cdot F_\bullet:w_0\cdot (E_\bullet^{\text{st}},F_\bullet)\in  D_{w_0\cdot w}\}\\&=\{w_0\cdot F_\bullet:(E_\bullet^{\text{op}},w_0\cdot F_\bullet)\in D_{w_0\cdot w}\}\\&=\{G_\bullet: (E_\bullet^{\text{op}},G_\bullet)\in D_{w_0\cdot w}\}
.
\end{align*}},
which have the properties that $X_w\subseteq X_v$ iff $w\geq v$, and $X^w \subseteq X^v$ iff $w\leq v.$ Intersections of these are known as \textbf{Richardson varieties} and denoted $X_u^v:=X_u\cap X^v.$ We note that $X_u^v\neq \emptyset$ if and only if $u\leq v$.

\begin{rmk}
    In \cref{fig: relpos i,fig:1234,fig:4321,fig:1324} we visualize relative position using inclusions. The reader may wonder if this is always possible. It is not! The question of whether $D_{\sigma}$ can be described by inclusions is equivalent to asking if $X_{\sigma}$ can be described by inclusions. Such Schubert varieties are aptly called Schubert varieties \textbf{defined by inclusions} and were classified by Gasharov and Reiner in \cite{inclusions}. They showed that $X_{\sigma}$ is defined by inclusions if and only if $\sigma$ ``avoids'' the permutations 4231, 35142, 42513 and 351624.
\end{rmk}

\begin{egg}
    The Schubert variety $\sigma=4231$ is the set of all flags $\{0\}\subset F_1\subset F_2\subset F_3\subset \C^4$ such that $F_2\cap \langle e_1, e_2\rangle \neq \{0\}$.
\end{egg}

 Since $T=B_{-}\cap B_{+}$, Schubert varieties, opposite Schubert varieties, and hence Richardson varieties, are all $T$-invariant. The set of $T$-fixed points of $X_u^v$ is $\{\sigma\cdot E_\bullet^{\text{st}}\mid \sigma\in S_n, u\leq \sigma\leq v \}$.

        Observe that $\sigma(\omega_1+\dots +\omega_n)=\sigma(n,n-1,\dotsc,2,1)=(n+1-\sigma^{-1}(1),n+1-\sigma^{-1}(2),\dotsc,n+1-\sigma^{-1}(n))$. In particular,
        \[\mu_{\Fl_n}(\sigma \cdot  E_\bullet^{\text{st}} ) = (n+1-\sigma^{-1}(1),n+1-\sigma^{-1}(2),\dotsc,n+1-\sigma^{-1}(n)).\]
        Since $(X_u^v)^T = \{w\cdot E_\bullet^{\text{st}}: u\leq w \leq v\},$ the moment polytope of $X_u^v$ is exactly the twisted Bruhat interval polytope $\tilde{P}_{u,v}.$
    
        The dimension of a $T$-invariant subvariety matches the dimension of its moment polytope precisely when the subvariety is toric, so $\tilde{P}_{u,v}$ is toric exactly when $X_u^v$ is. The latter observation also appears in \cite{LMP21}.

Since $X_u^v$ is a $T$-invariant subvariety of a product of Grassmannians, this gives us a first corollary (albeit not a novel one) of \cref{lem:MinkowskiMachine}.

\begin{cor}[Essentially  Corollary 6.11 in \cite{Kostant-Toda}]\label{lem:Richardson sum}
    For permutations $u\leq v$ in $S_n$, \[\tilde{P}_{u,v} = \mu_{\Fl_n}(X_u^v)=\sum_{i=1}^n \mu_{\Gr(i,n)}(\phi_i(X_u^v))\]
\end{cor}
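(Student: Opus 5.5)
The first equality $\tilde{P}_{u,v}=\mu_{\Fl_n}(X_u^v)$ was already observed just before the statement. The $T$-fixed points of $X_u^v$ are $\{w\cdot E_\bullet^{\mrm{st}}: u\leq w\leq v\}$, and $\mu_{\Fl_n}(w\cdot E_\bullet^{\mrm{st}})=(n+1-w^{-1}(1),\dotsc,n+1-w^{-1}(n))$. Taking the convex hull of these weights gives $\tilde{P}_{u,v}$ by \cref{defn:bruhat-interval-polytope}. So the real content is the second equality, and I would obtain it as a direct application of \cref{lem:MinkowskiMachine}.

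For the setup, take $Y_i=\Gr(i,n)$ for $i=1,\dots,n$, with $\rho_i=\psi_i$ the Pl\"ucker embedding, and regard $X_u^v$ as a subvariety of $\prod_{i=1}^n\Gr(i,n)$ via $\phi$. Then I need to check the hypotheses of \cref{lem:MinkowskiMachine} one at a time.
\begin{enumerate}
\item $\phi(X_u^v)$ is closed. $X_u^v$ is closed in $\Fl_n$, and $\phi$ is a closed embedding, since $\Fl_n$ is projective and its image is cut out by the incidence conditions $F_i\subseteq F_{i+1}$.
\item $X_u^v$ is irreducible. Richardson varieties are irreducible, which is a standard fact; if needed, I would cite it alongside the irreducibility of $D_\sigma$ noted earlier.
\item Each $\Gr(i,n)$ is closed and irreducible.
\item All maps are $T$-equivariant. $\phi$ is $G$-equivariant, the projections $\pi_i=\phi_i$ are $T$-equivariant for the diagonal action, and the Pl\"ucker embeddings are $T$-equivariant, with coordinates (the minors) being $T$-weight vectors.
\end{enumerate}

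It remains to check that the moment map produced by \cref{lem:MinkowskiMachine} is $\mu_{\Fl_n}$. Its construction applies the Pl\"ucker embeddings to each factor and then the Segre embedding, and this is exactly the notation $\mu_{\Fl_n}=\mu_{\prod\Gr(k,n)}\circ\phi$ fixed earlier. \cref{eq:bbb3} then reads
\[\mu_{\Fl_n}(X_u^v)=\sum_{i=1}^n\mu_{\Gr(i,n)}(\phi_i(X_u^v)).\]

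As a sanity check, I would verify at the level of fixed points that the weight of $w\cdot E_\bullet^{\mrm{st}}$ is $\sum_i w(\omega_i)$. This matches $\mu_{\Gr(i,n)}(w\cdot E_{[i]})=w(\omega_i)$ on each factor, as \cref{eq:aaa1} predicts.

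The only step needing any care is the irreducibility of $X_u^v$, since \cref{lem:MinkowskiMachine} uses a general point $z\in X$. I would simply invoke the known result that nonempty Richardson varieties are irreducible, of dimension $\ell(v)-\ell(u)$. Alternatively, inspecting the proof of \cref{lem:MinkowskiMachine}, irreducibility is only used to choose a point whose limit realizes a given vertex. One could instead pick, for each vertex, a $T$-fixed point $w\cdot E_\bullet^{\mrm{st}}$ directly: projecting a fixed point maximizing the functional gives the maximizers on each factor, because the factor polytopes' maximizers sum to the unique maximizer. This bypasses irreducibility entirely.
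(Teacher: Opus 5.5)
Your main argument is correct and follows the paper's route. The first equality comes from $(X_u^v)^T=\{w\cdot E_\bullet^{\mathrm{st}}:u\le w\le v\}$. The second is \cref{lem:MinkowskiMachine} applied to $X_u^v\subseteq\prod_{i=1}^n\Gr(i,n)$ via $\phi$, using the standard fact that Richardson varieties are irreducible.

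Your closing remark, that irreducibility can be bypassed by working only with fixed points, is false and should be dropped. Suppose $x\in(X_u^v)^T$ maximizes $w\cdot\mu_{\Fl_n}(x)$, and let $P_i$ be the $i$th summand with maximizer $v_i$. You only get $\sum_i w\cdot\mu_{\Gr(i,n)}(\phi_i(x))\le\sum_i w\cdot v_i$, and nothing forces equality. Your justification, ``the factor polytopes' maximizers sum to the unique maximizer,'' assumes that the vertex $\sum_i v_i$ is the weight of some fixed point. That is exactly the inclusion $\sum_i P_i\subseteq\mu_{\Fl_n}(X_u^v)$ you are trying to prove.

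The equality genuinely fails without irreducibility. Take $X=\{([1:0],[0:1]),([0:1],[1:0])\}\subseteq\P^1\times\P^1$, with $\C^\times$ acting on each factor with weights $0,1$. Then $\mu(X)=\{1\}$, but the sum of the moment polytopes of the two projections is $[0,2]$. What produces a fixed point that is simultaneously extremal in every factor is the limit of a single general point $z$ of an irreducible $X$. So keep the citation of irreducibility.
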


\begin{rmk}
    The polytopes $\mu_{\Gr(i,n)}(\phi_i(X_u^v))$ are the moment polytopes of projected Richardson varieties, and thus are ``positroid polytopes''. 
\end{rmk}

\subsection{Bott-Samelson varieties and brick varieties}\label{subsection:brick-varieti}

Let $Q=(\q_1,\dots, \q_{\mm})$ be a word in $S_n$. We define the \textbf{generalized Bott-Samelson} variety (see \cite{Perrin, silv}) to be
\[\BS^Q:=\{ (F_\bullet^{0},F_\bullet^1,\dotsc,F_\bullet^{\mm})\mid F_\bullet^0=E_\bullet^{\text{st}},\, (F_\bullet^{j-1},F_\bullet^{j})\in D_{\q_{j}}\text{ for }  j=1,\dotsc,\mm\}.
\]

Generalized Bott-Samelson varieties are irreducible, normal, and Cohen-Macaulay \cite{silv}. These properties hold because generalized Bott-Samelson varieties are \textit{locally trivial iterated Schubert bundles}. 

The fibre of the map $\BS^Q\to \BS^{Q'}$ with $Q'=(\q_1,\dots, \q_{\mm-1})$ over a point $(F_\bullet^0,\dotsc,F_\bullet^{m-1})\in \BS^{Q'}$ is equal to $(F_\bullet^0,\dotsc,F_\bullet^{m-1})$ times the set of flags in relative position $\q_\mm$ to $F_\bullet^{m-1}$. That set is isomorphic to a Schubert variety so $\BS^Q$ is an iterated Schubert bundle. It is in fact a locally trivial iterated Schubert bundle. Since Schubert varieties are irreducible, normal, and Cohen-Macaulay, the irreducibility, normality, and Cohen-Macaulay property pass inductively through the iterated bundle construction.

The iterated bundle construction also allows us to observe that
\begin{align*}
\dim(\BS^Q)&=\dim(X_{\q_1})+\cdots+\dim(X_{\q_{\mm}})\\
    &=\ell(\q_1)+\cdots + \ell(\q_{\mm}).
\end{align*}
When $Q$ is a word in simple reflections, $\BS^Q$ is a traditional Bott-Samelson variety. Traditional Bott-Samelson varieties are iterated locally trivial $\mbb{P}^1$ bundles and are therefore smooth. We will explain momentarily  a way to see the irreducibility of generalized Bott-Samelson varieties as a consequence of the irreducibility of traditional Bott-Samelson varieties.

We now define the \textbf{brick variety} as
\begin{align*}
    \Brick^Q &=\BS^Q \cap ((\Fl_n)^{\mm-1}\times \{ E_\bullet^{\text{op}}\}).
\end{align*}
Equivalently, the brick variety is the fibre over $E_\bullet^{\text{op}}$ of the map $\BS^Q\to \Fl_n$ sending $(F^0_{\bullet},\dots, F^\mm_\bullet)$ to $F^\mm_\bullet$.
Brick varieties were defined by Escobar in \cite{E16} in the case where $Q$ is a word in simple transpositions. In that setting $\Brick^Q$ is a smooth irreducible variety with dimension $\mm-\ell(\dem(Q))$
\cite[Theorem 20]{E16}.

We introduce a quick technical lemma that helps us to understand Bott-Samelson and brick varieties.

 \begin{lem}\label{lem:technical} 
    Let $\sigma_{i}$ and $\sigma_{i+1}$ be permutations in $S_n$, and choose some $\sigma\leq \mrm{Dem}(\sigma_i,\sigma_{i+1})$.
    \begin{enumerate}
        \item\label{itm:1} There are $\sigma',\sigma''$ in $S_n$ satisfying $\sigma'\leq \sigma_i$, $\sigma''\leq \sigma_{i+1}$, $\sigma=\sigma'\sigma''$, and $\ell(\sigma)=\ell(\sigma')+\ell(\sigma'')$.
        \item\label{itm:2} If $[1,\sigma_i]\cap [1,\sigma_{i+1}]=\{1\}$, then the $\sigma'$, $\sigma''$ satisfying the conditions in Part (\ref{itm:1}) are unique.
    \end{enumerate}
    \end{lem}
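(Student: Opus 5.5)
For part (\ref{itm:1}) I plan to use the characterization $[\id,\sigma_i\star\sigma_{i+1}]=\{ab: a\in[\id,\sigma_i],\ b\in[\id,\sigma_{i+1}]\}$ recalled above, strengthened to a length-additive factorization. I will induct on $\ell(\sigma_{i+1})$. If $\sigma_{i+1}=\id$, take $\sigma'=\sigma$ and $\sigma''=\id$. Otherwise write $\sigma_{i+1}=\tau s_j$ with $\ell(\tau)<\ell(\sigma_{i+1})$, so $\dem(\sigma_i,\sigma_{i+1})=\dem(\sigma_i,\tau)\star s_j$, and fix $\sigma\le \dem(\sigma_i,\tau)\star s_j$.

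I split into cases according to whether $\ell(\sigma s_j)<\ell(\sigma)$.
\begin{enumerate}
\item If $\sigma s_j<\sigma$, then $\sigma s_j\le \dem(\sigma_i,\tau)$. This is immediate if $\dem(\sigma_i,\tau)\star s_j=\dem(\sigma_i,\tau)$. Otherwise $w:=\dem(\sigma_i,\tau)s_j$ has $ws_j<w$, and \cref{prop:lifting-property} (or the $Z$-property) gives $\sigma s_j\le w s_j$. Induction factors $\sigma s_j=\sigma'\tau'$ length-additively with $\sigma'\le\sigma_i$ and $\tau'\le\tau$. Then $\sigma=\sigma'(\tau' s_j)$. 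Here $\ell(\tau's_j)=\ell(\tau')+1$ by length additivity and $\ell(\sigma)=\ell(\sigma s_j)+1$. Also $\tau's_j\le \tau\star s_j=\sigma_{i+1}$ by \cref{lem: demlem}.
\item If $\sigma s_j>\sigma$, then in either subcase of the Demazure product the lifting property gives $\sigma\le \dem(\sigma_i,\tau)$. Induction then produces a factorization with $\sigma''\le\tau\le\sigma_{i+1}$.
\end{enumerate}
The main care is in the bookkeeping of these lifting-property cases; I expect this step to be the main obstacle, though it is routine.

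For part (\ref{itm:2}), the hypothesis $[\id,\sigma_i]\cap[\id,\sigma_{i+1}]=\{\id\}$ says that the supports $A$ and $B$ (the sets of simple transpositions $s_j\le\sigma_i$, respectively $s_j\le\sigma_{i+1}$) are disjoint. Any $\sigma'\le\sigma_i$ lies in the parabolic subgroup $W_A$, and any $\sigma''\le\sigma_{i+1}$ lies in $W_B$. Suppose $\sigma'\sigma''=\tau'\tau''$ are two such factorizations. Then $(\tau')^{-1}\sigma'=\tau''(\sigma'')^{-1}\in W_A\cap W_B$. Since $A\cap B=\emptyset$, the standard fact about parabolic subgroups ($W_A\cap W_B=W_{A\cap B}$) gives $W_A\cap W_B=\{\id\}$. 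A direct argument also works: an element of $W_A$ has all its reduced words in letters from $A$, because the support of a permutation is well defined. Hence $\sigma'=\tau'$ and $\sigma''=\tau''$. Length additivity is not even needed for uniqueness.
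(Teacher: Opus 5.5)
Your proof is correct, but part (1) follows a different route from the paper. The paper argues in one step from the subword property. It takes a reduced word $D$ for $\dem(\sigma_i,\sigma_{i+1})$ that splits as $C_i+C_{i+1}$, with $C_i$ a reduced word for $\sigma_i$ and $C_{i+1}$ a subword of a reduced word for $\sigma_{i+1}$. It then extracts a reduced subword $C$ of $D$ with product $\sigma$ and cuts it at the seam, $C=C'+C''$. The products of $C'$ and $C''$ are $\sigma'$ and $\sigma''$, and length additivity is just $|C|=|C'|+|C''|$.

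Your induction on $\ell(\sigma_{i+1})$ removes one right descent at a time, using only the lifting property and the monotonicity of $\star$. In effect it re-derives that subword argument letter by letter. It is longer, but it is self-contained and does not need to justify that the Demazure word splits as claimed. When you write it out, make two points explicit:
\begin{itemize}
\item In Case 1 the lifting property is applied to $v=\sigma s_j$, using $v\le\sigma\le w$, $vs_j>v$ and $ws_j<w$.
\item Length additivity of $\sigma'(\tau's_j)$ comes from the squeeze $\ell(\sigma')+\ell(\tau')+1=\ell(\sigma)\le\ell(\sigma')+\ell(\tau's_j)\le\ell(\sigma')+\ell(\tau')+1$.
\end{itemize}

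For part (2), the paper reduces to uniqueness of $C'$ and $C''$ and cites \cite{LOYY24} for that. Your parabolic-subgroup argument, $W_A\cap W_B=W_{A\cap B}=\{\id\}$, is direct and proves slightly more. Uniqueness holds among all factorizations $\sigma=\sigma'\sigma''$ with $\sigma'\le\sigma_i$ and $\sigma''\le\sigma_{i+1}$, with no length condition needed.
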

    \begin{proof}
        First we will prove (1). Consider a word $D:=(s_{i_1},\dotsc,s_{i_m})$ for $\mrm{Dem}(\sigma_i,\sigma_{i+1})$ in simple transpositions in $S_n$. By definition of the Demazure product, there is some integer $j\in[m]$ such that we can write $D=C_{i}+C_{i+1}$, where $C_i:=(s_{i_1},\dotsc,s_{i_{j}})$ is a subword of a word for $\sigma_i$ and $C_{i+1}:=(s_{i_{j+1}},\dotsc,s_{i_m})$ is a subword of a word for $\sigma_{i+1}$. Here, we have used `$+$' to mean concatenation of words. Since $\sigma\leq \mrm{Dem}(\sigma_i,\sigma_{i+1})$, there is a reduced word $C$ for $\sigma$ which is also a subword of $D$. In particular, $C$ is a reduced subword of $C_{i}+C_{i+1}$. So $C$ can be expressed $C'+C''$, where $C'$ is a reduced subword of $C_{i}$, and $C''$ is a reduced subword of $C_{i+1}$.
        Taking $\sigma'$ and $\sigma''$ to be the permutations corresponding to the reduced words $C'$ and $C''$ respectively completes the proof of Part (1).

        Next, we will prove Part (2). The condition $[1,\sigma_i]\cap [1,\sigma_{i+1}]=\{1\}$ says that we can write reduced expressions $\sigma_i=s^{(i)}_{j_1}s^{(i)}_{j_2}\dotsb s^{(i)}_{j_k}$ and $\sigma_{i+1}=s^{(i+1)}_{\ell_1}s^{(i+1)}_{\ell_2}\dotsb s^{(i+1)}_{\ell m}$, where the words $C_{i}:=(s^{(i)}_{j_1},s^{(i)}_{j_2},\dotsb, s^{(i)}_{j_k})$ and $C_{i+1}:=(s^{(i+1)}_{\ell_1},s^{(i+1)}_{\ell_2},\dotsb, s^{(i+1)}_{\ell m})$ are disjoint. That $C_i\cap C_{i+1}=\emptyset$ implies that the concatenation $C_i+C_{i+1}$ is reduced, and hence $\mrm{Dem}(C_i+C_{i+1})=\sigma_i\sigma_{i+1}$. As $\sigma\leq \sigma_i\sigma_{i+1}$, we can choose a reduced word $C$ for $\sigma$ such that $C=C'+C''$, where $C'$ is a reduced subword of $C_i$ and $C''$ is a reduced subword of $C_{i+1}$. We can take $\sigma'$ and $\sigma''$ to be the permutations corresponding to the words $C'$ and $C''$, respectively. To see that $\sigma'$ and $\sigma''$ are unique, it is enough to show that $C'$ and $C''$ are unique. This follows from the well-known property \cite[Proposition 1.2]{LOYY24}.
    \end{proof}

\begin{prop}\label{lem: surject}
Let $Q=(\q_1,\dots, \q_\mm)$ and let $Q'=(\q_1,\dots, \q_{i-1},\q_{i}\star\q_{i+1},\q_{i+2},\dots, \q_\mm)$ for some $i<\mm.$
Define $\varrho_i:(\Fl_n)^{\mm+1}\to (\Fl_n)^{\mm}$ by 
\[(F^0_\bullet,\dotsc,F^\mm_\bullet)\mapsto (F^0_\bullet,\dotsc,F^{i-1}_\bullet,F^{i+1}_\bullet,\dotsc,F^\mm_\bullet).\]
Then $\varrho_i$ is a $T$-equivariant regular map of projective varieties. The restriction of $\varrho_i$ to $\BS^Q$ is a surjection onto $\BS^{Q'}$ and the restriction to $\Brick^Q$ is a surjection onto $\Brick^{Q'}$.
\end{prop}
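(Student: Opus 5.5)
The plan is to prove the three claims in order: that $\varrho_i$ is a $T$-equivariant regular map, that it maps $\BS^Q$ into $\BS^{Q'}$ and onto it, and then the analogous statement for bricks.

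\textbf{Regularity and equivariance.} $\varrho_i$ is a coordinate projection of a product of projective varieties. The torus $T$ acts diagonally, so $\varrho_i$ commutes with the action.

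\textbf{Image contained in $\BS^{Q'}$.} The only nontrivial condition is on the new consecutive pair $(F^{i-1}_\bullet,F^{i+1}_\bullet)$. We need: if $(F^{i-1}_\bullet,F^i_\bullet)\in D_{\q_i}$ and $(F^i_\bullet,F^{i+1}_\bullet)\in D_{\q_{i+1}}$, then $(F^{i-1}_\bullet,F^{i+1}_\bullet)\in D_{\q_i\star\q_{i+1}}$. Write the flags as $xE^{\mrm{st}}_\bullet$, $yE^{\mrm{st}}_\bullet$, $zE^{\mrm{st}}_\bullet$. The hypotheses say $x^{-1}y\in\overline{B_+\q_iB_+}$ and $y^{-1}z\in\overline{B_+\q_{i+1}B_+}$. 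Hence
\[x^{-1}z\in \overline{B_+\q_iB_+}\cdot\overline{B_+\q_{i+1}B_+}.\]
It then suffices to know the standard fact that this product of closed double cosets equals $\overline{B_+(\q_i\star\q_{i+1})B_+}$. To justify it, expand each closure as the union of $B_+\sigma B_+$ over $\sigma$ below $\q_i$ (resp. $\q_{i+1}$). Reduce to simple reflections, where the BN-pair axioms give $B_+s B_+\cdot B_+ w B_+\subseteq B_+wB_+\cup B_+swB_+$. Then induct along reduced words, using the description of the Demazure product recalled in the paper: $[\id,a\star b]=\{a'b': a'\le a,\ b'\le b\}$.

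\textbf{Surjectivity onto $\BS^{Q'}$.} Let $(F^0_\bullet,\dots,F^{i-1}_\bullet,F^{i+1}_\bullet,\dots)\in\BS^{Q'}$. Then $(F^{i-1}_\bullet,F^{i+1}_\bullet)\in D_{\sigma}^\circ$ for some $\sigma\le \q_i\star\q_{i+1}$, since $D_{\q_i\star\q_{i+1}}$ is the disjoint union of the $D^\circ_\sigma$ over such $\sigma$. Apply \cref{lem:technical}(1) to write $\sigma=\sigma'\sigma''$ with $\sigma'\le \q_i$, $\sigma''\le\q_{i+1}$, and $\ell(\sigma)=\ell(\sigma')+\ell(\sigma'')$. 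Now use $G_\Delta$-homogeneity of $D^\circ_\sigma$: pick $g$ with
\[(F^{i-1}_\bullet,F^{i+1}_\bullet)=g\cdot(E^{\mrm{st}}_\bullet,\sigma E^{\mrm{st}}_\bullet),\]
and set $F^i_\bullet:=g\sigma'E^{\mrm{st}}_\bullet$. The first new pair is then $g\cdot(E^{\mrm{st}}_\bullet,\sigma'E^{\mrm{st}}_\bullet)\in D_{\sigma'}^\circ\subseteq D_{\q_i}$. The second is $g\sigma'\cdot(E^{\mrm{st}}_\bullet,\sigma''E^{\mrm{st}}_\bullet)\in D_{\sigma''}\subseteq D_{\q_{i+1}}$. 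Only $\sigma=\sigma'\sigma''$ is used here; the length additivity is not needed. This produces a preimage in $\BS^Q$.

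\textbf{Brick varieties.} If $i<\mm$, then $\varrho_i$ does not delete the last flag $F^\mm_\bullet$. Therefore $\varrho_i^{-1}(\Brick^{Q'})\cap\BS^Q=\Brick^Q$, and the preimage constructed above keeps $F^\mm_\bullet=E^{\mrm{op}}_\bullet$. So surjectivity restricts to bricks.

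\textbf{Main obstacle.} The delicate step is the containment into $\BS^{Q'}$, namely the multiplicativity of Bruhat cell closures under the Demazure product. I would cite it as standard (e.g.\ via the Demazure monoid, as in \cite{He}) and sketch the simple-reflection induction, rather than reprove it fully.
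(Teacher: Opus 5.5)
Your proposal is correct and follows the paper's proof. Containment in $\BS^{Q'}$ comes from $\overline{B_+\q_iB_+}\cdot\overline{B_+\q_{i+1}B_+}\subseteq\overline{B_+(\q_i\star\q_{i+1})B_+}$ (which the paper simply cites from Larson). Surjectivity comes from factoring $\sigma=\sigma'\sigma''$ with \cref{lem:technical}(1) and inserting a middle flag, and the brick case holds because the last flag is untouched.

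The one difference is that you build the middle flag explicitly as $g\sigma'E^{\mrm{st}}_\bullet$, using the $G_\Delta$-homogeneity of $D^\circ_\sigma$, where the paper cites \cite[Lemma 3.2]{CGGLSS25}. You are right that length additivity matters only for the uniqueness that this citation also provides.
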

\begin{proof}
   Let $\mfk{F}=(F^0_\bullet,\dotsc,F^\mm_\bullet)\in \BS^Q.$ Without loss of generality suppose that $F^i_\bullet=g_i\cdot E_\bullet^\text{st}$  with $g_i\in G.$ Then $g_{i-1}^{-1}g_{i}\in \overline{B_+ \q_i B_+}$  and $g_{i}^{-1}g_{i+1}\in \overline{B_+ \q_{i+1} B_+}$. Thus $g_{i-1}^{-1}g_{i+1}\in \overline{B_+ \q_i B_+}\overline{B_+ \q_{i+1} B_+} = \overline{B_+ \q_i\q_{i+1} B_+}$ by \cite[Section 1]{smallResolutions}. Hence $\varrho_i(\mfk{F})\in \BS^{Q'} $. Since $i< \mm, $ the last flag of $\varrho_i$ remains unchanged, so if $\mfk{F}\in \Brick^Q,$ $\varrho(\mfk{F})\in \Brick^{Q'}.$

    Now let  $\mfk{F}'=(F^0,\dotsc,F^{i-1}_\bullet,F^{i+1},F_\mm)\in \BS^{Q'}.$ 
 Then $F^{i-1}$ and $F^{i+1}$ are in relative position $\sigma$ for a \textit{unique} $\sigma\leq \mrm{Dem}(\q_i,\q_{i+1})$. By (\ref{itm:1}) of \cref{lem:technical}, there  are $\q_i',\q_{i+1}'$ in $S_n$ satisfying $\q_i'\leq \q_i$, $\q_{i+1}''\leq \q_{i+1}$, $\sigma=\q_i'\q_{i+1}'$, and $\ell(\sigma)=\ell(\q_{i}')+\ell(\q_{i+1}')$. We wish to find $F_i$ such that $F_{i-1}$ is in relative position $\sigma'$ with $F_{i}$ and such that $F_{i}$ is in relative position $\sigma''$ with $F_{i+1}$. Such an $F_i$ exists, and it is unique, by the argument in \cite[Lemma 3.2]{CGGLSS25}, which proves surjectivity. The argument applies word for word if we replace $\BS^Q$ with $\Brick^Q$.
\end{proof}

When $Q$ is a word in simple reflections, both $\BS^Q$ and $\Brick^Q$ are projective varieties \cite{E16}. By replacing a word $Q'=(\q_1,\dots, \q_m)$ with the concatenation of reduced words in simple transpositions for the $\q_i$ and repeatedly applying \cref{lem: surject}, we can conclude that both $\BS^{Q'}$ and $\Brick^{Q}$ are the images of projective varieties under regular maps. This proves that generalized Bott-Samelson varieties and generalized brick varieties are in fact  projective varieties.

\subsubsection{Moment polytopes}

Since the projection of $\Brick^Q$ onto its first $\Fl_n$ factor (indexed by 0) is constant, we can view $\Brick^Q$ as embedded in $(\Fl_n)^\mm$ (rather than $(\Fl_n)^{\mm+1}$). The moment polytope with respect to this embedding is the bulky brick polytope $\BB(Q)$, as we now show.

\begin{prop}\label{prop: bulky moment}
     Let $Q=(\q_1,\dotsc,\q_{\mm})$ be a word in $S_n$ with $\dem(Q)=w_0.$ Define
          \[p_J:=\left(E_\bullet^{\text{st}},J_{(1)}\cdot E_\bullet^{\text{st}},\dotsc,J_{(\mm-1)}\cdot E_\bullet^{\text{st}},E_\bullet^{\text{op}}\right)\] for $J\in\nabla(Q,w_0)$.
      \begin{itemize}
          \item The map $J\mapsto p_J$ is a bijection $\nabla(Q,w_0)\to (\Brick^Q)^T.$
          \item Defining $\pi_i:\Fl_n^{\mm+1}\to \Fl_n$ as the projection of $(\Fl_n)^{\mm+1}$ onto its $i$th $\Fl_n$ factor, the $T$-weight of $\pi_i(p_J)$ is the bulky weight function $\mbf{w}(J,i).$ That is, \[\mu_{\Fl_n}(\pi_i(p_J))=\mbf{w}(J,i)\] for $J\in \nabla(Q,w_0).$ 
          \item The fixed point $p_J$ has $T$-weight \[\mu_{(\Fl_n)^{\mm}}(p_J) = \mbf{B}(J)\] for $J\in \nabla(Q,w_0).$
          \item The moment polytope $\mu_{\Fl_n}(\Brick^Q)$ of $\Brick^Q$ with respect to the projective embedding resulting from the inclusion $\Brick^Q \subseteq (\Fl_n)^{\mm}$ (ignoring the 0th factor) is the bulky brick polytope $\BB(Q)$. That is,
          \[\mu_{(\Fl_n)^{\mm}}(\Brick^Q) = \BB(Q).\]
      \end{itemize}
\end{prop}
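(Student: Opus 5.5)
The plan is to compute the torus-fixed points of $\Brick^Q$ directly from the definitions, and then read off the moment polytope using the weight formula $\mu_{\Fl_n}(\sigma\cdot E_\bullet^{\text{st}})=\sigma(\omega_1+\dots+\omega_n)$ together with \cref{eq:aaa1}.

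\emph{Step 1: fixed points.} Since $T$ acts diagonally on $(\Fl_n)^{\mm+1}$, a $T$-fixed point of $\Brick^Q$ is a tuple $(w_0'\cdot E_\bullet^{\text{st}},\dots,w_\mm'\cdot E_\bullet^{\text{st}})$ with $w_j'\in S_n$. The constraints force $w_0'=\id$ (first flag $E_\bullet^{\text{st}}$) and $w_\mm'=w_0$ (last flag $E_\bullet^{\text{op}}=w_0\cdot E_\bullet^{\text{st}}$). Consecutive flags must satisfy $(F^{j-1}_\bullet,F^j_\bullet)\in D_{\q_j}$. By the description of $D_\sigma^T$ given earlier, this holds for fixed points exactly when $(w_{j-1}')^{-1}w_j'\le \q_j$.

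Now set $v_j:=(w_{j-1}')^{-1}w_j'$. Then $J=(v_1,\dots,v_\mm)$ satisfies $v_j\le\q_j$ and $J_{(\mm)}=w_0$, so $J\in\nabla(Q,w_0)$, and moreover $J_{(k)}=w_k'$. Conversely, any $J\in\nabla(Q,w_0)$ produces the fixed point $p_J$ by the same reasoning. The two assignments are mutually inverse, because $J$ is recovered from its partial products. This gives the bijection in the first bullet. I expect this step to be the only one requiring care, and the subtle points are minor: checking $J_{(\mm)}\cdot E_\bullet^{\text{st}}=E_\bullet^{\text{op}}$, and noting that distinct permutations give distinct fixed flags.

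\emph{Step 2: weights.} For the second bullet, $\pi_i(p_J)=J_{(i)}\cdot E_\bullet^{\text{st}}$. The displayed formula $\mu_{\Fl_n}(\sigma\cdot E_\bullet^{\text{st}})=\sigma(\omega_1+\dots+\omega_n)$ then gives exactly $\mbf{w}(J,i)$; for $i=\mm$ this uses $J_{(\mm)}=w_0$. For the third bullet, the weights add under the Segre embedding by \cref{eq:aaa1}. Summing over the factors $i=1,\dots,\mm$, and ignoring the constant $0$th factor, gives $\mu_{(\Fl_n)^\mm}(p_J)=\sum_{i=1}^{\mm}\mbf{w}(J,i)=\mbf{B}(J)$.

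\emph{Step 3: moment polytope.} The fourth bullet then follows from the definition of the moment polytope as the convex hull of the weights of the $T$-fixed points. By Steps 1 and 2, this convex hull is $\conv\{\mbf B(J): J\in\nabla(Q,w_0)\}=\BB(Q)$. Here I use that $\Brick^Q$ is projective, as established after \cref{lem: surject}, so the fixed-point set is the finite set found in Step 1.
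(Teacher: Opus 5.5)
Your proposal is correct and follows the paper's proof essentially step for step. You identify the fixed points as tuples of permutation flags and pass to the increments $v_j=(w'_{j-1})^{-1}w'_j\le \q_j$ to get the bijection with $\nabla(Q,w_0)$, then read off the weights via $\mu_{\Fl_n}(\sigma\cdot E_\bullet^{\text{st}})=\sigma(\omega_1+\dots+\omega_n)$ and their additivity under the Segre embedding. The appeal to projectivity in Step 3 is unnecessary, since finiteness of $(\Brick^Q)^T$ already follows from your Step 1 and the moment polytope is by definition just the convex hull of the fixed-point weights.
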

\begin{proof}
    The $T$-fixed points of $\mrm{Brick}^Q$  (with $T$ acting diagonally on $(\Fl_n)^{\mm+1} \supseteq \mathrm{Brick}^Q$) are the sequences $F_\bullet=(F_\bullet^{0}=E_\bullet^{\text{st}},F_\bullet^{1},\dotsc,F_{\bullet}^{\mm-1},F_\bullet^{\mm}=E_{\bullet}^{\text{op}})$, where $F_\bullet^j$ has the form $w_j \cdot E_\bullet^{\text{st}}$ for some $w_j\in S_n$ and $(F_\bullet^{j-1},F_\bullet^{j})\in D_{\q_{j}}$ for each $j=1,\dotsc,\mm$. Letting $u_j = w_{j-1}^{-1}w_{j}$, we have $u_j\leq \q_j$ and $w_j = u_1\cdots u_{j}$. In particular,  every $T$-fixed point of $\mrm{Brick}^Q$ is uniquely determined by a sequence $(u_1,u_2,\dotsc,u_{\mm})$, such that $u_j\leq \q_j$; and each such sequence determines a $T$-fixed point of $\Brick^Q$. 
   
   By definition, any $J\in\nabla(Q,w_0)$ is of the form $J=(u_1,\dotsc,u_m)$, where $u_j\leq \q_j$. Hence the map $J\mapsto p_J$ is a bijection $\nabla(Q,w_0)\to(\Brick^Q)^T$ (see also \cite[pp. 7]{E16}).
    We can now compute
    \begin{align*}
        \mu_{\Fl_n}(\pi_i(p_J)) &= \mu_{\Fl_n}(J_{(i)}\cdot E_\bullet^{\text{st}})\\
        &=J_{(i)}(\omega_1+\dots +\omega_n)\\
        &=\mbf{w}(J,i).
    \end{align*}
Hence,
\begin{align*}
      \mu_{\Fl_n^{\mm}}(p_J)&=\sum_{i=1}^{\mm}  \mu_{\Fl_n}(\pi_i(p_J))  \\
      &=\sum_{i=1}^{\mm} \mbf{w}(J,i)\\
      &=\mbf{B}(J).
\end{align*}
And so,
\begin{align*}
        \mu_{(\Fl_n)^{\mm}}(\Brick^Q)&=\conv_{x\in (\Brick^Q)^T} \mu_{\Fl_n}(x)\\
        &=\conv_{J\in \nabla(Q,w_0)} \mu_{\Fl_n}(p_J)\\
        &=\conv_{J\in \nabla(Q,w_0)} \mbf{B}(J)\\
        &=\BB(Q).\qedhere
\end{align*}

\end{proof}

The (unbulky) brick polytope is the moment polytope of the brick variety with respect to a different projective embedding. The projective embedding coming from the inclusion $\Brick^Q\subseteq (\Fl_n)^{\mm}$ turns out to be very redundant (bulky, if you will). Recall from \cref{lem: no change} that for $(F_\bullet, G_\bullet) \in D_{\sigma},$ if $s_j \not \leq \sigma,$ $F_j = G_j$. On the other hand, for $(F_\bullet, G_\bullet) \in D_{\sigma},$ the subspaces which might change between $F_{\bullet}$ and $G_\bullet$ are in dimensions $\{j: s_j\leq \sigma\}.$ So for a less bulky projective embedding of $\Brick^Q$, we should project to $\prod_{i=1}^{\mm}\prod_{j\in[n-1]:s_j\leq \q_{i}}\Gr(j,n)$.

When a word $Q=(\q_1,\dots, \q_{\mm})$ in $S_n$ has been specified,
we will define
\[\rho_i:\Brick^Q \to \prod_{j\in[n-1]:s_j\leq \q_{i}}\Gr(j,n)  \]
and
    \[\rho\colon \Brick^Q\hookrightarrow \prod_{i=1}^{\mm}\prod_{j\in[n-1]:s_j\leq \q_{i}}\Gr(j,n),\quad  \quad F_\bullet=(F_\bullet^0,\dotsc,F_\bullet^{|Q|})\mapsto (\rho_1(F_\bullet),\dotsc,\rho_{|Q|}(F_\bullet))\] 
in the natural manner. We will also define
\[\nu_i:\Fl_n \to \prod_{j\in[n-1]:s_j\leq \q_{i}}\Gr(j,n) \]
so that $\rho_i=\nu_i \circ \pi_i$ for $\pi_i$ as in \cref{prop: bulky moment}.

\begin{nota}
    When $Q$ is clear from context, we will write $\mu_{\prod\prod\Gr}$ instead of $\mu_{\prod_{i=1}^{\mm}\prod_{j\in[n-1]:s_j\leq \q_{i}}\Gr(j,n)} \circ \rho$ and $\mu^i_{\prod\Gr}$ instead of $\mu_{\prod_{j\in[n-1]:s_j\leq \q_{i}}\Gr(j,n)}$. 
\end{nota}

\begin{egg}[\cite{E16}, Example 12]
        Consider the word $Q=(s_1,s_2,s_1,s_2,s_1)$ in $S_3$. The brick variety $\Brick^Q$ can be visualized with a \textbf{Magyar diagram}:
        \[\adjustbox{center}{
        \begin{tikzcd}[row sep = 1.1em, column sep = 1.7em]
	   \mbb{C}^3 \\
	\langle e_1,e_2\rangle & V_2 & \langle e_2,e_3\rangle \\
	\langle e_1\rangle & V_1 & V_3 & \langle e_3\rangle \\
	(0)
	\arrow[no head, from=1-1, to=2-2]
	\arrow[no head, from=1-1, to=2-3]
	\arrow[no head, from=2-1, to=1-1]
	\arrow[no head, from=2-1, to=3-2]
	\arrow[no head, from=2-2, to=3-3]
	\arrow[no head, from=2-3, to=3-4]
	\arrow[no head, from=3-1, to=2-1]
	\arrow[no head, from=3-2, to=2-2]
	\arrow[no head, from=3-3, to=2-3]
	\arrow[no head, from=4-1, to=3-1]
	\arrow[no head, from=4-1, to=3-2]
	\arrow[no head, from=4-1, to=3-3]
	\arrow[no head, from=4-1, to=3-4]
\end{tikzcd}
}\]
In particular, The points $(E_\bullet^{\text{st}},F_\bullet^{1},\dotsc,F_\bullet^{4},E_\bullet^{\text{op}})$ of $\mrm{Brick}^Q$ between $E_\bullet^{\text{st}}$ and $E_\bullet^{\text{op}}$ are of the form $F_\bullet^{1}=(V_1\subseteq \langle e_1,e_2\rangle\subseteq \mbb{C}^3)$, $F_\bullet^{2}=(V_1\subseteq V_2\subseteq\mbb{C}^3)$, $F_\bullet^{3}=(V_3\subseteq V_2\subseteq \mbb{C}^3)$, and $F_\bullet^{4}=(V_3\subseteq \langle e_2,e_3\rangle\subseteq\mbb{C}^3)$.
The point $p_J$ corresponding to $J=(s_1,s_2,-,-,s_1)$ has $V_1= \langle e_2\rangle$, $V_2=\langle e_2,e_3\rangle$, and $V_3=\langle e_2\rangle$. 
\end{egg}

\begin{prop}[\cite{E16}, Theorem 21]\label{prop: unbulky moment}
      Let $Q=(\q_1,\dotsc,\q_{\mm})$ be a word in $S_n$ with $\dem(Q)=w_0.$ When we embed $\Brick^Q$ into projective space by applying $\rho$ followed by Pl\"ucker and Segre embeddings,
      \begin{itemize}
          \item The $T$-weight coming from the projection by $\rho_k$ of the fixed point $p_J$ is the $k$th weight function $w(J,k)$. That is,
          \[ \mu^k_{\prod \Gr}(\rho_k(p_J)) = w(J,k).\]
          \item The $T$-weight of $p_J$ is the brick vector $B(J)$, i.e.,
          \[\mu_{\prod\prod \Gr}(p_J) = B(J).\]
          \item And the moment polytope of $\Brick^Q$ is the brick polytope $\B(Q)$, i.e.,
          \[\mu_{\prod\prod \Gr}(\Brick^Q) = \B(Q).\]
      \end{itemize}
\end{prop}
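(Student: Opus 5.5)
We will prove the three bullet points in order, using \cref{prop: bulky moment} for the description of the fixed points and \cref{lem:MinkowskiMachine} (or simply \cref{eq:aaa1}) for how weights add under the Segre embedding.

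\textbf{First bullet.} By \cref{prop: bulky moment}, every $T$-fixed point of $\Brick^Q$ has the form $p_J$ for $J\in\nabla(Q,w_0)$. Its $k$th flag is $\pi_k(p_J)=J_{(k)}\cdot E_\bullet^{\text{st}}$. By definition, $\rho_k=\nu_k\circ\pi_k$, and $\nu_k$ keeps only the subspaces of dimension $j$ with $s_j\leq \q_k$. The $j$th subspace of $J_{(k)}\cdot E_\bullet^{\text{st}}$ is the coordinate subspace $J_{(k)}\cdot E_{[j]}$, whose Pl\"ucker weight is $J_{(k)}(\omega_j)$, as recorded in the Grassmannian subsection. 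The Segre embedding of $\prod_{j:s_j\leq \q_k}\Gr(j,n)$ adds these weights, which gives
\[\mu^k_{\prod\Gr}(\rho_k(p_J))=\sum_{j\in[n-1]:s_j\leq \q_k}J_{(k)}(\omega_j)=J_{(k)}\Big(\sum_{j:s_j\leq \q_k}\omega_j\Big)=w(J,k).\]

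\textbf{Second bullet.} We apply the same additivity of Segre weights (\cref{eq:aaa1}) once more, now over the factors indexed by $k=1,\dots,\mm$. This gives $\mu_{\prod\prod\Gr}(p_J)=\sum_k w(J,k)=B(J)$.

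\textbf{Third bullet.} The moment polytope is the convex hull of the weights of the fixed points. Since $\rho$ is an injective $T$-equivariant map, the fixed points of the image are the images of the fixed points; this also follows from \cref{pre- fix of proj}. Hence
\[\mu_{\prod\prod\Gr}(\Brick^Q)=\conv_{J\in\nabla(Q,w_0)}B(J)=\B(Q).\]

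\textbf{Main obstacle.} The only subtle point is that $\rho$ really is an embedding, i.e.\ injective on $\Brick^Q$, so that the moment polytope is computed on $\Brick^Q$ itself rather than on some quotient. The subspaces that $\rho$ discards are recovered by \cref{lem: no change}. If $s_j\not\leq \q_k$, then $F^{k-1}_j=F^k_j$. So each discarded subspace equals the same-dimensional subspace of an adjacent flag, and following the chain of equalities back, it is eventually either retained by some $\rho_{k'}$ or equal to a subspace of the fixed flags $E_\bullet^{\text{st}}$ or $E_\bullet^{\text{op}}$. Thus the full tuple of flags is determined by its image under $\rho$.

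Even without injectivity, the moment polytope of the image is what the statement concerns, and the fixed-point computation above is unaffected because \cref{pre- fix of proj} applies to the regular map $\rho$ of projective varieties.
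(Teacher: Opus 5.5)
Your proof is correct and follows the same route as the paper. You compute the weight of $\rho_k(p_J)$ as $\sum_{j:s_j\leq \q_k}J_{(k)}(\omega_j)=w(J,k)$ from the Pl\"ucker weights, add over $k$ using Segre additivity to get $B(J)$, and take the convex hull over the fixed points $p_J$ supplied by \cref{prop: bulky moment}. Your additional check that $\rho$ is injective, via \cref{lem: no change}, is valid but goes beyond the paper, which simply treats $\rho$ as an embedding and reads the moment polytope off the weights of the fixed points of $\Brick^Q$.
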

\begin{proof}
   The $T$-weight of $\rho_k(p_J)$ is
   \begin{align*}
       \mu^k_{\prod \Gr}(\rho_k(p_J)) &= \sum_{j\in[n-1]:s_j\leq \q_k} \mu_{\Gr(j,n)}( \phi_{j}(\pi_k(p_J))) \\
       &= \sum_{j\in[n-1]:s_j\leq \q_k} \mu_{\Gr(j,n)}( \phi_{j}(J_{(k)}\cdot E_\bullet^{\text{st}}) ) \\
       &= \sum_{j\in[n-1]:s_j\leq \q_k} J_{(k)}(\omega_j)\\
       &= J_{(k)} \left( \sum_{j\in[n-1]:s_j\leq \q_k} \omega_j \right)\\
       &=w(J,k).
   \end{align*}
    So when we projectively embed $\Brick^Q$ by first applying $\rho$, the $T$-weight of $p_J$ is
    \begin{align*}
        \mu_{\prod\prod \Gr}(p_J) &= \sum_{k=1}^{\mm}  \mu^k_{\prod \Gr}(\rho_k(p_J))\\
        &=\sum_{k=1}^{\mm} w(J,k)\\
        &=B(J).
    \end{align*}
    Finally,
    \begin{align*}
        \mu_{\prod\prod \Gr}(\Brick^Q) &= \conv_{x\in (\Brick^Q)^T} \mu_{\prod\prod \Gr}(x)\\
        &=\conv_{J\in \nabla(Q,w_0)} \mu_{\prod\prod \Gr}(p_J)\\
        &=\conv_{J\in \nabla(Q,w_0)} B(J)\\
        &=\B(Q).\qedhere
    \end{align*}
\end{proof}

\begin{rmk}
    We have now shown that the brick polytope and bulky brick polytope are moment polytopes of the brick variety with respect to two different $T$-equivariant projective embeddings. As a consequence of this, $\B(Q)$ and $\BB(Q)$ have the same normal fan.
\end{rmk}

\subsection{Minkowski decompositions of brick polytopes}\label{subsection:Minkowski}

    In \cref{thm:Minkowski} and \cref{Brick Gr sum}, we apply \cref{lem:MinkowskiMachine} to produce Minkowski decompositions for $\BB(Q)$ and $\B(Q)$.

The result below says that, for certain words in $S_n$, the restriction of the $i$th projection map $(\Fl_n)^m\to \Fl_n$ to the brick variety has as its image a Richardson variety. Furthermore, in certain cases, we can deduce that the brick variety is a toric variety if the Richardson variety is a toric variety, and vice versa.

\begin{prop}\label{proj is Richardson}
    Let $\pi_i:(\Fl_n)^{\mm}\to \Fl_n$ be the projection of $(\Fl_n)^{\mm}$ onto its $i$-th factor (with indexing of $i$ starting at 0). Let $R,S$ be words in $S_n$. Set $i:=|R|$, $u:=w_0\dem(S)^{-1}$, and $v:=\dem(R)$.  
    \begin{enumerate}
        \item We have $\pi_{i}(\Brick^{R+S})=X_u^v$, where $+$ denotes concatenation of words.
        \item If $\Brick^{R+S}$ is toric, then $X_u^v$ is toric.
        \item If $\mrm{dim}_{\mbb{C}}(\mrm{Brick}^{R+S})=\mrm{dim}_{\mbb{C}}(X_u^v)$ and $X_u^v$ is toric, then $\mrm{Brick}^{R+S}$ is toric.  
   \end{enumerate}      
\end{prop}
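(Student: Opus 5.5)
Write $R=(r_1,\dots,r_i)$ and $S=(s'_1,\dots,s'_k)$, so $Q=R+S$ has length $\mm=i+k$ and $\dem(Q)=w_0$. The map $\pi_i$ restricted to $\Brick^{R+S}$ remembers only the flag $F^i_\bullet$. The plan for (1) is to compress the word on each side of position $i$, using \cref{lem: surject}, until only two letters remain, and then identify the resulting two-step brick variety with a Richardson variety.

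First apply $\varrho$ repeatedly to collapse $R$ to the single letter $\dem(R)=v$ and $S$ to the single letter $\dem(S)$. By \cref{lem: surject}, each contraction $\Brick^{Q}\to\Brick^{Q'}$ is surjective and forgets only intermediate flags, never the $i$th one. It follows that $\pi_i(\Brick^{R+S})=\pi_1(\Brick^{(v,\dem(S))})$. The two-letter brick variety is the set of triples $(E^{\rm st}_\bullet,F_\bullet,E^{\rm op}_\bullet)$ with $(E^{\rm st}_\bullet,F_\bullet)\in D_v$ and $(F_\bullet,E^{\rm op}_\bullet)\in D_{\dem(S)}$.

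Next identify each condition with a Schubert condition. The first condition says exactly $F_\bullet\in X^v$. For the second, use that relative position is inverted when the pair is swapped, so $(E^{\rm op}_\bullet,F_\bullet)\in D_{\dem(S)^{-1}}$. By the paper's definition $X_w=\{F_\bullet:(E^{\rm op}_\bullet,F_\bullet)\in D_{w_0w}\}$, this says $F_\bullet\in X_u$ with $w_0u=\dem(S)^{-1}$, i.e.\ $u=w_0\dem(S)^{-1}$. Hence the image is $X^v\cap X_u=X_u^v$. This is the step needing the most care: one must verify the convention $D_\sigma\leftrightarrow D_{\sigma^{-1}}$ under swapping, via the criterion $x^{-1}y\in\overline{B_+\sigma B_+}$, and make sure the indexing by $E^{\rm op}$ matches the paper's convention.

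For (2), observe that $\pi_i$ is $T$-equivariant and surjective onto $X_u^v$, so a dense $T$-orbit in $\Brick^{R+S}$ maps to a dense $T$-orbit in $X_u^v$, because the image of a dense set under a continuous surjection is dense. Hence $X_u^v$ is toric.

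For (3), recall that the dimension of the effective torus equals the dimension of the moment polytope. By \cref{lem:MinkowskiMachine}, $\mu(\Brick^{R+S})$ contains a translate of a Minkowski summand equal to $\mu(\pi_i(\Brick^{R+S}))=\tilde P_{u,v}$. Therefore
\[\dim\mu(\Brick^{R+S})\ge\dim\tilde P_{u,v}=\dim X_u^v=\dim\Brick^{R+S},\]
where the last equality is the hypothesis. The reverse inequality always holds, so the effective torus of $\Brick^{R+S}$ has dimension $\dim\Brick^{R+S}$, and $\Brick^{R+S}$ is toric. A possible gap is that \cref{lem:MinkowskiMachine} needs $\Brick^{R+S}$ to be irreducible; irreducibility is part of the notion of a variety with a dense orbit used here, so I would cite it, or otherwise argue component-wise.
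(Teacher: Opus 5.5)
Your proof is correct, and for parts (1) and (3) it takes a different route from the paper's.

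For (1), the paper only says the claim follows from slight modifications of the proof of \cite[Theorem 26]{E16}. Your argument is self-contained within the paper. Iterating \cref{lem: surject} contracts $\Brick^{R+S}$ onto $\Brick^{(v,\dem(S))}$ through maps that never forget $F^{|R|}_\bullet$. The two-letter brick variety is then identified with $X_u^v$ using $(F_\bullet,G_\bullet)\in D_\sigma\iff(G_\bullet,F_\bullet)\in D_{\sigma^{-1}}$ (immediate from the criterion $x^{-1}y\in\overline{B_+\sigma B_+}$) and the paper's conventions for $X^w$ and $X_w$.

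Your (2) is the paper's argument without its extra proof that $\pi_i(\mcal{O})$ is open. That step is indeed superfluous, given the paper's definition of toric and \cref{lem:dense-torus-orbit-is-open}.

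For (3), the paper uses generic finiteness of the dominant map $\pi_i\colon\Brick^{R+S}\to X_u^v$ between varieties of equal dimension. The preimage of the open orbit is then a finite union of $T$-orbits, and irreducibility forces one of them to be dense. You instead compare moment-polytope dimensions, using \cref{lem:MinkowskiMachine} and the fact that $\dim\mu(X)$ equals the dimension of the effective torus. This is shorter given that machinery, but it relies on a $T$-equivariant projective embedding and on the cited effective-torus fact; the paper's argument is purely algebro-geometric. Stripped of polytopes, your argument says: for general $z$, $\pi_i(z)$ lies in the open orbit of $X_u^v$ and $\mrm{Stab}_T(z)\subseteq\mrm{Stab}_T(\pi_i(z))$, so $\dim T\cdot z\geq\dim X_u^v=\dim\Brick^{R+S}$.

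Both proofs need $\Brick^{R+S}$ to be irreducible. Your justification, that irreducibility is ``part of the notion of a variety with a dense orbit'', is circular in (3), because the dense orbit is exactly what you are trying to produce. Cite irreducibility independently instead. For example, $\Brick^{R+S}$ is the image, under the contractions of \cref{lem: surject}, of Escobar's irreducible brick variety for the concatenation of reduced words in simple transpositions for the letters of $R+S$.
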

\begin{proof}
    (1) follows from slight modifications of the proof of \cite[Theorem 26]{E16}. For (2), assume $\mrm{Brick}^{R+S}$ is toric. Then it contains a dense, open $T$-orbit $\mcal{O}$. As $\pi_i$ is $T$-equivariant, it follows that $\pi_i(\mcal{O})$ is a $T$-orbit in $X_u^v$, and as $\pi_i$ is surjective onto $X_u^v$ and continuous, it follows that $\pi_i(\mcal{O})$ is dense in $X_u^v$. To show that $X_{u}^v$ is a toric variety, it is therefore enough to show that $\pi_i(\mcal{O})$ is open in $X_u^v$. As $\mcal{O}$ is open in $\mrm{Brick}^{R+S}$, it follows that $\pi_i(\mcal{O})$ is a constructible subvariety of $X_u^v$. By \cite[\S AG.11, Corollary 10.2]{B91}, the constructible set $\pi_i(\mcal{O})$ contains an open subvariety $\mcal{U}$ of $X_u^v$. We will show that $\pi_i(\mcal{O})$ is open in $X_u^v$ by showing that it equals $\mcal{U}$. As $\pi_i(\mcal{O})$ is a $T$-orbit in $X_u^v$, we can write $\pi_i(\mcal{O})=T\cdot x$ for some $x\in \pi_i(\mcal{O})$. Given any $y\in \pi_i(\mcal{O})$ and any $u\in \mcal{U}\subseteq \pi_i(\mcal{O})$, there exist $r,s\in T$ such that $y=r\cdot x$ and $u=s\cdot x$. Thus, $y=(rs^{-1})\cdot u\in (rs^{-1})\cdot \mcal{U}$. As the action of $t\in T$ on $X_u^v$ induces an automorphism of $X_u^v$, it follows that $t\cdot \mcal{U}$ is open in $X_u^v$ for all $t\in T$. In particular, $(rs^{-1})\cdot \mcal{U}$ is an open subset of $X_u^v$ contained in $\pi_i(\mcal{O})$ which contains $y$. As $y\in \pi_i(\mcal{O})$ was chosen arbitrarily, we have shown that $\cup_{t\in T} (t\cdot \mcal{U})=\pi_i(\mcal{O})$. As $\pi_i(\mcal{O})$ is a union of open subsets of $X_u^v$, it follows that $\pi_i(\mcal{O})$ is itself open in $X_u^v$. Therefore, $X_u^v$ is a toric variety, which proves (2).

    Now we will prove (3). For the rest of this proof, we will abuse notation and denote the restriction $\pi_i|_{\mrm{Brick^{R+S}}}$ by $\pi_i$ itself. By (1), the restriction $\pi_i\colon \mrm{Brick}^{R+S}\to X_u^v$ is surjective, so it is dominant.  The result \cite[Theorem 3.7]{P08} implies that any dominant morphism of varieties of the same dimension is generically finite. Therefore, there is a dense open subset $U'$ of $X_u^v$ such that $\pi_i^{-1}(x)$ is a finite set for any $x\in U'$.
    
    Let $U$ be a dense, open $T$-orbit in $X_u^v$. Then the preimage $V:=\pi_i^{-1}(U)$ is dense, open, and $T$-invariant in $\mrm{Brick}^{R+S}$ by $T$-equivariance and continuity of $\pi_i$. Let $x\in U\cap U'\neq \emptyset$. As $U$ is a $T$-orbit in $X_{u}^v$, we have $T\cdot x=U$, and as $x\in U'$, we have $(\pi_i)|_{V}^{-1}(x)=\{y_1,\dotsc,y_k\}$ for some $y_i\in V$. Moreover, we have
    \[V=((\pi_i)|_{V})^{-1}(U)=((\pi_i)|_{V})^{-1}(T\cdot x)=T\cdot (((\pi_i)|_{V})^{-1}(x))=(T\cdot y_1)\cup\dotsb\cup(T\cdot y_k).\]As $V$ is dense in $\mrm{Brick}^{R+S}$, it follows that $\mrm{Brick}^{R+S}=\overline{V}=(\overline{T\cdot y_1})\cup\dotsb\cup(\overline{T\cdot y_k})$. If $\overline{T\cdot y_i}=\mrm{Brick}^{R+S}$ for some index $i$, then $T\cdot y_i$ is a dense torus orbit in $\mrm{Brick}^{R+S}$; by \cref{lem:dense-torus-orbit-is-open}, a dense torus orbit in $\mrm{Brick}^{R+S}$ is open, so $\mrm{Brick}^{R+S}$ is toric. Otherwise 
     $\mrm{Brick}^{R+S}$ is the disjoint union of multiple proper closed subsets, contradicting irreducibility of $\mrm{Brick}^{R+S}$. 
    \end{proof}

\begin{rmk}
    In the notation of \cref{proj is Richardson}, if $R$ and $S$ are reduced words in simple transpositions in $S_n$, 
    then it is shown in \cite[Theorem 26]{E16} that $\pi_i|_{\mrm{Brick}^{R+S}}\colon \mrm{Brick}^{R+S}\to X_u^v$ is a surjective, birational map. Thus, if $X_u^v$ is toric and $R$ and $S$ are reduced words in simple transpositions in $S_n$, then it follows from part (3) of \cref{proj is Richardson} that $\mrm{Brick}^{R+S}$ is toric.
    
    The map $\pi_i|_{\mrm{Brick}^{R+S}}$ will be analyzed further in \cref{subsubsection:brick-resolution}. In particular, we will give a sufficient condition for $\pi_i|_{\mrm{Brick}^{R+S}}$ to be an isomorphism in \cref{thm:patterns}.  
\end{rmk}

The following corollary is a direct consequence of \cref{proj is Richardson}.

\begin{cor}\label{toric-iff}
    Let $\pi_i:(\Fl_n)^{\mm}\to \Fl_n$ be the projection of $(\Fl_n)^{\mm}$ onto its $i$-th factor (with indexing of $i$ starting at 0). Let $R,S$ be words in $S_n$. Set $i:=|R|$, $u:=w_0\dem(S)^{-1}$, and $v:=\dem(R)$. Assume that $\mrm{dim}_{\mbb{C}}(\mrm{Brick}^{R+S})=\mrm{dim}_{\mbb{C}}(X_u^v)$. Then $\mrm{Brick}^{R+S}$ is toric if and only if $X_u^v$ is toric. 
\end{cor}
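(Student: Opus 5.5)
The corollary follows from \cref{proj is Richardson} once we combine part (2) with part (3) under the dimension hypothesis; the two directions are handled separately.

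For the forward direction, suppose $\Brick^{R+S}$ is toric. Part (2) of \cref{proj is Richardson} then gives directly that $X_u^v$ is toric. This direction does not use the dimension hypothesis at all. For completeness I would recall how part (2) works. The image $\pi_i(\mcal{O})$ of the dense open $T$-orbit is a dense $T$-orbit in $X_u^v$. Its openness follows because it is constructible, hence contains an open subset $\mcal{U}$ of $X_u^v$, and then it equals the union of the translates $t\cdot\mcal{U}$.

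For the reverse direction, suppose $X_u^v$ is toric. The hypothesis $\dim_{\C}(\Brick^{R+S})=\dim_{\C}(X_u^v)$ is exactly the extra assumption needed in part (3) of \cref{proj is Richardson}. That part then yields that $\Brick^{R+S}$ is toric.

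The only point needing care is that part (3) relies on the projective and irreducible nature of $\Brick^{R+S}$, in order to pass from a finite union of orbit closures to a single dense orbit. That fact is supplied in the proof of part (3) itself. So there is no genuine obstacle here: the statement is a repackaging of parts (2) and (3), with the dimension equality ensuring both apply simultaneously.
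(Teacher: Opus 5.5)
Your proposal is correct and takes the same approach as the paper, which records this corollary as a direct consequence of \cref{proj is Richardson}. Part (2) gives the forward implication with no hypothesis, and part (3), under the dimension equality, gives the converse.
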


Recall that the brick polytope $\B(Q)$ is defined as the convex hull over all $J\in \nabla(Q,w_0)$ of the brick vectors $B(J)$ and the brick vector $B(J)$ is defined as $B(J)=\sum_{i=1}^{\mm}w(J,k).$ When $Q$ is a word in reduced reflections, Proposition 3.29 of \cite{bricksort} and Proposition 5.17 of \cite{PS15} explain that the summation and convex hull in the definition of the brick polytope can be exchanged. That is, \[\conv_{J\in \nabla(Q,w_0)} \sum_{i=1}^{\mm}w(J,i) = \sum_{i=1}^{\mm} \conv_{J\in \nabla(Q,w_0)} w(J,i). \]
This result is proved combinatorially in \cite{bricksort} and \cite{PS15}. In the following corollary we show that the same observation holds when $Q$ is a sequence of permutations as a natural consequence of the geometry of brick varieties and \cref{lem:MinkowskiMachine}. Additionally, when we write $w(J,i)$ as a sum, this sum can also be exchanged with the convex hull operation.

\begin{cor}[Proposition 3.29 of \cite{bricksort}, Proposition 5.17 of \cite{PS15}]\label{Brick Gr sum}
   Let $Q=(\q_1,\dots, \q_\mm)$ be a word in $S_n$ such that $\dem(Q)=w_0$.
  \begin{align}
      \B(Q)&=\conv_{J\in \nabla(Q,w_0)}\sum_{i=1}^{\mm}w(J,i) \nonumber\\ 
      &= \sum_{i=1}^{\mm}\conv_{J\in \nabla(Q,w_0)} w(J,i) \nonumber\\
      &=\sum_{i=1}^{\mm} \mu^i_{\prod \Gr}(\rho_i(\Brick^Q)) \nonumber\\
      &=\sum_{i=1}^{\mm} \sum_{j\in[n-1]:s_j\leq \q_{i}} \conv_{J\in \nabla(Q,w_0)} J_{(i)}\omega_j \label{eq:positroidSum1}
  \end{align}
  where $\rho_k$ is as in Section  \ref{background}.
\end{cor}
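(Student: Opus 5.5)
The plan is to apply \cref{lem:MinkowskiMachine} twice: once to the inclusion of $\Brick^Q$ into a product of products of Grassmannians, and once to each factor of that product.

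\textbf{First equality.} This is the definition of $\B(Q)$. Alternatively, it follows from \cref{prop: unbulky moment}, which gives $B(J)=\sum_i w(J,i)$ and $\B(Q)=\mu_{\prod\prod\Gr}(\Brick^Q)$.

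\textbf{Second and third equalities.} To apply \cref{lem:MinkowskiMachine}, I need $\Brick^Q$ to be closed, irreducible and $T$-invariant inside the product $Y_1\times\cdots\times Y_\mm$, where
\[Y_i=\prod_{j\in[n-1]:s_j\leq \q_i}\Gr(j,n),\]
and the embedding is $\rho$ composed with the Plücker and Segre embeddings.

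\begin{itemize}
\item Projectivity was established after \cref{lem: surject}, so $\rho(\Brick^Q)$ is closed.
\item Irreducibility I would obtain as follows. Replace $Q$ by the concatenation of reduced words in simple transpositions for the $\q_i$. The resulting brick variety is irreducible by \cite[Theorem 20]{E16}, and it surjects onto $\Brick^Q$ by repeated use of \cref{lem: surject}. A continuous image of an irreducible space is irreducible.
\item Injectivity of $\rho$ should hold because, by \cref{lem: no change}, consecutive flags agree in every dimension not recorded by $\rho$. More precisely, for each $j$ not recorded at step $i$, the subspace $F^i_j$ equals the corresponding subspace of a neighbouring flag. The only exceptions are dimensions $j$ with $s_j\not\leq \q_k$ for all $k\le i$ (or for all $k>i$); there $F^i_j$ is forced to be $E_{[j]}$ (respectively $w_0E_{[j]}$).
\end{itemize}

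With these hypotheses, \cref{lem:MinkowskiMachine} gives
\[\B(Q)=\sum_i\mu^i_{\prod\Gr}(\rho_i(\Brick^Q))=\sum_i\conv_{x\in(\Brick^Q)^T}\mu^i_{\prod\Gr}(\rho_i(x)).\]
By \cref{prop: bulky moment}, the fixed points are exactly the $p_J$ with $J\in\nabla(Q,w_0)$. By \cref{prop: unbulky moment}, $\mu^i_{\prod\Gr}(\rho_i(p_J))=w(J,i)$. Substituting these gives the second and third equalities.

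\textbf{Fourth equality.} Apply \cref{lem:MinkowskiMachine} again, now to the irreducible closed $T$-invariant subvariety $\rho_i(\Brick^Q)\subseteq Y_i$. It is closed because it is the image of a projective variety, and irreducible because $\Brick^Q$ is. The projections of $\rho_i(\Brick^Q)$ onto the factors $\Gr(j,n)$ are the maps $\phi_j\circ\pi_i$. This yields
\[\mu^i_{\prod\Gr}(\rho_i(\Brick^Q))=\sum_{j:s_j\le\q_i}\conv_{y\in\rho_i(\Brick^Q)^T}\mu_{\Gr(j,n)}(\phi_j(y)).\]
By \cref{pre- fix of proj}, $\rho_i(\Brick^Q)^T=\rho_i((\Brick^Q)^T)$, which is the set of $\rho_i(p_J)$. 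Since $\mu_{\Gr(j,n)}(\phi_j(J_{(i)}E_\bullet^{\mathrm{st}}))=J_{(i)}\omega_j$, we obtain \cref{eq:positroidSum1}.

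\textbf{Main obstacle.} The delicate point is verifying the hypotheses of \cref{lem:MinkowskiMachine}, chiefly irreducibility of the generalized brick variety; the rest is bookkeeping with fixed points. Injectivity of $\rho$ is not essential: if it fails, I would instead work with the image $\rho(\Brick^Q)$. Its $T$-fixed points are still the $\rho(p_J)$ by \cref{pre- fix of proj}, and $\mu_{\prod\prod\Gr}$ is computed through $\rho$ anyway, so the argument goes through unchanged.
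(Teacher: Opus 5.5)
Your proposal is correct and follows the paper's proof. Like the paper, you apply \cref{lem:MinkowskiMachine} twice: first to $\rho$ viewed as the product of the $\rho_i$, then to each $\rho_i(\Brick^Q)\subseteq\prod_{j:s_j\le \q_i}\Gr(j,n)$, identifying fixed points via \cref{prop: bulky moment}, \cref{prop: unbulky moment} and \cref{pre- fix of proj}. Your explicit checks of the hypotheses fill in details the paper leaves implicit: irreducibility via the surjection from the simple-transposition brick variety given by \cref{lem: surject}, and the observation that injectivity of $\rho$ is not needed.
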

\begin{proof}
The equality in the first line of this corollary is by definition.

The brick polytope is defined as \[\B(Q)=\mrm{conv}_{J\in\nabla(Q,w_0)} B(J) = \mrm{conv}_{J\in\nabla(Q,w_0)}\sum_{k=1}^{\mm}w(J,k).\]
\cref{prop: unbulky moment} tells us that $\B(Q)=\mu_{\prod\prod\Gr}\Brick^Q$. Viewing the embedding $\rho\colon \Brick^Q\hookrightarrow \prod_{i=1}^{\mm}\prod_{j\in[n-1]:s_j\leq \q_{i}}\Gr(j,n)$ as the product of its constituent maps $\rho_i:\Brick^Q \to \prod_{j\in[n-1]:s_j\leq \q_{i}}\Gr(j,n)$ allows us to apply \cref{lem:MinkowskiMachine} to get 
\begin{align*}
    \B(Q)&=\mu_{\prod\prod\Gr}\Brick^Q\\
    &=\sum_{i=1}^{\mm} \conv_{x\in (\Brick^Q)^T}
    \mu^i_{\prod \Gr} (\rho_i(x))\\
    &=\sum_{i=1}^{\mm} \mu^i_{\prod \Gr}(\rho_i(\Brick^Q)).
\end{align*}
We can then apply \cref{prop: unbulky moment} to find \[\B(Q)= \sum_{i=1}^{\mm} \conv_{x\in (\Brick^Q)^T}
    \mu^i_{\prod \Gr} (\rho_i(x)) =  \sum_{i=1}^{\mm} \conv_{J\in \nabla(Q,w_0)} w(J,i) \]
    as well.

Since $\rho_i(\Brick^Q)$ is a $T$-invariant subvariety of $\prod_{j\in[n-1]:s_j\leq \q_{i}}\Gr(j,n),$ we could apply \cref{lem:MinkowskiMachine} again to further expand $\mu^i_{\prod \Gr}(\rho_i(\Brick^Q))$. To avoid introducing notation for maps $\prod_{j\in[n-1]:s_j\leq \q_{i}}\Gr(j,n) \to \Gr(r,n),$ we note that the following diagram commutes for any $i,r$ such that $s_r \leq \q_i$.

\begin{center}
    \begin{tikzcd}
 \Brick^Q\subseteq (\Fl_n)^{\mm+1} \arrow[d,"\pi_i"]  \arrow[r,"\rho_i"] & \prod_{j\in[n-1]:s_j\leq \q_{i}}\Gr(j,n) \arrow[d] \\
 \Fl_n \arrow[r,"\phi_r"]        & \Gr(r,n)          
\end{tikzcd}
\end{center}

\begin{align*}
    \mu^i_{\prod \Gr}(\rho_i(\Brick^Q))&=\mu_{\prod_{j\in[n-1]:s_j\leq \q_{i}}\Gr(j,n)}\rho_i(\Brick^Q)\\
    &=\sum_{j\in[n-1]:s_j\leq \q_{i}}\mu_{\Gr(j,n)}(\phi_j(\pi_i(\Brick^Q)))\\
    &=\sum_{j\in[n-1]:s_j\leq \q_{i}} \conv_{x\in \phi_j(\pi_i(\Brick^Q))^T} \mu_{\Gr(j,n)}(x)\\
    &=\sum_{j\in[n-1]:s_j\leq \q_{i}} \conv_{x\in (\Brick^Q)^T} \mu_{\Gr(j,n)}(\phi_j(\pi_i(x)))\\
    &=\sum_{j\in[n-1]:s_j\leq \q_{i}} \conv_{J\in \nabla(Q,w_0)}\mu_{\Gr(j,n)}(\phi_j(\pi_i(p_J)))\\
    &=\sum_{j\in[n-1]:s_j\leq \q_{i}} \conv_{J\in \nabla(Q,w_0)}\mu_{\Gr(j,n)}(\phi_j(  J_{(i)}\cdot E_\bullet^{\text{st}}  ))\\
    &=\sum_{j\in[n-1]:s_j\leq \q_{i}} \conv_{J\in \nabla(Q,w_0)} J_{(i)}\cdot \omega_j.\qedhere
\end{align*}
\end{proof}

\begin{rmk}
    The polytopes $\conv_{J\in \nabla(Q,w_0)} J_{(i)}w_j$ appearing in \cref{eq:positroidSum1} are associated to projected Richardson varieties and are therefore positroid polytopes.
\end{rmk}

The situation of bulky brick polytopes closely parallels that of (unbulky) brick polytopes. The bulky brick polytopes are convex hulls of bulky brick vectors, and the bulky brick vectors are sums of bulky weight vectors. Again, the underlying geometry allows the use of \cref{lem:MinkowskiMachine} to commute the convex hull and sum operations. This yields \cref{eq:pieces}, which expresses bulky brick polytopes as Minkowski sums of twisted Bruhat interval polytopes. \cref{eq:positroidSum2} shows that the sum of Bruhat interval polytopes can be expanded and reorganized to reveal the exact same positroid polytope summands appearing in \cref{eq:positroidSum1}, but now with multiplicities.

\begin{thm}\label{thm:Minkowski}
Let $Q=(\q_1,\dots, \q_{\mm})$ be a word in $S_n$ with $\dem(Q)=w_0.$
Set $\lambda_{i,j}=\min(\{k:k>i, s_j\leq \q_k \} \cup \{\mm+1\})-i$ for $i>0$ and set $\lambda_{0,j}=\min(\{k-1: s_j\leq \q_k \} \cup \{\mm\})$. Then the bulky brick polytope has the following Minkowski decompositions into twisted Bruhat interval polytopes and into matroid polytopes:
    \begin{align}
 \BB(Q)
 &=\sum_{i=1}^{\mm} \tilde{P}_{w_0\dem(\q_{i+1},\dots, \q_{\mm})^{-1},\dem(\q_{1},\dots, \q_{i})}\label{eq:pieces}\\
&=\left(\sum_{j=1}^{n}\lambda_{0,j}\omega_j \right)+\sum_{i=1}^{\mm}\sum_{j\in [n-1]:s_j\leq \q_i} \lambda_{i,j} \conv_{J\in \nabla(Q,w_0)}J_{(i)}(\omega_j). \label{eq:positroidSum2}   
    \end{align}
\end{thm}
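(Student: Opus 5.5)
For \cref{eq:pieces}, the plan is to combine \cref{prop: bulky moment} with \cref{lem:MinkowskiMachine} applied to $\Brick^Q\subseteq (\Fl_n)^{\mm}$, where the factors $Y_i=\Fl_n$ are embedded projectively via $\phi$ followed by Pl\"ucker and Segre embeddings. \cref{prop: bulky moment} identifies $\mu_{(\Fl_n)^{\mm}}(\Brick^Q)$ with $\BB(Q)$. \cref{lem:MinkowskiMachine} then gives
\[\BB(Q)=\sum_{i=1}^{\mm}\mu_{\Fl_n}(\pi_i(\Brick^Q)).\]
To identify each summand, write $Q=R+S$ with $R=(\q_1,\dots,\q_i)$ and $S=(\q_{i+1},\dots,\q_{\mm})$. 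Part (1) of \cref{proj is Richardson} gives $\pi_i(\Brick^Q)=X_u^v$ with $u=w_0\dem(S)^{-1}$ and $v=\dem(R)$. \cref{lem:Richardson sum} (or the computation just before it) gives $\mu_{\Fl_n}(X_u^v)=\tilde P_{u,v}$, which yields \cref{eq:pieces}. Two small checks are needed here:
\begin{itemize}
\item Irreducibility of $\Brick^Q$, which \cref{lem:MinkowskiMachine} requires. I would take this from the irreducibility of $\BS^Q$ together with the surjection of \cref{lem: surject} from a brick variety for a word in simple transpositions, using Escobar's irreducibility in that case.
\item For $i=\mm$, the summand is $\tilde P_{w_0,w_0}$, the point $w_0(\sum\omega_j)$. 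This is consistent with $\pi_{\mm}$ being constant at $E_\bullet^{\text{op}}$.
\end{itemize}

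For \cref{eq:positroidSum2}, I would instead refine the embedding further and apply \cref{lem:MinkowskiMachine} to $\Brick^Q\subseteq\prod_{i=1}^{\mm}\prod_{j=1}^{n}\Gr(j,n)$, with each flag split by $\phi$. This gives
\[\BB(Q)=\sum_{i=1}^{\mm}\sum_{j=1}^n \conv_{J\in\nabla(Q,w_0)} J_{(i)}\omega_j.\]
The task is then to group equal summands. Fix $j$. Along a point of $\Brick^Q$, \cref{lem: no change} shows that the $j$-dimensional subspace $F^i_j$ equals $F^{i-1}_j$ whenever $s_j\not\le \q_i$. Consequently $F^i_j=F^{k}_j$ for $i\le k<i'$, where $i'$ is the next index with $s_j\le\q_{i'}$. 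At the level of fixed points this says $J_{(k)}\omega_j=J_{(i)}\omega_j$ for those $k$, since $v_k\omega_j=\omega_j$ when $v_k\le\q_k$ and $s_j\not\le \q_k$.

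Each block of consecutive indices $k$ sharing the same $F_j$ is therefore headed by an index $i\ge1$ with $s_j\le\q_i$, and the block has length $\lambda_{i,j}$. The one exception is the initial block $0\le k<\min\{k: s_j\le\q_k\}$, where $F^k_j=E_{[j]}$ is constant and contributes the point $\omega_j$ with multiplicity $\lambda_{0,j}$; the index $k=0$ is excluded from the sum, which matches the $-1$ in the definition of $\lambda_{0,j}$. For $j=n$ every term is the constant $\omega_n$, appearing $\mm$ times, which matches $\lambda_{0,n}=\mm$. Summing over blocks produces \cref{eq:positroidSum2}.

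The main obstacle I expect is bookkeeping rather than anything conceptual. One concern is the off-by-one conventions: $\Brick^Q$ sits in $(\Fl_n)^{\mm}$ with factors indexed $1..\mm$ in \cref{prop: bulky moment}, while \cref{proj is Richardson} indexes from $0$, and these must agree with the definitions of $\lambda_{0,j}$ and $\lambda_{i,j}$. The other is confirming that \cref{lem:MinkowskiMachine} legitimately applies to repeated identical factors. It does, since the Minkowski sum of a polytope with itself $\lambda$ times is $\lambda$ times that polytope.
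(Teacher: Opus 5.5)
Your proposal is correct and follows essentially the same route as the paper. For \cref{eq:pieces} you combine \cref{prop: bulky moment}, \cref{lem:MinkowskiMachine} and part (1) of \cref{proj is Richardson}; for \cref{eq:positroidSum2} you refine to $\prod_{i}\prod_{j}\Gr(j,n)$ and group the repeated summands using \cref{lem: no change}, with multiplicities $\lambda_{i,j}$ and $\lambda_{0,j}$. Your explicit check that $\Brick^Q$ is irreducible (needed to apply \cref{lem:MinkowskiMachine}) and your fixed-point-level identity $J_{(k)}\omega_j=J_{(k-1)}\omega_j$ are small details that the paper leaves implicit.
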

\begin{proof}
    Viewing $\Brick^Q$ as a subvariety of $(\Fl_n)^{\mm},$ we can apply \cref{prop: bulky moment}, \cref{lem:MinkowskiMachine}, and \cref{proj is Richardson} to get 
    \begin{align*}
        \BB(Q)&=\mu_{(\Fl_n)^{\mm}}(\mrm{Brick}^Q)\\
        &=\sum_{i=1}^{\mm} \mu_{\Fl_n}(\pi_i(\Brick^Q))\\
        &=\sum_{i=1}^{\mm} \mu_{\Fl_n}\left(X_{w_0\dem(\q_{i+1},\dots, \q_{\mm})^{-1}}^{\dem(\q_{1},\dots, \q_{i})}\right)\\
        &=\sum_{i=1}^{\mm} \tilde{P}_{w_0\dem(\q_{i+1},\dots, \q_{\mm})^{-1},\dem(\q_{1},\dots, \q_{i})},
    \end{align*}
     proving \cref{eq:pieces}. 

    We could further expand each $\tilde{P}_{w_0\dem(\q_{i+1},\dots, \q_{\mm})^{-1},\dem(\q_{1},\dots, \q_{i})}$ via \cref{lem:Richardson sum}.
    However, it helps to instead think about expanding $\mu_{\Fl_n}(\pi_i(\Brick^Q))$ (even though the expansion will be the same). This expansion (via \cref{lem:MinkowskiMachine} as usual) gives us
\begin{equation}
    \BB(Q)=\sum_{i=1}^{\mm} \sum_{j=1}^{n} \mu_{\Gr(j,n)}(\phi_j(\pi_i(\Brick^Q))) \label{eq:anothersum}
\end{equation}

This Minkowski decomposition of $\BB(Q)$ is about embedding $\Brick^Q$ in $\prod_{i=1}^{\mm} \prod_{j=1}^n \Gr(j, n)$ and getting a Minkowski summand for each $\Gr(j,n)$. However, we know that this embedding is very redundant since we were also able to embed $\Brick^Q$ in $\prod_{i=1}^{\mm}\prod_{j\in[n-1]:s_j\leq \q_{i}}\Gr(j,n)$ (and composing the embedding into $\prod_{i=1}^{\mm} \prod_{j=1}^n \Gr(j, n)$ with the projection $\prod_{i=1}^{\mm} \prod_{j=1}^n \Gr(j, n)\to \prod_{i=1}^{\mm}\prod_{j\in[n-1]:s_j\leq \q_{i}}\Gr(j,n)$ gives the embedding of $\Brick^Q$ into $\prod_{i=1}^{\mm}\prod_{j\in[n-1]:s_j\leq \q_{i}}\Gr(j,n)$). We can thus rewrite \cref{eq:anothersum} by only summing terms $\mu_{\Gr(j,n)}(\phi_j(\pi_i(\Brick^Q)))$ where $i=0$ or $s_j\leq \q_i$ and attaching multiplicities to these terms recording the number of times they are repeated in \cref{eq:anothersum}. The $i=0$ terms don't appear to be present in \cref{eq:anothersum}, but we need to sum them anyway to account for copies of subspaces that appear first in $\pi_0(\Brick^Q)$.

By \cref{lem: no change}, when $s_j \not \leq \q_{i+1},$ $\phi_j\circ \pi_i = \phi_j \circ \pi_{i+1}.$ Moreover,  $\phi_j\circ \pi_i = \phi_j \circ \pi_{i'}$  for $i\leq i' \leq \min(\{k:k>i, s_j\leq \q_k \} \cup \{\mm\})$. So when $s_j\leq \q_{i}$, the number of terms of \cref{eq:anothersum} that are equal to $\phi_j \circ \pi_{i}$ is $\min(\{k:k>i, s_j\leq \q_k \} \cup \{\mm+1\})-i = \lambda_{i,j}.$ For $i=0,$ the number of terms equal to $\phi_j \circ \pi_{i}$ is $\min(\{k-1: s_j\leq \q_k \} \cup \{\mm\}) = \lambda_{0,j}$.

Since the simple transposition $s_n$ does not exist inside $S_n$, we have $\lambda_{0,n}=\mm$ and this is indeed the number of times we should sum $\phi_n(\pi_0(\Brick^Q)).$ We further note that $\phi_j(\pi_0(\Brick^Q)) = \{\omega_j\}$ for all $j$.
So \cref{eq:anothersum} may be manipulated to become
\begin{align*}
    \BB(Q)&=
    \left(\sum_{j=1}^{n}\lambda_{0,j}\omega_j \right)+\sum_{i=1}^{\mm}\sum_{j\in [n-1]:s_j\leq \q_i} \lambda_{i,j} \mu_{\Gr(j,n)}(\phi_j(\pi_i(\Brick^Q)))\\
    &=\left(\sum_{j=1}^{n}\lambda_{0,j}\omega_j \right)+\sum_{i=1}^{\mm}\sum_{j\in [n-1]:s_j\leq \q_i} \lambda_{i,j} \conv_{J\in \nabla(Q,w_0)}\mu_{\Gr(j,n)}(\phi_j(\pi_i(p_J)))\\
    &=\left(\sum_{j=1}^{n}\lambda_{0,j}\omega_j \right)+\sum_{i=1}^{\mm}\sum_{j\in [n-1]:s_j\leq \q_i} \lambda_{i,j} \conv_{J\in \nabla(Q,w_0)}\mu_{\Gr(j,n)}(\phi_j(J_{(i)}\cdot F_{\bullet}^{\text{st}}    ))\\
    &=\left(\sum_{j=1}^{n}\lambda_{0,j}\omega_j \right)+\sum_{i=1}^{\mm}\sum_{j\in [n-1]:s_j\leq \q_i} \lambda_{i,j} \conv_{J\in \nabla(Q,w_0)}J_{(i)}(\omega_j).\qedhere
\end{align*}
\end{proof}

 \begin{lem}[Special case of \cite{LMP21} Theorem 7.1]
        If $uv^{-1}=s_1s_2\cdots s_{k}$ then $\tilde{P}_{u,v}$ is combinatorially a cube 
\end{lem}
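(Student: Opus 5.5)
The plan is to reduce to an ordinary Bruhat interval polytope and then invoke the structure theory of toric Bruhat interval polytopes from \cite{LMP21}. Recall that $\tilde{P}_{u,v}=P_{w_0v^{-1},w_0u^{-1}}$. Set $x=w_0v^{-1}$ and $y=w_0u^{-1}$. Then $x^{-1}y=vu^{-1}=(uv^{-1})^{-1}=s_ks_{k-1}\cdots s_1$, so $y=x\,c$ with $c=s_k\cdots s_1$ a product of \emph{distinct} simple transpositions. I will implicitly assume $u\leq v$, as the statement requires for $\tilde{P}_{u,v}$ to be a nonempty interval polytope. Since $w\mapsto w_0w^{-1}$ reverses Bruhat order, $x\leq y$ as well.

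First I check that $\ell(y)=\ell(x)+k$. The element $c$ has length $k$, and $x\leq y=xc$. Write $y$ via a reduced word for $x$ followed by $s_k,\dots,s_1$; if this word were not reduced, then $\ell(y)<\ell(x)+k$. I expect to get a contradiction with $x\leq y$ by applying the lifting property (\cref{prop:lifting-property}) letter by letter. This bookkeeping is the step I expect to be fiddliest; in particular the left/right conventions in $uv^{-1}$ need care. If lifting alone does not suffice, I will instead show directly that $\ell(y)-\ell(x)$ equals the number of letters of $c$ that increase length.

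Second, with length additivity in hand, I show that $[x,y]=\{x c' : c' \text{ a subword of } s_k\cdots s_1\}$ and that this interval is a Boolean lattice of rank $k$. Subwords of a word with distinct letters are reduced and give pairwise distinct elements, ordered by containment. This is the same mechanism as in \cref{lem:technical}(2) and \cite[Proposition 1.2]{LOYY24}. Every element of $[x,y]$ has this form by the subword characterization of Bruhat order applied to the reduced word for $y$ from the first step.

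Third, I show that $P_{x,y}$ is toric, i.e.\ $\dim P_{x,y}=k$. The edges from the vertex $x$ to $xs_j$, for $s_j\leq c$, have directions $\pm(p_{x(j)}-p_{x(j+1)})$. These $k$ vectors are of the form $p_a-p_b$, and their index pairs $\{x(j),x(j+1)\}$ form a forest: they are the image under $x$ of the path edges $\{j,j+1\}$. Hence they are linearly independent, so the dimension is at least $k$; it is at most $k=\ell(y)-\ell(x)$ since $X^{y}_{x}$ has that dimension. Finally, \cite[Theorem 7.1]{LMP21} says the face lattice of a toric Bruhat interval polytope is isomorphic to the lattice of subintervals $[a,b]\subseteq[x,y]$ that are themselves toric, with $P_{a,b}$ a face of $P_{x,y}$. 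For a Boolean interval all subintervals are Boolean of the same type, hence toric. The lattice of subintervals of the Boolean lattice $2^{[k]}$ is exactly the face lattice of the $k$-cube, and this transfers back to $\tilde{P}_{u,v}$.
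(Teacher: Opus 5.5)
\textbf{Step 2 is false under the stated hypothesis, and Step 3 inherits the error.}

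The identity $[x,y]=\{xc' : c'\le c\}$ needs $\ell(xc')=\ell(x)+\ell(c')$ for every subword $c'$ of $c$. Equivalently, $x$ must have no right descent in $J=\{s_1,\dots,s_k\}$. Nothing in the hypothesis forces this.

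Take $n=3$, $u=e$, $v=s_2s_1$, so $uv^{-1}=s_1s_2$. Then $x=s_1$, $y=w_0$, $c=s_2s_1$, and
$[x,y]=\{s_1,s_1s_2,s_2s_1,w_0\}$, while $\{xc'\}=\{s_1,s_1s_2,e,w_0\}$.
\begin{itemize}
\item The point $xs_1=e$ is not in the interval, so one of your two edges at $x$ does not exist.
\item The element $s_2s_1\in[x,y]$ is not of the form $xc'$. It is the subword of $(s_1,s_2,s_1)$ obtained by deleting the letter coming from the reduced word for $x$. Your appeal to the subword property overlooks exactly this possibility.
\end{itemize}

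Passing to the twisted side does not rescue the argument. Take $u=s_3s_2=1423$ and $v=4312$ in $S_4$. One checks $uv^{-1}=s_1s_2s_3$ and $u\le v$. But $x=s_1$ has the right descent $s_1$, and $u$ has the left descent $s_3$. Under $w\mapsto w_0w^{-1}$ the top-end descriptions fail as well, so neither $[x,y]$ nor $[u,v]$ is a one-sided translate of $[e,c]$ from either end. The interval is nevertheless Boolean: its atoms are $s_1s_2,s_2s_1,s_1s_3$ and its coatoms are $s_1s_3s_2,s_1s_2s_1,s_3s_2s_1$.

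\textbf{What your argument actually proves.} It covers the special case where $x$ is the minimal-length element of its coset $xW_J$, with $W_J=\langle s_1,\dots,s_k\rangle$. There, $z\mapsto x^{-1}z$ identifies $[x,xc]$ with $[e,c]$. Even in this case, the direction ``every element of $[x,y]$ has this form'' needs the fact that the parabolic projection $z\mapsto z^J$ is order-preserving.

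\textbf{What the general case needs.} You need a genuinely different argument that $[x,xc]$ is Boolean and $P_{x,xc}$ is $k$-dimensional even when $x$ has descents in $J$. That is exactly the content of \cite[Theorem 7.1]{LMP21}, which the paper cites without giving an argument of its own. Once Boolean and toric are established, your last step is fine, for example via \cref{thm:smooth-toric}.

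\textbf{Two smaller points.}
\begin{itemize}
\item Step 1 is true, but the natural proof uses reflection length rather than lifting. A saturated chain from $x$ to $xc$ writes $c$ as a product of $\ell(xc)-\ell(x)$ transpositions. Since $c$ is a $(k+1)$-cycle, it needs at least $k$ of them.
\item In $P_{x,y}$ the displacement from $x$ to $xs_j$ is a multiple of $p_j-p_{j+1}$, not of $p_{x(j)}-p_{x(j+1)}$. This does not affect the independence argument.
\end{itemize}
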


\begin{cor}\label{Loday mink sum}
    Let $\sigma_1=s_1s_2\dots s_{n-1}$ and $\sigma_i=s_1s_2\dots s_{n-i+1}$ for $i\geq 2.$  Let $\alpha_i=\sigma_2\sigma_3\cdots\sigma_i.$ 
    Then, up to translation and multiplication by $-1$, the Loday associahedron has the following Minkowski decomposition into twisted Bruhat interval polytopes, each of which is combinatorially a cube:
    \[\sum \tilde{P}_{\alpha_i,\alpha_{i+1}} \]
\end{cor}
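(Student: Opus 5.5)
The plan is to specialize \cref{thm:Minkowski}, specifically \cref{eq:pieces}, to the word $Q=(\sigma_1,\sigma_2,\dots,\sigma_n)$ of \cref{prop:is Loday}. By that proposition, $\BB(Q)$ agrees with the Loday associahedron up to translation and negation. \cref{eq:pieces} then gives
\[\BB(Q)=\sum_{i=1}^{n}\tilde{P}_{w_0\dem(\sigma_{i+1},\dots,\sigma_n)^{-1},\,\dem(\sigma_1,\dots,\sigma_i)}.\]
It therefore remains to identify the two Demazure products with explicit products of the $\sigma_j$. After that, we check that each summand is a cube.

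For the Demazure products I would invoke \cref{lem:easydem} with $C=(s_1,\dots,s_{n-1})$. In that case $a_i=s_1\cdots s_{n-i}=\sigma_{i+1}$, so the word $(a_1,a_1,a_2,\dots,a_{n-1})$ is exactly $(\sigma_1,\sigma_2,\dots,\sigma_n)$. The lemma gives $\dem(a_1,a_1,a_2,\dots,a_{i-1})=a_1\cdots a_i$. Translated into our notation this says $\dem(\sigma_1,\dots,\sigma_i)=\sigma_2\cdots\sigma_{i+1}=\alpha_{i+1}$ for $i\geq1$.

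For the lower index, the tail $(\sigma_{i+1},\dots,\sigma_n)$ equals $(a_i,\dots,a_{n-1})$ when $i\geq1$. The proof of \cref{lem:easydem} shows that this word is reduced, so its Demazure product is $a_i\cdots a_{n-1}$. Hence
\[w_0\dem(a_i,\dots,a_{n-1})^{-1}=a_1\cdots a_{i-1}=\alpha_i,\]
using $w_0=a_1\cdots a_{n-1}$. Thus the $i$th summand is $\tilde{P}_{\alpha_i,\alpha_{i+1}}$, with $\alpha_1=\id$. The edge case $i=n$ gives $\tilde P_{\alpha_n,w_0}$ with $\alpha_{n+1}=w_0$, which is a point.

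For the cube claim I would use the stated special case of \cite{LMP21} Theorem 7.1. Here $\alpha_i^{-1}\alpha_{i+1}=\sigma_{i+1}=s_1s_2\cdots s_{n-i}$, which has the required form $s_1\cdots s_k$. The main obstacle I expect is matching conventions. The lemma is phrased with $uv^{-1}$, whereas our interval has $u^{-1}v=s_1\cdots s_k$. I would resolve this using the identity $\tilde{P}_{u,v}=P_{w_0v^{-1},w_0u^{-1}}$ and the fact that toricity and combinatorial type are preserved under $u,v\mapsto u^{-1},v^{-1}$ \cite{LMP21}. This converts our interval into one to which the cited lemma applies directly; if needed, one could conjugate by $w_0$, which maps $s_1\cdots s_k$ to $s_{n-1}\cdots s_{n-k}$, an analogous Coxeter-type element.

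The remaining care is bookkeeping the index shift between $\alpha_{i}$ as defined in the corollary and in \cref{lem:easydem}. Degenerate point summands only contribute translations and can be dropped from the sum.
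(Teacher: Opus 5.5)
Your proposal is correct and follows the paper's proof: specialize \cref{eq:pieces} of \cref{thm:Minkowski} to $Q=(\sigma_1,\dots,\sigma_n)$, use \cref{lem:easydem} to identify the endpoints as $\alpha_i$ and $\alpha_{i+1}$, and invoke the special case of \cite{LMP21} Theorem 7.1 for the cube property. The paper leaves one hypothesis check implicit, and your inversion step makes it explicit: it replaces $\tilde{P}_{\alpha_i,\alpha_{i+1}}$ by the combinatorially equivalent $\tilde{P}_{\alpha_i^{-1},\alpha_{i+1}^{-1}}$, for which $uv^{-1}=\alpha_i^{-1}\alpha_{i+1}=\sigma_{i+1}=s_1\cdots s_{n-i}$ holds literally.
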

\begin{proof}
Recall from \cref{prop:is Loday} that $\BB((\sigma_1,\dots, \sigma_n))$ is a translation of $-1$ times the Loday associahedron. We are thus reduced to showing that $\BB((\sigma_1,\dots, \sigma_n)) = \sum \tilde{P}_{\alpha_i,\alpha_{i+1}}.$

\cref{thm:Minkowski} thus tells us that \[\BB((\sigma_1,\dots, \sigma_n)) = \sum_{i=1}^{n} \tilde{P}_{w_0\dem(\sigma_{i+1},\dots, \sigma_n)^{-1},\dem(\sigma_1,\dots, \sigma_i)}.\]
Recall that for $c=s_1s_2\cdots s_{n-1}$ and $C$ a reduced word for $c$, $\sigma_{i+1}$ is the product of the simple reflections in $w_0(C)$ that come from the $i$th copy of $C$ in $C^{\infty}$. Thus,  \cref{lem:easydem} implies that we have $w_0\dem(\sigma_{i+1},\dots, \sigma_n)^{-1} = \sigma_{2}\sigma_3\cdots \sigma_{i} = \alpha_{i}$ and $\dem(\sigma_1,\dots, \sigma_i) = \sigma_{2}\sigma_{3}\cdots \sigma_{i+1}=\alpha_{i+1}.$ Hence \[\BB((\sigma_1,\dots, \sigma_n)) = \sum \tilde{P}_{\alpha_i,\alpha_{i+1}}.\qedhere\]

A special case of \cite{LMP21}[Theorem 7.1] is that if $uv^{-1}=s_1s_2\cdots s_{k}$ then $\tilde{P}_{u,v}$ is combinatorially a cube. Thus, we have an expression for Loday's associahedron as a Minkowski sum of twisted Bruhat interval polytopes, each of which is combinatorially a cube.
\end{proof}

Twisted Bruhat interval polytopes which are combinatorially cubes are especially interesting because they correspond to smooth toric Richardson varieties.

\begin{thm}(\cite[Theorem 5.7]{LMP21})\label{thm:smooth-toric}
    Let $X_u^v$ be a Richardson variety in $\Fl_n$. The following are equivalent:
    \begin{enumerate}
        \item The variety $X_u^v$ is smooth and toric.
        \item The moment polytope $\tilde{P}_{u,v}$ of $X_u^v$ is combinatorially equivalent to a cube.
        \item The variety $X_u^v$ is toric and the Bruhat interval $[u,v]$ is a Boolean lattice.
    \end{enumerate}
\end{thm}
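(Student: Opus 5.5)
The plan is to prove the cycle of implications $(1)\Rightarrow(2)\Rightarrow(3)\Rightarrow(1)$. Throughout I use two facts from earlier in the paper: $\tilde{P}_{u,v}$ is the moment polytope of $X_u^v$ (\cref{lem:Richardson sum}), and $X_u^v$ is toric exactly when $\dim\tilde{P}_{u,v}=\ell(v)-\ell(u)$.

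For $(1)\Rightarrow(2)$, assume $X_u^v$ is smooth and toric, of dimension $d=\ell(v)-\ell(u)$. The $T$-fixed points are the $w\in[u,v]$, and each is a vertex of $\tilde{P}_{u,v}$. At a smooth fixed point the tangent space has $d$ distinct weights, so exactly $d$ $T$-curves pass through it. Hence the moment polytope is simple, with $d$ edges at each vertex. The edges correspond to $T$-invariant curves, which in a Richardson variety are the curves joining $w$ and $wt$ for transpositions $t$ with $w,wt\in[u,v]$. Since $u$ is the unique minimum, some generic linear functional on $\tilde{P}_{u,v}$ is minimized exactly at $u$. By smoothness all $d$ edge directions at $u$ lead to elements covering $u$, and every element of $[u,v]$ is reached by increasing edge paths. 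A shelling/$h$-vector count then gives $h_i(\tilde{P}_{u,v})=\#\{w:\ell(w)-\ell(u)=i\}$. For a toric Richardson variety, Bruhat intervals have $\binom{d}{i}$ elements in rank $i$ once Poincaré duality holds, which smoothness supplies. A simple $d$-polytope with $h$-vector $\binom{d}{i}$ and $2^d$ vertices is a combinatorial cube.

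For $(2)\Rightarrow(3)$, a cube has dimension $d'$ and $2^{d'}$ vertices, and the vertices are exactly the points $w\in[u,v]$, which are distinct since $w\mapsto w^{-1}(\omega)$ is injective. Toricness requires $d'=\ell(v)-\ell(u)$. For this I would invoke the lower bound $\#[u,v]\ge 2^{\ell(v)-\ell(u)}$ together with $\dim\tilde{P}_{u,v}\le\ell(v)-\ell(u)$. If $d'<d$, the count $2^{d'}\ge 2^{d}$ fails. So $d'=d$ and $X_u^v$ is toric. Next, the edges of $\tilde{P}_{u,v}$ are Bruhat edges $w\to wt$, and the face lattice of the cube lets us read off the poset. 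Orienting the edges by the generic functional that recovers Bruhat order gives the Boolean lattice on its vertices. I would have to argue that cube edges coincide with cover relations, which follows by comparing lengths along paths of length $d$ from $u$ to $v$.

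For $(3)\Rightarrow(1)$, assume $X_u^v$ is toric and $[u,v]$ is Boolean of rank $d$. Smoothness is local at fixed points $w$. The number of $T$-curves through $w$ equals the number of Bruhat edges at $w$ inside $[u,v]$, and in a Boolean interval every element is incident to exactly $d$ edges. I expect this to require Dyer's result that in a Boolean interval all Bruhat edges are covers. By Carrell–Peterson-type criteria for Richardson varieties, having exactly $\dim X_u^v$ curves at every fixed point, with the interval rationally smooth, implies smoothness. For toric Richardson varieties the graph being regular also forces the polytope to be simple and the variety to be smooth.

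The main obstacle is the passage from rational smoothness to smoothness in $(3)\Rightarrow(1)$, and the identification of polytope edges with Bruhat cover relations. Since this is \cite[Theorem 5.7]{LMP21}, in the paper I would ultimately cite it rather than reprove these geometric inputs.
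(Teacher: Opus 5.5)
The paper does not prove this statement; it simply quotes \cite[Theorem 5.7]{LMP21}. So your closing decision to cite it matches the paper. The sketch you give in support would not work as a proof, however, because several of its steps are false rather than merely unproved.

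\begin{itemize}
\item In $(2)\Rightarrow(3)$, the bound $\#[u,v]\ge 2^{\ell(v)-\ell(u)}$ fails already for $[\id,w_0]$ in $S_3$, which has length $3$ but only $6<8$ elements. Your derivation of toricness therefore collapses, and with it the ``paths of length $d$'' argument for cube edges being covers, since that argument presupposes $d'=d$.
\item In $(1)\Rightarrow(2)$, even granting $h_i(\tilde P_{u,v})=\#\{w:\ell(w)-\ell(u)=i\}$, Poincar\'e duality only gives $h_i=h_{d-i}$, not $h_i=\binom{d}{i}$. For instance, a smooth toric surface over a hexagon has $h=(1,4,1)$. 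Your final criterion is also false. Truncating one vertex of a triangular prism gives a simple $3$-polytope with $8$ vertices, $6$ facets and $h=(1,3,3,1)$, yet it has triangular and pentagonal faces, so it is not a cube.
\item In $(3)\Rightarrow(1)$, regularity of the Bruhat graph gives a simple polytope. For a normal toric variety this yields only quotient singularities. Simple does not imply smooth: the weighted projective plane $\mathbb{P}(1,1,2)$ has a triangle as its polytope and is singular.
\end{itemize}

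The ideas that actually drive the theorem are as follows.

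\emph{The missing combinatorial input.} Every face of $\tilde P_{u,v}$ equals $\tilde P_{x,y}$ for a subinterval $[x,y]\subseteq[u,v]$ \cite{TW15}. Hence every edge is a two-element interval, i.e.\ a cover, and so changes $\ell$ by exactly one. This gives $(2)\Rightarrow(3)$ directly.
\begin{itemize}
\item Toricness. If $\tilde P_{u,v}$ is a $d'$-cube, its graph has diameter $d'$. So there is an edge path from $u$ to $v$ of length at most $d'$, and it must change $\ell$ by $\ell(v)-\ell(u)$. This forces $\ell(v)-\ell(u)\le d'\le \dim X_u^v=\ell(v)-\ell(u)$, which is toricness.
\item Boolean. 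Every face through the vertex $u$ has the form $\tilde P_{u,y}$. The map $\tilde P_{u,y}\mapsto y$ is an order embedding of the $2^{d'}$ faces through $u$ into $[u,v]$, which also has $2^{d'}$ elements. So the map is an order isomorphism and $[u,v]$ is Boolean.
\end{itemize}

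\emph{The toric directions.} Use the theorem of \cite{LMP21}, already invoked in the proof of \cref{thm:poset-isos}: for toric $X_u^v$, the face poset of $\tilde P_{u,v}$ is the poset of subintervals of $[u,v]$. Under this correspondence the faces through $u$ correspond to $[u,v]$. A simple vertex at $u$ therefore forces $[u,v]$ to be Boolean, and a Boolean interval has the subinterval poset of a cube. This handles $(1)\Rightarrow(2)$ and $(3)\Rightarrow(2)$.

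\emph{What remains.} The only step still open is ``simple $\Rightarrow$ smooth'' for toric Richardson varieties, which needs a genuinely type-A input your sketch does not supply. One possible route: simple gives rational smoothness; the Knutson--Woo--Yong local product structure of Richardson varieties then reduces to Schubert varieties; and rationally smooth Schubert varieties in type A are smooth.
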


In light of this, the proof of \cref{Loday mink sum} tells us that the toric variety of Loday's associahedron is a subvariety of a product of smooth toric Richardson varieties.

\subsection{The brick resolution}\label{section:fibres}

    In \cite[\S 4.4]{E16}, Escobar showed that brick varieties with respect to certain words in simple transpositions are resolutions of singularities of Richardson varieties. In this section, we will give a pattern-avoidance criterion for when a brick resolution is isomorphic to the Richardson variety whose singularities it resolves (\cref{thm:patterns}). This result generalizes the result of Tenner \cite{T07}, who gives a pattern-avoidance criterion for when a Bott-Samelson resolution is isomorphic to the opposite Schubert variety whose singularities it resolves.

    \subsubsection{Algebraic geometry results}

    We will record three standard results from algebraic geometry that will be used to prove \cref{prop:fibres-iso}.
    \begin{lem}(\cite[Chapter II, \S 3]{H77})\label{lem:fibres-projective}
        If $X$ and $Y$ are projective varieties and $f\colon X\to Y$ is a morphism, then the fibres of $f$ are projective subschemes of $X$. 
    \end{lem}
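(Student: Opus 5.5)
The plan is to reduce to the affine case and then use the fact that a projective variety admits no nonconstant regular functions. Since $X$ is projective, it is covered by affine opens. Take a fibre $f^{-1}(y)$ with $y\in Y$, and give it the scheme structure $X\times_Y \operatorname{Spec}\kappa(y)$. The key step is to show that $f^{-1}(y)$ is a closed subscheme of $X$. Once that is done, projectivity follows at once, because a closed subscheme of a projective scheme is projective: we compose the closed immersion $f^{-1}(y)\hookrightarrow X$ with the closed immersion $X\hookrightarrow \P^N$.

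To see that the fibre is closed, I will argue as follows.

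\begin{enumerate}
\item Points of the variety $Y$ (closed points, in the variety setting) are closed, so $\{y\}\hookrightarrow Y$ is a closed immersion.
\item Closed immersions are stable under base change (\cite[Chapter II, Exercise 3.11]{H77}), so $X\times_Y\{y\}\to X$ is a closed immersion.
\item This base change is exactly the scheme-theoretic fibre.
\end{enumerate}

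Alternatively, I could argue through properness. The projective morphism $X\to \operatorname{Spec}\C$ factors through $f$, and $Y$ is separated, so $f$ is projective (\cite[Chapter II, \S 4]{H77}). Projective morphisms are stable under base change, so the fibre is projective over $\kappa(y)=\C$.

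There is no real obstacle here. The only care needed is in the precise sense of ``projective subscheme'': the fibre need not be reduced, so I will phrase the conclusion scheme-theoretically, as a closed subscheme of $X\subseteq \P^N$. I will then note that its underlying reduced variety is a projective variety, which is the form in which the result will be used in \cref{prop:fibres-iso}.
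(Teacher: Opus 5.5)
Your argument is correct and is essentially the standard Hartshorne II.3 reasoning that the paper cites in place of a proof. The fibre over a closed point is the base change of the closed immersion $\operatorname{Spec}\kappa(y)\hookrightarrow Y$, hence a closed subscheme of $X\subseteq\mathbb{P}^N$; your projective-morphism alternative is also valid. Two small cleanups: delete the unused opening ``plan'' about reducing to the affine case and nonconstant regular functions, and note that the reduced fibre may be reducible. It is therefore a projective algebraic set rather than a variety in the irreducible sense, which is all that \cref{prop:fibres-iso} needs.
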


    \begin{thm}(Zariski's connectedness theorem (cf. \cite[Chapter III, Corollary 11.4]{H77}))\label{thm:ZMT}
        Suppose $f\colon X\to Y$ is a proper, birational morphism of algebraic varieties, and that $Y$ is normal. Then the fibres of $f$ are connected.
    \end{thm}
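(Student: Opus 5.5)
The plan is to follow the classical route through the condition $f_*\mathcal{O}_X=\mathcal{O}_Y$ and the theorem on formal functions, as in \cite[Chapter III, \S 11]{H77}. The argument has two stages. First we show that the hypotheses (proper, birational, $Y$ normal) force $f_*\mathcal{O}_X=\mathcal{O}_Y$. Second we show that any proper morphism with $f_*\mathcal{O}_X=\mathcal{O}_Y$ has connected fibres.

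For the first stage, the step is local on $Y$, so fix an affine open $V=\operatorname{Spec}A\subseteq Y$.
\begin{enumerate}
\item Since $f$ is proper, $f_*\mathcal{O}_X$ is a coherent $\mathcal{O}_Y$-module, so $B:=\Gamma(f^{-1}V,\mathcal{O}_X)$ is a finitely generated $A$-module. In particular every element of $B$ is integral over $A$.
\item Since $X$ is an integral variety, $B$ embeds in the function field $K(X)$.
\item Since $f$ is birational, $K(X)=K(Y)=\operatorname{Frac}(A)$.
\end{enumerate}
Thus $A\subseteq B\subseteq \operatorname{Frac}(A)$ with $B$ integral over $A$. Normality of $Y$ says $A$ is integrally closed in its fraction field, so $B=A$, which gives $f_*\mathcal{O}_X=\mathcal{O}_Y$.

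For the second stage, fix $y\in Y$ and let $X_y$ be the fibre. Suppose for contradiction that $X_y=Z_1\sqcup Z_2$ with $Z_1,Z_2$ nonempty, closed and disjoint.
\begin{enumerate}
\item Let $\mathfrak m_y$ be the maximal ideal of $\mathcal{O}_{Y,y}$, and let $X_n:=X\times_Y\operatorname{Spec}(\mathcal{O}_{Y,y}/\mathfrak m_y^{n})$ be the $n$th infinitesimal thickening of the fibre.
\item The theorem on formal functions \cite[Chapter III, Theorem 11.1]{H77} gives
\[\widehat{(f_*\mathcal{O}_X)_y}\cong\varprojlim_n H^0(X_n,\mathcal{O}_{X_n}).\]
\item Each $X_n$ has the same underlying topological space as $X_y$, so the decomposition $Z_1\sqcup Z_2$ splits $H^0(X_n,\mathcal{O}_{X_n})$ as a product of two nonzero rings. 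These splittings are compatible as $n$ varies, so the inverse limit has a nontrivial idempotent.
\item On the other hand, by the first stage the left side is $\widehat{\mathcal{O}_{Y,y}}$, the completion of a local ring. This is again a local ring, and a local ring has no nontrivial idempotents. This contradiction shows $X_y$ is connected.
\end{enumerate}

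The main obstacle is the theorem on formal functions itself. Its proof requires comparing the cohomology of the thickenings with the completion of the higher direct images, using Artin--Rees and the coherence of $R^if_*$ for proper maps. I would not reprove it but cite it as a black box from \cite{H77}. With that input, the remaining steps are the short algebraic arguments above: the integral-closure argument in the first stage and the idempotent argument in the second.
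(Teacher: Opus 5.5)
Your proposal is correct and is the standard argument behind the cited result. Your first stage (coherence of $f_*\mathcal{O}_X$ plus integral closedness of $A$ forcing $f_*\mathcal{O}_X=\mathcal{O}_Y$) is Hartshorne's proof of \cite[Chapter III, Corollary 11.4]{H77}, and your second stage (formal functions and the absence of nontrivial idempotents in $\widehat{\mathcal{O}_{Y,y}}$) is his proof of Corollary 11.3; the paper itself gives no proof beyond this citation.

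The only caveat is bibliographic. Hartshorne proves both the coherence of $f_*\mathcal{O}_X$ and the theorem on formal functions only for projective morphisms, so for a general proper $f$ you should cite EGA III, as his Remark III.11.1.1 indicates. In the paper's one application, in the proof of \cref{prop:fibres-iso}, both $X$ and $Y$ are projective, so $f$ is a projective morphism and Hartshorne's versions suffice.
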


    \begin{thm}(Upper-semicontinuity of fibre dimension (\cite[Chapter III, \S 12]{H77}))\label{thm:upper-sc}
        Assume that $f\colon X\to Y$ is a proper morphism of varieties.
        Define a function 
			\[d\colon Y\to \mbb{N},\quad y\mapsto \mrm{dim}_{\mbb{C}}(f^{-1}(y)),\]
            where we view the fibre $f^{-1}(y)$ as a closed subscheme of $X$ in the natural way. Then $d$ is upper-semicontinuous on $Y$.
    \end{thm}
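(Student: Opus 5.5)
The plan is to deduce the statement from Chevalley's semicontinuity theorem on the source, and then use properness to push closed sets down to $Y$. Concretely, define $e\colon X\to\mbb{N}$ by $e(x)=\dim_x f^{-1}(f(x))$, the local dimension at $x$ of the fibre through $x$. For every $y\in Y$ we have $d(y)=\max_{x\in f^{-1}(y)}e(x)$ (with $d(y)=-\infty$ or $0$ by convention on empty fibres, which lie outside the closed set $f(X)$). Hence for each $k$,
\[\{y\in Y: d(y)\geq k\}=f\bigl(\{x\in X: e(x)\geq k\}\bigr).\]
If $\{e\geq k\}$ is closed in $X$, then its image is closed in $Y$ because a proper morphism is closed. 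This is exactly upper-semicontinuity of $d$. So everything reduces to showing that $e$ is upper-semicontinuous on $X$ for an arbitrary morphism of finite type between varieties.

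For the upper-semicontinuity of $e$, I would first reduce to the case where $X$ and $Y$ are affine and $X$ is irreducible. The question is local on $X$, and $\dim_x$ of a fibre is the maximum over the irreducible components of $X$ through $x$ of the corresponding local fibre dimensions. I may also replace $Y$ by $\overline{f(X)}$, so $f$ is dominant. Set $r=\dim X-\dim Y$. The two standard dimension facts I will use are:
\begin{enumerate}
\item every component of every nonempty fibre has dimension at least $r$, proved by cutting $y$ out by $\dim Y$ equations locally;
\item there is a dense open $U\subseteq Y$ over which every nonempty fibre has pure dimension exactly $r$, proved via generic freeness or Noether normalization of $\C(Y)\otimes\C[X]$ over $\C(Y)$.
\end{enumerate}
Then I argue by induction on $\dim X$. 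For $k\leq r$ the set $\{e\geq k\}$ is all of $X$ by fact (1). For $k>r$ it is contained in the proper closed subset $Z=f^{-1}(Y\setminus U)$. On $Z$, each point's fibre in $X$ coincides with its fibre for $f|_Z$, so the local fibre dimensions agree, and the induction hypothesis applied to the components of $Z$ (which have smaller dimension) shows that $\{e\geq k\}$ is closed in $Z$, hence in $X$.

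I expect the main obstacle to be the bookkeeping in that induction step. The identity $\dim_x f^{-1}(f(x))=\dim_x (f|_Z)^{-1}(f(x))$ for $x\in Z$ is immediate, since $Z$ is a union of fibres, but the local dimension on $Z$ must be computed across all components of $Z$ through $x$, not one at a time. Fact (2) is the one genuinely non-formal input; I would cite generic flatness (or generic freeness) rather than reprove it. Since the statement is quoted from Hartshorne, in the paper it is enough to cite it. The proposal above simply records how the cited result reduces to Chevalley's theorem together with the closedness of proper maps.
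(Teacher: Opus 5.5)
Your argument is correct, and it does more than the paper does. The paper gives no proof of this statement. It only cites Hartshorne, Chapter III, \S 12, which is the cohomological semicontinuity theorem: upper semicontinuity of $y\mapsto \dim H^i(X_y,\mathcal{F}_y)$ for a coherent sheaf flat over the base of a projective morphism. That is a different result, stated for flat families, and is not what is needed here.

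Your route is the standard justification of the statement:
\begin{enumerate}
\item Chevalley's upper semicontinuity of $e(x)=\dim_x f^{-1}(f(x))$ on the source, proved by the induction on $\dim X$ suggested in Hartshorne's Exercise II.3.22(d) (equivalently EGA IV, 13.1.3).
\item The identity $\{d\geq k\}=f(\{e\geq k\})$.
\item Closedness of proper maps (EGA IV, 13.1.5).
\end{enumerate}
This makes clear that properness is used only to push closed sets down to $Y$. That target version is exactly what Proposition~\ref{prop:fibres-iso} needs.

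The bookkeeping you flag is handled by running the induction over possibly reducible sources of dimension less than $\dim X$. You then reduce to irreducible sources via your componentwise maximum formula.

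Taking the open set $U$ inside $Y$, rather than inside $X$ as in Hartshorne's part (c), is what makes $Z=f^{-1}(Y\setminus U)$ a union of fibres. That is why $e$ and $e_Z$ agree on $Z$.

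Your fact (2) in this form does hold. One way is generic flatness together with the dimension formula for flat morphisms. Another is spreading out a Noether normalization of the generic fibre and combining it with fact (1).

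Your handling of empty fibres also correctly disposes of the harmless imprecision in the paper's choice of codomain $\mathbb{N}$.
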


\subsubsection{Algebraic varieties with torus actions}
    We will record and prove standard results concerning algebraic varieties with algebraic torus actions, which will be used in the proof of \cref{prop:fibres-iso}.
        
    \begin{prop}\label{prop:one-parameter-subgroup}(\cite[page 80]{K82})
        Let $X$ be an algebraic variety with the algebraic action of a torus $T$, such that $X$ has finitely many $T$-fixed points. There exists a regular element $\varrho\in \mrm{Hom}_\mbb{Z}(\mbb{C}^\times,T)$ such that action of $\varrho(\mbb{C}^\times)$ on $X$ has the same set of fixed points as the action of $T$ on $X$.
    \end{prop}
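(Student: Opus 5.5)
The plan is to reduce to the case of a linear action on a projective space, where the claim follows from choosing a cocharacter that avoids finitely many hyperplanes. The fixed point hypothesis is then needed only to make sense of the conclusion; the content is that $\varrho(\mbb{C}^\times)$ has no extra fixed points. First, for any $\varrho\in\mrm{Hom}_\mbb{Z}(\mbb{C}^\times,T)$ we have $X^T\subseteq X^{\varrho(\mbb{C}^\times)}$, so only the reverse inclusion needs proof.

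In the setting used in this paper, $X$ is a $T$-invariant subvariety of some $\mbb{P}^N$ with a diagonal action $t\cdot(x_1:\dots:x_{N+1})=(t^{\alpha_1}x_1:\dots:t^{\alpha_{N+1}}x_{N+1})$, as in \cref{lim optimizes functional}. Write $T=(\mbb{C}^\times)^k$. A cocharacter $\varrho(s)=(s^{\beta_1},\dots,s^{\beta_k})$ acts by $s\cdot x=(s^{\alpha_1\cdot\beta}x_1:\dots:s^{\alpha_{N+1}\cdot\beta}x_{N+1})$. Thus a point $x$ is $\varrho$-fixed if and only if $\alpha_i\cdot\beta=\alpha_j\cdot\beta$ whenever $x_i,x_j\neq 0$. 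Likewise, $x$ is $T$-fixed if and only if $\alpha_i=\alpha_j$ whenever $x_i,x_j\neq0$. I will call $\beta$ (equivalently $\varrho$) \emph{regular} if $\alpha_i\cdot\beta\neq\alpha_j\cdot\beta$ for every pair with $\alpha_i\neq\alpha_j$. This is exactly the genericity hypothesis of \cref{lim optimizes functional}. The excluded $\beta$ lie on the finitely many hyperplanes $(\alpha_i-\alpha_j)^\perp$ in $\mbb{R}^k$, and a finite union of hyperplanes cannot contain all of $\mbb{Z}^k$. So a regular $\beta\in\mbb{Z}^k$ exists. For such $\beta$ we get $(\mbb{P}^N)^{\varrho(\mbb{C}^\times)}=(\mbb{P}^N)^T$. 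Intersecting with the invariant subvariety $X$ gives $X^{\varrho(\mbb{C}^\times)}=X^T$.

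For a general (not necessarily projective) $X$, I would argue through isotropy groups. A point $x$ is $\varrho$-fixed exactly when $\varrho(\mbb{C}^\times)\subseteq\mrm{St}_T(x)$. The key claim is that only finitely many subgroups $\mrm{St}_T(x)$ occur as $x$ ranges over $X$. Granting this, each non-fixed $x$ has a proper closed subgroup $\mrm{St}_T(x)\subsetneq T$. Its cocharacters then lie in a proper sublattice contained in a hyperplane of $\mrm{Hom}(\mbb{C}^\times,T)\otimes\mbb{R}$. Choosing $\varrho$ outside these finitely many hyperplanes gives the result.

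The main obstacle is this finiteness of stabilizers. I would first try to cover $X$ by finitely many $T$-stable affine opens and embed each equivariantly in a finite-dimensional representation of $T$. On such a representation the stabilizer of a vector is determined by its set of nonzero weight coordinates, so only finitely many stabilizers occur. The cover exists by Sumihiro's theorem in the normal case. In general one can pass to the normalization, since the normalization map is finite and $T$-equivariant. For the purposes of this paper the projective argument above already suffices.
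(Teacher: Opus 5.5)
The paper gives no proof of this proposition. It is quoted from \cite[page 80]{K82} and used only inside the proof of \cref{prop:fibres-iso}. Your argument is correct and self-contained, so it supplies a proof the paper omits.

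In the projective case, your description of $\varrho$-fixed and $T$-fixed points in terms of the support of $x$ is right. A $\beta\in\mbb{Z}^k$ avoiding the finitely many hyperplanes $(\alpha_i-\alpha_j)^\perp$ exists; for instance $\beta=(1,M,\dots,M^{k-1})$ with $M\gg 0$ works. Your notion of regularity is exactly the genericity hypothesis of \cref{lim optimizes functional}, which is how the paper uses such cocharacters anyway. This elementary version already covers the paper's application, where $Y=X_u^v\subseteq\Fl_n$ is equivariantly embedded (\cref{thm:patterns}). However, an arbitrary projective $T$-variety need not admit a linear equivariant embedding. The nodal cubic with the $\mbb{C}^\times$-action descended from $\mbb{P}^1$ has none. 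So for the statement as written, it is your isotropy-group argument that does the work.

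That argument is sound: a cocharacter landing in $\mrm{St}_T(x)$ lands in its identity component, whose cocharacter lattice has rank $<k$ when $x\notin X^T$. The one step that needs an extra sentence is the passage to the normalization. Let $\nu\colon\tilde X\to X$ be the normalization and choose $\varrho$ regular for $\tilde X$. For $x\in X^{\varrho(\mbb{C}^\times)}$, the finite fibre $\nu^{-1}(x)$ is $\varrho(\mbb{C}^\times)$-stable. Being connected, $\varrho(\mbb{C}^\times)$ fixes this fibre pointwise, so the fibre lies in $\tilde X^{T}$, and hence $x\in X^T$.

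Finally, your proof never uses finiteness of $X^T$. Contrary to your remark, that hypothesis is not needed even to make sense of the conclusion, so you actually prove a slightly stronger statement.
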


    The following lemma follows from \cite[Lemma 2.4.1]{CG10}, noting  \cref{prop:one-parameter-subgroup}.
    
    \begin{lem}\label{lem:closure-orbit}(\cite[Lemma 2.4.1]{CG10})
        Suppose $X$ is a projective variety with the algebraic action of a torus $T$. If $\mcal{O}_x$ is the orbit of $x\in X$ under $T$, then the closure $\overline{\mcal{O}_x}$ of $\mcal{O}_x$ in $X$ contains a $T$-fixed point of $X$.  
    \end{lem}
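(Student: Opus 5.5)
The plan is to derive the lemma from its two ingredients. The first is the Borel fixed point theorem. The second is the fact, recorded in \cref{prop:one-parameter-subgroup}, that a regular one-parameter subgroup $\varrho$ has the same fixed points as $T$. Concretely, fix $x\in X$ and consider the orbit closure $\overline{\mcal{O}_x}$. It is a closed subvariety of the projective variety $X$, hence itself projective. Since $\mcal{O}_x$ is $T$-stable and $T$ acts by homeomorphisms, $\overline{\mcal{O}_x}$ is $T$-stable as well. The torus $T$ is a connected solvable algebraic group, so Borel's fixed point theorem applied to the $T$-action on the nonempty complete variety $\overline{\mcal{O}_x}$ produces a point $y\in\overline{\mcal{O}_x}$ fixed by all of $T$. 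Such a $y$ is automatically a $T$-fixed point of $X$.

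Because the paper points to \cite[Lemma 2.4.1]{CG10} and to \cref{prop:one-parameter-subgroup}, I would also give the more constructive limit argument, which is what gets used later. First, I would restrict attention to the case where $X^T$ is finite, as in \cref{prop:one-parameter-subgroup}, and choose a regular $\varrho\in\mrm{Hom}_{\mbb{Z}}(\mbb{C}^\times,T)$ with $X^{\varrho(\mbb{C}^\times)}=X^T$. Next, I would look at the orbit map $\mbb{C}^\times\to X$, $s\mapsto\varrho(s)\cdot x$. Since $X$ is projective, the valuative criterion of properness (or an explicit computation in projective coordinates, as in \cref{lim optimizes functional}) shows that this map extends to $\mbb{P}^1$. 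This extension defines the limit $y=\lim_{s\to 0}\varrho(s)\cdot x$, and $y$ lies in $\overline{\varrho(\mbb{C}^\times)\cdot x}\subseteq\overline{\mcal{O}_x}$.

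It remains to check that $y$ is fixed by $\varrho(\mbb{C}^\times)$. For $t\in\mbb{C}^\times$ we have $\varrho(t)\cdot y=\lim_{s\to0}\varrho(ts)\cdot x=y$. So $y$ is $\varrho$-fixed, and by the choice of $\varrho$ it is $T$-fixed.

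I expect the main obstacle to be bookkeeping rather than anything substantial: making sure the limit exists in $X$, and not just in the ambient projective space. This is handled by the closedness of $X$ in $\mbb{P}^N$. Explicitly, the limit is computed coordinatewise as in \cref{lim optimizes functional}, and since $X$ is closed the limit lies in $X$. The Borel-theorem route avoids both this step and the finiteness hypothesis on $X^T$, so I would present it as the main proof and the limit argument as a remark.
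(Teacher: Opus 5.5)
Your argument is correct, but your main route is not the one the paper takes. The paper obtains the lemma from \cite[Lemma 2.4.1]{CG10} together with \cref{prop:one-parameter-subgroup}. It passes to a regular one-parameter subgroup $\varrho$ with the same fixed points as $T$, and then uses a fixed point in the closure of the $\varrho(\mbb{C}^\times)$-orbit. That is essentially the limit argument you relegate to a remark.

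Your Borel fixed point argument is shorter and more general; the paper itself uses this tool in \cref{pre- fix of proj}. It needs no hypothesis on the fixed-point set. The paper's route, as written, inherits from \cref{prop:one-parameter-subgroup} the assumption that $X$ has finitely many $T$-fixed points, and that assumption does not appear in the statement of the lemma.

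What the limit argument buys in exchange is an explicit fixed point, $\lim_{s\to 0}\varrho(s)\cdot x$, lying in the closure of the \emph{one-parameter} orbit $\varrho(\mbb{C}^\times)\cdot x$. That is the form actually invoked in the proof of \cref{prop:fibres-iso}, where $\mcal{O}_z$ is a $\varrho(\mbb{C}^\times)$-orbit rather than a $T$-orbit. Borel's theorem applied to the rank-one torus $\varrho(\mbb{C}^\times)$ would cover that use as well, since $\varrho$ is chosen to have the same fixed points as $T$.

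One minor caution about your remark: computing the limit coordinatewise as in \cref{lim optimizes functional} presupposes a $T$-equivariant embedding $X\hookrightarrow\mbb{P}^N$, which the lemma does not supply. The valuative-criterion extension of $s\mapsto\varrho(s)\cdot x$ to $\mbb{P}^1$ needs no such embedding, so that is the version to keep.
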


    We will now prove \cref{prop:fibres-iso}, which is the  main result of this subsection. The statement of this proposition was inspired by, and generalizes, the statement of \cite[Lemma 4.3]{DS19}.
    \begin{prop}\label{prop:fibres-iso}
        Suppose $X$ and $Y$ are projective varieties with algebraic actions of a torus $T$, such that $X$ and $Y$ both have finitely many $T$-fixed points. Let $f\colon X\to Y$ be a birational $T$-equivariant morphism of varieties. Assume $X$ is smooth and $Y$ is normal. Then $f$ is an isomorphism if and only if $f$ is a bijection on $T$-fixed points.
    \end{prop}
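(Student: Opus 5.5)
The forward direction is immediate: an isomorphism is $T$-equivariant in both directions, so it carries $X^T$ bijectively onto $Y^T$. The content is the converse. We assume $f$ is bijective on $T$-fixed points and show that every fibre of $f$ is a single reduced point. We then conclude by Zariski's main theorem, in the form: a proper birational morphism with finite fibres onto a normal variety is an isomorphism. Here $f$ is proper because $X$ is projective; by Zariski's main theorem $f$ is then finite and birational onto a normal target, hence an isomorphism.

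The key step is to show that all fibres are finite. We argue by contradiction. Suppose $y\in Y$ has $\dim f^{-1}(y)>0$. Let $Z=\{y'\in Y:\dim f^{-1}(y')\geq 1\}$. By upper-semicontinuity (\cref{thm:upper-sc}), $Z$ is closed, and by $T$-equivariance it is $T$-stable. Applying \cref{lem:closure-orbit} to the projective $T$-variety $Z$, the closure of $T\cdot y$ contains a $T$-fixed point $y_0\in Z$. So some fixed point $y_0$ has positive-dimensional fibre $F=f^{-1}(y_0)$.

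By \cref{lem:fibres-projective}, $F$ is projective, and it is $T$-stable because $y_0$ is fixed. Every $T$-orbit closure in $F$ contains a fixed point of $X$ by \cref{lem:closure-orbit}. Hence each connected component of $F$ contains a point of $X^T$, and since $f$ is injective on fixed points, $F$ contains exactly one fixed point $x_0$. By Zariski's connectedness theorem (\cref{thm:ZMT}), $F$ is connected. Thus $F$ is a connected projective $T$-stable variety of positive dimension whose only fixed point is $x_0$.

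We now derive a contradiction at $x_0$, and this is the main obstacle. Choose a regular one-parameter subgroup $\varrho$ as in \cref{prop:one-parameter-subgroup}. Since $X$ is smooth at $x_0$, Bialynicki-Birula gives an attracting cell $X^+_{x_0}$ and a repelling cell $X^-_{x_0}$ through $x_0$, and these are affine spaces tangent to the positive and negative weight spaces of $T_{x_0}X$. Every point of $F$ flows to $x_0$ under both $\lim_{s\to0}$ and $\lim_{s\to\infty}$, because the limits lie in $F^T=\{x_0\}$. Hence $F\subseteq X^+_{x_0}\cap X^-_{x_0}$. The intersection of the two cells is $\{x_0\}$: a point attracted to $x_0$ in both directions has a $\C^\times$-orbit closure that is a complete curve mapping into affine space $X^+_{x_0}$, so it is a point. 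Therefore $F=\{x_0\}$, contradicting $\dim F>0$.

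This shows that $f$ is quasi-finite. Since $f$ is proper and quasi-finite, it is finite. A finite birational morphism onto a normal variety is an isomorphism, which completes the converse. The delicate point to double-check is the Bialynicki-Birula step, in particular that smoothness at the single point $x_0$ suffices to obtain affine cells in a neighbourhood. If this causes trouble, we would instead use the global Bialynicki-Birula decomposition of the smooth projective $X$, which is valid because $X$ is smooth everywhere.
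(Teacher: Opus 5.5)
Your argument is correct, and its skeleton is the paper's. In both, upper-semicontinuity of fibre dimension along a torus-orbit closure (which contains a $T$-fixed point by \cref{lem:closure-orbit}) reduces everything to fibres over fixed points of $Y$, and Zariski's main theorem finishes.

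The substantive difference is how you handle the fibres over fixed points. The paper simply asserts that, since $f$ is bijective on $T$-fixed points, the fibre over each fixed point of $Y$ is a single point. Bijectivity alone only shows that such a fibre has exactly one $T$-fixed point. One still has to rule out a positive-dimensional $T$-stable fibre containing a unique fixed point. Your Bialynicki-Birula argument supplies exactly this step: every point of $F$ lies in $X^+_{x_0}\cap X^-_{x_0}$, and a complete orbit closure cannot lie inside the affine cell $X^+_{x_0}$. This is also where smoothness of $X$ genuinely enters. Use the global decomposition of the smooth projective $X$ from the start rather than anything local at $x_0$. In the paper, smoothness appears only in the last step, where birationality and normality of $Y$ already suffice. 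If you wanted to assume only normality of $X$, Sumihiro's theorem gives a linear $T$-equivariant embedding. For a generic one-parameter subgroup, the two limits of a non-fixed point are then distinct $T$-fixed points, which again forces the fibre to be $\{x_0\}$.

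Your ending (proper and quasi-finite implies finite, and finite birational onto a normal variety is an isomorphism) replaces the paper's route through \cref{thm:ZMT} to bijectivity. Correspondingly, your appeal to connectedness of $F$ and your opening claim that fibres are single reduced points are not needed.
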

    \begin{proof}
       One direction is clear: if $f$ is an isomorphism, then $f$ is a bijection on the $T$-fixed points. We will now prove the other direction, and assume that $f$ is a bijection on $T$-fixed points. 
        Let $z$ be a point in $Y$ that is \textit{not} a $T$-fixed point. We will show that the fibre of $f$ over $z$ consists of a single point as well. To do this, we will use \cref{prop:one-parameter-subgroup} to choose a regular element $\varrho$ of $\mrm{Hom}_\mbb{Z}(\mbb{C}^\times,T)$ such that the action of $\varrho(\mbb{C}^\times)$ on $Y$ has the same set of fixed points at the action of $T$ on $Y$. Consider the orbit $\mcal{O}_z:=\{\varrho(t)\cdot z\mid t\in\mbb{C}^\times\}$. As the map $f$ is $T$-equivariant, the fibres of $f$ over any two points in $\mcal{O}_z$ are isomorphic. Define a function
        \[d\colon Y\to\mbb{N},\quad y\mapsto \mrm{dim}_{\mbb{C}}(f^{-1}(y)).\]
        As the fibres of $f$ over any two points in $\mcal{O}_z$ are isomorphic, it follows that the restriction of $d$ to $\mcal{O}_z$ is constant. Thus, we can assume there is $C\in \mbb{N}$ such that $d(z')=C$ for all $z'\in\mcal{O}_z$.
        By \cref{prop:one-parameter-subgroup} and \cref{lem:closure-orbit}, the closure $\overline{\mcal{O}}_z$ of $\mcal{O}_z$ in $X$ contains a $T$-fixed point. As the fibre of $f$ over any $T$-fixed point of $Y$ is a single point, it follows from the upper-semicontinuity of fibre dimension (\cref{thm:upper-sc}) that $C\leq 0$, which ensures $C=0$. As the dimension of the fibre of $f$ over any point of $Y$ is zero-dimensional, $f^{-1}(z)$ is the union of finitely many closed points. By \cref{lem:fibres-projective} and \cref{thm:ZMT}, $f^{-1}(z)$ is connected. As a singleton set is the only type of connected finite union of closed points in the Zariski topology, it follows that $f^{-1}(z)$ consists of a single point. This shows that the fibre of $f$ over any point of $Y$ consists of a single point. In conclusion, the map $f$ is bijective. A bijective morphism of varieties whose source is smooth and whose target is normal is an isomorphism, which completes the argument.
    \end{proof}

    \subsubsection{The Bott-Samelson resolution}

    In certain cases, a Bott-Samelson variety is a resolution of singularities for an opposite Schubert variety. We will state a well-known pattern-avoidance criterion for deciding when a Bott-Samelson resolution is isomorphic to the opposite Schubert variety whose singularities it resolves.

    Let $Q$ be a word in simple transpositions in $S_n$, and set $w(Q):=\mrm{Dem}(Q)$. In this situtation, there is a morphism 
    \[m_Q\colon \mrm{BS}^Q\to X^{w(Q)}, \quad m_Q(F^0_\bullet,F^1_\bullet,\dotsc,F^{|Q|}_\bullet):=F^{|Q|}_\bullet.\] 
    When $Q$ is a reduced word, the morphism $m_Q$ is proper and birational. Therefore, if $Q$ is a reduced word in simple transpositions and $X^{w(Q)}$ is singular, then $\mrm{BS}^Q$ is a resolution of singularities for $X^{w(Q)}$.

    \begin{defn}
        If $Q$ is a reduced word in simple transpositions in $S_n$, then the Bott-Samelson variety $\mrm{BS}^Q$ will also be called the \textbf{Bott-Samelson resolution}.
    \end{defn}

    Write $w\in S_n$ in one-line notation: $w=w_1 w_2\dotsb w_n$. Let $v\in S_k$ be another permutation, with one-line notation $v=v_1 v_2\dotsb v_k$. We say that $w$ \textbf{avoids} the pattern $v$ if $w_{i_1}\dotsb w_{i_k}$ does not appear in the same relative order as $v_1\dotsb v_k$ for any $1\leq  i_1< i_2<\dotsb < i_k\leq n$. The following result is a pattern-avoidance criterion that determines when a Bott-Samelson variety is isomorphic to the opposite Schubert variety whose singularities it resolves.
    
    \begin{thm}(\cite{T07}; cf. \cite[Pattern Avoidance Property 7]{BA16})\label{lem:useful-facts}
		Suppose $Q$ is a reduced word in simple transpositions in $S_n$. The following are equivalent.
		\begin{itemize}
			\item The morphism $m_Q\colon \mrm{BS}^Q\to X^{w(Q)}$ is an isomorphism.
			\item $w(Q)$ avoids the patterns $321$ and $3412$.
		\end{itemize}
	\end{thm}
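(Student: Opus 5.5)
The plan is to deduce the statement from \cref{prop:fibres-iso} together with a combinatorial count of $T$-fixed points. That reduces everything to the known fact that $w$ avoids $321$ and $3412$ exactly when its reduced words have no repeated letters.

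\textbf{Step 1: check the hypotheses of \cref{prop:fibres-iso}.} Let $Q=(s_{i_1},\dotsc,s_{i_\mm})$ be reduced and $w=w(Q)$.
\begin{itemize}
\item $\BS^Q$ is a traditional Bott--Samelson variety, hence smooth and projective.
\item $X^{w}$ is a Schubert variety, hence normal and projective.
\item Both carry the diagonal $T$-action, and $m_Q$ (projection to the last flag) is $T$-equivariant.
\item Because $Q$ is reduced, $m_Q$ is proper and birational, as recalled before the statement.
\item Both spaces have finitely many $T$-fixed points, described in Step 2.
\end{itemize}
So \cref{prop:fibres-iso} applies, and $m_Q$ is an isomorphism if and only if it is a bijection on $T$-fixed points.

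\textbf{Step 2: compute the fixed-point map.} As in the proof of \cref{prop: bulky moment}, without imposing the last flag, the fixed points of $\BS^Q$ are the tuples $(E_\bullet^{\text{st}},u_1E_\bullet^{\text{st}},u_1u_2E_\bullet^{\text{st}},\dotsc)$ with each $u_j\in\{\id,s_{i_j}\}$. There are $2^{\mm}$ of them, indexed by subwords of $Q$. The fixed points of $X^w$ are $\{vE_\bullet^{\text{st}}: v\in[\id,w]\}$. The map $m_Q$ sends a subword to its product $u_1\cdots u_\mm$. By the subword property of Bruhat order this map is surjective onto $[\id,w]$. Hence it is a bijection iff the $2^{\mm}$ subwords of $Q$ have pairwise distinct products, which happens iff $|[\id,w]|=2^{\ell(w)}$.

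\textbf{Step 3: translate into a condition on letters.}
\begin{itemize}
\item Suppose all letters of $Q$ are distinct. A subword product is then a product of distinct simple transpositions in a fixed order. That product is reduced, has support equal to the chosen set of letters, and is determined by that set. So distinct subwords give distinct products.
\item Suppose some letter repeats. The number of distinct letters is $|\mathrm{supp}(w)|<\mm$, and the support is the same for every reduced word. The $\mm$ subwords of length one have products among the simple transpositions in $\mathrm{supp}(w)$. By pigeonhole two of these products coincide, so injectivity fails.
\end{itemize}
Thus $m_Q$ is an isomorphism iff $w$ has a reduced word with no repeated letter.

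\textbf{Step 4: the pattern-avoidance step.} It remains to show that $w$ has a reduced word with distinct letters iff $w$ avoids $321$ and $3412$. I expect this to be the main obstacle; it is a result of Tenner \cite{T07}, and I would cite it. If a self-contained argument is wanted, I would induct on $n$.
\begin{itemize}
\item Forward direction: a $321$ or $3412$ pattern forces some simple transposition to appear twice in every reduced word.
\item Converse: if $w$ avoids both patterns, locate the position of $n$ in $w$. Avoidance forces $w$ to factor as $w'\cdot s_{n-1}s_{n-2}\cdots s_k$ or a similar product, where $w'\in S_{n-1}$ avoids both patterns and does not use the letters appended. Then apply induction to $w'$.
\end{itemize}
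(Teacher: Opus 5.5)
Your proposal is correct, and it takes a genuinely different route from the paper. The paper gives no argument for this statement and simply cites Tenner~\cite{T07} and Abe--Billey~\cite{BA16}. You instead derive the geometric equivalence from the paper's own machinery:
\begin{itemize}
\item \cref{prop:fibres-iso} reduces ``$m_Q$ is an isomorphism'' to bijectivity on $T$-fixed points.
\item The fixed-point map is ``subword $\mapsto$ product'', and it surjects onto $[e,w]$.
\item Your Step~3 correctly shows that injectivity is equivalent to $Q$ having no repeated letter. The pigeonhole count is not even needed: a repeated letter immediately gives two one-letter subwords with the same product.
\end{itemize}

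This is the same strategy the paper uses for brick resolutions in \cref{thm:patterns}, applied directly to $m_Q$. Combined with your argument, the pattern-avoidance criterion there rests only on two kinds of input. The first is Tenner's purely combinatorial theorem: $w$ has repetition-free reduced words (equivalently, $[e,w]$ is Boolean) iff $w$ avoids $321$ and $3412$. The second is standard geometry: normality of Schubert varieties, birationality of $m_Q$ for reduced $Q$, and Zariski's main theorem as packaged in \cref{prop:fibres-iso}.

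The citation route is shorter and independent of \cref{prop:fibres-iso}. Yours exposes the intermediate characterization ``$m_Q$ is an isomorphism iff $Q$ has no repeated letter'', and shows that the patterns enter only through the combinatorics of Bruhat intervals.

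Your inductive sketch of the converse in Step~4 also goes through. Set $p=w^{-1}(n)$ and $w'=ws_ps_{p+1}\cdots s_{n-1}$. Then $321$-avoidance forces the entries to the right of $n$ to increase. A value larger than some $j\ge p$ sitting left of $n$ would then combine with $n$ and two smaller increasing entries after $n$ into a $3412$. Hence $w'$ is supported on $\{s_1,\dots,s_{p-1}\}$. The forward direction is only asserted, which is fine since you cite \cite{T07} for it.
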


    \subsubsection{The brick resolution}\label{subsubsection:brick-resolution}

    Similar to the Bott-Samelson variety, in certain cases a brick variety is a resolution of singularities for a Richardson variety. In this subsection, we will generalize \cref{lem:useful-facts} and give a pattern-avoidance criterion for when a brick resolution is isomorphic to the Richardson variety whose singularities it resolves.

    Let $u,v\in S_n$, with $u\leq v$. Suppose $R$ and $S$ are words in simple transpositions in $S_n$ such that $v=\mrm{Dem}(R)$ and $u^{-1}w_0=\mrm{Dem}(S)$. In this situtation, there is a morphism 
    \[m_S^R\colon \mrm{Brick}^{R+S}\to X_u^v,\quad m_S^R\left(F^0_\bullet,F^1_\bullet,\dotsc,F^{|R|-1}_\bullet, F^{|R|}_\bullet,\dotsc,F^{|R|+|S|}_\bullet\right):=F^{|R|}_\bullet.\]
 	When $R$ and $S$ are both reduced words, the morphism $m_S^R$ is proper and birational (\cite[Theorem 26]{E16}). Therefore, if $R$ and $S$ are reduced words in simple transpositions and $X_u^v$ is singular, then $\mrm{Brick}^{R+S}$ is a resolution of singularities\footnote{As explained in \cite[\S 4.4]{E16}, when $S$ and $R$ are both reduced words in simple transpositions in $S_n$, the brick variety $\mrm{Brick}^{R+S}$ is isomorphic to the resolution of singularities of the Richardson variety introduced by Brion \cite[Theorem 4.2.1]{B05}.} for $X_u^v$. 
    
    \begin{defn}
        If $u,v\in S_n$ with $u\leq v$, and $R$ and $S$ are reduced words in simple transpositions in $S_n$ such that $v=\mrm{Dem}(R)$ and $u^{-1}w_0=\mrm{Dem}(S)$, then the brick variety $\mrm{Brick}^{R+S}$ will also be called the \textbf{brick resolution}. 
    \end{defn}

    The following result gives a comparison between the fibres of the brick resolution over $T$-fixed points of $X_u^v$ and fibres of the Bott-Samelson resolution over fibres of $T$-fixed points of opposite Schubert varieties. For a reduced word $Q$ in simple transpositions in $S_n$, recall the Bott-Samelson resolution $m_Q\colon \mrm{BS}^Q\to X^{w(Q)}$.
    
    \begin{lem}\label{fibre-of-brick}
        Let $u,v\in S_n$ with $u\leq v$, and  suppose $R$ and $S$ are reduced words in simple transpositions in $S_n$ such that $v=\mrm{Dem}(R)$ and $u^{-1}w_0=\mrm{Dem}(S)$. For any $w\in [u,v]$, there is a set-theoretic isomorphism
        \[(m_S^R)^{-1}(w\cdot E^{\text{st}}_\bullet)\xrightarrow{\simeq} m_R^{-1}(w\cdot E^{\text{st}}_\bullet)\times m_S^{-1}(w^{-1}w_0\cdot E^{\text{st}}_\bullet),\]
        \begin{align*}
&\left(F^0_\bullet,F^1_\bullet,\dotsc,F^{|R|-1}_\bullet, F^{|R|}_\bullet,F_\bullet^{|R|+1},\dotsc,F^{|R|+|S|}_\bullet\right) \\& \mapsto \left(\left(F^0_\bullet,F^1_\bullet,\dotsc,F^{|R|-1}_\bullet, F^{|R|}_\bullet\right),\left(w^{-1}\cdot F_\bullet^{|R|+1},\dotsc,w^{-1}\cdot F_\bullet^{|R|+|S|}\right)\right)
        \end{align*}
    \end{lem}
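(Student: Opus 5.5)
The plan is to check directly that the stated map is well defined, injective, and surjective, using only the relative-position description of $\Brick^{R+S}$, the Bott--Samelson varieties, and the left action of $G$ on $\Fl_n$.

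\emph{Well-definedness.} Take a point $(F^0_\bullet,\dotsc,F^{|R|+|S|}_\bullet)$ in the fibre $(m_S^R)^{-1}(w\cdot E^{\text{st}}_\bullet)$. Then $F^0_\bullet=E^{\text{st}}_\bullet$, $F^{|R|}_\bullet=w\cdot E^{\text{st}}_\bullet$, and $F^{|R|+|S|}_\bullet=E^{\text{op}}_\bullet=w_0\cdot E^{\text{st}}_\bullet$. Consecutive flags lie in $D_{\q_j}$, with $\q_j$ running through the letters of $R$ and then of $S$.

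The first component $(F^0_\bullet,\dotsc,F^{|R|}_\bullet)$ is a point of $\BS^R$ whose last flag is $w\cdot E^{\text{st}}_\bullet$. Hence it lies in $m_R^{-1}(w\cdot E^{\text{st}}_\bullet)$.

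For the second component, each $D_\sigma$ is $G_\Delta$-stable, so applying $w^{-1}$ to every flag preserves all the relative-position conditions. Write $G^k_\bullet:=w^{-1}\cdot F^{|R|+k}_\bullet$. Then:
\begin{itemize}
\item $G^0_\bullet=w^{-1}w\cdot E^{\text{st}}_\bullet=E^{\text{st}}_\bullet$;
\item $G^{|S|}_\bullet=w^{-1}w_0\cdot E^{\text{st}}_\bullet$;
\item consecutive $G$'s satisfy the relative-position conditions given by the letters of $S$.
\end{itemize}
So $(E^{\text{st}}_\bullet,G^1_\bullet,\dotsc,G^{|S|}_\bullet)\in\BS^S$, and it maps under $m_S$ to $w^{-1}w_0\cdot E^{\text{st}}_\bullet$. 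The map in the statement lists only $G^1_\bullet,\dotsc,G^{|S|}_\bullet$; I will identify this tuple with the Bott--Samelson point obtained by prepending $G^0_\bullet=E^{\text{st}}_\bullet$, which is forced anyway.

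One check is needed here: the target fibre must be nonempty, that is, $w^{-1}w_0\leq \dem(S)=u^{-1}w_0$. This holds because $u\leq w$ and right multiplication by $w_0$ reverses Bruhat order.

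\emph{Inverse map.} Given a pair $\bigl((F^0_\bullet,\dotsc,F^{|R|}_\bullet),(G^0_\bullet,\dotsc,G^{|S|}_\bullet)\bigr)$ in the product of fibres, send it to
\[(F^0_\bullet,\dotsc,F^{|R|}_\bullet,\,w\cdot G^1_\bullet,\dotsc,w\cdot G^{|S|}_\bullet).\]
The junction step is compatible: $w\cdot G^0_\bullet=w\cdot E^{\text{st}}_\bullet=F^{|R|}_\bullet$, so the pair $(F^{|R|}_\bullet,w\cdot G^1_\bullet)$ lies in the correct $D_{\q}$. The last flag is $w\cdot w^{-1}w_0\cdot E^{\text{st}}_\bullet=E^{\text{op}}_\bullet$, so the tuple lies in $\Brick^{R+S}$. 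Its $|R|$th flag is $w\cdot E^{\text{st}}_\bullet$, so it lies in the fibre over $w\cdot E^{\text{st}}_\bullet$.

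The two maps are mutually inverse, since one applies $w^{-1}$ and the other applies $w$ to the same tail of flags, leaving the head untouched. Both are restrictions of morphisms of ambient products of flag varieties, so the bijection is in fact induced by an isomorphism of the ambient spaces.

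I expect the only real obstacle to be bookkeeping rather than mathematics: getting the indexing at the junction $F^{|R|}_\bullet$ right, keeping the convention $E^{\text{op}}_\bullet=w_0\cdot E^{\text{st}}_\bullet$ straight, and checking $G_\Delta$-invariance of $D_\sigma$. That invariance is immediate, since $D_\sigma$ is a $G_\Delta$-orbit closure.
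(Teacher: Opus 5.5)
Your proposal is correct and follows the same route as the paper: you split the chain at $F^{|R|}_\bullet=w\cdot E^{\text{st}}_\bullet$ and translate the tail by $w^{-1}$, using that the relative-position conditions are invariant under the diagonal $G$-action, which identifies the fibre with $m_R^{-1}(w\cdot E^{\text{st}}_\bullet)\times w\cdot m_S^{-1}(w^{-1}w_0\cdot E^{\text{st}}_\bullet)$. Your explicit inverse and junction check make the bijection slightly more transparent than the paper's chain of set equalities. The nonemptiness check is harmless but not needed for the statement; strictly, it also uses that inversion preserves Bruhat order.
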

    \begin{proof}
        Write $R=(q_{1},q_2,\dotsc,q_{|R|})$ and $S=(q_{|R|+1},q_{|R|+2},\dotsc,q_{|R|+|S|})$, where $q_i$ are simple transpositions in $S_n$. By definition, we have set-theoretic equalities:
        \begin{align*}
        &m_R^{-1}(w\cdot E^{\text{st}}_\bullet)=
        \left\{
\begin{aligned}
\bigl(&F_\bullet^0,F^1_\bullet,\dotsc,F_\bullet^{|R|}\bigr)\in \mrm{BS}^{R}\mid\, F_\bullet^{|R|}=w\cdot E_\bullet^{\text{st}}
\end{aligned}
\right\};
\end{align*}
\begin{align*}
    m_S^{-1}(w^{-1}w_0\cdot E^{\text{st}}_\bullet)=\left\{
\begin{aligned}
\bigl(&F_\bullet^{0},F^{1}_\bullet,\dotsc,F_\bullet^{|S|}\bigr)\in \mrm{BS}^{S}\mid
\,
F_\bullet^{|S|}=w^{-1}w_0\cdot E_\bullet^{\text{st}}
\end{aligned}
\right\}.
\end{align*}
As elements of $S_n$ induce automorphisms of $\Fl_n$, it follows that $w\cdot m_S^{-1}(w^{-1}w_0\cdot E^{\text{st}}_\bullet)$ is isomorphic to $m_S^{-1}(w^{-1}w_0\cdot E^{\text{st}}_\bullet)$. Explicitly, $w\cdot m_S^{-1}(w^{-1}w_0\cdot E^{\text{st}}_\bullet)$ equals
\begin{align*}
    w\cdot m_S^{-1}(w^{-1}w_0\cdot E^{\text{st}}_\bullet)=\left\{
\begin{aligned}
\bigl(&F_\bullet^{0},F^{1}_\bullet,\dotsc,F_\bullet^{|S|}\bigr)\in w\cdot \mrm{BS}^{S}\mid
\,
F_\bullet^{|S|}=w_0\cdot E_\bullet^{\text{st}}
\end{aligned}
\right\}.
\end{align*}
The set underlying the fibre $(m_S^R)^{-1}(wB/B)$ is 
        \begin{align*}
&(m_S^R)^{-1}(wB/B)\\
&=
\left\{
\begin{aligned}
\bigl(&F_\bullet^0,F^1_\bullet,\dotsc,
F_\bullet^{|R|+|S|}\bigr)\in \mrm{BS}^{R+S}\mid
\,
F_\bullet^{|R|}=w\cdot E_\bullet^{\text{st}},\text{ } F_\bullet^{|R|+|S|}=w_0\cdot E_\bullet^{\text{st}}
\end{aligned}
\right\}
\\&=
\left\{
\begin{aligned}
\bigl(&F_\bullet^0,F^1_\bullet,\dotsc,F_\bullet^{|R|}\bigr)\in\mrm{BS}^{R}\mid
\,
F_\bullet^{|R|}=w\cdot E_\bullet^{\text{st}}
\end{aligned}
\right\}\times
\\&
\,  \left\{
\begin{aligned}
\bigl(&F_\bullet^{|R|}, F^{|R|+1}_\bullet,\dotsc,F_\bullet^{|R|+|S|}\bigr)\in w\cdot \mrm{BS}^{S}\mid \,
\text{ }F_\bullet^{|R|+|S|}=w_0\cdot E_\bullet^{\text{st}}
\end{aligned}
\right\}
\\&= m_R^{-1}(w\cdot E_\bullet^{\text{st}})\times w\cdot m_S^{-1}(w^{-1}w_0\cdot E_\bullet^{\text{st}}). 
\end{align*}
Therefore, $(m_S^R)^{-1}(wB/B)$ is isomorphic to $m_R^{-1}(w\cdot E_\bullet^{\text{st}})\times  m_S^{-1}(w^{-1}w_0\cdot E_\bullet^{\text{st}})$ via the function defined in the lemma statement.
    \end{proof}

    \begin{prop}\label{thm:patterns}
        Let $u,v\in S_n$ with $u\leq v$, and  suppose $R$ and $S$ are reduced words in simple transpositions in $S_n$ such that $v=\mrm{Dem}(R)$ and $u^{-1}w_0=\mrm{Dem}(S)$. Then $m_S^R\colon\mrm{Brick}^{R+S}\to X_u^v$ is an isomorphism if and only if, for each $w\in [u,v]$, the fibres $m_R^{-1}(w\cdot E_\bullet^\text{st})$ and $m_S^{-1}(w^{-1}w_0\cdot E_\bullet^{\text{st}})$ each consist of a single point.

        In particular, if $v$ and $u^{-1}w_0$ both avoid the patterns $321$ and $3412$, then $m_S^R$ is an isomorphism, and in particular, $X_u^v$ is smooth.
    \end{prop}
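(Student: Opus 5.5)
The natural route is to apply \cref{prop:fibres-iso} to the morphism $f=m_S^R\colon \mrm{Brick}^{R+S}\to X_u^v$. First I would check its hypotheses. When $R$ and $S$ are reduced words in simple transpositions, $\mrm{Brick}^{R+S}$ is smooth and irreducible by \cite[Theorem 20]{E16}, and $m_S^R$ is proper and birational by \cite[Theorem 26]{E16}. The map is $T$-equivariant, since it is a restriction of a coordinate projection. Richardson varieties are normal and projective. Both varieties have finitely many $T$-fixed points: those of $X_u^v$ are indexed by $[u,v]$, and those of the brick variety by the finite set $\nabla(R+S,w_0)$ (\cref{prop: bulky moment}). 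So \cref{prop:fibres-iso} reduces the question to whether $m_S^R$ is a bijection on $T$-fixed points.

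Next I would translate this bijectivity into a condition on fibres. By \cref{pre- fix of proj}, $m_S^R$ maps $(\mrm{Brick}^{R+S})^T$ onto $(X_u^v)^T$, so it is a bijection on fixed points exactly when each fibre over a fixed point $w\cdot E^{\text{st}}_\bullet$, $w\in[u,v]$, contains exactly one $T$-fixed point. I will argue that this is equivalent to the fibre itself being a single point. The fibre is a nonempty $T$-stable projective variety, so by Borel's fixed point theorem every $T$-stable closed piece of it contains a fixed point. If the fibre is positive-dimensional, choose a non-fixed point $x$ in it. Then $\overline{\mathcal O_x}$ contains fixed points at both limits $s\to 0$ and $s\to\infty$ of a generic one-parameter subgroup, and these are distinct because the fixed points are isolated. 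This gives at least two fixed points. For the direct claim I only need the contrapositive of the stated equivalence. If $m_S^R$ is an isomorphism, every fibre is a point; conversely, singleton fibres over fixed points give a bijection on fixed points, and then \cref{prop:fibres-iso} applies. Finally, \cref{fibre-of-brick} identifies the fibre over $w\cdot E^{\text{st}}_\bullet$ with the product $m_R^{-1}(w\cdot E_\bullet^{\text{st}})\times m_S^{-1}(w^{-1}w_0\cdot E_\bullet^{\text{st}})$. A product of nonempty sets is a singleton if and only if each factor is, which yields the stated ``if and only if''.

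For the ``in particular'': if $v$ avoids $321$ and $3412$, then $m_R$ is an isomorphism by \cref{lem:useful-facts}, so all its fibres over points of $X^v$ are singletons. This applies in particular over $w\cdot E^{\text{st}}_\bullet$ for $w\le v$. Similarly, if $u^{-1}w_0$ avoids $321$ and $3412$, then $m_S$ has singleton fibres over $w^{-1}w_0\cdot E^{\text{st}}_\bullet$. This requires $w^{-1}w_0\le u^{-1}w_0$, which follows from $u\le w$ because right multiplication by $w_0$ reverses Bruhat order and inversion preserves it. The first part then shows $m_S^R$ is an isomorphism, and $X_u^v$ is smooth because it is isomorphic to the smooth variety $\mrm{Brick}^{R+S}$.

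The main obstacle is the step showing that a fibre with a unique $T$-fixed point is a single point. Fibres are a priori only projective subschemes, so I would pass to the reduced fibre and use the two-limit argument with the regular cocharacter from \cref{prop:one-parameter-subgroup}. Alternatively, I would avoid the issue by proving the converse direction of the ``iff'' directly: if every fibre is a singleton, then $m_S^R$ is bijective, and a bijective birational morphism from a smooth variety onto a normal one is an isomorphism.
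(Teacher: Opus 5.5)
Your proposal is correct and follows the paper's own proof: you reduce via \cref{prop:fibres-iso} to bijectivity on $T$-fixed points, identify each fibre over $w\cdot E^{\text{st}}_\bullet$ with a product via \cref{fibre-of-brick}, and deduce the pattern-avoidance statement from \cref{lem:useful-facts}. You also supply details the paper leaves implicit, all correctly: the hypotheses of \cref{prop:fibres-iso}, the inequality $w^{-1}w_0\le u^{-1}w_0$, and the observation that singleton fibres over fixed points already give a bijection on fixed points, which is all the stated equivalence needs.
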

    \begin{proof}
    The first statement follows from \cref{prop:fibres-iso}, which says that $m_S^R$ is an isomorphism if and only if it is a bijection on $T$-fixed points. By \cref{fibre-of-brick}, $m_S^R$ is a bijection on $T$-fixed points if and only if, for each $w\in[u,v]$, the fibres $m_R^{-1}(w\cdot E_\bullet^\text{st})$ and $m_S^{-1}(w^{-1}w_0\cdot E_\bullet^{\text{st}})$  each consist of a single point.
    
    The second statement follows from the first, noting \cref{lem:useful-facts}.
    \end{proof}

    \begin{rmk}
        In \cite{G01}, Gaussent shows that the fibre of a Bott-Samelson resolution $\mrm{BS}^Q$ over a torus-fixed point of $X^{w(Q)}$ admits a finite cellular decomposition, where each cell is an affine subvariety of an open subset of $\mrm{BS}^Q$. It follows from \cref{fibre-of-brick} that the fibre of a brick resolution over a torus-fixed point also admits a finite cellular decomposition.
    \end{rmk}

    \begin{rmk}
        The Poincar\'{e} polynomials of the fibres of Bott-Samelson resolutions were studied by Deodhar in \cite{D90}. It follows from \cref{fibre-of-brick} that the Poincar\'{e} polynomial of the fibre of a brick resolution is the product of Poincar\'{e} polynomials of the fibres of two Bott-Samelson resolutions.
    \end{rmk}

    \begin{rmk}
        In \cite[Theorem 1]{BW01}, the authors characterized when a Bott-Samelson resolution is a ``small resolution" in terms of a pattern-avoidance criterion that they called ``$321$-hexagon avoiding". It would be interesting to determine a pattern-avoidance criterion that characterizes when a brick resolution is a small resolution.
    \end{rmk}

\section{The degeneration of the brick variety}\label{section:degeneration}

In this section, we will describe a $T$-equivariant (embedded in a product of flag varieties) degeneration of the brick variety into unions of products of Richardson varieties. The subdivision of the bulky brick polytope and Loday associahedron induced by this degeneration is discussed in \cref{section:subdivision}.

\cref{subsection:preliminaries2} is dedicated to providing the background from algebraic geometry required to define and analyze our degeneration of the brick variety. In \cref{subsection:the-degeneration}, we will describe a flat degeneration of the closure $D_\sigma$ of the variety consisting of pairs of flags in $\mrm{Fl}_n^2$ that are in relative position $\sigma\in S_n$. In \cref{subsection:degeneration-of-brick}, we will use the flat degeneration of $D_\sigma$ to define and analyze our degeneration of the brick variety.

\begin{nota}
    In this section and in \cref{section:subdivision}, we will always assume that a \textit{variety} is a reduced, irreducible, separated scheme of finite type over $\mbb{C}$. 
\end{nota}

\subsection{Preliminaries}\label{subsection:preliminaries2}

In this subsection, we will compile relevant results from algebraic geometry.

\subsubsection{Intersecting varieties and the Chow ring}

We will recall notions such as proper and transverse intersections of varieties, and state Kleiman's transversality theorem which says that any two subvarieties of a homogeneous space for an algebraic group can be translated by a group element in such a way that their translates intersect generically transversely. We will also discuss the Chow ring of a smooth variety.

\begin{defn}(\cite{EH16}, page 18)
Suppose $A$ and $B$ are subvarieties of a variety $X$. 
\begin{enumerate}
    \item $A$ and $B$ intersect \textbf{transversely} at a point $p\in X$ if $A$, $B$, and $X$ are all smooth at $p$, and 
    $T_pA+T_pB=T_pX$, i.e., the tangent spaces of $A$ and $B$ at $p$ span the tangent space of $X$ at $p$.
    \item $A$ and $B$ intersect \textbf{generically transversely} if they meet transversely at a general point in each component $C$ of $A\cap B$.
    \item $A$ and $B$ intersect \textbf{properly}\footnote{In \cite{EH16}, the authors use the term ``dimensionally transverse" instead of ``proper".} if for every irreducible component $C$ of $A\cap B$, we have $\mrm{codim}_X(C)=\mrm{codim}_X(A)+\mrm{codim}_X(B)$. 
\end{enumerate}
\end{defn}

\begin{rmk}\label{rmk:finite-points}
    If a generically transverse intersection is a finite number of points then the intersection is actually transverse (as explained in \cite[page 18]{EH16}).
\end{rmk}

\begin{prop}[\cite{EH16}, Proposition 1.28]\label{prop:dimensionally-transverse}
    Subschemes $A$ and $B$ of a variety $X$ are generically transverse if and only if they intersect properly, and each irreducible component of $A\cap B$ contains a point where $X$ is smooth and $A\cap B$ is reduced.
\end{prop}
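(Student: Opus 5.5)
We will prove both directions by comparing Zariski tangent spaces at a general point of each component, using the identity $T_p(A\cap B)=T_pA\cap T_pB$ for the scheme-theoretic intersection. This identity holds because the ideal of $A\cap B$ is the sum of the ideals of $A$ and $B$, so the cotangent space $\mathfrak{m}_p/\mathfrak{m}_p^2$ of $A\cap B$ is the quotient of that of $X$ by the images of both ideals. The other two inputs are standard. First, for any subscheme $Z$ and $p\in Z$ we have $\dim T_pZ\geq \dim_p Z$, with equality at a point of a reduced component if and only if $Z$ is smooth there. Second, over $\C$ a reduced scheme is smooth on a dense open subset of each component.

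For the forward direction, let $C$ be a component of $A\cap B$ and $p$ a general point of $C$ at which $A$, $B$, $X$ are smooth and $T_pA+T_pB=T_pX$. Linear algebra gives
\[\dim T_p(A\cap B)=\dim T_pA+\dim T_pB-\dim T_pX=\dim A+\dim B-\dim X.\]
Hence $\dim C\leq \dim A+\dim B-\dim X$. For the reverse inequality, $X$ is smooth at $p$, so the local intersection inequality in a regular local ring gives $\dim_p(A\cap B)\geq \dim A+\dim B-\dim X$. (Concretely, reduce to the diagonal: $A\cap B\cong (A\times B)\cap\Delta_X$, and near $p$ the diagonal is cut out by $\dim X$ equations in the smooth variety $X\times X$.) So $\operatorname{codim} C=\operatorname{codim}A+\operatorname{codim}B$, and the intersection is proper. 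Moreover $\dim T_p(A\cap B)=\dim C$, so $A\cap B$ is smooth, in particular reduced, at $p$. This point also lies where $X$ is smooth.

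For the converse, assume the intersection is proper and that each component $C$ contains a point where $X$ is smooth and $A\cap B$ is reduced. Both conditions are open, so they hold on a dense open subset of $C$. By generic smoothness of reduced schemes in characteristic $0$, at a general $p\in C$ we get $\dim T_p(A\cap B)=\dim C=\dim A+\dim B-\dim X$. Now write
\[0=\bigl(\dim T_pA-\dim A\bigr)+\bigl(\dim T_pB-\dim B\bigr)+\bigl(\dim X-\dim(T_pA+T_pB)\bigr).\]
This follows from $\dim(T_pA\cap T_pB)=\dim T_pA+\dim T_pB-\dim(T_pA+T_pB)$ together with the equality just obtained. Each bracket is nonnegative: the first two by the tangent space inequality, and the third because $T_pA+T_pB\subseteq T_pX$, which has dimension $\dim X$ since $X$ is smooth at $p$. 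So all three vanish. Thus $A$ and $B$ are smooth at $p$ and their tangent spaces span $T_pX$, which is transversality at a general point of $C$.

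The main obstacle is the lower bound $\dim C\geq \dim A+\dim B-\dim X$ in the forward direction. It genuinely requires smoothness of $X$ at $p$, and I would handle it by the reduction to the diagonal and Krull's principal ideal theorem. A secondary point is ensuring, in the converse, that the general point of $C$ avoids the singular loci of $A$ and $B$. This is not assumed but is forced by the bracket computation above, so I would present that computation carefully rather than choosing $p$ in advance.
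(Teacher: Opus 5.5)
Your proof is correct and is essentially the standard argument behind the cited result; the paper gives no proof of its own and defers to Eisenbud--Harris. The argument compares $\dim(T_pA\cap T_pB)=\dim T_pA+\dim T_pB-\dim(T_pA+T_pB)$ with the dimension inequality at smooth points of $X$, which forces each tangent-space defect to vanish. Two small precisions. First, the criterion you actually use is that a Noetherian local ring whose embedding dimension equals its Krull dimension is regular, hence a domain, with no reducedness hypothesis needed; this is what gives reducedness of $A\cap B$ at $p$ in the forward direction, whereas the version you stated presupposes reducedness. Second, writing $\dim A$ and $\dim B$ tacitly assumes $A$ and $B$ are equidimensional; otherwise read them as $\dim_pA$ and $\dim_pB$.
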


\begin{thm}[Kleiman's transversality theorem]\label{thm:Kleiman}
    Suppose that an algebraic group $G$ acts transitively on a (characteristic $0$) algebraic variety $X$, and suppose $A$ is a subvariety of $X$. If $B$ is another subvariety of $X$, then there is an open dense subset $U$ of $G$ such that $g\cdot A$ is generically transverse to $B$ for all $g\in U$.
\end{thm}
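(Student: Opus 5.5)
The plan is the standard incidence-variety argument combined with generic smoothness, which is available because we are in characteristic $0$. First I would reduce to the smooth case. Since $G$ acts transitively, $X$ is homogeneous and hence smooth. Write $A^{\mathrm{sm}}\subseteq A$ and $B^{\mathrm{sm}}\subseteq B$ for the smooth loci; these are dense open subsets whose complements $A^{\mathrm{sing}}$ and $B^{\mathrm{sing}}$ have strictly smaller dimension.

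The key step is to show that for $g$ in a dense open subset of $G$, each of the following intersects properly (or is empty):
\begin{itemize}
\item $g\cdot A^{\mathrm{sm}}$ with $B^{\mathrm{sm}}$;
\item $g\cdot A^{\mathrm{sing}}$ with $B$;
\item $g\cdot A$ with $B^{\mathrm{sing}}$.
\end{itemize}
In addition, $g\cdot A^{\mathrm{sm}}$ and $B^{\mathrm{sm}}$ should be transverse at every point of their intersection. Granting this, every component $C$ of $g\cdot A\cap B$ has dimension $\dim A+\dim B-\dim X$. The bad loci $(g\cdot A^{\mathrm{sing}}\cap B)\cup(g\cdot A\cap B^{\mathrm{sing}})$ have strictly smaller dimension, so a general point of $C$ lies in $g\cdot A^{\mathrm{sm}}\cap B^{\mathrm{sm}}$. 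Such a point is a transverse intersection point, which gives generic transversality.

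For the key step, take smooth (or just irreducible) locally closed $A'\subseteq A$ and $B'\subseteq B$ and form the incidence variety
\[\Gamma=\{(g,a,b)\in G\times A'\times B' : g\cdot a=b\}.\]
Consider the projection $\Gamma\to A'\times B'$. Its fibre over $(a,b)$ is a coset of the stabilizer $\mathrm{Stab}_G(a)$, because transitivity guarantees that some $g$ with $g\cdot a=b$ exists. Using the action map $G\to X$, $g\mapsto g\cdot a$, which is a smooth surjection with fibres of dimension $\dim G-\dim X$, I would conclude that $\Gamma\to A'\times B'$ is smooth with equidimensional fibres. Hence $\Gamma$ is smooth when $A'$ and $B'$ are smooth, and in all cases
\[\dim\Gamma=\dim G+\dim A'+\dim B'-\dim X.\]

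Now project $\Gamma\to G$. The fibre over $g$ is identified with $g\cdot A'\cap B'$. If this map is not dominant, a general fibre is empty. Otherwise, by the theorem on fibre dimension, general fibres have dimension $\dim A'+\dim B'-\dim X$, which gives properness. When $A'$ and $B'$ are smooth, generic smoothness (characteristic $0$) gives a dense open $U\subseteq G$ over which the fibres are smooth of this dimension. Applying this to the three pairs above and intersecting the resulting open sets yields the desired $U$.

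The main obstacle, and the step I expect to be most delicate, is translating smoothness of the fibre of $\Gamma\to G$ into the tangent-space condition $T_p(gA)+T_pB=T_pX$. I would do this by computing $T_{(g,a,b)}\Gamma$ as the kernel of
\[T_gG\oplus T_aA\oplus T_bB\to T_bX,\qquad (\xi,\alpha,\beta)\mapsto \xi\cdot a+g_*\alpha-\beta.\]
This map is surjective already on $T_gG$, since the orbit map is submersive. Surjectivity of $d(\Gamma\to G)$ at this point is then equivalent to: for every $\xi\in T_gG$ one can find $\alpha,\beta$ with $\xi\cdot a=\beta-g_*\alpha$, i.e. $T_bX=g_*T_aA+T_bB$. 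This identity needs to be checked carefully, since it is exactly where characteristic $0$ enters, via generic smoothness giving surjectivity of the differential at every point of a general fibre.
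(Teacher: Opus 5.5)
Your argument is correct. It is the standard proof of Kleiman's theorem, as in Hartshorne, Theorem III.10.8: the incidence variety $\Gamma\to A'\times B'$ is smooth because the orbit map is smooth in characteristic $0$, and the fibre-dimension theorem together with generic smoothness is then applied to $\Gamma\to G$. The tangent-space computation identifies surjectivity of $d(\Gamma\to G)$ with $T_b(gA)+T_bB=T_bX$.

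The paper gives no proof of its own and simply invokes the theorem as a standard black box, so there is nothing to compare routes against.

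One step you leave implicit: you need the lower bound $\dim C\geq \dim A+\dim B-\dim X$ for every component $C$ of $gA\cap B$, which holds because $X$ is smooth. This bound is what rules out a component lying entirely inside the lower-dimensional bad loci.
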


Let $X$ be an algebraic variety. We will recall the \textit{Chow group} of $X$, following the exposition in \cite{EH16}, \cite{F98}. The \textbf{group of $k$-cycles} $Z_k(X)$ on $X$ is the free abelian group generated by isomorphism classes of reduced, irreducible subvarieties of $X$ of dimension $k$. The \textbf{$k$th Chow group} $\mrm{CH}_k(X)$ of $X$ is the quotient of the group $Z_k(X)$ by the relation ``rational equivalence." The \textbf{Chow group} of $X$ is defined to be the graded group $\mrm{CH}_*(X):=\oplus_{k\in\mbb{Z}}\mrm{CH}_k(X)$. Observe that $\mrm{CH}_k(X)=(0)$ for all $k>\mrm{dim}_\mbb{C}(X)$, since $X$ does not have any subvarieties of dimension greater than $\mrm{dim}_{\mbb{C}}(X)$. Any subscheme $Y$ of $X$ defines an element in the Chow group $\mrm{CH}_*(X)$ in the following way: let $Y_1,\dotsc,Y_m$ be the top dimensional irreducible components of $Y$, and let $m(Y_i,Y)$ be the ``intersection multiplicity" of $Y_i$ along $Y$. The class in $\mrm{CH}_*(X)$ defined by $Y$ is $[Y]:=\sum_{i=1}^m m(Y_i,Y)[Y_i]$. 
If $X$ is smooth, then define the abelian group $\mrm{CH}^k(X):=\mrm{CH}_{\mrm{dim}_{\mbb{C}}(X)-k}$, and further, define the graded abelian group $\mrm{CH}^*(X):=\oplus_{k\in\mbb{N}}\mrm{CH}^k(X)$. The group $\mrm{CH}^*(X)$ is graded by \textit{codimensions} of cycles, whereas $\mrm{CH}_*(X)$ is graded by the \textit{dimensions} of cycles.

\begin{thm}[\cite{F98}, cf. Theorem-Definition 1.5 of \cite{EH16}]\label{theo:unique-product-structure}
Let $X$ be a smooth variety. There is a unique product structure on $\mrm{CH}^*(X)$ satisfying the following condition:
\[\text{If two subvarieties $A,B$ of $X$ are generically transverse, then}\]
\[[A]\cdot [B]=[A\cap B].\]
This product structure makes $\mrm{CH}^*(X)$ into an associative, commutative, unital ring, graded by codimension.
\end{thm}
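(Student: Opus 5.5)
The plan is to prove existence by Fulton's intersection product along the diagonal and uniqueness by a moving lemma; the moving lemma is the step I expect to be the main obstacle. Throughout, $X$ is smooth of dimension $d$, so the diagonal $\Delta\colon X\to X\times X$ is a regular embedding of codimension $d$ with normal bundle $T_X$.

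\emph{Existence.} First I would use the exterior product $\mrm{CH}_a(X)\otimes \mrm{CH}_b(X)\to \mrm{CH}_{a+b}(X\times X)$, $[A]\otimes[B]\mapsto [A\times B]$. One checks it respects rational equivalence, since a rational function on $A$ pulls back to one on $A\times B$. Then I would define
\[
[A]\cdot[B]:=\Delta^{!}[A\times B],
\]
where $\Delta^{!}$ is the refined Gysin homomorphism of \cite{F98}, built by deformation to the normal cone. The cone $C_{A\cap B}(A\times B)$ sits inside $N_\Delta|_{A\cap B}=T_X|_{A\cap B}$, and one intersects it with the zero section.

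Commutativity and associativity would then follow formally:
\begin{itemize}
\item commutativity from the symmetry of $X\times X$ swapping the factors, which preserves $\Delta$;
\item associativity from the compatibility of Gysin maps for $X\to X\times X\to X\times X\times X$, together with functoriality of refined Gysin maps.
\end{itemize}
The unit is $[X]$, since $\Delta^{!}[X\times B]=[B]$. The grading by codimension is preserved because $\Delta$ has codimension $d$.

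\emph{Transverse case.} Next I would verify the normalization: if $A$ and $B$ are generically transverse, then $\Delta^{!}[A\times B]=[A\cap B]$. By \cref{prop:dimensionally-transverse}, the intersection $A\times B\cap \Delta = A\cap B$ is proper, so $\Delta^{!}[A\times B]=\sum_C m_C[C]$ over the components $C$ of $A\cap B$. Each $m_C$ is Fulton's intersection multiplicity, which equals the length of $\mathcal{O}_{A\cap B,C}$ when $A\times B$ is Cohen--Macaulay at a generic point of $C$. At a general point of each $C$, generic transversality gives that $A$, $B$ are smooth and $A\cap B$ is reduced there, so the scheme $A\cap B$ is generically reduced along $C$ and $m_C=1$. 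This gives $[A]\cdot[B]=[A\cap B]$.

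\emph{Uniqueness.} Suppose $\star$ is another product with the stated property. It suffices to show that for any classes $\alpha,\beta$ there are cycles $\sum a_i[A_i]$ representing $\alpha$ and $\sum b_j[B_j]$ representing $\beta$ with each pair $A_i,B_j$ generically transverse. Then bilinearity forces
\[
\alpha\star\beta=\sum a_ib_j[A_i\cap B_j]=\alpha\cdot\beta .
\]
I would first try the case that actually matters in this paper: when $X$ is homogeneous under a connected group $G$, as for $\Fl_n^{\mm+1}$. There \cref{thm:Kleiman} gives $g\in G$ with $gA_i$ generically transverse to every $B_j$, and since $G$ is connected, $[gA_i]=[A_i]$ in $\mrm{CH}_*(X)$ (move along a rational curve in $G$). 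For a general smooth quasi-projective $X$, one needs instead Chow's moving lemma, refined so that the moved representatives meet generically transversely and not merely properly, which is how \cite{EH16} handle it. If that refinement proves too delicate, I would restrict the uniqueness claim to the homogeneous setting, which is all this paper uses.
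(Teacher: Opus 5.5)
Your proposal is correct and follows the route of the sources the paper cites in place of a proof. Fulton's refined Gysin map $\Delta^{!}$ gives existence and the ring axioms, the Cohen--Macaulay comparison of intersection multiplicity with length gives $[A]\cdot[B]=[A\cap B]$ in the generically transverse case, and uniqueness is exactly the generically transverse moving lemma of \cite{EH16}. That lemma is stated there only for quasi-projective $X$, a hypothesis the statement above drops and which your uniqueness argument genuinely needs.

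In your homogeneous fallback, say ``connected linear group, hence rational'' rather than just ``connected'': connectedness alone only makes $gA_i$ algebraically equivalent to $A_i$ (distinct points on an elliptic curve are not rationally equivalent). For $\mrm{GL}_n^{\mm+1}$ acting on $\Fl_n^{\mm+1}$, the only case the paper uses, your argument is complete.
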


\begin{defn}
    Let $X$ be a smooth variety. The ring $\mrm{CH}^*(X)$ is called the \textbf{Chow ring} of $X$, and the product on $\mrm{CH}^*(X)$ is called the \textbf{intersection product}.
\end{defn}

\begin{egg}\label{egg:CH-Pn}
    Consider the projective space $\mbb{P}^n$. For $0\leq k\leq n$, we will view the projective space $\mbb{P}^k$ as a closed subvariety of $\mbb{P}^n$ via the embedding
    \[\mbb{P}^k\hookrightarrow \mbb{P}^n,\quad [x_0:x_1:\dotsb:x_k]\mapsto [x_0:x_1:\dotsb:x_k:0:\dotsb :0].\]
    The Chow ring of $\mbb{P}^n$ has the following ring presentation:
    \[\mrm{CH}^*(\mbb{P}^n)\simeq \mbb{Z}[x]/(x^{n+1}).\]
    The ring generator $x$ corresponds to the class $[\mbb{P}^{n-1}]$ of the subvariety $\mbb{P}^{n-1}$ of $\mbb{P}^n$. The elements $\{x^0,x^1,x^2,\dotsc,x^n\}$ form a $\mbb{Z}$-basis for $\mrm{CH}^*(\mbb{P}^n)$, where $x^k$ is identified with $[\mbb{P}^{n-k}]$. 
\end{egg}

\begin{thm}[\cite{EH16}, Theorem 1.26]\label{thm:multiplicity1}
    If $A$ and $B$ are subvarieties of a smooth variety $X$ that intersect generically transversely, and $C_1,\dotsc,C_n$ are the components of the intersection $A\cap B$, then 
    \[[A]\cdot [B]=\sum_{i=1}^n [C_i].\]
    That is, the intersection multiplicity of each $C_i$ along $A\cap B$ is $m_{C_i}(A,B)=1$.
\end{thm}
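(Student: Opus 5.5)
The plan is to reduce the statement to the defining property of the intersection product in \cref{theo:unique-product-structure}, and then compute the multiplicities appearing in the class of the scheme-theoretic intersection. Since $A$ and $B$ intersect generically transversely, \cref{theo:unique-product-structure} gives $[A]\cdot[B]=[A\cap B]$, where $A\cap B$ is the scheme-theoretic intersection $A\times_X B$. By the definition of the class of a subscheme,
\[[A\cap B]=\sum_{i} m(C_i,A\cap B)\,[C_i],\]
where the sum runs over the top-dimensional irreducible components $C_i$. So two things remain: every component $C_i$ actually appears in this sum, and each multiplicity equals $1$.

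For the first point I will invoke \cref{prop:dimensionally-transverse}. Generic transversality implies that $A$ and $B$ intersect properly. Hence every irreducible component $C_i$ of $A\cap B$ has the same codimension, namely $\mrm{codim}_X(A)+\mrm{codim}_X(B)$. Therefore all components are top-dimensional, and the sum runs over all of $C_1,\dotsc,C_n$.

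For the second point I will fix a component $C_i$ and choose a general point $p\in C_i$ at which $A$ and $B$ meet transversely. The Zariski tangent space of the scheme $A\times_X B$ at $p$ is $T_pA\cap T_pB$ inside $T_pX$. Since $T_pA+T_pB=T_pX$, we get
\[\dim T_p(A\cap B)=\dim T_pA+\dim T_pB-\dim T_pX=\dim A+\dim B-\dim X=\dim C_i,\]
using smoothness of $A$, $B$ and $X$ at $p$ together with properness. A local ring whose embedding dimension equals its Krull dimension is regular, so the scheme $A\cap B$ is smooth, in particular reduced, at $p$. Alternatively, \cref{prop:dimensionally-transverse} directly gives a point of $C_i$ where $A\cap B$ is reduced. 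Since $p$ is general, $p$ lies on no other component. Localizing the reduced local ring $\mathcal{O}_{A\cap B,p}$ at the generic point of $C_i$ yields a reduced Artinian local ring, which is therefore a field. Its length, which is the multiplicity $m(C_i,A\cap B)$, is then $1$.

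I expect the main obstacle to be the middle step: identifying the tangent space of the scheme-theoretic intersection with $T_pA\cap T_pB$, and turning smoothness at one closed point into length $1$ at the generic point of $C_i$. The first part I would handle via the description of the cotangent space of a fibre product, namely $\mathfrak m/\mathfrak m^2$ of $\mathcal{O}_X/(I_A+I_B)$. The second part uses that reducedness is preserved under localization. Everything else is bookkeeping with the definitions already recalled.
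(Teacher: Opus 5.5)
Your argument is correct. The paper itself gives no proof of this statement; it only cites \cite[Theorem 1.26]{EH16}, so the relevant comparison is with that source. In \cite{EH16} the statement belongs to the local theory of multiplicities for proper intersections. There the multiplicity $m_C(A,B)$ of a component $C$ is defined intrinsically and compared with the length of $\mathcal{O}_{A\cap B,C}$; this comparison is exact when $A$ and $B$ are Cohen--Macaulay at a general point of $C$, and involves higher Tor corrections in general. It is then shown to equal $1$ exactly when $A$ and $B$ meet transversely at a general point of $C$.

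You bypass that machinery by leaning on the recalled \cref{theo:unique-product-structure}, which already asserts $[A]\cdot[B]=[A\cap B]$ for generically transverse $A,B$. What remains is to show that the scheme $A\cap B$ is pure of dimension $\dim A+\dim B-\dim X$ and generically reduced along each $C_i$. Your computation $T_p(A\cap B)=T_pA\cap T_pB$ shows that $\mathcal{O}_{A\cap B,p}$ is regular, so its localization at the generic point of $C_i$ is a field, and this handles the remaining step cleanly.

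Your route is shorter and uses only facts the paper has already recorded. It also gives exactly the form the paper needs later: each point of a finite generically transverse intersection is counted once. The price is that the real content sits inside the Theorem--Definition you invoke. What you obtain is that the coefficient of $[C_i]$ in the cycle of the scheme $A\cap B$ is $1$. You get nothing about intersections that are proper but not transverse, nor the converse (multiplicity one forces transversality). Those are what the local multiplicity theory in \cite{EH16} actually provides.
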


 We will now recall the \textit{K\"{u}nneth isomorphism} for the Chow rings of certain smooth, projective varieties. 
 \begin{thm}(K\"{u}nneth isomorphism)\label{thm:Kunneth}
     Assume that $X$ and $Y$ are smooth projective varieties, each with an algebraic action of a torus $S=(\mbb{C}^\times)^m$ with finitely many fixed points. Then there is a ring isomorphism
     \[\mrm{CH}^*(X)\otimes_{\mbb{Z}}\mrm{CH}^*(Y)\xrightarrow{\simeq} \mrm{CH}^*(X\times Y),\]
     which is defined on generators by sending $[A]\otimes [B]$ to $[A\times B]$, where $A$ and $B$ are subvarieties of $X$ and $Y$, respectively. 
 \end{thm}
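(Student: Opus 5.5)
This is the Künneth isomorphism for the Chow rings of smooth projective $S$-varieties with finitely many fixed points. I would prove it by exhibiting compatible affine pavings (Białynicki-Birula decompositions) on $X$, $Y$ and $X\times Y$.

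\emph{Step 1: pavings.} By \cref{prop:one-parameter-subgroup}, choose a regular one-parameter subgroup $\varrho\colon\mbb{C}^\times\to S$ whose fixed points on $X$ are exactly $X^S$, and likewise for $Y$. One can take a single $\varrho$ working for both, since generic cocharacters avoid finitely many hyperplanes. Then $(X\times Y)^{\varrho}=X^S\times Y^S$ is finite. For smooth projective varieties, the Białynicki-Birula theorem gives decompositions into attracting cells:
\[X=\bigsqcup_{p\in X^S}X_p^+,\qquad Y=\bigsqcup_{q\in Y^S}Y_q^+,\]
with each cell an affine space. Because the limit $\lim_{t\to0}\varrho(t)\cdot(x,y)$ is computed coordinatewise, the attracting cells of $X\times Y$ are exactly the products $X_p^+\times Y_q^+$. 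The decompositions are filtrable, meaning that after ordering the cells suitably, unions of initial segments are closed.

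\emph{Step 2: Chow groups of paved varieties.} For a filtrable affine paving, the excision exact sequence $\mrm{CH}_*(Z)\to\mrm{CH}_*(X)\to\mrm{CH}_*(X\setminus Z)\to 0$ applies inductively, together with $\mrm{CH}_*(\mbb{A}^k)=\mbb{Z}[\mbb{A}^k]$. This shows that the classes of the cell closures $[\overline{X_p^+}]$ span $\mrm{CH}_*(X)$, and the same holds for $Y$ and $X\times Y$.

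\emph{Step 3: freeness, the main obstacle.} To see that these classes form a basis, I would use the intersection pairing on the smooth projective variety $X$. Pair the closures of the attracting cells for $\varrho$ with the closures of the repelling cells for $\varrho^{-1}$, using \cref{theo:unique-product-structure}. This pairing is upper triangular with ones on the diagonal: the attracting and repelling cells at $p$ meet transversely in the single point $p$ (compare \cref{rmk:finite-points}), and the off-diagonal vanishing follows from the ordering. Hence the classes are linearly independent, and $\mrm{CH}^*(X)$ is free with basis $\{[\overline{X_p^+}]\}$. The same argument applies to $Y$ and to $X\times Y$.

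\emph{Step 4: the isomorphism.} The exterior product map $[A]\otimes[B]\mapsto[A\times B]$ is well defined because it respects rational equivalence (Fulton). By Steps 1 and 3, it sends the basis $[\overline{X_p^+}]\otimes[\overline{Y_q^+}]$ to the basis $[\overline{X_p^+}\times\overline{Y_q^+}]=[\overline{X_p^+\times Y_q^+}]$, so it is a group isomorphism.

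\emph{Step 5: multiplicativity.} It remains to check $([A]\otimes[B])([A']\otimes[B'])\mapsto[A\times B]\cdot[A'\times B']$. By \cref{thm:Kleiman}, this can be verified on generically transverse representatives when $X$ and $Y$ are homogeneous. In general, one uses Fulton's formula $(a\times b)\cdot(a'\times b')=(a\cdot a')\times(b\cdot b')$, which follows from writing the intersection product as pullback along the diagonal: $\Delta_{X\times Y}$ equals $\Delta_X\times\Delta_Y$ after reordering factors.
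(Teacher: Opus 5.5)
The paper gives no proof of this statement. It quotes the K\"unneth isomorphism as standard background and uses it only for $\Fl_n\times\Fl_n$, alongside \cref{thm:cycle-class-iso}. Your Bia\l{}ynicki-Birula paving argument is the standard Chow-theoretic proof and is sound in outline.

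The other common route goes through cohomology. Apply \cref{thm:cycle-class-iso} to $X$, $Y$ and $X\times Y$ (whose fixed points are $X^S\times Y^S$), then use the topological K\"unneth theorem. Its Tor terms vanish because the paving makes $H^*(X;\mathbb{Z})$ free. That route is shorter given the paper's toolkit but rests on the same paving, while yours stays inside Chow groups. For $\Fl_n$, your Step 3 is exactly the duality $[X_w]\cdot[X^v]=\delta_{w,v}[\mathrm{pt}]$ that the paper uses in the proof of \cref{sigmadegen}.

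Step 3 is the one place where your sketch is too quick.

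\emph{A wording slip.} ``Repelling cells for $\varrho^{-1}$'' should read ``repelling cells for $\varrho$'' (equivalently, attracting cells for $\varrho^{-1}$). As written, you would be pairing the plus-cells with themselves.

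\emph{The real gap.} The matrix entries are intersections of closures $\overline{X_p^+}\cap\overline{X_q^-}$, not of cells. Filtrability of each decomposition separately does not give a single ordering of $X^S$ that controls both families of closures.

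\emph{How to supply it.}
\begin{enumerate}
\item Embed $X\subseteq\mathbb{P}^N$ $\varrho$-equivariantly with coordinate weights $w_i$. Let $\mu(p)$ be the common weight of the nonzero coordinates of a fixed point $p$.
\item A point $x$ lies in $X_p^+$ iff its coordinates of weight $<\mu(p)$ vanish and its weight-$\mu(p)$ block is a nonzero multiple of that of $p$.
\item Dropping ``nonzero'' gives a closed set containing $\overline{X_p^+}$. So every point of $\overline{X_p^+}\setminus X_p^+$ has minimal weight $>\mu(p)$. Dually, every point of $\overline{X_q^-}\setminus X_q^-$ has maximal weight $<\mu(q)$.
\item Hence a point $y\in\overline{X_p^+}\cap\overline{X_q^-}$ satisfies $\mu(p)\le\min\le\max\le\mu(q)$. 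A non-fixed point has minimal weight strictly below its maximal weight. Therefore, if $\mu(p)\ge\mu(q)$, then $y$ is fixed and lies in $X_p^+\cap X_q^-$, forcing $y=p=q$.
\item Thus $\overline{X_p^+}\cap\overline{X_q^-}=\emptyset$ unless $p=q$ or $\mu(p)<\mu(q)$, and $\overline{X_p^+}\cap\overline{X_p^-}=\{p\}$.
\item Near $p$ the closures coincide with the cells, and $T_pX=T_p^+\oplus T_p^-$. So the diagonal intersection is transverse of multiplicity one.
\end{enumerate}
The pairing is then unitriangular for any total order on $X^S$ refining $\mu$, and Steps 4 and 5 go through as you wrote them.

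In Step 5 the appeal to \cref{thm:Kleiman} is unnecessary. Fulton's identity $(a\times b)\cdot(a'\times b')=(a\cdot a')\times(b\cdot b')$ holds on arbitrary nonsingular varieties.

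Alternatively, you can bypass the pairing altogether. For a variety with an affine paving, the cycle map to Borel--Moore homology is an isomorphism onto the free group on the cell closures (Fulton, \emph{Intersection Theory}, \S19.1).
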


Let $X$ be a smooth, projective variety.
We will now compare the Chow ring $\mrm{CH}^*(X)$ and singular cohomology ring $H^*(X)$ of $X$. There is a natural ring homomorphism $c_X\colon \mrm{CH}^*(X)\to H^*(X)$ called the \textbf{cycle class map}. The following theorem gives a sufficient condition for the cycle class map to be an isomorphism.
\begin{thm}\label{thm:cycle-class-iso}
    Let $X$ be a smooth, projective variety, with the algebraic action of a torus $S=(\mbb{C}^\times)^m$ with finitely many fixed points. Then the cycle class map $c_X\colon \mrm{CH}^*(X)\to H^*(X)$ is a ring isomorphism.
\end{thm}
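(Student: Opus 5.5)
The plan is to reduce to the classical Chow ring (and cohomology ring) of a flag variety, or more generally of a smooth projective variety with a Białynicki-Birula decomposition into affine cells. The torus action with finitely many fixed points is what produces this decomposition.

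\textbf{Step 1.} By \cref{prop:one-parameter-subgroup}, choose a regular one-parameter subgroup $\varrho\colon \mbb{C}^\times\to S$ whose fixed points on $X$ coincide with $X^S$; this is a finite set $\{p_1,\dots,p_r\}$. Since $X$ is projective, every $\varrho$-orbit has a limit as $t\to 0$ (cf. \cref{lem:closure-orbit}). The Białynicki-Birula theorem, which uses that $X$ is smooth, then gives a decomposition $X=\bigsqcup_i X_i^+$. Here $X_i^+=\{x:\lim_{t\to0}\varrho(t)x=p_i\}$ is a locally closed subvariety isomorphic to an affine space $\mbb{A}^{d_i}$, with $d_i$ the dimension of the positive weight space of $T_{p_i}X$. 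Ordering the cells suitably makes this a filtrable decomposition: there is a chain of closed subsets $\emptyset=Z_0\subset Z_1\subset\cdots\subset Z_r=X$ with each $Z_k\setminus Z_{k-1}$ a single cell.

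\textbf{Step 2.} For such a cellular variety, both the Chow group and singular homology are free abelian on the cell closures $[\overline{X_i^+}]$. On the Chow side, one argues by induction along the filtration using the localization exact sequence $\mrm{CH}_*(Z_{k-1})\to\mrm{CH}_*(Z_k)\to\mrm{CH}_*(\mbb{A}^{d})\to0$ together with $\mrm{CH}_*(\mbb{A}^d)=\mbb{Z}$ in degree $d$. This shows that the cell closures generate $\mrm{CH}_*(X)$ (see Fulton \cite{F98}, Example 19.1.11). On the topological side, the cells are even-real-dimensional CW cells, so $H_*(X)$ is free on the same classes and vanishes in odd degrees.

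\textbf{Step 3.} The cycle class map sends $[\overline{X_i^+}]$ to its fundamental class. It therefore maps a generating set of $\mrm{CH}_*(X)$ onto a $\mbb{Z}$-basis of $H_*(X)$, so it is surjective. Comparing ranks and using freeness shows that the generators of $\mrm{CH}_*(X)$ are in fact independent, so the map is an isomorphism of groups. Because $X$ is smooth, Poincaré duality and the compatibility of the cycle class map with intersection and cup products (Fulton, Ch.~19) upgrade this to a ring isomorphism $\mrm{CH}^*(X)\cong H^*(X)$.

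The main obstacle is Step 2, specifically the independence of the cell classes in the Chow group. I would not attempt this directly. Instead I would obtain it from the comparison with homology as in Step 3, or simply cite Fulton's Example 19.1.11 for varieties with cellular decompositions. The remaining input, Białynicki-Birula, is standard and would be cited rather than reproved.
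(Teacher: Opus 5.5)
The paper gives no proof of this statement. It is recorded as standard background and used only to identify Chow rings of products of flag varieties with their cohomology rings, so there is no argument in the paper to compare against. Your proof is correct, and it is exactly the standard justification: Bia\l{}ynicki-Birula cells, then \cite[Example 19.1.11, Corollary 19.2]{F98}.

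Two small points of precision:
\begin{itemize}
\item Bia\l{}ynicki-Birula cells need not satisfy the frontier condition of a CW complex. The homology computation should therefore run along the same closed filtration $Z_k$ via the Borel--Moore long exact sequence, exactly parallel to your Chow argument; this is what Fulton does.
\item Injectivity in Step 3 is cleanest as follows. Any relation among the generators $[\overline{X_i^+}]$ of $\mrm{CH}_*(X)$ would map to a relation among a $\mbb{Z}$-basis of $H_*(X)$, so no rank comparison is needed.
\end{itemize}
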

 Flag varieties are examples of varieties that meet the conditions of \cref{thm:cycle-class-iso}, and hence the cycle class map is a ring isomorphism from the Chow ring of a flag variety to its cohomology ring. The Schubert varieties define natural classes in the Chow ring of the flag variety, which form a $\mbb{Z}$-basis for the Chow ring, and the images of these classes in the singular cohomology ring of the flag variety define a natural $\mbb{Z}$-basis as well. This basis is called the \textbf{Schubert basis} of $\mrm{CH}^*(\Fl_n)\simeq H^*(\Fl_n)$. The classes of opposite Schubert varieties also form a $\mbb{Z}$-basis of $\mrm{CH}^*(\Fl_n)\simeq H^*(\Fl_n)$.

\subsubsection{Grothendieck group of coherent sheaves}\label{subsub:Grothendieck-group}

Let $X$ be a variety. The \textbf{Grothendieck group $G_0(X)$ of coherent sheaves} on $X$ is the quotient of the free abelian group generated by isomorphism classes of coherent sheaves on $X$ by the subgroup generated by the elements $[\mcal{F}]-[\mcal{G}]-[\mcal{H}]$ whenever there is a short exact sequence of coherent sheaves on $X$ of the form
\begin{equation}\label{eqn:SES}
0\to \mcal{G}\to \mcal{F}\to \mcal{H}\to 0.
\end{equation}
These relations in $G_0(X)$ can be seen as formally imposing the relation that all short exact sequences of coherent sheaves on $X$ should split. Although the tensor product $\mcal{M}\otimes_{\mcal{O}_X}\mcal{N}$ of two coherent sheaves $\mcal{M}$ and $\mcal{N}$ on $X$ is a coherent sheaf on $X$, the tensor product $-\otimes_{\mcal{O}_X}\mcal{M}$ does \textit{not} necessarily preserve a short exact sequence of the form (\ref{eqn:SES}). In particular, the tensor product of coherent sheaves does not induce a product structure on $G_0(X)$. The story changes considerably if we instead consider the Grothendieck group of vector bundles, i.e. \textit{locally free} coherent sheaves, on $X$.

The \textbf{Grothendieck group $K_0(X)$ of vector bundles} on $X$ is the quotient of the free abelian group generated by isomorphism classes of vector bundles on $X$ by the subgroup generated by the elements $[\mcal{F}]-[\mcal{G}]-[\mcal{H}]$ whenever there is a short exact sequence of vector bundles on $X$ of the form (\ref{eqn:SES}). Given a vector bundle $\mcal{M}$ on $X$, the tensor product $-\otimes_{\mcal{O}_X}\mcal{M}$ \textit{does} preserve a short exact sequence of the form (\ref{eqn:SES}), for \textit{all} coherent sheaves $\mcal{F}$, $\mcal{G}$, $\mcal{H}$ that fit into the exact sequence. Therefore, unlike in $G_0(X)$, the tensor product of vector bundles induces a product structure on $K_0(X)$,
\[[\mcal{F}]\cdot [\mcal{G}]:=[\mcal{F}\otimes_{\mcal{O}_X}\mcal{G}],\quad [\mcal{F}],[\mcal{G}]\in K_0(X),\]
which makes $K_0(X)$ into a commutative, unital ring, and moreover, the tensor product of coherent sheaves induces a $K_0(X)$-module structure on $G_0(X)$,
\[[\mcal{F}]\cdot [\mcal{G}]:=[\mcal{F}\otimes_{\mcal{O}_X}\mcal{G}],\quad [\mcal{F}]\in K_0(X), \text{ }[\mcal{G}]\in G_0(X).\]
We will call $K_0(X)$ the \textbf{Grothendieck ring of vector bundles} on $X$. As every vector bundle on $X$ is a coherent sheaf on $X$, there is always a group homomorphism $s\colon K_0(X)\to G_0(X)$. When $X$ is smooth, the group homomorphism $s$ is an isomorphism.

\begin{egg}\label{egg:K_0(P)}
    Consider the projective space $\mbb{P}^n$. For $0\leq k\leq n$, we will view the projective space $\mbb{P}^k$ as a closed subvariety of $\mbb{P}^n$ in the same way as in \cref{egg:CH-Pn}.
    The Grothendieck ring of vector bundles on $\mbb{P}^n$ has the following ring presentation:
    \[K_0(\mbb{P}^n)\simeq \mbb{Z}[x]/(x^{n+1}).\]
    The ring generator $x$ corresponds to the class $[\mcal{O}_{\mbb{P}^{n-1}}]$ of the structure sheaf of $\mbb{P}^{n-1}$ in $\mbb{P}^n$. The elements $\{x^0,x^1,x^2,\dotsc,x^{n}\}$ form a $\mbb{Z}$-basis for $K_0(\mbb{P}^n)$, where $x^k$ is identified with $[\mcal{O}_{\mbb{P}^{n-k}}]$.
\end{egg}

Recall that, given any two coherent sheaves $\mcal{F},\mcal{G}$ on a variety $X$, the $i$th ``Tor-sheaf" $\mrm{Tor}_i^{\mcal{O}_X}(\mcal{F},\mcal{G})$ on $X$ associated with $\mcal{F}$ and $\mcal{G}$ is a coherent sheaf on $X$ (\cite{H77}).

\begin{thm}\label{thm:free-resolution}
    Assume that $X$ is a smooth variety, and let $\mcal{G}$ be a coherent sheaf on $X$. Then $\mcal{G}$ has a finite resolution by vector bundles $\mcal{F}_i$ on $X$:
    \[0\to \mcal{F}_n\to \mcal{F}_{n-1}\to \dotsb\to \mcal{F}_1\to \mcal{F}_0\to \mcal{G}\to 0,\quad n\leq \mrm{dim}_{\mbb{C}}(X).\]
    There is a well-defined group homomorphism $s'\colon G_0(X)\to K_0(X)$ defined on the generators of $G_0(X)$ by 
    \[s'([\mcal{G}]):=\sum_{i=0}^n(-1)^i[\mcal{F}_i],\quad [\mcal{G}]\in G_0(X).\]
    The homomorphism $s'$ is inverse to $s$, so the groups $K_0(X)$ and $G_0(X)$ are isomorphic. The product on $G_0(X)$ induced by that of $K_0(X)$ is given by
    \[[\mcal{F}]\cdot [\mcal{G}]=\sum_{i=0}^{\mrm{dim}(X)}(-1)^i[\mrm{Tor}_i^{\mcal{O}_X}(\mcal{F},\mcal{G})],\quad [\mcal{F}],[\mcal{G}]\in G_0(X).\]
\end{thm}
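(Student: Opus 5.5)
The plan is the standard construction of locally free resolutions on a regular scheme, followed by the usual Euler-characteristic bookkeeping in $G_0(X)$.

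\emph{Existence.} First I would show that every coherent sheaf $\mcal{G}$ on $X$ is a quotient of a vector bundle. In the cases used in this paper $X$ is projective, being a product of flag varieties or a subvariety of one, so it carries an ample line bundle $\mcal{L}$. By Serre's theorem $\mcal{G}\otimes\mcal{L}^{N}$ is globally generated for $N\gg 0$, which gives a surjection $\mcal{F}_0:=(\mcal{L}^{-N})^{\oplus r}\twoheadrightarrow\mcal{G}$. For a general smooth separated variety I would instead quote Kleiman's theorem that smooth separated noetherian schemes have the resolution property. The kernel of $\mcal{F}_0\to\mcal{G}$ is again coherent, so iterating produces a resolution $\cdots\to\mcal{F}_1\to\mcal{F}_0\to\mcal{G}\to 0$ by vector bundles. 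To make it finite, let $\mcal{K}$ be the image of $\mcal{F}_{n}\to\mcal{F}_{n-1}$ with $n=\dim X$. At each point $x$ the local ring $\mcal{O}_{X,x}$ is regular of dimension at most $n$, so by Auslander--Buchsbaum--Serre every finitely generated module over it has projective dimension at most $n$. Hence the $n$th syzygy $\mcal{K}_x$ is projective, that is free, so $\mcal{K}$ is locally free. Replacing $\mcal{F}_n$ by $\mcal{K}$ truncates the resolution at length $n$.

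\emph{Well-definedness of $s'$.} This is the step I expect to be the main obstacle, because independence of the choice of resolution is not formal. I would first show that any two resolutions $\mcal{F}_\bullet\to\mcal{G}$ and $\mcal{F}'_\bullet\to\mcal{G}$ are dominated by a third one mapping surjectively onto both. To build it, surject a vector bundle onto the fibre product $\mcal{F}_0\times_{\mcal{G}}\mcal{F}'_0$ and continue inductively. It then suffices to compare a resolution with one that surjects onto it. In that case the kernel complex is an exact bounded complex of vector bundles, whose alternating sum in $K_0$ vanishes by splitting it into short exact sequences of vector bundles, since the kernels are locally free by the same syzygy argument. Next, additivity of $s'$ on short exact sequences of coherent sheaves follows from the horseshoe lemma, using resolutions built from the vector bundle surjections above. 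Together these show that $s'$ descends to a homomorphism $G_0(X)\to K_0(X)$.

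\emph{Inverse and product formula.} The composite $s'\circ s$ is the identity because a vector bundle is its own resolution. The composite $s\circ s'$ is the identity because exactness of $0\to\mcal{F}_n\to\cdots\to\mcal{F}_0\to\mcal{G}\to 0$ gives $[\mcal{G}]=\sum_i(-1)^i[\mcal{F}_i]$ in $G_0(X)$; to see this, split the resolution into short exact sequences and telescope. For the product, resolve $\mcal{F}$ by $\mcal{F}_\bullet$; then $[\mcal{F}]\cdot[\mcal{G}]=\sum_i(-1)^i[\mcal{F}_i\otimes\mcal{G}]$. The complex $\mcal{F}_\bullet\otimes\mcal{G}$ is bounded with homology sheaves $\mrm{Tor}_i^{\mcal{O}_X}(\mcal{F},\mcal{G})$. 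In $G_0$, the alternating sum of the terms of a bounded complex of coherent sheaves equals the alternating sum of its homology; this again follows by splitting the complex into cycles, boundaries and homology. This gives the stated formula, and the Tor sheaves vanish for $i>\dim X$ because $\mcal{F}_\bullet$ has length at most $\dim X$.
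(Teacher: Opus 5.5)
The paper gives no proof of this statement. It is quoted as standard background: the classical Borel--Serre theorem, cf.\ Fulton's \emph{Intersection Theory}, Appendix~B.8. It is used only through the product on $G_0$ in \cref{lem:proper-and-CM} and \cref{thm:degen}.

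Your outline is that classical argument, and its overall structure is sound:
\begin{enumerate}
\item the resolution property plus Auslander--Buchsbaum--Serre gives the finite resolution;
\item a dominating resolution, together with the vanishing of the alternating sum of an exact bounded complex of vector bundles, gives independence of choices;
\item telescoping gives $s\circ s'=\mrm{id}$;
\item computing $\mrm{Tor}$ through $\mcal{F}_\bullet\otimes\mcal{G}$ gives the product formula.
\end{enumerate}

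\textbf{Additivity does not go through as written.} The horseshoe lemma must lift $\mcal{F}''_0\to\mcal{G}''$ along $\mcal{G}\twoheadrightarrow\mcal{G}''$, and this uses projectivity of $\mcal{F}''_0$. Vector bundles are in general not projective objects of the category of coherent sheaves. The obstruction is a class in $\mrm{Ext}^1(\mcal{F}''_0,\mcal{G}')=H^1\bigl(X;(\mcal{F}''_0)^{\vee}\otimes\mcal{G}'\bigr)$. If your surjection comes from Kleiman's theorem, you have no control over this class.

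The standard repair is to build the middle term first:
\begin{enumerate}
\item choose a vector bundle $\mcal{F}''_0\twoheadrightarrow\mcal{G}''$;
\item choose a vector bundle $\mcal{F}_0\twoheadrightarrow\mcal{G}\times_{\mcal{G}''}\mcal{F}''_0$;
\item put $\mcal{F}'_0:=\ker(\mcal{F}_0\to\mcal{F}''_0)$, which is locally free.
\end{enumerate}
By construction, $\ker(\mcal{F}_0\to\mcal{G})\to\ker(\mcal{F}''_0\to\mcal{G}'')$ is surjective. The snake lemma then shows that $\mcal{F}'_0\to\mcal{G}'$ is surjective and that the three kernels again form a short exact sequence. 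You can therefore iterate, and then truncate all three resolutions at their $n$th syzygies.

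When $X$ is projective there is an alternative. Take $\mcal{F}''_0=(\mcal{L}^{-N})^{\oplus r}$ with $N$ large enough that also $H^1(X;\mcal{G}'\otimes\mcal{L}^{N})=0$, and repeat this at each stage. Then the textbook lift exists.

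\textbf{Smaller point: truncating the dominating resolution.} The resolution you build to dominate the two given ones is a priori infinite. When you truncate it at a syzygy, you need to check that its maps to the two given resolutions stay surjective in the top degree. They do, because the top term of each given finite resolution either injects into the preceding term or is zero.
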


\subsubsection{Euler characteristic map}
We now define  the Euler characteristic map. Let $X$ be a projective variety (or, more generally, a ``proper variety'') of dimension $n$, and $\mcal{F}$ a coherent sheaf on $X$. The \textbf{Euler characteristic} of $\mcal{F}$ is the integer $\chi(X;\mcal{F}):=\sum_{i=0}^{n}(-1)^i\mrm{dim}(H^i(X;\mcal{F}))$, where $H^i(X;\mcal{F})$ is the $i$th ``sheaf cohomology group" of $\mcal{F}$. Given a short exact sequence of coherent sheaves of the form (\ref{eqn:SES}), the Euler characteristic has the property $\chi(X;\mcal{F})=\chi(X;\mcal{G})+\chi(X;\mcal{H})$. In particular, the Euler characteristic induces a well-defined group homomorphism $\chi\colon G_0(X)\to \mbb{Z}$. We record the definition of this group homomorphism below.
\begin{defn}\label{defn:Euler-char}
    Let $X$ be a projective variety of dimension $n$. There is a natural group homomorphism $\chi\colon G_0(X)\to\mbb{Z}$ called the \textbf{Euler characteristic map}, defined on generators by
    \[\chi(X;[\mcal{F}]):=\sum_{i=0}^n(-1)^i \mrm{dim}(H^i(X;\mcal{F})),\quad [\mcal{F}]\in G_0(X).\]
\end{defn}

\subsubsection{Hilbert polynomials and subschemes of projective space}\label{sub-sub:Hilbert}

Let $X$ be a closed subscheme of a projective space $\mbb{P}^m$, and consider the very ample line bundle $\mcal{O}_X(1):=\mcal{O}_{\mbb{P}^m}(1)|_{X}$ on $X$ which induces the closed embedding $X\hookrightarrow \mbb{P}^m$. Let $\mcal{F}$ be a coherent sheaf on $X$, and consider the $d$th \textbf{Serre twisting sheaf} $\mcal{F}(d):=\mcal{F}\otimes_{\mcal{O}_X}\mcal{O}_X(d)$ of $\mcal{F}$. The \textbf{Hilbert function} $h_{\mcal{F}}$ of $\mcal{F}$ is the function defined by
\[h_{\mcal{F}}\colon \mbb{N}\to\mbb{N},\quad h_{\mcal{F}}(d):=\mrm{dim}(H^0(X;\mcal{F}(d))).\]
There is a polynomial $H_{\mcal{F}}(x)\in\mbb{Q}[x]$ of degree at most $\mrm{dim}_{\mbb{C}}(X)$ called the \textbf{Hilbert polynomial} of $\mcal{F}$, with the property that $H_{\mcal{F}}(d)=h_{\mcal{F}}(d)$ for $d$ sufficiently large.

The ``Serre vanishing theorem" implies that for $d$ sufficiently large, $H^i(X;\mcal{F}(d))=0$ for all $i>0$. In particular, for $d$ sufficiently large, $\chi(X;\mcal{F}(d))=\mrm{dim}(H^0(X;\mcal{F}(d))=h_\mcal{F}(d)=P_\mcal{F}(d)$. As the Euler characteristic map is well-defined on $G_0(X)$, we thus obtain the following lemma.

\begin{lem}[cf. \cite{V}, Section 18.6]\label{lem:Hilbert-poly-SES}
    Let $X$ be a projective variety, together with a projective embedding $X\hookrightarrow \mbb{P}^m$. Given any two coherent sheaves $\mcal{G}$ and $\mcal{G}'$ on $X$ such that $[\mcal{G}]=[\mcal{G}']$ in $G_0(X)$, there is an equality of Hilbert polynomials $H_{\mcal{G}}(x)=H_{\mcal{G}'}(x)$. Furthermore, $H_{\mcal{F}}(x)=H_{\mcal{G}}(x)+H_{\mcal{H}}(x)$ whenever there is a short exact sequence of coherent sheaves on $X$ of the form (\ref{eqn:SES}). 
\end{lem}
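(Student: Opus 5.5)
We prove the statement by working with the short exact sequence definition of $G_0(X)$ and the Serre vanishing theorem just recalled. For the first claim, fix the very ample line bundle $\mcal{O}_X(1)$ coming from $X\hookrightarrow\mbb{P}^m$. For each integer $d$, the twisting functor $-\otimes_{\mcal{O}_X}\mcal{O}_X(d)$ is tensoring with a line bundle, hence locally free. It is therefore exact on coherent sheaves and sends short exact sequences of the form (\ref{eqn:SES}) to short exact sequences. Consequently the assignment $[\mcal{F}]\mapsto[\mcal{F}(d)]$ descends to a well-defined group endomorphism of $G_0(X)$; this is just the $K_0(X)$-module action of $[\mcal{O}_X(d)]$. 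Composing with the Euler characteristic homomorphism $\chi\colon G_0(X)\to\mbb{Z}$ of \cref{defn:Euler-char} gives, for each $d$, a homomorphism $\chi_d\colon G_0(X)\to\mbb{Z}$, $[\mcal{F}]\mapsto \chi(X;\mcal{F}(d))$.

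Now suppose $[\mcal{G}]=[\mcal{G}']$ in $G_0(X)$. Then $\chi(X;\mcal{G}(d))=\chi(X;\mcal{G}'(d))$ for every $d$. By Serre vanishing, there is a $d_0$ (depending on $\mcal{G}$ and $\mcal{G}'$) such that for $d\ge d_0$ all higher cohomology of $\mcal{G}(d)$ and $\mcal{G}'(d)$ vanishes. For such $d$ we get
\[H_{\mcal{G}}(d)=h_{\mcal{G}}(d)=\chi(X;\mcal{G}(d))=\chi(X;\mcal{G}'(d))=h_{\mcal{G}'}(d)=H_{\mcal{G}'}(d).\]
Here $d_0$ is also enlarged so that the Hilbert functions agree with the Hilbert polynomials. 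Two polynomials in $\mbb{Q}[x]$ agreeing at infinitely many integers are equal, so $H_{\mcal{G}}=H_{\mcal{G}'}$.

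For the second claim, take a short exact sequence (\ref{eqn:SES}) and twist it by $\mcal{O}_X(d)$, which preserves exactness. Additivity of $\chi$ then gives
\[\chi(X;\mcal{F}(d))=\chi(X;\mcal{G}(d))+\chi(X;\mcal{H}(d)).\]
Alternatively, this follows from $[\mcal{F}]=[\mcal{G}]+[\mcal{H}]$ together with the homomorphism $\chi_d$. Choose $d_0$ large enough that Serre vanishing applies to all three sheaves simultaneously, and that all three Hilbert functions agree with their Hilbert polynomials for $d\ge d_0$. Then $H_{\mcal{F}}(d)=H_{\mcal{G}}(d)+H_{\mcal{H}}(d)$ for all $d\ge d_0$, and the polynomial identity follows as before.

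The only point needing care is making a single uniform choice of $d_0$ for finitely many sheaves at once, which is immediate by taking a maximum. It is also worth noting that the twisting map $[\mcal{F}]\mapsto[\mcal{F}(d)]$ on $G_0(X)$ is well-defined, which requires exactness of tensoring with a line bundle rather than anything about smoothness of $X$. No step looks like a genuine obstacle.
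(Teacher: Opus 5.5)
Your proof is correct and is essentially the paper's argument. The paper likewise combines Serre vanishing, which makes $\chi(X;\mcal{F}(d))$ equal the Hilbert function (hence the Hilbert polynomial) for $d\gg 0$, with the fact that $\chi$ is well defined on $G_0(X)$; your observation that twisting by $\mcal{O}_X(d)$ is exact and therefore descends to $G_0(X)$ makes explicit a step the paper leaves implicit, since well-definedness of $\chi$ alone gives only $\chi(X;\mcal{G})=\chi(X;\mcal{G}')$ and not the equality for every twist.
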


  We will now state and prove an important application of the Hilbert polynomial. Recall that if $X$ is a projective scheme, together with a closed embedding $\iota:X\hookrightarrow \mbb{P}^m$, then $\iota_*\mcal{O}_X$ is isomorphic to $\mcal{O}_{\mbb{P}^m}/\mcal{I}_{X}$, where $\mcal{I}_X$ is the ``ideal sheaf" of $X$ in $\mbb{P}^m$. In this case, the ring of global sections $H^0(\bigoplus_{d\geq 0}(\mcal{O}_{\mbb{P}^m}/\mcal{I}_{X})(d))$ is the homogeneous coordinate ring of $X$ with respect to the embedding $\iota$.

\begin{lem}\label{lem:equal-subschemes}
    Let $X$ and $Y$ be closed subschemes of $\mbb{P}^m$. If $X\hookrightarrow Y$ is an inclusion of schemes and the Hilbert polynomials of $X$ and $Y$ with respect to the closed embeddings $X\hookrightarrow \mbb{P}^m$ and $Y\hookrightarrow \mbb{P}^m$ are equal, then $X=Y$.
\end{lem}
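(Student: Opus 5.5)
The plan is to compare the ideal sheaves of $X$ and $Y$ inside $\mbb{P}^m$. The inclusion $X\hookrightarrow Y$ of closed subschemes of $\mbb{P}^m$ means that $\mcal{I}_Y\subseteq \mcal{I}_X$. Equivalently, there is a surjection of coherent sheaves $\mcal{O}_{\mbb{P}^m}/\mcal{I}_Y\to \mcal{O}_{\mbb{P}^m}/\mcal{I}_X$ on $\mbb{P}^m$. Let $\mcal{K}:=\mcal{I}_X/\mcal{I}_Y$ be its kernel. This gives a short exact sequence of coherent sheaves on $\mbb{P}^m$:
\[0\to \mcal{K}\to \mcal{O}_{\mbb{P}^m}/\mcal{I}_Y\to \mcal{O}_{\mbb{P}^m}/\mcal{I}_X\to 0.\]
All Hilbert polynomials will be taken with respect to $\mcal{O}_{\mbb{P}^m}(1)$. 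The Hilbert polynomial of $X$ (resp. $Y$) is by definition that of $\iota_*\mcal{O}_X\cong\mcal{O}_{\mbb{P}^m}/\mcal{I}_X$ (resp. $\mcal{O}_{\mbb{P}^m}/\mcal{I}_Y$).

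By \cref{lem:Hilbert-poly-SES}, Hilbert polynomials are additive in short exact sequences, so $H_{\mcal{O}/\mcal{I}_Y}=H_{\mcal{K}}+H_{\mcal{O}/\mcal{I}_X}$. The hypothesis that the two Hilbert polynomials agree then forces $H_{\mcal{K}}=0$.

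The key step is to show that a coherent sheaf $\mcal{K}$ on $\mbb{P}^m$ with zero Hilbert polynomial is zero. By Serre's theorem, $\mcal{K}(d)$ is generated by its global sections for $d\gg 0$. Since $H_{\mcal{K}}(d)=\dim H^0(\mbb{P}^m;\mcal{K}(d))$ for $d\gg 0$, these global sections vanish for large $d$. Hence $\mcal{K}(d)=0$, and since twisting by $\mcal{O}(-d)$ is an equivalence, $\mcal{K}=0$. One can alternatively argue through the degree of the Hilbert polynomial, which equals $\dim\operatorname{Supp}\mcal{K}$ with positive leading coefficient whenever $\mcal{K}\neq 0$; the global generation argument avoids needing that fact. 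Therefore $\mcal{I}_X=\mcal{I}_Y$, and so $X=Y$ as closed subschemes.

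The only delicate point is bookkeeping rather than mathematics. The Hilbert polynomial of a subscheme must be identified with that of its pushed-forward structure sheaf on $\mbb{P}^m$, so that the additivity lemma applies to a sequence of sheaves on a single space. This identification holds because cohomology is unchanged under pushforward along a closed immersion, and $\iota^*\mcal{O}_{\mbb{P}^m}(1)=\mcal{O}_X(1)$ by the projection formula. Once that is in place, the argument is the short exact sequence followed by Serre's global generation, with no real obstacle expected.
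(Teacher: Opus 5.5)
Your proposal is correct and follows the paper's proof: the same short exact sequence $0\to \mcal{I}_X/\mcal{I}_Y\to \mcal{O}_{\mbb{P}^m}/\mcal{I}_Y\to \mcal{O}_{\mbb{P}^m}/\mcal{I}_X\to 0$, together with additivity of Hilbert polynomials, forces the kernel to have zero Hilbert polynomial. The only difference is that you prove directly, via Serre's global generation theorem, that a coherent sheaf with zero Hilbert polynomial vanishes, whereas the paper cites this fact from Vakil's book.
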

\begin{proof}
    The inclusion of schemes $X\hookrightarrow Y$ yields the following short exact sequence of sheaves on $\mbb{P}^m$,
    \[0\to \mcal{I}_X/\mcal{I}_Y\to \mcal{O}_Y\to\mcal{O}_X\to 0,\]
    where $\mcal{I}_X$ and $\mcal{I}_Y$ are the ideal sheaves of $X$ and $Y$ with respect to the closed embeddings $X\hookrightarrow \mbb{P}^m$ and $Y\hookrightarrow \mbb{P}^m$. It follows from \cref{lem:Hilbert-poly-SES} that
    \[h_{\mcal{I}_X/\mcal{I}_Y}(x)=h_{\mcal{O}_Y}(x)-h_{\mcal{O}_X}(x).\]
    By assumption, $h_{\mcal{O}_Y}(x)=h_{\mcal{O}_X}(x)$. Therefore, $h_{\mcal{I}_X/\mcal{I}_Y}(x)=0$. As explained in \cite[Theorem 18.6.3]{V}, the only coherent sheaf with Hilbert polynomial $0$ is the zero-sheaf, so $\mcal{I}_X/\mcal{I}_Y$ is the zero-sheaf. Thus, the ideal sheaves $\mcal{I}_X$ and $\mcal{I}_Y$ are equal, so $X=Y$ as subschemes of $\mbb{P}^n$. 
\end{proof}

\subsubsection{Integer-valued polynomials and $K$-groups of projective space}
We will now define the group of integer-valued polynomial functions in $\mbb{Q}[x]$, and we will explain that Hilbert polynomials lie in this group of functions. The Grothendieck group $G_0(\P^m)$ of coherent sheaves on $\mbb{P}^m$ is isomorphic to this group of functions, as we will explain below.

\begin{defn}
    Define the free abelian group \[\mrm{Int}_{x,n}:=\{f\in \mbb{Q}[x]:\deg(f)\leq n,\,\, f(\Z)\subseteq \Z\}.\]
\end{defn}

Let $X$ be a projective variety, together with a closed embedding $X\hookrightarrow \mbb{P}^m$, and suppose $\mcal{F}$ is a coherent sheaf on $X$. Set $n:=\mrm{dim}_{\mbb{C}}(X)$, and recall that $H_{\mcal{F}}(x)\in \mbb{Q}[x]$ and $\mrm{deg}(H_{\mcal{F}}(x))\leq n$. As $H_{\mcal{F}}(d)\in \mbb{N}$ for all sufficiently large integers $d$, it follows that $H_{\mcal{F}}(d)\in\mbb{Z}$ for all $d\in\mbb{Z}$. Therefore, $H_{\mcal{F}}(x)\in \mrm{Int}_{x,n}$. It follows from \cref{lem:Hilbert-poly-SES} that there is a group homomorphism $H\colon K_0(\mbb{P}^m)\to \mrm{Int}_{x,m}$, defined on generators by
\begin{equation}\label{eqn:Hilbert-poly}
    H\colon K_0(\mbb{P}^m)\to \mrm{Int}_{x,m},\quad H([\mcal{F}]):=H_{\mcal{F}}(x).
\end{equation}
Recall \cref{egg:K_0(P)}, where we showed there is a ring isomorphism $K_0(\mbb{P}^m)\simeq \mbb{Z}[x]/(x^{m+1})$. This identification shows that $K_0(\mbb{P}^m)$ is a free $\mbb{Z}$-module with basis $\{x^0,x^1,x^2,\dotsc,x^{m}\}$, where $x^k$ is identified with $[\mcal{O}_{\mbb{P}^{m-k}}]$. The function $H$ of (\ref{eqn:Hilbert-poly}) sends the basis element $[\mcal{O}_{\mbb{P}^{k}}]$ to \[H([\mcal{O}_{\mbb{P}^{k}}])=H_{\mcal{O}_{\mbb{P}^{k}}}(x)=\frac{(x+k)(x+k-1)\dotsb (x+1)}{k!}\in \mrm{Int}_{x,m}.\]
As shown in \cite[Proposition 1]{Ch14}, the elements ${x+k\choose k}:=\frac{(x+k)(x+k-1)\dotsb (x+1)}{k!}\in \mrm{Int}_{x,m}$ as $k$ ranges from $0$ to $m$ form a $\mbb{Z}$-basis for $\mrm{Int}_{x,m}$. As both $K_0(\mbb{P}^m)$ and $\mrm{Int}_{x,m}$ are free $\mbb{Z}$-modules of rank $m+1$ and the group homomorphism $H$ sends the $\mbb{Z}$-basis $\{[\mcal{O}_{\mbb{P}^k}]\}_{k=0}^{m}$ of $K_0(\mbb{P}^m)$ to the $\mbb{Z}$-basis $\{{x+k\choose k}\}_{k=0}^{m}$ of $\mrm{Int}_{x,m}$, it follows that $H$ is a group isomorphism. We record this below.

\begin{lem}\label{lem:K-ring-basis}
    The group homomorphism (\ref{eqn:Hilbert-poly}) is a group isomorphism.
\end{lem}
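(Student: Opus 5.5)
The plan is to compare two explicit $\mbb{Z}$-bases and check that $H$ carries one onto the other. Both groups are free of rank $m+1$. \cref{egg:K_0(P)} gives $K_0(\mbb{P}^m)\simeq \mbb{Z}[x]/(x^{m+1})$ with basis $\{[\mcal{O}_{\mbb{P}^{k}}]\}_{k=0}^{m}$; here $[\mcal{O}_{\mbb{P}^{m-k}}]$ corresponds to $x^k$, so reindexing gives the structure sheaves of all linear subspaces $\mbb{P}^k\subseteq\mbb{P}^m$, $0\le k\le m$. A group homomorphism between free abelian groups of equal finite rank that sends a basis bijectively onto a basis is an isomorphism. So it suffices to show two things: that $H$ is a well-defined homomorphism, and that it maps this basis onto a basis of $\mrm{Int}_{x,m}$.

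First, well-definedness. Since $\mbb{P}^m$ is smooth, $K_0(\mbb{P}^m)\cong G_0(\mbb{P}^m)$ by \cref{thm:free-resolution}. \cref{lem:Hilbert-poly-SES} shows that Hilbert polynomials depend only on classes in $G_0$ and are additive on short exact sequences, so $H$ descends to a homomorphism. Its image lies in $\mrm{Int}_{x,m}$ for two reasons. Every Hilbert polynomial has degree at most $m$. Its values are integers for $d\gg 0$, and a rational polynomial that is integer-valued on all sufficiently large integers is integer-valued on all of $\mbb{Z}$, as one sees by expanding in binomial coefficients or taking finite differences.

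Second, I compute the image of each basis element. For the linear subspace $\mbb{P}^k\subseteq \mbb{P}^m$, the homogeneous coordinate ring is the polynomial ring in $k+1$ variables. Its degree-$d$ part has dimension $\binom{d+k}{k}$, so
\[H([\mcal{O}_{\mbb{P}^k}])=\binom{x+k}{k}.\]
Third, I need these polynomials for $k=0,\dots,m$ to form a $\mbb{Z}$-basis of $\mrm{Int}_{x,m}$, which is \cite[Proposition 1]{Ch14}. For a self-contained argument, I would use the standard basis $\binom{x}{j}$, $0\le j\le m$, of integer-valued polynomials of degree at most $m$. That this is a basis follows by taking iterated finite differences at $0$: these produce the integer coefficients. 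Vandermonde's identity gives
\[\binom{x+k}{k}=\sum_j\binom{k}{k-j}\binom{x}{j},\]
so the transition matrix is unitriangular with integer entries, hence invertible over $\mbb{Z}$.

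The only step with real content is this basis statement for $\mrm{Int}_{x,m}$, and it is handled by the finite-difference and unitriangularity argument just described. Everything else is bookkeeping from results already stated in the paper.
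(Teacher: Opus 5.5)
Your proposal is correct and follows the paper's argument: you take the basis $\{[\mcal{O}_{\mbb{P}^k}]\}_{k=0}^{m}$ of $K_0(\mbb{P}^m)$ from \cref{egg:K_0(P)}, compute $H([\mcal{O}_{\mbb{P}^k}])=\binom{x+k}{k}$, and conclude because these polynomials form a $\mbb{Z}$-basis of $\mrm{Int}_{x,m}$. The only difference is that the paper cites Chabert for that last basis fact, while you prove it directly via the binomial basis $\binom{x}{j}$ and the unitriangular integer Vandermonde transition matrix, which is a valid self-contained substitute.
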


\subsubsection{Flat morphisms}

We will now state several properties possessed by flat morphisms of varieties which will be relevant for this work. Recall that a morphism of varieties $f\colon X\to Y$ is called \textbf{flat} if, for every point $p\in X$, the induced map on the stalk $f_\#\colon \mcal{O}_{Y,f(p)}\to \mcal{O}_{X,p}$ is a flat morphism of rings.

Let $f\colon X\to\mbb{A}^1$ be a flat morphism of varieties, and let $X_y$ be the fibre of $f$ over a point $y\in \mbb{A}^1$. Assume the fibres $X_{y_1}:=f^{-1}(y_1)$ and $X_{y_2}:=f^{-1}(y_2)$ are isomorphic as schemes for all  $y_1,y_2\in\mbb{A}^1\setminus \{0\}$. In this setting, we will call $f$ a \textbf{flat degeneration} of $X_{1}$ to $X_{0}$. If $f\colon X\to\mbb{A}^1$ is a flat degeneration of $X_1$ to $X_0$, then it follows from \cref{lem:special-equals-general} that $[X_{0}]=[X_{1}]$ in $\mrm{CH}^*(X)$.

Next, we will define what we mean by a \textit{$T$-equivariant embedded degeneration} of a variety. This definition will be used frequently in the forthcoming sections.

\begin{defn}\label{defn:torus-equivariant-embedded-degeneration}
    Let $Y$ be a projective variety, $f:Y\times \mbb{A}^1\to \mbb{A}^1$ be the natural projection, and $X$ be a closed subscheme of $Y\times \mbb{A}^1$ such that the map $f|_X:X\to \mbb{A}^1$ is flat and the fibers $(f|_X)^{-1}(t)$ are isomorphic to each other for $t\neq 0$. Set $X_t:=(f|_X)^{-1}(t)$. In this case we say we have a \textbf{degeneration}  $X_1\leadsto X_0$ which is \textbf{embedded} in $Y$. The scheme $X_0$ (which is often reducible) is called the \textbf{special fibre} or \textbf{central fibre} of the degeneration, while any $X_t$, $t\neq 0$, is called a \textbf{general fibre} of the degeneration.

    If additionally $Y$ is a $T$-variety, and the action of $T$ preserves each fiber $f^{-1}(t)$, we call the degeneration \textbf{$T$-equivariant}. If $f|_X\colon X\to \mbb{A}^1$ is a $T$-equivariant, embedded (in $Y$) degeneration $X_1\leadsto X_0$ and $X_0$ has finitely many $T$-orbits\footnote{A (not necessarily normal) algebraic variety with an algebraic $T$-action has finitely many $T$-orbits if and only if it is toric.}, then we call the degeneration \textbf{semitoric}.
\end{defn}

We will now describe several useful properties of flat morphisms of varieties. First, we will state that a flat morphism of varieties extends to the closure.

\begin{thm}[\cite{H77}, Chapter III, Proposition 9.8]\label{thm:closure-of-flat}
    Let $Y$ be a smooth variety of dimension $1$. Let $p\in Y$, and assume that $X$ is a closed subvariety of $\mbb{P}^N\times (Y\setminus p)$ which is flat over $Y\setminus p$. Then there is a unique closed subscheme $\overline{X}$ in $\mbb{P}^N\times Y$, which is flat over $Y$ and whose intersection with $\mbb{P}^N\times (Y\setminus p)$ is $X$. 
\end{thm}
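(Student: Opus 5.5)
The plan is to take $\overline{X}$ to be the \emph{scheme-theoretic closure} of $X$ in $\mbb{P}^N\times Y$, and to use one fact throughout: over the local ring of a smooth curve, flatness is the same as having no torsion by the uniformizer. Set $U:=\mbb{P}^N\times (Y\setminus p)$ and let $j\colon X\hookrightarrow \mbb{P}^N\times Y$ be the composite of the closed immersion $X\hookrightarrow U$ with the open immersion $U\hookrightarrow \mbb{P}^N\times Y$. Since $j$ is quasi-compact and separated, $j_*\mcal{O}_X$ is a quasi-coherent sheaf. Define $\mcal{I}:=\ker(\mcal{O}_{\mbb{P}^N\times Y}\to j_*\mcal{O}_X)$ and let $\overline{X}$ be the closed subscheme cut out by $\mcal{I}$. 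Restriction to $U$ commutes with $j_*$ and with kernels, so $\overline{X}\cap U=X$ as schemes.

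Next I would check that $\overline{X}$ is flat over $Y$.
\begin{enumerate}
\item Over $Y\setminus p$, $\overline{X}$ agrees with $X$, which is flat by hypothesis, so only points of $\overline{X}$ lying over $p$ need attention.
\item Since $Y$ is a smooth curve, $\mcal{O}_{Y,p}$ is a discrete valuation ring with uniformizer $t$. A module over a DVR is flat if and only if it is torsion-free, and it suffices that $t$ acts as a nonzerodivisor. So for each $x\in\overline{X}$ over $p$, I need $t$ to be a nonzerodivisor on $\mcal{O}_{\overline{X},x}$.
\item By construction $\mcal{O}_{\overline{X}}=\mcal{O}_{\mbb{P}^N\times Y}/\mcal{I}$ injects into $j_*\mcal{O}_X$. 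On an affine open $V$, sections of $j_*\mcal{O}_X$ over $V$ are sections of $\mcal{O}_X$ over $V\cap U$, where $t$ is a unit. Hence multiplication by $t$ is injective on $j_*\mcal{O}_X$, and therefore on its subsheaf $\mcal{O}_{\overline{X}}$.
\end{enumerate}
This gives flatness of $\overline{X}$ over $Y$.

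For uniqueness, suppose $Z\subseteq \mbb{P}^N\times Y$ is a closed subscheme that is flat over $Y$ and satisfies $Z\cap U=X$.
\begin{enumerate}
\item Flatness over the DVR $\mcal{O}_{Y,p}$ means $t$ is a nonzerodivisor on $\mcal{O}_Z$ near the fibre over $p$. So no nonzero local section of $\mcal{O}_Z$ is supported over $p$: such a section would be killed by a power of $t$.
\item Consequently the natural map $\mcal{O}_Z\to (j_Z)_*\mcal{O}_{Z\cap U}=j_*\mcal{O}_X$ is injective.
\item The ideal sheaf of $Z$ is then exactly $\ker(\mcal{O}_{\mbb{P}^N\times Y}\to j_*\mcal{O}_X)=\mcal{I}$, so $Z=\overline{X}$.
\end{enumerate}

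The main point needing care is the identification of flatness with $t$-torsion-freeness, applied locally at points over $p$. Formally, one uses that a module over a local ring which is a torsion-free module over the DVR $\mcal{O}_{Y,p}$ is flat over it. One also needs quasi-coherence of $j_*\mcal{O}_X$ so that $\mcal{I}$ defines a closed subscheme. Everything else is formal bookkeeping with restrictions to $U$.
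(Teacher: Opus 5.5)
Your proposal is correct and is essentially the standard proof of Hartshorne III.9.8, which the paper cites without reproducing: take the scheme-theoretic closure, and use that flatness over the DVR $\mathcal{O}_{Y,p}$ is the same as $t$-torsion-freeness for both existence and uniqueness. Hartshorne phrases this criterion as ``every associated point lies over the generic point'' (III.9.7). The one detail you should make explicit is shrinking $Y$ around $p$ so that the uniformizer $t$ is a regular function vanishing only at $p$; this is what justifies ``$t$ is a unit on $V\cap U$'' in the existence step and ``$V\cap U=D(t)$, so a section vanishing there is killed by a power of $t$'' in the uniqueness step.
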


We will now state that the property of being reduced is open for the collection of fibres of a flat, proper morphisms of varieties.

\begin{lem}[\cite{stacks-project}, \href{https://stacks.math.columbia.edu/tag/0C0E}{Tag 0C0E}] \label{lem:reducedness-open-property}
    Let $f\colon X\to Y$ be a flat, proper morphism of complex varieties. Then the set $\{y\in Y\mid \text{$X_y$ is reduced}\}$ is open in $Y$, where $X_y$ is the fibre of $f$ over $y\in Y$.
\end{lem}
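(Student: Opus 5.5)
The plan is to split the statement into a local assertion on the source $X$ and a global step on the target $Y$, and to use properness only in the global step. Let
\[W:=\{x\in X \mid \text{the fibre } X_{f(x)} \text{ is reduced at } x\}.\]
Since $X_y$ is reduced exactly when it is reduced at each of its points, we have $\{y\in Y\mid X_y \text{ is reduced}\}=Y\setminus f(X\setminus W)$. So it suffices to prove two things: that $W$ is open in $X$, and that $f$ sends closed sets to closed sets.

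The second point is immediate. The morphism $f$ is proper, hence universally closed, so if $W$ is open then $f(X\setminus W)$ is closed in $Y$ and its complement is open. We are over $\C$, so reducedness of a fibre coincides with geometric reducedness, and we may freely pass to the algebraically closed residue fields at closed points.

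The main work is the openness of $W$. I would use Serre's criterion on each fibre: a noetherian scheme is reduced if and only if it satisfies $(R_0)$ and $(S_1)$. I would then show separately that, for a flat morphism of finite type, the loci of points where the fibre satisfies $(S_1)$ and where it satisfies $(R_0)$ are both open.

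For $(S_1)$, I would use flatness to relate depths. For $x\in X$ with $y=f(x)$, we have $\mathrm{depth}\,\mathcal O_{X,x}=\mathrm{depth}\,\mathcal O_{Y,y}+\mathrm{depth}\,\mathcal O_{X_y,x}$, and similarly for dimensions. Together with the fact that the relative Cohen--Macaulay and relative $(S_k)$ loci are open for flat finitely presented morphisms (EGA IV, 12.1.1 and 12.1.6), this gives openness of the $(S_1)$ locus.

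For $(R_0)$, I would use that the relative smooth locus of $f$ is open. The condition that $X_y$ is geometrically reduced at the generic points of its components through $x$ amounts to the fibre being smooth at those generic points. One then argues, via Chevalley-type constructibility together with stability under generization, that the set of such $x$ is open.

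I expect the $(R_0)$ part to be the main obstacle. The difficulty is that it concerns generic points of the fibre rather than $x$ itself, and the components of the fibres move in the family. The cleanest route is probably to quote EGA IV 12.1.1(vii) directly, which states that the geometrically reduced locus is open for flat morphisms of finite presentation; this is exactly the content of the cited Stacks tag. Combining it with the properness step above finishes the proof.
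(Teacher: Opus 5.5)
Your argument is correct and is the standard proof behind the Stacks lemma the paper cites; the paper gives no proof of its own. That proof shows the locus $W\subseteq X$ where the fibre is (geometrically) reduced is open for $f$ flat of finite presentation, then writes $\{y : X_y \text{ reduced}\}=Y\setminus f(X\setminus W)$ and uses that the proper map $f$ is closed. Your $(S_1)$ and $(R_0)$ sketches are only heuristic, but nothing is missing: you ultimately invoke the EGA IV openness theorem for $W$, and in characteristic $0$ reduced and geometrically reduced agree at every point, including non-closed ones.
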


We will now state a characterization of flat morphisms of varieties in terms of Hilbert polynomials.

\begin{thm}[\cite{H77}, Theorem 9.9 and Corollary 9.10]\label{thm:Hilbert-polynomial-flatness}
Let $Y$ be any algebraic variety, and let $X$ be a closed subscheme of $\mbb{P}^N\times Y$ for some $N>0$. Consider the restriction $\pi|_{X}\colon X\to Y$ of the projection map $\pi_Y\colon \mbb{P}^N\times Y\to Y$. As $X$ is a closed subvariety of $\mbb{P}^N\times Y$, we will view the fibre $X_y$ of $\pi|_{Y}$ over any point $y\in Y$ as a closed subscheme of $\mbb{P}^N\times \{y\}$ in the natural way, and we will denote the Hilbert polynomial of $X_y$ under this projective embedding by $H_y\in\mbb{Q}[t]$. 

The morphism $\pi|_X\colon X\to Y$ is a flat morphism of varieties if and only if the Hilbert polynomial $H_y$ is independent of $y$. Furthermore, as the leading term of a Hilbert polynomial depends only on the degree and dimension of a projectively embedded variety, it follows that the degree and dimension of $X_y$ are both also independent of $y$.
\end{thm}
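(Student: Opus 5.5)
This is a standard result, and I would follow Hartshorne's argument. Flatness is local on $Y$, and so is the statement about Hilbert polynomials: the fibres $X_y$ and their embeddings in $\mbb{P}^N\times\{y\}$ do not change under restriction. So I would first reduce to the case $Y=\mrm{Spec}(A)$ with $A$ a noetherian local domain. This is legitimate because $Y$ is a variety, hence reduced and irreducible. Write $\mcal{F}:=\mcal{O}_X$, viewed as a coherent sheaf on $\mbb{P}^N_A$, and set $M_d:=H^0(\mbb{P}^N_A;\mcal{F}(d))$.

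The first key step is to show that $\mcal{F}$ is flat over $A$ if and only if $M_d$ is a free $A$-module for all $d\gg 0$.

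For the forward direction, compute cohomology with the Čech complex of the standard affine cover of $\mbb{P}^N_A$. If $\mcal{F}$ is flat, every term of this complex is flat over $A$. By Serre vanishing, for $d\gg 0$ the twisted complex is exact except in degree $0$, so it is a finite exact resolution of $M_d$ by flat modules. Hence $M_d$ is flat. Since $M_d$ is also finitely generated over a noetherian local ring, it is free.

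For the converse, recover $\mcal{F}$ as the sheaf $\widetilde{\bigoplus_{d\geq d_0}M_d}$ associated to a graded module whose graded pieces are free. Then localizing at each $x_i$ produces a flat module, so $\mcal{F}$ is flat.

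The second key step identifies the rank of $M_d$ with the value of the fibre's Hilbert function. Tensor the Čech complex with $k(y)$. For $d$ large, the same exactness together with flatness gives $M_d\otimes_A k(y)\cong H^0(X_y;\mcal{O}_{X_y}(d))$; this is a base change argument. So if $X$ is flat, $M_d$ is free of some rank $r_d$, and $h_{X_y}(d)=r_d$ for every point $y$ of $\mrm{Spec}(A)$. In particular $H_y$ is independent of $y$.

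For the converse, suppose $H_y$ is constant. I would show that $\dim_{k(y)}M_d\otimes k(y)$ is independent of $y$ for $d\gg 0$. A finitely generated module over a reduced noetherian ring whose fibre dimension is constant is locally free, and then the first step gives flatness.

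The main obstacle is uniformity in $d$. The threshold beyond which Serre vanishing and base change hold must be chosen independently of the fibre. I would handle this by proving the base change isomorphism for a single $d_0$ that works over all of $\mrm{Spec}(A)$. Here I would use that $\mcal{F}$ is coherent on the noetherian scheme $\mbb{P}^N_A$, so Serre vanishing holds for $\mcal{F}$ itself with one bound. I would then use the semicontinuity of $\dim H^0$ of the fibres to get constancy of the fibre dimension of $M_d$ from constancy of the Hilbert polynomial.

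Finally, the leading term of $H_y$ is $\frac{\deg(X_y)}{\dim(X_y)!}x^{\dim X_y}$. Constancy of $H_y$ therefore forces both $\dim(X_y)$ and $\deg(X_y)$ to be independent of $y$, which gives the last assertion of \cref{thm:Hilbert-polynomial-flatness}.
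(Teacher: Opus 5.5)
Your first key step and your forward direction are sound. Once $\mathcal{F}$ is flat, the \v{C}ech terms $C^i$ are flat, so the exact complex $0\to M_d\to C^0\to\cdots\to C^N\to 0$ stays exact after $\otimes_A k(y)$. That gives base change and vanishing of higher fibre cohomology for one uniform $d_0$.

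The gap is in the converse, exactly where you locate the main obstacle. Without flatness the $C^i$ need not be flat, so Serre vanishing for $\mathcal{F}$ alone gives no control over $C^\bullet\otimes_A k(y)$. It does not yield $M_d\otimes k(y)\cong H^0(X_y,\mathcal{O}_{X_y}(d))$. For example, let $X=V(tf)\subseteq\mathbb{P}^1\times\mathbb{A}^1$ with $f$ a binary form of degree $a\geq 2$. The sequence $0\to\mathcal{O}(-a)\to\mathcal{O}\to\mathcal{O}_X\to 0$ shows that $H^1(\mathcal{O}_X(d))=0$ for all $d\geq -1$. Yet for $0\leq d\leq a-2$ the module $M_d$ is free of rank $a$, while $h^0(X_0,\mathcal{O}_{X_0}(d))=d+1<a$. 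So base change fails at $t=0$.

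Your appeal to semicontinuity is circular. The semicontinuity theorem (Hartshorne III.12.8) assumes $\mathcal{F}$ is flat over $Y$, which is what you are trying to prove. In the same example with $d=0$, the function $y\mapsto h^0(X_y,\mathcal{O}_{X_y}(d))$ equals $1$ at $t=0$ and $a$ elsewhere, so it is not even upper semicontinuous.

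The argument the paper cites (Hartshorne III.9.9) avoids uniformity over all $y$. Let $A$ be a local noetherian domain with closed point $y_0$, residue field $k$, generic point $\eta$ and fraction field $K$. A finitely generated $A$-module $M$ is free if and only if $\dim_k M\otimes k=\dim_K M\otimes K$, so only $y_0$ and $\eta$ matter. At $\eta$, base change holds by flat base change. At $y_0$, tensor a presentation $A^q\to A\to k\to 0$ with $\mathcal{F}$ to get an exact sequence $\mathcal{F}^q\xrightarrow{\varphi}\mathcal{F}\to\mathcal{F}\otimes_A k\to 0$. Apply Serre vanishing to $\ker\varphi$ and $\mathrm{im}\,\varphi$, not to $\mathcal{F}$. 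Then for $d\gg 0$ the functor $H^0(-(d))$ is right exact on this sequence, which gives $M_d\otimes k\cong H^0(X_{y_0},\mathcal{O}_{X_{y_0}}(d))$ with no flatness assumption.

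Now choose $d$ beyond these finitely many thresholds, and beyond the points where both fibres' Hilbert functions agree with their Hilbert polynomials. Then $P_{y_0}=P_\eta$ forces $M_d$ to be free, and your first step gives flatness. For general $Y$, apply this to each $\mathrm{Spec}\,\mathcal{O}_{Y,y}$, which contains the generic point of $Y$.
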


We will now state a sufficient condition, commonly referred to as ``miracle flatness", which implies that a morphism of varieties is flat. This condition will be useful for proving \cref{lem:D-flat} in the upcoming subsection. Recall that a variety is \textbf{Cohen-Macaulay} if the local ring at every point of the variety is Cohen-Macaulay. Any smooth variety is Cohen-Macaulay.

\begin{thm}(\cite[ Chapter III, Exercise 10.9]{H77})\label{thm:miracle-flatness}
    Let $f\colon X\to Y$ be a morphism of varieties. Assume that $X$ is Cohen-Macaulay, $Y$ is smooth, and all fibres of $f$ have the same dimension. Then $f$ is a flat morphism of varieties.  
\end{thm}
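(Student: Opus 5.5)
The plan is to reduce to a statement about local rings, then combine the dimension theory of Cohen--Macaulay rings with the local criterion for flatness. Flatness is local on $X$, so fix a closed point $x\in X$ and set $y=f(x)$, $A=\mathcal{O}_{Y,y}$ and $B=\mathcal{O}_{X,x}$. Then $f_\#\colon A\to B$ is a local homomorphism of Noetherian local rings, essentially of finite type over $\mathbb{C}$. Since $Y$ is smooth, $A$ is a regular local ring of dimension $d=\dim_y Y$. Choose a regular system of parameters $t_1,\dots,t_d$ generating the maximal ideal $\mathfrak{m}_A$. The goal is to show that $B$ is a flat $A$-module.

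The first key step is the dimension count $\dim B/\mathfrak{m}_A B=\dim B-\dim A$. The ring $B/\mathfrak{m}_AB$ is the local ring of the fibre $X_y$ at $x$, so its dimension is the local fibre dimension $e$. Here I use the hypothesis that all fibres have the same dimension $e$, and I read it as: every component through every point has dimension $e$. I would then invoke the dimension formula for morphisms of varieties. The generic fibre of $f\colon X\to\overline{f(X)}$ has dimension $\dim X-\dim\overline{f(X)}$, and upper semicontinuity of fibre dimension (\cref{thm:upper-sc}) gives $\dim B\le \dim A+e$. Equality must hold because the fibre dimension is constant. If $f$ were not dominant onto its component of $Y$, the fibres would be forced to be too large, so in the intended setting $f$ is dominant and $\dim X=\dim Y+e$. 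I expect this to be the main obstacle. The bookkeeping of dimensions at non-generic points of a possibly non-dominant map is where care is needed, and I would first reduce to $f$ dominant onto a smooth irreducible $Y$. The local version $\dim B=\dim A+\dim B/\mathfrak{m}_AB$ then follows from the inequality $\dim B\le \dim A+\dim B/\mathfrak{m}_AB$ (which holds for any local homomorphism) together with the global equality, since $B$ is the local ring of an irreducible variety at a point.

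Given the dimension count, the images of $t_1,\dots,t_d$ in $B$ satisfy $\dim B/(t_1,\dots,t_d)B=\dim B-d$. Hence they extend to a system of parameters of $B$. Since $B$ is Cohen--Macaulay, every part of a system of parameters is a regular sequence, so $t_1,\dots,t_d$ is $B$-regular.

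Finally, the Koszul complex $K_\bullet(t_1,\dots,t_d;A)$ is a free resolution of the residue field $k=A/\mathfrak{m}_A$, because $A$ is regular. Tensoring with $B$ gives $K_\bullet(t;B)$, which is acyclic in positive degrees because $t$ is $B$-regular. Therefore $\mathrm{Tor}_1^A(k,B)=0$. By the local criterion for flatness, $B$ is then flat over $A$. This applies because $B$ is a finitely generated module over the Noetherian local ring $B$, the map $A\to B$ is local, and $\mathfrak{m}_AB$ lies in the maximal ideal of $B$, so $B$ is $\mathfrak{m}_A$-adically ideal-separated. Since $x$ was an arbitrary closed point and the flat locus is open, $f$ is flat.
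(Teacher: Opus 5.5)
The gap is in the dimension step you flagged as the main obstacle, and the argument you give there does not work. Constancy of the fibre dimension says nothing about its value, so ``the fibres would be forced to be too large'' has no force. That reasoning only applies under Hartshorne's actual hypothesis, that every nonempty fibre has dimension $\dim X-\dim Y$: then a non-dominant $f$ would have nonempty fibres of dimension at least $\dim X-\dim\overline{f(X)}>\dim X-\dim Y$.

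If the hypothesis concerns only nonempty fibres, the statement is false. The inclusion of a point into $\mathbb{A}^1$ has Cohen--Macaulay source, smooth target, and every nonempty fibre a point, yet $\mathbb{C}[t]_{(t)}\to\mathbb{C}$ is not flat. Nor can you ``reduce'' to the dominant case. A flat morphism of finite type is open, so flatness already forces dominance. Replacing $Y$ by $\overline{f(X)}$ loses smoothness in general and changes the question.

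Dominance has to come from the hypothesis. Either let the empty fibres count, so that constant fibre dimension forces $f$ to be surjective; this is the situation in the paper's application to $\mathcal{D}^\circ_\sigma\to\mathbb{A}^1\setminus\{0\}$. Or assume Hartshorne's normalization directly.

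Given dominance, there is a dense open subset of $Y$ over which the nonempty fibres have pure dimension $\dim X-\dim Y$, and $f(X)$ meets it by Chevalley, so $e=\dim X-\dim Y$. Every component of a nonempty fibre of a dominant morphism of varieties has dimension at least $\dim X-\dim Y$, so the fibres are automatically equidimensional. At a closed point you then get $\dim B/\mathfrak{m}_AB=\dim X-\dim Y=\dim B-\dim A$ directly. Upper semicontinuity plays no role here. The inequality $\dim B\le\dim A+\dim B/\mathfrak{m}_AB$ is a general fact about local homomorphisms and is not even needed.

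With that step repaired, the rest of your proposal is correct. It is the standard proof of miracle flatness (Matsumura, Thm.~23.1) behind the Hartshorne exercise the paper cites; the paper itself gives no argument. The $t_i$ extend to a system of parameters, hence are $B$-regular since $B$ is Cohen--Macaulay. The Koszul complex then gives $\mathrm{Tor}_i^A(k,B)=H_i(K_\bullet(t;B))=0$ for $i>0$. The local criterion gives flatness at closed points, and that suffices.
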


The following result is a useful property of flatness, which says fibres of a flat morphism of varieties $X\to \mbb{A}^1$ all have the same class in the Chow ring of $X$. This result follows from \cite[Chapter I, Theorem 1.7]{F98}, noting that any two points on $\mbb{A}^1$ are rationally equivalent, so the Chow ring of $\mbb{A}^1$ is isomorphic to $\mbb{Z}$.

\begin{thm}[\cite{F98}, Chapter I, Theorem 1.7]\label{lem:special-equals-general}
    Let $f:X\to \mbb{A}^1$ be a flat morphism of varieties. Given any two points $y_1,y_2\in  \mbb{A}^1$, the classes of the fibres $[f^{-1}(y_1)]$ and $[f^{-1}(y_2)]$ are equal in $\mrm{CH}^*(X)$.
\end{thm}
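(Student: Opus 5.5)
The plan is to prove the statement directly from the definition of rational equivalence: the difference of the two fibre classes should be the divisor of an explicit rational function on $X$. Since $X$ need not be smooth, I will work in $\mrm{CH}_*(X)$, which is what $\mrm{CH}^*(X)$ means here once $X$ is smooth. Write $t$ for the coordinate on $\mbb{A}^1$ and $\varphi:=f^*t$, a regular function on $X$. The key steps, in order, are as follows.

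First, dispose of the degenerate case. Since $X$ is a variety, it is irreducible. A flat morphism of finite type is open, so either $f$ is dominant or $X$ is empty. If $X$ is empty there is nothing to prove, so assume $f$ is dominant. Then $\varphi-y$ is a nonzero regular function for every $y\in\mbb{A}^1$, and
\[
g:=\frac{\varphi-y_1}{\varphi-y_2}
\]
is a nonzero rational function on $X$. By definition of rational equivalence, its divisor satisfies
\[
[\mrm{div}(g)]=[\mrm{div}(\varphi-y_1)]-[\mrm{div}(\varphi-y_2)]\sim 0 .
\]

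Second, identify the cycle $[\mrm{div}(\varphi-y)]$ with the fundamental cycle $[f^{-1}(y)]$ of the scheme-theoretic fibre. The fibre $f^{-1}(y)$ is the subscheme of $X$ cut out by the single equation $\varphi-y$, which is a nonzerodivisor because $X$ is integral and $\varphi-y\neq 0$. So $f^{-1}(y)$ is an effective Cartier divisor. Each irreducible component $V$ of it has codimension one, so its multiplicity in $\mrm{div}(\varphi-y)$ is $\mrm{ord}_V(\varphi-y)=\ell_{\mcal{O}_{X,V}}\bigl(\mcal{O}_{X,V}/(\varphi-y)\bigr)$. This length is exactly the multiplicity of $V$ in the fundamental cycle of the subscheme $f^{-1}(y)$ (Fulton, Lemma 1.7.2). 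Combining the two steps gives $[f^{-1}(y_1)]=[f^{-1}(y_2)]$.

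Flatness is used only in the first step, to rule out a component of $X$ lying inside one fibre, which would make $\varphi-y$ vanish identically on it. If one prefers not to assume $X$ is irreducible, the same argument runs componentwise: flatness over the regular one-dimensional base $\mbb{A}^1$ means no associated point of $X$ maps to a closed point, so $\varphi-y$ is a nonzerodivisor on $\mcal{O}_X$. This is precisely Fulton's statement that flat pullback $f^*\colon \mrm{CH}_*(\mbb{A}^1)\to\mrm{CH}_{*}(X)$ is well defined and sends $[y]$ to $[f^{-1}(y)]$; since $[y_1]=[y_2]$ in $\mrm{CH}_0(\mbb{A}^1)$, the classes of the fibres agree.

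The main obstacle is the multiplicity bookkeeping in the second step: showing that the order-of-vanishing multiplicities in $\mrm{div}(\varphi-y)$ match the length multiplicities of the possibly nonreduced fibre. I would handle it by citing Fulton's comparison lemma for effective Cartier divisors rather than reproving it.
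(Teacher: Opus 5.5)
Your proof is correct, but it takes a more hands-on route than the paper. The paper cites Fulton's Theorem~1.7 as a black box: flat pull-back is compatible with rational equivalence and sends $[y]$ to $[f^{-1}(y)]$. It combines this with $[y_1]=[y_2]$ in $\mathrm{CH}_0(\mathbb{A}^1)$.

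You instead pull back the specific relation $[y_1]-[y_2]=\mathrm{div}\bigl((t-y_1)/(t-y_2)\bigr)$ by hand. In substance this is Fulton's proof of Theorem~1.7, specialized to the base $\mathbb{A}^1$ and an integral $X$. In that case the multiplicity bookkeeping you flag as the main obstacle is purely definitional. The local ring of the fibre at the generic point of $V$ is $\mathcal{O}_{X,V}/(\varphi-y)$, and its length is $\mathrm{ord}_V(\varphi-y)$ by definition. The citation is shorter and covers arbitrary flat $f$.

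Your explicit route has the advantage of visibility. It shows each $[f^{-1}(y)]=[\mathrm{div}(\varphi-y)]$ is already $0$ in $\mathrm{CH}_*(X)$. So the statement as literally written holds because both sides vanish.

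What the paper actually uses later is $[D_\sigma]=[D_\sigma(0)]$ in the cohomology of the ambient $\mathrm{Fl}_n^2$. That is the analogous equality in $\mathrm{CH}_*(Y)$ for $X\subseteq Y\times\mathbb{A}^1$ with $Y$ projective. Your function $g$ is well suited to this stronger version:
\begin{itemize}
\item Pass to the closure $\overline{X}\subseteq Y\times\mathbb{P}^1$; there $g$ is a unit along the fibre over $\infty$.
\item Hence $\mathrm{div}(g)=[f^{-1}(y_1)]-[f^{-1}(y_2)]$ on $\overline{X}$.
\item Push forward along the proper projection $\overline{X}\to Y$; this gives the equality of fibre classes in $\mathrm{CH}_*(Y)$.
\end{itemize}
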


\subsection{Degeneration of $D_\sigma$}\label{subsection:the-degeneration}
In this subsection, we will define a $T$-equivariant degeneration of $D_\sigma$ to the union $\bigcup_{w\in S_n\text{ s.t. }\ell(w\sigma)=\ell(w)+\ell(\sigma)}(X_w\times X^{w \sigma})$. The main result of this subsection is \cref{sigmadegen}.

       Consider the embedding $\rho\colon \mbb{C}^\times \to T$, given by
	\[\rho(t)=\begin{bmatrix}
		1 &  &  & \\ 
		& t^{-1} &  & \\ 
		&  &  \ddots & \\ 
		&  &   & t^{-n+1}
	\end{bmatrix}.\]
    Define $f:\Fl^2 \times \C^\times \to \Fl^2 \times \C^\times$ by $(F_\bullet, G_\bullet,t)\mapsto (F_\bullet, \rho(t)\cdot G_\bullet,t)$. Now set $\mcal{D}_\sigma^{\circ} := f(D_\sigma \times \C^\times)$ and define $\mcal{D}_\sigma $ to be the closure of $\mcal{D}_\sigma^{\circ}$ in $\Fl_n^2 \times \mbb{A}^1$. Since $\mcal{D}_\sigma^{\circ} $ is the image of a variety under a regular map, it too is a variety. Finally, define $D_\sigma(t)$ to be the fiber of $\mcal{D}_\sigma \to \mbb{A}^1$ over $t$.

     The following result shows that $D_\sigma$ is Cohen-Macaulay and is used to apply miracle flatness (\cref{thm:miracle-flatness}) to prove that $\mcal{D}_\sigma\to\mbb{A}^1$ is a flat degeneration from $D_\sigma$ to $D_{\sigma}(0)$ in \cref{lem:D-flat}. In \cref{sigmadegen}, we will show that $\mcal{D}_\sigma\to\mbb{A}^1$ is in fact a $T$-equivariant degeneration from $D_\sigma$ to a reduced union of varieties of the form $X_w\times X^v$ with $w,v\in S_n$. 

    \begin{lem}\label{lem:CM}
        The variety $D_\sigma$ is Cohen-Macaulay of dimension $\mrm{dim}(\mrm{Fl}_n)+\ell(\sigma)$.
    \end{lem}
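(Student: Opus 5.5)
The plan is to realize $D_\sigma$ as a fibre bundle over $\Fl_n$ whose fibre is a Schubert variety, and then transfer both the dimension count and the Cohen--Macaulay property from the fibre. Consider the first projection $p_1\colon D_\sigma\to\Fl_n$, $(F_\bullet,G_\bullet)\mapsto F_\bullet$. This map is $G$-equivariant for the diagonal action, since $D_\sigma$ is a $G_\Delta$-orbit closure, and $G$ acts transitively on $\Fl_n$. Its fibre over $E_\bullet^{\text{st}}$ is
\[
\{G_\bullet:(E_\bullet^{\text{st}},G_\bullet)\in D_\sigma\}=X^\sigma,
\]
the opposite Schubert variety, by the characterization of $X^\sigma$ via relative position given earlier.

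Next I would use the identification $D_\sigma\cong G\times^{B_+}X^\sigma$, given by $(g,G_\bullet)\mapsto(gE_\bullet^{\text{st}},\,gG_\bullet)$. This is well defined because $X^\sigma$ is $B_+$-stable, and it is bijective onto $D_\sigma$ by the description
\[
D_\sigma=\{(xE_\bullet^{\text{st}},yE_\bullet^{\text{st}}):x^{-1}y\in\overline{B_+\sigma B_+}\}.
\]
To see that it is an isomorphism of varieties, the cleanest route is local triviality rather than a bijection argument. Over the open big cell $U=B_-E_\bullet^{\text{st}}\subseteq\Fl_n$ there is a section $U\to G$, $u\mapsto u$ for $u\in U^-$ (the unipotent lower triangular group, which maps isomorphically onto $U$). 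This gives an isomorphism
\[
p_1^{-1}(U)\cong U\times X^\sigma,\qquad (uE_\bullet^{\text{st}},G_\bullet)\mapsto\bigl(uE_\bullet^{\text{st}},\,u^{-1}G_\bullet\bigr).
\]
Its inverse is $(uE_\bullet^{\text{st}},H_\bullet)\mapsto(uE_\bullet^{\text{st}},uH_\bullet)$. Both maps are regular and preserve $D_\sigma$ by $G_\Delta$-invariance. Translating $U$ by elements of $S_n$ covers $\Fl_n$, so $D_\sigma$ is Zariski-locally isomorphic to $\mathbb{A}^{\dim\Fl_n}\times X^\sigma$.

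Both conclusions then follow from this local product structure:
\begin{itemize}
\item \emph{Dimension.} We get $\dim D_\sigma=\dim\Fl_n+\dim X^\sigma=\dim\Fl_n+\ell(\sigma)$. This is the same count already used for $\dim\BS^Q$.
\item \emph{Cohen--Macaulay.} Schubert varieties are Cohen--Macaulay, a standard fact already invoked in the discussion of generalized Bott--Samelson varieties. A product of a smooth variety with a Cohen--Macaulay variety is Cohen--Macaulay, since locally it is a flat extension with regular fibres, or more simply a polynomial extension of a CM ring. Being Cohen--Macaulay is local, so $D_\sigma$ is Cohen--Macaulay.
\end{itemize}

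Alternatively, one can simply note that $D_\sigma\cong\BS^{(\sigma)}$ translated by $G$. In fact $D_\sigma$ is exactly the "$m=1$ generalized Bott--Samelson variety with a free initial flag", and one could cite the iterated Schubert bundle argument given earlier verbatim.

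The only step requiring care is the local triviality. One has to check that the trivialization really lands in $D_\sigma$ and is an isomorphism, rather than merely a bijective morphism. Writing the explicit inverse as above handles this.
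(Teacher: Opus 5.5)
Your argument is correct and is essentially the paper's proof made explicit. The paper sets $Q=(w_0,\sigma)$ and observes $\BS^Q=\{E_\bullet^{\text{st}}\}\times D_\sigma\cong D_\sigma$ because $D_{w_0}=\Fl_n^2$. It then cites Cohen--Macaulayness and the dimension formula $\ell(w_0)+\ell(\sigma)$ for generalized Bott--Samelson varieties. Both rest on exactly the locally trivial $X^\sigma$-bundle structure over $\Fl_n$ that you verify by hand via the translates of the big cell.

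One small correction to your aside: $D_\sigma$ is not isomorphic to a translate of $\BS^{(\sigma)}\cong X^\sigma$, which is only the fibre of $p_1$. Your ``free initial flag'' rephrasing, however, is precisely the paper's $\BS^{(w_0,\sigma)}$.
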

\begin{proof}
    Let $Q=(w_0,\sigma)$ and consider $\BS^Q$. Since $D_{w_0}=(\Fl_n)^2,$ we have
    $\BS^Q=\{E_\bullet^\text{st}\}\times D_{\sigma}\cong D_{\sigma}.$
    Generalized Bott-Samelson varieties are Cohen-Macaulay by \cite{silv} so $D_{\sigma}$ is always Cohen-Macaulay. The dimension of $\BS^Q$ is $\ell(w_0)+\ell(\sigma)=\mrm{dim}(\mrm{Fl}_n)+\ell(\sigma).$
\end{proof}

\begin{lem}\label{lem:D-flat}
    The morphism $\mcal{D}_\sigma\to\mbb{A}^1$ is a flat degeneration from $D_\sigma$ to $D_{\sigma}(0)$.
    \end{lem}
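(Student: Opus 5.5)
We need to show that $\mcal{D}_\sigma\to\mbb{A}^1$ is flat and that its general fibres are all isomorphic to $D_\sigma$. The plan is to use miracle flatness (\cref{thm:miracle-flatness}) on the open part $\mcal{D}_\sigma^\circ$. We then extend across $t=0$ by the uniqueness of flat closures (\cref{thm:closure-of-flat}).

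First, the general fibres. The map $f\colon \Fl_n^2\times\C^\times\to\Fl_n^2\times\C^\times$, $(F_\bullet,G_\bullet,t)\mapsto(F_\bullet,\rho(t)\cdot G_\bullet,t)$, is an automorphism over $\C^\times$; its inverse uses $\rho(t)^{-1}$. Hence $\mcal{D}_\sigma^\circ=f(D_\sigma\times\C^\times)$ is closed in $\Fl_n^2\times\C^\times$. For each $t\neq0$ its fibre is $D_\sigma(t)=(\mathrm{id}\times\rho(t))\cdot D_\sigma$, which is isomorphic to $D_\sigma$. In particular $D_\sigma(1)=D_\sigma$. Because $\mcal{D}_\sigma$ is the closure of $\mcal{D}_\sigma^\circ$ and $\mcal{D}_\sigma^\circ$ is closed over $\C^\times$, the fibres of $\mcal{D}_\sigma$ over $t\neq 0$ agree with those of $\mcal{D}_\sigma^\circ$.

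Second, flatness over $\C^\times$. Via $f$, the projection $\mcal{D}_\sigma^\circ\to\C^\times$ is isomorphic to the projection $D_\sigma\times\C^\times\to\C^\times$. This is flat as a base change of $D_\sigma\to\mathrm{pt}$. Alternatively, \cref{lem:CM} gives that $D_\sigma\times\C^\times$ is Cohen--Macaulay, $\C^\times$ is smooth, and all fibres have dimension $\dim\Fl_n+\ell(\sigma)$, so \cref{thm:miracle-flatness} applies.

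Third, extension to $t=0$. Embed $\Fl_n^2$ into some $\mbb{P}^N$ by Plücker and Segre embeddings, and apply \cref{thm:closure-of-flat} with $Y=\mbb{A}^1$ and $p=0$. This yields a unique closed subscheme of $\mbb{P}^N\times\mbb{A}^1$ that is flat over $\mbb{A}^1$ and restricts to $\mcal{D}_\sigma^\circ$ over $\C^\times$; concretely it is the scheme-theoretic closure. The variety $\mcal{D}_\sigma$ is that closure, since $\mcal{D}_\sigma^\circ$ is reduced and irreducible. It lies in $\Fl_n^2\times\mbb{A}^1$ because that space is closed in $\mbb{P}^N\times\mbb{A}^1$.

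The main subtlety is the identification in the third step between the topological closure $\mcal{D}_\sigma$ and the flat limit. One can also argue directly. $\mcal{D}_\sigma$ is an integral scheme, and it dominates $\mbb{A}^1$ because $\mcal{D}_\sigma^\circ$ surjects onto $\C^\times$. Its coordinate rings are therefore torsion-free over $\C[t]$, and torsion-free modules over a PID are flat. Finally, $D_\sigma(0)$ is taken as the scheme-theoretic fibre, which gives the flat degeneration from $D_\sigma$ to $D_\sigma(0)$.
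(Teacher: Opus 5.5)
Your proof is correct and follows essentially the same route as the paper. First you establish flatness of $\mcal{D}_\sigma^\circ\to\mbb{A}^1\setminus\{0\}$: the paper invokes miracle flatness via \cref{lem:CM}, while your observation that $f$ identifies the family with $D_\sigma\times\C^\times$ makes this immediate by base change. Then you extend across $t=0$ by taking the scheme-theoretic closure and applying \cref{thm:closure-of-flat}; your supplementary torsion-freeness argument over $\C[t]$ is a valid self-contained replacement for that citation.
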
       
    \begin{proof}
        First, we will show that the morphism $\mcal{D}_\sigma\to(\mbb{A}^1\setminus \{0\})$ is flat. The fibre of $\mcal{D}_\sigma\to(\mbb{A}^1\setminus \{0\})$ over $t\in\mbb{A}^1\setminus \{0\}$ is $D_{\sigma}(t)$, which is isomorphic to $ D_\sigma$. Thus, every fibre of the family $\mcal{D}_\sigma^\circ\to\mbb{A}^1\setminus\{0\}$ has the same dimension, and by \cref{lem:CM}, every fibre is Cohen-Macaulay. As $\mbb{A}^1\setminus \{0\}$ is smooth, it follows from \cref{thm:miracle-flatness} that the morphism $\mcal{D}_\sigma^\circ\to(\mbb{A}^1\setminus\{0\})$ is flat. As $\mcal{D}_\sigma^\circ\subseteq \Fl_n^2\times \mbb{A}^1$, it follows from \cref{thm:closure-of-flat} that the scheme theoretic closure $\mcal{D}_\sigma=\overline{\mcal{D}_\sigma^\circ}$ in $\Fl_n^2\times \mbb{A}^1$ defines a flat morphism $\mcal{D}_\sigma\to\mbb{A}^1$ and satisfies $\mcal{D}_\sigma \cap (\mbb{A}^1\setminus \{0\}) = \mcal{D}_\sigma^\circ$.
\end{proof}

    Suppose $G\times G$ has matrix coordinates $((a_{i,j})_{i,j=1,\dotsc,n},(b_{i,j})_{i,j=1,\dotsc,n})$.
Let $\mcal{G}$ be the subscheme of $G\times G \times \mbb{A}^1$ cut out by the equations
	\[
		a_{i,j}=t^{2j-2i}b_{i,j},\text{ }\quad i\leq j; \qquad \qquad 
		b_{j,i}=t^{2j-2i}a_{j,i},\text{ }\quad i\leq j
	\]
    with $t$ the coordinate of $\mbb{A}^1$.  We let $G_t$ denote the fiber of $\mcal{G} \to \mbb{A}^1$ over $t$. 
    With $T$ acting on $G\times G\times \mbb{A}^1$ by acting diagonally on $G\times G$ and trivially on $\mbb{A}^1$, we have that $\mathcal{G}\to \mbb{A}^1$ is $T$-equivariant and flat.

    \begin{lem}[\cite{KMS06}, Proposition 4.2]
    The morphism $\mathcal{G}\to \mbb{A}^1$ is a $T$-equivariant degeneration of $G_1$ to $G_0$. The fiber over $t\neq 0$ is $G_t=\{(g,\rho(t)g\rho(t)^{-1},t) : g\in G\}$. The special fiber is $G_0=B_-\times_T B_+$ (i.e., $G_0$ is the set of elements in the product $B_-\times B_+$ whose diagonals are equal).
    \end{lem}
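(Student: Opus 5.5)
The plan is to make everything explicit by choosing coordinates on $\mcal{G}$ in which it is visibly an open subscheme of affine space, rather than appealing to flatness criteria. The defining equations express each upper-triangular entry $a_{i,j}$ ($i\leq j$) in terms of $b_{i,j}$, and each lower-triangular entry $b_{j,i}$ ($i\leq j$) in terms of $a_{j,i}$. Hence the projection
\[
\mcal{G}\to \mbb{A}^{n^2}\times\mbb{A}^1,\qquad \bigl((a_{i,j}),(b_{i,j}),t\bigr)\mapsto \bigl((b_{i,j})_{i\leq j},(a_{j,i})_{i<j},t\bigr)
\]
is an isomorphism onto the open subset on which both reconstructed matrices are invertible. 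Here the diagonal entries are recorded once, since the equations force $a_{i,i}=b_{i,i}$.

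\textbf{Step 1: flatness and $T$-equivariance.} The map $\mcal{G}\to\mbb{A}^1$ is the composite of an open immersion into $\mbb{A}^{n^2}\times\mbb{A}^1$ with the projection to $\mbb{A}^1$, which is flat. So $\mcal{G}$ is smooth and irreducible, and $\mcal{G}\to\mbb{A}^1$ is flat. For $T$-equivariance, conjugation by $d=\mathrm{diag}(d_1,\dots,d_n)$ multiplies both $a_{i,j}$ and $b_{i,j}$ by the same factor $d_id_j^{-1}$. It therefore preserves every defining equation, and it fixes $t$.

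\textbf{Step 2: the general fibre $G_t$, $t\neq 0$.} For $t\neq 0$ the equations become $b_{i,j}=t^{c(i,j)}a_{i,j}$ for all $i,j$, with exponent $c(i,j)$ proportional to $i-j$. This says exactly that $b$ is the conjugate of $a$ by a diagonal matrix that is a power of $\rho(t)$. Writing $a=\rho(t)^{-k}g\rho(t)^{k}$ for the appropriate $k$ and rescaling, the fibre becomes $\{(g',\rho(t)g'\rho(t)^{-1})\}$ up to reparametrizing the first factor by a $T$-translate. I will have to check the normalization of $\rho$ against the exponents $2j-2i$, possibly replacing $t$ by $t^{-1}$ or absorbing a conjugation into $g$. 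In any case $G_t$ is the graph of an automorphism of $G$, so all the general fibres are isomorphic to $G\cong G_1$.

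\textbf{Step 3: the special fibre $G_0$.} Setting $t=0$ kills every $a_{i,j}$ with $i<j$ and every $b_{j,i}$ with $i<j$, and imposes $a_{i,i}=b_{i,i}$. Combined with invertibility, this says $a\in B_-$, $b\in B_+$, and the two diagonals agree; that is, $G_0=B_-\times_T B_+$. The fibre is reduced because it is cut out by linear coordinate conditions inside the open subset of affine space from Step 1.

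The only genuinely delicate point is bookkeeping. One must confirm that the exponents in the equations match the conjugation by $\rho(t)$ as stated in the lemma (the sign and the factor of $2$), and that invertibility of $a$ and of $b$ is the right open condition in the chosen coordinates. Everything else follows directly from the open-immersion description of $\mcal{G}$.
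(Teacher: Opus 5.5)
Your Steps 1 and 3 are correct, and they take a different, self-contained route from the paper, which gives no argument and only cites \cite{KMS06}. Solving the equations for $(a_{i,j})_{i\le j}$ and $(b_{j,i})_{j>i}$ exhibits $\mathcal{G}$ as an open subscheme of $\mathbb{A}^{n^2+1}$. From this, flatness, $T$-equivariance, reducedness of the special fibre, and $G_0=B_-\times_T B_+$ follow at once. So does the fact that each $G_t$ with $t\neq 0$ is the graph of an automorphism of $G$, hence isomorphic to $G_1$.

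The gap is in Step 2. The normalization check you defer does not come out in favour of the statement, and your proposed remedies cannot fix it. For $t\neq 0$ the two families of equations combine uniformly to $b_{r,c}=t^{2(r-c)}a_{r,c}$ for all $r,c$. Since $\rho(t)_{k,k}=t^{1-k}$, this says $b=\rho(t)^{-2}a\,\rho(t)^{2}$. By contrast, $(\rho(t)g\rho(t)^{-1})_{r,c}=t^{c-r}g_{r,c}$, which has the opposite sign and half the exponent. None of your fixes closes this:
\begin{itemize}
\item Substituting $t\mapsto t^{-1}$ repairs only the sign.
\item Reparametrizing $g$ by a conjugation leaves the set $\{(g,\rho(t)g\rho(t)^{-1}):g\in G\}$ unchanged.
\item Conjugating the first factor by a $t$-dependent torus element such as $\rho(t)^3$ yields a subgroup isomorphic to the fibre, not equal to it.
\end{itemize}
The graphs of $\mathrm{Ad}_x$ and $\mathrm{Ad}_y$ coincide only when $x^{-1}y$ is central, and $\rho(t)^{-3}$ is central only when $t^3=1$. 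So $G_t\neq\{(g,\rho(t)g\rho(t)^{-1})\}$ for generic $t$. What your computation actually proves is $G_t=\{(g,\rho(t)^{-2}g\rho(t)^{2},t)\}$.

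This discrepancy is not cosmetic, so you should not wave it away with ``in any case $G_t$ is a graph.'' The next lemma needs $G_t$ to preserve $D_\sigma(t)=(1,\rho(t))\cdot D_\sigma$. The subgroups $\{(g,\rho(t)g\rho(t)^{-1})\}$ do preserve it, but with the paper's $\rho$ their flat limit at $t=0$ is $B_+\times_T B_-$, because the strictly upper entries of $b$ and the strictly lower entries of $a$ are the ones killed. The group defined by the equations, on the other hand, does not in general preserve $D_\sigma(t)$ for $\sigma\neq w_0$.

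Instead of asserting the stated formula, fix the conventions consistently. For example, take $\rho(t)=\mathrm{diag}(1,t,\dots,t^{n-1})$ and use exponents $j-i$ in place of $2j-2i$. The exponents remain nonnegative, so your Steps 1--3 go through verbatim. They then give exactly $G_t=\{(g,\rho(t)g\rho(t)^{-1},t)\}$ for $t\neq 0$ and $G_0=B_-\times_T B_+$. This is also the sign needed for $D_\sigma(0)$ to be a union of the $X_w\times X^{w\sigma}$.
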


    For $\sigma\in S_n$ and $t\in\mbb{C}^\times$, the variety $D_{\sigma}(t):=(1,\rho(t))\cdot D_{\sigma}$ is invariant under the action of $G_t$ on $\mrm{Fl}_n\times \mrm{Fl}_n$: if $(F_\bullet, G_\bullet)\in D_\sigma$ and $g\in G$, then
	\[(g,\rho(t)g(\rho(t))^{-1})\cdot (F_\bullet,\rho(t)(G_\bullet))=((g\cdot F_\bullet),\rho(t)(g\cdot G_\bullet))\in D_{\sigma}(t).\]

    \begin{lem}\label{lem:induced-action-on-special-fibre}
        The action of $G_t$ on $D_{\sigma}(t)$ for each $t\in \mbb{C}^\times$ induces an action of $G_0$ on $D_{\sigma}(0)$. In particular, $D_{\sigma}(0)$ is a union of $G_0$-orbits in $\mrm{Fl}_n^2$.
    \end{lem}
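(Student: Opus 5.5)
We want to show that the family $\mcal{G}_{\neq 0}=\{G_t\}_{t\neq 0}$ acting on $\mcal{D}_\sigma^\circ$ extends to an action of the whole group scheme $\mcal{G}$ on $\mcal{D}_\sigma$ over $\mbb{A}^1$. Restricting that action to the fibre over $0$ then gives the desired action of $G_0$ on $D_\sigma(0)$. The plan is to consider the action morphism
\[a\colon \mcal{G}\times_{\mbb{A}^1}(\Fl_n^2\times\mbb{A}^1)\to \Fl_n^2\times \mbb{A}^1,\qquad ((g,h,t),(F_\bullet,G_\bullet,t))\mapsto (gF_\bullet,hG_\bullet,t).\]
This is a regular map, since $G\times G$ acts algebraically on $\Fl_n^2$ and $\mcal{G}\subseteq G\times G\times\mbb{A}^1$ is closed. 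We then show that $a$ carries $\mcal{G}\times_{\mbb{A}^1}\mcal{D}_\sigma$ into $\mcal{D}_\sigma$.

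First, over $t\neq 0$ this holds by the computation preceding the lemma: $G_t=\{(g,\rho(t)g\rho(t)^{-1})\}$ preserves $D_\sigma(t)=(1,\rho(t))\cdot D_\sigma$. Hence $a$ maps $\mcal{G}|_{\mbb{A}^1\setminus 0}\times_{\mbb{A}^1}\mcal{D}_\sigma^\circ$ into $\mcal{D}_\sigma^\circ$.

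Second, we pass to closures. The key input is that $\mcal{G}\times_{\mbb{A}^1}\mcal{D}_\sigma$ is the closure of its restriction over $\mbb{A}^1\setminus 0$. Both $\mcal{G}\to\mbb{A}^1$ and $\mcal{D}_\sigma\to\mbb{A}^1$ are flat (the former by \cite{KMS06}, the latter by \cref{lem:D-flat}), so their fibre product is flat over $\mbb{A}^1$. A flat scheme over the smooth curve $\mbb{A}^1$ has no associated points supported on the fibre over $0$, i.e.\ $t$ is a nonzerodivisor. Hence it equals the scheme-theoretic closure of its restriction to $t\neq 0$. Since $a$ is continuous and $\mcal{D}_\sigma$ is closed, $a$ maps this closure into $\overline{\mcal{D}_\sigma^\circ}=\mcal{D}_\sigma$. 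At the set-theoretic level, which suffices for the orbit statement, continuity alone finishes the argument once density is known.

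Finally, we restrict to $t=0$. The fibre of $\mcal{G}\times_{\mbb{A}^1}\mcal{D}_\sigma$ over $0$ is $G_0\times D_\sigma(0)$, so $a$ gives a map $G_0\times D_\sigma(0)\to D_\sigma(0)$. This is the restriction of the $G\times G$-action on $\Fl_n^2$ to $G_0=B_-\times_T B_+\subseteq G\times G$, so the group axioms are automatic. Therefore $D_\sigma(0)$ is a $G_0$-stable subset of $\Fl_n^2$, and hence is the union of the $G_0$-orbits of its points.

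The main obstacle is the density step in the second paragraph: making sure the fibre product is the closure of its generic part. If one prefers to avoid the general associated-points argument, an alternative is to work pointwise. Given $(g_0,h_0)\in G_0$ and $x\in D_\sigma(0)$, choose a curve germ $x(t)\in D_\sigma(t)$ tending to $x$; such a curve exists because $D_\sigma(0)$ lies in the closure of $\mcal{D}_\sigma^\circ$. Next choose a section $(g(t),h(t))\in G_t$ tending to $(g_0,h_0)$, using that $\mcal{G}\to\mbb{A}^1$ is smooth, or explicitly by setting $g(t)=\rho(t)^{-1}h(t)\rho(t)$ with $h$ built from $g_0$ and $h_0$. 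Then take the limit of $(g(t),h(t))\cdot x(t)$ as $t\to 0$.
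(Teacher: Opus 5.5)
Your proof is correct and follows essentially the same route as the paper, which defers to the orbit-degeneration argument of \cite[Proposition 4.1]{KMS06}. That argument extends the action map to $\mcal{G}\times_{\mbb{A}^1}\mcal{D}_\sigma$ and uses density of the part over $t\neq 0$, together with continuity, to land in $\mcal{D}_\sigma$; you have made explicit the flatness-implies-density step that the citation leaves implicit.
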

    \begin{proof}
        This proof is the same as the proof of \cite[Proposition 4.1]{KMS06}\footnote{The orbit degeneration technique of degenerating a group alongside a variety so that the degeneration of the variety is a union of orbit closures of the degeneration of the group comes from \cite{KMS06}, which employed the same family of groups $G_t\subseteq G\times G.$}.
    \end{proof}

The following definition is used in \cref{thm:multiplicity-free}, which is used to prove \cref{sigmadegen}.
  \begin{defn}[\cite{B03}]
    A subvariety $V$ of $\Fl_n^2$ is \textbf{multiplicity-free} if its class $[V]\in H^*(\Fl_n^2)$ has the form
    \[[V]=\sum_{v,w\in S_n}c_v^w[X_v\times X^w],\quad c_v^w\in\{0,1\}.\]
\end{defn}
\begin{thm}[\cite{B03}, Theorem 1]\label{thm:multiplicity-free}
    Any multiplicity-free subvariety of $\Fl_n^2$ admits a flat degeneration to a reduced and Cohen-Macaulay union of varieties of the form $X_u\times X^v$.
\end{thm}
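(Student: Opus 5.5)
The plan is to mimic the proof of \cref{lem:D-flat}, but to replace the special one-parameter family $(1,\rho(t))$ by a degeneration inside the Hilbert scheme of $\Fl_n^2$. The point is to push $V$ to a subscheme stable under the solvable group $B_-\times B_+$. Embed $\Fl_n^2$ $B_-\times B_+$-equivariantly in a projective space, for instance via the Plücker and Segre embeddings of \cref{background}. Let $\mathcal{H}$ be the component of the Hilbert scheme containing the point $[V]$; it is projective and carries an induced action of the connected solvable group $B_-\times B_+$. By the Borel fixed point theorem, the orbit closure $\overline{(B_-\times B_+)\cdot[V]}$ contains a fixed point $[Z]$.

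To produce an actual family over $\mbb{A}^1$ joining $V$ to a fixed point, I would degenerate in stages. First take the flat limit of $V$ under a regular one-parameter subgroup of $T\times T$; by \cref{thm:closure-of-flat} this limit exists and is flat, and \cref{lem:closure-orbit} guarantees a fixed point in the relevant orbit closure. Next, degenerate successively along the one-parameter root subgroups of the unipotent radicals, which are copies of $\mbb{G}_a\cong\mbb{A}^1$, again taking closures in $\mathcal{H}$. The outcome is a $B_-\times B_+$-stable subscheme $Z$. A single flat family over $\mbb{A}^1$ is then obtained by the standard trick of composing these degenerations along a suitable one-parameter subgroup of $B_-\times B_+$, or alternatively by accepting a chain of flat degenerations. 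Flatness is automatic because the Hilbert polynomial is constant along the family (\cref{thm:Hilbert-polynomial-flatness}).

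Because $Z$ is $B_-\times B_+$-stable, its support is a union of $B_-\times B_+$-orbit closures in $\Fl_n^2$, namely of products $X_u\times X^v$. By \cref{lem:special-equals-general}, $[Z]=[V]=\sum c_u^v[X_u\times X^v]$. Since these classes form a $\mbb{Z}$-basis of $H^*(\Fl_n^2)$ (\cref{thm:Kunneth} together with the Schubert basis), the top-dimensional components of $Z$ are exactly the $X_u\times X^v$ with $c_u^v=1$, each occurring with multiplicity one. Hence $Z$ is generically reduced along each top component.

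The main obstacle is ruling out embedded and lower-dimensional components, that is, showing that $Z$ equals the reduced union $Y=\bigcup_{c_u^v=1}X_u\times X^v$ and that $Y$ is Cohen--Macaulay. The inclusion $Y\subseteq Z$ holds scheme-theoretically, so by \cref{lem:equal-subschemes} it suffices to show $H_Y=H_V$, or equivalently $[\mcal{O}_Y]=[\mcal{O}_V]$ in $K_0(\Fl_n^2)$ (\cref{lem:K-ring-basis}). My first attempt would be to import Brion's result that multiplicity-free subvarieties have rational (in particular Cohen--Macaulay) singularities; this forces the $K$-class $[\mcal{O}_V]$ to be determined by $[V]$ through the Möbius-type inclusion--exclusion formula over intersections of the components. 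I would then compute $[\mcal{O}_Y]$ by Mayer--Vietoris on the union, using the fact that intersections of the $X_u\times X^v$ are again unions of such products and are reduced. This comparison, which in Brion's paper rests on Frobenius-splitting techniques, is where I expect the real work to lie.
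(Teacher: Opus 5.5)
The paper does not prove this statement; it quotes \cite[Theorem~1]{B03}. The first part of your proposal is the standard opening and is essentially right: a Borel-fixed-point degeneration in the Hilbert scheme to a $B_-\times B_+$-stable $Z$, followed by the cycle computation showing that the top-dimensional components of $Z$ are exactly the $X_u\times X^v$ with $c_u^v=1$, each with multiplicity one, so that $Y=Z_{\mathrm{red}}\subseteq Z$.

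Two repairs are needed there. First, a limit along a single one-parameter subgroup of $B_-\times B_+$ is in general fixed only by that subgroup, so the ``single family over $\mbb{A}^1$'' needs another justification. The usual fix is to normalize a curve in $\overline{(B_-\times B_+)\cdot[V]}$ joining a point of the orbit to $[Z]$; this gives a flat family over a smooth curve whose general fibres are translates of $V$. Second, the root-subgroup limits must be taken in an order, for instance by decreasing height, that preserves the invariance already obtained.

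More importantly, at this stage multiplicity-freeness has only been used to control the top-dimensional cycle. The whole content of the theorem is the step you leave open: that $Z$ has no embedded components, and that $Y$ is Cohen--Macaulay.

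Your sketch does not close that step.
\begin{enumerate}
\item Importing from \cite{B03} that multiplicity-free varieties have rational or Cohen--Macaulay singularities uses the cited paper exactly where its content lies. It also risks circularity, since there those properties are established together with the degeneration.
\item Even granting rational singularities of $V$, nothing forces $[\mcal{O}_V]$ to equal the inclusion--exclusion class of $Y$. The positivity theorem of \cite{B02} constrains only the \emph{signs} of the lower-order coefficients of $[\mcal{O}_V]$ in the basis $\{[\mcal{O}_{X_u\times X^v}]\}$. Those coefficients are precisely where $[\mcal{O}_Z]=[\mcal{O}_V]$ and $[\mcal{O}_Y]$ can differ: $[\mcal{O}_Z]-[\mcal{O}_Y]=[\mcal{I}]$, where $\mcal{I}$ is the ideal sheaf of $Y$ in $Z$, supported in lower dimension. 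So your claimed ``determination'' of $[\mcal{O}_V]$ by $[V]$ is equivalent to $Z=Y$, the statement you are trying to prove.
\item What is actually needed is a genuine upper bound $\chi(V,L^{\otimes d})\le\chi(Y,L^{\otimes d})$ for $d\gg0$ and $L$ ample. Combined with $Y\subseteq Z$ and \cref{lem:equal-subschemes}, this would give $Z=Y$.
\item Cohen--Macaulayness of $Y$ is not addressed at all. Mayer--Vietoris computes a $K$-class, not depth. Cohen--Macaulayness passes from the special fibre to nearby fibres, not in the reverse direction, so knowing $Z=Y$ and $V$ Cohen--Macaulay would still not give it.
\item Reduced unions of products $X_u\times X^v$ need not be Cohen--Macaulay. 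For example, $(X_{\id}\times X^{\id})\cup(X_{w_0}\times X^{w_0})\subseteq\Fl_3^2$ is a union of two threefolds meeting in a point, so it fails Hartshorne's connectedness in codimension one. The multiplicity-free hypothesis must therefore be used again at this point, and your proposal contains no argument that does so.
\end{enumerate}
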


    The set of $T$-fixed points of $D_{\sigma}$ is $D_{\sigma}^T=\{(F_{\{{u(i_1),\dotsc,u(i_n)}\}},F_{\{{u\sigma'(i_1),\dotsc,u\sigma'(i_n)}\}})\mid u\in S_n,\sigma'\leq \sigma\}$. In fact $D_{\sigma}(t)^T=D_{\sigma}^T$ for all $t\in\mbb{C}^\times$. By continuity of the action of $T$ on $\mathcal{D}_\sigma$, it follows that $D_{\sigma}(0)^T\supseteq D_\sigma^T$ as well. \cref{lem:induced-action-on-special-fibre} implies that  $D_{\sigma}(0)$ is invariant under $\mcal{G}_0=B_-\times_T B_+$. In particular, the $(B_-\times_T B_+)$-orbit of each fixed point in $D_\sigma^T$ must lie in $D_{\sigma}(0)$. Thus, there is a set-theoretic inclusion $\cup_{u\in S_n\text{ s.t.  } \ell(u\sigma) = \ell(u)+\ell(\sigma)}(X_u\times X^{u\sigma})\subseteq D_{\sigma}(0)$. We record this below. 

    \begin{lem}\label{lem:B-orbit-inclusion}
        There is a set-theoretic inclusion 
        \[\cup_{u\in S_n\text{ s.t.  } \ell(u\sigma) = \ell(u)+\ell(\sigma)}(X_u\times X^{u\sigma})\subseteq D_{\sigma}(0).\]
    \end{lem}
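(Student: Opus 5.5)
To prove \cref{lem:B-orbit-inclusion}, I would fix $u\in S_n$ with $\ell(u\sigma)=\ell(u)+\ell(\sigma)$ and show that the irreducible variety $X_u\times X^{u\sigma}$ lies in $D_\sigma(0)$. Because $D_\sigma(0)$ is closed in $\Fl_n^2$, it suffices to find a point $p\in D_\sigma(0)$ whose $G_0=B_-\times_T B_+$-orbit is dense in $X_u\times X^{u\sigma}$. By \cref{lem:induced-action-on-special-fibre}, $D_\sigma(0)$ is $G_0$-stable, so that orbit, and hence its closure, lies in $D_\sigma(0)$.

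\textbf{Choice of point.} I take $p=(u\cdot E_\bullet^{\text{st}},\,u\sigma\cdot E_\bullet^{\text{st}})$. This pair is $T$-fixed and lies in $D_\sigma$, since $u^{-1}(u\sigma)=\sigma$. The torus $\rho(t)$ fixes $u\sigma\cdot E_\bullet^{\text{st}}$, so $p\in D_\sigma(t)$ for every $t\neq0$. The point $p\times\mathbb{A}^1$ therefore lies in the closure $\mathcal{D}_\sigma$, giving $p\in D_\sigma(0)$. This is the fixed-point observation already made before the lemma.

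\textbf{The orbit.} The $G_0$-orbit of $p$ is $\{(b_-u\cdot E_\bullet^{\text{st}},\,b_+u\sigma\cdot E_\bullet^{\text{st}}) : b_\pm\in B_\pm,\ \mathrm{diag}(b_-)=\mathrm{diag}(b_+)\}$. Write $b_-=\tau n_-$ and $b_+=\tau' n_+$ with $\tau,\tau'\in T$ and $n_\pm$ unipotent. The condition $\tau=\tau'$ is only a single diagonal $T$ acting on both factors. The orbit then contains $\tau\cdot(N_-u\cdot E_\bullet^{\text{st}}\times N_+u\sigma\cdot E_\bullet^{\text{st}})$.

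Since $N_-uB_+/B_+=B_-uB_+/B_+$ and $N_+u\sigma B_+/B_+=B_+u\sigma B_+/B_+$, the orbit of $p$ already contains the full product of open cells $(B_-uB_+/B_+)\times(B_+u\sigma B_+/B_+)$. In fact it equals that product. The closure of this product is $X_u\times X^{u\sigma}$, so the inclusion follows.

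\textbf{Main obstacle.} The delicate step is checking that passing from $B_-\times B_+$ to the fibre product $B_-\times_T B_+$ loses nothing. This is handled by the unipotent decomposition above, since the $T$-factors fix the $T$-fixed point and can be absorbed.

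The length hypothesis $\ell(u\sigma)=\ell(u)+\ell(\sigma)$ is not used for this inclusion. It restricts the union to the components that will later be shown to be the top-dimensional ones. Without the hypothesis one still obtains $X_u\times X^{u\sigma'}\subseteq D_\sigma(0)$ for every $\sigma'\leq\sigma$, and the stated inclusion is a special case of this.
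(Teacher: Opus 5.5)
Your proof is correct and is essentially the paper's own argument. The paper likewise puts the $T$-fixed point $(u\cdot E_\bullet^{\text{st}},u\sigma\cdot E_\bullet^{\text{st}})$ into $D_\sigma(0)$ and then uses the $G_0$-stability from \cref{lem:induced-action-on-special-fibre}; your check that the $B_-\times_T B_+$-orbit of this point is exactly $(B_-uB_+/B_+)\times(B_+u\sigma B_+/B_+)$ just makes explicit the orbit-closure step the paper leaves implicit.

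One caveat applies to both your argument and the paper's. The claim $G_0=B_-\times_T B_+$, with $B_-$ on the first factor, is what you get by conjugating by $\rho(t)^{-1}=\mathrm{diag}(1,t,\dots,t^{n-1})$; this is also what the equations for $\mathcal{G}$ encode, up to $t\mapsto t^2$. With $\rho(t)$ exactly as printed, the limit group is $B_+\times_T B_-$, and already for $n=2$, $\sigma=e$ one finds $D_e(0)=(X^{e}\times X_{e})\cup(X^{s_1}\times X_{s_1})$, so that sign must be flipped for the inclusion to hold as stated.
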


 Now we will show $\cup_{u\in S_n\text{ s.t.  } \ell(u\sigma) = \ell(u)+\ell(\sigma)}(X_u\times X^{u\sigma})= D_{\sigma}(0)$ as schemes, where $\cup_{w\in S_n\text{ s.t.  } \ell(w\sigma) = \ell(w)+\ell(\sigma)}(X_u\times X^{u\sigma})$ has the reduced scheme structure.

\begin{thm}\label{sigmadegen}
      There is a scheme-theoretic equality
      \[D_{\sigma}(0)=\bigcup\limits_{w\in S_n\text{ s.t.  } \ell(w\sigma) = \ell(w)+\ell(\sigma)} X_w\times X^{w\sigma }.\]
    The scheme $D_{\sigma}(0)$ is reduced and Cohen-Macaulay, and each component $ X_w\times X^{w\sigma }$ appearing in the union above is Cohen-Macaulay. Moreover, the flat morphism $\mcal{D}_\sigma\to\mbb{A}^1$ is a $T$-equivariant degeneration of $D_\sigma$ to $D_{\sigma}(0)$.
        \end{thm}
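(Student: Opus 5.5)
Let $Z:=\bigcup_{w:\ \ell(w\sigma)=\ell(w)+\ell(\sigma)} X_w\times X^{w\sigma}$, with its reduced scheme structure. \cref{lem:B-orbit-inclusion} gives the set-theoretic inclusion $Z\subseteq D_\sigma(0)$, and the plan is to upgrade this to an equality of schemes by a cohomology-class and Hilbert-polynomial comparison. First, $D_\sigma(0)$ is $T$-invariant: the family $\mcal{D}_\sigma$ is $T$-stable, because $\rho(t)$ commutes with $T$ and $D_\sigma$ is $G_\Delta$-stable. By \cref{lem:D-flat} the family is flat, so the only remaining claims are the scheme-theoretic equality and reducedness.

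Next I compute classes. By \cref{lem:special-equals-general}, $[D_\sigma(0)]=[D_\sigma]$ in $H^*(\Fl_n^2)$. The class of $D_\sigma$ in the K\"unneth basis $\{[X_v\times X^u]\}$ (\cref{thm:Kunneth}) is computed by pairing with dual classes $[X^{v'}\times X_{u'}]$. By Kleiman transversality (\cref{thm:Kleiman}) this pairing counts points of $D_\sigma\cap (gX^{v'}\times hX_{u'})$ for generic translates. A dimension count reduces it to pairs with $\ell(v')+\ell(w_0u')=\dim D_\sigma$, and for those the count is $1$ exactly when $(v',u')=(w,w\sigma)$ with $\ell(w\sigma)=\ell(w)+\ell(\sigma)$, and $0$ otherwise. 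This is the standard expansion of the diagonal-type class
\[[D_\sigma]=\sum_{\ell(w\sigma)=\ell(w)+\ell(\sigma)}[X_w\times X^{w\sigma}],\]
which can also be obtained from $[\Delta]=\sum_w[X_w\times X^w]$ by applying the Demazure-type operator for $\sigma$ on the second factor. In particular $D_\sigma$ is multiplicity-free.

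Now compare with $D_\sigma(0)$. Its top-dimensional components, counted with multiplicity, give $[D_\sigma]$. Each $X_w\times X^{w\sigma}$ in $Z$ has dimension $\dim\Fl_n+\ell(\sigma)$ and lies in $D_\sigma(0)$, so each appears with multiplicity at least $1$. The classes of these components are independent basis elements, and the class is effective, so there is no room left: every top-dimensional component of $D_\sigma(0)$ is one of these, each with multiplicity exactly $1$.

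To exclude embedded or lower-dimensional junk, I use \cref{thm:multiplicity-free}, whose proof in \cite{B03} shows that degenerations of multiplicity-free varieties are reduced, Cohen--Macaulay unions of products of Schubert varieties. Alternatively, I compare Hilbert polynomials directly. Since $Z$ is reduced, Cohen--Macaulay (by Brion's result) and has the same class as $D_\sigma$, its $K$-class, and hence by \cref{lem:K-ring-basis} its Hilbert polynomial, agrees with that of $D_\sigma$. Then $Z\hookrightarrow D_\sigma(0)$ is an inclusion of subschemes with equal Hilbert polynomials, which by flatness (\cref{thm:Hilbert-polynomial-flatness}) equal that of $D_\sigma$, so \cref{lem:equal-subschemes} gives $Z=D_\sigma(0)$. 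Cohen--Macaulayness of each component follows from that of Richardson and Schubert varieties and of their products.

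\textbf{Main obstacle.} The hard step is passing from equality of Chow classes to equality of $K$-classes (Hilbert polynomials) for the reduced union $Z$. This is where Brion's theorem is essential: it ensures that multiplicity-free classes have degenerations whose $K$-classes are determined by the reduced union, for instance via the vanishing of higher cohomology and rational singularities. A secondary point is making the transversality count in the class computation rigorous.
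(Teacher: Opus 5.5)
Your strategy is the paper's: expand $[D_\sigma]$ in the K\"unneth basis via transversality, conclude that $D_\sigma$ is multiplicity-free, invoke Brion's theorem, and finish with a Hilbert-polynomial comparison and \cref{lem:equal-subschemes}. Your argument that the top-dimensional components of $D_\sigma(0)$ are exactly the $X_w\times X^{w\sigma}$, each with multiplicity one, is also sound.

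\textbf{The gap.} It sits in the step you flag as the main obstacle. The inference ``$Z$ is reduced, Cohen--Macaulay and has the same class as $D_\sigma$, hence the same $K$-class and Hilbert polynomial'' is false in general. A Chow class does not determine a Hilbert polynomial, even among reduced Cohen--Macaulay schemes: a plane cubic and a twisted cubic in $\mbb{P}^3$ both have class $3[\mathrm{line}]$, but their Hilbert polynomials are $3x$ and $3x+1$. Your first option misreads \cref{thm:multiplicity-free}. Brion produces \emph{some} flat degeneration of a multiplicity-free variety to a reduced Cohen--Macaulay union. He does not claim that every degeneration, in particular $\mcal{D}_\sigma$, has reduced special fibre. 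Likewise, ``$Z$ is Cohen--Macaulay by Brion's result'' presupposes that $Z$ is Brion's special fibre, which you never establish.

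\textbf{The fix.} That identification is the missing idea. Let $Z'$ be the special fibre of Brion's degeneration of $D_\sigma$; it takes place inside $\Fl_n^2$, so all Hilbert polynomials are for the same embedding.
\begin{enumerate}
\item $Z'$ is a reduced, equidimensional union of varieties $X_v\times X^u$, so its class is the sum of the classes of its components, each with coefficient $1$.
\item These classes form a $\mbb{Z}$-basis, and $[Z']=[D_\sigma]=\sum_{\ell(w\sigma)=\ell(w)+\ell(\sigma)}[X_w\times X^{w\sigma}]$. Hence the components of $Z'$ are exactly the $X_w\times X^{w\sigma}$, and $Z'=Z$.
\item Flatness of Brion's family gives $H_Z=H_{D_\sigma}$, and flatness of $\mcal{D}_\sigma$ gives $H_{D_\sigma(0)}=H_{D_\sigma}$ (\cref{thm:Hilbert-polynomial-flatness}).
\item $Z$ is reduced with support in $D_\sigma(0)$, so it is a closed subscheme of $D_\sigma(0)$. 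Then \cref{lem:equal-subschemes} gives $Z=D_\sigma(0)$, reduced and Cohen--Macaulay.
\end{enumerate}
This is exactly how the paper concludes.

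\textbf{Two smaller points.} First, your dimension condition should read $\ell(v')+\ell(w_0u')=\mrm{codim}\,D_\sigma$, not $\dim D_\sigma$. Second, to make the point count rigorous, the paper uses $\Delta G$-invariance of $D_\sigma$ and an LU decomposition to show that $D_\sigma$ meets the untranslated $X^u\times X_v$ generically transversely. The finitely many intersection points are then $\Delta T$-fixed and can be counted by a length argument. Your Demazure-operator alternative for the class computation also works.
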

        \begin{proof}
        Consider the singular cohomology ring $\mcal{H}:=H^*(\mrm{Fl_n})\otimes_{\mbb{Z}}H^*(\Fl_n)$. By \cref{thm:cycle-class-iso}, there is a ring isomorphism $H^*(\Fl_n)\simeq \mrm{CH}^*(\Fl_n)$, so we can identify the cup product on $H^*(\Fl_n)$ with the intersection product on $\mrm{CH}^*(\Fl_n)$. The sets $\{[X_w]\}_{w\in S_n}$ and $\{[X^w]\}_{w\in S_n}$ are both bases of $H^*(\mrm{Fl_n})$ over $\mbb{Z}$. Identifying the rings $\mathcal{H}$ and $H^*(\Fl_n)\otimes_{\mbb{Z}}H^*(\Fl_n)$ via the K\"{u}nneth isomorphism (\cref{thm:Kunneth}), we see that $\{[X^u]\otimes [X_v]\}_{u,v\in S_n}$ forms a $\mbb{Z}$-basis for $\mathcal{H}$. By \cref{lem:D-flat}, the morphism $\mcal{D}_\sigma\to\mbb{A}^1$ is flat, so by \cref{lem:special-equals-general} $[D_{
		\sigma}]=[D_{\sigma}(0)]$ in $\mcal{H}$. As $\{[X^u]\otimes [X_v]\}_{u,v\in S_n}$ is a $\mbb{Z}$-basis for $\mathcal{H}$, there are $ c_u^v\in\mbb{Z}$ such that 
	\[[D_{\sigma}(0)]=[D_{\sigma}]=\sum_{u,v\in S_n}c_u^v[X_u]\otimes [X^v].\]
    The bases $\{[X^w]\}_{w\in S_n}$ and $\{[X_v]\}_{v\in S_n}$ have the property that $[X_w]\cdot [X^v]=\delta_{u,v}$ under the intersection product in $\mrm{CH}^*(\Fl_n)$, where $\delta_{u,v}$ is $1$ when $u=v$ and $0$ otherwise. Thus, we can multiply $[D_{\sigma}]$ by $[X^u]\otimes [X_v]$ to extract the coefficient $c_u^v$:
	\[c_{u}^v[\mrm{pt}]=[D_\sigma]\cdot ([X^u]\otimes [X_v])=[D_\sigma]\cdot[X^u\times X_v].\]

	By \cref{thm:Kleiman}, there is an open subset $U$ of $G$ such that for all $g_1,g_2\in U$, the varieties $D_\sigma$ and $(g_1,g_2)\cdot (X^u\times X_v)$ are generically transverse in $\Fl_n^2$. As $D_\sigma$ is a $\Delta G$-orbit in $\Fl_n^2$, we see that $(g,g)\cdot D_\sigma=D_\sigma$ for any $g\in G$. Thus, 
        \[(g_1^{-1},g_1^{-1})\cdot (D_{\sigma}\cap ((g_1,g_2)\cdot (X^u\times X_v)))=D_\sigma\cap ((1,g_1^{-1}g_2)\cdot (X^u\times X_v)),\] 
        and the varieties $D_\sigma$ and $((1,g_1^{-1}g_2)\cdot (X^u\times X_v))$ are  generically transverse because $D_\sigma$ and $(X^u\times X_v)$ are generically transverse and the action of $(g_1^{-1},g_1^{-1})$ on $\Fl_n^2$ is an automorphism.
        As the pair $(g_1,g_2)$ was chosen generically, we can assume $g_1^{-1}g_2$ lies in the \textit{big cell} of $G$ and hence has an \textit{LU decomposition}\footnote{Recall that $G=\mrm{GL}_n(\mbb{C})$ is the disjoint union over the subsets $B_+ w B_-$, where $w\in S_n$. When $w=e$ is the identity, the subset $B_+ wB_-=B_+B_-$ is dense and open in $G$, and we call it the \textbf{big cell} of $G$. Every element $g\in B_+B_-$ by definition can be written (non-uniquely) in the form $g=b_2 b_1$, where $b_2\in B_+$ and $b_1\in B_-$. Such a decomposition of an element $g\in G$ into an element of $B_+$ times an element of $B_-$ is called an \textbf{LU decomposition} of $g$.}, i.e., we can assume $g_1^{-1}g_2=b_2b_1$ for some $b_1\in B_-$ and $b_2\in B_+$. Note that $b_1\cdot X_v=X_v$ and $b_2\cdot X^u=X^u$. Thus,
        \[(b_2^{-1},b_2^{-1})\cdot (D_{\sigma}\cap ((1,b_2b_1)\cdot (X^u\times X_v)))=D_\sigma\cap ((b_2^{-1}\cdot X^u)\times (b_1\cdot X_v))=D_\sigma\cap  (X^u\times X_v),\]
        and the intersection of $D_\sigma$ and $(X^u\times X_v)$ is generically transverse because the intersection of $((1,g_1^{-1}g_2)$ and $(X^u\times X_v))$ is generically transverse and the action of $(b_2^{-1},b_2^{-1})$ on $\Fl_n^2$ is an automorphism.

        Our argument above shows that the varieties $D_\sigma$ and $(X^u\times X_v)$ intersect generically transversely, and moreover that  
        \begin{align*}
            c_u^v[\mrm{pt}]=[D_\sigma]\cdot[X^u\times X_v]&=[D_{\sigma}\cap ((g_1,g_2)\cdot (X^u\times X_v))]
            \\&=[(g_1^{-1},g_1^{-1})\cdot (D_{\sigma}\cap ((g_1,g_2)\cdot (X^u\times X_v)))]\\&=[D_\sigma\cap ((1,g_1^{-1}g_2)\cdot (X^u\times X_v))]\\&=[D_\sigma\cap ((1,b_2b_1)\cdot (X^u\times X_v))]\\&
            =[(b_2^{-1},b_2^{-1})\cdot (D_{\sigma}\cap ((1,b_2b_1)\cdot (X^u\times X_v)))]\\&=[D_\sigma\cap ((b_2^{-1}\cdot X^u)\times (b_1\cdot X_v))]\\&=[D_\sigma\cap  (X^u\times X_v)].
        \end{align*}        

        As the intersection $D_\sigma\cap (X^u\times X_v)$ is generically transverse, it is proper by \cref{prop:dimensionally-transverse}, so the intersection either consists of a (nonzero) finite number of points, or the intersection product $[D_\sigma]\cdot[ (X^u\times X_v)]$ is $0$. A generically transverse intersection consisting of finitely many points is transverse (\cref{rmk:finite-points}); in this case, when $D_\sigma\cap (X^u\times X_v)$ consists of finitely many points, each point appearing in the intersection $D_\sigma\cap (X^u\times X_v)$ appears with multiplicity $1$ (\cref{thm:multiplicity1}), and $c_u^v$ represents the number of (distinct) points in the intersection $D_\sigma\cap (X^u\times X_v)$. 

        We will next determine the pairs $(u,v)$ such that the (proper) intersection $D_\sigma\cap (X^u\times X_v)$ is nonempty. The coefficient $c_u^v$ is nonzero only if any of the following equivalent conditions hold: 
		\begin{align*}
			&\mrm{dim}(\mrm{Fl_n}\times \mrm{Fl_n})=(\mrm{dim}(\mrm{Fl_n}\times \mrm{Fl_n})-\mrm{dim}(D_\sigma))+(\mrm{dim}(\mrm{Fl_n}\times \mrm{Fl_n})-\mrm{dim}(X^u\times X_v))\\
			\iff &0=(\mrm{dim}(\mrm{Fl_n}\times \mrm{Fl_n}))-(\mrm{dim}(D_\sigma))-(\mrm{dim}(X^u\times X_v))\\
			\iff &0=(2\cdot \mrm{dim}(\mrm{Fl_n}))-(\mrm{dim}(\mrm{Fl_n})+l(\sigma))-(l(u)+l(w_0v))\\
			\iff &0=-\mrm{dim}(\mrm{Fl_n})+l(\sigma)+l(u)+l(w_0v)\\
			\iff &0=-l(w_0)+l(\sigma)+l(u)+l(w_0v)\\
			\iff &l(v)=l(\sigma)+l(u).
		\end{align*}
        
        Therefore, the coefficient $c_u^v$ can be nonzero only if $l(v)=l(\sigma)+l(u)$. We will now show that, when $l(v)=l(\sigma)+l(u)$, there is \textit{exactly} one point in the intersection $D_\sigma\cap (X^u\times X_v)$, i.e., that $c_u^v=1$. The diagonal subgroup $\Delta T:=\{(t,t)\mid t\in T\}$ of $T\times T$ acts on both $D_\sigma$ and $X^u\times X_v$, and hence $\Delta T$ acts on the intersection $D_\sigma\cap (X^u\times X_v)$. As $D_\sigma\cap (X^u\times X_v)$ consists of finitely many points, it follows that all points in $D_\sigma\cap (X^u\times X_v)$ are $\Delta T$-fixed points. The $\Delta T$-fixed points of $D_\sigma\cap (X^u\times X_v)$ are precisely the points $(wB/B,w'B/B)\in \mrm{Fl}_n\times \mrm{Fl}_n$, where $w\leq u$ and $w'\geq v$ and $(w,w')=(w,w\sigma')$ for some $\sigma'\leq \sigma$ in $S_n$. As $\sigma'\leq \sigma$, we have $l(\sigma')\leq l(\sigma)$. As $w\leq u$, we have $l(w)\leq l(u)$. As $w\sigma'\geq v$, we have $l(w\sigma')\geq l(v)$. Combining these inequalities, we get
	 	\[l(u)+l(\sigma)=l(v)\leq l(w\sigma')\leq l(w)+l(\sigma')\leq l(w)+l(\sigma)\leq l(u)+l(\sigma).\]
	 	The upper and lower bounds in the string of inequalities above both equal $l(u)+l(\sigma)$. Therefore, all inequalities in between must be equalities. In particular, $l(u)+l(\sigma)=l(w)+l(\sigma)$ and $l(w)+l(\sigma')=l(w)+l(\sigma)$. These two equations imply that $l(w)=l(u)$ and $l(\sigma')=l(\sigma)$. As $w\leq u$ and $l(w)=l(u)$, we must have $w=u$, and as $\sigma'\leq \sigma$ and $l(\sigma')=l(\sigma)$, we must have $\sigma'=\sigma$. In conclusion, the only $\Delta T$-fixed point in $D_\sigma\cap (X^u\times X_v)$ is $(uB/B,u\sigma B/B)$. Therefore, $c_u^v=1$.   

        We have now shown that in $\mcal{H}$,
        \begin{equation}\label{eqn:multiplcity-free}
            [D_\sigma]=[D_{\sigma}(0)]=\sum_{w\in S_n\text{ s.t. }\ell(w\sigma)=\ell(w)+\ell(\sigma)}[X_w\times X^{w \sigma}].
        \end{equation}
        As $D_{\sigma}(0)$ is invariant under $\mcal{G}_0=B_-\times_T B_+$ and $\{[X_v\times X^w]\}_{v,w\in S_n}$ is a $\mbb{Z}$-basis for $\mcal{H}$, we therefore have, set-theoretically, that 
         \[D_{\sigma}(0)=\bigcup_{w\in S_n\text{ s.t. }\ell(w\sigma)=\ell(w)+\ell(\sigma)}(X_w\times X^{w \sigma}).\]

         We will now show that $D_{\sigma}(0)$ is reduced and Cohen-Macaulay.
        The equation (\ref{eqn:multiplcity-free}) implies that $D_\sigma$ is a multiplicity-free subvariety of $\Fl_n^2$, so \cref{thm:multiplicity-free} says that $D_\sigma$ admits a flat degeneration to a reduced Cohen-Macaulay union of varieties of the form $X_v\times X^w$, $u,v\in S_n$. Let $Z$ be this reduced union of varieties, and observe that $Z$ has an algebraic torus action induced by the action of $T$ on $D_\sigma$. It follows from \cref{lem:special-equals-general} that $[Z]=[D_\sigma]=[D_{\sigma}(0)]$ in $\mcal{H}$. As $\{[X_v\times X^w]\}_{v,w\in S_n}$ is a $\mbb{Z}$-basis for $\mcal{H}$, it follows from (\ref{eqn:multiplcity-free}) that the $X_v\times X^w$ appearing as components of $Z$ must be the same 
        ones that appear as components of $D_{\sigma}(0)$. Therefore,
        $Z=(D_{\sigma}(0))_{\text{red}}$, where $(D_{\sigma}(0))_{\text{red}}$ is the scheme $D_{\sigma}(0)$ with its reduced structure. As the degenerations of $D_\sigma$ to both $Z$ and to $D_{\sigma}(0)$ are flat, the Hilbert polynomials $h_{D_\sigma}$, $h_{Z}$, and $h_{D_{\sigma}(0)}$ are all equal (\cref{thm:Hilbert-polynomial-flatness}). 
        In particular, the scheme $Z$ is a closed subscheme of $D_{\sigma}(0)$, and $Z$ and $D_{\sigma}(0)$ have the same Hilbert polynomial. Therefore, by \cref{lem:equal-subschemes}, the schemes $Z=D_{\sigma}(0)$ are equal. This shows that $D_{\sigma}(0)$ is reduced and Cohen-Macaulay. 

        The varieties $X_w\times X^{w\sigma}$ appearing as components of $D_\sigma(0)$ are reduced because they are irreducible components of the reduced variety $D_{\sigma}(0)$, and they are Cohen-Macaulay because $X_w$ and $X^{w\sigma}$ are both Cohen-Macaulay and products of Cohen-Macaulay varieties are Cohen-Macaulay. 

        That the flat degeneration of $D_\sigma$ to $D_{\sigma}(0)$ is a $T$-equivariant degeneration now follows from the fact that the $T$-action on $\mcal{D}_\sigma$ induces the natural  diagonal action of $T$ on each component $X_w\times X^{w\sigma}$. 
         
    \end{proof}

\begin{egg}
    In $S_3$, we have $D_{s_1}(0)=(X_{1}\times X^{s_1})\cup(X_{s_2}\times X^{s_2s_1})\cup(X_{s_1s_2}\times X^{s_1s_2s_1}) $. 
\end{egg}

\subsection{Degeneration of the brick variety}\label{subsection:degeneration-of-brick}

In this subsection, we will define a $T$-equivariant degeneration of the brick variety to a reduced union of products of Richardson varieties (\cref{thm:degen}). The products of Richardson varieties that appear as components of this union are indexed by \textit{brickly sequences}, as defined in \cref{defn:brickly} below. The consequences of our degeneration will be explored in \cref{section:subdivision}. 

Let $Q=(\q_1,\dotsc,\q_\mm)$ be a sequence of permutations in $S_n$.

\begin{defn}\label{defn:brickly}
         A sequence $(u_0,u_1,\dotsc,u_{\mm})$ in $S_n$ is a \textbf{brickly sequence} with respect to $Q$ if $u_0=\id$, $u_{\mm}=w_0$, $\ell(u_i\q_{i+1})=\ell(u_i)+\ell(\q_{i+1})$, and $u_{i+1}\leq u_i {\q_{i+1}}$ for all $i=0,\dotsc,\mm-1$. The set of brickly sequences for $Q$ will be denoted $L(Q)$.
    \end{defn}
    \begin{egg}\label{egg:brickly}
        In $S_3$, set $Q=(s_1,s_2,s_1,s_2,s_1)$. Then $u=(\id,\id,s_2,s_1,s_1s_2,s_1s_2s_1)$ is a brickly sequence with respect to $Q$. 
    \end{egg}
    \begin{egg}
        In $S_3$, let $Q=(s_1s_2, s_1s_2,s_1)$. The two brickly sequences are $(u_0,u_1,u_2,u_3)=(\id,s_2, s_1s_2, s_1s_2s_1)$ and $(u_0,u_1,u_2,u_3)=(\id,\id, s_1s_2, s_1s_2s_1)$.
    \end{egg}
    \begin{egg}
        In $S_4$, let $Q=(s_1s_2s_3, s_1s_2s_3,s_1s_2,s_1)$. The six brickly sequences are:
        \begin{itemize}
            \item $(u_0,u_1,u_2,u_3,u_4)=(\id,\id, s_1s_2s_3, s_1s_2s_3s_1s_2,s_1s_2s_3s_1s_2s_1)$,
            \item  $(u_0,u_1,u_2,u_3,u_4)=(\id,s_2, s_1s_2s_3, s_1s_2s_3s_1s_2,s_1s_2s_3s_1s_2s_1)$,
            \item  $(u_0,u_1,u_2,u_3,u_4)=(\id,s_3, s_1s_2s_3, s_1s_2s_3s_1s_2,s_1s_2s_3s_1s_2s_1)$,
            \item $(u_0,u_1,u_2,u_3,u_4)=(\id,s_2s_3, s_1s_2s_3, s_1s_2s_3s_1s_2,s_1s_2s_3s_1s_2s_1)$,
           \item  $(u_0,u_1,u_2,u_3,u_4)=(\id,s_3, s_3s_1s_2s_3, s_1s_2s_3s_1s_2,s_1s_2s_3s_1s_2s_1)$,
           \item $(u_0,u_1,u_2,u_3,u_4)=(\id,s_2s_3, s_3s_1s_2s_3, s_1s_2s_3s_1s_2,s_1s_2s_3s_1s_2s_1)$,
        \end{itemize}
        The reader is encouraged to observe that $6=3!$.
    \end{egg}

\begin{defn}
   Let $Q=(\q_1,\dots, \q_{\mm})$ be a word in $S_n$.  
    We introduce the following definitions:
    \begin{align*}
        \mcal{I}_j&:=\Fl_n^j \times \mcal{D}_{\q_j} \times \Fl_n^{\mm-j-1} \subseteq \Fl_n^{\mm+1}\times \mbb{A}^1 && \text{for $j\leq m$, }  \\
        \mcal{I}_{\mm+1}&:=X^{\id}\times \Fl_{n}^{\mm-1} \times X_{w_0}\times \mbb{A}^1, && \\
        \mcal{I}_{[j]}&:=\bigcap_{i\in [j]}\mcal{I}_i &&\text{for $j\leq m+1$. }  
    \end{align*}
    Note that the intersection in the definition of $\mcal{I}_{[j]}$ is a scheme-theoretic one. 
\end{defn}

\begin{defn}
    Let $Q=(\q_1,\dots, \q_{\mm})$ be a word in $S_n$. For $t\in \mbb{A}^1$, we define the following scheme-theoretic intersections:
    \begin{align*}
        I_j(t)&=\mcal{I}_j\cap ((\Fl_n)^{m+1}\times \{t\}) \subseteq \Fl_n^{\mm+1}&&\text{ for $j\leq m$, }\\
        I_{\mm+1}(t) &:= \mcal{I}_{\mm+1} \cap ((\Fl_n)^{m+1}\times \{t\})\subseteq \Fl_n^{\mm+1}, &&\\
        I_{[j]}(t)&:=\mcal{I}_{[j]}\cap ((\Fl_n)^{m+1}\times \{t\})\subseteq \Fl_n^{\mm+1} &&\text{for $j\leq m+1$. }
    \end{align*}
\end{defn}

Note that for any $t\in \mbb{A}^1,$ we have the scheme-theoretic equalities
\begin{align*}
    I_j(t)&=\Fl_n^j \times D_{\q_j}(t) \times \Fl_n^{\mm-j-1}\\
    I_{\mm+1}(t)&=X^{\id}\times \Fl_{n}^{\mm-1} \times X_{w_0},\\
    I_{[j]}(t)&= \bigcap_{i\in [j]} I_i(t).
\end{align*}
Set theoretically, $I_{[\mm+1]}(1)=\Brick^Q$ by definition. However, $I_{[\mm+1]}(1)$ is defined as a scheme-theoretic intersection, and we defined $\Brick^Q$ as having reduced structure.
To see that $I_{[\mm+1]}(1)$ and $\Brick^Q$ are equal as schemes, we will first show that $\mcal{I}_{[\mm+1]}$ is flat over $\mbb{A}^1$ and that $I_{[\mm+1]}(0)$ is reduced. We will then use these facts to conclude that $I_{[\mm+1]}(1)$ is reduced.

\begin{prop}\label{degen of brick}
    For $u=(u_0,u_1,\dotsc,u_{\mm})\in L(Q)$, set \[C_u:=X_{u_0}^{u_0}\times X_{u_1}^{u_0 \q_1}\times X_{u_2}^{u_{1}\q_2}\times\dotsb\times X_{u_{\mm}}^{u_{\mm-1}\q_{\mm}}.\]
    Then $I_{[\mm+1]}(0)$ is $\bigcup_{u\in L(Q)}C_u$. In particular, $I_{[m+1]}(0)$ is reduced.
    \end{prop}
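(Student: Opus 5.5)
The plan is to compute the scheme-theoretic intersection $I_{[\mm+1]}(0)=\bigcap_{j\le \mm} I_j(0)\cap I_{\mm+1}(0)$ factor by factor using \cref{sigmadegen}, and then to check reducedness.

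\textbf{Step 1: expand the intersection.} By \cref{sigmadegen}, each $I_j(0)$ is the preimage, under projection onto factors $j-1,j$, of the reduced union $\bigcup_{w:\ \ell(w\q_j)=\ell(w)+\ell(\q_j)} X_w\times X^{w\q_j}$. Taking the intersection over $j$ and distributing, $I_{[\mm]}(0)$ is set-theoretically the union, over sequences $(w_1,\dots,w_\mm)$ with $\ell(w_j\q_j)=\ell(w_j)+\ell(\q_j)$, of
\[\bigcap_j \pi_{j-1,j}^{-1}(X_{w_j}\times X^{w_j\q_j}).\]
In factor $i$ this is $X^{w_i\q_i}\cap X_{w_{i+1}}=X_{w_{i+1}}^{w_i\q_i}$, with the end factors cut by $X^{\id}$ and $X_{w_0}$ from $I_{\mm+1}(0)$. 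So each piece is a product of Richardson varieties
\[X_{w_1}^{\id}\times X_{w_2}^{w_1\q_1}\times\dots\times X_{w_0}^{w_\mm\q_\mm}.\]
It is nonempty only when $w_1=\id$ and $w_{i+1}\le w_i\q_i$. Reindexing $u_i=w_{i+1}$ and $u_\mm=w_0$ (which forces $w_\mm\q_\mm\ge w_0$), these are exactly the brickly sequences of \cref{defn:brickly}, and the pieces are exactly the $C_u$. This gives the set-theoretic equality.

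\textbf{Step 2: reducedness.} The scheme-theoretic intersection of products of Richardson varieties whose factors are intersected in distinct coordinates is a product of scheme-theoretic intersections $X^{a}\cap X_b$. These are reduced, since Richardson varieties are reduced as scheme-theoretic intersections. The difficulty is that the intersection of \emph{unions} is not obviously the union of the intersections at the level of ideals.

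To handle this, I would use the fact that each $D_{\q_j}(0)$ is a reduced union of $B_-\times_T B_+$-stable products of Schubert and opposite Schubert varieties, so its ideal is cut out compatibly with Frobenius splitting. More precisely, all the varieties $X_w\times X^v$ and their intersections are compatibly Frobenius split in $\Fl_n^{\mm+1}$ by a common splitting: take the product of the standard splittings of $\Fl_n$ compatibly splitting all Schubert and opposite Schubert varieties. Sums and intersections of ideals of compatibly split subvarieties are radical, and the lattice they generate is distributive. Hence intersections distribute over unions scheme-theoretically, and $I_{[\mm+1]}(0)$ is the reduced union $\bigcup_{u\in L(Q)}C_u$.

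\textbf{Main obstacle.} Step 2 is the main obstacle. If Frobenius splitting is undesirable, the fallback is a Hilbert-polynomial or class computation modeled on the end of the proof of \cref{sigmadegen}: compare $[I_{[\mm+1]}(0)]$ with $\sum_u[C_u]$ and use \cref{lem:equal-subschemes}. This route, however, requires already knowing flatness of $\mcal I_{[\mm+1]}$, which is what the proposition is meant to feed, so I would try the splitting argument first.
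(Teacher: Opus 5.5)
Your proposal is correct and follows the paper's argument. Like the paper, you distribute the intersection over the unions from \cref{sigmadegen}, identify the nonempty pieces as products of Richardson varieties whose nonemptiness conditions are exactly the brickly-sequence conditions, and justify reducedness and scheme-theoretic distributivity through a common Frobenius splitting of $\Fl_n^{\mm+1}$ that compatibly splits all Schubert and opposite Schubert varieties; the paper invokes this splitting in a footnote, and your Step 2, including your placement of $D_{\q_j}(0)$ on factors $j-1,j$, spells it out more carefully.
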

\begin{proof}
Applying \cref{sigmadegen}, we find\footnote{Note that there is a Frobenius splitting on $\Fl_n$ that compatibly splits all Schubert varieties and opposite Schubert varieties. This Frobenius splitting extends to the product $\Fl_n^{m+1}$, and compatibly splits all of the $I_j(0)$. As intersections and unions of compatibly split varieties are Frobenius split, and as compatibly split varieties are all reduced, it follows that every intersection and union in this sequence of equalities is reduced.}
            \begin{align*}
                I_{[\mm+1]}(0)&=\bigcap_{j\in [\mm+1]} I_{j}(0)\\
                &=I_{\mm+1}(0) \cap \bigcap_{j\in [\mm]} \left(  \bigcup_{u\in S_n\text{ s.t. }\ell(u\q_j)=\ell(u)+\ell(\q_j)} (\Fl_n)^j \times X_u \times X^{u\q_j}  \times (\Fl_n)^{\mm-j-1} \right)\\
                &=\bigcup_{\substack{\{u_j\in S_n\}_{j=0}^{\mm-1}\\ \text{ s.t. }  \ell(u_j\q_{j+1})=\ell(u_j)+\ell(\q_{j+1})}}  (X^\id \cap X_{u_0} )\times \left(\prod X^{u_{j}\q_{j+1}}\cap X_{u_{j+1}}\right) \times (X^{u_{\mm-1}\q_{\mm}}\cap X_{w_0})\\
                &=\bigcup_{\substack{\{u_j\in S_n\}_{j=0}^{\mm-1}\\ \text{ s.t. }  \ell(u_j\q_{j+1})=\ell(u_j)+\ell(\q_{j+1})}}  X_{u_0}^e\times \left(\prod X^{u_{j}\q_{j+1}}_{u_{j+1}}\right) \times X^{u_{\mm-1}\q_{\mm}}_{w_0}.
            \end{align*}
            
        Intersections of Schubert varieties and opposite Schubert varieties are reduced, and, over $\mbb{C}$, the scheme-theoretic intersection of a product of varieties is the product of the scheme-theoretic intersections. As each Schubert variety and opposite Schubert variety appearing in \cref{sigmadegen} is reduced, it follows that every non-empty component appearing in the above union is reduced.
        Since a Richardson variety $X_u^v$ is nonempty if and only if $u\leq v$, the nonempty terms of the above union are exactly when $u_0=\id$, $u_{i+1}\leq u_iq_{i+1}$ for $i\leq m-2$, and $w_0\leq u_{\mm-1}\q_{\mm}$.       
        That is, 
        \[ I_{[\mm+1]}(0) = \bigcup_{(u_0,\dots,u_{\mm})\in L(Q)} X_{\id}^{\id} \times \prod_{i=1}^{\mm} X_{u_i}^{u_{i-1}\q_i}.\qedhere \]

    \end{proof}

As $D_\sigma$ is fixed by the action of $\Delta G$, given any $t\in\mbb{C}^\times$ and $i\geq 0$ there is an equality of schemes
    \[D_{\sigma}(t)=(1,\rho(t))\cdot D_\sigma=(\rho(t)^i,\rho(t)^{i+1})\cdot D_\sigma.\]
Therefore, for any $t\in\mbb{C}^\times$ and any $j\leq m+1$ we have equalities 
\[I_{j}(t)=(1,\rho(t),\rho(t)^2,\dotsc,\rho(t)^{\mm})\cdot I_{j}(1),\]
and
\[I_{[j]}(t)=(1,\rho(t),\rho(t)^2,\dotsc,\rho(t)^{\mm})\cdot I_{[j]}(1).\]

    The following lemma will be used in \cref{lem:proper-and-CM} to prove that 
     $[\mcal{O}_{I_{[j]}(t)}]=[\mcal{O}_{I_{[j-1]}(t)}]\cdot [\mcal{O}_{I_{j}(t)}]$.

\begin{lem}[\cite{B02}, Lemma 1]\label{lem:Cohen-Macaulay}
    Let $A$ and $B$ be closed subschemes of a smooth variety $X$. If $A$ and $B$ are equidimensional, Cohen-Macaulay, and intersect properly in $X$, then their scheme-theoretic intersection $A\cap B$ is equidimensional and Cohen-Macaulay. Furthermore, 
    \[[\mcal{O}_{A\cap B}]=[\mcal{O}_{A}]\cdot[\mcal{O}_B]\]
\end{lem}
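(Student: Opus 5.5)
The approach is to work locally and reduce to commutative algebra. Equidimensionality, the Cohen--Macaulay property and the identity in $G_0(X)$ are all local on $X$, so we fix a point $p\in A\cap B$. We put $R:=\mcal{O}_{X,p}$, a regular local ring of dimension $d=\dim X$. Let $I,J\subseteq R$ be the ideals of $A$ and $B$ at $p$. The hypotheses say that $R/I$ and $R/J$ are Cohen--Macaulay and equidimensional of codimensions $a$ and $b$. Properness says that every minimal prime of $I+J$ has height $a+b$.

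\emph{Step 1: reduction to the diagonal.} We use Serre's trick and identify $A\cap B$ with $(A\times B)\cap\Delta_X$ inside $X\times X$. The product $A\times B$ is Cohen--Macaulay and equidimensional of codimension $a+b$, since products of Cohen--Macaulay schemes over $\C$ are Cohen--Macaulay. Because $X$ is smooth, $\Delta_X$ is locally cut out by a regular sequence $x_1\otimes1-1\otimes x_1,\dots,x_d\otimes1-1\otimes x_d$ of length $d$ in the local ring of $X\times X$ at $(p,p)$. Write $S$ for the local ring of $A\times B$ at $(p,p)$. Then $S$ is Cohen--Macaulay of dimension $2d-a-b$, and $\mcal{O}_{A\cap B,p}$ is the quotient of $S$ by the images of these $d$ elements.

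\emph{Step 2: the Cohen--Macaulay and equidimensional conclusion.} By properness, this quotient has dimension $d-a-b=\dim S-d$. In a Cohen--Macaulay local ring, a sequence of $d$ elements that cuts the dimension down by exactly $d$ is a regular sequence, because it is part of a system of parameters. The quotient of a Cohen--Macaulay ring by a regular sequence is again Cohen--Macaulay. A Cohen--Macaulay local ring is unmixed, so all its minimal primes have the same dimension. Together these give that $A\cap B$ is Cohen--Macaulay and equidimensional. I expect this dimension count, namely verifying that properness exactly matches the parameter condition, to be the only real content of the argument.

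\emph{Step 3: the K-theory identity.} By \cref{thm:free-resolution}, $[\mcal{O}_A]\cdot[\mcal{O}_B]=\sum_i(-1)^i[\mrm{Tor}_i^{\mcal{O}_X}(\mcal{O}_A,\mcal{O}_B)]$. It therefore suffices to show $\mrm{Tor}_i^R(R/I,R/J)=0$ for all $i>0$. Via the diagonal reduction, $\mrm{Tor}_i^R(R/I,R/J)\cong\mrm{Tor}_i^{R\otimes R}(R/I\otimes R/J,R)$. This is computed by the Koszul complex on the $d$ diagonal elements over $S$. The Koszul complex on a regular sequence is acyclic in positive degrees, and Step 2 showed that these elements form a regular sequence on $S$. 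Hence the higher Tor vanish, and the identity $[\mcal{O}_{A\cap B}]=[\mcal{O}_A]\cdot[\mcal{O}_B]$ holds in $G_0(X)$.
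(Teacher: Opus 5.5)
Your proof is correct. It is the standard Serre reduction-to-the-diagonal argument: the local equations of $\Delta_X$ form a regular sequence on the Cohen--Macaulay ring $S=\mathcal{O}_{A\times B,(p,p)}$ by the dimension count that properness provides, and then the Koszul complex is acyclic. This is essentially how the cited result of Brion is proved; the paper itself gives no proof beyond the citation.

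The only point to tidy is the Tor isomorphism. State it as $\mathrm{Tor}_i^R(R/I,R/J)\cong\mathrm{Tor}_i^{\mathcal{O}_{X\times X,(p,p)}}(S,\mathcal{O}_{\Delta_X,(p,p)})$, obtained by localizing the Cartan--Eilenberg isomorphism over $R\otimes_{\mathbb{C}}R$. Writing it over $R\otimes R$ is not enough, because the $d$ diagonal elements generate the diagonal ideal only after this localization.
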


\begin{lem}\label{lem:proper-and-CM}
    For every $t\in\mbb{C}$ and $j\in [\mm+1],$ the schemes $I_{[j-1]}(t)$ and $I_j(t)$ intersect properly,  $I_{[j]}(t)$ is equidimensional and Cohen-Macaulay, and 
      \[[\mcal{O}_{I_{[j]}(t)}]=\prod_{k=1}^j [\mcal{O}_{I_{\{k\}}(t)}].\]
\end{lem}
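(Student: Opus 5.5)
I plan to argue by induction on $j$, applying \cref{lem:Cohen-Macaulay} at each step; the real content is checking properness of the intersection. In the base case $j=1$ we have $I_{[1]}(t)=I_1(t)=D_{\q_1}(t)\times\Fl_n^{\mm-1}$ (with the obvious indexing). For $t\neq 0$ it is isomorphic to $D_{\q_1}\times \Fl_n^{\mm-1}$, which is Cohen--Macaulay and irreducible by \cref{lem:CM}. For $t=0$, \cref{sigmadegen} says it is a reduced Cohen--Macaulay union of products $X_w\times X^{w\q_1}$, each of dimension $\dim\Fl_n+\ell(\q_1)$, so it is equidimensional. The same reasoning shows every $I_k(t)$ is equidimensional and Cohen--Macaulay of codimension $\dim\Fl_n-\ell(\q_k)$ for $k\le\mm$. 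Also, $I_{\mm+1}(t)$ is smooth (a point times $\Fl_n^{\mm-1}$ times a point) of codimension $2\dim\Fl_n$.

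For the inductive step I would first prove the dimension bound
\[\dim I_{[j]}(t)\le (\mm+1)\dim\Fl_n-\sum_{k\le j}\operatorname{codim} I_k(t)\]
for every component. Properness then follows, since the reverse inequality $\ge$ holds for any intersection in a smooth ambient variety (Serre's intersection inequality). With properness in hand, \cref{lem:Cohen-Macaulay} applied to $A=I_{[j-1]}(t)$ and $B=I_j(t)$ gives that $I_{[j]}(t)$ is equidimensional and Cohen--Macaulay, and that $[\mcal{O}_{I_{[j]}(t)}]=[\mcal{O}_{I_{[j-1]}(t)}]\cdot[\mcal{O}_{I_j(t)}]$; the product formula then follows by induction.

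To get the dimension bound I would use the translation identities $I_{[j]}(t)=(1,\rho(t),\dots,\rho(t)^{\mm})\cdot I_{[j]}(1)$ for $t\neq0$, which reduce the nonzero-$t$ cases to $t=1$.
\begin{itemize}
\item For $t=1$ and $j\le \mm$, $I_{[j]}(1)$ is a generalized Bott--Samelson-type iterated bundle with an unconstrained first flag, so its dimension is $\dim\Fl_n+\sum_{k\le j}\ell(\q_k)$ (up to the trailing free factors), which is exactly the expected value. This follows from the iterated Schubert bundle description.
\item For $t=1$ and $j=\mm+1$, the intersection is $\Brick^Q$. Its expected dimension is $\sum\ell(\q_k)-\ell(w_0)$. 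Here I would use \cref{lem: surject} to compare with words in simple transpositions and invoke Escobar's dimension count $\mm'-\ell(\dem(Q'))$, possibly with the fibre description used in \cref{lem: surject}.
\item For $t=0$, I would use the explicit description of the pieces. Set-theoretically, $I_{[j]}(0)$ is a union of products of Richardson varieties and full flag factors, indexed by partial brickly-type sequences $u_0,\dots,u_j$ with $\ell(u_i\q_{i+1})=\ell(u_i)+\ell(\q_{i+1})$. Each piece has dimension $\sum_i(\ell(u_{i-1}\q_i)-\ell(u_i))+\ell(w_0)-\ell(u_j)$, which telescopes to $\sum\ell(\q_i)+\ell(w_0)-\ell(u_0)$. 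With $u_0$ free (the first factor is $X_{u_0}$), this is bounded by the expected value. When $j=\mm+1$, the constraints $u_0=\id$ and $u_\mm=w_0$ give exactly $\sum\ell(\q_i)-\ell(w_0)$, matching \cref{degen of brick}.
\end{itemize}

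The main obstacle is this dimension bookkeeping, particularly at $t=0$, because we need the scheme-theoretic intersection to have no excess components. I would handle it by computing the underlying set exactly as in the proof of \cref{degen of brick}, since dimension depends only on the reduced structure.

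A more uniform alternative is upper semicontinuity of fibre dimension in the family $\mcal{I}_{[j]}\to\mbb{A}^1$: the fibre dimension at $0$ is at most the maximum over a neighbourhood, so the $t=0$ bound would follow from $t\neq0$ together with the direct computation above. However, this uses properness of the family, which I would want to check before relying on it.
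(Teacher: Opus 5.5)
Your proof is correct. Its skeleton matches the paper's. That means induction on $j$ via \cref{lem:Cohen-Macaulay}, reducing $t\neq 0$ to $t=1$, the iterated Schubert-bundle count for $j\le\mm$ at $t=1$, and the explicit Richardson pieces at $t=0$. At $t=0$ the paper cites Richardson's theorem that a nonempty $X_u\cap X^v$ is a proper intersection, which has the same content as your count $\dim X_u^v=\ell(v)-\ell(u)$.

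The real difference is the step $t=1$, $j=\mm+1$. The paper applies Kleiman transversality (\cref{thm:Kleiman}) to $I_{[\mm]}(1)$ and a generic translate $(g_0,\dots,g_\mm)\cdot I_{\mm+1}(1)$. It then uses the $\Delta G$-invariance of $I_{[\mm]}(1)$ and a big-cell factorization $g_0^{-1}g_\mm=b^{-1}b'$ with $b\in B_+$, $b'\in B_-$. These stabilize the point factors $X^{\id}$ and $X_{w_0}$, so the translate can be moved back to $I_{\mm+1}(1)$ itself. This gives generic transversality and hence properness.

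You instead bound $\dim\Brick^Q$ from above. Concatenating reduced words for the $\q_i$ gives a word $Q'$ in simple transpositions with $|Q'|=\sum_i\ell(\q_i)$ and $\dem(Q')=\dem(Q)=w_0$. Forgetting intermediate flags maps $\Brick^{Q'}$ onto $\Brick^Q$, by \cref{lem: surject} or directly from $\overline{B_+rB_+}\cdot\overline{B_+sB_+}=\overline{B_+rsB_+}$ when $\ell(rs)=\ell(r)+\ell(s)$. Escobar's theorem then gives $\dim\Brick^Q\le\sum_i\ell(\q_i)-\ell(w_0)$, which is exactly the expected dimension. The intersection inequality in the smooth ambient $\Fl_n^{\mm+1}$ supplies the reverse bound.

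Your route is more elementary, with no transversality and no $B_\pm$ manipulations. The cost is importing Escobar's dimension theorem, so \cref{prop:dim of brick} becomes an input rather than a consequence. The paper's route needs only general theorems and proves the stronger statement that the final intersection is generically transverse.

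Two small points. First, your telescoping at $t=0$ has the endpoints swapped. The piece $X_{u_0}\times X_{u_1}^{u_0\q_1}\times\dotsb\times X^{u_{j-1}\q_j}\times\Fl_n^{\mm-j}$ has dimension $(\ell(w_0)-\ell(u_0))+\sum_{i=1}^{j-1}\bigl(\ell(u_{i-1})+\ell(\q_i)-\ell(u_i)\bigr)+\ell(u_{j-1})+\ell(\q_j)+(\mm-j)\ell(w_0)=(\mm-j+1)\ell(w_0)+\sum_{k\le j}\ell(\q_k)$. This is exactly the expected value, with no leftover $-\ell(u_0)$ term, so your conclusion is unaffected.

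Second, the semicontinuity alternative runs the wrong way. Upper semicontinuity lets the fibre dimension jump \emph{up} at $t=0$. Since $\mcal{I}_{[j]}$ is defined as an intersection rather than a closure, it could a priori have components lying over $0$. So the direct computation at $t=0$ is genuinely needed; that alternative should be dropped, not kept as a fallback.
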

\begin{proof}
First, we consider the case $t=0$. In this case, we note that by \cref{sigmadegen}, $D_{\q_j}(0)$ is a reduced union of varieties of the form $(\Fl_n)^{j}\times X_u\times X^w \times (\Fl_n)^{\mm-j-1}$ with $\ell(w)=\ell(u)+\ell(\q_j)$. Thus, each $I_j(0)$ is equidimensional, and, as Schubert varieties and opposite Schubert varieties are Cohen-Macaulay and products of Cohen-Macaulay varieties are Cohen-Macaulay, it follows that $I_j(0)$ is Cohen-Macaulay. As a nonempty intersection of a reduced Schubert variety with a reduced opposite Schubert variety is proper (\cite{R92}\footnote{In \cite[Theorem 3.2]{R92}, the author shows that the intersection of a Schubert cell $X_u^\circ$ and opposite Schubert cell $X^v_\circ$ is proper, provided the intersection is nonempty, i.e., provided that $u\leq v$. From this it follows that the intersection of the Schubert variety $X_u$ and opposite Schubert variety $X^v$ is proper (since $X_u\cap X^v$ contains the dense open subset $X_u^\circ\cap X^v_\circ$).}), it follows that a nonempty intersection of $I_1(0)$ and $I_2(0)$ is proper. 
Therefore, \cref{lem:Cohen-Macaulay} implies that the intersection $I_1(0)\cap I_2(0)$ is equidimensional and Cohen-Macaulay and $[\mcal{O}_{I_{[2]}(0)}] = [\mcal{O}_{I_1(0)}]\cdot [\mcal{O}_{I_2(0)}]$. By repeating this argument inductively, we see that the proper intersection $I_{[j]}(0)=I_{[j-1]}(0)\cap I_j(0)$ is equidimensional and Cohen-Macaulay and $[\mcal{O}_{I_{[j]}(0)}]=\prod_{k=1}^j [\mcal{O}_{I_{k}(0)}].$ When we get to $I_{\mm+1}(0)$ it is also equidimensional and Cohen-Macaulay and the properness argument remains unchanged. 

Next, we will consider the case $t\in\mbb{C}^\times$. In this case, we have that $I_{j}(t)=(1,\rho(t),\rho(t)^2,\dotsc,\rho(t)^{\mm})\cdot I_{j}(1)$ and 
$I_{[j]}(t)=(1,\rho(t),\rho(t)^2,\dotsc,\rho(t)^{\mm})\cdot I_{[j]}(1)$. Thus, it suffices to consider the case $t=1$.

For each $j$, $I_j(1)$ is a product of equidimensional Cohen-Macaulay varieties by \cref{lem:CM}, so we have that $I_j(1)$ is equidimensional and Cohen-Macaulay. 

We next show that $I_{[j-1]}(1)$ and $I_j(1)$ intersect properly by computing dimensions.
If we take\footnote{When we view $I_{[\{1,\dotsc,j\}]}$ as a scheme-theoretic intersection as we are doing, it actually \textit{does} have a reduced structure sheaf by ``Frobenius splitting'' arguments in \cite{BrionKumar}. This means that we don't really \textit{need} to ``take'' the reduced structure.} the reduced structure on $I_{[j]}$, it is isomorphic to the Bott-Samelson variety\footnote{When $\q_1,\dots, \q_m$ are simple transpositions, \cite{AndersonFulton} calls $I_{[\mm]}(1)$ a ``big Bott-Samelson variety."}  $\BS^{(w_0,\q_1,\dots, \q_j,w_0,\dots, w_0)}$. Our earlier discussion of generalized Bott-Samelson varieties tells us that $I_{[j]}(1)$ is irreducible with dimension $(\mm-j)\cdot \dim(\Fl_n)+\ell(\q_1)+\dots+\ell(\q_j).$ Thus, $\mrm{codim}(I_{[j]}(1))=(\mm+1) \cdot \dim(\Fl_n)-((\mm-j)\cdot \dim(\Fl_n)+\ell(\q_1)+\dots+\ell(\q_j)) = (j+1)\cdot \dim(\Fl_n) -(\ell(\q_1)+\dots+\ell(\q_j)).$ We can also see that $(I_j(1))_{\mrm{red}}\cong\BS^{(w_0,\dots,w_0,\q_j,w_0,\dots,w_0)}$ so $\mrm{codim}(I_j(1))=\dim(\Fl_n)-\ell(\q_j)$. Hence, $\mrm{codim}(I_{[j]}(1))=\mrm{codim}(I_{[j-1]}(1))+\mrm{codim}(I_j(1)).$ That is, $I_{\{1,\dotsc,j-1\}}$ and $I_j$ intersect properly for $j\leq \mm$. 

To see that $I_{[\mm]}(1)$ and $I_{\mm+1}(1)$ intersect properly, we will use a different argument. The group $G^{\mm+1}$ acts transitively on $\Fl_n^{\mm+1}$. So by \cref{thm:Kleiman}, there is an open subset of $\vec{g}=(g_0,\dots, g_\mm)\in G^{\mm+1}$ such that $\vec{g}\cdot I_{\mm+1}(1)$ is generically transverse to $I_{[\mm] }(1)$. Let $\vec{g}'=(g_0,\id,\id,\dots,\id, g_{\mm})$. Since $I_{\mm+1}(1)=X^e\times \Fl_n^{\mm-1}\times X_{w_0},$ $\vec{g}'\cdot I_{\mm+1}(1)=\vec{g}\cdot I_{\mm+1}(1)$ so $\vec{g}'\cdot I_{\mm+1}(1)$ will also be generically transverse to $I_{[\mm]}(1)$. Hence, $(g_0^{-1},g_0^{-1},\dots, g_0^{-1})\cdot \vec{g}'\cdot I_{\mm+1}(1) = (\id, \dots, \id, g_0^{-1}g_{\mm})\cdot I_{\mm+1}(1)$ will be generically transverse to $(g_0^{-1},g_0^{-1},\dots, g_0^{-1})\cdot I_{[\mm] }(1)= I_{[\mm] }(1)$. Now, since there was an open set of choices of $\vec{g}$ that would work, we can retroactively pick a $\vec{g}$ such that $g_0^{-1}g_{\mm}$ lies in the \textit{big cell} $B_{+}B_{-}\subseteq G$. Writing $g_0^{-1}g_{\mm}=b^{-1}b'$ with $b\in B_+$ and $b'\in B_{-}$ and recalling that $B_{-}$ is the stabilizer of $X_{w_0}=\{E_\bullet^{\mrm{op}}\},$ we have that $(\id, \dots, \id, b^{-1}b')\cdot  X^e\times \Fl_n^{\mm-1}\times X_{w_0}=(\id, \dots, \id, b^{-1})\cdot I_{\mm+1}(1)$ is generically transverse to $I_{[\mm] }(1)$. We now act on each with $(b,b,\dots,b)$ to get that $(b,b,\dots,b)\cdot (\id, \dots, \id, b^{-1})\cdot I_{\mm+1}(1) = (b,\id,\dots, \id)\cdot I_{\mm+1}(1) = I_{\mm+1}(1) $ is generically transverse to $I_{[\mm] }(1)$.

Now we note that each $I_j(1)$ is Cohen-Macaulay so by \cref{lem:Cohen-Macaulay}, the Cohen-Macaulayness inductively passes to each $I_{[j]}(1)$ and we have that each $I_{[j]}(1)$ is equidimensional and Cohen-Macaulay and \[[\mcal{O}_{I_{[j]}(1)}]=\prod_{k=1}^j [\mcal{O}_{I_{\{k\}}(1)}]\]
for all $j.$ By our earlier reduction, the same holds for all $t\in \C^\times$.
 \end{proof}

Specializing \cref{lem:proper-and-CM} to $j=\mm+1$, and recalling that $\Brick^Q$ and $I_{[\mm+1]}(1)$ have the same underlying topological spaces, we can compute the dimension of $\Brick^Q$.
\begin{prop}\label{prop:dim of brick}
    For $Q=(\q_1,\dots, \q_\mm)$ a word in $S_n$ with $\dem(Q)=w_0,$ we have $\dim(\Brick^Q)=\ell(\q_1)+\dots+\ell(\q_\mm)-\ell(w_0).$ 
\end{prop}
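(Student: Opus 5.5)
The plan is to read off the dimension from the proper-intersection computation in \cref{lem:proper-and-CM}, applied with $j=\mm+1$ and $t=1$. Because $\Brick^Q$ and $I_{[\mm+1]}(1)$ have the same underlying topological space, it suffices to compute the dimension of $I_{[\mm+1]}(1)$. To do this, I will compute $\mrm{codim}(I_{[\mm+1]}(1))$ in $\Fl_n^{\mm+1}$ as the sum of the codimensions of the two pieces being intersected.

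From the proof of \cref{lem:proper-and-CM} we already have
\[\mrm{codim}(I_{[\mm]}(1))=(\mm+1)\dim(\Fl_n)-\sum_{i=1}^{\mm}\ell(\q_i).\]
This comes from identifying $(I_{[\mm]}(1))_{\mrm{red}}$ with the generalized Bott--Samelson variety $\BS^{(w_0,\q_1,\dots,\q_\mm)}$, whose dimension is $\ell(w_0)+\sum_i\ell(\q_i)$ by the iterated Schubert bundle description. The second piece is $I_{\mm+1}(1)=X^{\id}\times\Fl_n^{\mm-1}\times X_{w_0}$. Here $X^{\id}$ and $X_{w_0}$ are points, so this piece has codimension $2\dim(\Fl_n)$. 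Since these two pieces intersect properly by \cref{lem:proper-and-CM}, every component of the intersection has codimension $(\mm+3)\dim(\Fl_n)-\sum_i\ell(\q_i)$. Its dimension is therefore
\[(\mm+1)\dim(\Fl_n)-\Bigl((\mm+3)\dim(\Fl_n)-\sum_i\ell(\q_i)\Bigr)=\sum_i\ell(\q_i)-2\ell(w_0).\]

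This count double-counts the fixed $0$th factor. In $\BS^{(w_0,\q_1,\dots)}$ the first flag is $E_\bullet^{\mrm{st}}$, and the factor $\Fl_n^{\,0}$ paired with $w_0$ is free. Imposing $X^{\id}$ on the $0$th factor cuts exactly that $\ell(w_0)$ back out. To avoid sign slips, I will set up the bookkeeping explicitly: the ambient space is $\Fl_n^{\mm+1}$, and $I_{[\mm]}(1)$ is $\Fl_n$ (the $0$th factor) times a Schubert bundle of fibre dimension $\sum\ell(\q_i)$. This gives $\dim I_{[\mm]}(1)=\ell(w_0)+\sum\ell(\q_i)$, and the codimension formula above agrees with that. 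Intersecting with $X^{\id}$ on factor $0$ reduces the dimension by $\ell(w_0)$, and intersecting with $X_{w_0}$ on factor $\mm$ reduces it by another $\ell(w_0)$. The result is
\[\dim(\Brick^Q)=\ell(w_0)+\sum\ell(\q_i)-2\ell(w_0)=\sum_i\ell(\q_i)-\ell(w_0).\]
So the two approaches agree once the $0$th factor is counted correctly.

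The remaining point is nonemptiness, because properness only controls the dimension of components that actually exist. I will get this from $\dem(Q)=w_0$: the set $\nabla(Q,w_0)$ is nonempty, so by \cref{prop: bulky moment} $\Brick^Q$ has a $T$-fixed point. Alternatively, equidimensionality from \cref{lem:proper-and-CM} ensures that every component has this dimension. I expect the main obstacle to be the careful dimension bookkeeping for the $0$th factor, together with confirming that the proper-intersection claim in \cref{lem:proper-and-CM} covers the last step $j=\mm+1$, which is done there via Kleiman transversality.
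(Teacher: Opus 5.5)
Your route is the paper's: use that $I_{[\mm]}(1)$ and $I_{\mm+1}(1)$ intersect properly (\cref{lem:proper-and-CM} at $t=1$, $j=\mm+1$), and that $\Brick^Q$ and $I_{[\mm+1]}(1)$ have the same underlying space. Your second, ``explicit bookkeeping'' count is correct. The first computation, however, is wrong, and so is your assertion that the quoted codimension ``agrees'' with $\dim I_{[\mm]}(1)=\ell(w_0)+\sum_i\ell(\q_i)$.

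Inside the $(\mm+1)\ell(w_0)$-dimensional space $\Fl_n^{\mm+1}$, that dimension gives $\mrm{codim}\, I_{[\mm]}(1)=\mm\,\ell(w_0)-\sum_i\ell(\q_i)$, not $(\mm+1)\ell(w_0)-\sum_i\ell(\q_i)$. The extra $-\ell(w_0)$ in your first answer comes from this off-by-one, not from double-counting the $0$th factor. The formula you quoted is inherited from the proof of \cref{lem:proper-and-CM}. There, $\dim I_{[j]}(1)$ is written as $(\mm-j)\dim\Fl_n+\sum_{i\le j}\ell(\q_i)$, but it should be $(\mm-j+1)\dim\Fl_n+\sum_{i\le j}\ell(\q_i)$, because the leading $w_0$ (the free flag $F^0$) was not counted. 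In that proof the slip is harmless, since the same shift appears on both sides of the check $\mrm{codim}\, I_{[j]}(1)=\mrm{codim}\, I_{[j-1]}(1)+\mrm{codim}\, I_j(1)$. It stops being harmless once you plug in absolute codimensions, as you do. With the corrected value, properness gives every component of $I_{[\mm+1]}(1)$ codimension $(\mm+2)\ell(w_0)-\sum_i\ell(\q_i)$, i.e.\ dimension $\sum_i\ell(\q_i)-\ell(w_0)$. So delete the first count rather than ``reconciling'' it.

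On nonemptiness, your first reason is the right one: $\dem(Q)=w_0$ gives some $J\in\nabla(Q,w_0)$, hence a fixed point $p_J\in\Brick^Q$ by \cref{prop: bulky moment}. The paper leaves this point implicit. Your ``alternatively, equidimensionality'' option cannot substitute for it, since equidimensionality says nothing about whether the intersection is empty.
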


\begin{thm}\label{thm:degen}
    Let $Q=(\q_1,\dots,\q_\mm)$ be a word in $S_n$ with $\dem(Q)=w_0$.  The scheme  $\mcal{I}_{[\mm+1]}$ is flat over $\mbb{A}^1$ with general fiber $I_{[\mm+1]}(1)$ equal to the brick variety $\Brick^Q$.
   This gives a $T$-equivariant embedded (within $\Fl_n^{\mm+1}$) degeneration $\Brick^Q\leadsto \bigcup_{u\in L(Q)}C_u$ with $C_u=X_{u_0}^{u_0}\times X_{u_1}^{u_0 \q_1}\times X_{u_2}^{u_{1}\q_2}\times\dotsb\times X_{u_{\mm}}^{u_{\mm-1}\q_{\mm}}.$ 
\end{thm}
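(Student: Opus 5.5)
The plan is to prove flatness of $\mcal{I}_{[\mm+1]}\to\mbb{A}^1$ by showing that every fibre $I_{[\mm+1]}(t)$ has the same Hilbert polynomial with respect to the Segre--Pl\"ucker embedding of $\Fl_n^{\mm+1}$, and then to invoke \cref{thm:Hilbert-polynomial-flatness}. By \cref{lem:proper-and-CM}, for every $t\in\mbb{C}$ we have $[\mcal{O}_{I_{[\mm+1]}(t)}]=\prod_{k=1}^{\mm+1}[\mcal{O}_{I_k(t)}]$ in $K_0(\Fl_n^{\mm+1})$. Each $\mcal{I}_k\to\mbb{A}^1$ is flat: for $k\leq\mm$ this is \cref{lem:D-flat} with factors of $\Fl_n$ attached, and $\mcal{I}_{\mm+1}$ is a constant family. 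The first step is therefore to show that the $K$-class of a fibre of a flat family embedded in $Y\times\mbb{A}^1$ is independent of $t$. I would argue this directly: $\mcal{O}_{\mcal{I}_k}$ is flat over $\mbb{A}^1$, so the sequence $0\to\mcal{O}_{\mcal{I}_k}\xrightarrow{\cdot(x-t)}\mcal{O}_{\mcal{I}_k}\to\mcal{O}_{I_k(t)}\to0$ is exact. Pulling back along $Y\to Y\times\mbb{A}^1$ and using homotopy invariance $K_0(Y\times\mbb{A}^1)\cong K_0(Y)$ for smooth $Y$, the class of $\mcal{O}_{I_k(t)}$ is constant in $t$. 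It follows that $[\mcal{O}_{I_{[\mm+1]}(t)}]$ is independent of $t$.

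Next I would push this class forward to $K_0(\mbb{P}^N)$ under the projective embedding and apply \cref{lem:K-ring-basis}. Since the Hilbert polynomial factors through $G_0$, as in \cref{lem:Hilbert-poly-SES}, all fibres $I_{[\mm+1]}(t)$ have the same Hilbert polynomial. The total space $\mcal{I}_{[\mm+1]}$ is a closed subscheme of $\mbb{P}^N\times\mbb{A}^1$, so \cref{thm:Hilbert-polynomial-flatness} then gives flatness over $\mbb{A}^1$.

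The hardest step is identifying the general fibre scheme-theoretically with the reduced variety $\Brick^Q$. Set-theoretically the two agree, and by \cref{degen of brick} the special fibre $I_{[\mm+1]}(0)=\bigcup_{u\in L(Q)}C_u$ is reduced. I would apply \cref{lem:reducedness-open-property} to the flat proper family, which is proper because $\Fl_n^{\mm+1}$ is projective. This shows that the locus of $t$ with reduced fibre is open and contains $0$, so it is nonempty. Because $I_{[\mm+1]}(t)=(1,\rho(t),\dotsc,\rho(t)^\mm)\cdot I_{[\mm+1]}(1)$ for $t\neq0$, all nonzero fibres are isomorphic. Hence $I_{[\mm+1]}(1)$ is reduced and equals $\Brick^Q$.

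Finally, $T$-equivariance holds because $T$ commutes with $\rho(t)$ and preserves each $D_{\q_j}$, $X^{\id}$ and $X_{w_0}$, and so preserves each fibre. Together with the identification of the special fibre from \cref{degen of brick}, this gives the stated $T$-equivariant embedded degeneration $\Brick^Q\leadsto\bigcup_{u\in L(Q)}C_u$.
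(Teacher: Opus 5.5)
Your proposal is correct and follows the paper's proof step for step. The steps are: the product formula $[\mcal{O}_{I_{[\mm+1]}(t)}]=\prod_{k}[\mcal{O}_{I_k(t)}]$ from \cref{lem:proper-and-CM}; constancy in $t$ of each factor; the flatness criterion \cref{thm:Hilbert-polynomial-flatness}; then \cref{lem:reducedness-open-property} together with the $(1,\rho(t),\dotsc,\rho(t)^{\mm})$-translation to see that $I_{[\mm+1]}(1)$ is reduced and hence equals $\Brick^Q$.

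The one place you argue differently is the constancy of $[\mcal{O}_{I_k(t)}]$ in $K_0(\Fl_n^{\mm+1})$. You use that $x-t$ is a nonzerodivisor on the flat family, so restricting to the fibre computes the derived pullback of $[\mcal{O}_{\mcal{I}_k}]$, and then apply $\mbb{A}^1$-homotopy invariance. The paper instead deduces it from equality of Hilbert polynomials via \cref{lem:K-ring-basis}.

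Your route is the sound one. \cref{lem:K-ring-basis} only pins down classes after pushforward to $K_0(\mbb{P}^N)$, and that pushforward is not injective on $K_0(\Fl_n^{\mm+1})$. For example, the Schubert curves $X^{s_1}$ and $X^{s_2}$ in $\Fl_3$ are both lines under the Pl\"ucker--Segre embedding, so both have Hilbert polynomial $x+1$, yet their $K$-classes differ. The product formula, however, needs the equality in $K_0(\Fl_n^{\mm+1})$ itself.

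Two small remarks. For the Hilbert-polynomial step you only need that $\chi(\,\cdot\,(d))$ is additive on $G_0$, not \cref{lem:K-ring-basis}. And what openness of the reduced locus gives you is that it contains some $t\neq 0$, since an open subset of $\mbb{A}^1$ containing $0$ is cofinite; "nonempty" alone is not the point.
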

\begin{proof}

      By \cref{lem:proper-and-CM}, we have that 
      $[\mcal{O}_{I_{[\mm+1]}(t)}]=\prod_{k=1}^{\mm+1} [\mcal{O}_{I_{\{k\}}(t)}]$. As $\mcal{I}_j$ is a flat degeneration from $I_j(1)$ to $I_j(0)$, it follows from \cref{thm:Hilbert-polynomial-flatness} that $H_{I_j(0)}(x)=H_{I_j(t)}(x)=H_{I_j(1)}(x)$ for all $j\in [\mm+1],t\in \C$. By \cref{lem:K-ring-basis}, we can translate that equality of Hilbert polynomials to an equality of classes in $G_0(\Fl_n^{\mm+1})$: $[\mcal{O}_{I_{\{j\}}(0)}]=[\mcal{O}_{I_{\{j\}}(t)}]=[\mcal{O}_{I_{\{j\}}(1)}]$ for all $j\in [\mm+1],t\in \C$.
      
      Therefore, we get the equalities
\[
    [\mcal{O}_{I_{[\mm+1]}(t)}]=[\mcal{O}_{I_{\{1\}}(t)}] \dotsb [\mcal{O}_{I_{\{\mm+1\}}(t)}]=[\mcal{O}_{I_{\{1\}}(1)}] \dotsb [\mcal{O}_{I_{\{\mm+1\}}(1)}]= [\mcal{O}_{I_{[\mm+1]}(1)}]
\]
for all $t\in \C$. By \cref{thm:Hilbert-polynomial-flatness}, this shows that $\mcal{I}_{[\mm+1]}\to \mbb A^1$ is a flat family. Since the action of $T$  on $\Fl_n^{\mm+1}$ gives an algebraic action on $\mcal{I}_{[\mm+1]}$ that restricts to fibers, we have a $T$-equivariant embedded (within $\Fl_n^{\mm+1}$) degeneration $I_{[\mm+1]}(1)\leadsto I_{[\mm+1]}(0)$.

We know $I_{[\mm+1]}(0)$ is reduced by \cref{degen of brick}. Thus, by \cref{lem:reducedness-open-property}, there is some $t\in \C^\times$ such that $I_{[\mm+1]}(t)$ is reduced. For $t\in \C^\times,$ $I_{[\mm+1]}(t)\cong I_{[\mm+1]}(1)$, so we find that $I_{[\mm+1]}(1)$ is reduced. Recalling that $I_{[\mm+1]}(1)$ and $\Brick^Q$ are equal as sets, and $\Brick^Q$ was defined to have reduced structure, we conclude that  $I_{[\mm+1]}(1)=\Brick^Q$.

Applying \cref{degen of brick}, we conclude that $\mcal{I}_{[\mm+1]}$ gives a $T$-equivariant embedded (within $\Fl_n^{\mm+1}$) degeneration $\Brick^Q\leadsto \bigcup_{u\in L(Q)}C_u$.
\end{proof}

\section{The subdivision of the brick polytope}\label{section:subdivision}

We now have all the tools we need to prove \cref{thm:main-result}. Owing to \cite[Theorem 3.1]{AllenHammer}, when $\Brick^Q$  has finitely many $T$-orbits, the $T$-equivariant embedded degeneration of $\Brick^Q$ from \cref{thm:degen} induces a subdivision of the bulky brick polytope whose face poset matches the poset of torus-orbit closures in the special fibre of the degeneration. We call this poset the \textit{spackle poset}, and this result is recorded as  \cref{thm:poset-isos}, which is the main result of \cref{subsection:bricks-tricks}. Formulas for the volumes of the full-dimensional cells of the subdivision are computed in \cref{subsection:volumes}. In \cref{subsection:Lehmer-complex}, we focus on the induced subdivision of the Loday associahedron, and show that the  spackle poset in this case is isomorphic to the face poset of the unit cube subdivision of the $1\times 2\times \dotsb \times (n-1)$ box, ultimately proving \cref{thm:main-result}.

 \subsection{The spackle poset and a  mixed subdivision of the bulky brick polytope }\label{subsection:bricks-tricks}
In \cref{subsubsection:Richardson}, we will prove relevant results concerning containments of Richardson varieties inside other Richardson varieties, and in \cref{subsubsection:spackle}, we will prove \cref{thm:poset-isos}.

\subsubsection{Richardson varieties and torus-orbit closures in $\mrm{Brick}^Q(0)$}\label{subsubsection:Richardson} 

To begin, we will prove a technical result, \cref{subrichardson}, involving brickly sequences. A consequence of this result is \cref{subrichardson2}, which concerns containments of Richardson varieties within other Richardson varieties. \cref{subrichardson2} will ultimately be used in the proof of \cref{thm:poset-isos}.

\begin{lem}\label{subrichardson}
Let $Q=(\q_1, \q_2,\dots, \q_{\mm})$ be a word in $S_n$ with $\dem(Q)=w_0$.
Suppose $1=u_0,u_1,\dots, u_{\mm}=w_0$ is a brickly sequence in $S_n$.

Then for $x_i=w_0\dem(\q_{i+1},\q_{i+2},\dots, \q_{\mm})^{-1}$ and $y_i=\dem(\q_{1},\q_{2},\dots \q_{i})$, the Bruhat interval $[u_{i},u_{i-1}\q_i]$ is contained in $[x_i,y_i]$.
\end{lem}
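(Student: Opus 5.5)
To show $[u_i,u_{i-1}\q_i]\subseteq[x_i,y_i]$ it suffices to prove the two Bruhat inequalities $x_i\leq u_i$ and $u_{i-1}\q_i\leq y_i$. Then any $w$ with $u_i\leq w\leq u_{i-1}\q_i$ satisfies $x_i\leq w\leq y_i$. (If $u_i\not\leq u_{i-1}\q_i$ the interval is empty and there is nothing to show, but brickly sequences have $u_i\leq u_{i-1}\q_i$ anyway.) Both inequalities will come from inductions along the brickly sequence, using \cref{lem: demlem} (monotonicity of $\star$) and the fact that $ab\leq a\star b$ for all $a,b$. The latter holds because $[\id,a\star b]=\{a'b':a'\leq a,\,b'\leq b\}$.

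\textbf{Upper bound.} First prove by induction on $i$ that $u_i\leq y_i=\dem(\q_1,\dots,\q_i)$, with $y_0=\id=u_0$. If $u_{i-1}\leq y_{i-1}$, then
\[u_{i-1}\q_i\leq u_{i-1}\star\q_i\leq y_{i-1}\star \q_i=y_i,\]
using $ab\leq a\star b$ and then \cref{lem: demlem}. Since $u_i\leq u_{i-1}\q_i$, this gives $u_i\leq y_i$ and closes the induction. It also yields the needed inequality $u_{i-1}\q_i\leq y_i$.

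\textbf{Lower bound.} Prove by downward induction on $i$ that $x_i\leq u_i$, where $x_i=w_0\dem(\q_{i+1},\dots,\q_\mm)^{-1}$ and $x_\mm=w_0=u_\mm$. Left multiplication by $w_0$ is an order-reversing involution, and so is inversion combined with it. So $x_i\leq u_i$ is equivalent to $u_i^{-1}w_0\leq \dem(\q_{i+1},\dots,\q_\mm)$; set $z_i:=u_i^{-1}w_0$. Then $z_\mm=\id$, and the inductive hypothesis reads $z_{i}\leq \dem(\q_{i+1},\dots,\q_\mm)$. We want $z_{i-1}\leq \q_i\star\dem(\q_{i+1},\dots,\q_\mm)$.

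From $u_i\leq u_{i-1}\q_i$ we get $w_0u_i\geq w_0u_{i-1}\q_i$. Here the length-additivity condition $\ell(u_{i-1}\q_i)=\ell(u_{i-1})+\ell(\q_i)$ lets us write $u_{i-1}=(u_{i-1}\q_i)\q_i^{-1}$ as a length-subtractive factorization. Equivalently, $\ell(z_{i-1})=\ell(\q_i)+\ell(\q_i^{-1}z_{i-1})$, so $z_{i-1}=\q_i\cdot(\q_i^{-1}z_{i-1})$ is a reduced product. Writing $z'=\q_i^{-1}z_{i-1}=(u_{i-1}\q_i)^{-1}w_0$, the inequality $u_i\leq u_{i-1}\q_i$ translates (via $w\mapsto w^{-1}w_0$, which reverses order) to $z'\leq z_i$. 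Hence
\[z_{i-1}=\q_i z'\leq \q_i\star z'\leq \q_i\star z_i\leq \q_i\star\dem(\q_{i+1},\dots,\q_\mm),\]
by $ab\leq a\star b$, \cref{lem: demlem} twice, and the inductive hypothesis. This gives $x_{i-1}\leq u_{i-1}$. Combining the two bounds finishes the proof.

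The main obstacle is the bookkeeping in the lower bound: checking that $w\mapsto w^{-1}w_0$ reverses Bruhat order (this follows since inversion preserves it and multiplication by $w_0$ reverses it), and correctly converting $u_i\leq u_{i-1}\q_i$ into $z'\leq z_i$. Notably, the length-additivity hypothesis is not even strictly needed, since $\q_iz'\leq\q_i\star z'$ holds regardless. So I expect the argument to go through using only $u_i\leq u_{i-1}\q_i$.
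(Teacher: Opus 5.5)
Your proof is correct and follows essentially the same route as the paper's: a forward induction giving $u_{i-1}\q_i\leq u_{i-1}\star\q_i\leq \dem(\q_1,\dots,\q_i)$ via \cref{lem: demlem}, and a backward induction on $u_i^{-1}w_0\leq\dem(\q_{i+1},\dots,\q_\mm)$ using the factorization $u_{i-1}^{-1}w_0=\q_i\cdot (u_{i-1}\q_i)^{-1}w_0$. Your explicit appeal to $ab\leq a\star b$ spells out a step the paper leaves implicit, and one small slip is harmless: $w\mapsto w^{-1}w_0$ is an order-reversing bijection of $S_n$ (with order-reversing inverse $v\mapsto w_0v^{-1}$) but not an involution, since applying it twice gives $w\mapsto w_0ww_0$, and the bijection property is all you use.
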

\begin{proof}
	For $i=0$ we can see $u_0 = \prod_{j=1}^0 \q_j=1.$ Suppose for induction sake that $u_i\leq \dem(\q_1,\q_2,\dots, \q_i)$. Then   $u_{i+1}\leq u_{i}\star\q_{i+1}\leq \dem(\q_1,\q_2,\dots, \q_{i+1})=u_{i}\q_{i+1}\leq \dem(\q_1,\q_2,\dots, \q_{i+1})$ by \cref{lem: demlem}. Therefore $u_i\leq \dem(\q_1,\dots, \q_{i})$ for all $i$. In particular $u_{i-1}\q_i \leq y_i$ for all $i$.
    	
	For $i=\mm,$ we have that $u_{\mm}^{-1}w_0=1$. Suppose for (backwards) induction that $u_{i+1}^{-1}w_0\leq \dem(\q_{i+2},\dots, \q_{\mm})$. 
	Well, $u_{i+1}\leq u_{i}\q_{i+1}$ thus $u_{i+1}^{-1}\leq \q_{i+1}^{-1}u_i^{-1}$ and $\q_{i+1}^{-1} u_i^{-1}w_0 \leq u_{i+1}^{-1}w_0\leq \dem(\q_{i+2},\dots, \q_{\mm})$. Hence, by Lemma \ref{lem: demlem}, $u_i^{-1}w_0=\q_{i+1}\q_{i+1}^{-1} u_i^{-1}w_0\leq \dem(\q_{i+1},\q_{i+2},\dots, \q_{|Q|}).$  By induction, $u_i^{-1}w_0\leq \dem(\q_{i+1},\q_{i+2},\dots, \q_{\mm})$ for all $i.$ Hence, $u_i\geq w_0 \dem(\q_{i+1},\q_{i+2},\dots, \q_{\mm})^{-1} = x_i$ for all $i.$ We conclude that $[u_i,u_{i-1}\q_i]\subseteq [x_i,y_i]$ for all $i$.
\end{proof}

\begin{lem}\label{subrichardson2}
Suppose $(u_0,u_1,\dots, u_{\mm})$, where $u_0=\id$ and $u_{\mm}=w_0$, is a brickly sequence in $S_n$ for $Q=(\q_1, \q_2,\dots, \q_{\mm})$. 
Then for $x_i=w_0\dem(\q_{i+1},\q_{i+2},\dots, \q_{\mm})^{-1}$ and $y_i=\dem(\q_{1},\q_{2},\dots \q_{i})$, we have $X_{u_{i}}^{u_{i-1}\q_{i}} \subseteq X_{x_i}^{y_i}=\pi_i(\Brick^Q)$ where $\pi_i$ is the projection of $\Fl_n^{\mm+1}$ to the $i$th $\Fl_n$ factor (with indexing starting at 0).
 
\end{lem}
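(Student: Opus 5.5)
The plan is to prove the two claims in the statement separately: the equality $X_{x_i}^{y_i}=\pi_i(\Brick^Q)$, and the containment $X_{u_i}^{u_{i-1}\q_i}\subseteq X_{x_i}^{y_i}$.

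For the equality I will apply part (1) of \cref{proj is Richardson}. Split $Q=R+S$ with $R=(\q_1,\dots,\q_i)$ and $S=(\q_{i+1},\dots,\q_{\mm})$, so that $|R|=i$. That proposition gives $\pi_i(\Brick^{R+S})=X_u^v$ with
\[
v=\dem(R)=y_i, \qquad u=w_0\dem(S)^{-1}=x_i .
\]
No further work is needed here; this is exactly how the decomposition \cref{eq:pieces} in \cref{thm:Minkowski} was obtained.

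For the containment, I will use the standard facts that $X_a\subseteq X_b$ iff $a\geq b$, and $X^a\subseteq X^b$ iff $a\leq b$. Since a Richardson variety is the intersection $X_u^v=X_u\cap X^v$, it follows that $X_a^b\subseteq X_c^d$ whenever $c\leq a$ and $b\leq d$.
\begin{itemize}
\item If $u_i\leq u_{i-1}\q_i$, then \cref{subrichardson} gives $[u_i,u_{i-1}\q_i]\subseteq[x_i,y_i]$. Its proof records the bounds explicitly: $x_i\leq u_i$ and $u_{i-1}\q_i\leq y_i$. These inequalities give $X_{u_i}\subseteq X_{x_i}$ and $X^{u_{i-1}\q_i}\subseteq X^{y_i}$. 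Intersecting yields $X_{u_i}^{u_{i-1}\q_i}\subseteq X_{x_i}^{y_i}$.
\item If $u_i\not\leq u_{i-1}\q_i$, then $X_{u_i}^{u_{i-1}\q_i}$ is empty and the containment is trivial. The brickly condition $u_{i}\leq u_{i-1}\q_{i}$ in fact rules out this case.
\end{itemize}

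The only delicate point I expect is extracting the two inequalities $x_i\leq u_i$ and $u_{i-1}\q_i\leq y_i$ from \cref{subrichardson}. Containment of Bruhat intervals by itself already forces $x_i\leq u_i$ and $u_{i-1}\q_i\leq y_i$, since the interval's endpoints lie in it. In any case, the proof of that lemma establishes both inequalities directly, so this step is routine.
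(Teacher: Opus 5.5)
Your proposal is correct and is the paper's argument: the equality $X_{x_i}^{y_i}=\pi_i(\Brick^Q)$ is part (1) of \cref{proj is Richardson} applied to the split $R=(\q_1,\dots,\q_i)$, $S=(\q_{i+1},\dots,\q_{\mm})$. The containment follows from \cref{subrichardson} by reading off the endpoint inequalities $x_i\leq u_i$ and $u_{i-1}\q_i\leq y_i$, then using $X_a\subseteq X_c$ for $a\geq c$ and $X^b\subseteq X^d$ for $b\leq d$. The paper states this in one line, and your write-up fills in those routine steps; your second case is vacuous by the brickly condition, as you note.
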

\begin{proof}
    This follows from \cref{subrichardson} with the equality $X_{x_i}^{y_i}=\pi_i(\Brick^Q)$ coming from \cref{proj is Richardson}.
\end{proof}

We will next prove a result, \cref{lem: prod the rich}, characterizing $T$-orbit closures in the special fibre $\mrm{Brick}^Q(0)$ of the degeneration of the brick variety from \cref{degen of brick}. This result will also be used in the proof of \cref{thm:poset-isos}.

\begin{lem}\label{lem: prod the rich}
    Let $c_1,\dots, c_{\mm},d_1,\dots, d_{\mm}$ be permutations in $S_n$.
    Suppose the product of Richardson varieties $Y=\prod_{i=1}^{\mm} X_{c_i}^{d_i}$ is toric with respect to the diagonal $T$ action on $(\Fl_n)^{\mm}$ with $T$ being the torus of invertible diagonal matrices. Then the $T$-orbit closures contained in $\prod_{i=1}^{\mm} X_{c_i}^{d_i}$ are exactly the products of sub-Richardson varieties, i.e. $\prod_{i=1}^{\mm} X_{a_i}^{b_i}$ with $c_i\leq a_i\leq b_i\leq d_i$.
\end{lem}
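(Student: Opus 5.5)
We prove the two containments separately. Throughout, $T$ acts diagonally on $(\Fl_n)^{\mm}$. Since $Y$ is toric, it has finitely many $T$-orbits, and every $T$-orbit closure in $Y$ is an irreducible $T$-invariant closed subvariety.

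\emph{Products of sub-Richardson varieties are orbit closures.} Let $c_i\le a_i\le b_i\le d_i$ for every $i$, and set $Z=\prod_i X_{a_i}^{b_i}$. Since $X_{a_i}^{b_i}\subseteq X_{c_i}^{d_i}$, we have $Z\subseteq Y$, and $Z$ is closed, irreducible and $T$-invariant. A closed irreducible $T$-invariant subvariety of a toric variety with finitely many orbits is the closure of its unique open dense orbit. So $Z$ is a $T$-orbit closure, and it is contained in $Y$.

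\emph{Every orbit closure is such a product.} Let $Z=\overline{T\cdot y}$ with $y=(y_1,\dots,y_{\mm})\in Y$.

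\begin{enumerate}
\item For each $i$, consider the Richardson stratification $X_{c_i}^{d_i}=\bigsqcup_{c_i\le a\le b\le d_i}\mathring X_a^b$, where $\mathring X_a^b:=X_a^\circ\cap X^b_\circ$. Let $(a_i,b_i)$ be the unique pair with $y_i\in\mathring X_{a_i}^{b_i}$. Each $\mathring X_{a_i}^{b_i}$ is $T$-invariant and locally closed with closure $X_{a_i}^{b_i}$. It follows that $Z\subseteq W:=\prod_i X_{a_i}^{b_i}$.

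\item We then show $Z=W$. By the first part, $W$ is itself a $T$-orbit closure in $Y$, say $W=\overline{T\cdot w}$ with $T\cdot w$ the open orbit of $W$. The open stratum $W^\circ=\prod_i\mathring X_{a_i}^{b_i}$ is open and dense in $W$ and $T$-invariant, so $T\cdot w\subseteq W^\circ$.

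\item The key claim is that $W^\circ$ is a single $T$-orbit, from which $y\in W^\circ$ gives $Z=W$. We argue by dimension. The torus orbits of $Y$ correspond to faces of the moment polytope $\mu(Y)=\sum_i\tilde P_{c_i,d_i}$, and each orbit has dimension equal to that of its face.
\begin{enumerate}
\item Toricness of $Y$ means $\dim\mu(Y)=\sum_i(\ell(d_i)-\ell(c_i))$.
\item The dimension of a Minkowski sum is at most the sum of the dimensions of the summands, and $\dim\tilde P_{c_i,d_i}\le\ell(d_i)-\ell(c_i)$ always holds.
\item Hence equality holds in both: each $X_{c_i}^{d_i}$ is toric, and the affine spans of the summands are independent.
\item Every face of $\tilde P_{c_i,d_i}$ is $\tilde P_{a,b}$ for a subinterval, and $X_a^b$ is a $T$-orbit closure of the toric variety $X_{c_i}^{d_i}$, hence toric. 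So $\dim \tilde P_{a_i,b_i}=\ell(b_i)-\ell(a_i)$.
\item By independence of the spans, $\dim\sum_i\tilde P_{a_i,b_i}=\dim W$, so the effective torus of $W$ has dimension $\dim W$.
\end{enumerate}
A point of $W^\circ$ whose $T$-orbit has full dimension $\dim W$ lies in the open orbit, since the orbits of smaller dimension lie in $W\setminus T\cdot w$.

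\item It remains to see that every point of $W^\circ$ has an orbit of full dimension. This reduces to the single-factor statement: every point of $\mathring X_{a_i}^{b_i}$ has trivial-modulo-generic stabilizer in the toric Richardson $X_{a_i}^{b_i}$. Equivalently, the open Richardson cell is exactly the open $T$-orbit. This holds because the boundary $X_{a_i}^{b_i}\setminus\mathring X_{a_i}^{b_i}$ is the union of the $X_{a'}^{b'}$ for proper subintervals. Since $X_{a_i}^{b_i}$ is toric, that union is exactly the union of the proper orbit closures.
\end{enumerate}

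\emph{Main obstacle.} The delicate point is step 3, in particular the independence argument. We need that a product of toric varieties is toric with respect to the diagonal torus only when the dimensions add, and that this passes to the sub-products. The Minkowski dimension count above is how I would handle it, assuming the standard fact that faces of a toric Bruhat interval polytope are Bruhat interval polytopes of subintervals (the toric case of results in \cite{TW15,LMP21}).
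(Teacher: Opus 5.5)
\textbf{Where the proof breaks: the justification in Step 4.}
Your Part 1 is sound, and cleaner than the paper's dimension count: an irreducible closed $T$-stable subvariety of a variety with finitely many $T$-orbits is an orbit closure. Steps 1--3 are also sound. So is the reduction at the start of Step 4: all points of $\prod_i O_i$, with $O_i$ the open orbit of $X_{a_i}^{b_i}$, share the stabilizer $\bigcap_i \mathrm{Stab}_T(O_i)$, which is the generic stabilizer of the toric $W$.

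The gap is the justification of the single-factor claim that $\mathring{X}_a^b$ \emph{is} the open $T$-orbit of a toric Richardson variety $X_a^b$. Toricness only shows that each proper sub-Richardson $X_{a'}^{b'}$ is a proper orbit closure. So the boundary lies in the union of proper orbit closures, which gives only the inclusion (open orbit) $\subseteq \mathring{X}_a^b$, and you already had that.

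The reverse inclusion says no lower-dimensional $T$-orbit sits inside $\mathring{X}_a^b$, i.e.\ every orbit closure of a toric Richardson variety is a sub-Richardson. That is essentially the case $\mm=1$ of the lemma. So the sentence ``since $X_{a_i}^{b_i}$ is toric, that union is exactly the union of the proper orbit closures'' is circular. This step, not the independence count in Step 3, is where the real content of the lemma lies.

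\textbf{The missing input.} You need the face structure of Bruhat interval polytopes, which is what the paper uses.
By \cite[Theorem 4.1]{TW15}, every face of $\tilde{P}_{c_i,d_i}$ is $\tilde{P}_{a,b}$ for a subinterval $[a,b]\subseteq[c_i,d_i]$. By the orbit--face correspondence for the toric variety $X_{c_i}^{d_i}$, the orbit closure attached to that face is the unique one with moment polytope $\tilde{P}_{a,b}$, namely $X_a^b$. Hence every orbit closure of $X_{c_i}^{d_i}$ is a sub-Richardson, and your Step 4 closes. You quote essentially this fact in 3(d), where it is not needed, but never use it in Step 4.

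Once you have it, the paper's route is shorter than yours.
\begin{itemize}
\item $\pi_i(Z)=\overline{T\cdot \pi_i(y)}$ is an orbit closure in $X_{c_i}^{d_i}$, hence equals some $X_{a_i}^{b_i}$.
\item So $Z\subseteq W:=\prod_i X_{a_i}^{b_i}$.
\item \cref{lem:MinkowskiMachine} gives $\mu_{(\Fl_n)^{\mm}}(Z)=\sum_i \tilde{P}_{a_i,b_i}=\mu_{(\Fl_n)^{\mm}}(W)$.
\item Both $Z$ and $W$ are orbit closures, so $\dim Z=\dim W$ and therefore $Z=W$.
\end{itemize}
This needs no open-stratum or stabilizer bookkeeping.
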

\begin{proof}
We will make repeated use of the fact that the dimension of a $T$-variety $Z$ matches the dimension of its moment polytope if and only if $Z$ is toric with respect to $T$.

We will first show that each product of sub-Richardson varieties is a $T$-orbit closure.

 Since $Y$ is a product, its dimension is the sum of the dimensions of the terms in the product. Since $Y$ is toric,  each $X_{c_i}^{d_{i}}$ must also be toric and thus has the same dimension as its moment polytope.

    Since $\mu_{\Fl_n^\mm}(Y)=\sum_{i=1}^{\mm}\tilde{P}_{c_i}^{d_{i}},$ we conclude via \cref{lem:MinkowskiMachine} that
    \begin{align*}
        \dim\left( \sum_{i=1}^{\mm}    \tilde{P}_{c_i}^{d_i}\right) &=\dim(\mu_{\Fl_n^\mm}(Y))\\
        &=\dim(Y)\\
        &= \sum_{i=1}^{\mm}  \dim(X_{c_i}^{d_i})\\
        &=\sum_{i=1}^{\mm}  \dim(  \tilde{P}_{c_i}^{d_i}).
    \end{align*}
     This is only possible when the polytopes $\{\tilde{P}_{c_i}^{d_{i}}\}$ are \textit{affinely independent}\footnote{By \textbf{affinely independent}, we mean that if we translate each of the polytopes to contain the origin and then take the linear spans of the points in each translated polytope, then these two linear subspaces will be linearly independent, i.e., the dimension of the sum of those two spaces will be the sum of their dimensions.} from each other. 
     
     Let $a_1,\dots,a_{\mm},b_1,\dots, b_{\mm}$ be permutations in $S_n$ such that $c_i\leq a_i\leq b_i\leq d_i$.  Since $\{\tilde{P}_{c_i}^{d_{i}}\}$ are affinely independent, the sub-polytopes $\{\tilde{P}_{a_i}^{b_i}\}$ are affinely independent. Note that $T$-invariant subvarieties of toric varieties with respect to $T$ are toric. Since $X_{a_i}^{b_i}$ is a $T$-invariant subvariety of $ X_{c_i}^{d_{i}}$, and $X_{c_i}^{d_{i}}$ is toric, we find that $X_{a_i}^{b_i}$ is toric.

     Since $\{\tilde{P}_{a_i}^{b_i}\}$ is an affinely independent collection of toric twisted Bruhat interval polytopes, we conclude
     \begin{align*}
        \dim(\mu_{(\Fl_n)^\mm}(\prod_{i=1}^{\mm} X_{a_i}^{b_i}))&=\dim(\sum_{i=1}^{\mm} \tilde{P}_{a_i}^{b_i})\\
        &=\sum_{i=1}^{\mm} \dim(\tilde{P}_{a_i}^{b_i})\\
        &=\sum_{i=1}^{\mm} \dim(X_{a_i}^{b_i})\\
        &=\dim(\prod_{i=1}^{\mm} X_{a_i}^{b_i})).
     \end{align*}
     Thus $\prod_{i=1}^{\mm} X_{a_i}^{b_i}$ is a $T$-orbit closure contained in $Y$.

    Suppose that $Z=\overline{T\cdot z}$ for some $z\in Y$. Let $\pi_i$ be the map from $(\Fl_n)^{\mm}$ to the $i$th $\Fl_n$ factor. Then $\pi_i(Z)=\overline{T\cdot \pi_i(z)}\subseteq X_{c_i}^{d_i}.$ By \cite[Theorem 4.1]{TW15} and the orbit-cone correspondence, the $T$-orbit closures contained in $X_{c_i}^{d_{i}}$ are exactly $X_{a_i}^{b_i}$ with $c_i\leq a_i\leq b_i \leq d_i$. Thus, there must be some $a_i,b_i$ satisfying those inequalities such that $\pi_i(Z)=X_{a_i}^{b_i}.$ Hence,
     \begin{align*}
         Z&\subseteq \pi_1(Z)\times  \cdots \pi_{\mm}(Z)\\
         &=  X_{a_1}^{b_1}\times \cdots X_{a_{\mm}}^{b_{\mm}}.
     \end{align*} 
   By \cref{lem:MinkowskiMachine}, $\dim(\mu_{(\Fl_n)^{\mm}}(Z))=\dim(\mu_{(\Fl_n)^{\mm}}(X_{a_1}^{b_1}\times \cdots X_{a_{\mm}}^{b_{\mm}})).$
    Since $Z$ and $X_{a_1}^{b_1}\times \cdots X_{a_{\mm}}^{b_{\mm}}$ are both $T$-orbit closures, this means $\dim(Z)=\dim(X_{a_1}^{b_1}\times \cdots X_{a_{\mm}}^{b_{\mm}}).$ Hence, $Z=X_{a_1}^{b_1}\times \cdots X_{a_{\mm}}^{b_{\mm}}.$ We conclude that every $T$-orbit closure contained in $Y$ is a product of sub-Richardson varieties. 
\end{proof}

\subsubsection{The spackle poset and the face poset of the subdivision}\label{subsubsection:spackle} We will now define the \textit{spackle poset}. We will prove in \cref{thm:poset-isos} that the spackle poset is isomorphic to both the poset of torus-orbit closures in the special fibre of the degeneration of \cref{degen of brick}, and to the face poset of the subdivision of the associated bulky brick polytope induced by this degeneration.

\begin{defn}\label{defn:spackle}
    Let $Q=(\q_1,\dots, \q_{\mm})$ be a sequence of permutations. Define the \textbf{spackle poset} as
    \[\mathbb{S}(Q)=\{[\atom_1,\boson_1]\times \cdots \times [\atom_{\mm},\boson_{\mm}]: \exists u_{\bullet}\in L(Q) \text{ s.t. }   u_{i}\leq \atom_{i}\leq \boson_i\leq u_{i-1}\q_i \text{ for } i=1,\dots, \mm\}\]
    with partial order given by inclusion. 
    Define 
    \[\mathbb{S}_{i}(Q)=\{[\atom_1,\boson_1]\times \cdots \times [\atom_{\mm},\boson_{\mm}]\in \mathbb{S}(Q): \ell(\boson_1)-\ell(\atom_1)+\cdots +\ell(\boson_{\mm})-\ell(\atom_{\mm}) = i\}.\] 
    We will identify each minimal element $[\atom_1,\atom_1]\times \cdots \times [\atom_{\mm},\atom_{\mm}] \in \mathbb{S}_0(Q)$ of the poset with the corresponding sequence $(\atom_1,\atom_2,\cdots, \atom_{\mm}).$
\end{defn}

Before proving \cref{thm:poset-isos}, we will recall \cite[Theorem 3.1]{AllenHammer}, and record it as \cref{prop: allen hammer} below. This proposition states that when $\Brick^Q$ has finitely many $T$-orbits, the poset of torus-orbit closures in $\mrm{Brick}^Q(0)$ is isomorphic to the face poset of the induced subdivision of $\BB(Q)$. For use in the following proposition, recall the definitions of \textit{torus-equivariant embedded degeneration} and \textit{semitoric degeneration} from \cref{defn:torus-equivariant-embedded-degeneration}. 

\begin{thm}(\cite[Theorem 3.1]{AllenHammer})\label{prop: allen hammer}
    Let $T$ be a torus, let $Y$ be a projective $T$-variety, and let $X\subseteq Y$ be $T$-invariant, irreducible, and with finitely many $T$-orbits. Let $\mcal{F}$ be a $T$-invariant embedded (inside $Y$) degeneration $X\leadsto \bigcup X_i$, where the $X_i$ are the irreducible components of the special fiber, possibly with multiplicities $(m_i)$.

    Then each $X_i$ has an open dense $T$-orbit, so the degeneration is semitoric. The open $T$-orbit in $X_i$ may have a larger pointwise stabilizer $\mrm{Stab}_T(X_i)$ than it does in $X$, but $\mrm{Stab}_T(X_i)/\mrm{Stab}_T(X)$ has finite order.

    Let $f:Y\to \P^{r}$ be a $T$-equivariant projective embedding of $Y$.
    Then the polytopes $\mu_{\P^r}(X_i)$ are the maximal cells of a subdivision of $\mu_{\P^r}(X)$. The map $Z\mapsto \mu_{\P^r}(Z)$ is an order isomorphism from the poset of $T$-orbit closures in $\bigcup X_i$ to the face poset of the subdivision.
\end{thm}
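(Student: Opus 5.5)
The plan is to argue entirely through $T$-graded Hilbert functions of homogeneous coordinate rings, which are preserved by the $T$-equivariant flat family. Fix the embedding $f\colon Y\to\P^r$ and let $R=\bigoplus_d R_d$ be the homogeneous coordinate ring of $X$. Since $X$ is $T$-invariant, $R$ is graded by degree and by the character lattice. Since $X$ has finitely many $T$-orbits, it is irreducible with a dense orbit, hence toric. So $R$ is a (not necessarily normal) semigroup ring: each $R_d$ is multiplicity-free as a $T$-representation. Moreover the set of weights occurring in $R_d$ lies in $d\,\mu_{\P^r}(X)$ and fills it asymptotically, i.e. it contains all lattice points of $d\,\mu_{\P^r}(X)$ away from a bounded neighbourhood of the boundary, up to the sublattice generated by the weights.

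First I would show that flatness plus $T$-equivariance forces the $T$-character of each graded piece to be constant in the family. For each weight $\lambda$ and degree $d$, the $\lambda$-isotypic part of the degree-$d$ piece of the family's coordinate ring is a finitely generated module over $\C[t]$. It is torsion free because the family is the flat closure. It is therefore free, so its rank at $t=0$ equals its rank at $t=1$. Hence the coordinate ring $R^0$ of the special fibre $X_0=\bigcup X_i$ has exactly the same weight multiplicities as $R$, in particular all of them at most $1$.

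Next I would pass to components. The reduced $X_i$ has coordinate ring $R^0/\mathfrak p_i$, whose weights are a subset of those of $R^0$; hence $\mu_{\P^r}(X_i)\subseteq\mu_{\P^r}(X)$, because fixed-point weights are extremal weights of the coordinate ring. For the dense orbit, compare leading coefficients of Hilbert polynomials. On one side, $\deg X=\sum m_i\deg X_i$. On the other, $\deg X_i\ge \mathrm{Vol}(\mu(X_i))$ (suitably normalized) whenever $\dim X_i=\dim X$, with equality, up to the finite index $|\mathrm{Stab}_T(X_i)/\mathrm{Stab}_T(X)|$, exactly when $X_i$ is toric for the effective torus. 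Counting weights then does the rest. The surjection $R^0_d\to\bigoplus_i (R^0/\mathfrak p_i)_d$ is injective modulo nilpotents and lower-dimensional pieces, and $R^0_d$ is multiplicity-free with about $\mathrm{Vol}(d\mu(X))$ weights. Since the $d\mu(X_i)$ all lie in $d\mu(X)$, the asymptotic count forces three things. First, every component has full dimension and is toric, which is the semitoric claim. Second, the $m_i$ all equal $1$ (if multiplicities were allowed, the count would instead show the stabilizer-index weighting). Third, the interiors of the $\mu(X_i)$ are disjoint while their union is $\mu(X)$.

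The main obstacle is upgrading ``covering with disjoint interiors'' to a genuine polyhedral subdivision whose face poset is the orbit-closure poset. For this I would use one-parameter subgroups as in \cref{lim optimizes functional}. Every $T$-orbit closure $Z\subseteq X_0$ lies in some $X_i$, and since $X_i$ is a possibly non-normal toric variety, its orbit closures correspond to faces of $\mu(X_i)$ via $Z\mapsto\mu(Z)$. That gives the order isomorphism on each cell. To see that $\mu(X_i)\cap\mu(X_j)$ is a common face, I would show it equals $\mu(X_i\cap X_j)$, using again that $R^0$ is multiplicity-free. A weight lying in both polytopes, together with its multiples, is realized by a single weight vector of $R^0$. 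That vector restricts nonzero to both components, hence is supported on $X_i\cap X_j$, which is a $T$-stable union of orbit closures, i.e. of faces of each. Finally, injectivity of $Z\mapsto\mu(Z)$ across different components reduces to the same argument: equal polytopes give the same weight set, hence, by multiplicity-freeness, the same orbit closure.
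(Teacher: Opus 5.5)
The paper gives no proof of this statement. It is quoted from \cite{AllenHammer}, so your argument has to stand on its own.

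Your first step is sound, with the caveat that $\mathcal R/t\mathcal R$ and the saturated coordinate ring of the special fibre agree only in large degree. It shows that every $T$-weight space of $R^0_d$ has dimension at most $1$, and this gives the semitoric part more directly than your count. Each $R^0/\mathfrak p_i$ is a graded domain with multiplicity-free pieces, hence the cone over a possibly non-normal toric variety. Every $X_i$ has dimension $\dim X$ by Krull's principal ideal theorem, because the total space is irreducible and $X_0$ is cut out by the nonzerodivisor $t$. Let $\alpha$ be the weight of a coordinate not vanishing on $X$, and $L_X$ the lattice spanned by differences of such weights. All weights of $R^0_d$ lie in $d\alpha+L_X$, so $\mathrm{Stab}_T(X)$ acts trivially on $X_0$, and the analogous lattice $L_{X_i}\subseteq L_X$ has full rank; this gives the finite-index claim. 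The two later steps, however, have genuine gaps.

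\textbf{The counting paragraph.} Write $W^{(i)}_d$ for the weights of $(R^0/\mathfrak p_i)_d$, and normalize volumes to $L_X$, as your comparison with $\mathrm{Vol}\,\mu(X)$ requires. Multiplicity-freeness gives $h_{X_i}(d)=|W^{(i)}_d|\le \#(d\mu(X_i)\cap(d\alpha+L_X))$, i.e. $\deg X_i\le n!\,\mathrm{Vol}\,\mu(X_i)$. This is the \emph{opposite} of your inequality, and it is strict whenever $[L_X:L_{X_i}]>1$. Combined with $\deg X=\sum_i m_i\deg X_i$, it forces neither disjoint interiors nor covering. Your conclusion $m_i=1$ is also false. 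Let $\C^\times$ act on $\P^2$ with weights $0,1,2$. The family $x_1^2=s\,x_0x_2$ degenerates the conic $\{x_1^2=x_0x_2\}$ to the line $\{x_1=0\}$ with multiplicity $2$. That line has stabilizer $\{\pm1\}$, moment polytope all of $[0,2]$, and degree $1<2=\mathrm{Vol}\,[0,2]$.

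Covering therefore needs its own argument. For example, the nilradical $N$ of $R^0$ has subquotients $N^k/N^{k+1}$ that are finitely generated modules over $R^0/N$. The weights of $R^0/N$ lie in $\bigcup_i d\mu(X_i)$, so every weight of $R^0_d$, nilpotent or not, lies within bounded distance of $\bigcup_i d\mu(X_i)$. These are the weights of $R_d$, which fill $d\mu(X)$ away from its boundary, so the $\mu(X_i)$ cover $\mu(X)$.

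\textbf{The common-face step.} The inference ``restricts nonzero to both components, hence is supported on $X_i\cap X_j$'' is a non sequitur. That weight vector is nonzero on the dense orbits of $X_i$ and $X_j$, which do not lie in the intersection. Also, $\mu(X_i\cap X_j)$ is the hull of a union of faces and need not be a face. Disjoint interiors alone do not give face-to-face cells either: tile $[0,2]^2$ by $[0,2]\times[0,1]$, $[0,1]\times[1,2]$ and $[1,2]\times[1,2]$.

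The missing idea is to use multiplicity-freeness as a \emph{separation} statement.
\begin{enumerate}
\item Take rational $\lambda\in\mu(X_i)\cap\mu(X_j)$, and $d$ with $d\lambda$ in both weight semigroups.
\item The weight-$d\lambda$ section of $\mathcal O_{X_i}(d)$ is nonzero exactly on $U_\lambda=\bigcup_{G\ni\lambda}O_G$, where $O_G\subseteq X_i$ is the orbit with $\mu(\overline{O_G})=G$.
\item Suppose $U_\lambda$ missed $X'=\bigcup_{k\neq i}X_k$. Then a high power of this section lies in the ideal of the scheme $X_i\cap X'$ in $X_i$. That ideal is isomorphic to $\mathcal I_{X'}\subseteq\mathcal O_{X_{0,\mathrm{red}}}$, since $\mathcal I_{X'}\cap\mathcal I_{X_i}=0$.
\item So the power extends by zero to a section on $X_{0,\mathrm{red}}$ vanishing on $X_j$. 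Together with a lift of the weight vector of $R^0/\mathfrak p_j$, this gives two independent vectors of one weight in $H^0(X_{0,\mathrm{red}},\mathcal O(D))$ for $D\gg0$. That space is a quotient of $R^0_D$, which is multiplicity-free: contradiction.
\end{enumerate}
Hence every such $\lambda$ lies in $\mu(\overline O)$ for some orbit $O\subseteq X_i\cap X_j$. So $\mu(X_i)\cap\mu(X_j)$ is a convex union of common faces. It is then a single common face, because the smallest face of $\mu(X_i)$ containing it meets it in its relative interior.

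The same argument gives disjoint interiors and injectivity of $Z\mapsto\mu(Z)$. Your justification that ``equal polytopes give the same weight set'' fails for non-normal components.
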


We will now prove the main result of the subsection, \cref{thm:poset-isos}.

\begin{thm}\label{thm:poset-isos}
        Let $Q=(\q_1,\dots, \q_{\mm})$ be a sequence of permutations with $\dem(Q)=w_0$ such that $\Brick^Q$ has finitely many $T$-orbits.  
        \begin{enumerate}
            \item The degeneration  $\Brick^Q\leadsto \Brick^Q(0)$ from  \cref{degen of brick} is semi-toric so that that each $C_u$ is toric and $\BB(Q)$ is subdivided by their moment polytopes: \[\BB(Q)=\bigcup_{u\in L(Q)} \mu_{\Fl_n^{\mm}}(C_u).\]
             \item The poset of $T$-orbit closures of $\Brick^Q(0)$ is isomorphic to the face poset of the subdivision $\BB(Q)=\bigcup_{u\in L(Q)} \mu_{\Fl_n^{\mm}}(C_u)$ via the moment map $\mu_{\Fl_n^{\mm}}$.
            \item The spackle poset $\mathbb{S}(Q)$ is isomorphic  to the poset of $T$-orbit closures of $\Brick^Q(0)$ via the map $\kappa$ defined
        \[[\atom_1,\boson_1]\times \cdots \times [\atom_{\mm},\boson_{\mm}] \mapsto X_\id^\id \times X_{\atom_1}^{\boson_1}\times \cdots \times X_{\atom_{\mm}}^{\boson_{\mm}}\] for $[\atom_1,\boson_1]\times \cdots \times [\atom_{\mm},\boson_{\mm}]$ an element of the spackle poset. 
            \item The subdivision $\bigcup_{u\in L(Q)} \mu_{\Fl_n^{\mm}}(C_u)$ is a mixed subdivision with respect to the Minkowski decomposition $\BB(Q)=\sum_{i=1}^{\mm} \tilde{P}_{w_0\dem(\q_{i+1},\dots, \q_{\mm})^{-1},\dem(\q_{1},\dots, \q_{i})}$ from \cref{thm:Minkowski}. 
        \end{enumerate}
\end{thm}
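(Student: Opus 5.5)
The plan is to feed the degeneration of \cref{thm:degen} into \cref{prop: allen hammer}, and then translate its output into the combinatorics of brickly sequences using \cref{lem: prod the rich} and \cref{subrichardson2}.

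For parts (1) and (2), we take $Y=\Fl_n^{\mm+1}$ and $X=\Brick^Q$, which is irreducible, $T$-invariant, and has finitely many $T$-orbits by hypothesis. The degeneration $\Brick^Q\leadsto\bigcup_{u\in L(Q)}C_u$ of \cref{thm:degen} is $T$-equivariant and embedded. By \cref{degen of brick} its special fibre is reduced, so all multiplicities are $1$, and its irreducible components are exactly the $C_u$ that are not contained in another $C_{u'}$. \cref{prop: allen hammer} then gives three things: each $C_u$ has a dense $T$-orbit, so it is toric; the polytopes $\mu(C_u)$ are the maximal cells of a subdivision of $\mu(\Brick^Q)$; and $Z\mapsto\mu(Z)$ is an order isomorphism from $T$-orbit closures in $\Brick^Q(0)$ to the face poset. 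We then identify $\mu(\Brick^Q)=\BB(Q)$ by \cref{prop: bulky moment}. The $0$th factor is the constant $E_\bullet^{\text{st}}$, so it only translates by a fixed vector and can be dropped. For the statement "union over all $u\in L(Q)$", the redundant $C_u$ (those not maximal) contribute polytopes that are faces of cells, so the union is unchanged.

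For part (3), each $C_u$ is toric, so \cref{lem: prod the rich} says its $T$-orbit closures are exactly the products $X_\id^\id\times\prod X_{\atom_i}^{\boson_i}$ with $u_i\le\atom_i\le\boson_i\le u_{i-1}\q_i$. Every orbit closure in $\Brick^Q(0)=\bigcup C_u$ is irreducible, so it lies in some $C_u$. This shows that $\kappa$ is surjective onto the $T$-orbit closures and lands in them. It is injective because Richardson varieties determine their indices. It is order-preserving in both directions because $X_a^b\subseteq X_{a'}^{b'}$ if and only if $[a,b]\subseteq[a',b']$, which follows from the Schubert-variety containment facts recalled earlier, applied factorwise to products.

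For part (4), we apply \cref{lem:MinkowskiMachine} to $C_u$ as a product: $\mu(C_u)=\sum_i\tilde P_{u_i,u_{i-1}\q_i}$. By \cref{subrichardson}, $[u_i,u_{i-1}\q_i]\subseteq[x_i,y_i]$, so $X_{u_i}^{u_{i-1}\q_i}$ is a $T$-orbit closure in the toric variety $X_{x_i}^{y_i}=\pi_i(\Brick^Q)$. That $X_{x_i}^{y_i}$ is toric follows from \cref{proj is Richardson}(2), using that $\Brick^Q$ is toric. By the orbit–face correspondence for toric Richardson varieties (\cite[Theorem 4.1]{TW15}, as used in \cref{lem: prod the rich}), $\tilde P_{u_i,u_{i-1}\q_i}$ is therefore a face of $\tilde P_{x_i,y_i}$. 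The full-dimensionality of $\mu(C_u)$ comes from (1), since maximal cells are top dimensional. Together these give the mixed subdivision.

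The main obstacle is the bookkeeping in (1) between all of $L(Q)$ and the actual irreducible components. The cleanest fix is to check directly whether $\dim C_u=\dim\Brick^Q$ for every $u$: this holds since $\sum_i(\ell(u_{i-1}\q_i)-\ell(u_i))=\sum_i\ell(\q_i)-\ell(w_0)$ telescopes, using $\ell(u_{i-1}\q_i)=\ell(u_{i-1})+\ell(\q_i)$ and \cref{prop:dim of brick}. So every $C_u$ is a genuine component and no redundancy occurs.
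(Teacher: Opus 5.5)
Your proposal is correct and follows the paper's proof essentially step for step: \cref{prop: allen hammer} applied to the degeneration of \cref{thm:degen} gives (1)--(2), \cref{lem: prod the rich} together with irreducibility of orbit closures gives (3), and \cref{subrichardson2} with the orbit--face correspondence for the toric Richardson varieties $\pi_i(\Brick^Q)$ gives (4). Your telescoping check that $\dim C_u=\sum_i\ell(\q_i)-\ell(w_0)=\dim\Brick^Q$ for every $u\in L(Q)$ confirms that each $C_u$ is a genuine irreducible component and that each $\mu(C_u)$ is full dimensional; the paper leaves this detail implicit, and it is worth keeping.
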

\begin{proof}
    The degeneration $\Brick^Q\leadsto \Brick^Q(0)=\bigcup_{u\in L(Q)} C_u$ is a $T$-invariant embedded in $(\Fl_n)^{\mm+1}$, so by \cref{prop: allen hammer}: it is a semitoric degeneration, the polytopes $\mu_{\Fl_n^{\mm}}(C_u)$ are the maximal cells of a subdivision of $\BB(Q)=\mu_{(\Fl_n)^{\mm}}(\Brick^Q)$, and the face poset of that subdivision is isomorphic to the poset of $T$-orbit closures of $\Brick^Q(0)$ via the moment map. It remains to prove that the spackle poset is isomorphic to the poset of $T$-orbit closures in $\Brick^Q(0).$

     By the definition of the spackle poset, there exists $u\in L(Q)$ such that $u_i\leq \atom_i\leq \boson_i\leq u_{i-1}\q_i.$ 
     Hence, 
     \begin{align*}
         \kappa([\atom_1,\boson_1]\times \cdots [\atom_{\mm},\boson_{\mm}])&=X_e^e\times X_{\atom_1}^{\boson_1}\times \cdots \times X_{\atom_{\mm}}^{\boson_{\mm}}\\
         &\subseteq X_\id^\id \times X_{u_1}^{u_0\q_1}\times \cdots \times X_{u_{\mm}}^{u_{\mm-1}\q_{\mm}}\\
         &=C_u\\
         &\subseteq \Brick^Q(0).
     \end{align*} 
    We observe that: $\kappa$ is injective, everything in the image of $\kappa$ is a product of sub-Richardson varieties of some $C_u$, every product of sub-Richardson varieties of some $C_u$ is in the image of $\kappa$, the $T$-orbit closures of  $C_u$ are exactly the products of sub-Richardson varieties of $C_u$ (by \cref{lem: prod the rich}), and every $T$-orbit closure of $\Brick^Q(0)$ is contained in some $C_u$. Together, these facts prove that $\kappa$ is a bijection from the spackle poset to the poset of $T$-orbit closures in $\Brick^Q(0).$ Since $X_{a}^{b}\subseteq X_{c}^d$ if and only if $[a,b]\subseteq [c,d],$ we conclude that $\kappa$ is an order embedding and then by bijectivity, an order isomorphism. Therefore, the poset of $T$-orbit closures in $\Brick^Q(0)$ is isomorphic to the spackle poset.
    
    By \cite{LMP21}, subintervals of a toric Bruhat interval correspond to faces of the (twisted) Bruhat interval polytope. Applying \cref{proj is Richardson} and \cref{subrichardson2}, we find that $\mu_{\Fl_n}(\pi_i(C_u))=\tilde{P}_{u_i}^{u_{i-1}\q_i}$ is a face of $\mu_{\Fl_n}(\pi_{i}(\Brick^Q)).$ Hence the polytopes $\mu_{(\Fl_n)^{\mm+1}}(C_u)=\sum_{i=1}^{\mm+1}\mu_{\Fl_n}(\pi_i(C_u))$ are Minkowski cells of the sum $\sum_{i=1}^{\mm+1} \pi_i(\Brick^Q)$. Therefore, $\bigcup_{u\in L(Q)} \mu_{\Fl_n^{\mm}}(C_u)$ is a mixed subdivision with respect to the Minkowski decomposition of $\BB(Q)$ in \cref{thm:Minkowski}.
     \end{proof}

     \begin{cor}
           Let $Q=(\q_1,\dots, \q_{\mm})$ be a sequence of permutations with $\dem(Q)=w_0$ such that $\Brick^Q$ has finitely many $T$-orbits. The vertices of the subdivision correspond to the elements of $\mbb{S}_0(Q)$ and the vertex corresponding to $\mbb{a}\in \mbb{S}_0(Q)$ appears at 
           \[\sum_{i=1}^{\mm} \mbf{a}_i\left(\sum_{i=1}^n \omega_i\right).  \]
           
           For $u_{\bullet}\in L(Q)$, the subset of $\mbb{S}_0(Q)$ corresponding to the vertices of $C_{u_{\bullet}}$ is $V_{u_\bullet}=\{(\mbb{a}_1,\mbb{a}_2,\dots, \mbb{a}_\mm): u_i\leq \mbb{a}_i\leq u_{i-1}\q_i \text{ for } i=1,\dots, \mm \}.$ 
        Hence,
           \[\mu_{\Fl_n^{\mm}}(C_u) = \conv_{\mbf{a}\in V_{u_\bullet}}\left(\sum_{i=1}^{\mm} \mbf{a}_i\left(\sum_{i=1}^n \omega_i\right)   \right).  \]
     \end{cor}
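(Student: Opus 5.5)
The plan is to read everything off \cref{thm:poset-isos}. By part (2) of that theorem, the face poset of the subdivision $\BB(Q)=\bigcup_{u\in L(Q)}\mu_{\Fl_n^{\mm}}(C_u)$ is isomorphic, via $\mu_{\Fl_n^{\mm}}$, to the poset of $T$-orbit closures of $\Brick^Q(0)$. By part (3), that poset is isomorphic to $\mathbb{S}(Q)$ via $\kappa$. So the vertices (the $0$-dimensional cells) correspond to the $0$-dimensional $T$-orbit closures. These are the $T$-fixed points, i.e.\ the images under $\kappa$ of elements of $\mathbb{S}(Q)$ with each $\atom_i=\boson_i$.

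First I check that the dimension of $\kappa([\atom_1,\boson_1]\times\cdots\times[\atom_\mm,\boson_\mm])$ is $\sum_i(\ell(\boson_i)-\ell(\atom_i))$, using $\dim X_a^b=\ell(b)-\ell(a)$. Hence $0$-dimensional orbit closures are exactly $\kappa(\mathbb{S}_0(Q))$. For $\mbf{a}\in\mathbb{S}_0(Q)$ the corresponding point is $X_\id^\id\times X_{\mbf{a}_1}^{\mbf{a}_1}\times\cdots\times X_{\mbf{a}_\mm}^{\mbf{a}_\mm}$, which is the single fixed point $(E_\bullet^{\text{st}},\mbf{a}_1E_\bullet^{\text{st}},\dots,\mbf{a}_\mm E_\bullet^{\text{st}})$.

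Next I compute its moment map value. Using \cref{eq:aaa1} together with $\mu_{\Fl_n}(\sigma\cdot E_\bullet^{\text{st}})=\sigma(\omega_1+\dots+\omega_n)$, and discarding the constant $0$th factor as in \cref{prop: bulky moment}, this gives $\sum_{i=1}^{\mm}\mbf{a}_i(\sum_j\omega_j)$.

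For a fixed $u_\bullet$, the vertices of the maximal cell $\mu_{\Fl_n^{\mm}}(C_{u_\bullet})$ are the images of the fixed points contained in $C_{u_\bullet}$. This follows because the order isomorphism sends vertices of a cell to the $0$-dimensional orbit closures below it. The $T$-fixed points of $C_{u_\bullet}=\prod_i X_{u_i}^{u_{i-1}\q_i}$ are the products of fixed points of the factors. A Richardson variety $X_u^v$ has fixed points $\{w E_\bullet^{\text{st}}:u\le w\le v\}$, so the fixed points of $C_{u_\bullet}$ are indexed by $V_{u_\bullet}$.

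Finally, a polytope is the convex hull of its vertices, or directly, the moment polytope is the convex hull of the fixed-point weights. This yields the displayed formula for $\mu_{\Fl_n^{\mm}}(C_u)$.

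The only delicate point is making sure distinct elements of $\mathbb{S}_0(Q)$ give distinct vertices. This is automatic from injectivity of the composite isomorphism $\mu\circ\kappa$ on the poset, so no separate injectivity argument for the weight formula is needed.
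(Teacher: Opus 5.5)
Your proposal is correct and is essentially the argument the paper intends. The paper states the corollary without a separate proof, as an immediate consequence of parts (2) and (3) of \cref{thm:poset-isos}: the $0$-cells correspond to $\kappa(\mathbb{S}_0(Q))$, their positions are the fixed-point weights computed as in \cref{prop: bulky moment}, and $V_{u_\bullet}$ indexes the $T$-fixed points of the product of Richardson varieties $C_{u_\bullet}$. You also get the distinctness of the vertices from the injectivity of the order isomorphism, which is exactly how the paper treats it (it notes that this injectivity comes ``for free'' from the main result).
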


     \subsection{Volumes of the full-dimensional cells}\label{subsection:volumes}

The main result of this subsection is Proposition \ref{prop:component-volume}, which is a formula for the volumes of the full-dimensional cells in the subdivision of the brick polytope of \cref{thm:poset-isos} in cases where the associated brick variety is a \textit{normal toric variety}.

    \subsubsection{The Hilbert polynomial and Ehrhart polynomial}
  
    Let $X$ be a projective toric variety with respect to the torus $T=(\mbb{C}^\times)^n$, and let $f\colon X\hookrightarrow \mbb{P}^m$ be a $T$-equivariant embedding of $X$ into $\mbb{P}^m$. Recall the Hilbert polynomial $H_{\mcal{O}_X}(x)\in\mbb{Q}[x]$ of $X$, as defined \cref{sub-sub:Hilbert}, with respect to its structure sheaf $\mcal{O}_X:=\mcal{O}_{\mbb{P}^m}|_X$. We will use the notation $H_f(x):=H_{\mcal{O}_X}(x)$. The leading term of $H_{f}(x)$ equals $\tfrac{\mrm{deg}_f(X)}{(\mrm{dim}_{\mbb{C}}(X))!}$, where $\mrm{deg}_f(X)$ is the degree of $X$ with respect to the embedding $f$, and $\mrm{dim}_{\mbb{C}}(X)$ is the dimension of $X$.

    Let $P$ be the moment polytope of $X$ with respect to the embedding $f$, and recall that $P$ sits inside $\mbb{R}^n$. Define a function
	\[e_f\colon \mbb{N}\to\mbb{N},\quad e_f(i):=|(iP)\cap \mbb{Z}^n|,\]
    where $iP$ is the \textit{dilation} of the polytope $P$ by a factor $i$ (i.e., if $P$ is the convex hull of points $v_1,\dotsc,v_t$ in $\mbb{R}^n$, then $iP$ is defined to be the convex hull of the points $i\cdot v_1,\dotsc, i\cdot v_t$ in $\mbb{R}^n$). There is a polynomial $E_f(x)\in\mbb{Q}[x]$, called the \textbf{Ehrhart polynomial} of $X$ with respect to the embedding $f$, such that for $d$ sufficiently large, $E_f(d)=e_f(d)$. The leading term of $E_f(x)$ is $\frac{\mrm{Vol}(P)}{(\mrm{dim}_{\mbb{C}}(X))!}$, where $\mrm{Vol}(P)$ is the volume of $P$.

     The theorem below says that the Ehrhart polynomial coincides with the Hilbert polynomial in the setting above, when the variety $X$ is \textit{normal}. Recall that an algebraic variety is \textbf{normal} if all of the local ring at every point of the variety is an integrally closed domain.
     
	\begin{thm}(see, e.g., \cite[Theorem 13.11]{S96})\label{theo:equal-polys}
		Suppose $X$ is a \textit{normal}, projective toric variety with respect to a torus $T$, and $f\colon X\hookrightarrow \mbb{P}^m$ is a $T$-equivariant embedding of $X$ into $\mbb{P}^m$. Then the polynomials $H_f(x)$ and $E_f(x)$ are equal.
	\end{thm}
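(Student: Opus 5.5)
The statement is the classical fact that for a normal projective toric variety the Hilbert polynomial of the embedding equals the Ehrhart polynomial of its moment polytope. The plan is to compare, for each large $d$, the dimension of the degree-$d$ piece of the homogeneous coordinate ring with the number of lattice points in $dP$, using the $T$-weight decomposition.

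\textbf{Step 1: weight spaces of the coordinate ring.} Since $f$ is $T$-equivariant, the homogeneous coordinate ring $S=\C[x_0,\dots,x_m]/I_X$ is graded and $T$-stable, so it decomposes into $T$-weight spaces. Because $X$ has a dense orbit $T\cdot x\cong T/\mathrm{St}_T(x)$, every $T$-semi-invariant regular function on the affine cone is determined up to scalar by its weight. Hence each weight space of $S_d$ has dimension at most one.

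\textbf{Step 2: which weights occur.} $S$ is the semigroup ring of the affine semigroup $\Lambda\subseteq \Z^n\times\Z$ generated by the pairs $(\alpha_i,1)$, where the $\alpha_i$ are the weights of the coordinates $x_i$ that do not vanish on $X$. The convex hull of these $\alpha_i$ is $P$. So $\dim S_d$ is the number of points of $\Lambda$ at height $d$, and these lie in $dP\cap\Z^n$, after identifying the lattice $\Z^n$ with the character lattice of the effective torus as in the statement's normalization.

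\textbf{Step 3: normality gives saturation.} $X$ is normal, and I assume, as in the cited \cite[Theorem 13.11]{S96}, that the embedding is projectively normal, i.e.\ the cone over $X$ is normal. Then $\Lambda$ is saturated: $\Lambda=\mathrm{cone}(\Lambda)\cap(\text{lattice generated by }\Lambda)$. At height $d$, $\mathrm{cone}(\Lambda)$ meets exactly $dP$. Provided the lattice generated by $\Lambda$ is the standard one, this gives $\dim S_d=|dP\cap\Z^n|=e_f(d)$ for all $d$.

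\textbf{Step 4: conclude.} By Serre vanishing, $h_{\mathcal O_X}(d)=\dim H^0(X,\mathcal O_X(d))$ agrees with $\dim S_d$ for $d\gg0$. Therefore $H_f$ and $E_f$ agree at infinitely many integers, and so they are equal as polynomials.

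\textbf{Main obstacle.} The difficult step is Step 3. Normality of $X$ alone only makes $\Lambda$ saturated for large $d$, which is all the argument needs, because $\bigoplus_d H^0(X,\mathcal O_X(d))$ is the normalization of $S$. The points to check carefully are the passage from normality of $X$ to saturation of the section semigroup, and the choice of lattice. I would handle both through the toric description $H^0(X,\mathcal O_X(d))\cong\bigoplus_{u\in dP\cap M}\C\chi^u$ for normal toric varieties.
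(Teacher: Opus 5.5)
Your route (the semigroup ring of weights, passing to $R=\bigoplus_d H^0(X,\mathcal{O}_X(d))$ as the normalization, counting points at height $d$) is the standard argument behind the result the paper cites without proof. Steps 1, 2 and 4 are fine, and so is your treatment of normal versus projectively normal. The genuine gap is the lattice identification. You defer it with ``provided the lattice generated by $\Lambda$ is the standard one'' and attribute it to ``the statement's normalization'', but the statement contains no such normalization, and the step cannot be completed from its hypotheses.

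What your argument proves is that for $d\gg0$, $H_f(d)=\#\bigl(dP\cap(d\alpha_0+M')\bigr)$. Here $\alpha_0$ is any weight of a coordinate not vanishing on $X$, and $M'=\mathbb{Z}\{\alpha_i-\alpha_j\}$. This $M'$ is exactly the character lattice of the effective torus $T/\mathrm{St}_T(x)$. So the formula $H^0(X,\mathcal{O}_X(d))\cong\bigoplus_{u\in dP\cap M}\mathbb{C}\chi^u$ that you plan to invoke counts points of a translate of $M'$, not of $\mathbb{Z}^n$; it restates the problem rather than solving it.

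The two counts really can differ. Let $T=\mathbb{C}^\times$ act on $X=\mathbb{P}^1$ by $t\cdot(x_1:x_2)=(x_1:t^2x_2)$, and take $f=\mathrm{id}$. Then $X$ is normal, projective and toric, and its fixed points have weights $0$ and $2$. So $P=[0,2]$ and $E_f(d)=2d+1$, whereas $H_f(d)=d+1$.

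So an extra hypothesis is needed, and your proviso names the wrong one. $\mathbb{Z}\Lambda$ can never equal $\mathbb{Z}^n\times\mathbb{Z}$ when $\dim P<n$; this happens for $\mathrm{Fl}_n$, where the scalar matrices in $T$ act trivially. The right condition is that $\mathbb{Z}\Lambda$ be saturated in $\mathbb{Z}^n\times\mathbb{Z}$, equivalently $M'=\mathbb{Z}^n\cap\mathbb{R}M'$. This is automatic when $T$ acts effectively, since then $M'=\mathbb{Z}^n$. Under it, $dP\cap\mathbb{Z}^n=dP\cap(d\alpha_0+M')$ and your proof is complete.

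The condition does hold in every case the paper uses. Take a $T$-invariant irreducible subvariety of a product of flag varieties with the Pl\"ucker--Segre embedding. Then $M'$ is generated by the differences $e_I-e_J$ of bases of the matroids of nonvanishing Pl\"ucker coordinates in each factor. By basis exchange, it is therefore generated by vectors of the form $e_i-e_j$. A lattice generated by such vectors is saturated, because graph incidence matrices are totally unimodular.
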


    \subsubsection{Saturated chains and the degree of a Richardson variety}
	Recall the weight lattice $\Lambda:=\mrm{Hom}(T,\mbb{C}^\times)\simeq \mbb{Z}^n$ of $T$. A weight $\lambda=(\lambda_1,\lambda_2,\dotsc,\lambda_n)\in\Lambda$ is called \textbf{regular} if $\lambda_1>\lambda_2>\dotsb>\lambda_n$. For any regular weight $\lambda\in\Lambda$, there is an embedding $f_\lambda\colon \Fl_n\hookrightarrow \mbb{P}(V_\lambda)$, where $V_\lambda$ is the irreducible representation of $\mrm{GL}_n$ with highest weight $\lambda$. 
	Define the regular weight $\rho:=\omega_1+\omega_2+\dotsb+\omega_n$. The resulting embedding $f_\rho:\Fl_n\hookrightarrow \mbb{P}(V_\rho)$ is precisely the composition
	\[\Fl_n\hookrightarrow \mrm{Gr}(1,n)\times \mrm{Gr}(2,n)\times\dotsb\times \mrm{Gr}(n,n)\hookrightarrow \mbb{P}^{{n\choose 1}-1}\times \mbb{P}^{{n\choose 2}-1}\times\dotsb\times \mbb{P}^{{n\choose n}-1}\hookrightarrow  \mbb{P},\]
	where the first map is the natural inclusion of the flag variety $\Fl_n$ into a product of Grassmannians, the second map is a product of Pl\"ucker embeddings, and the third map is induced by the Segre embedding. The \textbf{$\lambda$-degree} of $X_u^v$ is defined as the degree of $X_u^v$ with respect to the projective embedding $f_\lambda$. We will take the \textbf{degree} $\mrm{deg}(X_u^v)$ of $X_u^v$ to be the $\rho$-degree of $X_u^v$.	
	
    We will denote the transposition in $S_n$ that swaps $i$ and $j$, with $i<j$, by $r_{i,j}$. Given a covering relation $u\precdot u r_{i,j}$ in the strong Bruhat order on $S_n$, its \textbf{Chevalley multiplicity} with respect to $\lambda$ is  
	\[m_\lambda(u\precdot u r_{i,j}):=\lambda_i-\lambda_{j}.\]
	A \textbf{saturated chain} in $S_n$ is a sequence $C=(u_0\precdot u_1\precdot \dotsb\precdot u_l)$ with all $u_i\in S_n$. The \textbf{Chevalley multiplicity} of a saturated chain $C=(u_0\precdot u_1\precdot \dotsb\precdot u_l)$ in $S_n$ with respect to $\lambda$ is defined as 
	\[m_\lambda(C):=m_\lambda(u_0\precdot u_1)m_\lambda(u_1\precdot u_2)\dotsb m_\lambda(u_{l-1}\precdot u_l).\]

	\begin{thm}\label{thm:volume-of-sat-chains}(\cite[Proposition 3.3]{ATZ24}, also \cite[\S4.8]{BL00}, \cite[Proposition 4.2]{PS09}, \cite{S76})
		 The $\lambda$-degree of $X_u^v$ is 
		\[\mrm{deg}_\lambda(X_u^v)=\sum_{C}m_\lambda(C),\]
		where the sum runs over all saturated chains $C=(u=u_0\precdot u_1\precdot \dotsb\precdot u_l=v)$ in $S_n$.
	\end{thm}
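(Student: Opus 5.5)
This is a cited result, so the plan is to reconstruct the standard argument rather than derive it from scratch. The approach follows Chevalley's formula in the (equivariant) cohomology or Chow ring of $\Fl_n$.

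First, the $\lambda$-degree of $X_u^v$ under $f_\lambda$ is the intersection number
\[\int_{\Fl_n}[X_u^v]\cdot c_1(\mathcal{L}_\lambda)^{\ell(v)-\ell(u)},\]
where $\mathcal{L}_\lambda=f_\lambda^*\mathcal{O}(1)$. For $\lambda=\rho$ this is $\bigotimes_k\phi_k^*\mathcal{O}_{\Gr(k,n)}(1)$, by the Pl\"ucker--Segre description above. This holds because the degree of a $d$-dimensional projective subvariety is the number of points of its intersection with $d$ general hyperplanes, and each hyperplane class pulls back to $c_1(\mathcal{L}_\lambda)$. By \cref{thm:Kleiman} and \cref{thm:multiplicity1}, these intersections are generically transverse with multiplicity one.

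Second, I would invoke Chevalley's formula in the form
\[[X_w]\cdot c_1(\mathcal{L}_\lambda)=\sum_{w\precdot wr_{i,j}} (\lambda_i-\lambda_j)\,[X_{wr_{i,j}}],\]
with the sign and normalization convention matched to $X_w=\overline{B_-wB_+/B_+}$, so that $\dim X_w$ drops along covers. The geometric input is that $c_1(\mathcal{L}_\lambda)$ restricted to $X_w$ is represented by a $B_-$-stable divisor. That divisor is the zero locus of the extremal weight vector section $p_{w\lambda}$, which is a union of the codimension-one Schubert subvarieties $X_{wr_{i,j}}$. The multiplicity along $X_{wr_{i,j}}$ is computed by restricting to the $T$-stable $\mathbb{P}^1$ joining $w$ and $wr_{i,j}$, where $\mathcal{L}_\lambda$ has degree $\langle\lambda,\alpha_{i,j}^\vee\rangle=\lambda_i-\lambda_j$. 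Since $X_u^v=X_u\cap X^v$ is a generically transverse intersection, the same formula multiplies through $[X_u]\cdot[X^v]$.

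Third, iterating $\ell(v)-\ell(u)$ times expresses $[X_u]\cdot c_1^{\ell(v)-\ell(u)}$ as a sum over saturated chains $u=u_0\precdot\cdots\precdot u_l$ of $m_\lambda(C)\,[X_{u_l}]$. Pairing this with $[X^v]$ and using duality, $[X_w]\cdot[X^v]=\delta_{w,v}[\mathrm{pt}]$ whenever $\ell(w)=\ell(v)$, leaves exactly the chains ending at $v$. This gives $\deg_\lambda(X_u^v)=\sum_C m_\lambda(C)$.

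The main obstacle is the second step: establishing the multiplicities $\lambda_i-\lambda_j$ in Chevalley's formula, along with the correct orientation conventions for $r_{i,j}$ versus left and right multiplication under the paper's conventions. I would settle this by the $T$-fixed-point or curve-restriction computation. Alternatively, I would cite \cite{BL00} or \cite{ATZ24} directly for the formula and only verify that their conventions match the ones used here.
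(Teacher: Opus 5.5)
Your proposal is correct. Iterating Chevalley's formula from $[X_u]$ and pairing with $[X^v]$ via $[X_w]\cdot[X^v]=\delta_{w,v}[\mathrm{pt}]$ for $\ell(w)=\ell(v)$ is the standard argument in the sources cited (the paper gives no proof of its own, deferring to \cite{ATZ24}, \cite{BL00}, \cite{PS09}, \cite{S76}), and your conventions match the paper's: $X_w=\overline{B_-wB_+/B_+}$ of codimension $\ell(w)$, right multiplication by $r_{i,j}$, weight $\lambda_i-\lambda_j$. The one detail you leave implicit in the curve computation is that the $T$-stable curve meets $X_{wr_{i,j}}$ transversally at a smooth point of $X_w$; this holds because $wr_{i,j}$ lies in the open $B_-$-orbit of $X_{wr_{i,j}}$, and $X_w$ is normal, hence smooth along that whole orbit.
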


    \subsubsection{The volume formula}
	
	For the remainder of this subsection, we will assume that a subvariety $V$ of $\Fl_n$ is embedded inside a projective space via the restriction of the embedding $f_\rho$ to $V$. The notations $H_{V}(x)$ and $E_V(x)$ will henceforth mean the Hilbert polynomial and Ehrhart polynomial of $V$, respectively, with respect to the embedding $f_\rho$. 

    Fix $Q=(\q_1,\dots, \q_{\mm})$, a sequence of permutations such that $\dem(Q)=w_0$ and such that $\mrm{Brick}^Q$ is a normal, projective toric variety with respect to $T$. For example, if $Q$ is a sequence of simple transpositions satisfying the conditions of \cite[Theorem 2.9]{E16}, then $\mrm{Brick}^Q$ is a smooth (hence normal), projective toric variety. By \cref{thm:degen}, there is a semi-toric degeneration of $\mrm{Brick}^Q$ to the union $\bigcup_{u\in L(Q)}C_u$ with $C_u=X_{u_0}^{u_0}\times X_{u_1}^{u_0 \q_1}\times X_{u_2}^{u_{1}\q_2}\times\dotsb\times X_{u_{\mm}}^{u_{\mm-1}\q_{\mm}}$, and by \cref{thm:poset-isos}, there is a subdivision of the bulky brick polytope $\BB(Q)$ given by $\bigcup_{u\in L(Q)} \mu_{\Fl_n^{\mm}}(C_u)$. We will henceforth assume that a component $C_u=X_{u_0}^{u_0}\times X_{u_1}^{u_0 \q_1}\times X_{u_2}^{u_{1}\q_2}\times\dotsb\times X_{u_{\mm}}^{u_{\mm-1}\q_{\mm}}$ is embedded inside projective space by precomposing the product of embeddings $f_\rho$ by a Segre embedding.
	
	\begin{prop}\label{prop:component-volume}
        Assume $Q=(\q_1,\dots, \q_{\mm})$, a sequence of permutations such that $\dem(Q)=w_0$ and such that $\mrm{Brick}^Q$ is a normal, projective toric variety with respect to $T$. Then the volume of $\mu_{\Fl_n^{m+1}}(C_u)$ is
        \[\mrm{Vol}(\mu_{\Fl_n^{m+1}}(C_u))=\left(\prod_{i=1}^m\frac{\mrm{deg}(X_{u_i}^{u_{i-1}q_i})}{(l(u_{i-1}q_i)-l(u_i))!}\right)\left(\left(\sum_{i=1}^m(l(u_{i-1}q_i)-l(u_i))\right)!\right).\]
        The degree $\mrm{deg}(X_{u_i}^{u_{i-1}q_i})$ has an explicit expression in terms of saturated chains, given in \cref{thm:volume-of-sat-chains}. Finally, the volume of $\BB(Q)$ is 
        \[\mrm{Vol}(\BB(Q))=\sum_{u\in L(Q)}\mrm{Vol}(\mu_{\Fl_n^{m+1}}(C_u)).\]
	\end{prop}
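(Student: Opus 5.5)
The plan is to compute volumes through Hilbert and Ehrhart polynomials. The key facts are that flat degenerations preserve Hilbert polynomials and that normal projective toric varieties have equal Hilbert and Ehrhart polynomials (\cref{theo:equal-polys}).

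First, the formula for $\mrm{Vol}(\BB(Q))$. By \cref{thm:poset-isos}, the cells $\mu_{\Fl_n^{\mm+1}}(C_u)$ for $u\in L(Q)$ form a polyhedral subdivision of $\BB(Q)$. Distinct maximal cells meet in common faces of strictly smaller dimension. Volume is additive over such a subdivision, which gives $\mrm{Vol}(\BB(Q))=\sum_{u\in L(Q)}\mrm{Vol}(\mu(C_u))$. No geometry is needed here. If desired, one can cross-check this geometrically. Since $\Brick^Q$ is normal toric, $H_{\Brick^Q}=E_{\BB(Q)}$. By flatness of the degeneration (\cref{thm:degen}, \cref{thm:Hilbert-polynomial-flatness}), $H_{\Brick^Q}=H_{\Brick^Q(0)}$. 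The leading coefficient of $H_{\Brick^Q(0)}$ is $\deg(\Brick^Q(0))/d!$, and because $\Brick^Q(0)$ is reduced with top-dimensional components $C_u$, this degree is $\sum_u\deg C_u$.

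Next, the volume of a single cell. Write $d_i=\ell(u_{i-1}\q_i)-\ell(u_i)$, so that $\dim X_{u_i}^{u_{i-1}\q_i}=d_i$ and $d=\sum d_i=\dim C_u$. By \cref{thm:poset-isos}, each $C_u$ is toric, and hence so is each factor $X_{u_i}^{u_{i-1}\q_i}$. Toric Richardson varieties are normal: they are Richardson varieties, and Richardson varieties are normal. Therefore $H_{X_i}=E_{\tilde P_i}$, with leading coefficient
\[\frac{\deg(X_{u_i}^{u_{i-1}\q_i})}{d_i!}=\frac{\mrm{Vol}_{d_i}(\tilde P_i)}{d_i!}.\]
This gives $\mrm{Vol}(\tilde P_i)=\deg(X_i)$, where volume is normalized relative to the lattice in the affine span. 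For the product $C_u$ under the Segre embedding, the Hilbert function is the product of the factors' Hilbert functions. Its leading coefficient is therefore $\prod_i \deg(X_i)/d_i!$, so $\deg C_u=d!\prod_i\deg(X_i)/d_i!$.

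The same computation applies on the polytope side. The moment polytope $\mu(C_u)=\sum_i\tilde P_i$ is a Minkowski sum of affinely independent polytopes, as shown in the proof of \cref{lem: prod the rich}. Lattice points of the dilate $k\mu(C_u)$ correspond to tuples of lattice points of the dilates $k\tilde P_i$, so the Ehrhart function is a product. Equivalently, $C_u$ is itself a normal toric variety, being a product of normal toric varieties, and $H_{C_u}=E_{\mu(C_u)}$. Comparing leading terms gives $\mrm{Vol}(\mu(C_u))/d!=\prod_i\deg(X_i)/d_i!$, which is the stated formula. The degrees $\deg(X_i)$ can then be expanded using \cref{thm:volume-of-sat-chains}.

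The main obstacle is the lattice-normalization bookkeeping. The volume must be taken with respect to the lattice of the affine span of $\BB(Q)$, and one must check that this lattice is the direct sum of the lattices spanned by the summands $\tilde P_i$. If the lattice were only a finite-index sublattice of that sum, extra index factors would appear in the Ehrhart leading term. I would first try to establish the direct-sum property from the affine independence together with the fact that each $X_i$ is a smooth or normal toric variety.
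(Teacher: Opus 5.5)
Your proposal is correct and follows essentially the paper's route: factor $H_{C_u}$ as the product of the Hilbert polynomials of the Richardson factors (Segre embedding), use normality of the toric variety $C_u$ and \cref{theo:equal-polys} to identify $H_{C_u}$ with the Ehrhart polynomial of $\mu(C_u)$, and compare leading coefficients; the formula for $\mrm{Vol}(\BB(Q))$ then comes from additivity of volume over the subdivision of \cref{thm:poset-isos}. The lattice direct-sum check you call the main obstacle is needed only for your alternative product-of-Ehrhart-functions argument, not for your main line (which the paper also uses without further comment); also, the inclusion runs the other way, since the sum of the summand lattices always sits inside the lattice of the affine span, and the real worry is that it is a proper finite-index sublattice of it.
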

	\begin{proof}
		As $C_u=X_{u_1}^{u_{0}q_1}\times\dotsb\times X_{u_m}^{u_{m-1}q_m}$, we have $H_{C_u}(x)=H_{X_{u_1}^{u_{0}q_1}}(x)\dotsb H_{X_{u_m}^{u_{m-1}q_m}}(x)$. As $C_u$ is toric, normal, and reduced, it follows from \cref{theo:equal-polys} that $H_{C_u}(x)$ equals $E_{C_u}(x)$. In particular, the leading term of $H_{C_u}(x)$ equals the leading term of $E_{C_u}(x)$. The leading term of $H_{C_u}(x)$ equals the product over the leading terms of the $H_{X_{u_i}^{u_{i-1}q_i}}(x)$, and the leading term of $H_{X_{u_i}^{u_{i-1}q_i}}(x)$ is \[\frac{\mrm{deg}(X_{u_i}^{u_{i-1}q_i})}{(\mrm{dim}(X_{u_i}^{u_{i-1}q_i}))!}=\frac{\mrm{deg}(X_{u_i}^{u_{i-1}q_i})}{(l(u_{i-1}q_i)-l(u_i))!}.\] The result follows from the fact that the leading term of the Ehrhart polynomial $E_{C_u}(x)$ is 
        \[\frac{\mrm{Vol}(\mu_{\Fl_n^{m+1}}(C_u))}{(\mrm{dim}(C_u))!}=\frac{\mrm{Vol}(\mu_{\Fl_n^{m+1}}(C_u))}{(\sum_{i=1}^m(l(u_{i-1}q_i)-l(u_i)))!}.\]
	\end{proof}

\begin{rmk}
    When the bulky brick polytope $\BB(Q)$ is the Loday associahedron, we expect there is a different, simpler formula for the volume of $\BB(Q)$ than the one given in \cref{prop:component-volume}, which can be proven using techniques from \cite[\S 10]{qdivdiff}.
\end{rmk}

\subsection{Cubulating the associahedron}\label{subsection:Lehmer-complex}

Call the face poset of the unit cube subdivision of the $1\times 2\times \dotsb \times (n-1)$ box the \textit{Lehmer complex}. We will show that when the brick polytope is the Loday associahedron, the spackle poset is combinatorially isomorphic to the Lehmer complex, completing the proof of \cref{thm:main-result}.

We will establish the notation used in this subsection in \cref{subsubsection:notation}. Technical results on brickly sequences will be proven in \cref{subsubsection:technical}, and the spackle poset for the word corresponding to the Loday associahedron will be analyzed in \cref{subsubsection:spackle-poset}. In \cref{subsubsection:witch}, the explicit isomorphism between the spackle poset and Lehmer complex will be proven as \cref{lem:spackle-poset-iso}, completing the proof of \cref{thm:main-result}.

\subsubsection{Notation}\label{subsubsection:notation} To begin, we recall the choice of word $Q$ in $S_n$ such that $\BB(Q)$ is a translation of the Loday associahedron. Let $\sigma_1=s_1s_2\dots s_{n-1}$ and $\sigma_i=s_1s_2\dots s_{n-i+1}$ for $i\geq 2.$ Let $Q$ be the concatenation $(\sigma_1,\sigma_2,\dots, \sigma_{n}).$ By \cref{prop:is Loday}, $\BB(Q)$ is the same as the Loday realization of the associahedron up to translation and reflection.
 Let $Q_i=(s_1,s_2,\dots, s_{n-i+1})$ for $i\geq 2$ and set $Q_1=Q_2$. 
 Since each $\sigma_i$ is a product of distinct simple reflections, the $T$-equivariant projective embedding of $\Brick^Q$ into $\prod_{i=1}^{n}\prod_{j\in [n-1]:s_j\leq \sigma_i}\Gr(j,n)$ is the same as the projective embedding of $\Brick^{Q_1+\cdots+Q_n}$. The brick variety $\Brick^{Q_1+\cdots+Q_n}$ is smooth and is the toric variety of the Loday associahedron, due to \cite{E16}, so the same is true of $\Brick^Q$. In particular, $\Brick^Q$ has finitely many $T$-orbits so we can apply \cref{prop: allen hammer} to understand the face poset of the subdivision of $\BB(Q)$ as the spackle poset for $Q$.

We fix the word $Q$ above for the rest of this section, and below we record notation which will be used for the rest of this section.

 \begin{nota}\label{notation}
 We fix the following notation:
 \begin{itemize}
     \item Set $\sigma_0:=e$, and $\sigma_1:=s_1s_2\dots s_{n-1}$, and $\sigma_i:=s_1s_2\dots s_{n-i+1}$ for $i\geq 2.$ 
     \item Set $\alpha_0=\alpha_1:=e$, and $\alpha_i:=\sigma_2\sigma_3\dotsb \sigma_i$ for $i=2,\dotsc,n$.
     \item Set $Q:=(\sigma_1,\dotsc,\sigma_n)$.
 \end{itemize}
 \end{nota}

\subsubsection{Technical results on brickly sequences and permutations}\label{subsubsection:technical} Before proceeding to prove our main results, we will prove three technical results, \cref{lem: look the interval}, \cref{lem:passover}, and \cref{lem:alpha interval}, involving permutations and brickly sequences which will be used to prove \cref{lem:spackle-poset-iso}.

\begin{lem}\label{lem: look the interval}
    Let $\sigma_i$, $\alpha_i$, and $Q$ be as in \cref{notation}. If $(u_0,u_1,\dots, u_{|Q|})\in L(Q),$ then 
    $[u_i,u_{i-1}\sigma_i]\subseteq [\alpha_i, \alpha_{i+1}]$ for all $i\geq 0$.

\end{lem}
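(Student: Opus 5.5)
This is a specialization of \cref{subrichardson}, so the plan is to reduce to it and then identify the endpoints via \cref{lem:easydem}. By \cref{subrichardson}, for every brickly sequence $(u_0,\dots,u_n)\in L(Q)$ and every $i$, we have
\[
[u_i,u_{i-1}\sigma_i]\subseteq [x_i,y_i],
\qquad x_i=w_0\dem(\sigma_{i+1},\dots,\sigma_n)^{-1},\qquad y_i=\dem(\sigma_1,\dots,\sigma_i).
\]
It therefore suffices to show $x_i=\alpha_i$ and $y_i=\alpha_{i+1}$.

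For the endpoints, take $c=s_1s_2\cdots s_{n-1}$ with reduced word $C$. Write $a_k$ for the product of the letters of $w_0(C)$ coming from the $k$th copy of $C$. One checks $a_k=s_1\cdots s_{n-k}$, so $a_k=\sigma_{k+1}$ for $k\ge 1$, while $\sigma_1=\sigma_2=a_1$. Then $Q=(a_1,a_1,a_2,\dots,a_{n-1})$. \cref{lem:easydem} gives
\[
\dem(\sigma_1,\dots,\sigma_i)=\dem(a_1,a_1,a_2,\dots,a_{i-1})=a_1\cdots a_i=\sigma_2\cdots\sigma_{i+1}=\alpha_{i+1}.
\]
It also gives
\[
w_0\dem(\sigma_{i+1},\dots,\sigma_n)^{-1}=w_0\dem(a_i,\dots,a_{n-1})^{-1}=a_1\cdots a_{i-1}=\alpha_i.
\]
This is exactly the computation already carried out in the proof of \cref{Loday mink sum}. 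Combining, $[u_i,u_{i-1}\sigma_i]\subseteq[\alpha_i,\alpha_{i+1}]$.

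The main point needing care is indexing at the boundary. For $i=0$ the interval $[u_0,u_{-1}\sigma_0]$ has to be read with the conventions $\sigma_0=e$ and $u_0=e$. With $\alpha_0=\alpha_1=e$ the claim is then trivial. For $i=1$ we need $\alpha_1=e$, and since $w_0\dem(\sigma_2,\dots,\sigma_n)^{-1}=a_1\cdots a_0=e$, this agrees. For $i=n$, $\alpha_{n+1}$ should be interpreted as $\dem(Q)=w_0$, or equivalently the claim reads $[u_n,u_{n-1}\sigma_n]=[w_0,w_0]$. I would check each of these edge cases explicitly against the conventions of \cref{notation}.
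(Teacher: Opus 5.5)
Your proposal is correct and follows essentially the same route as the paper's proof. Both apply \cref{subrichardson}, identify the endpoints $x_i=\alpha_i$ and $y_i=\alpha_{i+1}$ via \cref{lem:easydem} (with the small cases handled by hand), and your indexing $a_k=\sigma_{k+1}$ together with the explicit convention $\alpha_{n+1}=w_0$ at $i=n$ is in fact slightly more careful than the paper's.
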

\begin{proof}
    For $i=0,$ we have $u_0=e$ and $\alpha_0=\alpha_1=e$, so that $e=u_0\in [\alpha_0,\alpha_1]=[e,e]$ and $u_0\sigma_{1}=\sigma_1 \in [e,\sigma_1]= [\alpha_1,\alpha_2]$. 
    
    For $i>0,$ let $x_i=w_0\dem(\sigma_{i+1},\sigma_{i+2},\dots,\sigma_{n})^{-1}$ and $y_i=\dem(\sigma_{1},\sigma_{2},\dots \sigma_{i}).$ Since $\sigma_2\dots \sigma_n = w_0,$ we have $x_1=e=\alpha_1$ and $y_1=\sigma_1=\sigma_2=\alpha_2.$
    For $i\geq 2,$ $\sigma_i$ is the piece of the $c$-sorting word for $w_0$ coming from the $i$th copy of $C=(s_1,s_2,\cdots , s_{n-1})$ in $C^{\infty}$.
    Thus Lemma \ref{lem:easydem} says that $y_i=\dem(\sigma_1,\sigma_2,\dots, \sigma_i)=\sigma_2\sigma_3\cdots \sigma_i \sigma_{i+1} = \alpha_{i+1}$ and $x_i=w_0\dem(\sigma_{i+1},\sigma_{i+2},\dots,\sigma_{n})^{-1} = \sigma_2 \sigma_{3} \cdots \sigma_i = \alpha_i.$ By Lemma \ref{subrichardson}, $[u_i,u_{i-1}\sigma_i] \subseteq [x_i,y_i] = [\alpha_i,\alpha_{i+1}].$
\end{proof}

\begin{lem}\label{lem:passover}
Let $\sigma_i$, $\alpha_i$, and $Q$ be as in \cref{notation}. Then,
\[\{ \alpha_i \cdot  s: s\leq \sigma_{i+1}\} = \{s\cdot \alpha_i: s\leq s_is_{i+1}\dots s_{n-1}\}.\] In particular, for $s=s_{j_1}\dotsb s_{j_{k}}\leq \sigma_{i+1}$, we have $\alpha_i s = s_{j_1+i-1}s_{j_2+i-1}\cdots s_{j_k+i-1} \alpha_i.$ Consequently, if $\alpha_is=s'\alpha_i$ and $\alpha_i t =t'\alpha_i$, then $s\leq t$ if and only if $s'\leq t'$. 
\end{lem}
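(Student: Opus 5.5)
The plan is to prove the conjugation identity $\alpha_i s_j \alpha_i^{-1}=s_{j+i-1}$ for every $j\leq n-i$, that is, for every $s_j\leq\sigma_{i+1}=s_1\cdots s_{n-i}$. Everything else follows from it. If $s=s_{j_1}\cdots s_{j_k}$ with all $j_r\leq n-i$, then
\[\alpha_i s=(\alpha_i s_{j_1}\alpha_i^{-1})\cdots(\alpha_i s_{j_k}\alpha_i^{-1})\alpha_i=s_{j_1+i-1}\cdots s_{j_k+i-1}\alpha_i.\]
Conjugation by $\alpha_i$ is a group isomorphism from the parabolic subgroup $\langle s_1,\dots,s_{n-i}\rangle$ onto $\langle s_i,\dots,s_{n-1}\rangle$, and it sends simple generators to simple generators. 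It therefore preserves length and reduced words. Since Bruhat order is defined by subwords of reduced words, it is also a poset isomorphism.

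The lower interval $\{s:s\leq\sigma_{i+1}\}$ consists of products of subwords of $s_1\cdots s_{n-i}$. Its image is the set of products of subwords of $s_i\cdots s_{n-1}$, which is $\{s': s'\leq s_is_{i+1}\cdots s_{n-1}\}$. This gives the set equality. The final claim, that $s\leq t$ if and only if $s'\leq t'$, is the order-isomorphism statement applied to $s'=\alpha_i s\alpha_i^{-1}$ and $t'=\alpha_i t\alpha_i^{-1}$.

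To prove the conjugation identity, compute $\alpha_i$ explicitly. In the convention where $\sigma\in S_n$ acts by $e_k\mapsto e_{\sigma(k)}$, the element $\sigma_r=s_1s_2\cdots s_{n-r+1}$ is the cycle sending $n-r+2\mapsto 1$ and $k\mapsto k+1$ for $k\leq n-r+1$. Induct on $i$ to show that $\alpha_i=\sigma_2\cdots\sigma_i$ satisfies $\alpha_i(k)=k+i-1$ for $1\leq k\leq n-i+1$. The base case $i=1$ is $\alpha_1=e$.

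For the inductive step, $\alpha_{i+1}=\alpha_i\sigma_{i+1}$. For $k\leq n-i$ we get $\sigma_{i+1}(k)=k+1\leq n-i+1$, so $\alpha_{i+1}(k)=\alpha_i(k+1)=k+i$, as required.

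Given this formula, for $j\leq n-i$ both $j$ and $j+1$ lie in $[1,n-i+1]$. Hence $\alpha_i s_j\alpha_i^{-1}$ is the transposition of $\alpha_i(j)=j+i-1$ and $\alpha_i(j+1)=j+i$, which is $s_{j+i-1}$.

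The only real care needed is bookkeeping: tracking the index ranges and the left-versus-right action convention. No deeper obstacle is expected.
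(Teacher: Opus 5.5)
Your proof is correct, but it reaches the key identity $\alpha_i s_j\alpha_i^{-1}=s_{j+i-1}$ (for $j\le n-i$) by a different route than the paper.

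The paper never computes $\alpha_i$ as a permutation. It sets $v=\sigma_{i+1}\cdots\sigma_n$ and observes two facts: $v=v^{-1}$ is the longest element of the parabolic copy of $S_{n-i+1}$, and $\alpha_i v=w_0$. So conjugation by $\alpha_i=w_0v$ is conjugation by $v$ (sending $s_j\mapsto s_{n+1-i-j}$) followed by conjugation by $w_0$ (sending $s_j\mapsto s_{n-j}$). These two reversals compose to the shift by $i-1$.

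You instead prove by induction that $\alpha_i(k)=k+i-1$ on $[1,n-i+1]$ and read off the conjugate of the transposition $s_j$ directly. Your route is more elementary and self-contained: it needs neither $\alpha_i v=w_0$ nor the fact that conjugation by a longest element reverses the simple generators. The paper's factorization $\alpha_i=w_0\,v$ is more structural and is the form that would carry over to other Coxeter elements or types.

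Your argument also covers something the paper's proof leaves implicit. You justify the final ``consequently'' clause (for $s,t\le\sigma_{i+1}$, as intended) by noting that conjugation by $\alpha_i$ is an isomorphism $\langle s_1,\dots,s_{n-i}\rangle\to\langle s_i,\dots,s_{n-1}\rangle$ sending simple generators to simple generators, hence preserving reduced words and Bruhat order. This uses that length in a standard parabolic subgroup agrees with length in $S_n$. Alternatively, every subword of $s_1\cdots s_{n-i}$ is reduced, so both intervals are Boolean with order given by inclusion of letters. The paper's proof stops after the set equality.
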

\begin{proof}

Let $v=\sigma_{i+1}\sigma_{i+2}\cdots \sigma_{n}.$ Then $\alpha_iv=w_0.$ Embed $S_{n-i+1}$ into $S_{n}$ in the maximally sensible way, identifying $s_j$ in $S_{n-i+1}$ with $s_j$ in $S_n$. Not only is $v\in S_{n-i+1},$ it is the longest permutation of $S_{n+1-i}$ because it is equal to $(s_1s_2\cdots s_{n-i})(s_1s_2\cdots s_{n-i-1})\cdots (s_1s_2)(s_1).$ One consequence of this is that $v=v^{-1}.$

For any $\pi \in S_n$, 
\begin{align*}
    \alpha_i \pi &= \alpha_i \pi (v v^{-1})\alpha_i^{-1}\alpha_i \\
    &= \alpha_i \pi v (\alpha_i v)^{-1} \alpha_i\\
    &= \alpha_i \pi v (w_0)^{-1} \alpha_i\\
    &= (w_0 v^{-1} \pi v w_0^{-1}) \alpha_i\\
    &=(w_0    v    \pi v^{-1} w_0^{-1}) \alpha_i.
\end{align*}

The simple transposition $s_j$ has cycle notation $(j\quad j+1)$ so $w_0 s_j w_0^{-1}$ has cycle notation $(n-j+1\quad n-j).$ Hence $w_0 s_j w_0^{-1} = s_{n-j}$. Moreover $w_0 s_{j_1}s_{j_2}\cdots s_{j_k} w_0^{-1} = s_{n-j_1}s_{n-j_2}\cdots s_{n-j_k}$ for any product of simple reflections $s_{j_1}s_{j_2}\cdots s_{j_k}$. Since $v$ is the longest permutation of  $S_{n+1-i}$, this means that $vs_jv^{-1}=s_{n+1-i-j}$ for $j<n-i$ and thus $v s_{j_1}s_{j_2}\cdots s_{j_k} v^{-1} = s_{n+1-i-j_1}s_{n-j_2}\cdots s_{n-j_k}$ for any product of simple reflections $s_{j_1}s_{j_2}\cdots s_{j_k}$ with $j_{\ell}\leq n-i$ for each $\ell$. Carefully composing conjugations coaxes us into comprehending that $w_0 v s_{j_1}s_{j_2}\cdots s_{j_k} v^{-1}w_0^{-1} = s_{j_1+i-1}s_{j_2+i-1}\cdots s_{j_k+i-1}$ for any product of simple reflections $s_{j_1}s_{j_2}\cdots s_{j_k}$ with $j_{\ell}\leq n-i$ for each $\ell$. 

Restricting our attention to $s=s_{j_1}s_{j_2}\cdots s_{j_k}\leq \sigma_{i+1},$ we find $\alpha_i s = s_{j_1+i-1}s_{j_2+i-1}\cdots s_{j_k+i-1} \alpha_i$ and $(w_0    v    \sigma_{i+1} v^{-1} w_0^{-1}) =s_is_{i+1}\cdots s_{n-1}$. Hence, $\{ \alpha_i \cdot  s: s\leq \sigma_{i+1}\} = \{s\cdot \alpha_i: s\leq s_is_{i+1}\dots s_{n-1}\}.$
\end{proof}

\begin{lem}\label{lem:alpha interval}
    Let $\sigma_i$, $\alpha_i$, and $Q$ be as in \cref{notation}.
   The following hold:
   \begin{enumerate}
    \item For $s\leq \sigma_{i+1}$, we have $\alpha_i s=\alpha_i\star s$ and for $s\leq s_is_{i+1}\dots s_{n-1},$ we have $s \alpha_i = s\star \alpha_i$. 
       \item $[\alpha_i,\alpha_{i+1}] = \{ \alpha_i \cdot  s: s\leq \sigma_{i+1}\} = \{s\cdot \alpha_i: s\leq s_is_{i+1}\dots s_{n-1}\}.$ 
       \item For $s\leq \sigma_{i+1}$ and $s'\leq \sigma_{i+1}$, we have $\alpha_i s \leq \alpha_i s'$ if and only if $s\leq s'.$ 
       \item For $s\leq s_is_{i+1}\dots s_{n-1}$ and  $s'\leq s_is_{i+1}\dots s_{n-1}$, we have $s\alpha_i\leq s'\alpha_i$ if and only if $s\leq s'$.
   \end{enumerate}
\end{lem}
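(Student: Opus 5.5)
The plan is to derive everything from \cref{lem:passover} together with the structure of $\alpha_{i+1}=\alpha_i\sigma_{i+1}$ as a reduced product. First I establish (1). By \cref{lem:easydem}, the product $\alpha_{i+1}=\sigma_2\cdots\sigma_{i+1}$ is reduced, so $\ell(\alpha_i\sigma_{i+1})=\ell(\alpha_i)+\ell(\sigma_{i+1})$. Any $s\le\sigma_{i+1}$ is represented by a subword of the reduced word $s_1s_2\cdots s_{n-i}$. Since $\sigma_{i+1}$ is a product of distinct simple transpositions, such an $s$ is a \emph{prefix-compatible} piece: appending the letters of $s$ one at a time to a reduced word for $\alpha_i$ should keep the word reduced. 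To verify this I would use the concrete description from \cref{lem:passover}. Write $\alpha_i s=s'\alpha_i$ with $s'=s_{j_1+i-1}\cdots s_{j_k+i-1}$, where $j_1<\dots<j_k$ is a subword of $1,\dots,n-i$. Then $\ell(\alpha_i s)=\ell(\alpha_i)+k$ exactly when each letter $s_{j}$ is a right ascent at each stage. Equivalently, $\alpha_i s_{j_1}\cdots s_{j_{r-1}}$ must have $\alpha_i s_{j_1}\cdots s_{j_{r-1}}(j_r)<\alpha_i s_{j_1}\cdots s_{j_{r-1}}(j_r+1)$. A cleaner route uses $\alpha_i v=w_0$ with $v$ the longest element of $S_{n-i+1}$, as in the proof of \cref{lem:passover}. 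Then $\ell(\alpha_i\pi)=\ell(w_0)-\ell(v)+\ell(\pi)$ whenever $\pi\in S_{n-i+1}$, because $\alpha_i$ is the minimal-length coset representative of $w_0$ in $w_0 S_{n-i+1}$. This gives $\alpha_i s=\alpha_i\star s$ for all $s\in S_{n-i+1}$, and in particular for $s\le\sigma_{i+1}$. The left statement follows symmetrically, since $\alpha_i$ is also minimal in $S'\alpha_i$, where $S'$ is the parabolic subgroup generated by $s_i,\dots,s_{n-1}$; this is the conjugate of $S_{n-i+1}$ under \cref{lem:passover}.

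For (3), I would use the standard fact that left multiplication by a minimal coset representative is an order isomorphism from the parabolic subgroup onto its coset. Concretely, if $s\le s'$ in $S_{n-i+1}$, take a reduced word for $\alpha_i s'$ obtained by concatenating a word for $\alpha_i$ with one for $s'$. Then $\alpha_i s$ appears as a subword, so $\alpha_i s\le\alpha_i s'$. Conversely, if $\alpha_i s\le\alpha_i s'$, I project to the parabolic factor. The map $w\mapsto$ (parabolic component of $w$ in $w^J\cdot w_J$) is order-preserving on a coset, which can be checked via the subword property using reduced words of the form $\alpha_i\cdot(\text{word in }S_{n-i+1})$. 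Part (4) then follows from (3) together with the final statement of \cref{lem:passover}, which transports $\le$ between $s$ and $s'$.

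For (2), the second equality is exactly \cref{lem:passover}. For the first equality, (3) shows that the set $\{\alpha_i s:s\le\sigma_{i+1}\}$ lies in $[\alpha_i,\alpha_i\sigma_{i+1}]=[\alpha_i,\alpha_{i+1}]$. For the reverse inclusion, take $\alpha_i\le w\le\alpha_{i+1}$. I first show that $w\in\alpha_iS_{n-i+1}$. Since $w\le\alpha_{i+1}$, the element $w$ is a subword of $\alpha_i\cdot s_1\cdots s_{n-i}$. Any subword that drops a letter of the $\alpha_i$ part has length too small, or fails to be $\ge\alpha_i$. The clean way to see this is through the parabolic projection $w\mapsto w^J$, which is order-preserving: it gives $\alpha_i\le w^J\le\alpha_{i+1}^J=\alpha_i$, so $w^J=\alpha_i$. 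Writing $w=\alpha_i s$ with $s\in S_{n-i+1}$, part (3) then yields $s\le\sigma_{i+1}$.

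The main obstacle is verifying that $\alpha_i$ is the minimal coset representative for the parabolic subgroup $S_{n-i+1}$ on the right, and for $S'$ on the left. I would establish this by showing that $\alpha_i s_j>\alpha_i$ for all $j\le n-i$, using $\alpha_i=w_0v$: the inequality becomes $\ell(w_0vs_j)=\ell(w_0)-\ell(vs_j)$, which is greater than $\ell(w_0)-\ell(v)$ because $\ell(vs_j)<\ell(v)$. Once this is in place, all the parabolic facts about order-preservation are standard (Björner–Brenti).
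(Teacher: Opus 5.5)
Your argument is correct. Part (1) is essentially the paper's. The paper extends $s\le\sigma_{i+1}$ by some $r$ to the longest element $v=\sigma_{i+1}\cdots\sigma_n$ of $S_{n-i+1}$ and reads $\alpha_i\cdot s\cdot r$ as a reduced word for $w_0$. That is the same content as your observation that $\alpha_i=w_0v$ is the minimal representative of the coset $w_0W_J$, where $W_J=\langle s_1,\dots,s_{n-i}\rangle$. Part (4) is handled identically in both, via \cref{lem:passover}.

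The routes genuinely diverge for (2) and (3). The paper proves (2) first, by counting. It imports from \cite{LMP21}, through the cube property of $\tilde P_{\alpha_i,\alpha_{i+1}}$ in \cref{Loday mink sum}, that $[\alpha_i,\alpha_{i+1}]$ is a Boolean lattice with $2^{\ell(\sigma_{i+1})}$ elements. This matches the size of $\{\alpha_is:s\le\sigma_{i+1}\}$. It then gets (3) because a monotone bijection between Boolean lattices of equal rank is an isomorphism. You stay entirely inside Coxeter combinatorics. For you, (3) is the order isomorphism $u\mapsto\alpha_iu$ from $W_J$ onto $\alpha_iW_J$, valid because $\alpha_i$ is a minimal coset representative. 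Then (2) follows because $w\mapsto w^J$ is order-preserving, which pins $[\alpha_i,\alpha_{i+1}]$ inside the single coset $\alpha_iW_J$.

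Your version is self-contained and proves (3) for all of $W_J$, not only below $\sigma_{i+1}$. It also yields the Boolean structure of $[\alpha_i,\alpha_{i+1}]$ as a consequence, rather than needing it as input from the polytope side. The paper's version is shorter only because it leans on those cited results.

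The one step you merely gesture at is the implication $\alpha_iu\le\alpha_iu'\Rightarrow u\le u'$. The subword argument you allude to does close it:
\begin{itemize}
\item Let $\mathbf a$ and $\mathbf u'$ be reduced words for $\alpha_i$ and $u'$. Some subword $\mathbf a''\mathbf u''$ of the reduced word $\mathbf a\mathbf u'$ is a reduced word for $\alpha_iu$.
\item The product $a''$ of $\mathbf a''$ satisfies $a''\le\alpha_i$, and it lies in $\alpha_iW_J$.
\item Minimality of $\alpha_i$ in that coset forces $a''=\alpha_i$, hence $\mathbf a''=\mathbf a$.
\item Then $\mathbf u''$ is a reduced subword of $\mathbf u'$ representing $u$, so $u\le u'$.
\end{itemize}
As in the paper, all of this requires $i\ge1$: at $i=0$ one has $\alpha_1=e\ne\alpha_0\sigma_1$, and part (2) fails as literally stated.
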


\begin{proof}

We will begin by proving (1). Let $s\leq s_1s_2\dots s_{n-i}.$ Since $s$ may be viewed as living inside $S_{n-i+1},$ there exists some $r\in S_{n-i+1}$ such that $s\cdot r$ is the longest permutation of $S_{n-i+1}$ and $\ell(s)+\ell(r)=\ell(s\cdot r).$ When we include $S_{n-i+1}$ into  $S_{n},$ the longest permutation of $S_{n-i+1}$ may be written $\sigma_{i+1}\sigma_{i+2}\dots \sigma_{n}$. Including $r$ into $S_n$ as well, we have $sr=\sigma_{i+1}\sigma_{i+2}\dots \sigma_{n}.$ Thus $\sigma_2\sigma_3\cdots \sigma_i \cdot s\cdot r$ is a reduced expression for the longest permutation of $S_{n}.$ We conclude that $\alpha_i s = \sigma_2\sigma_3\cdots \sigma_i s$ is a reduced expression when $s\leq \sigma_{i+1}$. Applying  \cref{lem:passover}, we also conclude that $s'\alpha_i=s'\sigma_2 \sigma_3\dotsb \sigma_i$ is a reduced expression when $s'\leq s_i s_{i+1}\dotsb s_{n-1}$. This completes the proof of (1). 

We will now explain how (2) follows from (1). Part (1) tells us that $\alpha_i\leq \alpha_i s\leq \alpha_i \sigma_{i+1}=\alpha_{i+1}$ for all $s\leq \sigma_{i+1}.$ In particular, $\{ \alpha_i s: s\leq \sigma_{i+1}\}\subseteq [\alpha_i , \alpha_{i+1}]$. It is shown in \cite[Theorem 1.3]{LMP21} that a Bruhat interval polytope $\tilde{P}_{u,v}$ is combinatorially equivalent to a cube if and only if the Bruhat interval $[u,v]$ is a Boolean lattice. This observation, combined with \cref{Loday mink sum}, says that the interval $ [\alpha_i , \alpha_{i}\sigma_{i+1}]$ is a Boolean lattice, and thus has $2^{\ell(\sigma_{i+1})}$ elements -- the same cardinality as $\{\alpha_i s: s\leq \sigma_{i+1}\}$. Therefore, $\{ \alpha_i s: s\leq \sigma_{i+1}\}= [\alpha_i , \alpha_{i+1}]$. Applying \cref{lem:passover} gives the second equality.

We will next explain why (3) is true. When $s\leq s'\leq \sigma_{i+1},$ the fact that $\alpha_i s$ and $\alpha_i s'$ are both reduced expressions implies that $\alpha_i s \leq \alpha_{i}s'.$ Thus there is a bijective monotone map from the poset of subwords of  $\sigma_{i+1}$ to the Bruhat interval $[\alpha_i,\alpha_{i+1}].$ Since the poset of subwords of $\sigma_{i+1}$ forms a boolean lattice under containment and the Bruhat interval $[\alpha_i,\alpha_{i+1}]$ is a Boolean lattice of the same rank, our bijective monotone map must in fact be an order isomorphism. So we find that for $s\leq \sigma_{i+1}$ and $s'\leq \sigma_{i+1}$, we have $\alpha_i s \leq \alpha_{i}s'$ if and only if $s\leq s'.$ 

Finally, (4) follows from (3) by applying \cref{lem:passover}.
\qedhere

\end{proof}

\subsubsection{The spackle poset for the word $Q$}\label{subsubsection:spackle-poset}

We will now study the spackle poset $\mbb{S}(Q)$ with respect to the word $Q$ we fixed at the start of this subsection. In \cref{pieces} and \cref{lem: the vertices}, we will give explicit descriptions for $\mbb{S}_{n-1}(Q)$ and $\mbb{S}_0(Q)$, respectively, which will help us prove that that the poset $\mbb{S}(Q)$ is isomorphic to the face poset of the standard cubulation (which we will call the \textit{Lehmer complex} in \cref{subsubsection:witch}).

In \cref{pieces} below, note that it follows from \cref{thm:poset-isos} that $\mbb{S}_{n-1}=L(Q)$.

\begin{lem}\label{pieces}
Let $\sigma_i$, $\alpha_i$, and $Q$ be as in \cref{notation}. Then\footnote{Below, we are slightly abusing notation. A brickly sequence $(\id,v_1\alpha_1,\dots, v_{n}\alpha_{n})$ in $L(Q)$ by definition begins at the identity. The element of $\mbb{S}_{n-1}(Q)$ corresponding to this brickly sequence is obtained by omitting the first component: $(v_1\alpha_1,\dots, v_{n}\alpha_{n})$.}, 
\begin{align*}
    \mbb{S}_{n-1}(Q)&=L(Q)\\
    &=\{(\id,v_1\alpha_1,\dots, v_{n}\alpha_{n}):v_i\in S_n, v_{i+1}\leq v_{i}, v_i\leq s_{i+1}s_{i+2}\dots s_{n-1} \text{ for } i\geq 1\}.
\end{align*}
\end{lem}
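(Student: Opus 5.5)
We will establish the second equality, $L(Q)=\{(\id,v_1\alpha_1,\dots,v_n\alpha_n)\}$, directly from \cref{defn:brickly}. The identification $\mbb{S}_{n-1}(Q)=L(Q)$ then follows from \cref{thm:poset-isos}: the top-dimensional cells of the spackle poset are exactly the products $[u_1,u_0\sigma_1]\times\cdots\times[u_n,u_{n-1}\sigma_n]$ for $u_\bullet\in L(Q)$, and each has rank $\sum(\ell(u_{i-1}\sigma_i)-\ell(u_i))=\dim\Brick^Q=n-1$ by \cref{prop:dim of brick}.

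The key tool is \cref{lem: look the interval}. For any $u_\bullet\in L(Q)$ and any $i\ge1$ it gives $u_i\in[\alpha_i,\alpha_{i+1}]$. By \cref{lem:alpha interval}(2), this lets us write uniquely $u_i=v_i\alpha_i$ with $v_i\le s_is_{i+1}\cdots s_{n-1}$. Two refinements are needed:
\begin{itemize}
\item the bound should be sharpened to $v_i\le s_{i+1}\cdots s_{n-1}$;
\item the brickly conditions should translate into $v_{i+1}\le v_i$.
\end{itemize}

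\textbf{Translating the brickly conditions.} Compute $u_{i}\sigma_{i+1}=v_i\alpha_i\sigma_{i+1}=v_i\alpha_{i+1}$. The length condition $\ell(u_i\sigma_{i+1})=\ell(u_i)+\ell(\sigma_{i+1})$ then reads $\ell(v_i\alpha_{i+1})=\ell(v_i)+\ell(\alpha_{i+1})$. By \cref{lem:alpha interval}(1) at index $i+1$, this holds automatically when $v_i\le s_{i+1}\cdots s_{n-1}$. Conversely, if $v_i$ involves $s_i$, then $v_i\alpha_{i+1}$ should fail to be length-additive, or else violate $u_{i+1}\le u_i\sigma_{i+1}$ together with $u_{i+1}\ge\alpha_{i+1}$. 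To see this we will check that $\alpha_{i+1}$ has a left descent at $s_i$, which we expect from $\alpha_{i+1}=\alpha_i\sigma_{i+1}$ and \cref{lem:passover}. Granting this, $\ell(v_i)+\ell(\alpha_{i+1})$ cannot be additive, which forces $v_i\le s_{i+1}\cdots s_{n-1}$.

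\textbf{Comparing consecutive $v$'s.} The condition $u_{i+1}\le u_i\sigma_{i+1}$ becomes $v_{i+1}\alpha_{i+1}\le v_i\alpha_{i+1}$. Since both $v_i,v_{i+1}$ now lie below $s_{i+1}\cdots s_{n-1}$, \cref{lem:alpha interval}(4) at index $i+1$ shows this is equivalent to $v_{i+1}\le v_i$. The endpoints need separate handling:
\begin{itemize}
\item for $i=0$, $u_0=\id$ and $u_1\le\sigma_1$, and $\alpha_1=\id$ gives $v_1=u_1\le s_2\cdots s_{n-1}$ by the sharpened bound;
\item for $i=n$, $u_n=w_0=\alpha_n$, so $v_n=\id$, consistent with the empty product $s_{n+1}\cdots s_{n-1}$.
\end{itemize}
For the reverse inclusion, given such $v_\bullet$ we run the same equivalences backwards: length-additivity comes from \cref{lem:alpha interval}(1) and the order conditions from part (4). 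This shows the tuple is brickly.

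\textbf{Main obstacle.} The hardest step is the sharpening to $v_i\le s_{i+1}\cdots s_{n-1}$, i.e.\ showing that a $v_i$ involving $s_i$ violates length-additivity with $\sigma_{i+1}$. We plan to treat this with \cref{lem:passover}, rewriting $v_i\alpha_i\sigma_{i+1}=v_i s'\alpha_i$ with $s'=s_i\cdots s_{n-1}$. Length-additivity of the product $v_i s'$ inside the Boolean-type interval $[\id,s_i\cdots s_{n-1}]$ then forces $v_i$ to avoid $s_i$. Careful index bookkeeping is needed here, especially at $i=1$, where $\sigma_1=\sigma_2$ breaks the pattern.
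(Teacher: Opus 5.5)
Your proposal is correct and has the same skeleton as the paper's proof. You factor $u_i=v_i\alpha_i$ via \cref{lem: look the interval} and \cref{lem:alpha interval}(2), turn $u_{i+1}\le u_i\sigma_{i+1}$ into $v_{i+1}\le v_i$ via part (4), and reverse everything using parts (1) and (4).

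The real difference is how you sharpen to $v_i\le s_{i+1}\cdots s_{n-1}$. The paper never uses the length condition at this step. It applies \cref{lem: look the interval} at index $i+1$ to get $u_i\sigma_{i+1}\in[\alpha_{i+1},\alpha_{i+2}]$. Then \cref{lem:alpha interval}(2) gives $u_i\sigma_{i+1}=v_i'\alpha_{i+1}$ with $v_i'\le s_{i+1}\cdots s_{n-1}$. Comparing with $u_i\sigma_{i+1}=v_i\alpha_{i+1}$ forces $v_i=v_i'$. So in the paper the sharpening comes from the Demazure upper bound, whose proof uses only the Bruhat conditions. This also shows that, for this $Q$, length-additivity is implied by the order conditions. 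Your route extracts the sharpening from length-additivity instead, which is more hands-on but needs one more argument.

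That extra argument is where your sketch is loose. Checking that $\alpha_{i+1}$ has a left descent at $s_i$ is not enough. It only blocks length-additivity when $s_i$ is also a right descent of $v_i$. For example, take $n=4$, $i=1$, $v_1=s_1s_2$: here $s_1$ is not a right descent of $v_1$. Length-additivity still fails, since $v_1\alpha_2=s_1s_2s_1s_2s_3=s_2s_1s_3$ has length $3<5$, but not for the reason you give.

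The route in your ``main obstacle'' paragraph does work, and closes as follows.
\begin{enumerate}
\item Write $v_i=s_iv'$ with $v'$ a product of $s_j$'s, $j>i$.
\item Set $t=v'^{-1}s_iv'$. This is the transposition $(i\ \ v'^{-1}(i+1))$ with $v'^{-1}(i+1)>i$, and it satisfies $v_it=v'$.
\item The left inversions of $s'=s_is_{i+1}\cdots s_{n-1}$ are exactly the transpositions $(i\,j)$ with $j>i$, so $t$ is one of them.
\item Hence $v_is'=(v_it)(ts')$ has length at most $\ell(v_i)+\ell(s')-2$.
\item By \cref{lem:passover} and \cref{lem:alpha interval}(1), $\alpha_{i+1}=\alpha_i\sigma_{i+1}=s'\alpha_i$ with $\ell(\alpha_{i+1})=\ell(s')+\ell(\alpha_i)$.
\item Therefore $\ell(v_i\alpha_{i+1})<\ell(v_i)+\ell(\alpha_{i+1})$.
\end{enumerate}
Nothing special happens at $i=1$: there $\alpha_1=\id$ and $s'=\sigma_2$.

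For $\mbb{S}_{n-1}(Q)=L(Q)$, the paper simply cites \cref{thm:poset-isos}. With the explicit description in hand you can skip the appeal to $\dim\Brick^Q$, because the rank telescopes: $\sum_{i=1}^n(\ell(u_{i-1}\sigma_i)-\ell(u_i))=(n-1-\ell(v_1))+\sum_{i=2}^n(\ell(v_{i-1})-\ell(v_i))=n-1$.
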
 
\begin{proof}
Let $(u_0,u_1,\dots, u_{n})\in L(Q)$ be a brickly sequence for $Q.$ Then $u_0=e$, so let $i>0$.

We know from \cref{lem: look the interval} that $u_{i}\in [\alpha_{i},\alpha_{i+1}]$ and $u_i\sigma_{i+1}\in [\alpha_{i+1},\alpha_{i+2}]$. Thus, by \cref{lem:alpha interval}, there exist $v_i\leq s_{i}s_{i+1}\cdots s_{n-1}$ and $v_i'\leq s_{i+1}\cdots s_{n-1}$ such that $u_i=v_i\alpha_i$ and $u_i\sigma_{i+1}=v_i'\alpha_{i+1}.$ Observing that $v_i'\alpha_{i+1}=u_i\sigma_{i+1}=v_i\alpha_i\sigma_{i+1}=v_i\alpha_{i+1}$, we find that $v_i=v_i'.$ In particular, $v_i\leq s_{i+1}\cdots s_{n-1}$ for all $i\geq 1$.

We know from the definition of brickly sequences that $u_{i+1}\leq u_i\sigma_{i+1}$. Thus, \[v_{i+1}\alpha_{i+1}=u_{i+1}\leq u_{i}\sigma_{i+1}= v_i\alpha_i\sigma_{i+1}=v_i\alpha_{i+1}.\] Using our newfound knowledge that $v_{i}\leq s_{i+1}\cdots s_{n-1}$ and $v_{i+1}\leq s_{i+1}\cdots s_{n-1},$ we apply \cref{lem:alpha interval} to conclude that $v_{i+1}\leq v_i$ for each $i\geq 1$.

Summarizing what we have shown thus far, we have that for $u_{\bullet}\in L(Q)$, there exists (uniquely) a  sequence of permutations $v_1,\dots, v_n$ with $v_i\leq s_{i+1}\cdots s_{n-1}$ such that $u_i= v_i\alpha_{i}$ and $v_{i+1}\leq v_i$. This proves that 
\[L(Q)\subseteq \{(\id,v_1\alpha_1,\dots, v_{n}\alpha_{n}):v_i\in S_n, v_{i+1}\leq v_{i}, v_i\leq s_{i+1}s_{i+2}\dots s_{n-1} \text{ for } i\geq 1\}.\] 

Now suppose we have a sequence of permutations $v_1,\dots, v_n$ with $v_i\leq s_{i+1}\cdots s_{n-1}$ such that $v_{i+1}\leq v_i,$ and $v_{i}\leq s_{i+1}\cdots s_{n-1}.$ Set $u_0=\id$ and $u_i= v_i\alpha_{i}$ for $i=1,\dots, n$.
Then $u_i\sigma_{i+1}=v_i\alpha_{i}\sigma_{i+1}=v_i\alpha_{i+1}$  and by \cref{lem:alpha interval}, each of the three expressions is reduced. 
 Since $v_{i+1}\leq v_i$, we have $u_{i+1}=v_{i+1}\alpha_{i+1}\leq v_i\alpha_{i+1}=u_i\sigma_{i+1}$ by \cref{lem:alpha interval}.
Finally, the requirement $v_i\leq s_{i+1}\cdots s_{n-1}$ forces $v_{n-1}=v_n=1$ so $u_{n}=\alpha_n = w_0.$ Thus $u_\bullet$ is a brickly sequence.
\end{proof}

At this point it is not too difficult to explain why $\mbb{S}_{n-1}(Q)$ has $(n-1)!$ elements\footnote{To see the $(n-1)!$, set $v_0=s_1s_2\cdots s_{n-1}$ and let $d_i$ be such that $s_{i}\leq v_{d_i-1}$ but $s_i\not \leq v_{d_i}$. This exists uniquely because $s_{1}s_2\cdots s_{n-1}=v_0\geq v_{i}\geq v_{n-1}=\id$. Because $v_i\leq s_{i+1}\cdots s_{n-1},$ we have $d_i\leq i$. So $(d_1,\cdots, d_{n-1})\in [1] \times [2] \times [3] \times \cdots \times [n-1]$. There are $(n-1)!$ such sequences and each of them can be realized. So that is why there are $(n-1)!$ pieces.}, which is the same  as the number of maximal cells in the face poset of the standard cubulation. We can also see that each is combinatorially a cube. From \cref{thm:smooth-toric} and \cref{Loday mink sum} we learned that

\begin{lem}\label{lem: the vertices}
    Let $\sigma_i$, $\alpha_i$, and $Q$ be as in \cref{notation}. In this setting, we have $$\mbb{S}_0(Q)=\{(x_1\alpha_1,\dots, x_n\alpha_n)\in (S_n)^n:x_i\leq x_{i-1} \text{ for } i\geq 2 \text{ and }  x_i\leq s_is_{i+1}\dots s_{n-1} \text{ for } i\geq 1\}.$$
\end{lem}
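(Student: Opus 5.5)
The plan is to prove the two containments separately. The description of $\mbb{S}_0(Q)$ will be read off from \cref{pieces}, and the translation between left and right multiplication by $\alpha_i$ will be handled by \cref{lem:alpha interval} and \cref{lem:passover}.

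For the forward containment, let $(\atom_1,\dots,\atom_n)\in\mbb{S}_0(Q)$. By definition there is a brickly sequence $u_\bullet\in L(Q)$ with $u_i\leq \atom_i\leq u_{i-1}\sigma_i$ for every $i$. By \cref{pieces} we may write $u_i=v_i\alpha_i$ with $v_{i+1}\leq v_i$ and $v_i\leq s_{i+1}\cdots s_{n-1}$. Then $u_{i-1}\sigma_i=v_{i-1}\alpha_{i-1}\sigma_i=v_{i-1}\alpha_i$, with the convention $v_0=\id$ at $i=1$ (there $u_0\sigma_1=\sigma_1$, which forces a small separate check because $\sigma_1=\sigma_2$ and $\alpha_1=\id$). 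So $\atom_i\in[v_i\alpha_i,v_{i-1}\alpha_i]\subseteq[\alpha_i,\alpha_{i+1}]$, using \cref{lem: look the interval}. By \cref{lem:alpha interval}(2) we can then write $\atom_i=x_i\alpha_i$ with $x_i\leq s_is_{i+1}\cdots s_{n-1}$. By \cref{lem:alpha interval}(4), the inequalities become $v_i\leq x_i\leq v_{i-1}$. This gives $x_i\leq v_{i-1}\leq x_{i-1}$, where the second inequality holds because $v_{i-1}\leq x_{i-1}$.

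For the reverse containment, let $x_1,\dots,x_n$ be given with $x_i\leq x_{i-1}$ and $x_i\leq s_i\cdots s_{n-1}$. We must produce $v_\bullet$ satisfying the conditions of \cref{pieces} with $v_i\leq x_i\leq v_{i-1}$. The natural choice is to take $v_i$ to be the element obtained from $x_{i+1}$ (or from $x_i$) by deleting the letter $s_i$. Since every element below $s_i\cdots s_{n-1}$ is a product of distinct simple transpositions in increasing index order, it is determined by its support. So we set $v_i:=x_{i+1}$, whose support lies in $\{i+1,\dots,n-1\}$ and hence satisfies $v_i\leq s_{i+1}\cdots s_{n-1}$.

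We then check the required relations as support inclusions:
\begin{itemize}
\item $v_{i+1}=x_{i+2}\leq x_{i+1}=v_i$;
\item $v_i=x_{i+1}\leq x_i$;
\item $x_i\leq v_{i-1}=x_i$, which holds with equality.
\end{itemize}
With $v_0:=s_1\cdots s_{n-1}$ the case $i=1$ goes through similarly, once the endpoint conventions are handled, including the requirement $v_{n-1}=v_n=\id$.

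The main obstacle will be the bookkeeping at the boundary indices $i=1$ and $i=n$, where $\sigma_1=\sigma_2$ and $\alpha_0=\alpha_1=\id$. We will also need to justify that Bruhat order on elements below a Coxeter-type product $s_i\cdots s_{n-1}$ is just containment of supports, so that the choice $v_i=x_{i+1}$ is legitimate.
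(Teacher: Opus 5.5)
Your proposal is correct. The forward containment is the paper's argument. The reverse containment uses a different, simpler witness.

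\textbf{Forward containment.} You follow the paper: write $u_i=v_i\alpha_i$ by \cref{pieces}, land in $[\alpha_i,\alpha_{i+1}]$ by \cref{lem: look the interval}, and translate to $v_i\leq x_i\leq v_{i-1}$ by \cref{lem:alpha interval}. There is one slip. With $v_0=\id$ you get $v_0\alpha_1=\id\neq\sigma_1=u_0\sigma_1$. The convention that makes $u_0\sigma_1=v_0\alpha_1$ is $v_0=s_1\cdots s_{n-1}$, which is what you use later. In any case, at $i=1$ you only need $x_1\leq s_1\cdots s_{n-1}$, and \cref{lem:alpha interval}(2) supplies that.

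\textbf{Reverse containment.} The paper sets $v_i:=s_i\,\dem(s_i,x_i)$, i.e.\ $x_i$ with the letter $s_i$ deleted. It then needs the support description of $[\id,s_i\cdots s_{n-1}]$ to check $v_i\leq s_{i+1}\cdots s_{n-1}$, $v_i\leq v_{i-1}$ and $x_i\leq v_{i-1}$.

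You instead take $v_i:=x_{i+1}$, with $v_n:=\id$; note $x_n=\id$ is forced by $x_n\leq$ the empty product. With this choice, every condition in \cref{pieces} and both inequalities $v_i\leq x_i\leq v_{i-1}$ are literally instances of the hypotheses $x_{i+1}\leq x_i$ and $x_{i+1}\leq s_{i+1}\cdots s_{n-1}$. The upper inequality even holds with equality for $i\geq 2$, because $u_{i-1}\sigma_i=x_i\alpha_i$. The only tool needed is \cref{lem:alpha interval}(4), to pass from $x_{i+1}\leq x_i$ to $u_i\leq x_i\alpha_i$. So the support/Boolean fact you list as an obstacle is not actually needed on your route.

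\textbf{Comparison.} The two choices generally select different cells $C_u$ containing the given vertex. Yours is the cell in which the vertex agrees with the top $u_{i-1}\sigma_i$ in every position $i\geq 2$. Either choice suffices, since the lemma only asks for some brickly sequence. Your verification is the shorter of the two.
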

\begin{proof}
    Let $(w_1,w_2,\dots, w_n)\in \mbb{S}_0(Q).$ By the definition of $\mbb{S}_0(Q),$ there exists some $u_{\bullet}\in L(Q)$ such that $u_i\leq w_i \leq u_{i-1}\sigma_i$ for $i=1,\dots, n.$ By \cref{pieces}, there exist $v_1,v_2,\dots, v_n$ such that, for $i\geq 1$, we have $v_{i+1}\leq v_{i}$, and $v_i\leq s_{i+1}s_{i+2}\dotsb s_{n-1}$, and $u_i=v_i\alpha_i$. From \cref{lem: look the interval} we know that $[u_i,u_{i-1}\sigma_i]\subseteq [\alpha_i,\alpha_{i+1}]$ so $w_i\in [\alpha_i,\alpha_{i+1}].$ Therefore, by \cref{lem:alpha interval}, there exists  $(x_1,\dots, x_n) \in (S_n)^n$ so that $w_i=x_i\alpha_i$ and $x_i\leq s_is_{i+1}\dots s_{n-1}$ for each $i$.

    Since $x_i\alpha_i=w_i\geq u_i=v_i\alpha_i,$ we get that $x_i\geq v_i$ by \cref{lem:alpha interval}. Since $x_i\alpha_i=w_i\leq u_{i-1}\sigma_i=v_{i-1}\alpha_{i-1}\sigma_i=v_{i-1}\alpha_{i},$ we get that $x_i\leq v_{i-1}$ by \cref{lem:alpha interval}. Hence, $x_i\leq v_{i-1}\leq x_{i-1}$ for all $i$. This proves 
    \[\mbb{S}_0(Q)\subseteq \{(x_1\alpha_1,\dots, x_n\alpha_n)\in (S_n)^n:x_i\leq x_{i-1} \text{ for } i\geq 2 \text{ and }  x_i\leq s_is_{i+1}\dots s_{n-1} \text{ for } i\geq 1\}.\]

    To prove the reverse inclusion, suppose we have some sequence $(x_1,\dots,x_n)\in (S_n)^n$ such that $x_i\leq x_{i-1}$ and $x_i\leq s_is_{i+1}\cdots s_{n-1}$ for each $i$. Letting $w_i=x_i\alpha_i,$ we want to show that $(w_1,w_2,\dots, w_n)\in \mbb{S}_0(Q).$ 

    Let $v_i=s_i\dem(s_i,x_i)$; in other words, if $x_i$ begins with $s_i$ then we remove it, and otherwise we let $x_i=v_i$. This construction guarantees that $v_i\leq s_{i+1}s_{i+2}\cdots s_{n-1}$. Since $x_i\leq x_{i-1}$ we will have $v_i\leq v_{i-1}$. Letting $u_0=1$ and $u_i=v_i\alpha_i$ for $i>0$ we have that $(u_0,u_1,\dots, u_n)\in L(Q)$ by Lemma \ref{pieces}.

     When we view $x_{i-1}$ as a subword of $s_{i-1}\cdots s_{n-1}$, we obtain $v_{i-1}$ as a subword of $x_{i-1}$ by removing $s_{i-1}$ if it is there. Then viewing $x_i$ as a subword of $x_{i-1}$ which doesn't use $s_{i-1},$ it is clear that $x_i\leq v_{i-1}$. Since $v_{i-1}\alpha_i \in [\alpha_i,\alpha_{i+1}],$ we may apply \cref{lem:alpha interval} to get $w_i=x_i\alpha_i\leq v_{i-1}\alpha_i = u_{i-1}\sigma_i.$  Since $v_i\leq x_i$ we also have that $u_i=v_i\alpha_i\leq x_i\alpha_i=w_i.$ Hence $w_i\in [u_i,u_{i-1}\sigma_i].$ Therefore, $(w_1,w_2,\dots, w_n)\in \mbb{S}_0(Q)$ by the definition of the spackle poset. We conclude that 
\[\mbb{S}_0(Q)= \{(x_1\alpha_1,\dots, x_n\alpha_n)\in (S_n)^n:x_i\leq x_{i-1} \text{ for } i\geq 2 \text{ and }  x_i\leq s_is_{i+1}\dots s_{n-1} \text{ for } i\geq 1\}.\]
\end{proof}

\subsubsection{The Lehmer complex, the witch, and the spackle poset}\label{subsubsection:witch}

We now have all the tools needed to prove \cref{lem:spackle-poset-iso}. Let $\sigma_i$, $\alpha_i$, and $Q$ be as in \cref{notation}.

We begin by defining the \textit{Lehmer complex} below.

\begin{defn}\label{defn:Lehmer-complex}
    Define the Lehmer complex $\LL^n$ as the face poset of the unit cube subdivision of the box with corners $(0,\dots,0)$ and $(1,2,\dots, n-1)$. In particular:
        \begin{itemize}
        \item the vertices are $[0,1]\times [0,2] \times [0,3] \times \cdots \times [0,n-1] \subseteq \mathbb{N}^{n-1}$,
        \item faces of dimension $\geq0$ are indexed by pairs of vertices $\gimel,\daleth$ such that every coordinate of $\daleth-\gimel$ is either 0 or 1. The face indexed by such a $(\gimel,\daleth)$ is the unit hypercube with $\gimel$ and $\daleth$ at opposite corners (i.e. the face with vertices $\{\gimel_1,\daleth_1\}\times \cdots \times \{\gimel_{n-1},\daleth_{n-1}\}$.) We will frequently identify $(\gimel,\daleth)$ with the face it indexes.
    \end{itemize}
    Let $\LL^n_{i}$ be the set of $i$ dimensional faces of $\LL^n.$ 
\end{defn}

\begin{egg}
    Consider the unit cube subdivision of the box with corners $(0,0)$ and $(1,2)$, pictured as the Lehmer complex of \cref{fig:lehmerEx}. The following examples should illustrate how pairs of vertices of the Lehmer complex index faces in this specific case. 
    \begin{itemize}
        \item The face indexed by the pair $(\gimel, \daleth)=((0,1),(0,2))$ has vertices \[\{\gimel_1,\daleth_1\}\times \{\gimel_2,\daleth_2\}=\{0,0\}\times\{1,2\}=\{(0,1),(0,2)\}.\]
        \item The face indexed by the pair $(\gimel, \daleth)=((0,1),(1,2))$ has vertices \[\{\gimel_1,\daleth_1\}\times \{\gimel_2,\daleth_2\}=\{0,1\}\times\{1,2\}=\{(0,1),(0,2),(1,1),(1,2)\}.\]
    \end{itemize}
\end{egg}

\begin{SCfigure}
    \centering
    \caption{A picture of the faces of the Lehmer complex $\mbb{L}^3$. The vertices are labeled by elements of $[0,1]\times [0,2]$. Each face is labeled $(\gimel, \daleth)$ where $\gimel$ is its bottom-leftmost vertex, $\daleth$ is its top-rightmost vertex.}
    \includegraphics[width=0.36\linewidth]{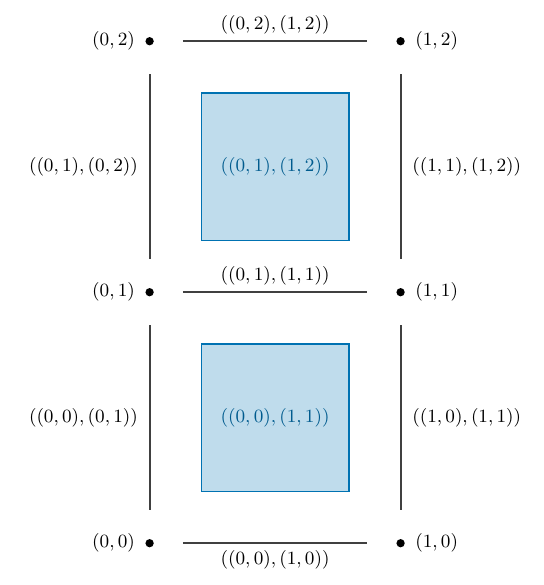}
    \label{fig:lehmerEx}
\end{SCfigure}

In \cref{subsection:cubulation}, we defined a map $\reversemathwitch$ from $\mbb{L}_0^n$ to $(S_n)^n$. Now that we have the notation $\mbb{S}_0(Q)$, which is the image of $\reversemathwitch$, we will define this map again, in \cref{witch} below. Using $\reversemathwitch$, we will define a map $\Upsilon$ in \cref{defn:extension-of-witch} from $\mbb{S}(Q)$ to $(\mbb{L}_0^n)^2$, which we will show in \cref{lem:spackle-poset-iso} defines an order isomorphism onto its image. Its image can be identified with the Lehmer complex $\mbb{L}^n$, completing the proof of \cref{thm:main-result}.

\begin{defn}\label{witch}
    Define $\reversemathwitch: \LL^n_0\to \mbb{S}_0(Q)$ as the map sending $(\daleth_1,\daleth_2,\dots, \daleth_{n-1})\in \LL^n_0$ to $(x_1\alpha_1, x_2\alpha_2,\dots, x_n\alpha_n)$ with 
 \[x_j= \prod_{i=1}^{n} \left(\begin{cases}
        s_i & j\leq \daleth_i\\
        \id & \text{otherwise.}
    \end{cases}\right) \]
    Define $\mathwitch=\reversemathwitch^{-1}$. 
\end{defn}
\cref{lem: the vertices} showed that $\reversemathwitch$ (and thus $\mathwitch$) are well defined bijections, and we will show in \cref{lem: preserve prec} that $\reversemathwitch$ is in fact an isomorphism of posets, where the poset structure on $\mbb{S}_0(Q)$ is defined in \cref{defn:partial-order} and the poset structure on $\mbb{L}_0^n$ defined in \cref{defn:partial-order}. We can more explicitly describe $\mathwitch$ as the map sending $(x_1\alpha_1, x_2\alpha_2,\dots, x_n\alpha_n)\in \mbb{S}_0(Q)$ to $(\daleth_1,\daleth_2,\dots, \daleth_{n-1})$ where $\daleth_i$ is such that $\{j:s_i\leq x_j\}=\{1,2,\dots, \daleth_{j}\}$ if and only if $j\leq \daleth_i$ (when $\{j:s_i\leq x_j\}=0$ we will have $\daleth_i=0$).

In \cref{defn:partial-order} below, we will define a partial order on $\mbb{S}_0(Q)$, and we will explain in \cref{lem: spackle prec} that this partial order on $\mbb{S}_0(Q)$ contains all of the information about the poset structure on $\mbb{S}(Q)$. In \cref{lem: Lehmer prec}, we will show that the poset structure on $\mbb{L}^n$ is determined entirely by the poset structure on $\mbb{L}_0^n$. Knowing that the poset structures on $\mbb{S}(Q)$ and $\mbb{L}^n$ boil down to the poset structures on $\mbb{S}_0(Q)$ and $\mbb{L}_0^n$ is an important observation towards lifting the map $\mathwitch$ from $\mbb{S}_0(Q)\to\mbb{L}_0^n$ (which, as we will show in \cref{lem: preserve prec} is an isomorphism of posets) to a map $\Upsilon$ from $\mbb{S}(Q)$ to $\mbb{L}^n$, as we will see shortly. 

\begin{defn}\label{defn:partial-order}
Define $\prec_{\mathbb{S}_0(Q)}$ as the partial order on $\mbb{S}_0(Q)$ where $\atom \prec_{\mathbb{S}_0(Q)} \boson$ for $\atom,\boson\in \mathbb{S}_0(Q)$ iff $\atom_i\leq \boson_i$ for all $i$. Similarly, define $\prec_{\LL^n_{0}}$ as the partial order on $\LL^n_{0}$ where $\gimel \prec_{\LL^n_{0}} \daleth$ for $\gimel,\daleth \in \LL^n_{0}$ iff $\gimel_i\leq \daleth_i$ for all $i$.
\end{defn}

\begin{lem}\label{lem: spackle prec}
    Let $\pumpkin=[\atom_1,\boson_1]\times \cdots \times [\atom_n,\boson_n] \in \mbb{S}(Q)$ and let $\pumpkin'=[\atom'_1,\boson'_1]\times \cdots \times [\atom'_n,\boson'_n] \in \mbb{S}(Q).$ Then $\pumpkin\subseteq \pumpkin'$ if and only if  $\atom'\prec_{\mbb{S}_0(Q)} \atom \prec_{\mbb{S}_0(Q)} \boson \prec_{\mbb{S}_0(Q)} \boson'.$
\end{lem}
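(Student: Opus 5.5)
The plan is to reduce the inclusion of products of intervals to coordinatewise inclusion of Bruhat intervals, and then to read off the endpoints. First, I note that the endpoint tuples of any element of $\mbb{S}(Q)$ lie in $\mbb{S}_0(Q)$. If $\pumpkin=[\atom_1,\boson_1]\times\cdots\times[\atom_n,\boson_n]\in\mbb{S}(Q)$, there is a brickly sequence $u_\bullet\in L(Q)$ with $u_i\leq\atom_i\leq\boson_i\leq u_{i-1}\sigma_i$. Then $[\atom_1,\atom_1]\times\cdots\times[\atom_n,\atom_n]$ satisfies the defining condition of \cref{defn:spackle} with the same $u_\bullet$, and likewise for $\boson$. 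Hence $\atom,\boson\in\mbb{S}_0(Q)$, so the expression $\atom'\prec_{\mbb{S}_0(Q)}\atom\prec_{\mbb{S}_0(Q)}\boson\prec_{\mbb{S}_0(Q)}\boson'$ in the statement makes sense. The middle relation $\atom\prec_{\mbb{S}_0(Q)}\boson$ holds automatically, since $\atom_i\leq\boson_i$ for every $i$.

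Next, a product of nonempty sets is contained in another product exactly when each factor is contained in the corresponding factor. All the intervals here are nonempty, because $\atom_i\leq\boson_i$. So $\pumpkin\subseteq\pumpkin'$ holds if and only if $[\atom_i,\boson_i]\subseteq[\atom'_i,\boson'_i]$ for every $i$.

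It remains to show that for Bruhat intervals with $\atom_i\leq\boson_i$, the inclusion $[\atom_i,\boson_i]\subseteq[\atom'_i,\boson'_i]$ is equivalent to $\atom'_i\leq\atom_i$ and $\boson_i\leq\boson'_i$.
\begin{itemize}
\item If the interval inclusion holds, then $\atom_i$ and $\boson_i$ both lie in $[\atom'_i,\boson'_i]$, which gives the two inequalities.
\item Conversely, if $\atom'_i\leq\atom_i$ and $\boson_i\leq\boson'_i$, then any $w$ with $\atom_i\leq w\leq\boson_i$ satisfies $\atom'_i\leq w\leq\boson'_i$ by transitivity.
\end{itemize}
Taking these equivalences over all $i$ and translating them into the definition of $\prec_{\mbb{S}_0(Q)}$ gives exactly $\atom'\prec_{\mbb{S}_0(Q)}\atom$ and $\boson\prec_{\mbb{S}_0(Q)}\boson'$, which proves the lemma.

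I do not expect a real obstacle. The only point needing care is the first step: checking that the endpoint tuples belong to $\mbb{S}_0(Q)$, so that the relation $\prec_{\mbb{S}_0(Q)}$ is being applied to elements of its domain.
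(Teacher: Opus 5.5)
Your proof is correct and follows the paper's argument. The paper's proof is the one-line observation that $[\atom_i,\boson_i]\subseteq[\atom'_i,\boson'_i]$ if and only if $\atom'_i\leq\atom_i\leq\boson_i\leq\boson'_i$, applied coordinatewise; your extra check that $\atom,\boson\in\mathbb{S}_0(Q)$, so that $\prec_{\mathbb{S}_0(Q)}$ is applied within its domain, is a detail the paper leaves implicit.
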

\begin{proof}
    This follows from the observation that $[\atom_i,\boson_i]\subseteq [\atom_i',\boson_i']$ if and only if $\atom'_i\leq \atom_i\leq \boson_i\leq \boson'_i.$ 
\end{proof}

\begin{lem}\label{lem: Lehmer prec}
    For $(\gimel,\daleth)\in \LL^n$ and $(\gimel',\daleth')\in \LL^n$, we have $(\gimel,\daleth)\subseteq (\gimel',\daleth')$ (that is, the face of $\LL^n$ indexed by $(\gimel,\daleth)$ is contained in the face indexed by $(\gimel',\daleth')$) if and only if $\gimel'\prec_{\LL^n_{0}} \gimel  \prec_{\LL^n_{0}} \daleth \prec_{\LL^n_{0}} \daleth'.$
\end{lem}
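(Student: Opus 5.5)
The plan is to unwind \cref{defn:Lehmer-complex} directly. A face indexed by $(\gimel,\daleth)$ is the product of coordinate intervals $[\gimel_1,\daleth_1]\times\cdots\times[\gimel_{n-1},\daleth_{n-1}]$, where each $\daleth_i-\gimel_i\in\{0,1\}$. Its vertex set is $\{\gimel_1,\daleth_1\}\times\cdots\times\{\gimel_{n-1},\daleth_{n-1}\}$. Since each interval has length at most one, this vertex set is exactly the set of lattice points of that box. So containment of faces (as geometric cells, or equivalently of vertex sets) is containment of these boxes. The whole argument is then coordinatewise, just as in the proof of \cref{lem: spackle prec}.

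For the forward direction, suppose $(\gimel,\daleth)\subseteq(\gimel',\daleth')$. The vertices $\gimel$ and $\daleth$ of the smaller face are vertices of the larger one. Hence for each $i$ both $\gimel_i$ and $\daleth_i$ lie in $\{\gimel'_i,\daleth'_i\}\subseteq[\gimel'_i,\daleth'_i]$. This gives $\gimel'_i\le\gimel_i\le\daleth_i\le\daleth'_i$ for every $i$, which is the chain $\gimel'\prec_{\LL^n_0}\gimel\prec_{\LL^n_0}\daleth\prec_{\LL^n_0}\daleth'$.

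For the reverse direction, assume the chain holds. Then $[\gimel_i,\daleth_i]\subseteq[\gimel'_i,\daleth'_i]$ for each $i$, so the product box is contained coordinatewise. Every vertex of $(\gimel,\daleth)$ has $i$th coordinate in $\{\gimel_i,\daleth_i\}$. That coordinate is an integer in $[\gimel'_i,\daleth'_i]$, an interval of length at most one, so it lies in $\{\gimel'_i,\daleth'_i\}$. Therefore every vertex of the first face is a vertex of the second, and the first face is contained in the second.

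There is no real obstacle here. The only point needing care is the identification of the "face" with both its vertex set and its convex hull, and the observation that an integer interval of length $\le 1$ has no interior lattice points. That observation is what lets vertex-set containment and box containment coincide.
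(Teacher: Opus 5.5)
Your proof is correct and follows the paper's argument. Both reduce face containment to the coordinatewise containment $\{\gimel_i,\daleth_i\}\subseteq\{\gimel'_i,\daleth'_i\}$, then use $\daleth_i-\gimel_i,\ \daleth'_i-\gimel'_i\in\{0,1\}$ to turn this into $\gimel'_i\le\gimel_i\le\daleth_i\le\daleth'_i$; you simply spell out the two directions, and the equivalence of box containment with vertex-set containment, more explicitly.
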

\begin{proof}
    We find that $\{\gimel_i,\daleth_i\}\subseteq \{\gimel'_i,\daleth'_i\}$ if and only if $\gimel'_i\leq \gimel_i \leq \daleth_i \leq \daleth'_i$ because    
    $\daleth_i-\gimel_i\in \{0,1\}$ and $\daleth'_i-\gimel'_i\in \{0,1\}$. Hence $\{\gimel_i,\daleth_i\}\subseteq \{\gimel'_i,\daleth'_i\}$ for all $i$ if and only if $\gimel'_i\leq \gimel_i \leq \daleth_i \leq \daleth'_i$ for all $i$. The former statement is equivalent to $(\gimel,\daleth) \subseteq (\gimel', \daleth')$ and the latter statement is equivalent to $\gimel'\prec_{\LL^n_{0}} \gimel \prec_{\LL^n_{0}} \daleth \prec_{\LL^n_{0}} \daleth'.$
    \end{proof}

We will now show that $\reversemathwitch$ is an isomorphism of posets.

\begin{lem}\label{lem: preserve prec}
    For $\atom,\boson\in \mathbb{S}_0(Q)$, $\atom \prec_{\mathbb{S}_0(Q)} \boson$ if and only if $\mathwitch(\atom) \prec_{\LL^n_{0}} \mathwitch(\boson).$ Equivalently, for $\gimel, \daleth \in \LL^n_{0},$ $\gimel \prec_{\LL^n_{0}} \daleth$ if and only if $\reversemathwitch(\gimel) \prec_{\mbb{S}_0(Q)} \reversemathwitch(\daleth).$
\end{lem}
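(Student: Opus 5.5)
The plan is to reduce the comparison coordinatewise using \cref{lem:alpha interval}(4), and then to translate Bruhat order among the factors $x_j$ into inequalities among the $\daleth_i$. Write $\atom=\reversemathwitch(\gimel)=(x_1\alpha_1,\dots,x_n\alpha_n)$ and $\boson=\reversemathwitch(\daleth)=(y_1\alpha_1,\dots,y_n\alpha_n)$. By \cref{lem: the vertices} we have $x_j,y_j\leq s_js_{j+1}\cdots s_{n-1}$. \cref{lem:alpha interval}(4) then gives $x_j\alpha_j\leq y_j\alpha_j$ if and only if $x_j\leq y_j$. Hence $\atom\prec_{\mbb{S}_0(Q)}\boson$ is equivalent to $x_j\leq y_j$ for every $j$. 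It therefore suffices to show that this family of Bruhat inequalities is equivalent to $\gimel_i\leq\daleth_i$ for all $i$.

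Next I use the explicit shape of the factors. By \cref{witch}, $x_j$ is the product, in increasing order of $i$, of the $s_i$ with $\gimel_i\geq j$. Thus $x_j$ is a product of distinct simple transpositions taken in increasing index order. Such a product is a reduced word, and it is a subword of the reduced word $s_1s_2\cdots s_{n-1}$.

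The key step is the following claim: for subsets $A,B\subseteq[n-1]$ with $x_A=\prod_{i\in A}^{\uparrow}s_i$ and $x_B=\prod_{i\in B}^{\uparrow}s_i$, we have $x_A\leq x_B$ if and only if $A\subseteq B$. The direction $(\Leftarrow)$ is the subword property, because $x_A$ is then a subword of the reduced word for $x_B$. For $(\Rightarrow)$, note that every reduced word of $x_B$ uses exactly the letters of $B$, since the support of a permutation is independent of the reduced word chosen. If $x_A\leq x_B$, then some reduced word of $x_A$ is a subword of a reduced word of $x_B$, so the support of $x_A$, namely $A$, lies in $B$. This is the same Boolean-lattice fact that underlies the proof of \cref{lem:alpha interval}(3).

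Finally, I assemble the pieces. Set $A_j=\{i:\gimel_i\geq j\}$ and $B_j=\{i:\daleth_i\geq j\}$. The claim shows that $x_j\leq y_j$ for all $j$ is equivalent to $A_j\subseteq B_j$ for all $j$. That in turn says that for each $i$ and each $j$, $\gimel_i\geq j$ implies $\daleth_i\geq j$, which is exactly $\gimel_i\leq\daleth_i$. The boundary case $j=1$ causes no trouble, since $\alpha_1=\id$ and the coordinates are nonnegative integers. The equivalent formulation via $\mathwitch$ follows because $\reversemathwitch$ is a bijection by \cref{lem: the vertices}.

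The only step that needs genuine care is the support argument in $(\Rightarrow)$ of the claim, and even that step is standard.
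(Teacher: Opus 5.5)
Your proposal is correct and follows the paper's proof. You reduce the comparison coordinatewise to $x_j\leq y_j$ via \cref{lem:alpha interval} and \cref{lem: the vertices}, then turn those Bruhat inequalities into $\gimel_i\leq\daleth_i$; the only difference is that your support argument for increasing products of distinct simple transpositions spells out the step the paper dismisses with ``the construction of $\mathwitch$ implies.''
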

\begin{proof}
    Let $\atom=(\atom_1,\atom_2,\dots, \atom_n)\in \mbb{S}_0(Q)$ and $\boson=(\boson_1,\boson_2,\dots, \boson_n)\in \mathbb{S}_0(Q)$.  There exist $x_{\bullet},y_{\bullet}$ such that 
\begin{itemize}
    \item $\atom_i=x_i\alpha_i, \boson_i=y_i\alpha_i,$
    \item $x_i\leq x_{i-1}, y_i\leq y_{i-1}$
    \item and $x_i\leq s_is_{i+1}\dots s_{n-1}, y_i\leq s_is_{i+1}\dots s_{n-1}.$
\end{itemize}
By \cref{lem:alpha interval} and \cref{lem: the vertices}, $\atom_i\leq \boson_i $ if and only if $ x_i\leq y_{i}$. The construction of $\mathwitch$ implies that $x_i\leq y_i$ for all $i$ if and only if $(\mathwitch(\atom))_i \leq (\mathwitch(\boson))_i$ for all $i$. Hence, $\atom \prec_{\mathbb{S}_0(Q)} \boson$ if and only if $\mathwitch(\atom) \prec_{\LL^n_{0}} \mathwitch(\boson).$
\end{proof}

In \cref{defn:extension-of-witch} below, we will define a map $\Upsilon$ from $\mbb{S}(Q)$ to $(\mbb{L}_0^n)^2$, which we will show in \cref{lem: cubejection and more} has image $\mbb{L}^n$ and lifts the map $\mathwitch\colon \mbb{S}_0(Q)\to \mbb{L}_0^n$. In \cref{lem:spackle-poset-iso} we will prove the main result of this section, that $\Upsilon$ defines an isomorphism of posets from $\mbb{S}(Q)$ to $\mbb{L}^n$.
 
\begin{defn}\label{defn:extension-of-witch}
    Define $\assumption:\mbb{S}(Q)\to (\LL^n_{0})^2$ as the map sending $[\atom_1,\boson_1]\times \cdots \times [\atom_n,\boson_n] \in \mathbb{S}(Q)$ to $(\mathwitch(\atom),\mathwitch(\boson)).$
\end{defn}
We will soon prove that the image of $\assumption$ is $\LL^n$ and in fact $\assumption$ is a bijection between the spackle poset $\mbb{S}(Q)$ and the Lehmer complex $\LL^n$.

\begin{lem}\label{lem: cubejection and more}
     The image of $\assumption$ is $\LL^n$ and the restriction of $\assumption$ to $\mbb{S}_{n-1}(Q)$ is a bijection between $\mbb{S}_{n-1}(Q)$ and $\LL_{n-1}^{n}.$ 
\end{lem}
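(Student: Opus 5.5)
The plan is to handle the maximal cells first, by computing $\assumption$ explicitly on $\mbb{S}_{n-1}(Q)=L(Q)$ using \cref{pieces}, and then to deduce the statement about arbitrary cells from the order-isomorphism \cref{lem: preserve prec}.

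\textbf{Maximal cells.} By \cref{pieces}, an element of $\mbb{S}_{n-1}(Q)$ is $\prod_i[u_i,u_{i-1}\sigma_i]$ with $u_i=v_i\alpha_i$. Here $v_{i+1}\leq v_i$ and $v_i\leq s_{i+1}\cdots s_{n-1}$.

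\begin{itemize}
\item Set $v_0:=s_1s_2\cdots s_{n-1}$. Since $u_0\sigma_1=\sigma_1=v_0\alpha_1$ and $u_{i-1}\sigma_i=v_{i-1}\alpha_{i-1}\sigma_i=v_{i-1}\alpha_i$, the cell is $[\atom,\boson]$ with $\atom=(v_1\alpha_1,\dots,v_n\alpha_n)$ and $\boson=(v_0\alpha_1,\dots,v_{n-1}\alpha_n)$.
\item By the explicit description of $\mathwitch$, $\mathwitch(\atom)_k=d_k$, where $\{j\geq 1:s_k\leq v_j\}=\{1,\dots,d_k\}$. This set is an initial segment because the $v_j$ decrease.
\item For $\boson$ the relevant set is $\{j\geq 1: s_k\leq v_{j-1}\}=\{1,\dots,d_k+1\}$, since $s_k\leq v_0$ for every $k$. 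Hence $\mathwitch(\boson)=\mathwitch(\atom)+(1,\dots,1)$.
\item Since $v_k\leq s_{k+1}\cdots s_{n-1}$ forces $s_k\not\leq v_k$, we get $d_k\leq k-1$. Therefore $(\mathwitch(\atom),\mathwitch(\boson))$ is a top-dimensional unit cube of $\LL^n$.
\item Injectivity on $\mbb{S}_{n-1}(Q)$ follows because $\mathwitch$ is a bijection (\cref{lem: the vertices}) and $\atom$ determines the cell.
\item For surjectivity, take any $\gimel$ with $\gimel_k\leq k-1$. Define $v_j$ to be the product, in increasing order of $k$, of those $s_k$ with $j\leq \gimel_k$, and set $v_n=\id$. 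Then $v_{j+1}\leq v_j$. Also $v_j\leq s_{j+1}\cdots s_{n-1}$, because $s_k\leq v_j$ forces $j\leq\gimel_k\leq k-1$. \cref{pieces} then produces a cell mapping to $(\gimel,\gimel+\mathbf 1)$.
\end{itemize}

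\textbf{Image contained in $\LL^n$.} Let $\pumpkin=\prod[\atom_i,\boson_i]\in\mbb{S}(Q)$. By definition there is some $u_\bullet\in L(Q)$ with $u_i\leq\atom_i\leq\boson_i\leq u_{i-1}\sigma_i$ for all $i$. The two sequences $(u_i)_i$ and $(u_{i-1}\sigma_i)_i$ are the corner vertices of a maximal cell, so they lie in $\mbb{S}_0(Q)$. Thus $(u_i)\prec\atom\prec\boson\prec(u_{i-1}\sigma_i)$ in $\mbb{S}_0(Q)$. Applying \cref{lem: preserve prec} and the first step gives
\[\gimel\prec\mathwitch(\atom)\prec\mathwitch(\boson)\prec\gimel+\mathbf 1\]
for some $\gimel$. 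Hence every coordinate of $\mathwitch(\boson)-\mathwitch(\atom)$ lies in $\{0,1\}$, so $\assumption(\pumpkin)\in\LL^n$.

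\textbf{Image contains $\LL^n$.} Let $(\gimel,\daleth)\in\LL^n$. It lies in some top cube $(\gimel',\gimel'+\mathbf 1)$, so by \cref{lem: Lehmer prec}
\[\gimel'\prec\gimel\prec\daleth\prec\gimel'+\mathbf1.\]
By the first step, $(\gimel',\gimel'+\mathbf 1)$ is the image of a maximal cell coming from some $u_\bullet\in L(Q)$. Pulling back along $\reversemathwitch$ with \cref{lem: preserve prec} gives
\[u_i\leq\reversemathwitch(\gimel)_i\leq\reversemathwitch(\daleth)_i\leq u_{i-1}\sigma_i \quad\text{for all } i.\]
So $\prod_i[\reversemathwitch(\gimel)_i,\reversemathwitch(\daleth)_i]\in\mbb{S}(Q)$, and it maps to $(\gimel,\daleth)$.

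\textbf{Main obstacle.} The step needing the most care is the explicit computation $\mathwitch(\boson)=\mathwitch(\atom)+\mathbf 1$. It requires the bookkeeping with $v_0$, and the conversion between $u_{i-1}\sigma_i$ and $v_{i-1}\alpha_i$, which relies on the reducedness statements in \cref{lem:alpha interval}.
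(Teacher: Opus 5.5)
Your proposal is correct and follows the paper's argument. You write the corners of a maximal cell as $(v_i\alpha_i)_i$ and $(v_{i-1}\alpha_i)_i$ to get $\assumption$ of it equal to $(\mathwitch(\conductivity),\mathwitch(\conductivity)+\mathbf{1})$, deduce injectivity from bijectivity of $\mathwitch$, and sandwich an arbitrary cell between the corners of a maximal cell via \cref{lem: preserve prec}. The only deviations are these: you reach every top cube by explicitly building $v_\bullet$ from $\gimel$ instead of using the count $|\mbb{S}_{n-1}(Q)|=|\LL^n_{n-1}|=(n-1)!$, and you prove the reverse inclusion $\LL^n\subseteq\assumption(\mbb{S}(Q))$ here rather than in \cref{lem:spackle-poset-iso}. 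Your choice $v_0=s_1\cdots s_{n-1}$ also correctly handles the first coordinate $u_0\sigma_1=\sigma_1$, which the paper's displayed formula for $\dipole$ misprints as $\alpha_1$.
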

\begin{proof}
    Let $u\in L(Q)$.
   Let $\conductivity=(u_1,u_2,\dots,u_n)$ and $\dipole=(u_0\sigma_1,u_1\sigma_2,\dots, u_{n-1}\sigma_n).$ It follows from the definition of the spackle poset that $\conductivity\in \mbb{S}_0(Q)$ and $\dipole\in \mbb{S}_0(Q).$ 
    Let $x_i=u_i\alpha_i^{-1}$ so that $\conductivity=(u_1,\dots,u_n)=(x_1\alpha_1,\dots, x_n\alpha_n).$ Then $x_{i-1}=u_{i-1}\sigma_i\sigma_i^{-1}\alpha_{i-1}^{-1}=u_{i-1}\sigma_i \alpha_{i}^{-1}$ so \[\dipole=(\sigma_1,u_1\sigma_2,\dots, u_{n-1}\sigma_n) = (\alpha_1,x_1\alpha_2,x_2\alpha_3,\dots, x_{n-1}\alpha_n).\]
    Hence, $(\mathwitch(\dipole))_i=(\mathwitch(\conductivity))_i+1$. Therefore, $(\mathwitch(\dipole))_i - (\mathwitch(\conductivity))_i =1\in \{0,1\}.$ This shows that $(\mathwitch(\conductivity),\mathwitch(\dipole))\in \LL^n$. Observing that $\sum_{i=1}^{n-1} (\mathwitch(\dipole))_i - (\mathwitch(\conductivity))_i =n-1,$ we learn that in fact $(\mathwitch(\conductivity),\mathwitch(\dipole))\in \LL_{n-1}^{n}.$ Hence, the restriction of $\assumption$ to $\mathbb{S}_{n-1}(Q)$ has image contained in $\LL_{n-1}^{n}$. That is, $\assumption$ takes maximal cubes of the spackle poset $\mathbb{S}(Q)$ to maximal cubes of the Lehmer complex. Since $\mathwitch$ is injective, $\assumption$ is as well.
We know $|\mathbb{S}_{n-1}(Q)|=|\LL_{n-1}^{n}|=(n-1)!$ so $\assumption|_{\mathbb{S}_{n-1}(Q)}$ is bijective.

Now let $\pumpkin\in \mbb{S}(Q)$ and without loss of generality suppose that $\pumpkin \subseteq [u_1,\sigma_1]\times [u_2,u_1\sigma_2]\times \cdots \times [u_n,u_{n-1}\sigma_n]$ (this is true for some $u\in L(Q)$ so it might as well be the general $u$ we have been considering). Write $\pumpkin=[\atom_1,\boson_1]\times \cdots \times [\atom_n, \boson_n].$ We have that $\conductivity\prec_{\mbb{S}_{0}(Q)} \atom \prec_{\mbb{S}_{0}(Q)} \boson \prec_{\mbb{S}_{0}(Q)} \dipole.$ Hence, by \cref{lem: preserve prec}, $\mathwitch(\conductivity)\prec_{\LL_{0}^n}\mathwitch( \atom) \prec_{\LL_{0}^n}\mathwitch( \boson) \prec_{\LL_{0}^n} \mathwitch(\dipole).$ In particular, $(\mathwitch(\conductivity))_i\leq (\mathwitch( \atom))_i\leq  (\mathwitch( \boson))_i \leq (\mathwitch(\dipole))_i$. Since $(\mathwitch(\dipole))_i-(\mathwitch(\conductivity))_i=1,$ we conclude $(\mathwitch( \boson))_i-(\mathwitch( \atom))_i\in \{0,1\}$ and thus $\assumption(\pumpkin)=(\mathwitch(\atom),\mathwitch(\boson))\in \LL^n.$  Therefore, the image of $\assumption$ is contained in $\LL^n.$\qedhere

\end{proof}

We will now prove the main result of this section, and hence \cref{thm:main-result}.

\begin{thm}\label{lem:spackle-poset-iso}
   The spackle poset $\mbb{S}(Q)$ is isomorphic to the Lehmer complex $\LL^n$ (partially ordered by face containment) and the explicit isomorphism is given by $\assumption$. 
\end{thm}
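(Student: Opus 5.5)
The plan is to show that $\assumption$ is a bijection $\mbb{S}(Q)\to\LL^n$ and that both $\assumption$ and its inverse preserve containment. Order-preservation in both directions should come almost for free. By \cref{lem: spackle prec}, $\pumpkin\subseteq\pumpkin'$ in $\mbb{S}(Q)$ if and only if $\atom'\prec\atom\prec\boson\prec\boson'$ in $\mbb{S}_0(Q)$. By \cref{lem: preserve prec}, $\mathwitch$ translates these relations exactly into $\mathwitch(\atom')\prec\mathwitch(\atom)\prec\mathwitch(\boson)\prec\mathwitch(\boson')$ in $\LL^n_0$. By \cref{lem: Lehmer prec}, the latter is precisely containment of the corresponding faces of $\LL^n$. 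So $\pumpkin\subseteq\pumpkin'$ if and only if $\assumption(\pumpkin)\subseteq\assumption(\pumpkin')$. This makes $\assumption$ an order embedding, and in particular injective, since injectivity also follows directly from the injectivity of $\mathwitch$. By \cref{lem: cubejection and more}, its image lies in $\LL^n$.

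What remains is surjectivity onto $\LL^n$, and I expect this to be the main step. Let $(\gimel,\daleth)\in\LL^n$ be an arbitrary face. Every face of the unit cube subdivision lies in some maximal unit cube, so there is a maximal face $(\gimel^0,\daleth^0)\in\LL^n_{n-1}$ with $\gimel^0\prec\gimel\prec\daleth\prec\daleth^0$.

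By the bijection $\mbb{S}_{n-1}(Q)\to\LL^n_{n-1}$ in \cref{lem: cubejection and more}, there is $u\in L(Q)$ with $\assumption$ of the maximal cell $[u_1,u_0\sigma_1]\times\cdots\times[u_n,u_{n-1}\sigma_n]$ equal to $(\gimel^0,\daleth^0)$. In the notation of that proof, $\mathwitch(\conductivity)=\gimel^0$ and $\mathwitch(\dipole)=\daleth^0$.

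Set $\atom=\reversemathwitch(\gimel)$ and $\boson=\reversemathwitch(\daleth)$. Both lie in $\mbb{S}_0(Q)$ because $\reversemathwitch:\LL^n_0\to\mbb{S}_0(Q)$ is a bijection by \cref{lem: the vertices}. Applying \cref{lem: preserve prec} gives $\conductivity\prec\atom\prec\boson\prec\dipole$ componentwise in Bruhat order, that is, $u_i\le\atom_i\le\boson_i\le u_{i-1}\sigma_i$ for every $i$. By \cref{defn:spackle}, $[\atom_1,\boson_1]\times\cdots\times[\atom_n,\boson_n]$ is then an element of $\mbb{S}(Q)$, witnessed by this $u$, and its image under $\assumption$ is $(\gimel,\daleth)$.

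The delicate point is to make sure the face/subcube indexing in $\LL^n$ matches what is needed here. Specifically, we need that the maximal cube containing $(\gimel,\daleth)$ can be chosen with $\daleth^0-\gimel^0=(1,\dots,1)$ and with $\gimel^0_i\in\{0,\dots,i-1\}$ inside the box. This can be done coordinatewise: where $\daleth_i-\gimel_i=1$, keep $\gimel^0_i=\gimel_i$; where $\daleth_i=\gimel_i$, choose either $\gimel_i-1$ or $\gimel_i$ as the lower endpoint, whichever stays in $[0,i]$.

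Combining the two steps, $\assumption$ is a bijective order embedding and hence an order isomorphism $\mbb{S}(Q)\cong\LL^n$. Together with \cref{thm:poset-isos}, this yields \cref{thm:main-result}.
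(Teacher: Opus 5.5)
Your proposal is correct and follows essentially the same route as the paper. Surjectivity comes from placing $(\gimel,\daleth)$ inside a maximal cube, pulling that cube back to some $u\in L(Q)$ via \cref{lem: cubejection and more}, and transporting the inequalities through \cref{lem: preserve prec}; injectivity and order-preservation come from bijectivity of $\mathwitch$ together with \cref{lem: spackle prec}, \cref{lem: Lehmer prec}, and \cref{lem: preserve prec}. Your extra justification for the containing maximal cube, which the paper simply asserts, is fine, except that the selection rule should require both $\gimel^0_i$ and $\gimel^0_i+1$ to lie in $[0,i]$, not just the lower endpoint.
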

\begin{proof}

    Let $(\gimel,\daleth)\in \LL^n.$ There exists some $n-1$ dimensional face  $\mathcloud\in \LL_{n-1}^{n}$ of the Lehmer complex such that $(\gimel,\daleth)\subseteq \mathcloud$.   By \cref{lem: cubejection and more}, $\mathcloud =\assumption(\conductivity,\dipole)$ where $\conductivity_i=u_i$ and $\dipole_i=u_{i-1}\sigma_i$ for some $u\in L(Q)$. Since $(\gimel,\daleth)\in \LL^n$ is such that $(\gimel,\daleth)\subseteq \mathcloud=(\mathwitch(\conductivity),\mathwitch(\dipole))$, \cref{lem: Lehmer prec} tells us that $\mathwitch(\conductivity)\prec_{\LL_{0}^n} \gimel \prec_{\LL_{0}^n} \daleth \prec_{\LL_{0}^n} \mathwitch(\dipole).$ Applying \cref{lem: preserve prec}, \[\conductivity=\reversemathwitch(\mathwitch(\conductivity))\prec_{\mbb{S}_0(Q)} \reversemathwitch(\gimel) \prec_{\mbb{S}_0(Q)} \reversemathwitch(\daleth) \prec_{\mbb{S}_0(Q)} \reversemathwitch(\mathwitch(\dipole))=\dipole.\]
    This means that $u_i\leq \reversemathwitch(\gimel)_i \leq  \reversemathwitch(\daleth)_i \leq u_{i-1}\sigma_i$ for all $i$. Letting $\pumpkin=[(\reversemathwitch(\gimel))_1,(\reversemathwitch(\daleth))_1]\times \cdots \times [(\reversemathwitch(\gimel))_n,(\reversemathwitch(\daleth))_n],$
    we get that $\pumpkin\in \mbb{S}(Q)$ and $\assumption(\pumpkin)=(\gimel,\daleth).$ We conclude that $\assumption$ surjects onto $\LL^n$.
    
    Since $\mathwitch$ is bijective, it follows that $\assumption$ is injective and therefore bijective as well. Taken together with \cref{lem: spackle prec}, \cref{lem: Lehmer prec}, and \cref{lem: preserve prec}, this tells us that $\assumption$ is an order isomorphism between the spackle poset $\mbb{S}(Q)$ and the Lehmer complex $\LL^n$.
\end{proof}

\printbibliography

@article {GU25,
    AUTHOR = {Gandhi, Raj and Udell, Gabe},
     TITLE = {A degeneration of the brick variety and a mixed subdivision of
              the associahedron into cubes},
   JOURNAL = {S\'{e}m. Lothar. Combin.},
  FJOURNAL = {S\'{e}minaire Lotharingien de Combinatoire},
    VOLUME = {93B},
      YEAR = {2025},
     PAGES = {Art. 91, 12},
      ISSN = {1286-4889},
   MRCLASS = {14M25 (52B05 52B20)},
  MRNUMBER = {4972552},
}

@misc{qdivdiff,
	title={Quasisymmetric divided differences},
	author={Nadeau, Philippe and Spink, Hunter and Tewari, Vasu},
    journal={arXiv preprint arXiv:2406.01510},
	year={2026}
}

@book {CG10,
    AUTHOR = {Chriss, Neil and Ginzburg, Victor},
     TITLE = {Representation theory and complex geometry},
    SERIES = {{Modern Birkh\"{a}user Classics}},
      NOTE = {Reprint of the 1997 edition},
 PUBLISHER = {{Birkh\"{a}user Boston, Ltd., Boston, MA}},
      YEAR = {2010},
     PAGES = {x+495},
      ISBN = {978-0-8176-4937-1},
   MRCLASS = {22E47 (17B35 20C08 20G05)},
  MRNUMBER = {2838836},
       DOI = {10.1007/978-0-8176-4938-8},
       URL = {https://doi.org/10.1007/978-0-8176-4938-8},
}

@article {DS19,
    AUTHOR = {Da Silva, Sergio},
     TITLE = {Strict {B}ott-{S}amelson resolutions of {S}chubert varieties},
   JOURNAL = {Exp. Math.},
  FJOURNAL = {Experimental Mathematics},
    VOLUME = {28},
      YEAR = {2019},
    NUMBER = {3},
     PAGES = {313--321},
      ISSN = {1058-6458,1944-950X},
   MRCLASS = {14M15 (14E15)},
  MRNUMBER = {3985836},
MRREVIEWER = {Edward\ Lowell\ Richmond},
       DOI = {10.1080/10586458.2017.1398695},
       URL = {https://doi.org/10.1080/10586458.2017.1398695},
}

@article {T07,
    AUTHOR = {Tenner, Bridget Eileen},
     TITLE = {Pattern avoidance and the {B}ruhat order},
   JOURNAL = {J. Combin. Theory Ser. A},
  FJOURNAL = {Journal of Combinatorial Theory. Series A},
    VOLUME = {114},
      YEAR = {2007},
    NUMBER = {5},
     PAGES = {888--905},
      ISSN = {0097-3165},
   MRCLASS = {05E15 (06A07 20F55)},
  MRNUMBER = {2333139},
MRREVIEWER = {Alessandro Conflitti},
       DOI = {10.1016/j.jcta.2006.10.003},
       URL = {https://doi.org/10.1016/j.jcta.2006.10.003},
}

@book{BB05,
    AUTHOR = {Bj\"{o}rner, Anders and Brenti, Francesco},
     TITLE = {Combinatorics of {C}oxeter groups},
    SERIES = {Graduate Texts in Mathematics},
    VOLUME = {231},
 PUBLISHER = {Springer, New York},
      YEAR = {2005},
     PAGES = {xiv+363},
      ISBN = {978-3540-442387; 3-540-44238-3},
   MRCLASS = {05-01 (05E15 20F55)},
  MRNUMBER = {2133266},
MRREVIEWER = {Jian-yi Shi},
}

@incollection {BA16,
    AUTHOR = {Abe, Hiraku and Billey, Sara},
     TITLE = {Consequences of the {L}akshmibai-{S}andhya theorem: the
              ubiquity of permutation patterns in {S}chubert calculus and
              related geometry},
 BOOKTITLE = {Schubert calculus---{O}saka 2012},
    SERIES = {Adv. Stud. Pure Math.},
    VOLUME = {71},
     PAGES = {1--52},
 PUBLISHER = {Math. Soc. Japan, [Tokyo]},
      YEAR = {2016},
   MRCLASS = {14M15 (05E10 14J17 14N15 20F55)},
  MRNUMBER = {3644818},
MRREVIEWER = {Mikhail Mazin},
       DOI = {10.2969/aspm/07110001},
       URL = {https://doi.org/10.2969/aspm/07110001},
}

@article {G01,
    AUTHOR = {Gaussent, St\'{e}phane},
     TITLE = {The fibre of the {B}ott-{S}amelson resolution},
   JOURNAL = {Indag. Math. (N.S.)},
  FJOURNAL = {Koninklijke Nederlandse Akademie van Wetenschappen.
              Indagationes Mathematicae. New Series},
    VOLUME = {12},
      YEAR = {2001},
    NUMBER = {4},
     PAGES = {453--468},
      ISSN = {0019-3577},
   MRCLASS = {14M15 (20E42 20F55)},
  MRNUMBER = {1908873},
MRREVIEWER = {Ivan Arzhantsev},
       DOI = {10.1016/S0019-3577(01)80034-3},
       URL = {https://doi.org/10.1016/S0019-3577(01)80034-3},
}

@article {BW01,
    AUTHOR = {Billey, Sara C. and Warrington, Gregory S.},
     TITLE = {Kazhdan-{L}usztig polynomials for 321-hexagon-avoiding
              permutations},
   JOURNAL = {J. Algebraic Combin.},
  FJOURNAL = {Journal of Algebraic Combinatorics. An International Journal},
    VOLUME = {13},
      YEAR = {2001},
    NUMBER = {2},
     PAGES = {111--136},
      ISSN = {0925-9899},
   MRCLASS = {05E15 (14M15 20F55)},
  MRNUMBER = {1826948},
MRREVIEWER = {Michael S. Kleber},
       DOI = {10.1023/A:1011279130416},
       URL = {https://doi.org/10.1023/A:1011279130416},
}

@article {D90,
    AUTHOR = {Deodhar, Vinay V.},
     TITLE = {A combinatorial setting for questions in {K}azhdan-{L}usztig
              theory},
   JOURNAL = {Geom. Dedicata},
  FJOURNAL = {Geometriae Dedicata},
    VOLUME = {36},
      YEAR = {1990},
    NUMBER = {1},
     PAGES = {95--119},
      ISSN = {0046-5755},
   MRCLASS = {20H15},
  MRNUMBER = {1065215},
MRREVIEWER = {Bhama Srinivasan},
       DOI = {10.1007/BF00181467},
       URL = {https://doi.org/10.1007/BF00181467},
}

@incollection {B05,
    AUTHOR = {Brion, Michel},
     TITLE = {Lectures on the geometry of flag varieties},
 BOOKTITLE = {Topics in cohomological studies of algebraic varieties},
    SERIES = {Trends Math.},
     PAGES = {33--85},
 PUBLISHER = {{Birkh\"{a}user, Basel}},
      YEAR = {2005},
   MRCLASS = {14M15 (14C35 19E08)},
  MRNUMBER = {2143072},
MRREVIEWER = {Christian Ohn},
       DOI = {10.1007/3-7643-7342-3\_2},
       URL = {https://doi.org/10.1007/3-7643-7342-3_2},
}

@book {S96,
    AUTHOR = {Sturmfels, Bernd},
     TITLE = {Gr\"{o}bner bases and convex polytopes},
    SERIES = {University Lecture Series},
    VOLUME = {8},
 PUBLISHER = {American Mathematical Society, Providence, RI},
      YEAR = {1996},
     PAGES = {xii+162},
      ISBN = {0-8218-0487-1},
   MRCLASS = {13P10 (14M25 52B20)},
  MRNUMBER = {1363949},
MRREVIEWER = {P. Schenzel},
       DOI = {10.1090/ulect/008},
       URL = {https://doi.org/10.1090/ulect/008},
}

@misc{ATZ24,
      AUTHOR = {An, Serena and Tung, Katherine and Zhang, Yuchong},
      EPRINT = {2412.02051},
  EPRINTTYPE = {arxiv},
       TITLE = {Postnikov-{S}tanley polynomials are {L}orentzian},
        YEAR = {2024}
}

@book {BL00,
    AUTHOR = {Billey, Sara and Lakshmibai, V.},
     TITLE = {Singular loci of {S}chubert varieties},
    SERIES = {Progress in Mathematics},
    VOLUME = {182},
 PUBLISHER = {{Birkh\"{a}user Boston, Inc., Boston, MA}},
      YEAR = {2000},
     PAGES = {xii+251},
      ISBN = {0-8176-4092-4},
   MRCLASS = {14M15 (14L35 20F55 20G05)},
  MRNUMBER = {1782635},
MRREVIEWER = {Michel Brion},
       DOI = {10.1007/978-1-4612-1324-6},
       URL = {https://doi.org/10.1007/978-1-4612-1324-6},
}

@article {PS09,
    AUTHOR = {Postnikov, Alexander and Stanley, Richard P.},
     TITLE = {Chains in the {B}ruhat order},
   JOURNAL = {J. Algebraic Combin.},
  FJOURNAL = {Journal of Algebraic Combinatorics. An International Journal},
    VOLUME = {29},
      YEAR = {2009},
    NUMBER = {2},
     PAGES = {133--174},
      ISSN = {0925-9899},
   MRCLASS = {14M15 (05A05 05E10 05E15)},
  MRNUMBER = {2475632},
       DOI = {10.1007/s10801-008-0125-4},
       URL = {https://doi.org/10.1007/s10801-008-0125-4},
}

@incollection {S76,
    AUTHOR = {Stanley, Richard P.},
     TITLE = {Some combinatorial aspects of the {S}chubert calculus},
 BOOKTITLE = {Combinatoire et repr\'{e}sentation du groupe sym\'{e}trique ({A}ctes
              {T}able {R}onde {CNRS}, {U}niv. {L}ouis-{P}asteur
              {S}trasbourg, {S}trasbourg, 1976)},
    SERIES = {Lecture Notes in Math., Vol. 579},
     PAGES = {217--251},
 PUBLISHER = {Springer, Berlin-New York},
      YEAR = {1977},
   MRCLASS = {05-02 (05A19 14M15 14N10)},
  MRNUMBER = {465880},
MRREVIEWER = {S. L. Kleiman},
}

@book {B91,
    AUTHOR = {Borel, Armand},
     TITLE = {Linear algebraic groups},
    SERIES = {Graduate Texts in Mathematics},
    VOLUME = {126},
   EDITION = {Second},
 PUBLISHER = {Springer-Verlag, New York},
      YEAR = {1991},
     PAGES = {xii+288},
      ISBN = {0-387-97370-2},
   MRCLASS = {20-01 (20Gxx)},
  MRNUMBER = {1102012},
MRREVIEWER = {F.\ D.\ Veldkamp},
       DOI = {10.1007/978-1-4612-0941-6},
       URL = {https://doi.org/10.1007/978-1-4612-0941-6},
}

@book {P08,
    AUTHOR = {Perrin, Daniel},
     TITLE = {Algebraic geometry},
    SERIES = {Universitext},
      NOTE = {An introduction,
              Translated from the 1995 French original by Catriona Maclean},
 PUBLISHER = {{Springer-Verlag London, Ltd., London; EDP Sciences, Les Ulis}},
      YEAR = {2008},
     PAGES = {xii+258},
      ISBN = {978-1-84800-055-1; 978-2-7598-0048-3},
   MRCLASS = {14-01},
  MRNUMBER = {2372337},
       DOI = {10.1007/978-1-84800-056-8},
       URL = {https://doi.org/10.1007/978-1-84800-056-8},
}

@article {KnutsonMiller,
    AUTHOR = {Knutson, Allen and Miller, Ezra},
     TITLE = {Gr\"obner geometry of {S}chubert polynomials},
   JOURNAL = {Ann. of Math. (2)},
  FJOURNAL = {Annals of Mathematics. Second Series},
    VOLUME = {161},
      YEAR = {2005},
    NUMBER = {3},
     PAGES = {1245--1318},
      ISSN = {0003-486X,1939-8980},
   MRCLASS = {05E15 (13C40 13F55 13P10 14M15 14N15)},
  MRNUMBER = {2180402},
MRREVIEWER = {Harry\ Tamvakis},
       DOI = {10.4007/annals.2005.161.1245},
       URL = {https://doi-org.proxy.library.cornell.edu/10.4007/annals.2005.161.1245},
}

@article {KM04,
    AUTHOR = {Knutson, Allen and Miller, Ezra},
     TITLE = {Subword complexes in {C}oxeter groups},
   JOURNAL = {Adv. Math.},
  FJOURNAL = {Advances in Mathematics},
    VOLUME = {184},
      YEAR = {2004},
    NUMBER = {1},
     PAGES = {161--176},
      ISSN = {0001-8708},
   MRCLASS = {20F55 (05E05 05E15 05E25 13F55)},
  MRNUMBER = {2047852},
MRREVIEWER = {Jian-yi Shi},
       DOI = {10.1016/S0001-8708(03)00142-7},
       URL = {https://doi.org/10.1016/S0001-8708(03)00142-7},
}

@article {CJP14,
    AUTHOR = {Ceballos, Cesar and Labb\'{e}, Jean-Philippe and Stump,
              Christian},
     TITLE = {Subword complexes, cluster complexes, and generalized
              multi-associahedra},
   JOURNAL = {J. Algebraic Combin.},
  FJOURNAL = {Journal of Algebraic Combinatorics. An International Journal},
    VOLUME = {39},
      YEAR = {2014},
    NUMBER = {1},
     PAGES = {17--51},
      ISSN = {0925-9899,1572-9192},
   MRCLASS = {05E45 (52B05)},
  MRNUMBER = {3144391},
MRREVIEWER = {Micha\l \ Adamaszek},
       DOI = {10.1007/s10801-013-0437-x}
}

@article{Sturmfels,
    AUTHOR = {Sturmfels, Bernd},
     TITLE = {Gr\"obner bases of toric varieties},
   JOURNAL = {Tohoku Math. J. (2)},
  FJOURNAL = {The Tohoku Mathematical Journal. Second Series},
    VOLUME = {43},
      YEAR = {1991},
    NUMBER = {2},
     PAGES = {249--261},
      ISSN = {0040-8735,2186-585X},
   MRCLASS = {14M25 (13P10 52B20)},
  MRNUMBER = {1104431},
MRREVIEWER = {Michael\ M.\ Kapranov},
       DOI = {10.2748/tmj/1178227496},
       URL = {https://doi-org.proxy.library.cornell.edu/10.2748/tmj/1178227496},
}

@incollection {K82,
    AUTHOR = {Konarski, Jerzy},
     TITLE = {Properties of projective orbits of actions of affine algebraic
              groups},
 BOOKTITLE = {Group actions and vector fields ({V}ancouver, {B}.{C}., 1981)},
    SERIES = {Lecture Notes in Math.},
    VOLUME = {956},
     PAGES = {79--91},
 PUBLISHER = {Springer, Berlin},
      YEAR = {1982},
      ISBN = {3-540-11946-9},
   MRCLASS = {14L30 (20G15)},
  MRNUMBER = {704988},
MRREVIEWER = {E.\ Aky\i ld\i z},
       DOI = {10.1007/BFb0101510},
       URL = {https://doi.org/10.1007/BFb0101510},
}

@article {KMS06,
    AUTHOR = {Knutson, Allen and Miller, Ezra and Shimozono, Mark},
     TITLE = {Four positive formulae for type {$A$} quiver polynomials},
   JOURNAL = {Invent. Math.},
  FJOURNAL = {Inventiones Mathematicae},
    VOLUME = {166},
      YEAR = {2006},
    NUMBER = {2},
     PAGES = {229--325},
      ISSN = {0020-9910},
   MRCLASS = {14M15 (05E15)},
  MRNUMBER = {2249801},
MRREVIEWER = {Harry Tamvakis},
       DOI = {10.1007/s00222-006-0505-0},
       URL = {https://doi.org/10.1007/s00222-006-0505-0},
}

@article {flagPositroids,
    AUTHOR = {Boretsky, Jonathan and Eur, Christopher and Williams, Lauren},
     TITLE = {Polyhedral and tropical geometry of flag positroids},
   JOURNAL = {Algebra Number Theory},
  FJOURNAL = {Algebra \& Number Theory},
    VOLUME = {18},
      YEAR = {2024},
    NUMBER = {7},
     PAGES = {1333--1374},
      ISSN = {1937-0652,1944-7833},
   MRCLASS = {05E14 (14T15)},
  MRNUMBER = {4757308},
       DOI = {10.2140/ant.2024.18.1333},
       URL = {https://doi.org/10.2140/ant.2024.18.1333},
}

@article {TW15,
    AUTHOR = {Tsukerman, E. and Williams, L.},
     TITLE = {Bruhat interval polytopes},
   JOURNAL = {Adv. Math.},
  FJOURNAL = {Advances in Mathematics},
    VOLUME = {285},
      YEAR = {2015},
     PAGES = {766--810},
      ISSN = {0001-8708},
   MRCLASS = {52B40 (05A05 06A07 20F55 52B05)},
  MRNUMBER = {3406515},
MRREVIEWER = {Joseph Kung},
       DOI = {10.1016/j.aim.2015.07.030},
       URL = {https://doi.org/10.1016/j.aim.2015.07.030},
}

@article {He,
    AUTHOR = {He, Xuhua},
     TITLE = {A subalgebra of 0-{H}ecke algebra},
   JOURNAL = {J. Algebra},
  FJOURNAL = {Journal of Algebra},
    VOLUME = {322},
      YEAR = {2009},
    NUMBER = {11},
     PAGES = {4030--4039},
      ISSN = {0021-8693,1090-266X},
   MRCLASS = {20C08},
  MRNUMBER = {2556136},
MRREVIEWER = {Thomas\ Pietraho},
       DOI = {10.1016/j.jalgebra.2009.04.003},
       URL = {https://doi-org.proxy.library.cornell.edu/10.1016/j.jalgebra.2009.04.003},
}

@article {LOYY24,
    AUTHOR = {Li, Tina and Oh, Suho and Richmond, Edward and Yan, Grace and
              You, Kimberly},
     TITLE = {Demazure product of permutations and hopping},
   JOURNAL = {Electron. J. Combin.},
  FJOURNAL = {Electronic Journal of Combinatorics},
    VOLUME = {31},
      YEAR = {2024},
    NUMBER = {1},
     PAGES = {Paper No. 1.48, 15},
      ISSN = {1077-8926},
   MRCLASS = {20F55 (05E16 20B30)},
  MRNUMBER = {4707769},
MRREVIEWER = {Jiayuan\ Wang},
       DOI = {10.37236/11983},
       URL = {https://doi.org/10.37236/11983},
}

@article {bricksort,
    AUTHOR = {Pilaud, Vincent and Santos, Francisco},
     TITLE = {The brick polytope of a sorting network},
   JOURNAL = {European J. Combin.},
  FJOURNAL = {European Journal of Combinatorics},
    VOLUME = {33},
      YEAR = {2012},
    NUMBER = {4},
     PAGES = {632--662},
      ISSN = {0195-6698,1095-9971},
   MRCLASS = {52B12 (05C75 52C30)},
  MRNUMBER = {2864447},
       DOI = {10.1016/j.ejc.2011.12.003},
       URL = {https://doi.org/10.1016/j.ejc.2011.12.003},
}

@article {E16,
    AUTHOR = {Escobar, Laura},
     TITLE = {Brick manifolds and toric varieties of brick polytopes},
   JOURNAL = {Electron. J. Combin.},
  FJOURNAL = {Electronic Journal of Combinatorics},
    VOLUME = {23},
      YEAR = {2016},
    NUMBER = {2},
     PAGES = {Paper 2.25, 18},
   MRCLASS = {14M15 (05E99 14M25)},
  MRNUMBER = {3512647},
MRREVIEWER = {Justin Brown},
       DOI = {10.37236/5038},
       URL = {https://doi.org/10.37236/5038},
}

@article {PS15,
    AUTHOR = {Pilaud, Vincent and Stump, Christian},
     TITLE = {Brick polytopes of spherical subword complexes and generalized
              associahedra},
   JOURNAL = {Adv. Math.},
  FJOURNAL = {Advances in Mathematics},
    VOLUME = {276},
      YEAR = {2015},
     PAGES = {1--61},
      ISSN = {0001-8708,1090-2082},
   MRCLASS = {05E45 (05E15 05E30 20F55 52B12)},
  MRNUMBER = {3327085},
MRREVIEWER = {Jian-yi\ Shi},
       DOI = {10.1016/j.aim.2015.02.012},
       URL = {https://doi.org/10.1016/j.aim.2015.02.012},
}

@article {inclusions,
    AUTHOR = {Gasharov, V. and Reiner, V.},
     TITLE = {Cohomology of smooth {S}chubert varieties in partial flag
              manifolds},
   JOURNAL = {J. London Math. Soc. (2)},
  FJOURNAL = {Journal of the London Mathematical Society. Second Series},
    VOLUME = {66},
      YEAR = {2002},
    NUMBER = {3},
     PAGES = {550--562},
      ISSN = {0024-6107,1469-7750},
   MRCLASS = {14M15 (05E15 14F25)},
  MRNUMBER = {1934291},
MRREVIEWER = {Raika\ Dehy},
       DOI = {10.1112/S0024610702003605},
       URL = {https://doi-org.proxy.library.cornell.edu/10.1112/S0024610702003605},
}

@book {Ziegler,
    AUTHOR = {Ziegler, G\"{u}nter M.},
     TITLE = {Lectures on polytopes},
    SERIES = {Graduate Texts in Mathematics},
    VOLUME = {152},
 PUBLISHER = {Springer-Verlag, New York},
      YEAR = {1995},
     PAGES = {x+370},
      ISBN = {0-387-94365-X},
   MRCLASS = {52Bxx},
  MRNUMBER = {1311028},
MRREVIEWER = {Margaret\ M.\ Bayer},
       DOI = {10.1007/978-1-4613-8431-1},
       URL = {https://doi.org/10.1007/978-1-4613-8431-1},
}

@article {postnikov,
    AUTHOR = {Postnikov, Alexander},
     TITLE = {Permutohedra, associahedra, and beyond},
   JOURNAL = {Int. Math. Res. Not. IMRN},
  FJOURNAL = {International Mathematics Research Notices. IMRN},
      YEAR = {2009},
    NUMBER = {6},
     PAGES = {1026--1106},
      ISSN = {1073-7928,1687-0247},
   MRCLASS = {05E30},
  MRNUMBER = {2487491},
       DOI = {10.1093/imrn/rnn153},
       URL = {https://doi-org.proxy.library.cornell.edu/10.1093/imrn/rnn153},
}

@article {Bilen,
    AUTHOR = {Can, Mahir Bilen},
     TITLE = {The cross-section of a spherical double cone},
   JOURNAL = {Adv. in Appl. Math.},
  FJOURNAL = {Advances in Applied Mathematics},
    VOLUME = {101},
      YEAR = {2018},
     PAGES = {215--231},
      ISSN = {0196-8858,1090-2074},
   MRCLASS = {06A07 (14M15 20G20)},
  MRNUMBER = {3857557},
       DOI = {10.1016/j.aam.2018.08.002},
       URL = {https://doi.org/10.1016/j.aam.2018.08.002},
}

@article {LMP21,
    AUTHOR = {Lee, Eunjeong and Masuda, Mikiya and Park, Seonjeong},
     TITLE = {Toric {B}ruhat interval polytopes},
   JOURNAL = {J. Combin. Theory Ser. A},
  FJOURNAL = {Journal of Combinatorial Theory. Series A},
    VOLUME = {179},
      YEAR = {2021},
     PAGES = {Paper No. 105387, 41},
      ISSN = {0097-3165},
   MRCLASS = {52B05 (14M25)},
  MRNUMBER = {4190574},
MRREVIEWER = {Margherita Barile},
       DOI = {10.1016/j.jcta.2020.105387},
       URL = {https://doi.org/10.1016/j.jcta.2020.105387},
}

@article {smallResolutions,
    AUTHOR = {Larson, Scott Joseph},
     TITLE = {Decompositions of {S}chubert varieties and small resolutions},
   JOURNAL = {Comm. Algebra},
  FJOURNAL = {Communications in Algebra},
    VOLUME = {52},
      YEAR = {2024},
    NUMBER = {5},
     PAGES = {2101--2126},
      ISSN = {0092-7872,1532-4125},
   MRCLASS = {22E47 (14M15)},
  MRNUMBER = {4716812},
MRREVIEWER = {Stephen\ Slebarski},
       DOI = {10.1080/00927872.2023.2281602},
       URL = {https://doi-org.proxy.library.cornell.edu/10.1080/00927872.2023.2281602},
}

@article {silv,
    AUTHOR = {Da Silva, Sergio},
     TITLE = {Strict {B}ott-{S}amelson resolutions of {S}chubert varieties},
   JOURNAL = {Exp. Math.},
  FJOURNAL = {Experimental Mathematics},
    VOLUME = {28},
      YEAR = {2019},
    NUMBER = {3},
     PAGES = {313--321},
      ISSN = {1058-6458,1944-950X},
   MRCLASS = {14M15 (14E15)},
  MRNUMBER = {3985836},
       DOI = {10.1080/10586458.2017.1398695},
       URL = {https://doi-org.proxy.library.cornell.edu/10.1080/10586458.2017.1398695},
}

@incollection {B03,
    AUTHOR = {Brion, Michel},
     TITLE = {Multiplicity-free subvarieties of flag varieties},
 BOOKTITLE = {Commutative algebra ({G}renoble/{L}yon, 2001)},
    SERIES = {Contemp. Math.},
    VOLUME = {331},
     PAGES = {13--23},
 PUBLISHER = {Amer. Math. Soc., Providence, RI},
      YEAR = {2003},
   MRCLASS = {14M15 (14L30 14N15)},
  MRNUMBER = {2011763},
MRREVIEWER = {Christian Ohn},
       DOI = {10.1090/conm/331/05900},
       URL = {https://doi.org/10.1090/conm/331/05900},
}

@book {EH16,
    AUTHOR = {Eisenbud, David and Harris, Joe},
     TITLE = {3264 and all that---a second course in algebraic geometry},
 PUBLISHER = {Cambridge University Press, Cambridge},
      YEAR = {2016},
     PAGES = {xiv+616},
      ISBN = {978-1-107-60272-4; 978-1-107-01708-5},
   MRCLASS = {14-01 (14C15 14M15 14N10)},
  MRNUMBER = {3617981},
MRREVIEWER = {Arnaud\ Beauville},
       DOI = {10.1017/CBO9781139062046},
       URL = {https://doi.org/10.1017/CBO9781139062046},
}

@book {F98,
    AUTHOR = {Fulton, William},
     TITLE = {Intersection theory},
    SERIES = {Ergebnisse der Mathematik und ihrer Grenzgebiete. 3. Folge. A
              Series of Modern Surveys in Mathematics [Results in
              Mathematics and Related Areas. 3rd Series. A Series of Modern
              Surveys in Mathematics]},
    VOLUME = {2},
   EDITION = {Second},
 PUBLISHER = {Springer-Verlag, Berlin},
      YEAR = {1998},
     PAGES = {xiv+470},
      ISBN = {3-540-62046-X; 0-387-98549-2},
   MRCLASS = {14C17 (14-02)},
  MRNUMBER = {1644323},
       DOI = {10.1007/978-1-4612-1700-8},
       URL = {https://doi.org/10.1007/978-1-4612-1700-8},
}

@book{V,
    AUTHOR = {Vakil, Ravi},
     TITLE = {The rising sea---foundations of algebraic geometry},
 PUBLISHER = {Princeton University Press, Princeton, NJ},
      YEAR = {[2025] \copyright 2025},
     PAGES = {xxiii+662},
      ISBN = {978-0-691-26866-8; 978-0-691-26867-5; 978-0-691-26868-2},
   MRCLASS = {Prelim},
  MRNUMBER = {4942570},
}

@book {H77,
    AUTHOR = {Hartshorne, Robin},
     TITLE = {Algebraic geometry},
    SERIES = {Graduate Texts in Mathematics, No. 52},
 PUBLISHER = {Springer-Verlag, New York-Heidelberg},
      YEAR = {1977},
     PAGES = {xvi+496},
      ISBN = {0-387-90244-9},
   MRCLASS = {14-01},
  MRNUMBER = {463157},
MRREVIEWER = {Robert Speiser},
}

@incollection {B02,
    AUTHOR = {Brion, Michel},
     TITLE = {Positivity in the {G}rothendieck group of complex flag
              varieties},
      NOTE = {Special issue in celebration of Claudio Procesi's 60th
              birthday},
   JOURNAL = {J. Algebra},
  FJOURNAL = {Journal of Algebra},
    VOLUME = {258},
      YEAR = {2002},
    NUMBER = {1},
     PAGES = {137--159},
      ISSN = {0021-8693},
   MRCLASS = {14C35 (14C17 14M15 19E15)},
  MRNUMBER = {1958901},
MRREVIEWER = {Mikhail Kogan},
       DOI = {10.1016/S0021-8693(02)00505-7},
       URL = {https://doi.org/10.1016/S0021-8693(02)00505-7},
}

@article {R92,
    AUTHOR = {Richardson, R. W.},
     TITLE = {Intersections of double cosets in algebraic groups},
   JOURNAL = {Indag. Math. (N.S.)},
  FJOURNAL = {Koninklijke Nederlandse Akademie van Wetenschappen.
              Indagationes Mathematicae. New Series},
    VOLUME = {3},
      YEAR = {1992},
    NUMBER = {1},
     PAGES = {69--77},
      ISSN = {0019-3577,1872-6100},
   MRCLASS = {20G15 (14L40)},
  MRNUMBER = {1157520},
MRREVIEWER = {James\ E.\ Humphreys},
       DOI = {10.1016/0019-3577(92)90028-J},
       URL = {https://doi.org/10.1016/0019-3577(92)90028-J},
}

@book {AndersonFulton,
    AUTHOR = {Anderson, David and Fulton, William},
     TITLE = {Equivariant cohomology in algebraic geometry},
    SERIES = {Cambridge Studies in Advanced Mathematics},
    VOLUME = {210},
 PUBLISHER = {Cambridge University Press, Cambridge},
      YEAR = {2024},
     PAGES = {xv+446},
      ISBN = {978-1-00-934998-7},
   MRCLASS = {14L30 (05E14 14-02 14F43 14Mxx 20G05 55N91)},
  MRNUMBER = {4655919},
MRREVIEWER = {Michael\ Orin\ Joyce},
}

@book{ShniderSternberg,
  author    = {Shnider, Steven and Sternberg, Shlomo},
  title     = {Quantum Groups: From Coalgebras to Drinfeld Algebras},
  series    = {Graduate Texts in Mathematical Physics},
  publisher = {International Press},
  year      = {1994}
}

@article {PP25,
    AUTHOR = {Pilaud, Vincent and Poliakova, Daria},
     TITLE = {Hochschild polytopes},
   JOURNAL = {Math. Ann.},
  FJOURNAL = {Mathematische Annalen},
    VOLUME = {392},
      YEAR = {2025},
    NUMBER = {2},
     PAGES = {2395--2441},
      ISSN = {0025-5831,1432-1807},
   MRCLASS = {52B05 (05C30)},
  MRNUMBER = {4906324},
MRREVIEWER = {Mehmetcik\ Pamuk},
       DOI = {10.1007/s00208-025-03120-x},
       URL = {https://doi.org/10.1007/s00208-025-03120-x},
}

@incollection {Lehmer,
    AUTHOR = {Lehmer, D. H.},
     TITLE = {Teaching combinatorial tricks to a computer},
 BOOKTITLE = {Proc. {S}ympos. {A}ppl. {M}ath., {V}ol. 10},
     PAGES = {179--193},
 PUBLISHER = {Amer. Math. Soc., Providence, RI},
      YEAR = {1960},
   MRCLASS = {68.00},
  MRNUMBER = {113289},
MRREVIEWER = {A.\ D.\ Booth},
}

@article {SU04,
    AUTHOR = {Saneblidze, Samson and Umble, Ronald},
     TITLE = {Diagonals on the permutahedra, multiplihedra and associahedra},
   JOURNAL = {Homology Homotopy Appl.},
  FJOURNAL = {Homology, Homotopy and Applications},
    VOLUME = {6},
      YEAR = {2004},
    NUMBER = {1},
     PAGES = {363--411},
      ISSN = {1532-0081,1532-0073},
   MRCLASS = {55U05 (05A18 52B05 55P35 81T30)},
  MRNUMBER = {2118493},
MRREVIEWER = {Alberto\ Cavicchioli},
       URL = {http://projecteuclid.org/euclid.hha/1139839559},
}

@misc{AllenHammer,
      title={Permutahedra, {L}usztig varieties, degenerations, and subdivisions}, 
      author={Allen Knutson and Mario Sanchez and Melissa Sherman-Bennett},
      year={2026},
      eprint={2607.02701},
      archivePrefix={arXiv},
      primaryClass={math.AG},
      url={https://arxiv.org/abs/2607.02701}, 
}

@misc{woo,
      title={Catalan numbers and {S}chubert polynomials for $w=1(n+1)... 2$}, 
      author={Alexander Woo},
      year={2004},
      eprint={math/0407160},
      archivePrefix={arXiv},
      primaryClass={math.CO},
      url={https://arxiv.org/abs/math/0407160}, 
}

@misc{cubical,
      title={Extended Abstract: Canonical join complex and cubical coordinates for all framing lattices}, 
      author={Jonah Berggren and Clément Chenevière},
      year={2026},
      eprint={2605.15319},
      archivePrefix={arXiv},
      primaryClass={math.CO},
      url={https://arxiv.org/abs/2605.15319}, 
}

@article {Karola,
    AUTHOR = {M\'esz\'aros, Karola},
     TITLE = {Pipe dream complexes and triangulations of root polytopes
              belong together},
   JOURNAL = {SIAM J. Discrete Math.},
  FJOURNAL = {SIAM Journal on Discrete Mathematics},
    VOLUME = {30},
      YEAR = {2016},
    NUMBER = {1},
     PAGES = {100--111},
      ISSN = {0895-4801,1095-7146},
   MRCLASS = {05E45 (52B12)},
  MRNUMBER = {3448341},
MRREVIEWER = {Gabriel\ K.\ Cunningham},
       DOI = {10.1137/15M1014802},
       URL = {https://doi-org.proxy.library.cornell.edu/10.1137/15M1014802},
}

@book {T51,
    AUTHOR = {Tamari, Dov},
     TITLE = {Mono\"{\i}des pr\'{e}ordonn\'{e}s et cha\^{i}nes de {M}alcev},
      NOTE = {Th\`ese},
 PUBLISHER = {Universit\'{e} de Paris, Paris},
      YEAR = {1951},
     PAGES = {iv+81 pp. (mimeographed)},
   MRCLASS = {20.0X},
  MRNUMBER = {51833},
MRREVIEWER = {D.\ C.\ Murdoch},
}

@misc{stacks-project,
    shorthand    = {Stacks},
    author       = {The {Stacks Project Authors}},
    title        = {\textit{Stacks Project}},
    howpublished = {\url{https://stacks.math.columbia.edu}},
    year         = {2018},
  }

@article {S63,
    AUTHOR = {Stasheff, James Dillon},
     TITLE = {Homotopy associativity of {$H$}-spaces. {I}, {II}},
      NOTE = {{\bf 108} (1963), 275-292; ibid},
   JOURNAL = {Trans. Amer. Math. Soc.},
  FJOURNAL = {Transactions of the American Mathematical Society},
    VOLUME = {108},
      YEAR = {1963},
     PAGES = {293--312},
      ISSN = {0002-9947,1088-6850},
   MRCLASS = {55.40},
  MRNUMBER = {158400},
MRREVIEWER = {A.\ H.\ Clark},
       DOI = {10.1090/s0002-9947-1963-0158400-5},
       URL = {https://doi.org/10.1090/s0002-9947-1963-0158400-5},
}

@misc {H84,
    AUTHOR = {Haiman, Mark},
     TITLE = {Constructing the associahedron},
     NOTE = {Unpublished manuscript, MIT 1984, 11 pages},
       URL = {http:
//math.berkeley.edu/~mhaiman/ftp/assoc/manuscript.pdf},
}

@article {L89,
    AUTHOR = {Lee, Carl W.},
     TITLE = {The associahedron and triangulations of the {$n$}-gon},
   JOURNAL = {European J. Combin.},
  FJOURNAL = {European Journal of Combinatorics},
    VOLUME = {10},
      YEAR = {1989},
    NUMBER = {6},
     PAGES = {551--560},
      ISSN = {0195-6698,1095-9971},
   MRCLASS = {52A25 (51M20)},
  MRNUMBER = {1022776},
MRREVIEWER = {J.\ M.\ Wills},
       DOI = {10.1016/S0195-6698(89)80072-1},
       URL = {https://doi.org/10.1016/S0195-6698(89)80072-1},
}

@article {L04,
    AUTHOR = {Loday, Jean-Louis},
     TITLE = {Realization of the {S}tasheff polytope},
   JOURNAL = {Arch. Math. (Basel)},
  FJOURNAL = {Archiv der Mathematik},
    VOLUME = {83},
      YEAR = {2004},
    NUMBER = {3},
     PAGES = {267--278},
      ISSN = {0003-889X,1420-8938},
   MRCLASS = {52B11 (55P47 55P48)},
  MRNUMBER = {2108555},
MRREVIEWER = {Richard\ John\ Steiner},
       DOI = {10.1007/s00013-004-1026-y},
       URL = {https://doi.org/10.1007/s00013-004-1026-y},
}

@article{HHMP,
  title={The volume polynomial of regular semisimple Hessenberg varieties and the Gelfand—Zetlin polytope},
  author={Harada, Megumi and Horiguchi, Tatsuya and Masuda, Mikiya and Park, Seonjeong},
  journal={Proceedings of the Steklov Institute of Mathematics},
  volume={305},
  number={1},
  pages={318--344},
  year={2019},
  publisher={Springer}
}

@article {Kostant-Toda,
    AUTHOR = {Kodama, Yuji and Williams, Lauren},
     TITLE = {The full {K}ostant-{T}oda hierarchy on the positive flag
              variety},
   JOURNAL = {Comm. Math. Phys.},
  FJOURNAL = {Communications in Mathematical Physics},
    VOLUME = {335},
      YEAR = {2015},
    NUMBER = {1},
     PAGES = {247--283},
      ISSN = {0010-3616,1432-0916},
   MRCLASS = {17B80 (14M15 22E46 52B12)},
  MRNUMBER = {3314504},
MRREVIEWER = {Rutwig\ Campoamor-Stursberg},
       DOI = {10.1007/s00220-014-2203-x},
       URL = {https://doi.org/10.1007/s00220-014-2203-x},
}

@phdthesis{LauraThesis,
  title={Brick varieties and toric matrix {S}chubert varieties},
  author={Vega, Laura Escobar},
  year={2015},
  school={Cornell University}
}

@incollection {E19,
    AUTHOR = {Escobar, Laura},
     TITLE = {A brief survey about moment polytopes of subvarieties of
              products of {G}rassmanians},
 BOOKTITLE = {Algebraic and geometric combinatorics on lattice polytopes},
     PAGES = {200--216},
 PUBLISHER = {World Sci. Publ., Hackensack, NJ},
      YEAR = {2019},
   MRCLASS = {14M15 (14N15 52B20)},
  MRNUMBER = {3971694},
}

@article{benedetti2024lattice,
  title={Lattice path matroids and quotients},
  author={Benedetti-Vel{\'a}squez, Carolina and Knauer, Kolja},
  journal={Combinatorica},
  volume={44},
  number={3},
  pages={621--650},
  year={2024},
  publisher={Springer}
}

@misc{MarioMV,
      title={Mirkovi\'c-{V}ilonen Polytopes from Combinatorics}, 
      author={Mario Sanchez},
      year={2023},
      eprint={2311.16979},
      archivePrefix={arXiv},
      primaryClass={math.CO},
      url={https://arxiv.org/abs/2311.16979}, 
}

@incollection {Ch14,
    AUTHOR = {Chabert, Jean-Luc},
     TITLE = {Integer-valued polynomials: looking for regular bases (a
              survey)},
 BOOKTITLE = {Commutative algebra},
     PAGES = {83--111},
 PUBLISHER = {Springer, New York},
      YEAR = {2014},
   MRCLASS = {13F20 (11B65 11R21 11S05)},
  MRNUMBER = {3330213},
MRREVIEWER = {Keith Peter Johnson},
}

@article {CGGLSS25,
    AUTHOR = {Casals, Roger and Gorsky, Eugene and Gorsky, Mikhail and Le,
              Ian and Shen, Linhui and Simental, Jos\'{e}},
     TITLE = {Cluster structures on braid varieties},
   JOURNAL = {J. Amer. Math. Soc.},
  FJOURNAL = {Journal of the American Mathematical Society},
    VOLUME = {38},
      YEAR = {2025},
    NUMBER = {2},
     PAGES = {369--479},
      ISSN = {0894-0347},
   MRCLASS = {13F60 (14M15 20F36)},
  MRNUMBER = {4868947},
       DOI = {10.1090/jams/1048},
       URL = {https://doi.org/10.1090/jams/1048},
}

@article {Cambrian,
    AUTHOR = {Reading, Nathan},
     TITLE = {Cambrian lattices},
   JOURNAL = {Adv. Math.},
  FJOURNAL = {Advances in Mathematics},
    VOLUME = {205},
      YEAR = {2006},
    NUMBER = {2},
     PAGES = {313--353},
      ISSN = {0001-8708,1090-2082},
   MRCLASS = {05E25 (06B10 20F55 52C07)},
  MRNUMBER = {2258260},
MRREVIEWER = {Axel\ Hultman},
       DOI = {10.1016/j.aim.2005.07.010},
       URL = {https://doi-org.proxy.library.cornell.edu/10.1016/j.aim.2005.07.010},
}

@article {HLT,
    AUTHOR = {Hohlweg, Christophe and Lange, Carsten E. M. C. and Thomas,
              Hugh},
     TITLE = {Permutahedra and generalized associahedra},
   JOURNAL = {Adv. Math.},
  FJOURNAL = {Advances in Mathematics},
    VOLUME = {226},
      YEAR = {2011},
    NUMBER = {1},
     PAGES = {608--640},
      ISSN = {0001-8708,1090-2082},
   MRCLASS = {20F55 (05E15 52B11)},
  MRNUMBER = {2735770},
MRREVIEWER = {Chi\ Kin\ Mak},
       DOI = {10.1016/j.aim.2010.07.005},
       URL = {https://doi-org.proxy.library.cornell.edu/10.1016/j.aim.2010.07.005},
}

@book {BrionKumar,
    AUTHOR = {Brion, Michel and Kumar, Shrawan},
     TITLE = {Frobenius splitting methods in geometry and representation
              theory},
    SERIES = {Progress in Mathematics},
    VOLUME = {231},
 PUBLISHER = {{Birkh\"auser Boston, Inc., Boston, MA}},
      YEAR = {2005},
     PAGES = {x+250},
      ISBN = {0-8176-4191-2},
   MRCLASS = {14M15 (13A35 14C05 17B10 20G05)},
  MRNUMBER = {2107324},
MRREVIEWER = {Vikram\ B.\ Mehta},
}

@article{quasiflag,
	title = {The quasisymmetric flag variety: {A} toric complex on noncrossing partitions},
	volume = {14},
	issn = {2050-5086},
	shorttitle = {The quasisymmetric flag variety},
	url = {https://www.cambridge.org/core/product/identifier/S2050508626100341/type/journal_article},
	doi = {10.1017/fmp.2026.10034},
	language = {en},
	urldate = {2026-07-27},
	journal = {Forum of Mathematics, Pi},
	author = {Bergeron, Nantel and Gagnon, Lucas and Nadeau, Philippe and Spink, Hunter and Tewari, Vasu},
	year = {2026},
	pages = {e18},
}

@misc{bergeron_coxeter_2026,
	title = {The {Coxeter} {Flag} {Variety}},
	url = {http://arxiv.org/abs/2601.23111},
	doi = {10.48550/arXiv.2601.23111},
	urldate = {2026-07-27},
	publisher = {arXiv},
	author = {Bergeron, Nantel and Gagnon, Lucas and Spink, Hunter and Tewari, Vasu},
	month = jan,
	year = {2026},
	note = {arXiv:2601.23111 [math.AG]},
}

@misc{nadeau_geometry_2024,
	title = {The geometry of quasisymmetric coinvariants},
	url = {http://arxiv.org/abs/2410.12643},
	doi = {10.48550/arXiv.2410.12643},
	urldate = {2026-07-27},
	publisher = {arXiv},
	author = {Nadeau, Philippe and Spink, Hunter and Tewari, Vasu},
	month = oct,
	year = {2024},
	note = {arXiv:2410.12643 [math.AG]},
}

@incollection {BrionProcesi,
    AUTHOR = {Brion, Michel and Procesi, Claudio},
     TITLE = {Action d'un tore dans une vari\'et\'e{} projective},
 BOOKTITLE = {Operator algebras, unitary representations, enveloping
              algebras, and invariant theory ({P}aris, 1989)},
    SERIES = {Progr. Math.},
    VOLUME = {92},
     PAGES = {509--539},
 PUBLISHER = {{Birkh\"auser Boston, Boston, MA}},
      YEAR = {1990},
      ISBN = {0-8176-3489-4},
   MRCLASS = {14L30 (17B35 52B20)},
  MRNUMBER = {1103602},
MRREVIEWER = {Anthony\ Joseph},
}

@article{Perrin, 
title={Small resolutions of minuscule Schubert varieties}, 
volume={143},
DOI={10.1112/S0010437X07002734},
number={5},
journal={Compositio Mathematica},
author={Perrin, Nicolas},
year={2007},
pages={1255–1312}}

@article{lian,
  title={The HHMP decomposition of the permutohedron and degenerations of torus orbits in flag varieties},
  author={Lian, Carl},
  journal={International Mathematics Research Notices},
  volume={2024},
  number={20},
  pages={13380--13399},
  year={2024},
  publisher={Oxford University Press}
}

@article{gorsky2026unexpected,
  title={Unexpected toric Richardson varieties},
  author={Gorsky, Eugene and Kim, Soyeon and Sherman-Bennett, Melissa},
  journal={arXiv preprint arXiv:2603.29260},
  year={2026}
}

@article{gaetz2026poset,
  title={Poset topology, moves, and Bruhat interval polytope lattices},
  author={Gaetz, Christian and Hersh, Patricia},
  journal={Bulletin of the London Mathematical Society},
  volume={58},
  number={3},
  pages={e70309},
  year={2026},
  publisher={Wiley Online Library}
}

@article{gaetz2023one,
  title={One-skeleton posets of Bruhat interval polytopes},
  author={Gaetz, Christian},
  journal={Advances in Mathematics},
  volume={430},
  pages={109216},
  year={2023},
  publisher={Elsevier}
}

\end{document}